\renewcommand{\email}[2][]{%
  \ifx\emails\@empty\relax\else{\g@addto@macro\emails{,\space}}\fi%
  \@ifnotempty{#1}{\g@addto@macro\emails{\textrm{(#1)}\space}}%
  \g@addto@macro\emails{#2}%
}
\author{Julien Korinman$^{(1)}$}
\address{${}^{(1)}$ Institut Montpelli\'erain Alexander Grothendieck - UMR 5149 Universit\'e de Montpellier. Place Eug\'ene Bataillon, 34090 Montpellier France}
\email{julien.korinman@gmail.com}
\author{Jun Murakami$^{(2)}$}
\address{${}^{(2)}$Department of Mathematics, Faculty of Science and Engineering, Waseda University,3-4-1 Ohkubo, Shinjuku-ku, Tokyo, 169-8555, Japan}
\email{murakami@waseda.jp}
\subjclass{$57$R$56$, $57$N$10$, $57$M$25$.}
\keywords{Quantum character varieties, Stated skein modules}
\def\restriction#1#2{\mathchoice
              {\setbox1\hbox{${\displaystyle #1}_{\scriptstyle #2}$}
              \restrictionaux{#1}{#2}}
              {\setbox1\hbox{${\textstyle #1}_{\scriptstyle #2}$}
              \restrictionaux{#1}{#2}}
              {\setbox1\hbox{${\scriptstyle #1}_{\scriptscriptstyle #2}$}
              \restrictionaux{#1}{#2}}
              {\setbox1\hbox{${\scriptscriptstyle #1}_{\scriptscriptstyle #2}$}
              \restrictionaux{#1}{#2}}}
\def\restrictionaux#1#2{{#1\,\smash{\vrule height .8\ht1 depth .85\dp1}}_{\,#2}} 
\newcommand{\quotient}[2]{{\raisebox{.2em}{$#1$}\left/\raisebox{-.2em}{$#2$}\right.}}
\newcommand{\Hom}{\operatorname{Hom}}
\newcommand{\SL}{\operatorname{SL}}
\newcommand{\id}{id}
\newcommand{\Span}{\operatorname{Span}}
\newcommand{\End}{\operatorname{End}}
\newcommand{\Cob}{\operatorname{Cob}}
\newcommand{\Alg}{\operatorname{Alg}}
\newcommand{\Gp}{\operatorname{Gp}}
\newcommand{\Top}{\operatorname{Top}}
\newcommand{\Set}{\operatorname{Set}}
\newcommand{\Cat}{\operatorname{Cat}}
\newcommand{\LMod}{\operatorname{LMod}}
\newcommand{\RMod}{\operatorname{RMod}}
\newcommand{\LComod}{\operatorname{LComod}}
\newcommand{\RComod}{\operatorname{RComod}}
\newcommand{\Bimod}{\operatorname{Bimod}}
\newcommand{\BT}{\operatorname{BT}}
\newcommand{\MS}{\operatorname{MS}}
\newcommand{\PP}{\operatorname{P}}
\newcommand{\Spec}{\operatorname{Spec}}
\newcommand{\coinv}{\operatorname{coinv}}
\newcommand{\lt}{\operatorname{lt}}
\newcommand{\Gr}{\operatorname{Gr}}
\newcommand{\TL}{\operatorname{TL}}
\newcommand{\con}{\operatorname{c}}
\newcommand{\Free}{\operatorname{Free}}
\newcommand{\Lan}{\operatorname{Lan}}
\newcommand{\Rep}{\operatorname{Rep}}
\newcommand{\Mod}{\operatorname{Mod}}
\newcommand{\rot}{\operatorname{rot}}
\newcommand{\HT}{\operatorname{HT}}
\newcommand{\hT}{\operatorname{ht}}
\newcommand{\ad}{\operatorname{ad}}
\newcommand{\Ad}{\operatorname{Ad}}
\newcommand{\M}{\boldsymbol{\mathcal{M}}}
\newcommand{\Mun}{\boldsymbol{\mathcal{M}}^{(1)}}
\newcommand{\Char}{\operatorname{Char}}
\newcommand{\Rot}{\operatorname{Rot}}
\newcommand{\Spin}{\operatorname{Spin}}
\newcommand{\SO}{\operatorname{SO}}
\newcommand{\Aut}{\operatorname{Aut}}
\newcommand{\Maps}{\operatorname{Maps}}
\newcommand{\CC}[2]{
\adjustbox{valign=c}{\includegraphics[width=0.5cm]{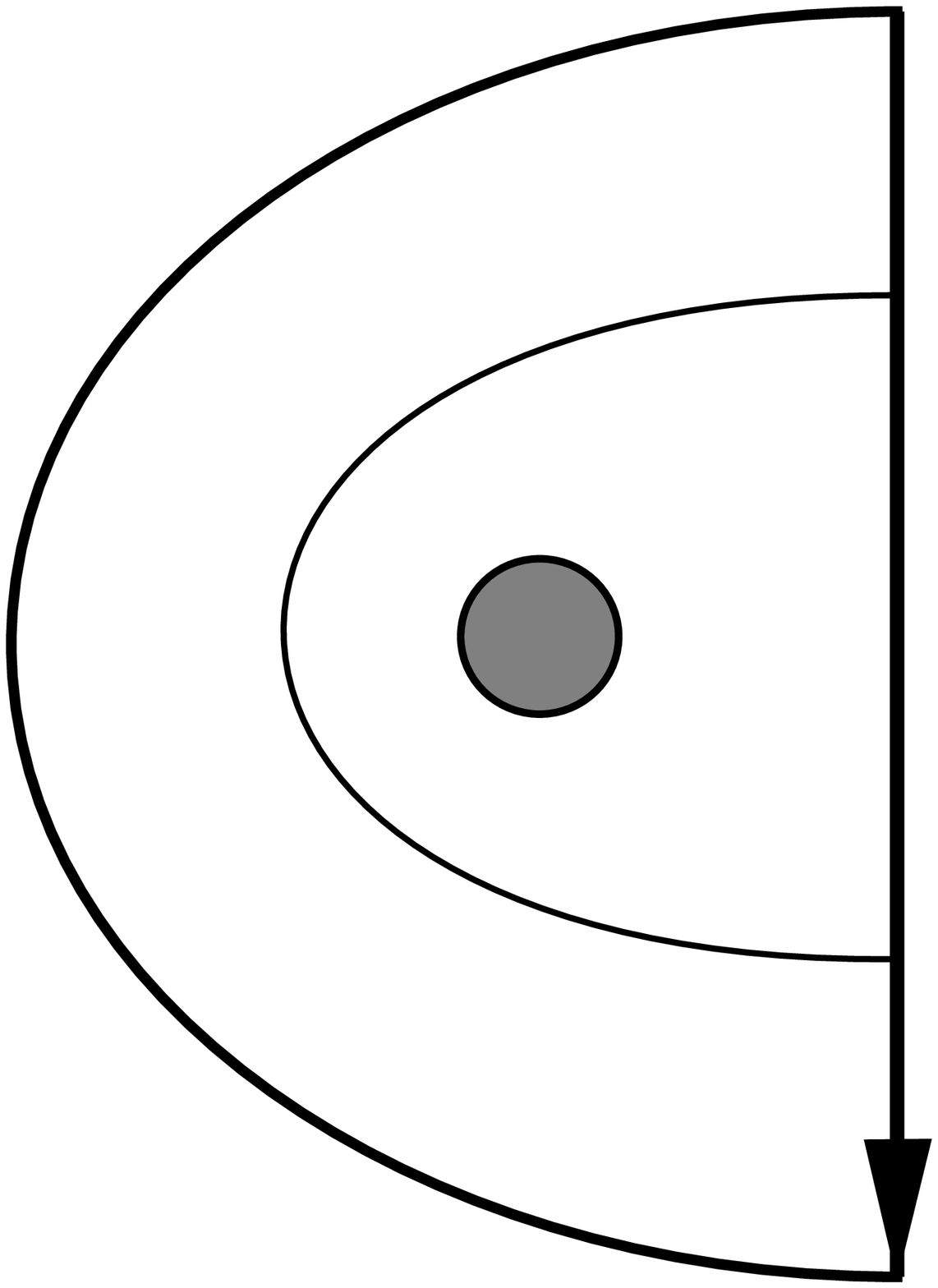}}{}_{#1}^{#2}
}
\newcommand{\CCC}[2]{
\adjustbox{valign=c}{\includegraphics[width=0.5cm]{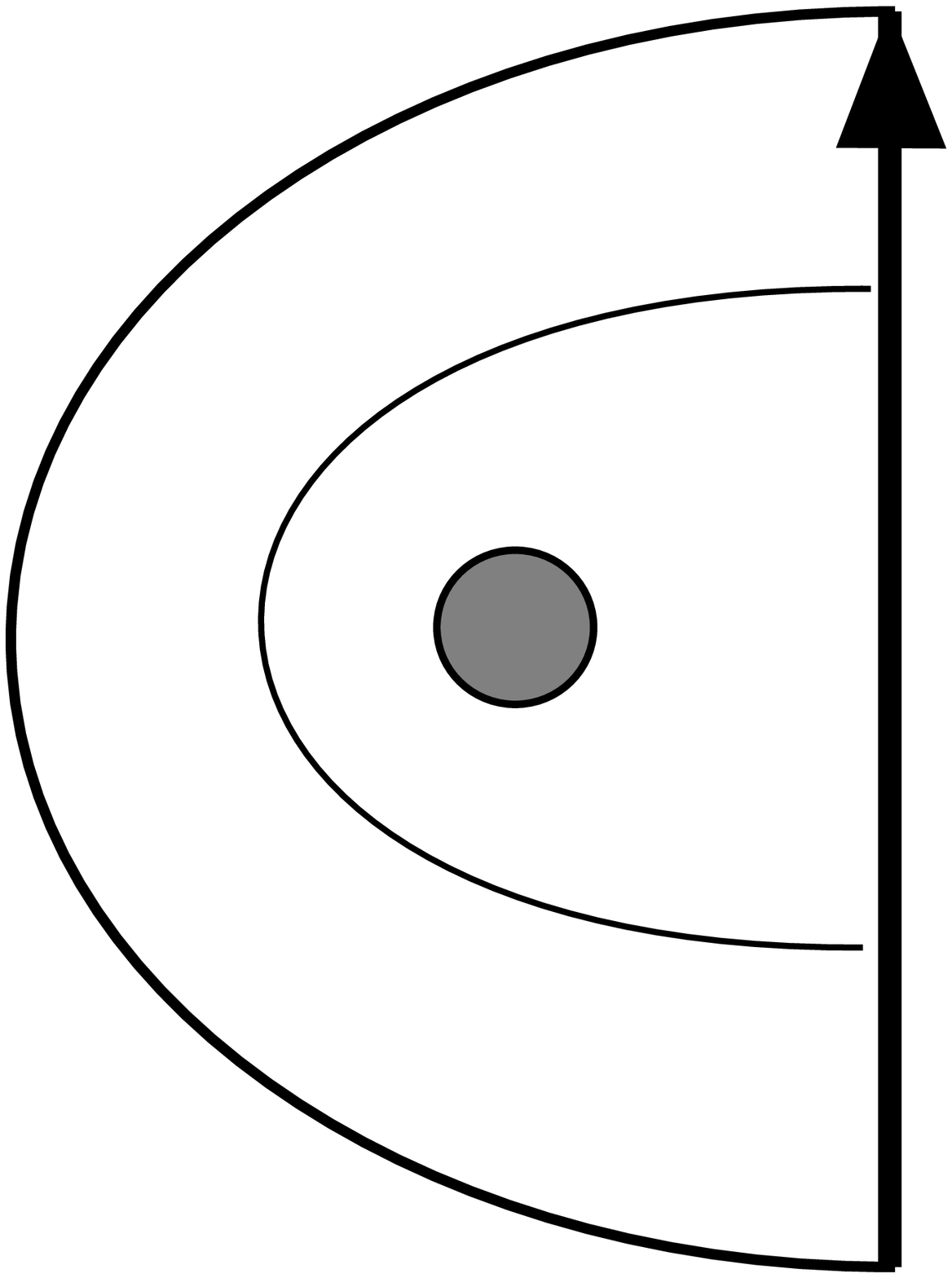}}{}_{#1}^{#2}
}
\newcommand{\crosspos}{
\tikz[baseline=-0.4ex,scale=0.2, >=stealth]{
\draw (-1,-1) -- (1,1);
\draw (-1,1) -- (-0.5,0.5);
\draw (0.5,-0.5) -- (1,-1);
}}
\newcommand{\Crosspos}{
\tikz[baseline=-0.4ex,scale=0.5,>=stealth]{	
\draw [fill=gray!45,gray!45] (-.6,-.6)  rectangle (.6,.6);
\draw[line width=1.2,-] (-0.4,-0.52) -- (.4,.53);
\draw[line width=1.2,-] (0.4,-0.52) -- (0.1,-0.12);
\draw[line width=1.2,-] (-0.1,0.12) -- (-.4,.53);
}}
\newcommand{\crossneg}{
\tikz[baseline=-0.4ex,scale=0.2, >=stealth]{
\draw (-1,1) -- (1,-1);
\draw (-1,-1) -- (-0.5,-0.5);
\draw (0.5,0.5) -- (1,1);
}}
\newcommand{\Crossneg}{
\tikz[baseline=-0.4ex,scale=0.5,>=stealth]{	
\draw [fill=gray!45,gray!45] (-.6,-.6)  rectangle (.6,.6);
\draw[line width=1.2,-] (-0.4,0.53) -- (.4,-.52);
\draw[line width=1.2,-] (-0.4,-0.52) -- (-0.1,-0.12);
\draw[line width=1.2,-] (0.1,0.12) -- (.4,.53);
}}
\newcommand{\heightexch}[3]{
	\begin{tikzpicture}[baseline=-0.4ex,scale=0.5, >=stealth]
	\draw [fill=gray!60,gray!45] (-.7,-.75)  rectangle (.4,.75)   ;
	\draw[#1] (0.4,-0.75) to (.4,.75);
	\draw[line width=1.2] (0.4,-0.3) to (-.7,-.3);
	\draw[line width=1.2] (0.4,0.3) to (-.7,.3);
	\draw (0.65,0.3) node {\scriptsize{$#2$}}; 
	\draw (0.65,-0.3) node {\scriptsize{$#3$}}; 
	\end{tikzpicture}
}
\newcommand{\qplanedouble}[2]{
	\begin{tikzpicture}[baseline=-0.4ex,scale=0.5, >=stealth]
	\draw [fill=gray!60,gray!45] (-.7,-.75)  rectangle (.4,.75)   ;
	\draw[->] (0.4,-0.75) to (.4,.75);
	\draw[->] (-0.7,-0.75) to (-.7,.75);
	\draw[line width=1.2] (0.4,-0.3) to (-.7,-.3);
	\draw[line width=1.2] (0.4,0.3) to (-.7,.3);
	\draw (-1,0.3) node {\scriptsize{$#1$}}; 
	\draw (-1,-0.3) node {\scriptsize{$#2$}}; 
	\end{tikzpicture}
}
\newcommand{\traitalacon}[2]{
	\begin{tikzpicture}[baseline=-0.4ex,scale=0.5, >=stealth]
	\draw [fill=gray!60,gray!45] (-.7,-.75)  rectangle (.4,.75)   ;
	\draw[#1] (0.4,-0.75) to (.4,.75);
	\draw[line width=1.2] (0.4,0) to (-.7,0);
	\draw (0.9,0) node {\scriptsize{$#2$}}; 
	\end{tikzpicture}
}
\newcommand{\traitalacondeux}[1]{
	\begin{tikzpicture}[baseline=-0.4ex,scale=0.5, >=stealth]
	\draw [fill=gray!60,gray!45] (-.7,-.75)  rectangle (.4,.75)   ;
	\draw[->] (0.4,-0.75) to (.4,.75);
	\draw[->] (-0.7,-0.75) to (-.7,.75);
	\draw[line width=1.2] (0.4,0) to (-.7,0);
	\draw (-1,0) node {\scriptsize{$#1$}}; 
	\end{tikzpicture}
}
\newcommand{\traitalacontrois}[2]{
	\begin{tikzpicture}[baseline=-0.4ex,scale=0.5, >=stealth]
	\draw [fill=gray!60,gray!45] (-.7,-.75)  rectangle (.4,.75)   ;
	\draw[->] (0.4,-0.75) to (.4,.75);
	\draw[->] (-0.7,-0.75) to (-.7,.75);
	\draw[line width=1.2] (0.4,0) to (-.7,0);
	\draw (-1,0) node {\scriptsize{$#1$}}; 
	\draw (0.9,0) node {\scriptsize{$#2$}}; 
	\end{tikzpicture}
}
\newcommand{\tresalacon}[2]{
	\begin{tikzpicture}[baseline=-0.4ex,scale=0.5, >=stealth]
	\draw [fill=gray!60,gray!45] (-.7,-.75)  rectangle (.4,.75)   ;
	\draw[#1] (0.4,-0.75) to (.4,.75);
	\draw[line width=1.2] (0.4,0) to (-.7,0);
	\draw (2,0) node {\scriptsize{$#2$}}; 
	\end{tikzpicture}
}
\newcommand{\heightcurve}{
\begin{tikzpicture}[baseline=-0.4ex,scale=0.5]
\draw [fill=gray!20,gray!45] (-.7,-.75)  rectangle (.4,.75)   ;
\draw[-] (0.4,-0.75) to (.4,.75);
\draw[line width=1.2] (-.7,-0.3) to (-.4,-.3);
\draw[line width=1.2] (-.7,0.3) to (-.4,.3);
\draw[line width=1.15] (-.4,0) ++(-90:.3) arc (-90:90:.3);
\end{tikzpicture}
}
\newcommand{\CAP}{
\begin{tikzpicture}[scale=0.5, rotate=90,transform shape]
\draw[line width=0.7] (-.7,-0.3) to (-.4,-.3);
\draw[<-, line width=0.7] (-.7,0.3) to (-.4,.3);
\draw[line width=0.7] (-.4,0) ++(-90:.3) arc (-90:90:.3);
\end{tikzpicture}
}
\newcommand{\heightcurveright}{
\begin{tikzpicture}[baseline=-0.4ex,scale=0.5]
\draw [fill=gray!20,gray!45] (-.7,-.75)  rectangle (.4,.75)   ;
\draw[-] (-0.7,-0.75) to (-.7,.75);
\draw[line width=1.2] (0.1,-0.3) to (.4,-.3);
\draw[line width=1.2] (0.1,0.3) to (.4,.3);
\draw[line width=1.15] (.1,0) ++(90:.3) arc (90:270:.3);
\end{tikzpicture}
}
\newcommand{\heightcurverightdeux}{
\begin{tikzpicture}[baseline=-0.4ex,scale=0.5]
\draw [fill=gray!20,gray!45] (-.7,-.75)  rectangle (.4,.75)   ;
\draw[->] (0.4,-0.75) to (.4,.75);
\draw[->] (-0.7,-0.75) to (-.7,.75);
\draw[line width=1.2] (0.1,-0.3) to (.4,-.3);
\draw[line width=1.2] (0.1,0.3) to (.4,.3);
\draw[line width=1.15] (.1,0) ++(90:.3) arc (90:270:.3);
\end{tikzpicture}
}
\begin{document}

\theoremstyle{plain}
\newtheorem{theorem}{Theorem}[section]
\newtheorem{proposition}[theorem]{Proposition}
\newtheorem{corollary}[theorem]{Corollary}
\newtheorem{lemma}[theorem]{Lemma}
\theoremstyle{definition}
\newtheorem{notations}[theorem]{Notations}
\newtheorem{convention}[theorem]{Convention}
\newtheorem{problem}[theorem]{Problem}
\newtheorem{definition}[theorem]{Definition}
\theoremstyle{remark}
\newtheorem{remark}[theorem]{Remark}
\newtheorem{conjecture}[theorem]{Conjecture}
\newtheorem{example}[theorem]{Example}
\newtheorem{strategy}[theorem]{Strategy}
\newtheorem{question}[theorem]{Question}

\title[Relating quantum character varieties and skein modules]{Relating quantum character varieties and skein modules}

\date{}
\maketitle


\begin{abstract} 
We relate the Kauffman bracket stated skein modules to two independent constructions of quantum representation spaces of Habiro and Van der Veen with the second author. We deduce from this relation a description of the classical limit of stated skein modules, a quantum Van Kampen theorem and a quantum HNN extension theorem for stated skein modules and obtain a new description of the skein modules of mapping tori and  links exteriors. 
\end{abstract}


\tableofcontents

\section{Introduction}

\textit{Main results}
\par 

The $\SL_2$ character variety $\mathcal{X}_{\SL_2}(M)$ of a compact oriented $3$ manifold $M$ admits different quantum deformations  namely the Kauffman-bracket skein module $\mathcal{S}_q(M)$, introduced by Hoste-Przytycki \cite{HP92} and   Turaev \cite{Tu88}  and the quantum character variety introduced by Habiro in \cite{Habiro_QCharVar}. When $M= S^3 \setminus L$ is a link exterior, a third construction of quantum character variety was introduced by Van der Veen and the second author in \cite{MurakamiVdV_QRepSpaces, Murakami_RIMS}. The goal of this paper is to relate all these constructions and deduce new properties of these modules. 

\vspace{2mm}
\par In order to state our results, we now briefly sketch the three constructions of quantum character varieties and refer the reader to Sections \ref{sec2}, \ref{sec3} and \ref{sec6} for details.
By definition, the character variety $\mathcal{X}_{\SL_2}(M)$ is the algebraic quotient of the variety of representations $\mathcal{R}_{\SL_2}(M):= \Hom(\pi_1(M, v), \SL_2)$ by the action of $\SL_2$ by conjugacy. It means that the algebra of regular functions $\mathcal{O}[\mathcal{X}_{\SL_2}(M)]$ is defined as the subalgebra of $\mathcal{O}[\mathcal{R}_{\SL_2}(M)]$ of coinvariant vectors for the $\mathcal{O}[\SL_2]$ coaction. Similarly, all three previously cited constructions of quantum character varieties are obtained as the submodule of coinvariant vectors of a $\mathcal{O}_q\SL_2$ comodule thought as a quantum representation space and relating the three constructions ought to relate these three families of quantum representation spaces. In the case of skein modules, what plays the role of a quantum representation space is the stated skein module introduced in \cite{BonahonWongqTrace, LeStatedSkein, BloomquistLe} and equivalent to the internal skein modules defined in \cite{GunninghamJordanSafranov_FinitenessConjecture} when working over a field. Instead of considering pointed $3$-manifolds, here we consider the category $\mathcal{M}^{(1)}_{\con}$ of connected $1$-marked $3$-manifolds which are pairs $\mathbf{M}=(M, \iota_M)$ where $M$ is a non-closed, connected, compact, oriented $3$-manifold and $\iota_M : \mathbb{D}^2 \hookrightarrow \partial M$ is an oriented embedding of the disc into the boundary of $M$. Morphisms in $\mathcal{M}^{(1)}_{\con}$ are (certain) oriented embeddings. The category $\mathcal{M}^{(1)}_{\con}$ has a natural braided balanced structure and (a restriction of) the Kauffman-bracket stated skein module is a braided balanced functor 
$$ \mathcal{S}_q: \mathcal{M}^{(1)}_{\con} \to {\mathcal{O}_q[\SL_2]}-\RComod$$
where comodules are taken over the ring $k=\mathbb{Z}[q^{\pm 1/4}]$. Consider also the field of rational functions $K:=\mathbb{Q}(q^{1/4})$ and write $\mathcal{S}_q(\mathbf{M})^{rat}:= \mathcal{S}_q(\mathbf{M})\otimes_k K$.
The interpretation of the stated skein module as a quantum representation space is summarized in the

\begin{theorem}\label{theorem0}
\begin{enumerate} Let $\mathbf{M}=(M, \iota_M)\in \mathcal{M}_{\con}^{(1)}$ and consider the associated unmarked $3$ manifold $M$.
\item The module $\mathcal{S}_{+1}(\mathbf{M}):= \mathcal{S}_q(\mathbf{M})\otimes_{q^{1/4}=1}\mathbb{Z}$ has a natural ring structure which is isomorphic to the ring of regular functions of the representation scheme $\mathcal{R}_{\SL_2}(\mathbf{M})$.
\item The inclusion $\mathcal{S}_q(M) \to \mathcal{S}_q(\mathbf{M})^{coinv}$ of the usual skein module into the subset of coinvariant vectors of the stated skein module is surjective and its kernel is included in the torsion submodule of $\mathcal{S}_q(M)$.
\item Suppose that the image of $\iota_M$ lies in a spherical boundary component of $\partial M$. Then every vectors of  $\mathcal{S}^{rat}_q(\mathbf{M})$ are coinvariant, so  $\mathcal{S}^{rat}_q(M) = \mathcal{S}^{rat}_q(\mathbf{M})$.
\end{enumerate}
\end{theorem}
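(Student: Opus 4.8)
\emph{Overall strategy.} The plan is to deduce the three statements from the identifications of $\mathcal{S}_q(\mathbf{M})$ with the quantum representation spaces of Habiro and of Murakami--Van der Veen established in the body of the paper, supplemented by direct skein--theoretic arguments. Throughout one uses that $\mathcal{O}_q[\SL_2]$ becomes cosemisimple after base change to $K=\mathbb{Q}(q^{1/4})$ and degenerates to the commutative Hopf algebra $\mathcal{O}[\SL_2]$ at $q^{1/4}=1$.

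\emph{Part (1).} First put a ring structure on $\mathcal{S}_{+1}(\mathbf{M})$ using the co--oriented collar $\iota_M(\mathbb{D}^2)\times[0,1]\subset M$ of the marked disc: set $[T_1]\cdot[T_2]:=[T_1\sqcup T_2]$ with $T_2$ pushed to higher levels of the collar than $T_1$. This is associative with unit the empty tangle, and it is commutative precisely at $q^{1/4}=1$ because there the height--exchange, crossing--change and framing--change stated skein relations all degenerate to the identity permutation. Next define a ring homomorphism $\Psi\colon\mathcal{S}_{+1}(\mathbf{M})\to\mathcal{O}[\mathcal{R}_{\SL_2}(\mathbf{M})]$ by evaluating a stated tangle on the universal representation $\pi_1(M)\to\SL_2(\mathcal{O}[\mathcal{R}_{\SL_2}(\mathbf{M})])$: a closed component based near $\iota_M$ maps to $-\tr$ of its holonomy, an arc with endpoint states $(i,j)$ maps to the $(i,j)$ matrix coefficient of the corresponding holonomy, general closed components being resolved first. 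Well--definedness at $q^{1/4}=1$ is the stated refinement of the theorems of Bullock and Przytycki--Sikora (the Cayley--Hamilton relation for $\SL_2$), as in the surface case of Costantino--L\^e. Surjectivity of $\Psi$ is immediate, since the matrix coefficients of the holonomies of a generating set of $\pi_1(M)$ generate $\mathcal{O}[\mathcal{R}_{\SL_2}(\mathbf{M})]$ and lie in the image. Injectivity --- the absence of extra relations at $q^{1/4}=1$ --- is the crux: one gets it by choosing a Heegaard--type decomposition of $\mathbf{M}$ into $1$--marked handlebodies glued along boundary discs, using the quantum Van Kampen theorem to write $\mathcal{S}_q(\mathbf{M})$ as an iterated relative cotensor product of copies of $\mathcal{O}_q[\SL_2]^{\otimes g}$, and matching this at $q^{1/4}=1$ with the fibre--product presentation of $\mathcal{R}_{\SL_2}(\mathbf{M})$ coming from the van Kampen pushout for $\pi_1$; alternatively, more cheaply, injectivity is transported from the identification with Habiro's construction, whose classical limit is the representation scheme essentially by design.

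\emph{Part (2).} A link in $M$, viewed as a stateless tangle, has a class in $\mathcal{S}_q(\mathbf{M})$ which is coinvariant --- it can be isotoped off the collar of $\iota_M$, so the coaction reads off nothing --- which gives the map in question. For surjectivity onto the coinvariants one argues on the number $n$ of strands of a diagram meeting the marked disc: the coaction lands, up to lower--order terms, in the $\mathcal{O}_q[\SL_2]$--subcomodule generated by an $n$--fold cup/cap configuration, and for the total class to be coinvariant the ``type--$n$'' part must cancel, which via the boundary skein relations rewrites the class with fewer strands meeting $D$, terminating at $n=0$, i.e. at a combination of links; this is cleanest through Habiro's model, where the coinvariant submodule is computed directly. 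For the kernel: the composite $\mathcal{S}_q(M)\to\mathcal{S}_q(\mathbf{M})$ becomes injective after $\otimes_k K$ (by faithfulness of evaluation on the relevant quantum representation space, equivalently by the identification with Habiro's construction), and since $K$ is flat over $k$ the kernel over $k$ is annihilated by a nonzero element of $k$, hence lies in the torsion submodule of $\mathcal{S}_q(M)$.

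\emph{Part (3).} When $\iota_M(\mathbb{D}^2)$ lies in a spherical boundary component $S^2\subset\partial M$, it suffices by part (2) over $K$ to show that the $\mathcal{O}_q[\SL_2]$--comodule $\mathcal{S}_q^{rat}(\mathbf{M})$ is trivial, equivalently that only the trivial type occurs in its isotypic decomposition. The point is that a marked disc sitting inside a simply connected boundary sphere contributes no quantum gauge symmetry: in the Habiro / Murakami--Van der Veen description the comodule structure is built from the fundamental groupoid with the marking as basepoint, and since $S^2$ is simply connected the basepoint data are inessential, which forces the comodule to be trivial. Skein--theoretically one shows that every stated tangle can be capped off through $S^2$ --- connecting its endpoints pairwise by turnbacks routed in the sphere --- and that the resulting cup/cap change--of--basis matrices are invertible once the quantum integers are inverted; this reduces every class to a combination of stateless links (which is why the statement is only rational) and simultaneously kills the higher isotypic components. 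Combining with the rational form of part (2) yields $\mathcal{S}_q^{rat}(M)=\mathcal{S}_q^{rat}(\mathbf{M})^{coinv}=\mathcal{S}_q^{rat}(\mathbf{M})$. As in part (1), the main obstacle is the rigidity/presentation input --- here the claim that no nontrivial $\mathcal{O}_q[\SL_2]$--type can survive when the marking lies in a simply connected piece of the boundary --- which is exactly where cosemisimplicity over $K$, rather than mere base change from $k$, is essential.
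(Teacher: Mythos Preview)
Your outline has the right shape, but Part~(1) contains a genuine gap. You claim that at $q^{1/4}=1$ the ``framing-change stated skein relations all degenerate to the identity''; this is false, since a full twist contributes a factor $-A^3=-1$ at $A=+1$. Consequently the class of a stated arc in $\mathcal{S}_{+1}(\mathbf{M})$ still depends on its framing, whereas the matrix coefficient of its holonomy does not, so your map $\Psi$ is not well-defined as written. The paper repairs this by introducing \emph{spin functions} $w$ (Definition~\ref{def_SpinFunction}) and building the isomorphism $\eta_w$ with an explicit sign $(-1)^{w_1(\gamma)}$ absorbing the framing ambiguity (Theorem~\ref{theorem_classical}, Remark~\ref{remark_spin}). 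The argument then identifies both $\mathcal{S}_{+1}$ and $\mathcal{O}[\mathcal{R}]$ as left Kan extensions along $\BT\hookrightarrow\mathcal{M}^{(1)}_{\con}$ (Lemmas~\ref{lemma_classic1}, \ref{lemma_LKan_Classical}) and compares them through auxiliary quotients $\BT_0,\BT_1$ of $\BT$ that separate the crossing-change and framing-change equivalences; your phrases ``Heegaard-type decomposition'' and ``transported from Habiro's construction \ldots essentially by design'' do not supply this.

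For Parts~(2) and~(3) your routes also diverge from the paper's. In~(2) the paper does not induct directly on the number of boundary strands; it introduces the skein quantum plane $\mathcal{S}_q[\mathbb{A}^2]$, computes its coinvariants via a PBW/leading-term argument (Lemmas~\ref{lemma_coinv1}, \ref{lemma_coinv2}), realises $\mathcal{S}_q(\mathbf{M})$ as $F_{\mathbb{A}}\otimes_{\TL}F_M$ over the Temperley--Lieb category, and then deduces surjectivity (resp.\ the rational isomorphism) from right-exactness (resp.\ exactness, using semisimplicity of $\overline{\mathcal{C}_q^{\SL_2,rat}}$) of $\bullet\otimes_{\TL}F_M$ (Lemma~\ref{lemma_exact}); your torsion argument for the kernel is correct. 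In~(3) the paper does not invoke $\pi_1(S^2)=1$ abstractly but exhibits an explicit isotopy of bottom tangles through the spherical boundary showing that $V=\mathcal{S}_q^{rat}(\mathbf{M})$ satisfies $(\epsilon\otimes\id_V)\circ\tau_V=\id_V\otimes\epsilon$ for Lyubashenko's field-goal transform, hence lies in the M\"uger centre of $\overline{\mathcal{C}_q^{G,rat}}$ (Lemma~\ref{lemma_spherical}), which is trivial over $K$. Your ``cap off through $S^2$'' picture is essentially the illustrative computation in Remark~\ref{remark_spherical}, which explains why one must invert $q^4-1$, but promoting it to a full proof still requires an organising device such as the M\"uger-centre criterion.
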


In particular, the first item this theorem re-proves the classical result of Bullock that the skein module at $A=+1$ is isomorphic to the ring of regular functions of the character scheme. Note that we consider here $A=+1$ instead of $A=-1$ (in which case stated skein modules are non commutative algebras).
\vspace{2mm}
\par Let $G$ be a connected reductive complex algebraic group, let $\mathcal{C}_q^G$ be the ribbon category of integrable finite dimensional $\dot{U}_qG$ modules and $\overline{\mathcal{C}_q^G}= \mathcal{O}_qG-\RComod$ be the category of $\mathcal{O}_qG$ (right) comodules, thought as a free cocompletion of $\mathcal{C}_q^G$. Habiro's construction of a quantum representation space makes use of the full subcategory $\BT \subset \mathcal{M}^{(1)}_{\con}$ of elements $(H, \iota_H)$ such that $H$ is homeomorphic to a handlebody. Reinterpreting the constructions in \cite{Kerler_PresTanglesCat, CraneYetter_Categorification}, Habiro showed (after Kerler and Crane-Yetter) in \cite{Habiro_QCharVar} that the genus $1$ handlebody $\mathbf{H}_1\in \BT$ is a Hopf algebra object in $\BT$. Using the work of Kerler \cite{Kerler_PresTanglesCat} and Bobtcheva-Piergallini \cite{BobtchevaPiergallini}, we will associate to any braided quantum group $B_qG$ a braided functor $Q_{B_qG} : \BT \to \overline{\mathcal{C}_q^G}$ sending $\mathbf{H}_1$ to $B_qG$ and the quantum representation space $\Rep_q^G: \mathcal{M}^{(1)}_{\con}\to \overline{\mathcal{C}_q^G}$ is  defined as the left Kan extension $\Rep_q^G:= \Lan_{\iota} Q_{B_qG}$ for the inclusion $\iota: \BT \hookrightarrow \mathcal{M}^{(1)}_{\con}$. The \textit{quantum character variety} is then the submodule $\Char_q^G(\mathbf{M})\subset \Rep_q^G(\mathbf{M})$ of coinvariant vectors for the $\mathcal{O}_qG$-coaction.

\begin{theorem}\label{theorem1} Let $\mathbf{M}\in \mathcal{M}^{(1)}_{\con}$.
 There is an isomorphism $\Psi : \Rep_q^{\SL_2}(\mathbf{M}) \xrightarrow{\cong} \mathcal{S}_q(\mathbf{M})$.  It restricts to a surjective morphism $\Char_q^{\SL_2}(M) \to \mathcal{S}_q(M)$ which becomes an isomorphism  $\Char_q^{\SL_2, rat}(M) \to \mathcal{S}^{rat}_q(M)$ while working over $K$.
\end{theorem}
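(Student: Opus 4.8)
The plan is to construct $\Psi$ from the universal property of the left Kan extension, and then to prove that $\mathcal{S}_q$ is itself the left Kan extension along $\iota$ of its restriction to handlebodies. Since $\mathcal{S}_q$ is a braided functor it carries the Hopf algebra object structure of $\mathbf{H}_1\in\BT$ (due to Kerler, Crane--Yetter and Habiro \cite{Habiro_QCharVar}) to one on $\mathcal{S}_q(\mathbf{H}_1)$ in $\overline{\mathcal{C}_q^{\SL_2}}$. I would first identify this object explicitly: the genus one handlebody being homeomorphic to $\Sigma_{0,2}\times[0,1]$, a direct computation with stated tangles --- reading the multiplication off the stacking cobordism, the comultiplication off the handle-doubling cobordism, and the $\mathcal{O}_q\SL_2$-coaction off the marked disc --- shows that $\mathcal{S}_q(\mathbf{H}_1)$ is isomorphic, as a braided Hopf algebra in $\overline{\mathcal{C}_q^{\SL_2}}$, to the braided quantum group $B_q\SL_2=Q_{B_q\SL_2}(\mathbf{H}_1)$, i.e. the transmutation of $\mathcal{O}_q\SL_2$. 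This is the main computational input.

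By the presentations of $\BT$ of Kerler \cite{Kerler_PresTanglesCat} and Bobtcheva--Piergallini \cite{BobtchevaPiergallini}, a braided functor out of $\BT$ is determined, up to canonical natural isomorphism, by the Hopf algebra object (together with its induced ribbon data) that it assigns to $\mathbf{H}_1$. Since $\mathcal{S}_q\circ\iota$ and $Q_{B_q\SL_2}$ are both braided and, by the previous paragraph, agree at $\mathbf{H}_1$ compatibly with all the structure morphisms, this promotes to a natural isomorphism $\eta\colon Q_{B_q\SL_2}\xrightarrow{\cong}\mathcal{S}_q\circ\iota$. Applying $\Lan_\iota$ yields $\Rep_q^{\SL_2}=\Lan_\iota Q_{B_q\SL_2}\cong\Lan_\iota(\mathcal{S}_q\circ\iota)$, and composing with the counit $\Lan_\iota(\mathcal{S}_q\circ\iota)\to\mathcal{S}_q$ of the Kan extension adjunction defines $\Psi$. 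In particular $\Psi(\mathbf{M})$ is an isomorphism if and only if this counit is one, that is, if and only if $\mathcal{S}_q$ is the left Kan extension along $\iota$ of its restriction to $\BT$.

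To prove this last point, surjectivity of $\Psi(\mathbf{M})$ is geometric and easy: any stated tangle $T$ in $M$, together with the marked disc and arcs joining $T$ to it inside $M$, has a regular neighbourhood that is a handlebody $\mathbf{H}\hookrightarrow\mathbf{M}$, so $\mathcal{S}_q(\mathbf{M})$ is the union of the images of the maps $\mathcal{S}_q(\mathbf{H})\to\mathcal{S}_q(\mathbf{M})$ over all handlebodies $\mathbf{H}\to\mathbf{M}$. For injectivity I would fix a handle decomposition $M=\mathbf{H}_g\cup(2\text{-handles})\cup(3\text{-handles})$, which presents $\mathbf{M}$ as an iterated pushout in $\mathcal{M}^{(1)}_{\con}$ of morphisms between objects of $\BT$ (attaching a $2$-handle glues along an annulus in the boundary, attaching a $3$-handle along a sphere), and then invoke the corresponding gluing formulas for stated skein modules of $3$-manifolds --- which follow from L\^e's splitting theorem \cite{LeStatedSkein} applied along discs in the boundary --- to conclude that $\mathcal{S}_q$ sends these pushouts to pushouts. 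Hence $\mathcal{S}_q\cong\Lan_\iota(\mathcal{S}_q\circ\iota)$ and $\Psi$ is an isomorphism.

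Finally, everything above lives in $\overline{\mathcal{C}_q^{\SL_2}}=\mathcal{O}_q\SL_2-\RComod$, so $\Psi$ is an isomorphism of $\mathcal{O}_q\SL_2$-comodules; applying the coinvariants functor gives an isomorphism $\Char_q^{\SL_2}(\mathbf{M})=\Rep_q^{\SL_2}(\mathbf{M})^{coinv}\xrightarrow{\cong}\mathcal{S}_q(\mathbf{M})^{coinv}$, and composing with the comparison morphism of Theorem \ref{theorem0}(2) between $\mathcal{S}_q(\mathbf{M})^{coinv}$ and $\mathcal{S}_q(M)$ yields the asserted surjective morphism between $\Char_q^{\SL_2}(M)$ and $\mathcal{S}_q(M)$, with kernel contained in the torsion submodule; since $K$ is flat over $k$, coinvariants commute with $-\otimes_k K$ and the torsion dies, so after this base change one obtains the isomorphism $\Char_q^{\SL_2,rat}(M)\xrightarrow{\cong}\mathcal{S}_q^{rat}(M)$. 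I expect the main obstacle to be the injectivity step --- establishing that every relation in the stated skein module of an arbitrary connected non-closed $3$-manifold is generated by relations supported in handlebodies --- which rests on the handle presentation of $\mathcal{M}^{(1)}_{\con}$ relative to $\BT$ together with the precise behaviour of stated skein modules under gluing along annular (respectively spherical) boundary pieces; the explicit Hopf-algebra identification at $\mathbf{H}_1$ is also delicate, though routine in principle.
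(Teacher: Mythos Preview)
Your overall architecture matches the paper: identify $\mathcal{S}_q(\mathbf{H}_1)\cong B_q\SL_2$ as braided Hopf algebras, deduce $\mathcal{S}_q\circ\iota\cong Q_{B_q\SL_2}$ from the Kerler/Bobtcheva--Piergallini presentation, and then show $\mathcal{S}_q\cong\Lan_\iota(\mathcal{S}_q\circ\iota)$. The first two steps and the surjectivity half of the third are essentially as in the paper, and the coinvariants conclusion is handled correctly by invoking Theorem~\ref{theorem0}(2).

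The genuine gap is your injectivity argument for $\Psi$. You propose to present $\mathbf{M}$ via a handle decomposition and then use ``gluing formulas for stated skein modules of $3$-manifolds which follow from L\^e's splitting theorem''. But L\^e's splitting theorem in \cite{LeStatedSkein} concerns cutting a marked surface along an arc (equivalently, a thickened surface along a disc meeting the boundary in a bigon); it does \emph{not} directly describe what happens when one attaches a $2$-handle along an annulus or a $3$-handle along a sphere. The $3$-manifold gluing formulas you need (Theorem~\ref{theorem2} and Theorem~\ref{theorem2.5}) are in fact proved in this paper \emph{as consequences} of the isomorphism $\Psi$, via the quantum Van Kampen theorem for the quantum fundamental group; invoking them here would be circular. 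Independent proofs do exist (Costantino--L\^e \cite{CostantinoLe_SSkeinTQFT}), but establishing the precise pushout behaviour along annular and spherical pieces is substantial and not a consequence of \cite{LeStatedSkein} alone.

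The paper avoids all of this with a much lighter argument. To build an inverse $G_{\mathbf{M}}:\mathcal{S}_q(\mathbf{M})\to\Lan_\iota(\mathcal{S}_q\circ\iota)(\mathbf{M})$, one uses two elementary facts: (i) the poset of handlebody embeddings $f:\mathbf{H}_n\hookrightarrow\mathbf{M}$ whose image contains a given tangle is nonempty and \emph{filtrant} (any two such handlebodies sit inside a larger one), so $G_{\mathbf{M}}([T,s]):=[f\otimes[T_f,s_f]]$ is independent of the choice of $f$; and (ii) every skein relation is \emph{local}, i.e.\ supported in a ball, so any finite skein relation $\sum_i\alpha_i[T^i,s^i]=0$ lives inside a single handlebody and hence maps to zero. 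No handle decomposition, no gluing formulas, no pushout preservation is required. I would replace your injectivity paragraph with this direct filtrant/locality argument.
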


In other words, the skein modules and Habiro's quantum character varieties are isomorphic when considered over the field $K$ (i.e. neglecting the torsion). The existence of such a relation was conjectured in \cite{Habiro_QCharVar}. 
The main interest in this relation lies in the fact that, by definition of a left Kan extension, the quantum representation space admits the following tensor decomposition 
$$ \Rep_q^G(\mathbf{M}) = \mathbb{Z}[P_M] \otimes_{BT} Q_{B_qG}, $$
where $P_M:= \Hom_{\mathcal{M}_c^{(1)}}(\iota(\cdot), \mathbf{M})$ is the so-called \textit{quantum fundamental group} which only depends on $\mathbf{M}$ (and not on $G$) and $Q_{B_qG}$ only depends on $B_qG$ (and not on $\mathbf{M}$). Therefore, in order to prove relations between the quantum representations spaces of different $\mathbf{M}$ with $G$ fixed it suffices to prove such a relation at the level of the quantum fundamental groups (see the quantum Van Kampen theorem bellow). Similarily, in order to compare the quantum representation spaces of a fixed $\mathbf{M}$ for different $G$, it suffices to compare the associated braided quantum groups (this is how we compute the classical limit of stated skein modules).

\vspace{2mm}
\par As suggested by Habiro in \cite{Habiro_QCharVar}, the quantum representation space admits a Van Kampen type theorem that we now sketch and refer to Section \ref{sec4} for details. Let $\mathbf{M}_1, \mathbf{M}_2 \in \mathcal{M}^{(1)}_{\con}$ and consider a connected, compact, oriented surface $\Sigma$ with a distinguished closed interval $I_{\Sigma}\subset \partial \Sigma$ in its boundary and two oriented embeddings $\phi_1: \Sigma \hookrightarrow \partial M_1$, $\phi_2: \overline{\Sigma}\hookrightarrow \partial M_2$ sending the interval to some subarc of the based discs in such a way that $M_1\cup_{\Sigma}M_2$ becomes an element of $\mathcal{M}_{\con}^{(1)}$. The thickened surface $\Sigma\times [-1,1]$ with the based disc $I_{\Sigma}\times [-1,1]$ defines an element $\mathbf{\Sigma}\in \mathcal{M}^{(1)}$ such that $\mathcal{S}_q(\mathbf{\Sigma})$ is an algebra and such that $\mathcal{S}_q(\mathbf{M}_1)$ and $\mathcal{S}_q(\mathbf{M}_2)$ are left and right $\mathcal{S}_q(\mathbf{\Sigma})$ modules respectively.

\begin{theorem}\label{theorem2}
\par We have (explicit) isomorphisms of $\mathcal{O}_qG$ (resp. $\mathcal{O}_q[\SL_2]$)-comodules:
$$\Rep^G_q(\mathbf{M}_1\cup_{\mathbf{\Sigma}} \mathbf{M}_2) \cong \Rep^G_q(\mathbf{M}_1)\otimes_{\Rep^G_q(\mathbf{\Sigma})}\Rep^G_q(\mathbf{M}_2);  \quad  \mathcal{S}_q(\mathbf{M}_1\cup_{\mathbf{\Sigma}} \mathbf{M}_2) \cong \mathcal{S}_q(\mathbf{M}_1)\otimes_{\mathcal{S}_q(\mathbf{\Sigma})}\mathcal{S}_q(\mathbf{M}_2).$$
\end{theorem}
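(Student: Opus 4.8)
The plan is to prove the skein-theoretic isomorphism by the gluing formula for stated skein modules and then transport it through the isomorphism $\Psi$ of Theorem \ref{theorem1}; equivalently, to prove the $\Rep^G_q$ statement directly from the universal property of the left Kan extension and deduce the skein statement as the case $G=\SL_2$. Since $\Rep^G_q = \Lan_\iota Q_{B_qG}$, writing $\Rep^G_q(\mathbf{M}) = \mathbb{Z}[P_M]\otimes_{\BT}Q_{B_qG}$ as in the excerpt, the assertion reduces to an isomorphism of $\BT$-bimodules (pro-objects indexed by $\BT$) at the level of the quantum fundamental groups,
\[
P_{M_1\cup_\Sigma M_2} \cong P_{M_1}\otimes_{P_\Sigma}P_{M_2},
\]
after which one applies $(-)\otimes_{\BT}Q_{B_qG}$ and uses that relative tensor products commute with this functor. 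The first step, then, is to set up the bimodule structures carefully: the distinguished interval $I_\Sigma\subset\partial\Sigma$ and the embeddings $\phi_1,\phi_2$ make $\mathbf{\Sigma}$ a ``cospan'' object, $\mathbf{M}_1$ acquires a right $\mathcal{S}_q(\mathbf{\Sigma})$-action via $\phi_1$ and $\mathbf{M}_2$ a left action via $\phi_2$ (with $\overline{\Sigma}$ accounting for the orientation reversal), and one must check these actions are compatible with the $\mathcal{O}_qG$-comodule structures, so that the relative tensor product is taken in $\overline{\mathcal{C}_q^G}$.

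The heart of the argument is a decomposition result for the gluing. Concretely, I would: (i) choose a handlebody decomposition, i.e.\ a diagram in $\BT$ presenting $\mathbf{M}_1$, $\mathbf{M}_2$ and $\mathbf{\Sigma}$ compatibly, so that $M_1\cup_\Sigma M_2$ is obtained by the evident pushout of handlebodies glued along the thickened surface; (ii) show that $P_{M_1\cup_\Sigma M_2}(\mathbf{H})$, as a set of morphisms $\iota(\mathbf{H})\to \mathbf{M}_1\cup_\Sigma\mathbf{M}_2$ in $\mathcal{M}^{(1)}_{\con}$, is computed by a coend/pushout over the possible ways an embedded handlebody can cross $\Sigma$, which is exactly the combinatorial content of the relative tensor product $P_{M_1}\otimes_{P_\Sigma}P_{M_2}$; (iii) verify this identification intertwines all structure maps. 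For the skein side one can instead cite the gluing/excision property of stated skein modules from \cite{LeStatedSkein, CostantinoLe} in the form $\mathcal{S}_q(\mathbf{M}_1\cup_{\mathbf{\Sigma}}\mathbf{M}_2)\cong \mathcal{S}_q(\mathbf{M}_1)\otimes_{\mathcal{S}_q(\mathbf{\Sigma})}\mathcal{S}_q(\mathbf{M}_2)$ — a tangle in the glued manifold decomposes, after isotoping it transverse to $\Sigma$, into the pieces in $M_1$ and $M_2$ stated along the surface, with the relative tensor product quotienting by the two ways of pushing a state across — and then check that the isomorphism $\Psi$ of Theorem \ref{theorem1} is natural for the inclusions $\mathbf{M}_i\hookrightarrow \mathbf{M}_1\cup_\Sigma\mathbf{M}_2$ and for the $\mathcal{S}_q(\mathbf{\Sigma})$-actions, so that the two isomorphisms agree and the statement for general $G$ follows from the $\BT$-level description.

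The main obstacle I anticipate is the compatibility of the relative tensor product with the left Kan extension, together with the fact that $\Rep^G_q(\mathbf{\Sigma})$ need not be flat: forming $(-)\otimes_{\BT}Q_{B_qG}$ of a relative tensor product of $\BT$-(bi)modules is not formally the relative tensor product of the images unless one checks an appropriate exactness/cofibrancy condition, and the coend presenting $\Lan_\iota$ must be shown to commute with the coequalizer defining $\otimes_{\mathcal{S}_q(\mathbf{\Sigma})}$. I expect this to be handled by choosing the handlebody presentations so that $P_\Sigma$ acts on $P_{M_1}$ (or $P_{M_2}$) freely — reflecting the collar $\Sigma\times[-1,1]\subset M_1$ — which makes the bar resolution split and lets the two colimits be interchanged; the surjectivity/torsion subtleties already present in Theorems \ref{theorem0} and \ref{theorem1} do not interfere here because the comparison is an isomorphism of $\mathcal{O}_qG$-comodules on the full representation spaces, before passing to coinvariants. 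A secondary, more bookkeeping-type difficulty is tracking orientations and the placement of the based discs along $I_\Sigma$ so that the left/right module conventions match on both sides; this is routine but must be done explicitly to make the isomorphisms ``explicit'' as claimed.
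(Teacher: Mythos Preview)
Your overall strategy matches the paper's: reduce to an isomorphism $P_{M_1\cup_\Sigma M_2}\cong P_{M_1}\otimes_{P_\Sigma}P_{M_2}$ at the level of quantum fundamental groups (this is Theorem~\ref{theorem_QVK_FG}), then apply $(-)\otimes_{\BT}Q_{B_qG}$ and use the monoidality lemma (Lemma~\ref{lemma_tenstens}) together with $\Rep_q^G(\mathbf{M})\cong k[P_M]\otimes_{\BT}Q_{B_qG}$ to obtain the $\Rep_q^G$ statement; the skein statement is then the case $G=\SL_2$ via Corollary~\ref{coro_RepSkein}.

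Two comments. First, your anticipated ``main obstacle'' is not one: you do not need any flatness, freeness of the $P_\Sigma$-action, or bar-resolution argument. The coequalizer $P_{M_1}\otimes_D P_\Sigma\otimes_D P_{M_2}\rightrightarrows P_{M_1}\otimes_D P_{M_2}\to P_{M_1\cup_\Sigma M_2}$ becomes, after applying $k[-]$, a right exact sequence, and tensoring on the right by $Q_{B_qG}$ is right exact; that is all that is used. The interchange of the Day convolution with $\otimes_{\BT}Q_{B_qG}$ is exactly Lemma~\ref{lemma_tenstens} (using that $Q_{B_qG}$ is monoidal), not a separate exactness issue.

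Second, the place where you are vaguest is where the real work lies. Your steps (i)--(iii) for the quantum fundamental group isomorphism are too formal: the paper does not use handlebody presentations or a coend computation, but a direct topological argument modelled on Hatcher's proof of the classical Van~Kampen theorem. One shows that $\kappa:P_{M_1}\otimes_{P_\Sigma}P_{M_2}\to P_{M_1\cup_\Sigma M_2}$ is bijective by identifying both sides with quotients of $P_n(\mathbf{M}_1\wedge\mathbf{M}_2)$ by two equivalence relations $\sim_\phi$ (isotopies supported in $\overline{M}_1$ or $\overline{M}_2$) and $\sim_\psi$ (isotopies in $M$), and then proving $\sim_\phi=\sim_\psi$. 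Surjectivity uses a ``push'' operation to move intersection points with $\Sigma$ into the base ball; injectivity is a double induction on the size $(a,b)$ of a grid subdividing an isotopy $H:[0,1]^2\to UM$ into rectangles each mapping into $\overline{M}_1$ or $\overline{M}_2$, reducing first to $n=1$ and then cutting the isotopy along edges after arranging $H(e)$ to be constant. This is Proposition~\ref{prop_QVK} and Lemmas~\ref{lem1}--\ref{lem3}, and is the genuine content your sketch is missing.
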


The proof of Theorem \ref{theorem2} relies on a quantum Van Kampen Theorem \ref{theorem_QVK_FG} on quantum fundamental groups conjectured by Habiro in \cite{Habiro_QCharVar}. In particular, Theorem \ref{theorem0} and Theorem \ref{theorem2} permit to reprove a theorem of Gunningham-Jordan-Safronov in \cite[Corollary 1]{GunninghamJordanSafranov_FinitenessConjecture} (see Section \ref{sec_QVK_Skein} for details). Recently, F.Costantino and T.Q.T.L\^e proved in \cite[Theorem $6.5$]{CostantinoLe_SSkeinTQFT} a theorem similar to Theorem \ref{theorem2} in the $\SL_2$ case where the glued marked $3$-manifolds are allowed to have more than one boundary disc.


\vspace{2mm}
\par Given $\mathbf{M} \in \mathcal{M}^{(1)}_{\con}$ and two embeddings $\phi_1: \Sigma \hookrightarrow \partial M$ and $\phi_2: \overline{\Sigma}\hookrightarrow \partial M$, one can also consider the marked $3$ manifold $\mathbf{M}_{\phi_1 \# \phi_2}$ obtained by gluing the two copies of $\Sigma$ inside $\partial M$ (see Section \ref{sec_selfgluing} for a precise definition). The maps $\phi_1$ and $\phi_2$ endow  $\Rep_q^G(\mathbf{M})$ with a structure of bimodule over $\Rep_q^G(\mathbf{\Sigma})$.
A consequence of the quantum Van Kampen theorem is the 

\begin{theorem}\label{theorem2.5} 
One has $\Rep_q^G(\mathbf{M}_{\phi_1 \# \phi_2}) \cong \mathrm{HH}^0( \Rep_q^G(\mathbf{\Sigma}), \Rep_q^G(\mathbf{M}))$ and $\mathcal{S}_q(\mathbf{M}_{\phi_1 \# \phi_2}) \cong \mathrm{HH}^0( \mathcal{S}_q(\mathbf{\Sigma}), \mathcal{S}_q(\mathbf{M}))$.
\end{theorem}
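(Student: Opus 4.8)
The plan is to realize the self-gluing as an honest pushout in $\mathcal{M}^{(1)}_{\con}$ and then run it through the quantum Van Kampen machinery, exactly as the classical statement ``$\pi_1$ of a self-glued manifold is an HNN extension'' follows from the classical Van Kampen theorem. Concretely, one should first present $\mathbf{M}_{\phi_1\#\phi_2}$ by the usual doubling trick for HNN extensions: glue a trivial handle $\mathbf{H}_1$ (the genus-one handlebody, whose quantum fundamental group provides the ``stable letter'' $t$) onto $\mathbf{M}$, and then amalgamate $\mathbf{M}\,\natural\,\mathbf{H}_1$ with a standard ``pair-of-pants'' type marked $3$--manifold along the disconnected surface $\mathbf{\Sigma}\sqcup\mathbf{\Sigma}$, the two copies of $\mathbf{\Sigma}$ being attached via $\phi_1$ and via $\phi_2$ conjugated by the core of $\mathbf{H}_1$. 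The coequalizer/pushout so obtained is $\mathbf{M}_{\phi_1\#\phi_2}$, and one must check that all this data is compatible with the one-marked-disc structure (this is where the precise gluing conditions of Section~\ref{sec_selfgluing} are used) and is of the kind covered by Theorem~\ref{theorem_QVK_FG} and by the (multi-disc version of the) quantum Van Kampen Theorem~\ref{theorem2}, in the spirit of \cite[Theorem~6.5]{CostantinoLe_SSkeinTQFT}.

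Granting that presentation, apply $\Rep_q^G$. Since $\Rep_q^G=\mathbb{Z}[P_M]\otimes_{BT}Q_{B_qG}$ is a left Kan extension, it carries the pushout to the corresponding relative tensor product of $\mathcal{O}_qG$--comodule algebras and bimodules; the handle contributes the factor $B_qG=\Rep_q^G(\mathbf{H}_1)$, and the amalgamation over $\mathbf{\Sigma}\sqcup\mathbf{\Sigma}$ contributes a tensor product over $A\otimes A^{\mathrm{op}}$ where $A:=\Rep_q^G(\mathbf{\Sigma})$ acts on $\Rep_q^G(\mathbf{M})$ on the left via $\phi_1$ and on the right via $\phi_2$. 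The remaining, and essential, step is to identify the resulting object with $\mathrm{HH}^0(A,\Rep_q^G(\mathbf{M}))=\Hom_{A\text{-}\mathrm{bimod}}(A,\Rep_q^G(\mathbf{M}))$. The right heuristic is the classical model: $\mathcal{R}_{\SL_2}(M_{\phi_1\#\phi_2})$ is cut out inside $\mathcal{R}_{\SL_2}(M)\times\SL_2$ by the equations $g\,r_1(\rho)\,g^{-1}=r_2(\rho)$ (with $r_i=\phi_i^{\,*}$ the restriction maps and $g$ the stable letter), so that $\mathcal{O}[\mathcal{R}_{\SL_2}(M_{\phi_1\#\phi_2})]$ is the relative cokernel of $r_1^{\,*}-\mathrm{Ad}_g\circ r_2^{\,*}$; passing through the (contravariant) comodule-algebra duality between rep-space colimits and function-algebra limits, this cokernel becomes the invariant subspace $\{m : a\cdot m=m\cdot a\}$, i.e.\ $\mathrm{HH}^0$. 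Making this precise in the quantum setting means analysing the bar complex computing $\mathrm{HH}^0(A,-)$ and showing that the stable-letter factor $B_qG$ together with the braiding of $\mathcal{C}_q^G$ converts the naive self-gluing coend into exactly the Hochschild-cohomology end; I expect this to be the main obstacle, together with checking that the identification is an isomorphism already over $k=\mathbb{Z}[q^{\pm 1/4}]$ (no torsion or completion defect is introduced, in contrast to the coinvariant statements of Theorems~\ref{theorem0} and \ref{theorem1}).

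Finally, for the skein statement specialize $G=\SL_2$ and transport the isomorphism through $\Psi:\Rep_q^{\SL_2}(\mathbf{M})\xrightarrow{\ \cong\ }\mathcal{S}_q(\mathbf{M})$ of Theorem~\ref{theorem1}. Here one only needs that $\Psi$ is compatible with the $\mathcal{S}_q(\mathbf{\Sigma})$--bimodule structures arising from $\phi_1$ and $\phi_2$; this is immediate from naturality of $\Psi$ in $\mathbf{M}$ and its monoidality, so $\Psi$ intertwines $\mathrm{HH}^0(\Rep_q^{\SL_2}(\mathbf{\Sigma}),-)$ with $\mathrm{HH}^0(\mathcal{S}_q(\mathbf{\Sigma}),-)$ and the second isomorphism of Theorem~\ref{theorem2.5} follows from the first. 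As a by-product this recovers the known description of skein modules of mapping tori and of link exteriors announced in the introduction, by taking $\mathbf{M}$ to be (a marked version of) $\Sigma\times[0,1]$ with $\phi_1,\phi_2$ the two ends, possibly composed with a mapping class.
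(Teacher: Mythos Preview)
There is a fundamental misreading at the heart of your proposal. You interpret $\mathrm{HH}^0(A,M)$ as the invariant subspace $\{m : a\cdot m = m\cdot a\} = \Hom_{A\text{-bimod}}(A,M)$, and accordingly worry about ``converting the self-gluing coend into the Hochschild-cohomology end''. But Definition~\ref{def_HH0} explicitly sets $\mathrm{HH}^0(A,M)$ to be the \emph{coequalizer}
\[
A\otimes M \;\rightrightarrows\; M \;\longrightarrow\; \mathrm{HH}^0(A,M)
\]
of $\nabla^L$ and $\nabla^R\circ c_{A,M}\circ(\theta_A\otimes\id_M)$, i.e.\ a braided version of the Hochschild \emph{coinvariants}. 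So the target is a quotient, not a subspace; there is no variance flip to negotiate, and the whole ``end versus coend'' step you flag as the main obstacle simply does not arise. Once this is corrected, your appeal to contravariant duality (rep-space colimits versus function-algebra limits) and the bar-complex argument become unnecessary and in fact point in the wrong direction.

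Your topological presentation is also off. You propose gluing along the disconnected surface $\mathbf{\Sigma}\sqcup\mathbf{\Sigma}$ after adjoining a handle $\mathbf{H}_1$; this does not fit the hypotheses of Theorem~\ref{theorem_QVK_FG}, which requires a \emph{connected} $1$-marked surface. The paper instead realizes the self-gluing as
\[
\mathbf{M}_{\phi_1\#\phi_2}\;=\;(\mathbf{\Sigma}\times I)\;\cup_{\overline{\mathbf{\Sigma}}\wedge\mathbf{\Sigma}}\;\mathbf{M},
\]
gluing a single cylinder along the \emph{connected} surface $\overline{\mathbf{\Sigma}}\wedge\mathbf{\Sigma}$, so Theorem~\ref{theorem_QVK_FG} applies directly and yields a coequalizer for $P_{M_{\phi_1\#\phi_2}}$. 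The remaining step is purely formal: one writes down explicit epimorphisms $j_1,j_2$ comparing this Van Kampen coequalizer to the $\mathrm{HH}^0$ coequalizer (Diagrams~\eqref{big_diagram} and \eqref{3_diagrams}) and invokes the elementary Lemma~\ref{lemma_diagrams}, which says that epimorphisms between the source diagrams transport the coequalizer property. No stable-letter handle is introduced here; the extra $\mathbf{H}_1$ you are thinking of appears only later in Section~\ref{sec5}, when this general result is specialized to mapping tori. Your final paragraph (transporting to $\mathcal{S}_q$ via $\Psi$) is fine in spirit, though in the paper both cases follow uniformly from right-exactness of $\,\bullet\otimes_{\BT}Q_{B_qG}$.
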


\par In Theorem \ref{theorem2.5}, the $0$-th Hochschild cohomology group is defined in a braided sense (see Definition \ref{def_HH0}). This theorem should be compared to \cite[Theorem $5.1$]{CostantinoLe_SSkeinTQFT} where the authors obtained a similar gluing theorem where this time they consider a marked $3$-manifold with two boundary discs glued together to give a single one and the Hochschild cohomology group is taken in a non-braided sense.
\par   This theorem can be used to study mapping tori. Consider again a connected, compact, oriented surface $\Sigma$ with  a distinguished closed interval $I_{\Sigma}\subset \partial \Sigma$, an oriented homeomorphism $\phi: \Sigma \xrightarrow{\cong} \Sigma$ preserving $I_{\Sigma}$ and its associated mapping torus $M_{\phi}=\quotient{\Sigma \times [-1,1]}{(x, -1)\sim (\phi(x), +1)}$. A well-known consequence of the Van Kampen theorem is that the fundamental group $\pi_1(M_{\phi})$ is isomorphic to the quotient of the free product $\pi_1(\Sigma)\star \pi_1(S^1)$ by the relation $\phi_*(\gamma)\sim t \gamma t^{-1}$ for $\gamma \in \pi_1(\Sigma)$ and $t$ a generator of $\pi_1(S^1)\cong \mathbb{Z}$, i.e. is a HNN extension. This means that we have a coequalizer in the category of groups:

$$ \begin{tikzcd}
\pi_1(\Sigma) 
\ar[r,shift left=.75ex,"\ad_*"]
  \ar[r,shift right=.75ex,swap,"\iota_1 \circ \phi_*"]
&
\pi_1(\Sigma \wedge S^1) 
\ar[r]
&
\pi_1(M_{\phi}).
\end{tikzcd}$$

As a consequence, we have a right exact sequence of algebras:
$$ \begin{tikzcd} 
 \mathcal{O}[R_{G}(\Sigma)]  
 \ar[rr, "\ad_*-\phi_*\otimes \eta"]
 &{}&
\mathcal{O}[R_{G}(\Sigma)] \otimes \mathcal{O}[G]
\ar[r]
&
\mathcal{O}[R_{G}(M_{\phi})]
\ar[r]
& 0
\end{tikzcd}
$$
or, equivalently, a right exact sequence of vector spaces:
$$ \begin{tikzcd} 
 \mathcal{O}[R_{G}(\Sigma)] \otimes (\mathcal{O}[R_{G}(\Sigma)] \otimes \mathcal{O}[G])
 \ar[rrr, "\mu \circ (\ad_* \otimes \id) - \mu \circ(\iota_1 \phi_*\otimes \id)"]
 &{}&{}&
\mathcal{O}[R_{G}(\Sigma)] \otimes \mathcal{O}[G]
\ar[r]
&
\mathcal{O}[R_{G}(M_{\phi})]
\ar[r]
& 0
\end{tikzcd}
$$

Adapting the preceding discussion to marked $3$-manifolds, we will define  morphisms
 $\Ad_{\Sigma} : \Rep_q^G(\mathbf{\Sigma}) \to \Rep_q^G(\mathbf{\Sigma})\overline{\otimes}B_qG$ and 
 $\Ad_{\Sigma} : \mathcal{S}_q(\mathbf{\Sigma}) \to \mathcal{S}_q(\mathbf{\Sigma})\overline{\otimes}B_q[\SL_2]$ and prove the following quantum analogue:

\begin{theorem}\label{theorem4}
One has right exact sequences
$$ \Rep_q^G(\mathbf{\Sigma}) \overline{\otimes} (\Rep_q^G(\mathbf{\Sigma})\overline{\otimes} B_qG) \xrightarrow{ \mu \circ (\iota_1\phi_*\otimes \id) - \mu^{top} \circ (\Ad_{\Sigma} \otimes \id)} \Rep_q^G(\mathbf{\Sigma})\overline{\otimes} B_qG \to \Rep_q^G(\mathbf{M}_{\phi})\to 0.$$
and 
$$ \mathcal{S}_q(\mathbf{\Sigma}) \overline{\otimes} (\mathcal{S}_q(\mathbf{\Sigma})\overline{\otimes} B_qG) \xrightarrow{ \mu \circ (\iota_1\phi_*\otimes \id) - \mu^{top} \circ (\Ad_{\Sigma} \otimes \id)} \mathcal{S}_q(\mathbf{\Sigma})\overline{\otimes} B_qG \to \mathcal{S}_q(\mathbf{M}_{\phi})\to 0.$$
\end{theorem}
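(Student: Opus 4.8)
The plan is to derive both right exact sequences from the self-gluing theorem (Theorem~\ref{theorem2.5}) by unpacking the definition of braided zeroth Hochschild cohomology, in direct analogy with the classical HNN computation recalled above. First I would present the mapping torus $M_\phi$ as a self-gluing $\mathbf{\Sigma}_{\phi_1\#\phi_2}$ of the thickened surface $\mathbf{\Sigma}=\Sigma\times[-1,1]$: the two copies of $\Sigma$ being identified are the ends $\Sigma\times\{\pm 1\}$, the gluing of the second copy being twisted by $\phi$, with the distinguished arcs placed in the based disc according to the conventions of Section~\ref{sec_selfgluing} (arranged so that the identification creates exactly one new loop, namely the $S^1$-direction of $M_\phi$). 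Theorem~\ref{theorem2.5} then provides isomorphisms of $\mathcal{O}_qG$- (resp.\ $\mathcal{O}_q[\SL_2]$-) comodules
\[ \Rep_q^G(\mathbf{M}_\phi)\;\cong\;\mathrm{HH}^0\!\big(\Rep_q^G(\mathbf{\Sigma}),\,\Rep_q^G(\mathbf{\Sigma})^{\phi}\big),\qquad \mathcal{S}_q(\mathbf{M}_\phi)\;\cong\;\mathrm{HH}^0\!\big(\mathcal{S}_q(\mathbf{\Sigma}),\,\mathcal{S}_q(\mathbf{\Sigma})^{\phi}\big), \]
where the superscript $\phi$ records the bimodule structure induced by $\phi_1$ and $\phi_2$: one of the two $\Rep_q^G(\mathbf{\Sigma})$-actions is ordinary multiplication and the other is multiplication precomposed with the functoriality morphism $\phi_*$.

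Next I would unpack $\mathrm{HH}^0$. By Definition~\ref{def_HH0}, for an algebra $A$ in $\overline{\mathcal{C}_q^G}$ and an $A$-bimodule $N$ in $\overline{\mathcal{C}_q^G}$, the group $\mathrm{HH}^0(A,N)$ is by construction the cokernel of a morphism $A\,\overline{\otimes}\,N\,\overline{\otimes}\,B_qG\to N\,\overline{\otimes}\,B_qG$, the extra tensor factor $B_qG$ recording the loop created by the self-gluing; this morphism is the difference of two maps assembled from the two $A$-actions, one of them being the ``plain'' action tensored with $\id_{B_qG}$ and the other being $\mu^{top}\circ(\Ad\otimes\id)$, where $\Ad:A\to A\,\overline{\otimes}\,B_qG$ is the braided adjoint coaction attached to the bimodule (the quantum avatar of conjugation by $t$) and $\mu^{top}$ is the multiplication merging the $B_qG$-strand produced by $\Ad$ with the ambient one. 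Specialising $A=N=\Rep_q^G(\mathbf{\Sigma})$ (resp.\ $\mathcal{S}_q(\mathbf{\Sigma})$) with the bimodule structure of the previous step, these two maps become exactly $\mu\circ(\iota_1\phi_*\otimes\id)$ and $\mu^{top}\circ(\Ad_\Sigma\otimes\id)$. Combined with the isomorphisms above this gives the announced sequences, and right exactness is automatic since $\mathrm{HH}^0$ is a cokernel by definition.

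For the passage between $\Rep_q^{\SL_2}$ and $\mathcal{S}_q$ I would either rerun the two steps directly in the skein setting, or transport the $\Rep_q^{\SL_2}$-sequence along the natural isomorphism $\Psi:\Rep_q^{\SL_2}\xrightarrow{\ \cong\ }\mathcal{S}_q$ of Theorem~\ref{theorem1}: being an isomorphism of functors on $\mathcal{M}^{(1)}_{\con}$, $\Psi$ intertwines the bimodule structures, the coactions $\Ad_\Sigma$, the (top) multiplications and the maps $\iota_1\phi_*$, all of which are assembled from morphisms of marked $3$-manifolds together with the monoidal structure.

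The main obstacle is not the formal part: once the ingredients are in place, unpacking $\mathrm{HH}^0$ as a cokernel and transporting along $\Psi$ are routine. The real work lies in the first two steps --- giving clean definitions of $\Ad_\Sigma$ and of the top multiplication $\mu^{top}$ and checking that they are $\mathcal{O}_qG$-colinear, and, most delicately, verifying that the coequalizer produced by the self-gluing really matches the displayed maps. This is the quantum shadow of the HNN relation $t\gamma t^{-1}=\phi_*(\gamma)$, and the subtle point is the braiding/``top'' bookkeeping (which strand passes over which, and on which side the twist $\phi_*$ occurs), together with the identification of the extra $B_qG$-factor on the target with the quantum representation space of the loop created by the identification.
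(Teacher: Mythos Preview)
Your overall strategy --- deduce the theorem from the self-gluing result (Theorem~\ref{theorem2.5}) by unpacking $\mathrm{HH}^0$ --- is the paper's strategy too, but you have misread Definition~\ref{def_HH0}, and this misreading propagates through the whole argument. In that definition, $\mathrm{HH}^0(A,N)$ is simply the coequalizer of the two parallel maps $\nabla^L,\ \nabla^R\circ c_{A,N}\circ(\theta_A\otimes\id_N):A\otimes N\to N$; there is no extra $B_qG$ factor and no adjoint coaction built into it. So if you self-glue $\mathbf{\Sigma}\times I$ along its two ends as you propose, Theorem~\ref{theorem2.5} produces $\mathrm{HH}^0(\Rep_q^G(\mathbf{\Sigma}),\Rep_q^G(\mathbf{\Sigma}))$, i.e.\ a coequalizer of maps $\Rep_q^G(\mathbf{\Sigma})^{\overline{\otimes}2}\to\Rep_q^G(\mathbf{\Sigma})$, which is not the displayed sequence.

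The $B_qG$ factor and the map $\Ad_\Sigma$ have to come from the geometry, and in the paper they do. One sets $\mathbf{M}:=(\mathbf{\Sigma}\times I)\wedge\mathbf{H}_1=(\Sigma\wedge\mathbb{D}_1)\times I$ --- so that $\Rep_q^G(\mathbf{M})\cong\Rep_q^G(\mathbf{\Sigma})\overline{\otimes}B_qG$ by lax monoidality --- and realizes $\mathbf{M}_\phi$ as a self-gluing of \emph{this} larger thickened surface. One attaching map is $\Sigma\xrightarrow{\phi}\Sigma\xrightarrow{\iota_1}\Sigma\wedge\mathbb{D}_1\cong(\Sigma\wedge\mathbb{D}_1)\times\{+1\}$; the other is $\overline{\Sigma}\xrightarrow{\overline{\ad_\Sigma^0}}\overline{\Sigma\wedge\mathbb{D}_1}\cong(\Sigma\wedge\mathbb{D}_1)\times\{-1\}$, where $\ad_\Sigma^0:\Sigma\hookrightarrow\Sigma\wedge\mathbb{D}_1$ is the embedding that wraps a collar of the marked arc once around the annulus hole (Section~\ref{sec_QAd}). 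The quantum adjoint coaction $\Ad_\Sigma$ is \emph{defined} as the image of $\ad_\Sigma$ under $\Rep_q^G$; it enters the statement because it is one of the attaching maps, not because it is hidden in the $\mathrm{HH}^0$ formalism. With this setup Theorem~\ref{theorem2.5} gives $\Rep_q^G(\mathbf{M}_\phi)\cong\mathrm{HH}^0\big(\Rep_q^G(\mathbf{\Sigma}),\,\Rep_q^G(\mathbf{\Sigma})\overline{\otimes}B_qG\big)$, and then the two bimodule maps $\nabla^L,\nabla^R$ decompose, via the product $\mu_M$ of the thickened surface $\mathbf{M}$, into $\mu\circ(\iota_1\phi_*\otimes\id)$ and the one involving $\Ad_\Sigma$; the twist--braiding that Definition~\ref{def_HH0} inserts on the $\nabla^R$ side is exactly what turns $\mu$ into $\mu^{top}$. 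Your last paragraph flags the braiding bookkeeping as the delicate step, but the actual gap is earlier: you need the extra $\mathbf{H}_1$ handle in the manifold being self-glued before any of that bookkeeping can begin.
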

Here $\mu^{top}$ is the twisted opposite product defined by $\mu^{top}:= \mu \circ \psi \circ (\theta \otimes \id)$ where $\psi$ and $\theta$ represent the braiding and the twist in $\mathcal{O}_qG-\RComod$.
What makes Theorem \ref{theorem4} interesting is that, whereas (stated) skein modules of $3$-manifolds are poorly understood, the stated skein algebras of surfaces have been well studied. In particular bases \cite{LeStatedSkein} and finite presentations \cite{KojuPresentationSSkein} of these algebras are well known, so Theorem \ref{theorem4} can be a valuable tool in order to study skein modules of mapping tori.
\vspace{2mm}
\par A second consequence of Theorem \ref{theorem2.5} concerns links exterior. Consider a braid $\beta \in B_n$ whose Markov closure is a link $L\subset S^3$. Denote by $\mathbf{M}_L$ the marked $3$-manifold obtained by removing an open ball from $M_L:= S^3 \setminus N(L)$ and by embedding the base disc in the boundary of the ball. By functoriality, the braid group acts on $\Rep_q^G(\mathbb{D}_n) \cong (B_qG)^{\overline{\otimes}n}$ where $\mathbb{D}_n$ is a marked disc with $n$ subdiscs removed.

\begin{theorem}\label{theorem5}
One has an isomorphism
$$ \Rep_q^G(\mathbf{M}_L) \cong \quotient{ (B_qG)^{\overline{\otimes}n}}{ \left( \mu^{top}(x\otimes y) - \beta_*(x)y \right)}.$$
\end{theorem}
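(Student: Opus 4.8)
The plan is to realize $\mathbf{M}_L$ as a self-gluing of a simple marked $3$-manifold built from the braid $\beta$, and then apply Theorem \ref{theorem2.5} together with the mapping-torus-style computation of Theorem \ref{theorem4}. Concretely, let $\mathbb{D}_n$ be the marked disc with $n$ open subdiscs removed, so that $\mathbf{\Sigma} := \mathbb{D}_n \times [-1,1]$ has $\Rep_q^G(\mathbf{\Sigma}) \cong (B_qG)^{\overline{\otimes} n}$. Since $L$ is the Markov closure of $\beta \in B_n$, the exterior $M_L = S^3 \setminus N(L)$ is obtained from the thickened disc-with-holes $\mathbb{D}_n \times [-1,1]$ by gluing the bottom copy $\mathbb{D}_n \times \{-1\}$ to the top copy $\mathbb{D}_n \times \{+1\}$ via the homeomorphism induced by $\beta$ acting on $\mathbb{D}_n$ (this is the standard description of a braid complement, fibered over $S^1$ with fiber the punctured disc). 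Passing to the once-marked manifold $\mathbf{M}_L$ (remove a ball, put the base disc on its boundary), this self-gluing is exactly of the form $\mathbf{M}_{\phi_1 \# \phi_2}$ with $\mathbf{M} = \mathbf{\Sigma}$, $\phi_1 = \mathrm{id}$ the inclusion of the bottom face, and $\phi_2$ the inclusion of the top face post-composed with $\beta$.

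The second step is to feed this into Theorem \ref{theorem2.5}, giving $\Rep_q^G(\mathbf{M}_L) \cong \mathrm{HH}^0\big(\Rep_q^G(\mathbf{\Sigma}), \Rep_q^G(\mathbf{\Sigma})\big)$, where the two $\mathcal{S}_q(\mathbf{\Sigma})$-actions on the bimodule $\Rep_q^G(\mathbf{\Sigma})$ are the left action (coming from $\phi_1$, i.e. ordinary multiplication in $(B_qG)^{\overline{\otimes} n}$) and a right action twisted by $\beta_*$ (coming from $\phi_2$). By the definition of the braided $\mathrm{HH}^0$ (Definition \ref{def_HH0}) — the same mechanism that produces the right exact sequences in Theorem \ref{theorem4} — this Hochschild cohomology is computed as the cokernel of the map $x \otimes y \mapsto \mu^{top}(x \otimes y) - \beta_*(x) y$ on $(B_qG)^{\overline{\otimes} n} \overline{\otimes} (B_qG)^{\overline{\otimes} n}$, where $\mu^{top} = \mu \circ \psi \circ (\theta \otimes \mathrm{id})$ is the twisted opposite product encoding the right action through the braiding and twist of $\mathcal{O}_qG\text{-}\RComod$. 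Identifying this cokernel with the quotient $(B_qG)^{\overline{\otimes} n} / \big(\mu^{top}(x\otimes y) - \beta_*(x)y\big)$ yields the claimed isomorphism; the $\SL_2$ statement for $\mathcal{S}_q$ follows identically via the isomorphism $\Psi$ of Theorem \ref{theorem1}.

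The main obstacle, I expect, is verifying precisely that the braid-complement description of $\mathbf{M}_L$ matches the abstract self-gluing $\mathbf{M}_{\phi_1 \# \phi_2}$ in the marked category $\mathcal{M}^{(1)}_{\con}$ — in particular tracking the base disc and the embedded interval $I_\Sigma$ through the Markov closure so that the basepoints are compatible, and checking that the meridian/longitude data removed in $N(L)$ really corresponds to the boundary of $\mathbb{D}_n \times [-1,1]$ after the identification. One must also confirm that the $\beta$-action on $\Rep_q^G(\mathbb{D}_n)$ arising from functoriality of $\Rep_q^G$ agrees with the mapping-class action $\beta_*$ appearing in the gluing map $\phi_2$, which is where the functoriality of the construction and the compatibility of the braided balanced structure with mapping classes of the punctured disc are used. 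Once these topological bookkeeping points are settled, the algebraic conclusion is a direct instantiation of Theorem \ref{theorem2.5} and the cokernel formula already established in the proof of Theorem \ref{theorem4}.
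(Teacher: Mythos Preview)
Your proposal is correct and follows essentially the same route as the paper: realize $\mathbf{M}_L$ as the self-gluing $(\mathbb{D}_n\times I)_{\phi_1\#\phi_2}$ where one attaching map is the identity and the other is twisted by $\beta$, then apply Theorem~\ref{theorem2.5} and read off the quotient from the definition of the braided $\mathrm{HH}^0$. The only cosmetic difference is that the paper places $\beta$ on $\phi_1$ (the top face) rather than on $\phi_2$, so that $\nabla_{\phi_1}^L=\mu\circ(\beta_*\otimes\id)$ and $\nabla_{\phi_2}^R=\mu$; this yields the stated quotient directly without invoking Theorem~\ref{theorem4}.
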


In  \cite{MurakamiVdV_QRepSpaces}, Van der  Veen and that second author associated to a braid $\beta \in B_n$ an algebra $\mathcal{A}_{\beta}$ with a structure of $B_q \SL_2$-comodule defined as 
$$\mathcal{A}_{\beta}:= \quotient{ (B_q\SL_2)^{\overline{\otimes}n}}{ \left( \mu(x\otimes y) - \beta_*(x)y \right)}.$$
Note the similarity with the expression in Theorem \ref{theorem5}.
It is proved  in \cite{MurakamiVdV_QRepSpaces}  that $\mathcal{A}_{\beta}$ is an algebra in $\mathcal{O}_q\SL_2-\RComod$ and that  if two braids $\beta, \beta'$ have the same Markov closure $L\subset S^3$, then  $\mathcal{A}_{\beta}$ and $\mathcal{A}_{\beta'}$ are isomorphic.
Therefore the subalgebra $\mathcal{A}_{\beta}^{coinv}\subset \mathcal{A}_{\beta}$ of coinvariant vectors only depends on $L$ up to isomorphism and was named \textit{quantum character variety} in \cite{MurakamiVdV_QRepSpaces}. A skein reformulation and explicit computations were performed by the second author in \cite{Murakami_RIMS}.
 Even though they are different, Theorem \ref{theorem5} enlights the resemblance between the skein module of $M_L$ and the quantum character variety $\mathcal{A}_{\beta}^{coinv}$. 
Understanding the skein module of a knot exterior is a key feature in order to compute the peripheral ideal of a knot and to find $q$-differential equations satisfied by the Jones polynomials (see \cite{FGL_SkeinApolynomial, GaroufalidisLe_JonesqHolonomic, Garoufalidis_AJ, LeAJ} for details). In a future work, we plan to adapt the techniques developed in \cite{Murakami_RIMS} to deduce from Theorem \ref{theorem5} informations on the skein modules of links exteriors and their peripheral ideal.
\vspace{2mm}
\par 
\textit{Plan of the paper}
\par In Section \ref{sec1} we introduce the braided balanced category $\mathcal{M}^{(1)}_{\con}$ of connected $1$-marked $3$-manifolds, its  subcategory $\BT$ and Habiro's Hopf algebra object in $\BT$. In Section \ref{sec2} we recall the definition of Habiro's quantum representation space functor $\Rep_q^{G}$ and introduce the quantum fundamental group. In Section \ref{sec3} we recall the definition of stated skein modules and relate them to the quantum representation spaces proving Theorem \ref{theorem1}. We then identify the submodule of coinvariant vectors of stated skein modules with the usual skein module. In Section \ref{sec_classical}, we prove that the stated skein module at $q^{1/4}=1$ is isomorphic to the ring of regular functions of the representation scheme thus finishing the proof of Theorem \ref{theorem0}.
In Section \ref{sec4} we prove the quantum Van Kampen theorem for quantum fundamental groups and deduce Theorems \ref{theorem2} and \ref{theorem2.5}.
 Sections \ref{sec5} and \ref{sec6} are devoted to the proofs of Theorems \ref{theorem4} and \ref{theorem5} respectively.  In the appendix, we show how the work of Kerler and Bobtcheva-Piergallini can be used in order to find a finite presentation for $\BT$ and to prove the existence of the functor $Q_{B_qG}$, as conjectured by Habiro.

\vspace{2mm}
\par 
\textit{Acknowledgments.} The authors thank S.Baseilhac, D.Calaque,  F.Costantino, M.De Renzi,  R.Detcherry,  K.Habiro, D.Jordan,  T.Q.T.L\^e ,  A.Quesney and R.Van der Veen for valuable conversations. The first author acknowledges support from the Japanese Society for Promotion of Sciences (JSPS), from the Centre National de la Recherche Scientifique (CNRS) and from the European Research Council (ERC DerSympApp) under the European Union’s Horizon 2020 research and innovation program (Grant Agreement No. 768679).
Part of this work was accomplished during his stay at Waseda University.
 The second author was supported by KAKENHI 20H01803 and 20K20881.

\section{Marked $3$-manifolds}\label{sec1}

\subsection{The categories $\mathcal{M}$ and $\mathcal{M}^{(1)}_c$}\label{sec_M}

\begin{convention}\label{convention_bold}
Let $\Top$ be the category of Haussdorf, locally compact topological spaces and $\Cat_{\Top}$ the category of (small) categories enriched over $\Top$. For $\boldsymbol{\mathcal{C}}\in \Cat_{\Top}$, one can associate its homotopy category $\mathcal{C}=ho(\boldsymbol{\mathcal{C}})\in \Cat$ having the same objects and such that $\Hom_{\mathcal{C}}(x,y):= \pi_0( \Maps_{\boldsymbol{\mathcal{C}}}(x,y))$. In this paper, we will write topological categories using a bold symbol (like $\boldsymbol{\mathcal{C}}$) and write their homotopy categories using the same non-bold symbol (like $\mathcal{C}$).
\end{convention}

\begin{definition}\label{def_MMfd}
Let $\mathbb{D}^2:= \{ (x,y) \in \mathbb{R}^2 | x^2+y^2=1\}$. We call \textit{height} of $(x,y)\in \mathbb{D}^2$ the number $h(x,y):=y$. 

\begin{enumerate}
\item For $n\geq 1$, a  $n$-\textit{marked }$3$-\textit{manifold} is a pair $\mathbf{M}=(M, \iota_M)$ where $M$ is a compact, oriented $3$-manifold and $\iota_M : \mathbb{D}^2 \bigsqcup \ldots \bigsqcup \mathbb{D}^2 \to \partial M$ an oriented embedding of $n$ copies of the disc $\mathbb{D}^2$ in the boundary of $M$. We write $\iota_M^{(1)}, \ldots, \iota_M^{(n)}$ its individual disc embeddings and denote by $\mathbb{D}_M^{(1)}, \ldots, \mathbb{D}_M^{(n)}$ their (pairwise disjoint) images. The \textit{height } of a point  $p=\iota_M^{(i)}(x) \in \mathbb{D}_M^{(i)}$ is $h(p):=h(x)$. 
By convention, a $0$-marked manifold is just $\mathbf{M}=M$ a compact, oriented $3$-manifold (that we will call \textit{unmarked}) and a \textit{marked }$3$-\textit{manifold } is a $n$-marked $3$-manifold for some $n\geq 0$. 
\item An \textit{embedding} $f: \mathbf{M}_1\to \mathbf{M}_2$ of marked $3$-manifolds is an oriented embedding $f:M_1 \to M_2$ of the underlying $3$-manifolds such that: $(1)$ $f$ embeds each marked disc $\mathbb{D}_{M_1}^{(i)}$ into a marked disc $\mathbb{D}_{M_2}^{(j)}$ through an embedding $\mathbb{D}_{M_1}^{(i)}\hookrightarrow \mathbb{D}_{M_2}^{(j)}$ which is height increasing, i.e. $h(x)< h(y)$ implies $h(f(x))< h(f(y))$ and $(2)$ if two discs $\mathbb{D}_{M_1}^{(i)}, \mathbb{D}_{M_1}^{(j)}$ are embedded into the same disc $\mathbb{D}_{M_2}^{(k)}$ then their heights are disjoint, i.e. $h(f(\mathbb{D}_{M_1}^{(i)}))\cap h(f(\mathbb{D}_{M_1}^{(j)}))= \emptyset$. In particular the set of discs of $\mathbf{M}_1$ which are mapped into a given disc of $\mathbf{M}_2$ are totally ordered by their heights.
\item Marked $3$-manifolds with embeddings form a topological category $\boldsymbol{\mathcal{M}}$, where the sets of embeddings are equipped with their compact-open topology. For $n\geq 0$, we denote by $\boldsymbol{\mathcal{M}}^{(n)}$ the full subcategory of $n$-marked $3$-manifolds. Following Convention \ref{convention_bold}, we denote by $\mathcal{M}, \mathcal{M}^{(n)}$ the homotopy categories of $\boldsymbol{\mathcal{M}}, \boldsymbol{\mathcal{M}}^{(n)}$ respectively and by $\mathcal{M}^{(n)}_{\con}$ the full subcategory of connected $n$-marked $3$-manifolds.
\item A $n$-\textit{marked surface} is a pair $\mathbf{\Sigma}=(\Sigma, \mathcal{A})$ where $\Sigma$ is a compact oriented surface and $\mathcal{A}$ an oriented embedding of $n$ copies of $I:=[-1,1]$ into the boundary of $\Sigma$. We associate to $\mathbf{\Sigma}$ an element $\mathbf{\Sigma}\times I \in \mathcal{M}^{(n)}$ by smoothing the corners of $\Sigma\times I$ and identifying $[-1,1]^2$ with $\mathbb{D}^2$. We denote by $\bold{MS} \subset \boldsymbol{\mathcal{M}}$ the full subcategory generated by elements isomorphic to such a thickened marked surface.
\end{enumerate}
\end{definition}

Let $\mathbb{B}^3$ be the unit ball of $\mathbb{R}^3$.
The \textit{bigon} $\mathbb{B}\in \boldsymbol{\mathcal{M}}$ is the ball $\mathbb{B}^3$ with two boundary discs in its boundary. We can think of $\mathbb{B}$ as a thickened disc with two boundary arcs on its boundary (hence the name "bigon"). Similarly, we call \textit{triangle} $\mathbb{T} \in \boldsymbol{\mathcal{M}}$ the ball $\mathbb{B}^3$ with three boundary discs in its boundary. Again, $\mathbb{T}$ can be thought as a thickened disc with three boundary arcs (the edges of the triangle).

\begin{definition}\label{def_operations}
The category of marked $3$-manifolds admits the following three natural operations. 
\begin{enumerate}
\item The \textit{disjoint union} $\bigsqcup$ which endows $\boldsymbol{\mathcal{M}}$ with a symmetric monoidal structure in an obvious way.
\item The \textit{gluing operation}: given $\mathbb{D}^{(i)}, \mathbb{D}^{(j)}$ two distinct boundary discs of $\mathbf{M}$, we denote by $\mathbf{M}_{\mathbb{D}^{(i)}\# \mathbb{D}^{(j)}}$ the marked $3$-manifold obtained from $\mathbf{M}$ by gluing the two discs  $\mathbb{D}^{(i)}, \mathbb{D}^{(j)}$ using $\iota_M^{(j)} \circ (\iota_M^{(i)})^{-1}$.
\item The \textit{fusion operation}:  starting again with $\mathbb{D}^{(i)}, \mathbb{D}^{(j)}$ two distinct boundary discs of $\mathbf{M}$, we denote by $\mathbf{M}_{\mathbb{D}^{(i)}\circledast \mathbb{D}^{(j)}}$ the marked $3$-manifold obtained from $\mathbf{M}\bigsqcup \mathbb{T}$ by gluing $\mathbb{D}^{(i)}$ with the first boundary disc of $\mathbb{T}$ and gluing $\mathbb{D}^{(j)}$ with the second boundary disc of $\mathbb{T}$. In the particular case where $\mathbf{M}=\mathbf{M}_1 \bigsqcup \mathbf{M}_2$ with $\mathbf{M}_1, \mathbf{M}_2 \in \boldsymbol{\mathcal{M}}^{(1)}$ and $\mathbb{D}^{(i)}, \mathbb{D}^{(j)}$ are the unique boundary discs of  $\mathbf{M}_1, \mathbf{M}_2$, we simply write $\mathbf{M}_1\wedge \mathbf{M}_2:= \mathbf{M}_{\mathbb{D}^{(i)}\circledast \mathbb{D}^{(j)}}\in  \boldsymbol{\mathcal{M}}^{(1)}$. Then $\wedge$ endows ${\mathcal{M}}^{(1)}$ with a structure of monoidal category. We also denote by $\iota_1: \mathbf{M}_1 \to \mathbf{M}_1\wedge \mathbf{M}_2$ the map identifying $M_1$ with the union $\mathbb{T}\cup M_1$ inside $M_1\wedge M_2$. The morphism $\iota_2: \mathbf{M}_2\to \mathbf{M}_1\wedge \mathbf{M}_2$ is defined similarly.  
\end{enumerate}
\end{definition}

\begin{convention} We call \textit{braided balanced category} a braided category $\mathcal{C}$ equipped with a compatible twist (i.e. with an automorphism $\theta$ of the identity functor $\id: \mathcal{C}\to \mathcal{C}$ such that $\theta_{V\otimes W}= (\theta_V \otimes \theta_W)c_{W,V}c_{V,W}$). 
Beware that some authors, such as Salvatore-Wahl \cite{SalvatoreWahl_FD2} or Fresse \cite{Fresse_Book} call "ribbon category" what we call braided balanced category. However for quantum topologists, such as Turaev \cite{Tu}, a ribbon category is a braided balanced category equipped with left and right dualities compatible with the braiding and the twist. In this paper we follow Turaev's terminology. 
\end{convention}

By the work in \cite{SalvatoreWahl_FD2} based on \cite{Fiedorowicz} (surveyed in \cite{Fresse_Book}) a braided balanced category can be defined alternatively as a homotopy category $\mathcal{C}=ho(\boldsymbol{\mathcal{C}})$ where $\boldsymbol{\mathcal{C}}$ is an algebra over the framed little discs operad in $\Cat_{\Top}$. In particular, braided balanced categories give rise to locally constant factorization algebras on surfaces (\cite{BenzviBrochierJordan_FactAlg1}).
\par 
The operad of framed little discs $f\mathcal{D}_2$ naturally acts on $\boldsymbol{\mathcal{M}}^{(1)} \in \Cat_{\Top}$ as follows. For $n\geq 1$, an element  $c\in f\mathcal{D}_2(n)$ (a framed little discs configuration) is the data $c=(\iota_1, \ldots, \iota_n, \iota_{out})$ where $\iota_{out}: \mathbb{D}^2 \hookrightarrow \mathbb{R}^2$ is the embedding of an "outer" disc in the plane and $\iota_1, \ldots, \iota_n: \mathbb{D}^2 \hookrightarrow \mathbb{R}^2$ are oriented embeddings of the disc with pairwise disjoint image and such that each disc $\iota_i(\mathbb{D}^2)$ is included in the outer disc $\iota_{out}(\mathbb{D}^2)$. To such a configuration, we associate an element $B_c=(\mathbb{B}^3, \{j_1, \ldots, j_{n+1}\}) \in \boldsymbol{\mathcal{M}}^{(n+1)}$ by identifying $\mathbb{R}^2 \cup \{\infty\}$ with the boundary of the ball $\mathbb{B}^3$ so that each $\iota_i$ defines an embedding $j_i$ of $\mathbb{D}^2$ into the boundary of $\mathbb{B}^3$ and the complementary $\partial \mathbb{B}^3 \setminus \iota_{out}(\mathring{\mathbb{D}}^2)$ of the interior of the outer disc defines the $n+1$-th boundary disc $\mathbb{D}^{(n+1)}$. More precisely, the restriction of $\iota_{out}$ to the boundary of $\mathbb{D}^2$ defines the restriction  $j_{n+1}: \partial \mathbb{D}^2 \cong \partial \mathbb{D}^{(n+1)}$ that we extend canonically to $\mathbb{D}^2$ by first identifying $\partial \mathbb{B}^3\setminus \{0\}$ with $\mathbb{R}^2$ using the stereographic map, so that $\mathbb{D}^{(n+1)}$ is identified with a disc in $\mathbb{R}^2$ centered in $0$, and then applying the (unique) homothety between this disc and $\mathbb{D}^2$ centered in $0$.

 Now for $\mathbf{M}_1, \ldots, \mathbf{M}_n \in \boldsymbol{\mathcal{M}}^{(1)}$ and $c\in f\mathcal{D}_2(n)$, we denote by $\mathfrak{o}(c; \mathbf{M}_1, \ldots, \mathbf{M}_n) \in \Mun$ the marked $3$-manifold obtained by gluing each $\mathbf{M}_i$ to $B_c$ along its $i$-th disc. By construction, the assignation $(\mathbf{M}_1, \ldots, \mathbf{M}_n) \mapsto \mathfrak{o}(c; \mathbf{M}_1, \ldots, \mathbf{M}_n) $ is functorial so $c$ induces a topological functor $\mathfrak{o}(c; \bullet) : {\Mun}^n \to \Mun$ which endows $\Mun$ with a  structure of $f\mathcal{D}_2$-algebra in $\Cat_{\Top}$.
 
 \begin{definition}\label{def_ribbonstr}
 We endow $\mathcal{M}^{(1)}$ with the braided balanced structure coming from its action of the framed little discs operad.
 \end{definition}
 
 \begin{remark}
 \begin{enumerate}
 \item By construction, the monoidal structure on $\mathcal{M}^{(1)}$  underlying the ribbon structure of Definition \ref{def_ribbonstr} coincides with the monoidal structure $\wedge$ of Definition \ref{def_operations}.
 \item Consider the "forgetful" functor $F: \mathcal{M}^{(1)} \to \Top^{\bullet}$ to the category of pointed topological spaces sending $(M, \iota_M)$ to the pointed space $(M, \iota_M(0))$. By definition, $F$ is lax monoidal, i.e. $F(\mathbf{M}_1\wedge \mathbf{M}_2)\cong F(\mathbf{M}_1) \wedge F(\mathbf{M}_2)$ for the wedge product of $\Top^{\bullet}$. As we shall review, the quantum fundamental group and quantum representations space are monoidal functors valued in $\mathcal{M}^{(1)}_{\con}$ constructed by analogy with the classical fundamental group $\pi_1: \Top^{\bullet} \to \Gp$ and quantum representation space $\mathcal{O}[\Hom(\pi_1(\bullet), \SL_2)] : \Top^{\bullet} \to \Alg$. However one should be really careful when using this analogy: whereas the wedge product $\wedge$ is a coproduct in $\Top^{\bullet}$, the product $\wedge$ in $\mathcal{M}^{(1)}_{\con}$ is not a coproduct. Indeed, due to the fact that we consider  isotopy classes of embeddings in $\mathcal{M}^{(1)}_{\con}$, rather than just continuous maps as in $\Top^{\bullet}$, its is no longer true that two isotopy classes of embeddings $f: \mathcal{M}_1\to \mathcal{M}_3$ and $g:\mathcal{M}_2 \to \mathcal{M}_3$ induce an isotopy class of embedding $\mathcal{M}_1 \wedge \mathcal{M}_2 \to \mathcal{M}_3$.
 \end{enumerate}
 \end{remark}

\subsection{Bottom tangles}\label{sec_BT}

\begin{definition}\label{def_BTCat}
For each $n\geq 0$, let $\mathbb{D}_n$ be the disc $\mathbb{D}^2$ with $n$ pairwise disjoint open subdiscs removed and fix a boundary arc $a\subset \partial \mathbb{D}^2$ so that $(\mathbb{D}_n, \{a\})$ becomes a marked surface.
We set  $\mathbf{H}_n:=  (\mathbb{D}_n, \{a\}) \times [-1, 1]\in \Mun_c$. It is a genus $n$ handlebody with one boundary disc in its component.
We denote by $\mathbf{BT}\subset \Mun_c$ the full subcategory generated by elements isomorphic to some $\mathbf{H}_n$. We will often write $\mathbf{BT}(n,m):= \Maps_{\Mun_c}(\mathbf{H}_n, \mathbf{H}_m)$.
\end{definition}

For $\mathbf{M}\in \Mun$ and $n\geq 0$, Habiro introduced in \cite{Habiro_QCharVar} an alternative description of the set $\Hom_{\mathcal{M}_c^{(1)}}(\mathbf{H}_n, \mathbf{M})$ by means of bottom tangles that we now introduce.

\begin{definition}\label{def_tangles}
\begin{enumerate}
\item For $\mathbf{M}\in \M$, a \textit{tangle} in $\mathbf{M}$  is a  compact framed, properly embedded $1$-dimensional manifold $T\subset M$ such that each point $p \in \partial T$ lies in the interior of some boundary disc $\mathbb{D}_M^{(i)}$ and has framing parallel to the height direction pointing towards the increasing height direction.
 Here, by framing, we refer to a section of the unitary normal bundle of $T$. Moreover,  for a boundary disc $\mathbb{D}_M^{(i)}$ we impose that no two points of $\partial_i T:= \partial T \cap \mathbb{D}_M^{(i)}$  have the same height, hence the set $\partial_i T$ is totally ordered by the heights. Two tangles are isotopic if they are isotopic through an isotopy of tangles that preserves the boundary height orders. By convention, the empty set is a tangle only isotopic to itself.
\item For $\mathbf{M}\in \Mun$, a tangle $T\subset M$ is called a \textit{bottom tangle} if $(1)$ $T$ does not have any closed component and $(2)$ if $T=T_1\cup \ldots \cup T_n$ are the connected components of $T$ then, up to reindexing, $i<j$ implies that $h(\partial T_i)< h(\partial T_j)$, i.e. the heights of both points of $\partial T_i$ are smaller than the heights of both points of $\partial T_j$. So the connected components are totally ordered by their heights. Figure \ref{fig_BTexemple} illustrates such a bottom tangle. For $n\geq 1$, a $n$-bottom tangle is a bottom tangle with $n$ connected components and, by convention, the only $0$-bottom tangle is the empty tangle.
\item We denote by $\mathbf{P}_n(\mathbf{M})$ the space of $n$-bottom tangles in $\mathbf{M}$ and $\PP_n(\mathbf{M}):= \pi_0(\mathbf{P}_n(\mathbf{M}))$.
\item For $n\geq 1$, the \textit{trivial bottom tangle} of $\mathbf{H}_n$ is the $n$-bottom tangle $T_n$ drawn in Figure \ref{fig_BTexemple} such that $H_n$ retracts on $T_n$ and the framing points towards the direction of $1$ in $H_n=\mathbb{D}_n \times [-1,1]$. The \textit{trivial bottom tangle} of $\mathbf{H}_0$ is the empty tangle.
\end{enumerate}
\end{definition}

\begin{figure}[!h] 
\centerline{\includegraphics[width=8cm]{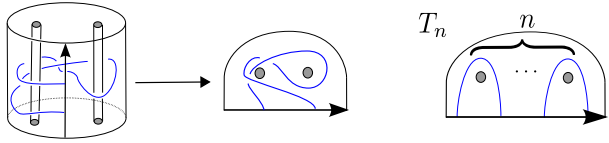} }
\caption{On the left: a $1$-bottom tangle in $H_2=\mathbb{D}_2\times I$ and its planar diagram projection in $\mathbb{D}_2$. The arrow depicts the height order and we use the blackboard framing. On the right: the trivial bottom tangle $T_n$.} 
\label{fig_BTexemple} 
\end{figure}

\begin{definition}\label{def_theta}
For $n\geq 0$ and $\mathbf{M}\in \Mun_c$, let  $\theta: \Hom_{\Mun_c}(\mathbf{H}_n, \mathbf{M}) \to \mathbf{P}_n(\mathbf{M})$ be the continuous map sending an embedding $f: \mathbf{H}_n\to \mathbf{M}$ to the image $\theta(f):=f(T_n)$ of the trivial bottom tangle by $f$.
\end{definition}

\begin{lemma}\label{lemma_BT}
The map  $\theta: \Hom_{\Mun_c}(\mathbf{H}_n, \mathbf{M}) \xrightarrow{\sim} \mathbf{P}_n(\mathbf{M})$ is an equivalence of homotopy.
\end{lemma}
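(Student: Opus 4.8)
The plan is to construct an explicit homotopy inverse to $\theta$, or rather to show $\theta$ is a weak equivalence by exhibiting, for every bottom tangle $T \in \mathbf{P}_n(\mathbf{M})$, a contractible space of embeddings $f \colon \mathbf{H}_n \to \mathbf{M}$ with $f(T_n) = T$, and checking this choice can be made continuously in families. The key geometric input is that $\mathbf{H}_n = \mathbb{D}_n \times [-1,1]$ is a regular neighbourhood of its trivial bottom tangle $T_n$ together with a collar of the marked disc: concretely, $\mathbf{H}_n$ deformation retracts onto $T_n \cup (\text{arc in } \mathbb{D}^{(1)})$, and conversely $\mathbf{H}_n$ is canonically identified with a tubular neighbourhood of that spine. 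So the heart of the argument is the following: a bottom tangle $T \subset M$, being framed and properly embedded with its boundary points sitting in prescribed height order on the single marked disc $\mathbb{D}_M^{(1)}$, determines a regular neighbourhood $N(T \cup \partial T)$ in $M$ which is (non-canonically, but up to a contractible choice) an embedded copy of $\mathbf{H}_n$ carrying $T_n$ to $T$. This gives a map $\mathbf{P}_n(\mathbf{M}) \to \Hom_{\Mun_c}(\mathbf{H}_n, \mathbf{M})$ up to contractible ambiguity, i.e. a section up to homotopy.

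First I would make precise the neighbourhood construction. Given $T$, thicken: the framing of $T$ promotes each component to an embedded band, and because the endpoints lie in the interior of $\mathbb{D}_M^{(1)}$ with distinct, correctly ordered heights and framing pointing in the increasing-height direction, the union of these bands together with a thin slab of $\mathbb{D}_M^{(1)}$ around the endpoints assembles into an embedded handlebody-with-marked-disc. The standard uniqueness theorem for regular neighbourhoods (in the smooth category, the tubular neighbourhood theorem together with uniqueness of collars) says that any two such thickenings are ambient isotopic, and more: the space of such thickenings, i.e. of embeddings $\mathbf{H}_n \hookrightarrow \mathbf{M}$ restricting to a fixed framed tangle on $T_n$ and compatible with the marked-disc data, is contractible. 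This is exactly the statement that the fibre of $\theta$ over each point is contractible; combined with the fact that $\theta$ is a fibration (it is the restriction map from embeddings of $\mathbf{H}_n$ to embeddings of the spine, which one checks is a Serre fibration using the isotopy extension theorem), one concludes $\theta$ is a weak equivalence.

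Slightly more structurally, I would organize this as: (i) $\theta$ is surjective on $\pi_0$ — every bottom tangle arises from an embedding, by the neighbourhood construction; (ii) $\theta$ induces injections on all homotopy groups and $\pi_0$, by showing that a family of embeddings inducing a null-homotopic family of tangles can be null-homotoped, again via isotopy extension and the contractibility of thickening choices; equivalently, (iii) identify the homotopy fibre. The cleanest route is probably to factor $\theta$ through the space of framed spines: restricting an embedding of $\mathbf{H}_n$ to $T_n$ with its framing is a fibration with contractible fibres (space of tubular neighbourhoods of a fixed framed submanifold-with-boundary-in-$\mathbb{D}^{(1)}$), and the space of framed spines maps by a homotopy equivalence to $\mathbf{P}_n(\mathbf{M})$ since the extra framing/spine data relative to the bare tangle $T$ is a contractible choice (a framing is a section of a bundle with contractible fibres once the isotopy class is fixed, and the boundary-height conditions are convex constraints).

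The main obstacle I expect is purely a matter of bookkeeping rather than of substance: one must verify that all the ``contractible space of choices'' claims are compatible with the height-order constraints on the marked disc and with the morphism conditions in $\Mun_c$ from Definition \ref{def_MMfd}(2) (height-increasing embeddings of marked discs, disjoint heights), and that the relevant restriction maps are genuinely fibrations in the compact-open topology on these infinite-dimensional embedding spaces. The constraints are all ``open and convex'' in the appropriate sense, so they do not change the homotopy type, but making this rigorous — e.g. invoking Cerf's or Palais's theorems on spaces of embeddings being fibrations, and checking the marked-disc collar can be chosen in a contractible family — is where the care is needed. Everything else is the standard regular-neighbourhood / isotopy-extension package in low-dimensional topology.
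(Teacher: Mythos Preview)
Your proposal is correct and takes essentially the same approach as the paper: the paper's proof consists of the single sentence that $\theta$ is clearly surjective and has contractible fibres, which is exactly the skeleton you flesh out. Your elaboration via regular neighbourhoods, isotopy extension, and the fibration/contractible-fibre argument is the standard way to justify those two assertions, and the paper simply leaves all of this implicit.
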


\begin{proof}
The lemma follows from the facts that $\theta$ is clearly surjective and that the fibers $\theta^{-1}(T)$ are contractible. 

\end{proof}

In particular, we get a bijection $\theta_* : \Hom_{\mathcal{M}^{(1)}}(\mathbf{H}_n, \mathbf{M}) \xrightarrow{\cong} P_n(\mathbf{M})$. Note that $\mathbf{H}_a \wedge \mathbf{H}_b \cong \mathbf{H}_{a+b}$ so $\BT$ is a PROP and we can draw the morphisms in $\BT(n,m)$ as $n$-bottom tangles in $\mathbf{H}_m$ as in Figure \ref{fig_BTcomposition}. 

\begin{figure}[!h] 
\centerline{\includegraphics[width=8cm]{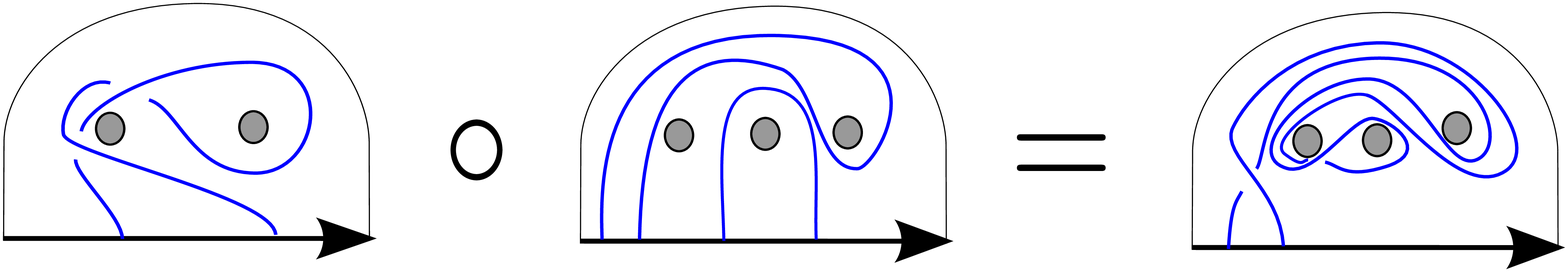} }
\caption{An illustration of the composition law $\circ : \BT(1,2)\times \BT(2,3) \to \BT(1,3)$ in terms of bottom tangles.} 
\label{fig_BTcomposition} 
\end{figure} 

 Let us now briefly introduce a different PROP $\mathfrak{bt}$, defined by Habiro in \cite{Habiro_BottomTangles}, to which we refer for further details. Let ${\mathcal{T}}$ be the category of tangles whose objects are words in the generators $\uparrow$ and $\downarrow$ (for instance $w=\uparrow \uparrow \downarrow \uparrow \downarrow$) and whose morphisms are isotopy classes of framed oriented tangles $T:w\to w'$ in $\mathbb{D}^2 \times I $ such that the endpoints at top are prescribed by $w$ and the endpoints at the bottom are prescribed by $w'$ (see Figure \ref{fig_BTaction} (a) for an instance of morphism between $\downarrow\uparrow\downarrow\uparrow\downarrow \uparrow$ and $\downarrow\uparrow\downarrow\uparrow$). Composition is given by vertical pasting and the monoidal structure is given by horizontal pasting (see \cite{Habiro_BottomTangles} for details). Let $b:= \downarrow \uparrow \in {\mathcal{T}}$, $\eta_b:= \CAP \in {\mathcal{T}}(\mathbf{1}, b)$ and $\eta_n := \eta_b^{\otimes n} \in {\mathcal{T}}(\mathbf{1}, b^{\otimes n})$ for $n\geq 0$. Set 
 $$ \mathfrak{bt}(n,m) := \{ T \in \mathcal{T}(b^{\otimes n}, b^{\otimes m}) | T \eta_n \sim_h \eta_m \}, $$
 where $\sim_h$ means an isotopy that allows the change of crossings $\crosspos\leftrightarrow \crossneg$ (Part (a) of Figure \ref{fig_BTaction} gives an example of such morphism).

\begin{figure}[!h] 
\centerline{\includegraphics[width=8cm]{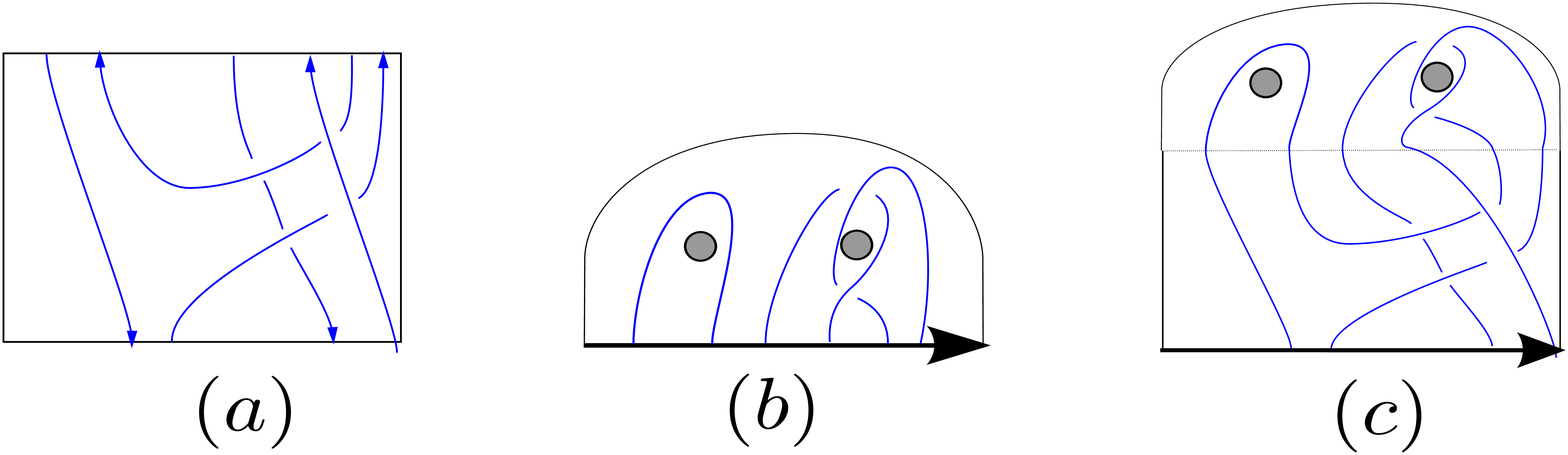} }
\caption{$(a)$ A morphism  $T \in \mathfrak{bt}(3,2)$. $(b)$ A bottom tangle $T' \in P_3(\mathbf{H}_2)$. $(c)$ The resulting bottom tangle $T' \cdot T \in P_2(\mathbf{H}_2)$.} 
\label{fig_BTaction} 
\end{figure} 
 
 \begin{definition}\label{def_bt}
 We denote by $\mathfrak{bt}$ the subcategory of $\mathcal{T}$ whose objects are elements $b^{\otimes n}$ for $n\geq 0$ and whose morphisms are the $ \mathfrak{bt}(n,m)$. 
 
 \par 
 For $T \in \mathfrak{bt}(n,m)$ and $T' \in P_n(\mathbf{M})$ we denote by $T'\cdot T \in P_m(\mathbf{M})$ the bottom tangle obtained by gluing $T$ and $T'$ as in Figure \ref{fig_BTaction}.
 \end{definition}

\subsection{The Crane-Habiro-Kerler-Yetter braided Hopf algebra}\label{sec_HopfAlg}

\begin{definition} Let $\mathcal{C}$ be a braided category. A \textit{dual BP Hopf algebra object  in } $\mathcal{C}$ is an object $H\in \mathcal{C}$, together with morphisms $\mu : H\otimes H \to H$, $\eta: \mathds{1}\to H$, $\Delta: H\to H \otimes H$, $\epsilon: H \to \mathds{1}$, $S^{\pm 1}:H\to H$, $\theta^{\pm 1}: H \to \mathds{1}$ such that, writing $B:=  \theta(\theta^{-1} \otimes  \mu \otimes \theta^{-1})(\Delta\otimes \Delta) : H\otimes H \to \mathds{1}$ and $B^-:= \theta^{-1}(\theta\otimes  \mu\circ c_{H,H} \otimes  \theta )(\Delta\otimes \Delta) : H\otimes H \to \mathds{1}$ , then the morphisms satisfy the relations of Figure \ref{fig_BPHopf}.
\end{definition}

\begin{figure}[!h] 
\centerline{\includegraphics[width=14cm]{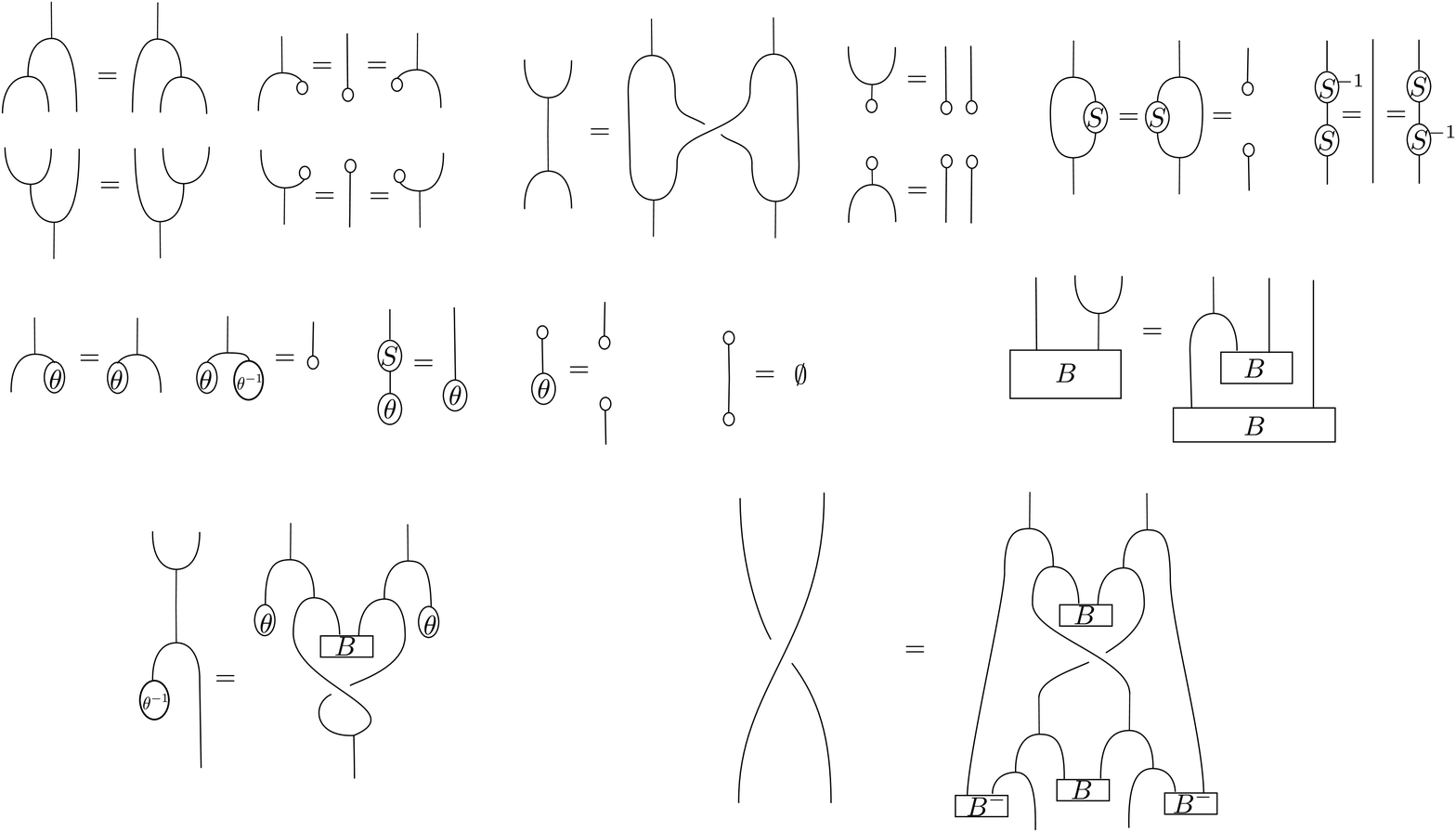} }
\caption{The relations defining a dual BP Hopf algebra. Diagrams are read from top to bottom. The first line are the relations for $H$ for being a (braided) Hopf algebra. The second line  asserts that $\theta$ is a cotwist and that the pairing $B$ is compatible with the product. The third line are the so-called BP relations.} 
\label{fig_BPHopf} 
\end{figure}

\begin{definition}\label{def_HabiroHopf}(Habiro)
We endow $\mathbf{H}_1 \in \BT$ with a structure of dual BP Hopf algebra object   $\mathcal{H} = (\mathbf{H_1}, \mu, \eta, \Delta, \epsilon, S, \theta)$  in the braided category $(\BT, \wedge, c_{\cdot, \cdot})$ with the structure morphisms drawn in Figure \ref{fig_BTHopfAlg}. The braiding $\Psi:= c_{\mathbf{H}_1, \mathbf{H}_1}$ is also depicted. It has a structure of right comodule over itself (in the braided sense) with comodule map 
$$\ad:=(\id \wedge \mu)\circ (\Psi \wedge \id) \circ (S\wedge \id \wedge \id) \circ \Delta^{(2)}= \adjustbox{valign=c}{\includegraphics[width=1.5cm]{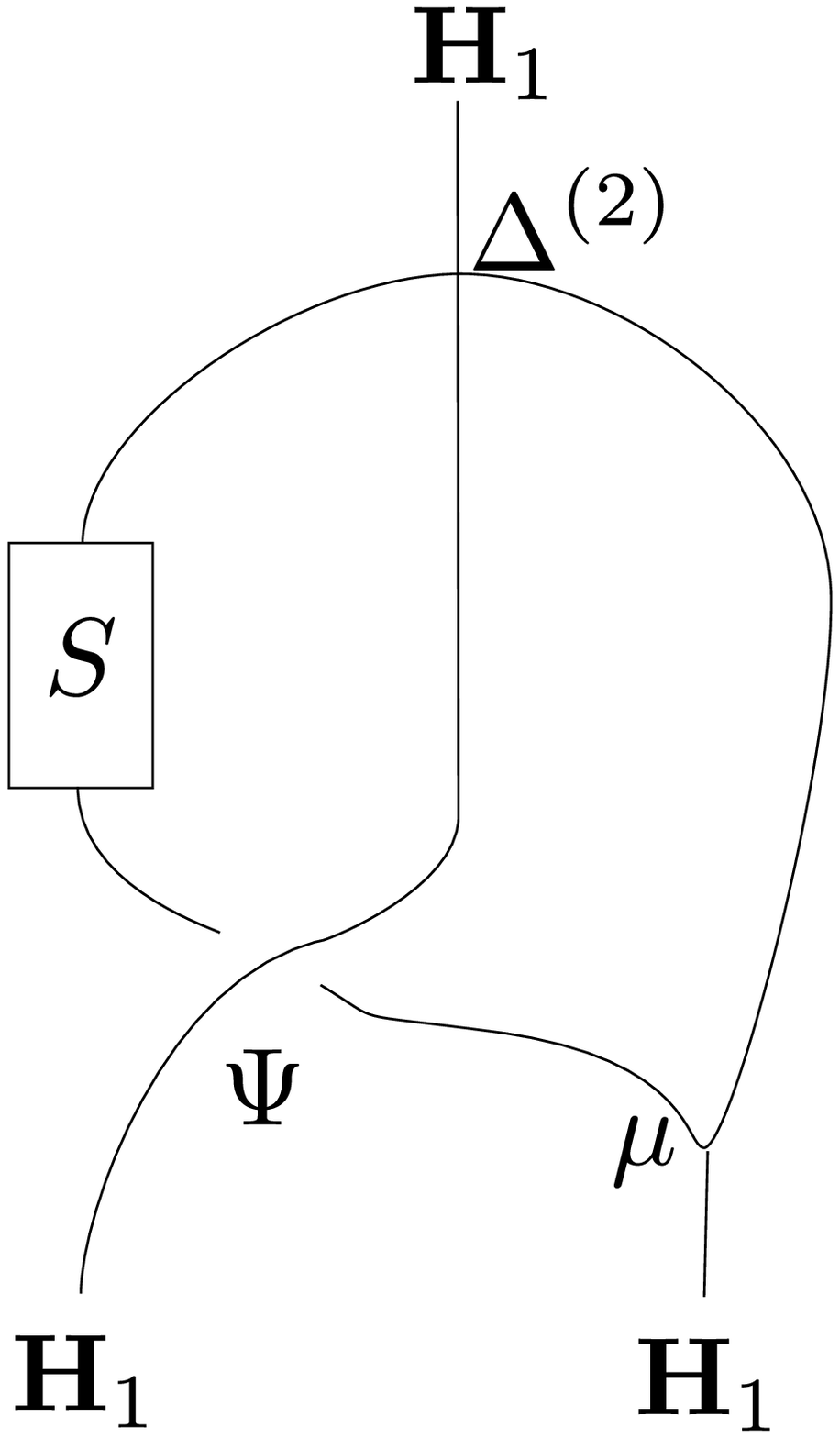}}.$$ 
\begin{figure}[!h] 
\centerline{\includegraphics[width=14cm]{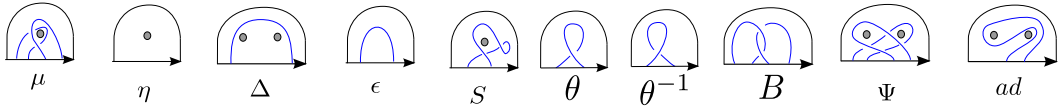} }
\caption{The product $\mu$, unit $\eta$, coproduct $\Delta$, counit $\epsilon$, antipode $S$, cotwist $\theta$ and its inverse $\theta^{-1}$, the associated pairing $B$,  the braiding $\Psi$ and the right comodule map $\ad$ in $\mathcal{H}$.} 
\label{fig_BTHopfAlg} 
\end{figure} 
\end{definition}

As illustrated in Figure \ref{fig_commutativity}, the Hopf algebra $\mathcal{H}$ is braided commutative in the sense that 
$$(\id \wedge \mu) (\ad\otimes \id) =(\id \wedge \mu)(\Psi\wedge \id)(\id\wedge \ad)\Psi.$$

\begin{figure}[!h] 
\centerline{\includegraphics[width=12cm]{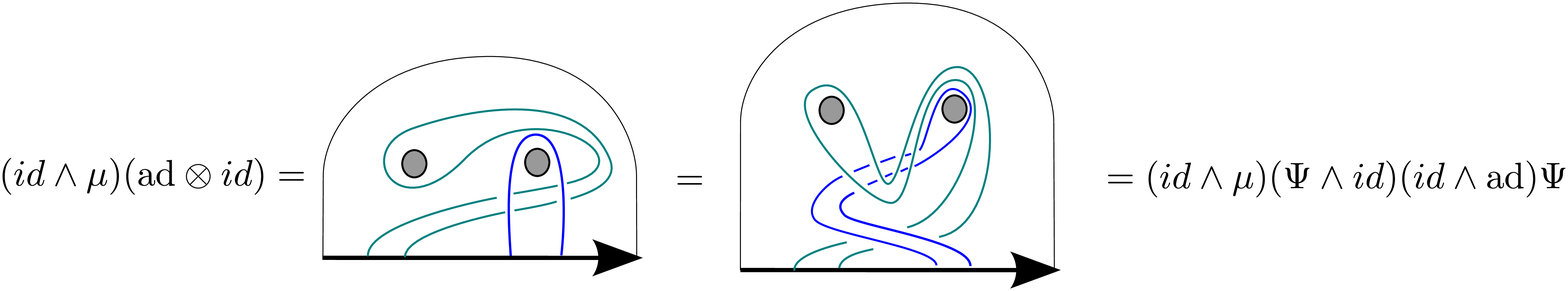} }
\caption{An illustration of the braided commutativity of $\mathcal{H}$.} 
\label{fig_commutativity} 
\end{figure}

\begin{remark}\label{remark_CYK}
Let $\Cob_{123}$ be the monoidal $2$-category with objects the oriented closed $1$-manifolds, with $1$-morphisms the oriented compact surface cobordisms and with $2$-morphisms the oriented compact $3$-manifolds with corners (see e.g. \cite{BDSV_ETQFT2} for details). The circle $\mathbb{S}^1$ and the empty set $\emptyset$ are objects in $\Cob_{123}$ and $\Hom_{\Cob_{123}}(\mathbb{S}^1, \emptyset)$ is a category whose objects are surfaces $\Sigma$ bounding $\partial \Sigma \cong \mathbb{S}^1$ and morphisms are $3$-manifolds with corners between two such surfaces. Let $\mathcal{C}^{CYK}\subset \Hom_{\Cob_{123}}(\mathbb{S}^1, \emptyset)$ be the subcategory with connected surfaces and connected $3$-manifolds. The category $\mathcal{C}^{CYK}$ has a braided balanced structure where the action of the framed little discs operad is given by gluing the inner discs of a little discs configuration to the boundaries of a family of such $\Sigma \in \mathcal{C}^{CYK}$. In particular, the monoidal structure is given by gluing two surfaces $\Sigma_1, \Sigma_2$ with circle boundary along a pair of pants to get a new one. 
Kerler \cite{Kerler_QInv} and Crane-Yetter \cite{CraneYetter_Categorification} independently noticed that the one-holed torus $\Sigma_{1,1} \in \mathcal{C}^{CYK}$ has a structure of Hopf algebra object, say $\mathcal{H}^{CYK}$, in $\mathcal{C}^{CYK}$. There is a braided balanced functor
$$ \partial : \mathcal{M}_{\con}^{(1)} \to \mathcal{C}^{CYK}$$ 
sending $\mathbf{M}$ to $\partial M \setminus \mathring{\mathbb{D}}_M$ and sending an embedding $f: \mathbf{M}_1 \to \mathbf{M}_2$ to the cobordism with corners $M_2 \setminus f(M_1)$.  The  restriction of $\partial$ to  $\BT$ is faithful  and the Crane-Yetter-Kerler Hopf algebra $\mathcal{H}^{CYK}$ is the image by $\partial$ of Habiro's Hopf algebra $\mathcal{H}$. This explains the origin of this Hopf algebra. Note that, due to the fact that $\mathcal{C}^{CYK}$ has more morphisms than $\mathcal{M}^{(1)}_{\con}$ ($\partial$ is not full), the Hopf algebra $\mathcal{H}^{CYK}$ admits a richer algebraic structure than $\mathcal{H}$ since it admits also an integral and a cointegral (see Appendix \ref{appendix_presentation_BT} for details).

\end{remark}

The following theorem is a direct consequence of the deep work of Kerler \cite{Kerler_PresTanglesCat, Kerler_AlgCobordisms} and Bobtcheva-Piergallini \cite{BobtchevaPiergallini} on the category $\mathcal{C}^{CYK}$, whose proof is postponed to the Appendix \ref{appendix_presentation_BT}.

\begin{theorem}\label{theorem_functorBT} Let $\mathcal{C}$ be a braided category and $H$ a dual BP Hopf algebra object  in $\mathcal{C}$. Then there exists a unique braided functor $Q_H: \BT \to \mathcal{C}$ sending  $\mathbf{H}_1$ to $H$ and preserving their structure morphisms $( \mu, \eta, \Delta, \epsilon, S, \theta)$.
\end{theorem}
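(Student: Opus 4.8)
The plan is to reduce the existence and uniqueness of $Q_H$ to a finite presentation of the PROP $\BT$ by generators and relations. First I would establish that $\BT$ is generated, as a braided monoidal category (equivalently, as a PROP with its braiding), by the structure morphisms $\mu, \eta, \Delta, \epsilon, S, S^{-1}, \theta, \theta^{-1}$ of Habiro's Hopf algebra object $\mathcal{H}$ on $\mathbf{H}_1$. This is where the work of Kerler \cite{Kerler_PresTanglesCat, Kerler_AlgCobordisms} and Bobtcheva--Piergallini \cite{BobtchevaPiergallini} enters: via the faithful braided balanced functor $\partial: \mathcal{M}_{\con}^{(1)} \to \mathcal{C}^{CYK}$ of Remark \ref{remark_CYK}, morphisms in $\BT$ correspond to connected $3$-dimensional cobordisms-with-corners between one-holed surfaces, and these admit a handle (or Kirby-Cerf-type) decomposition. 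One translates the Kerler/Bobtcheva--Piergallini presentation of the relevant subcategory of $\mathcal{C}^{CYK}$ into a presentation of $\BT$ itself; the generators become exactly $(\mu,\eta,\Delta,\epsilon,S^{\pm 1},\theta^{\pm 1})$ on $\mathbf{H}_1$, and the relations become exactly the defining relations of a dual BP Hopf algebra object listed in Figure \ref{fig_BPHopf} (this is the content of the definition having been reverse-engineered from the topology, so the match is by design). Carrying this out carefully is deferred to Appendix \ref{appendix_presentation_BT}, as the paper states.

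Granting such a presentation, the theorem is essentially formal. For \emph{uniqueness}: any braided functor $Q_H: \BT \to \mathcal{C}$ sending $\mathbf{H}_1 \mapsto H$ and preserving the structure morphisms is forced on all generators, hence on every morphism of $\BT$ (every morphism being a composite of generators, their formal inverses where applicable, tensor products, identities, and braidings, all of which a braided functor must preserve). For \emph{existence}: one defines $Q_H$ on objects by $\mathbf{H}_n \mapsto H^{\otimes n}$ and on generators by the corresponding structure morphisms of $H$; because $H$ is assumed to be a dual BP Hopf algebra object in $\mathcal{C}$, it satisfies in $\mathcal{C}$ precisely the relations of Figure \ref{fig_BPHopf}, so $Q_H$ respects all the defining relations of $\BT$ and therefore descends to a well-defined functor. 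Braidedness of $Q_H$ follows because the braiding $\Psi = c_{\mathbf{H}_1,\mathbf{H}_1}$ in $\BT$ is sent to $c_{H,H}$ in $\mathcal{C}$ by naturality and the hexagon axioms, and more generally $c_{\mathbf{H}_n,\mathbf{H}_m}$ is a composite of elementary braidings $c_{\mathbf{H}_1,\mathbf{H}_1}$.

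The main obstacle is entirely the first step: proving that $\BT$ admits exactly the finite presentation whose relations are those of a dual BP Hopf algebra. This requires (i) knowing that $\partial$ identifies $\BT$ with a concrete subcategory of cobordisms, (ii) importing the Kerler--Bobtcheva--Piergallini generators-and-relations result, which itself rests on Cerf theory / Kirby calculus for $4$-dimensional $2$-handlebodies and the translation to $3$-dimensional cobordisms, and (iii) checking that no relations beyond the BP ones survive and that the listed ones are independent enough to be complete. Everything after this — the functor construction and the verification of braidedness — is a routine diagram-chase, which is why it is safe to relegate the presentation to the appendix and treat the theorem here as a corollary.
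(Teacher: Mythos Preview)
Your overall strategy is exactly the paper's: reduce Theorem~\ref{theorem_functorBT} to a finite presentation of $\BT$ (Theorem~\ref{theorem_presBT}), then invoke the universal property of a presented PROP. The formal existence/uniqueness part is correct and matches the paper.

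Where your proposal is thin is precisely the step you flag as the main obstacle, item (iii). The Bobtcheva--Piergallini result presents $\mathcal{C}^{CYK}$, not $\BT$: their generating set is $G\cup\{\lambda,\Lambda\}$ and their relations are $Rel$ together with the integral/cointegral relations of Figure~\ref{fig_RelationsIntegral}. Since $\partial:\BT\to\mathcal{C}^{CYK}$ is faithful but not full, it is not a priori clear that the relations among morphisms in the image of $\partial$ are generated by $Rel$ alone --- in principle there could be relations in $\BT$ that, when viewed in $\mathcal{C}^{CYK}$, only become derivable after passing through words containing $\lambda$ or $\Lambda$. Your phrase ``checking that no relations beyond the BP ones survive'' names the issue but does not resolve it.

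The paper's key observation is this: if in each integral/cointegral relation of Figure~\ref{fig_RelationsIntegral} one substitutes $\lambda\mapsto\epsilon$ and $\Lambda\mapsto\eta$, the result is already a consequence of $Rel$. This yields a monoidal functor $g:\mathcal{C}^{CYK}\to\mathcal{D}$ (where $\mathcal{D}$ is the PROP presented by $(G,Rel)$) satisfying $g\circ f=\id_{\mathcal{D}}$ for the tautological $f:\mathcal{D}\to\mathcal{C}^{CYK}$. Hence $f$ is faithful; combined with the faithfulness of $\partial$ and Habiro's generation result (Theorem~\ref{theorem_Habiro_generators}), one concludes $\BT\cong\mathcal{D}$. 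This retraction argument is the missing ingredient in your outline.
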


\section{Habiro's quantum representation spaces}\label{sec2}

\subsection{Categorical preliminaries}\label{sec_Cat}
We introduce a bit of (standard) categorical terminology.
\par \textit{Free cocompletions}
 \par Let $(\mathcal{E}, \otimes)$ be a cocomplete symmetric monoidal category, which concretely will be either $\Set$, $\Mod_k$ or $\Top$, for $k$ is a commutative unital ring. Note that these three categories are related by monoidal functors $\pi_0 : \Top \to \Set$ and $k[\cdot] : \Set \to \Mod_k$, where the latter functor sends a set $X$ to the $k$-module $k[X]$ freely generated by $X$.
 
 Let $\mathcal{C}$ be a category enriched over $\mathcal{E}$. A \textit{free cocompletion} is a pair $(i, \Free(\mathcal{C}))$ where $\Free(\mathcal{C})$ is a cocomplete category and $i:\mathcal{C} \to \Free(\mathcal{C})$ is a fully faithful functor. The pair is required to be universal among such pairs in the sense that for any fully faithful functor $j: \mathcal{C}\to \mathcal{D}$ to a cocomplete category $\mathcal{D}$, there exists a continuous functor (i.e. a functor that commutes with colimits) $k: \Free(\mathcal{C}) \to \mathcal{D}$ such that $j=k\circ i$.
  Such a free cocompletion  is given by the category $\widehat{\mathcal{C}}$ of functors $F: \mathcal{C}^{op}\to \mathcal{E}$ with natural transformations as morphisms. The fully faithful functor (Yoneda embedding)  $\widehat{\bullet} : \mathcal{C}\to \widehat{\mathcal{C}}$ sends an object $x\in \mathcal{C}$ to $\widehat{x} := \Hom_{\mathcal{C}}(\bullet, x)$. If $(\mathcal{C}, \wedge)$ is symmetric monoidal, then $\widehat{\mathcal{C}}$ receives a symmetric monoidal structure by the \textit{Day convolution product} $\otimes_D$ defined by the coend
 $$ F\otimes_D G (x) := \int^{a,b \in \mathcal{C}} F(a)\otimes G(b) \otimes \Hom_{\mathcal{C}}( x, a\wedge b). $$
 The Day convolution is designed such that $\widehat{\bullet} : \mathcal{C}\to \widehat{\mathcal{C}}$ is symmetric monoidal (see \cite{DayThesis}). 
 
 \vspace{2mm}
 \par \textit{Left and right modules}
 \par Let $\mathcal{C}, \mathcal{D}$ be categories enriched over $\Mod_k$, for $k$ a commutative unital ring. A \textit{left} $\mathcal{C}$-\textit{module} is a functor $F: \mathcal{C}^{op}\to \Mod_k$, a  \textit{right} $\mathcal{C}$-\textit{module} is a functor $F: \mathcal{C}\to \Mod_k$
and a $\mathcal{C}-\mathcal{D}$ bimodule is a functor $F: \mathcal{C}^{op}\times \mathcal{D} \to \Mod_k$.  We denote by $\LMod(\mathcal{C}), \RMod(\mathcal{C}), \Bimod(\mathcal{C}, \mathcal{D})$ the categories of left, right and bi-modules respectively.

 Let $\mathbf{k}$ be the category with a single object whose set of endomorphism is $k$. Note that  a left $\mathcal{C}$-module is the same as a $\mathcal{C}-\mathbf{k}$-bimodule, a right $\mathcal{C}$-module is the same as a $\mathbf{k}-\mathcal{C}$-bimodule and a $\mathbf{k}-\mathbf{k}$-bimodule is just an element of $\Mod_k$.

  The \textit{convolution product} $\bullet \otimes_{\mathcal{C}} \bullet : \Bimod(\mathcal{A}, \mathcal{B})\times \Bimod(\mathcal{B}, \mathcal{C}) \to \Bimod(\mathcal{A}, \mathcal{C})$ sends $F: \mathcal{A}^{op}\times \mathcal{B} \to \Mod_k$ and $G: \mathcal{B}^{op}\times \mathcal{C}\to \Mod_k$ to $ F\otimes_{\mathcal{C}} G$ defined by:
  $$ F\otimes_{\mathcal{C}} G(a,b) = \int^{x \in \mathcal{C}} F(a,x)\otimes G(x,b).$$
  
  \begin{lemma}\label{lemma_tenstens} Let $(\mathcal{C}, \wedge)$ be a symmetric monoidal category enriched over $\Mod_k$, $F_1, F_2: \mathcal{C} \to \Mod_k$ two right $\mathcal{C}$-modules and $G: \mathcal{C}^{op}\to \Mod_k$ a left module which is monoidal. Then 
  $$ (F_1\otimes_D F_2) \otimes_{\mathcal{C}} G \cong (F_1\otimes_{\mathcal{C}} G) \otimes_k (F_2\otimes_{\mathcal{C}} G).$$
  \end{lemma}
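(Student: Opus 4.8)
The plan is to compute both sides as coends and exhibit a natural isomorphism between the resulting objects of $\Mod_k$. First I would unfold the left-hand side using the definitions given in the excerpt. The Day convolution gives
$$ (F_1\otimes_D F_2)(x) = \int^{a,b\in\mathcal{C}} F_1(a)\otimes F_2(b)\otimes \Hom_{\mathcal{C}}(x, a\wedge b), $$
and then the convolution product over $\mathcal{C}$ with the left module $G$ gives
$$ (F_1\otimes_D F_2)\otimes_{\mathcal{C}} G = \int^{x\in\mathcal{C}} (F_1\otimes_D F_2)(x)\otimes G(x) = \int^{x\in\mathcal{C}}\int^{a,b\in\mathcal{C}} F_1(a)\otimes F_2(b)\otimes \Hom_{\mathcal{C}}(x, a\wedge b)\otimes G(x). $$
By Fubini for coends I may interchange the order of the coends and first perform the coend over $x$. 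Here the co-Yoneda lemma applies: $\int^{x\in\mathcal{C}} \Hom_{\mathcal{C}}(x, a\wedge b)\otimes G(x) \cong G(a\wedge b)$, using that $G: \mathcal{C}^{op}\to\Mod_k$ is a functor (a left $\mathcal{C}$-module) so that $\Hom_{\mathcal{C}}(x,a\wedge b)$ acts on $G$ via precomposition. This leaves
$$ (F_1\otimes_D F_2)\otimes_{\mathcal{C}} G \cong \int^{a,b\in\mathcal{C}} F_1(a)\otimes F_2(b)\otimes G(a\wedge b). $$

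The next step is where the monoidality hypothesis on $G$ is used: since $G$ is a monoidal functor, there is a natural isomorphism $G(a\wedge b)\cong G(a)\otimes_k G(b)$ (the lax structure map, which is invertible as $G$ is strong monoidal), compatible with the morphisms of $\mathcal{C}$. Substituting and reassociating/commuting the tensor factors in $\Mod_k$ (which is symmetric monoidal) yields
$$ (F_1\otimes_D F_2)\otimes_{\mathcal{C}} G \cong \int^{a,b\in\mathcal{C}} \bigl(F_1(a)\otimes_k G(a)\bigr)\otimes_k \bigl(F_2(b)\otimes_k G(b)\bigr). $$
Finally, a coend in two independent variables of a tensor product that separates as a function of $a$ times a function of $b$ splits as a tensor product of coends — because $\otimes_k$ preserves colimits in each variable, so the double coend is the tensor product of the two single coends. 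This gives
$$ \int^{a\in\mathcal{C}} F_1(a)\otimes_k G(a) \;\otimes_k\; \int^{b\in\mathcal{C}} F_2(b)\otimes_k G(b) = (F_1\otimes_{\mathcal{C}} G)\otimes_k (F_2\otimes_{\mathcal{C}} G), $$
which is exactly the right-hand side. Chaining the displayed isomorphisms completes the proof.

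The steps are individually routine manipulations of coends (Fubini, co-Yoneda, colimit-preservation of $\otimes_k$), so the main point to be careful about is the \emph{naturality and coherence} of the monoidality isomorphism $G(a\wedge b)\cong G(a)\otimes G(b)$ — one must check that the identifications are compatible with the structure maps of the coends (the dinatural families), so that the final isomorphism is well-defined and, ideally, natural in $F_1$ and $F_2$. I expect this bookkeeping with the dinatural transformations, rather than any single conceptual difficulty, to be the part of the argument requiring genuine attention; everything else is formal.
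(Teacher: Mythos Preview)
Your proof is correct and follows essentially the same approach as the paper's own proof: unfold the definitions, apply Fubini and co-Yoneda to reduce to $\int^{a,b} F_1(a)\otimes F_2(b)\otimes G(a\wedge b)$, use monoidality of $G$, and split the double coend via cocontinuity of $\otimes_k$. The paper presents this as a single chain of coend manipulations without the coherence commentary you add, but the argument is identical.
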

  
  \begin{proof}
  Unfolding the definitions, one has:
  \begin{multline*}
  (F_1\otimes_D F_2) \otimes_{\mathcal{C}} G = \int^{x \in \mathcal{C}} (F_1\otimes_D F_2)(x) \otimes_k G(x) = \int^{a,b,x \in \mathcal{C}} F_1(a)\otimes_k F_2(b)\otimes_k \Hom_{\mathcal{C}}(x, a\wedge b) \otimes_k G(x)
 \\ = \int^{a,b \in \mathcal{C}} F_1(a)\otimes_k F_2(b) \otimes_k \left( \int^{x\in \mathcal{C}} \Hom_{\mathcal{C}}(x, a\wedge b) \otimes_k G(x)\right) \cong  \int^{a,b \in \mathcal{C}} F_1(a)\otimes_k F_2(b) \otimes_k G(a\wedge b) 
 \\ =   
  \int^{a,b \in \mathcal{C}} F_1(a)\otimes_k F_2(b) \otimes_k G(a)\otimes_k G(b) \cong \left( \int^{a\in \mathcal{C}} F_1(a) \otimes_k G(a) \right) \otimes_k \left( \int^{b\in \mathcal{C}} F_2(b)\otimes_k G(b)\right) \\ = (F_1\otimes_{\mathcal{C}} G) \otimes_k (F_2\otimes_{\mathcal{C}} G)
  \end{multline*}
  \end{proof}

 \vspace{2mm}
 \par \textit{Left Kan extensions}
 \par Let $\mathcal{A}, \mathcal{B}, \mathcal{C}$ be three categories enriched over $\mathcal{E}$ and $F: \mathcal{A}\to \mathcal{B}$, $G:\mathcal{A}\to \mathcal{C}$ two functors. A \textit{left Kan extension} is a pair $(L, \eta)$ where $L :\mathcal{B} \to \mathcal{C}$ is a functor and $\eta : X \to LF$ a natural transformation visualized as:
 $$
 \begin{tikzcd}
 {} & \mathcal{B}\ar[rd,dotted,  "L"] & {} \\
 \mathcal{A}  \ar[ru, "F"] \ar[rr, "G"{below} ,""{name=D , inner sep=1pt}] &{}& \mathcal{C}
 \arrow[Rightarrow, from=D, to= 1-2, "\eta"]
 \end{tikzcd}
 $$ 
 such that $(L, \eta)$ is universal among these pairs in the sense that if $M:\mathcal{B}\to \mathcal{C}$ is another functor and $\mu : G \to MF$ a natural transformation then there exists a unique natural transformation $\sigma: L\to M$ such that the following diagram commutes:
 $$
 \begin{tikzcd}
 {} & LF \ar[rd, "\sigma_F"] & {} \\
 G \ar[ru, "\eta"] \ar[rr, "\mu"] &{}& MF
 \end{tikzcd}
 $$
 where $\sigma_F: LF \to MF$ is the natural transformation defined by $\sigma_F(x):= \sigma(F (x))$. A left Kan extension is unique up to unique isomorphism so we call $L$ the left Kan extension and denote it by $L=\Lan_F G$. Explicitly,  the left Kan extension can be defined by the formula:
 $$ \Lan_FG (b) = \int^{a \in \mathcal{A}} \Hom_{\mathcal{B}}(F(a), b)\otimes G(a).$$
 
 \begin{lemma}\label{lemma_KAN}[\cite[Proposition $3.3.6$]{KashiwaraSchapira_Book}] Left adjoint functors preserve left Kan extensions. Said differently, if $F: \mathcal{A}\to \mathcal{B}$, $G:\mathcal{A}\to \mathcal{C}$ are two functors, $L=\Lan_FG$ and $H:\mathcal{C} \to \mathcal{D}$ admits a right adjoint, then $H\circ L = \Lan_{H\circ F}H\circ G$.
\end{lemma}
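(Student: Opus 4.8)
The statement amounts to saying that $H\circ L$ is a left Kan extension of $H\circ G$ along $F$, i.e. $H\circ\Lan_F G\cong\Lan_F(H\circ G)$ (in particular the right-hand side exists); the last clause of the lemma is a rephrasing of this. The plan is to verify the defining universal property directly from the adjunction $H\dashv H^R$ and the universal property of $L$, rather than from the pointwise coend formula: this is cleaner and automatically produces the comparison $2$-cell with the correct unit.

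Concretely, I fix a right adjoint $H^R\colon\mathcal{D}\to\mathcal{C}$ with unit $\iota\colon\id_{\mathcal{C}}\Rightarrow H^R\circ H$ and counit $\varepsilon\colon H\circ H^R\Rightarrow\id_{\mathcal{D}}$, and take as candidate unit for $H\circ L$ the whiskered natural transformation $H\eta\colon H\circ G\Rightarrow (H\circ L)\circ F$ (components $H(\eta_a)$), where $\eta\colon G\Rightarrow L\circ F$ is the unit of $L=\Lan_F G$. For an arbitrary functor $M\colon\mathcal{B}\to\mathcal{D}$ I would assemble the chain of bijections
$$\mathrm{Nat}(H\circ L,\,M)\;\cong\;\mathrm{Nat}(L,\,H^R\circ M)\;\cong\;\mathrm{Nat}\bigl(G,\,H^R\circ M\circ F\bigr)\;\cong\;\mathrm{Nat}(H\circ G,\,M\circ F),$$
in which the outer two bijections are the adjunction $H\dashv H^R$ applied objectwise and the middle one is the universal property of $L$ applied to the functor $H^R\circ M\colon\mathcal{B}\to\mathcal{C}$. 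Each of the three bijections is natural in $M$, hence so is the composite; and a natural-in-$M$ bijection $\mathrm{Nat}(H\circ L,\,M)\cong\mathrm{Nat}(H\circ G,\,M\circ F)$ is exactly what it means for $H\circ L$ to be the left Kan extension $\Lan_F(H\circ G)$.

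The one point requiring care — and the sole (minor) obstacle I anticipate — is to check that this composite bijection sends $\sigma\colon H\circ L\to M$ to $(\sigma_F)\circ(H\eta)$, so that the identification $H\circ L\cong\Lan_F(H\circ G)$ is the canonical one, with unit precisely $H\eta$, and not merely an abstract isomorphism of functors. This is a routine diagram chase: unwinding the three bijections writes the image of $\sigma$ in terms of $\varepsilon$, $\iota$, $\eta$, $\sigma_F$, after which naturality of $\varepsilon$ and the triangle identity $(\varepsilon_{HL})\circ(H\iota_L)=\id_{HL}$ collapse it to $(\sigma_F)\circ(H\eta)$. Alternatively, more in the spirit of the coend formula recorded above, one may argue pointwise: $L(b)=\int^{a\in\mathcal{A}}\Hom_{\mathcal{B}}(F(a),b)\otimes G(a)$ is a colimit (a coequalizer of coproducts of copowers), a left adjoint preserves all colimits, and hence $H(L(b))\cong\int^{a\in\mathcal{A}}\Hom_{\mathcal{B}}(F(a),b)\otimes H(G(a))=\Lan_F(H\circ G)(b)$ naturally in $b$; compatibility of this isomorphism with the units is again the content of the same short chase. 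Either way the lemma, including the displayed equality, follows.
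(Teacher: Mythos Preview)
Your proof is correct. The paper does not supply its own argument for this lemma: it simply cites \cite[Proposition 3.3.6]{KashiwaraSchapira_Book} and moves on. So there is no ``paper's proof'' to compare against beyond that reference.

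Your argument via the chain of natural bijections
\[
\mathrm{Nat}(H\circ L,\,M)\;\cong\;\mathrm{Nat}(L,\,H^R\circ M)\;\cong\;\mathrm{Nat}(G,\,H^R\circ M\circ F)\;\cong\;\mathrm{Nat}(H\circ G,\,M\circ F)
\]
is the standard one, and your care in verifying that the resulting unit is $H\eta$ (via the triangle identity) is exactly right. The alternative pointwise argument you sketch---that left adjoints preserve the colimits defining the coend formula for $\Lan_F G$---is equally valid and is in fact closer in spirit to how the lemma is invoked later in the paper (in the proof of Lemma~\ref{lemma_classic1}).

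One small remark: you correctly parsed the intended conclusion as $H\circ\Lan_F G\cong\Lan_F(H\circ G)$. The paper's displayed formula reads $\Lan_{H\circ F}(H\circ G)$, but $H\circ F$ does not type-check (since $F\colon\mathcal{A}\to\mathcal{B}$ and $H\colon\mathcal{C}\to\mathcal{D}$), so this is a typo for $\Lan_F$; the usage in Lemma~\ref{lemma_classic1} confirms this reading.
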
 
 
\subsection{Half twists}\label{sec_HT}

As we shall review in the next subsection, it is well known that the quantum enveloping algebras $U_qG$ are (topological) ribbon Hopf algebras, so dually, the quantum groups $\mathcal{O}_qG$ are coribbon Hopf algebras. Less known is the fact, discovered by Kamnitzer-Snyder-Tingley \cite{KamnitzerTingley, SnyderTingley_HalfTwist} by revisiting the original works in  \cite{LevandorskiiSoibelman, KirillovReshetikhin}, that quantum enveloping algebras admit an additional structure of half-ribbon Hopf algebras so that $\mathcal{O}_qG$ are half coribbon Hopf algebras. This additional half-twist structure plays an important role in the study (and definition) of stated skein modules and algebras so we recall here the main definitions.

\begin{definition}\label{def_HT} Let $k$ be a commutative unital ring.
\begin{enumerate}
\item
(\cite{SnyderTingley_HalfTwist})
A \textit{half-ribbon Hopf algebra} is a ribbon Hopf algebra $(H, R,\theta^{-1})$ in $\Mod_k$ with an element $\omega \in H$, named \textit{half-twist}, such that $(1)$ $\omega$ is invertible, $(2)$ $\omega^2= \theta$ and $(3)$ $\Delta(\omega) = (\omega \otimes \omega) R$. When the $R$-matrix, the twist and the half twist live in some pro-finite completion of $H$, we call it a \textit{topological half-ribbon Hopf algebra}.
\item (\cite{Haioun_Sskein_FactAlg})
A \textit{half-coribbon Hopf algebra} is a coribbon Hopf algebra $(A, \mathbf{r}, \Theta^{-1})$ with a linear map $t: A \to k$ such that $(1)$ $t$ admits an inverse $t^{-1}$ for the convolution $\star$ (i.e. $t\star t^{-1}= t^{-1} \star t = \eta \circ \epsilon$), $(2)$ $t\star t = \Theta$ and $(3)$ $t(xy)=\sum t(x_{(1)}) t(y_{(1)}) r(x_{(2)}\otimes y_{(2)})$ for all $x,y \in A$.
\item Let $(A, \mathbf{r}, t)$ be a half-coribbon Hopf algebra and consider the involutive  morphism $C_t : A\to A$, $C_t : x \mapsto \sum t(x_{(1)}) x_{(2)} t^{-1}(x_{(3)})=\sum t^{-1}(x_{(1)})x_{(2)} t(x_{(3)})$. It is a straightforward consequence of the definition (obtained by dualizing the proof of \cite[Proposition $4.2$]{SnyderTingley_HalfTwist}) that $C_t$ is an isomorphism of co-algebras and an anti-morphism of algebras. The \textit{rotation isomorphism} is the automorphism of algebras $\rot : A\to A$ (and anti-morphism of co-algebras) defined by 
$$\rot := C_t \circ S.$$
Define an isomorphism $\Rot: \LComod_A \to \RComod_A$ sending a left $A$-comodule $(V, \Delta_V^L)$ to the right $A$-comodule $(V, \Delta_V^R)$ with the comodule map 
$$ \Delta_V^R : V \xrightarrow{ \Delta_V^L} A \otimes V \xrightarrow{\rot \otimes \id} A \otimes V \xrightarrow{\tau_{A,V}} V\otimes A.$$
Here $\tau_{A,V}$ is the flip ($\tau_{A,V}(x\otimes y) = y\otimes x$). 
\item The half-coribbon structure of a half-coribbon Hopf algebra $A$ can be encoded categorically in $\RComod_A$ by the pair $(\HT, \hT)$ where $\HT : \RComod_A \to \RComod_A$ is the functor sending a comodule $V= (V, \Delta_V^R)$ to $\HT(V)=(V, (C_t \otimes \id)\otimes \Delta_V^R)$ and $\hT: \id \Rightarrow \HT$ is the natural isomorphism defined by the isomorphisms $\hT_V : V \to \HT(V)$, $\hT_V(x)= \sum x_{(1)} \otimes t(x_{2})$. 
 \item The category $\RComod_A$  has a structure of braided category where for $(V, \Delta_V)$ and $(W, \Delta_W)$ two right $A$-comodules, the braiding is given by: 
$$
 c_{V,W} : V\otimes W \xrightarrow{\tau_{V,W}} W\otimes V \xrightarrow{\Delta_W\otimes \Delta_V} W\otimes A \otimes V \otimes A \xrightarrow{\id_W \otimes \tau_{A,V} \otimes \id_A}  W \otimes V \otimes A\otimes A  \xrightarrow{\id_W \otimes \id_V \otimes r} W\otimes V.
$$
Here again $\tau$ is the flip sending $x\otimes y$ to $y\otimes x$.
\item For $(V, \Delta_V)$ and $(W, \Delta_W)$ in $\RComod_A$, their  \textit{braided tensor product}  is the element $V\overline{\otimes} W \in \RComod_A$, whose underlying $k$-module is $V\otimes_k W$ with comodule map:
$$ \Delta_{V\overline{\otimes} W}:  V\otimes W \xrightarrow{\Delta_V \otimes \Delta_W} V \otimes A \otimes W \otimes A \xrightarrow{\id_V \otimes \tau_{A \otimes W} \otimes \id_A} V \otimes W \otimes A \otimes A \xrightarrow{\id_V\otimes \id_W\otimes \mu} V\otimes W \otimes A.$$
 If, moreover, $(V, \mu_V, \eta_V)$ and $(W, \mu_W, \eta_W)$ are algebra objects in $\RComod_A$, then $V\overline{\otimes}W$ is also an algebra object with  unit  $\eta_V \otimes \eta_W$ and, identifying $V$ with $V\otimes 1 \subset V\otimes W$ and $W$ with $1 \otimes W\subset V\otimes W$, the product is characterized by:
$$ \mu_{V\overline{\otimes} W} (x\otimes y) = \left\{ 
\begin{array}{ll}
\mu_V(x\otimes y) & \mbox{, if }x,y\in V; \\
\mu_W(x \otimes y) & \mbox{, if }x,y \in W; \\
x\otimes y & \mbox{, if }x\in V, y\in W; \\
c_{V,W}(x\otimes y) & \mbox{, if }x\in W, y\in V.
\end{array} \right.
$$
\end{enumerate}
\end{definition}

\subsection{Quantum groups and their braided transmutations}\label{sec_QG}

Let $G$ be a connected complex algebraic reducible group and fix a  maximal torus $H$ and a positive Weyl chamber.  Consider the  weight lattice $X_G\subset \mathfrak{h}_{\mathbb{R}}^*$, the subset $X_G^+\subset X_G$ of positive roots, the subset $\Delta$ of simple roots and the invariant pairing $(\cdot, \cdot): \mathfrak{h}_{\mathbb{R}}^* \otimes \mathfrak{h}_{\mathbb{R}}^* \to \mathbb{R}$ normalized such that the shortest root $\alpha\in \Delta$ has norm $(\alpha, \alpha)=2$. Denote by $\rho$ the element such that $(\rho, \alpha_i)=\frac{(\alpha_i, \alpha_i)}{2}$ for all $\alpha_i \in \Delta$.
Let 
$$ l=l_G = \min \{ n\geq 1 | \frac{(\lambda, \lambda)}{2}, (\lambda, \rho) \in \frac{1}{n}\mathbb{Z}, \forall \lambda \in X_G \}.$$
and consider the ring $k=k_G:= \mathbb{Z}[q^{\pm \frac{1}{l}}]$, where $q^{\frac{1}{l}}$ is an invertible formal variable whose $l$-th power will be denoted by $q$. Let $\dot{U}_qG$ be Lusztig's quantum enveloping algebra over $k_G$ as defined in \cite{Lusztig_Book} and denote by $\mathcal{C}_q^G$ the category of finite dimensional integrable $\dot{U}_qG$ modules over the ring $k$.
 The category $\mathcal{C}_q^G$ is semi-simple with simple objects the irreducible representations $V_{\lambda}$ with highest weight $\lambda \in X_G^+$ and, by definition of being integrable of finite rank, a module $V\in \mathcal{C}_q^G$ is a direct sum of finitely such $V_{\lambda}$.
 The Hopf algebra $\dot{U}_qG$ has a natural pro-finite completion $\widetilde{U}_qG=\int_{V\in \mathcal{C}_q^G} V^{*}\otimes V \cong \oplus_{\lambda \in X_G^+} \End(V_{\lambda})$ (see for instance \cite{KamnitzerTingley, Sawin, Negron_Frobenius}) and it is well known that $\dot{U}_qG$ has a structure of topological ribbon Hopf algebra in the sense that the $R$-matrix $R$ and twist $\theta$ are defined on the completions  $\widetilde{U}_qG$ and $\widetilde{(U_qG)^{\otimes 2}}$ (see \cite[Section $1$]{KamnitzerTingley} for details on this terminology). As a result, $\mathcal{C}_q^G$ has a structure of ribbon category. 
Moreover, it is showed in \cite{SnyderTingley_HalfTwist}  that $\dot{U}_qG$ can be enhanced with an additional structure of topological half-ribbon Hopf algebra, provided that we choose the correct twist for its ribbon structure. The half-twist plays a crucial role in the construction of stated skein modules and in Costantino-L\^e's reinterpretation in Section \ref{sec_skein_transmutation} of Majid's transmutation theory, so we briefly recall it following \cite{SnyderTingley_HalfTwist} to which we refer for further details.

Here $\theta$ represents the positive twist and, for historical reasons, most authors defined the ribbon structure by considering $\theta^{-1}$ (denoted $v$ for instance in \cite{Kassel}). 
The completion  $\widetilde{U}_qG$ is isomorphic to the product $\oplus_{\lambda \in X_G^+} \End(V_{\lambda})$, where $V_{\lambda}$ is the simple module of highest weight $\lambda$. In most textbooks, the authors endow $\dot{U}_qG$ with the twist $\theta_0\in \widetilde{U}_qG$ defined on a weight vector $v$ of weight $wt(v)$ by $\theta_0 v = q^{(wt(v), wt(v))+2(wt(v), \rho)}v$. However, as noticed in \cite{SnyderTingley_HalfTwist}, such a twist might not admit any half-twist so it is wise to modify it slightly as follows. Let $\varepsilon \in \widetilde{U}_qG$ be  defined on $V_{\lambda}$ by $\varepsilon_{\lambda}= (-1)^{(2\lambda, \rho^{\vee})} \id_{V_{\lambda}}$ and set $\theta:= \varepsilon \theta_0$. Then $\theta$ is a (topological) twist as well and admits two different half-twists. Let $L$ and $J$ be the elements in $\widetilde{U}_qG$ defined on a weight vector $v$ of weight $wt(v)$ by 
$$ L v = q^{(wt(v), wt(v))/2} v,  \quad J v = q^{(wt(v), wt(v))/2 +(wt(v), \rho)}v.$$
Let $\mathcal{T}_{w_0}\in \widetilde{U}_qG$ be the braid group operator corresponding to the longest element $w_0 \in W_G$ in the Weyl group of $G$. 

\begin{definition}
The half twists $\omega_1, \omega_2 \in \widetilde{U}_qG$ are the operators 
$$ \omega_1 := L \mathcal{T}_{w_0}, \quad \omega_2 := J \mathcal{T}_{w_0}.$$
\end{definition}

It is proved in \cite{KamnitzerTingley,SnyderTingley_HalfTwist} that  $ \omega_2$ is a half twists with square $\theta$ which endows $\dot{U}_qG$ with a structure of topological half-ribbon Hopf algebra. It is an easy exercise (left to the reader) that $\omega_1$ is a half-twist as well with the same square. Note that the half-twist $\omega_1$ appeared in the original pioneered of Levandorskii-Soidelman in \cite{LevandorskiiSoibelman} who proved that $\Delta(\omega_1) = (\omega_1 \otimes \omega_1) R$

\begin{definition}
The quantum group $\mathcal{O}_qG$ is the half coribbon Hopf algebra defined as $k$-module as 
$$ \mathcal{O}_qG:= \int^{V\in \mathcal{C}_q^G} V^*\otimes V$$
so that its elements are classes of matrix coefficients $[v^*\otimes v]$, for $V\in \mathcal{C}_qG$ and $v\in V$, $v^*\in V^*$. The product is $\mu([v_1^* \otimes v_1] \otimes [v_2^* \otimes v_2]) := [(v_1^*\otimes v_2^*) \otimes (v_1\otimes v_2)]$. The unit is $\eta(1):=[1^*\otimes 1]$ where $1\in k$ and $1^*\in k^*$ such that $1^* (1)=1$. The coproduct is $\Delta([v^*\otimes v]) = \sum_{i\in I} [v^* \otimes v_i] \otimes [v_i^* \otimes v]$, where $v\in V, v^* \in V^*$, $\{v_i\}_{i\in I}$ is a basis of $V$ and $\{v_i^*\}$ the dual basis of $V^*$. The counit is $\epsilon ([v^*\otimes v]) := \left< v^*, v\right>$. The antipode is $S([v^* \otimes v]):= [\alpha_V(v) \otimes v^*]$, where $v\in V$, $\alpha_V : V \xrightarrow{\cong} (V^*)^*$ is the isomorphism induced by the image of the rigid structure of $\mathcal{C}_qG$, i.e. by the charmed element of $\dot{U}_qG$. The co-R-matrix is $\mathbf{r}([v_1^*\otimes v_1] \otimes [v_2^* \otimes v_2]):= \left< v_2^* \otimes v_1^* , c_{V_1, V_2} (v_1\otimes v_2)\right>$. The inverse of the co-twist is $\Theta([v^* \otimes v]):= \left< v^* , \theta_V(v) \right>$ and it has two possible  co-half twists $t_1, t_2$ defined by $t_j([v^* \otimes v]) :=  \left< v^* , (\omega_j)_V(v) \right>$.
\end{definition}
For instance, when $G=\SL_2$, then $k_{\SL_2}=\mathbb{Z}[q^{\pm \frac{1}{4}}]$. The twist $\theta_0$ is the one defining the original Jones polynomial whereas the twist $\theta$ that we choose is the one defining the Kauffman-bracket (see \cite{Tingley_MinusSign} for details) so that $\mathcal{C}_q^{\SL_2}$ is equivalent to the Cauchy closure of the Temperley-Lieb category. The matrix coefficients $a,b,c,d$ of the standard $2$-dimensional representation of $\dot{U}_q\SL_2$ generate $\mathcal{O}_q[\SL_2]$ with relations:
$$
ab = q^{-1}ba, \quad  ac=q^{-1}ca,
\quad  db = q bd, \quad dc=q cd,
\quad ad=1+q^{-1}bc, \quad  da=1 + q bc, 
\quad bc=cb.
$$
It has a Hopf algebra structure characterized by the formulas 

 \begin{equation*}
 \begin{pmatrix} \Delta (a) & \Delta (b) \\ \Delta(c) & \Delta(d) \end{pmatrix} 
 = 
 \begin{pmatrix} a & b \\ c & d \end{pmatrix} 
 \otimes 
 \begin{pmatrix} a & b \\ c & d \end{pmatrix} 
\quad
 \begin{pmatrix} \epsilon(a) & \epsilon(b) \\ \epsilon(c) & \epsilon(d) \end{pmatrix} =
\begin{pmatrix} 1 &0 \\ 0& 1 \end{pmatrix}  
\quad
\begin{pmatrix} S(a) & S(b) \\ 	S(c) & S(d) \end{pmatrix} 
	= 
	\begin{pmatrix} d & -q b \\ -q^{-1}c & a \end{pmatrix} .
 \end{equation*}
 and the half-coribon structure is characterized by
$$ r \left(  \begin{pmatrix} a & b \\ c & d \end{pmatrix} 
 \otimes 
 \begin{pmatrix} a & b \\ c & d \end{pmatrix} \right)
= \begin{pmatrix} q^{1/2} & 0 & 0 & 0 \\ 0 & 0 &q^{-1/2} & 0 \\ 0 & q^{-1/2} & q^{1/2}-q^{-3/2} & 0 \\ 0 & 0 & 0 & q^{1/2} \end{pmatrix}=: \mathscr{R}
\quad
\Theta  \begin{pmatrix} a & b \\ c & d \end{pmatrix} = -q^{3/2}\begin{pmatrix} 1 &0 \\ 0& 1 \end{pmatrix}.$$
The two co-half twists are: 
$$ t_1  \begin{pmatrix} a & b \\ c & d \end{pmatrix} = \begin{pmatrix} 0 &-q^{5/4} \\  q^{1/4}& 0 \end{pmatrix}, \quad t_2  \begin{pmatrix} a & b \\ c & d \end{pmatrix} = \begin{pmatrix} 0 & q^{3/4} \\ -q^{3/4}& 0 \end{pmatrix}.
$$
The rotation isomorphism associated to $t_1$ is $\rot \begin{pmatrix} a & b \\ c & d \end{pmatrix} = \begin{pmatrix} a & c \\ b & d \end{pmatrix}$.
Note that the relations defining the algebra structure of $\mathcal{O}_q\SL_2$ can be rewritten, using the notation $M:= \begin{pmatrix} a & b \\ c & d \end{pmatrix}$ in the following compact form:
$$ \mathscr{R} (M\odot M) = (M \odot M) \mathscr{R}, \quad ad-q^{-1}bc=1$$ 
, where $\odot$ is the Kronecker product.
\par Denote by $\overline{\mathcal{C}_q^G}$ the category of (possibly infinite dimensional)  locally finite $\dot{U}_qG$ modules and let $i: \mathcal{C}_q^G \to \overline{\mathcal{C}_q^G}$ be the inclusion functor. Equivalently, $\overline{\mathcal{C}_q^G}$ is isomorphic to the category of $\mathcal{O}_qG$-comodules. Let $\underline{\dot{U}_qG}$ be the category with one element, say $pt$, whose endomorphisms ring is $\dot{U}_qG$ so that the category of left modules over $\underline{\dot{U}_qG}$ is canonically isomorphic to  $\overline{\mathcal{C}_q^G}$. Let $E$ be the $\underline{\dot{U}_qG}-\mathcal{C}_q^G$ bimodule sending $(pt, V)$ to $V$ and $E'$ the $\mathcal{C}_q^G-\underline{\dot{U}_qG}$ bimodule sending $(pt, V)$ to $V^*$. We also consider the field of fraction $K_G:= \mathbb{Q}(q^{1/n_G})$ and denote by $\mathcal{C}_q^{G, rat}$ (resp. $\overline{\mathcal{C}_q^{G,rat}}$) the category of finite dimensional (resp. locally finite) $\dot{U}_qG\otimes_{k_G} K_G$ modules.

\begin{lemma}\label{lemma_cocompletion}
\begin{enumerate}
\item $\overline{\mathcal{C}_q^{G, rat}}$ is semi-simple with simple objects the $V_{\lambda}$, $\lambda\in X_G^+$.
\item The functors $$E\otimes_{\underline{\dot{U}_qG}} \bullet: \LMod(\underline{\dot{U}_qG})=\overline{\mathcal{C}_q^G} \to \LMod(\mathcal{C}_q^G)=\widehat{\mathcal{C}_q^G}$$ and $$E'\otimes_{\mathcal{C}_q^G}\bullet : \LMod(\mathcal{C}_q^G)=\widehat{\mathcal{C}_q^G} \to  \LMod(\underline{\dot{U}_qG})=\overline{\mathcal{C}_q^G}$$ are mutual inverse equivalence of categories.
\end{enumerate}
Therefore $\mathcal{C}_q^G$ and $\underline{\dot{U}_qG}$ are Morita equivalent and the pair $(\overline{\mathcal{C}_q^G}, i)$ is a free cocompletion of $\mathcal{C}_q^G$. 
\end{lemma}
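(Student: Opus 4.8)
The plan is to verify the two items of Lemma \ref{lemma_cocompletion} and then deduce the concluding assertions about Morita equivalence and free cocompletion from them.

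\textbf{Item (1).} First I would recall that, over the field $K_G$, the quantum enveloping algebra $\dot U_q G \otimes_{k_G} K_G$ retains the key feature from the classical/generic theory: the finite dimensional integrable highest weight modules $V_\lambda$, $\lambda \in X_G^+$, are simple and pairwise non-isomorphic, and any finite dimensional integrable module decomposes as a finite direct sum of such $V_\lambda$. This is where genericity of $q$ matters; one invokes the standard complete reducibility statement for Lusztig's form over a field in which $q$ is not a root of unity (the relevant reference is Lusztig's book \cite{Lusztig_Book}, already cited). For the locally finite category $\overline{\mathcal{C}_q^{G,rat}}$ one then argues that every object is a (possibly infinite) direct sum of $V_\lambda$'s: by local finiteness every vector lies in a finite dimensional integrable submodule, which decomposes by the finite dimensional case, and a routine Zorn's lemma / socle argument assembles these into a global direct sum decomposition. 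Hence $\overline{\mathcal{C}_q^{G,rat}}$ is semisimple with simple objects exactly the $V_\lambda$.

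\textbf{Item (2).} Here I would recognize the statement as an instance of the classical Eilenberg--Watts / Morita theory for categories enriched over $\Mod_k$, applied to the "one-object'' category $\underline{\dot U_q G}$ and the small semisimple category $\mathcal{C}_q^G$. The bimodules $E$ and $E'$ are the obvious ones: $E(pt,V) = V$ with its tautological $\dot U_q G$-action and its $\mathcal{C}_q^G$-action by precomposition, and $E'(pt,V) = V^*$. To show $E \otimes_{\underline{\dot U_q G}} \bullet$ and $E' \otimes_{\mathcal{C}_q^G} \bullet$ are mutually inverse, I would compute the two composites using the coend (convolution) formula of Section \ref{sec_Cat}. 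For one direction, given a left $\mathcal{C}_q^G$-module (presheaf) $F : (\mathcal{C}_q^G)^{op} \to \Mod_k$, one forms $E \otimes_{\underline{\dot U_q G}} (E' \otimes_{\mathcal{C}_q^G} F)$ and checks it is naturally isomorphic to $F$; the essential input is the co-Yoneda lemma together with the fact that $\bigoplus_{\lambda} V_\lambda^* \otimes V_\lambda$ behaves as a ``unit'' — concretely, the semisimplicity of $\mathcal{C}_q^G$ means every object is a finite sum of $V_\lambda$, so one reduces to checking the isomorphism on the $\widehat{V_\lambda}$, where it is the statement that $E'(pt,-) \otimes_{\mathcal{C}_q^G} \widehat{V_\lambda} \cong V_\lambda^*$ as a $\dot U_q G$-module (a direct coend computation) and then $E \otimes_{\underline{\dot U_q G}} V_\lambda^{*}$-th... more precisely one uses that the evaluation/coevaluation of the rigid structure give the unit and counit of the adjunction, and that this adjunction is an equivalence because the (co)unit is an iso on generators. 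The other composite is handled symmetrically. I expect this to be essentially bookkeeping with coends, with the only subtlety being keeping straight which side each comodule/module structure sits on and that $E,E'$ are genuinely bimodules (functoriality in both variables), but no real obstacle.

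\textbf{The two conclusions.} Morita equivalence of $\mathcal{C}_q^G$ and $\underline{\dot U_q G}$ is immediate from item (2): a Morita equivalence of $\Mod_k$-enriched categories is exactly an equivalence $\LMod(\mathcal{C}) \simeq \LMod(\mathcal{D})$ implemented by bimodules, which is what we have proved. Finally, to see that $(\overline{\mathcal{C}_q^G}, i)$ is a free cocompletion of $\mathcal{C}_q^G$: by the general theory recalled in Section \ref{sec_Cat}, the free cocompletion of $\mathcal{C}_q^G$ is $\widehat{\mathcal{C}_q^G} = \LMod(\mathcal{C}_q^G)$ via the Yoneda embedding, and item (2) supplies an equivalence $\overline{\mathcal{C}_q^G} = \LMod(\underline{\dot U_q G}) \simeq \widehat{\mathcal{C}_q^G}$ under which $i : \mathcal{C}_q^G \to \overline{\mathcal{C}_q^G}$ corresponds to the Yoneda embedding (one checks $i$ is fully faithful and that $E' \otimes_{\mathcal{C}_q^G} \widehat{V} \cong V$ naturally, i.e. the equivalence intertwines $i$ with $\widehat{\bullet}$); being a free cocompletion is preserved under equivalence, so $(\overline{\mathcal{C}_q^G}, i)$ is one too.

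\textbf{Main obstacle.} The one genuinely non-formal point is item (1) — establishing semisimplicity over $K_G$ with the prescribed simple objects — which relies on the root-of-unity-free representation theory of $\dot U_q G$; the rest (item (2) and the two deductions) is a formal unwinding of the enriched-Morita and Day-convolution yoga set up in Section \ref{sec_Cat}, and the main risk there is merely notational, not mathematical.
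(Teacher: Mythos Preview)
Your proposal is correct and follows essentially the same route as the paper. For item (1) the paper also reduces to the finite-dimensional case via Lusztig and then invokes local finiteness (citing Sweedler's theorem that cyclic comodules are finite dimensional, which is the same mechanism you sketch with Zorn); for item (2) the paper takes the slightly slicker variant of computing the two bimodule composites $E\otimes_{\underline{\dot U_qG}}E'$ and $E'\otimes_{\mathcal{C}_q^G}E$ directly and identifying them with the identity bimodules $\Hom_{\mathcal{C}_q^G}(-,-)$ and $\mathcal{O}_qG$, rather than checking the functor composites on representables as you plan, but this is the same argument packaged one level up.
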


\begin{proof}
$(1)$ That $\mathcal{C}_q^{G, rat}$ is semi-simple with simple objects the $V_{\lambda}$ is a classical result (see e.g. \cite{Lusztig_Book}). By \cite[Theorem $2.1.3$ (b)]{Sweedler_HopfAlgebras}, every cyclic $\mathcal{O}_qG$-comodule is finite dimensional so $\overline{\mathcal{C}_q^{G, rat}}$ is semi-simple as well with the same simple objects.
\par $(2)$ We need to prove that $E\otimes_{\underline{\dot{U}_qG}} E' \cong \mathcal{C}_q^G$ as a $\mathcal{C}_q^G$-bimodule and that $E'\otimes_{\mathcal{C}_q^G} E\cong \mathcal{O}_qG$ as a $\dot{U}_qG$-bimodule. By definition, for $V,W \in \mathcal{C}_q^G$, then
 $$E\otimes_{\underline{\dot{U}_qG}} E'(V,W)=  V\otimes_{\dot{U}_qG} W^*\cong \Hom_{\mathcal{C}_q^G}(W,V)$$
 and $$ E'\otimes_{\mathcal{C}_q^G} E (pt, pt) = \int^{V\in \mathcal{C}_q^G} V^*\otimes V= \mathcal{O}_qG.$$
 This proves the second assertion.
\end{proof}

In \cite{Majid_BQG_Rank}, Majid introduced another quantum group $B_qG$ named the \textit{braided quantum group} obtained from $\mathcal{O}_qG$ by the so-called transmutation procedure. The following definition of $B_qG$ only depends on the coribbon structure of $\dot{U}_qG$ and not on the half-twists.

\begin{definition}\label{def_transmutation} Let $(H,\mu, \eta, \Delta, \epsilon, S)$ be a coribbon Hopf algebra.
\begin{enumerate}
\item \cite[Example $9.4.10$]{Majid_QGroups} The \textit{transmutation} of $H$ is the Hopf algebra object $BH$ in the braided category $H-\RComod$ whose underlying right $H$-comodule is $(H, \Ad)$ with the (right) adjoint coaction $\Ad : BH \to BH \otimes H$
$$\Ad (x) := \sum x_{(2)} \otimes S(x_{(1)})x_{(3)},$$
 with coproduct $\Delta$, unit $\eta$, counit $\epsilon$, cotwist $\theta$ and with the modified product and antipode given by:
\begin{equation}\label{eq_transmuted_prod}
\underline{\mu} (x\otimes y) := \sum x_{(2)}y_{(2)} r( S(x_{(1)}) x_{(3)} \otimes S(y_{(1)})) 
\end{equation}
\begin{equation}\label{eq_transmuted_antipod}
 \underline{S} (x) = \sum S(x_{(2)})r((S^{2}(x_{(3)}) S(x_{(1)}) \otimes x_{(4)}))
 \end{equation}
The \textit{braided quantum adjoint coaction} $\Ad^B : BH \to  BH \overline{\otimes} BH$ is defined by
$$ \Ad^B :=  \sum (\id \otimes \mu) (c_{H,H} \otimes \id) (S(x_{(1)}) \otimes x_{(2)} \otimes x_{(3)}). $$
This equips $BH$ with a structure of right $BH$-comodule object in the category $H-\RComod$ (see \cite{Majid_BraidedHopfAlg} for details on this notion).
The non braided and braided quantum adjoint coaction are depicted as:
$$\Ad= \adjustbox{valign=c}{\includegraphics[width=1.5cm]{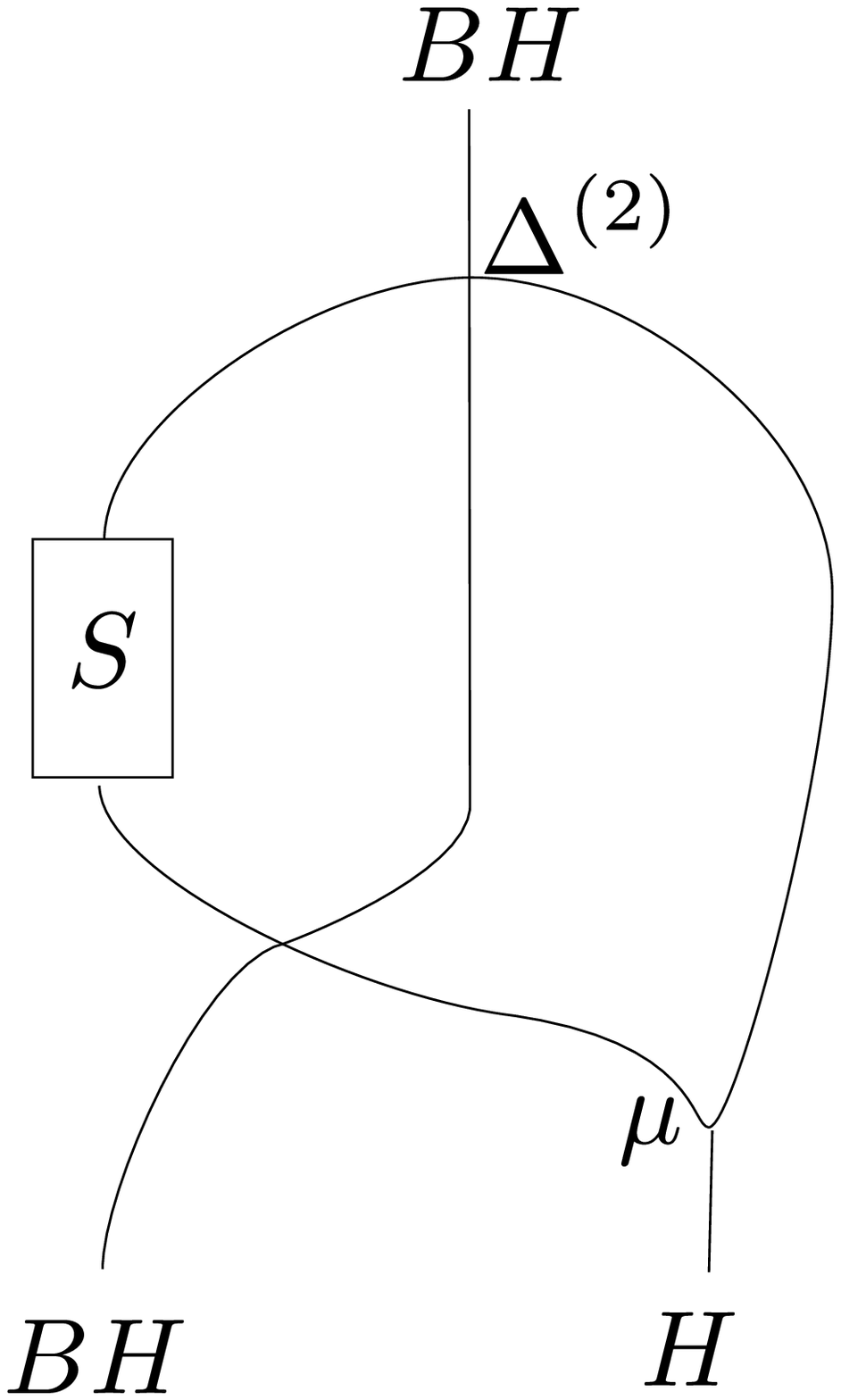}}, \quad \Ad^B =  \adjustbox{valign=c}{\includegraphics[width=1.5cm]{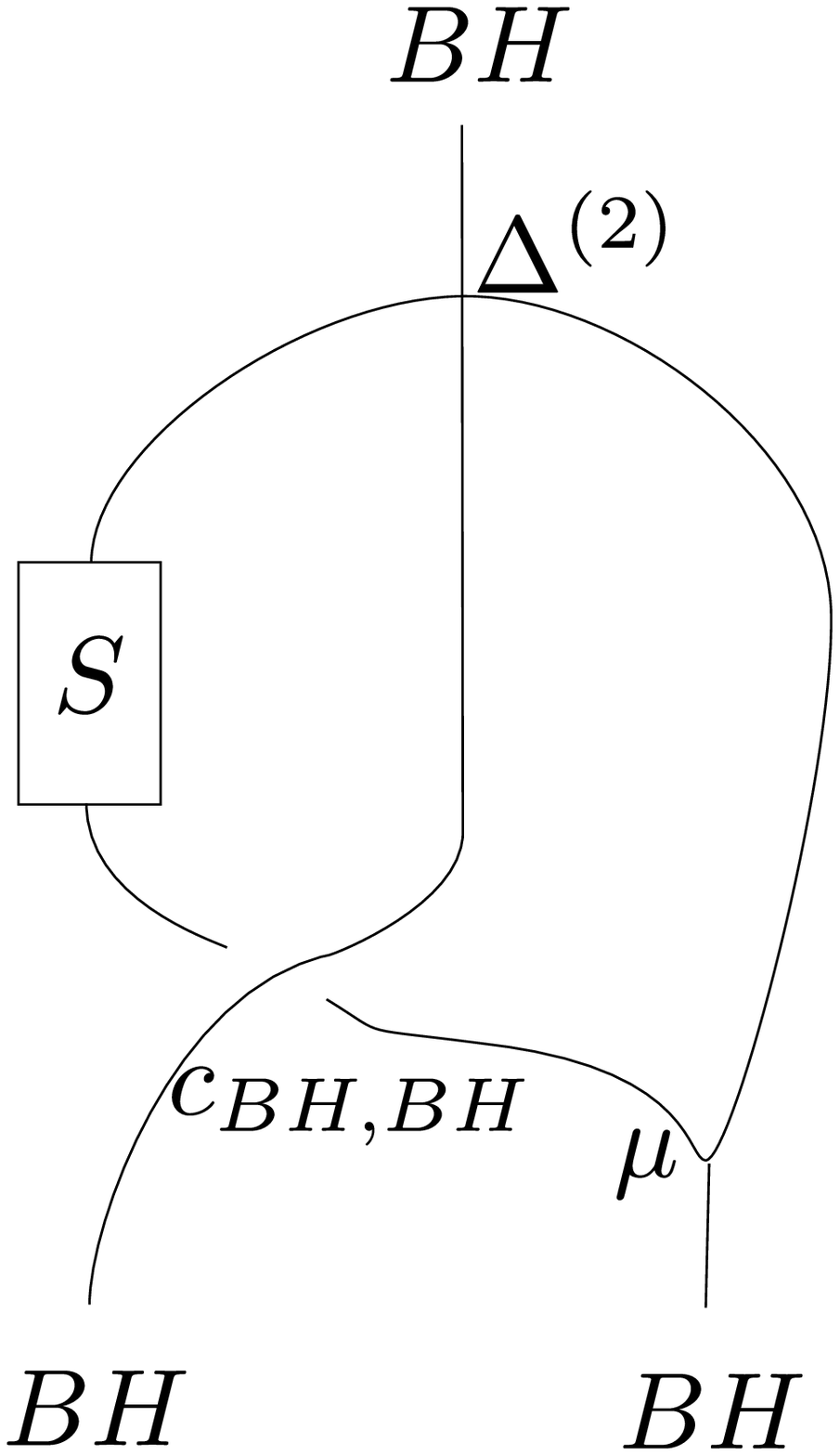}}.$$ 
\item The transmutation of $\mathcal{O}_qG$ is called the \textit{braided quantum group} $B_qG$.
\end{enumerate}
\end{definition}

The main property of the transmutation $BH$ is its \textit{braided commutativity}, that is the equality:
$$ (\id \otimes \mu) (c_{H,H} \otimes \id)(\id \otimes \Ad)c_{H,H}= (\id \otimes \mu)(\Ad\otimes \id).$$
So the transmutation procedure transforms the non-commutative cobraided Hopf algebra $\mathcal{O}_qG$ into a (braided) commutative Hopf algebra object $B_qG$ in the braided category ${\mathcal{O}_qG}-\RComod$.

\vspace{2mm}
\par For instance, when $G=\SL_2$, as developed in  \cite[Example $4.3.4$]{Majid_QGroups}, the algebra structure of $B_q\SL_2$ is given by the same generators $a,b,c,d$ than $\mathcal{O}_q[\SL_2]$ but with relations: 
\begin{align*}
& ba=q^{2} ab, \quad ca=q^{-2}ac, \quad da=ad, \quad bc=cb+(1-q^{-2})a(d-a) \\
&db=bd+(1-q^{-2})ab, \quad cd=dc+(1-q^{-2})ca, \quad, ad-q^{2}cb=1.
\end{align*} 
Using the same notations as before, and writing $\mathds{1}_2= \begin{pmatrix} 1 & 0 \\ 0 & 1 \end{pmatrix}$, the relations can be re-written in the compact form
\begin{equation}\label{eq_BSL2}
\mathscr{R}(\mathds{1}_2\odot M) \mathscr{R}(\mathds{1}_2 \odot M)= (\mathds{1}_2\odot M) \mathscr{R}(\mathds{1}_2\odot M) \mathscr{R}, \quad  ad-q^2cb=1.
\end{equation}

That the transmutation $B_qG$ is indeed a braided Hopf algebra object in ${\mathcal{O}_qG}-\RComod$ and that the cotwist $\theta$ satisfies the axioms of Figure \ref{fig_BPHopf} was proved by Kerler in \cite{Kerler_PresTanglesCat} and  \cite[Section $5.2$]{Kerler_AlgCobordisms} with the exception of the two last relations at the bottom of Figure  \ref{fig_BPHopf}  which were discovered latter by Bobtcheva-Piergallini in \cite{BobtchevaPiergallini} (where they are denoted by $(r8)$, $(r9)$). That $B_qG$ indeed satisfies these two relations is proved using the same computations than in the proof of \cite[Proposition $7.3$]{BeliakovaDeRenzi_KLFunctor}; we leave the details of this computation to the reader.

\begin{definition}
We denote by $Q_{B_qG}: \BT \to {\mathcal{O}_qG}-\RComod=\overline{\mathcal{C}_q^G}$ the braided functor obtained from Theorem \ref{theorem_functorBT}.
\end{definition}

\subsection{Quantum representation spaces and quantum fundamental groups}\label{sec_QRep}

\begin{definition}(Habiro \cite{Habiro_QCharVar}) 
\begin{enumerate}
\item
 The \textit{quantum representation functor} is the functor $\Rep_q^G : \mathcal{M}^{(1)}_{\con} \to \overline{\mathcal{C}_q^G}$ defined as the left Kan extension $\Rep_q^G := Lan_{i} Q_{B_qG}$ lying the diagram
 $$
 \begin{tikzcd}
 {} & \mathcal{M}^{(1)}_{\con}\ar[rd,  "\Rep_q^G"] & {} \\
 \BT  \ar[ru, "i"] \ar[rr, "Q_{B_qG}"] &{}& \overline{\mathcal{C}_q^G}
 \end{tikzcd}
 $$ 
where $i : \BT \to \mathcal{M}^{(1)}_{\con}$ is the inclusion functor. For $\mathbf{M}\in \mathcal{M}^{(1)}_{\con}$, the comodule $\Rep_q^G(\mathbf{M})$ is called its \textit{quantum representation space}. The subspace $\Char_q^G(\mathbf{M}) \subset \Rep_q^G(\mathbf{M})$ of coinvariant vectors is called the \textit{quantum character variety}.
\item Let $\widehat{\BT}$ be the category of functors $\BT^{op} \to \Set$. For $\mathbf{M}\in \mathcal{M}^{(1)}_{\con}$, the \textit{quantum fundamental group} is the functor $P_M := \Hom_{\mathcal{M}_{\con}^{(1)}}( i(\bullet), \mathbf{M}) \in \widehat{BT}$. In particular, by Lemma \ref{lemma_BT},  $P_M(\mathbf{H}_n)$ is identified with the set $P_n(M)$ of isomorphism classes of $n$-bottom tangles in $\mathbf{M}$. We denote by $P_{\bullet} : \mathcal{M}^{(1)}_c \to \widehat{\BT}$ the functor sending $\mathbf{M}$ to $P_M$,
\item We denote by $\widehat{\BT}_k$ the category of functors $\BT^{op} \to \Mod_k$ and $k[P_M] \in \widehat{\BT}_k$ the composition of $P_M$ with the monoidal functor $\Set \to \Mod_k$ sending a set $S$ to the $k$-module $k[S]$ freely generated by $S$. 
\end{enumerate}
\end{definition}

Note that, in the terminology of Section \ref{sec_Cat},  the functor $Q_{B_qG}$ is a right $\BT$-module whereas $k[P_M]$ is a left $\BT$-module.

\begin{lemma}\label{lemma_QFG_QRS} For $\mathbf{M}\in \mathcal{M}^{(1)}_{\con}$, one has an isomorphism of $k$-modules:
$$ \Rep_q^G(\mathbf{M}) \cong k[P_M] \otimes_{\BT} Q_{B_qG}.$$
\end{lemma}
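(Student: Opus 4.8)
The plan is to unfold the formula for the left Kan extension provided at the end of Section~\ref{sec_Cat} and identify the resulting coend with the convolution product $k[P_M] \otimes_{\BT} Q_{B_qG}$. Concretely, recall that the explicit formula for the left Kan extension gives, for $\mathbf{M} \in \mathcal{M}^{(1)}_{\con}$,
$$ \Rep_q^G(\mathbf{M}) = \Lan_i Q_{B_qG}(\mathbf{M}) = \int^{\mathbf{N} \in \BT} \Hom_{\mathcal{M}^{(1)}_{\con}}(i(\mathbf{N}), \mathbf{M}) \otimes_k Q_{B_qG}(\mathbf{N}). $$
Here there is a subtlety to address first: the formula for the left Kan extension as stated uses the tensor $\Hom_{\mathcal{B}}(F(a),b) \otimes G(a)$, where $\otimes$ is the tensor of the enriching category $\mathcal{E}$. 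Since $\mathcal{M}^{(1)}_{\con}$ is enriched over $\Top$ (or we pass to its homotopy category enriched over $\Set$) while $Q_{B_qG}$ is valued in $\Mod_k$, the meaning of this copower is: a set $S$ tensored with a $k$-module $V$ is $k[S]\otimes_k V = \bigoplus_{s\in S} V$. Thus I would first make explicit that the left Kan extension $\Lan_i Q_{B_qG}$ — computed in the appropriate enriched sense, using the monoidal functor $k[-]\colon \Set \to \Mod_k$ to change the base of enrichment as in the discussion of free cocompletions — is given by
$$ \Rep_q^G(\mathbf{M}) = \int^{\mathbf{N} \in \BT} k\bigl[\Hom_{\mathcal{M}^{(1)}_{\con}}(i(\mathbf{N}), \mathbf{M})\bigr] \otimes_k Q_{B_qG}(\mathbf{N}). $$

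Next I would recognize the integrand. By definition of the quantum fundamental group, $P_M = \Hom_{\mathcal{M}^{(1)}_{\con}}(i(\bullet), \mathbf{M}) \in \widehat{\BT}$, so $P_M(\mathbf{N}) = \Hom_{\mathcal{M}^{(1)}_{\con}}(i(\mathbf{N}), \mathbf{M})$, and $k[P_M] \in \widehat{\BT}_k$ is exactly the left $\BT$-module sending $\mathbf{N}$ to $k[P_M(\mathbf{N})] = k[\Hom_{\mathcal{M}^{(1)}_{\con}}(i(\mathbf{N}), \mathbf{M})]$. On the other side, $Q_{B_qG}$ is (by the remark right after the definitions) a right $\BT$-module, i.e.\ a functor $\BT \to \Mod_k$. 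The convolution product of the left module $k[P_M]$ and the right module $Q_{B_qG}$ is, by the formula given in Section~\ref{sec_Cat} for $\bullet \otimes_{\mathcal{C}} \bullet$ (with $\mathcal{A} = \mathcal{B} = \mathbf{k}$ degenerate, so these are genuine left/right modules rather than bimodules),
$$ k[P_M] \otimes_{\BT} Q_{B_qG} = \int^{\mathbf{N}\in\BT} k[P_M](\mathbf{N}) \otimes_k Q_{B_qG}(\mathbf{N}) = \int^{\mathbf{N}\in\BT} k\bigl[\Hom_{\mathcal{M}^{(1)}_{\con}}(i(\mathbf{N}), \mathbf{M})\bigr]\otimes_k Q_{B_qG}(\mathbf{N}), $$
which is term-by-term identical to the expression for $\Rep_q^G(\mathbf{M})$ above. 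Since a coend is determined up to canonical isomorphism by its defining diagram, this yields the desired isomorphism of $k$-modules $\Rep_q^G(\mathbf{M}) \cong k[P_M]\otimes_{\BT} Q_{B_qG}$, and it is natural in $\mathbf{M}$ because both $P_{\bullet}$ and the coend construction are functorial.

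The main point requiring care — and really the only obstacle — is the bookkeeping of enrichments: $\BT$ and $\mathcal{M}^{(1)}_{\con}$ are a priori topologically (hence, via $\pi_0$, set-)enriched categories, whereas the left Kan extension in the statement must land in $\Mod_k$-enriched categories, so one must be precise that the relevant Kan extension is the $\Mod_k$-enriched left Kan extension of $Q_{B_qG}$ along the composite $\BT \xrightarrow{i} \mathcal{M}^{(1)}_{\con}$ regarded via the change-of-base functor $k[-]$, and that the copower appearing in its pointwise formula is exactly $k[-]\otimes_k -$. Once this is spelled out, the proof is a one-line comparison of coend formulas; no genuine computation is needed. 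One should also note that the category $\BT$ is essentially small (generated by the $\mathbf{H}_n$, $n\geq 0$), so the coend exists, and that $Q_{B_qG}$ factors through $\mathcal{C}_q^G \hookrightarrow \overline{\mathcal{C}_q^G}$ and the latter is cocomplete, so the Kan extension genuinely lands in $\overline{\mathcal{C}_q^G}$; both facts are implicit in the setup of Sections~\ref{sec_Cat} and \ref{sec_QG}.
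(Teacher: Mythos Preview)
Your proof is correct and follows exactly the same approach as the paper: unfold the explicit coend formula for the left Kan extension from Section~\ref{sec_Cat} and observe that it coincides term-by-term with the definition of the convolution product $k[P_M]\otimes_{\BT}Q_{B_qG}$. The paper simply states this as an immediate consequence without spelling out the enrichment bookkeeping you carefully address, but your elaboration is sound and adds nothing beyond what the paper intends.
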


\begin{proof} This is an immediate consequence of the explicit expression of the left Kan extension given in Section \ref{sec_Cat}.
\end{proof}

The functor $P_{\bullet} : \mathcal{M}^{(1)}_{\con}\to \widehat{\BT}$ can be thought as a "quantum" analogue of the fundamental group functor $\pi_1: \Top^{\bullet} \to \Gp$. The philosophy promoted by Habiro in \cite{Habiro_QCharVar} is that many results about $\pi_1$ should extend to $P_{\bullet}$ thus permitting to extend results about the classical representation spaces and character varieties to the quantum ones using Lemma \ref{lemma_QFG_QRS}. Our first example of success of this philosophy is the:

\begin{lemma}\label{lemma_monoidal} The quantum fundamental group functor $P_{\bullet} : (\mathcal{M}^{(1)}_{\con}, \wedge) \to (\widehat{\BT}, \otimes_D)$ is lax monoidal, i.e. for $\mathbf{M}_1, \mathbf{M}_2 \in \mathcal{M}^{(1)}_{\con}$, then $P_{M_1\wedge M_2}$ is isomorphic to the Day convolution product $P_{M_1} \otimes_D P_{M_2}$. Therefore $\Rep_q^G(\mathbf{M}_1\wedge \mathbf{M}_2) \cong \Rep_q^G(\mathbf{M}_1) \overline{\otimes} \Rep_q^G(\mathbf{M}_2)$.
\end{lemma}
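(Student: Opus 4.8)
The plan is to prove the two isomorphisms in order: first the statement about quantum fundamental groups, $P_{M_1\wedge M_2}\cong P_{M_1}\otimes_D P_{M_2}$, and then deduce the statement about quantum representation spaces purely formally.

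For the first part, I would evaluate both sides on an arbitrary object $\mathbf{H}_n\in\BT$ and produce a bijection natural in $n$. By Lemma~\ref{lemma_BT} (and the identification $P_M(\mathbf{H}_n)\cong P_n(M)$) the left-hand side is the set $\PP_n(\mathbf{M}_1\wedge\mathbf{M}_2)$ of isotopy classes of $n$-bottom tangles in $\mathbf{M}_1\wedge\mathbf{M}_2$. Recall that $\mathbf{M}_1\wedge\mathbf{M}_2=\mathbf{M}_{\mathbb{D}^{(i)}\circledast\mathbb{D}^{(j)}}$ is obtained by gluing $M_1$ and $M_2$ to the triangle $\mathbb{T}$ along two of its boundary discs, so $M_1\wedge M_2$ retracts onto $M_1\vee M_2$ glued along the third disc of $\mathbb{T}$: geometrically, a bottom tangle there is one whose strands pass through this third disc. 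Using the PROP structure $\mathbf{H}_a\wedge\mathbf{H}_b\cong\mathbf{H}_{a+b}$ and the bottom-tangle action of $\mathfrak{bt}$ recalled in Definition~\ref{def_bt}, every $n$-bottom tangle in $\mathbf{M}_1\wedge\mathbf{M}_2$ can be cut by the third disc of $\mathbb{T}$ into an $a$-bottom tangle $T_1$ in $\mathbf{M}_1$, a $b$-bottom tangle $T_2$ in $\mathbf{M}_2$ (with $a+b$ the total number of strands entering the two pieces), and a morphism $T\in\mathfrak{bt}(a+b,n)$ recording how the $a+b$ resulting "bottoms" are braided and merged back into the $n$ components of the ambient tangle. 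This data is exactly a point of the coend
$$
\int^{a,b\in\BT}\PP_a(\mathbf{M}_1)\times\PP_b(\mathbf{M}_2)\times\Hom_{\BT}(\mathbf{H}_{a+b},\mathbf{H}_n),
$$
which unwinds to $(P_{M_1}\otimes_D P_{M_2})(\mathbf{H}_n)$ since $\BT$ is a PROP and $\wedge$ on it is $+$ on objects. Isotopy invariance on both sides matches because $\sim_h$ in the definition of $\mathfrak{bt}$ is precisely the relation generated by the crossing changes that become trivial once the strands are pushed into $\mathbb{T}$; I would check well-definedness and bijectivity of the cut-and-glue map by exhibiting an explicit inverse (reglue $T_1,T_2$ along $\mathbb{T}$ via $T$) and verifying the two composites are the identity up to the appropriate isotopies. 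Naturality in $n$ is immediate from the description as composition in $\BT$.

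The lax-monoidal structure map itself — the natural transformation $P_{M_1}\otimes_D P_{M_2}\to P_{M_1\wedge M_2}$ witnessing the isomorphism — is induced by the two canonical embeddings $\iota_1:\mathbf{M}_1\to\mathbf{M}_1\wedge\mathbf{M}_2$ and $\iota_2:\mathbf{M}_2\to\mathbf{M}_1\wedge\mathbf{M}_2$ of Definition~\ref{def_operations} together with the monoidal structure of $\BT$; the content of the first part is that this canonical map is an isomorphism. (One should also note the Remark after Definition~\ref{def_ribbonstr}: $\wedge$ is not a coproduct in $\mathcal{M}^{(1)}_{\con}$, so this is a genuine computation about bottom tangles, not a formal consequence.)

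For the second part I would simply combine Lemma~\ref{lemma_QFG_QRS}, the first part, and Lemma~\ref{lemma_tenstens}. By Lemma~\ref{lemma_QFG_QRS},
$$
\Rep_q^G(\mathbf{M}_1\wedge\mathbf{M}_2)\cong k[P_{M_1\wedge M_2}]\otimes_{\BT}Q_{B_qG}\cong k[P_{M_1}\otimes_D P_{M_2}]\otimes_{\BT}Q_{B_qG},
$$
and since $k[\cdot]:\Set\to\Mod_k$ is monoidal, $k[P_{M_1}\otimes_D P_{M_2}]\cong k[P_{M_1}]\otimes_D k[P_{M_2}]$ as left $\BT$-modules (here $\otimes_D$ now means the $\Mod_k$-enriched Day convolution on $\widehat{\BT}_k$, using that $\BT$ is symmetric monoidal as a PROP). Now apply Lemma~\ref{lemma_tenstens} with $\mathcal{C}=\BT$, $F_1=k[P_{M_1}]$, $F_2=k[P_{M_2}]$ and $G=Q_{B_qG}$, which is a monoidal left module (it is a braided monoidal functor sending $\mathbf{H}_1$ to the Hopf algebra $B_qG$, hence intertwines $\wedge$ on $\BT$ with $\overline{\otimes}$ on $\overline{\mathcal{C}_q^G}$): this gives
$$
(k[P_{M_1}]\otimes_D k[P_{M_2}])\otimes_{\BT}Q_{B_qG}\cong\bigl(k[P_{M_1}]\otimes_{\BT}Q_{B_qG}\bigr)\overline{\otimes}\bigl(k[P_{M_2}]\otimes_{\BT}Q_{B_qG}\bigr),
$$
where the outer tensor is $\overline{\otimes}$ because $Q_{B_qG}$ being monoidal upgrades the plain $\otimes_k$ appearing in Lemma~\ref{lemma_tenstens} to the braided tensor product of $\overline{\mathcal{C}_q^G}$ (one must keep track of the $\mathcal{O}_qG$-comodule structures through the coends, which is where the monoidality of $Q_{B_qG}$ is genuinely used). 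Using Lemma~\ref{lemma_QFG_QRS} twice more on the right identifies this with $\Rep_q^G(\mathbf{M}_1)\overline{\otimes}\Rep_q^G(\mathbf{M}_2)$, completing the proof.

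The main obstacle is the geometric content of the first part: showing that an arbitrary bottom tangle in $\mathbf{M}_1\wedge\mathbf{M}_2$ is, up to the allowed isotopies, uniquely decomposed as $(T_1,T_2,T)$ with $T\in\mathfrak{bt}$. The subtlety is matching the relation $\sim_h$ (which permits $\crosspos\leftrightarrow\crossneg$) in the domain against honest isotopy in $M_1\wedge M_2$, and making sure the decomposition is well-defined on isotopy classes rather than on representatives — i.e. that two isotopic bottom tangles in $\mathbf{M}_1\wedge\mathbf{M}_2$ yield the same point of the coend. This is essentially a careful general-position / innermost-disc argument about how a tangle meets the separating disc inside $\mathbb{T}$, and it is the technical heart; the remaining steps are formal manipulations of coends and Kan extensions already packaged in Lemmas~\ref{lemma_tenstens}, \ref{lemma_QFG_QRS} and~\ref{lemma_BT}.
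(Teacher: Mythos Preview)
Your overall strategy matches the paper's: build an explicit map from the Day convolution to $P_{M_1\wedge M_2}$ by gluing, build an inverse by cutting a bottom tangle in $\mathbf{M}_1\wedge\mathbf{M}_2$ into three pieces, and check well-definedness via elementary isotopy moves; then deduce the $\Rep_q^G$ statement from Lemmas~\ref{lemma_tenstens} and~\ref{lemma_QFG_QRS}. However, several of your details are garbled in ways that would derail the argument if carried out literally.

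First, the hom-set in the Day convolution goes the other way: $(P_{M_1}\otimes_D P_{M_2})(\mathbf{H}_n)$ involves $\Hom_{\BT}(\mathbf{H}_n,\mathbf{H}_{a}\wedge\mathbf{H}_b)=\BT(n,a+b)$, i.e.\ an $n$-bottom tangle in $\mathbf{H}_{a+b}$, not $\Hom_{\BT}(\mathbf{H}_{a+b},\mathbf{H}_n)$ nor an element of $\mathfrak{bt}(a+b,n)$. The category $\mathfrak{bt}$ is \emph{not} the one over which the coend is taken; it only enters later, in the well-definedness step. Second, you cut along the wrong disc: the ``third disc'' of $\mathbb{T}$ is the outgoing boundary disc of $\mathbf{M}_1\wedge\mathbf{M}_2$, where the endpoints of the tangle already sit. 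The paper cuts along the two \emph{internal} discs $\mathbb{D}_1,\mathbb{D}_2$ where $M_1,M_2$ are glued to $\mathbb{T}$; the piece left in $\mathbb{T}$ is then capped with trivial bottom tangles to become an element $\widehat{T_{\mathbb{T}}}\in\BT(n,n_1+n_2)$. Third, $\sim_h$ is not an equivalence relation used to identify morphisms in $\mathfrak{bt}$; it is a \emph{condition} singling out which tangles belong to $\mathfrak{bt}(n,m)$. The actual well-definedness argument is: two isotopic tangles in standard position differ by (i) isotopies supported in $M_1$, $M_2$, or $\mathbb{T}$, and (ii) moves that slide a tangle $T_0\in\mathfrak{bt}$ through one of the discs $\mathbb{D}_i$. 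Move~(ii) is absorbed precisely by the coend relation $(T_1\circ\alpha_{T_0},T_2,T)\sim(T_1,T_2,\varphi_{\alpha_{T_0},\id}(T))$ in $\BT$, not by any crossing-change relation. No innermost-disc argument is needed.
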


\begin{proof} Let $\mathbf{M}_1, \mathbf{M}_2 \in \mathcal{M}^{(1)}_{\con}$ and let us define bijections
 $$ f^{(n)} : P_{M_1}\otimes_D P_{M_2} (\mathbf{H}_n) \xrightarrow{\cong} P_{M_1\wedge M_2}(\mathbf{H}_n), \quad n\geq 0 $$
 which induce an isomorphism $f : P_{M_1}\otimes_D P_{M_2} \xrightarrow{\cong} P_{M_1\wedge M_2}$. When $n=0$, both $P_{M_1}\otimes_D P_{M_2} (\mathbf{H}_0)$ and $P_{M_1\wedge M_2}(\mathbf{H}_0)$ have only one point so the definition of $f^{(0)}$ is obvious and we assume that $n\geq 1$. Using the isomorphism $\theta_*$ of Lemma \ref{lemma_BT}, we identify the sets $P_M(\mathbf{H}_n)$ with the sets $P_n(M)$ of $n$-bottom tangles in $\mathbf{M}$ and to a bottom tangle $T\in P_n(M)$ we denote by $\varphi_T : H_n \to M$ the associated embedding in $ P_M(\mathbf{H}_n)$  ($\varphi_T$ is only well defined up to isotopy). For $a,b \geq 0$, let 
 $$ V_{a,b}^{(n)}:= P_a(M_1) \times P_b(M_2) \times \BT(n, a+b).$$
 Recall that $M_1\wedge M_2=M_1\cup \mathbb{T} \cup M_2$ is obtained from $M_1\bigsqcup M_2\bigsqcup \mathbb{T}$ by gluing the boundary disc $\mathbb{D}_{M_1}$ to the disc $e_1$ of $\mathbb{T}$  and gluing $\mathbb{D}_{M_2}$ to $e_2$. Let $\mathbb{D}_1, \mathbb{D}_2 \subset M_1\wedge M_2$ be the images of  $\mathbb{D}_{M_1}$ and $\mathbb{D}_{M_2}$ respectively through the quotient map.
 Similarly, $\mathbf{H}_{a}\wedge \mathbf{H}_b = \mathbf{H}_a \cup \mathbb{T} \cup \mathbf{H}_b$.

 For $(T_1, T_2, T) \in V_{a,b}^{(n)}$, consider the embedding $\varphi_{T_1, T_2} : \mathbf{H}_a \wedge \mathbf{H}_b \to M_1 \wedge M_2$, whose restriction to $\mathbb{T}$ is the identity and whose restrictions to $\mathbf{H}_a, \mathbf{H}_b$ are $\varphi_{T_1}$ and $\varphi_{T_2}$ and define a function $f_{a,b}^{(n)} : V_{a,b}^{(n)} \to P_n(M_1\wedge M_2)$ by $f_{a,b}^{(n)}(T_1, T_2, T):= \varphi_{T_1, T_2} (T)$.

 By definition of the Day convolution as a coend, we have 
 $$ P_{M_1}\otimes_D P_{M_2} (\mathbf{H}_n) = \quotient{\left( \oplus_{a,b\geq 0} V_{a,b}^{(n)} \right)}{\sim}, $$
 where for every $\alpha\in \BT(a',a)$, $\beta\in \BT(b',b)$ and $(T_1,T_2,T)\in V_{a',b'}^{(n)}$ we set 
 $$ (T_1 \circ \alpha, T_2  \circ \beta, T) \sim (T_1, T_2,  \varphi_{\alpha, \beta}(T)). $$
Since $f_{a,b}^{(n)}  (T_1 \circ \alpha, T_2  \circ \beta, T) = f_{a',b'}^{(n)}(T_1, T_2,  \varphi_{\alpha, \beta}(T))$, the maps $f_{a,b}^{(n)}$ induce a map $f^{(n)}: P_{M_1}\otimes_D P_{M_2} (\mathbf{H}_n) \to P_{M_1\wedge M_2}(\mathbf{H}_n)$ illustrated in Figure \ref{fig_wedgeproduct}. It is a straightforward consequence of the definitions that the family $(f^{(n)})_{n\geq 0}$ form a natural transformation $f: P_{M_1}\otimes_D P_{M_2} \to P_{M_1\wedge M_2}$. 

\begin{figure}[!h] 
\centerline{\includegraphics[width=11cm]{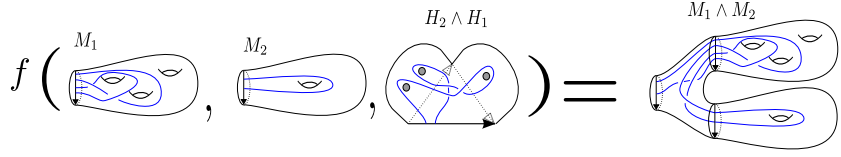} }
\caption{An illustration of the isomorphism $f$.} 
\label{fig_wedgeproduct} 
\end{figure}

To prove that $f$ is an isomorphism, let us define an inverse map $g^{(n)}$ to $f^{(n)}$. 
 A $n$-bottom tangle $T\subset M_1\wedge M_2$ is said in \textit{standard position} if $(1)$ it intersects $\mathbb{D}_1 \cup \mathbb{D}_2$ transversally such that at each intersection point in $T \cap \mathbb{D}_i$ the framing points towards the height direction and $(2)$ the points of $T\cap \mathbb{D}_i$ have pairwise distinct heights and for two connected components $T^{(j)}, T^{(k)}$ of $T$, either all points of $T^{(j)}\cap \mathbb{D}_i$ have smaller heights than the points of  $T^{(k)}\cap \mathbb{D}_i$ or they all have higher heights. 
 These conditions insure that when we cut $T$ into $T=T_{M_1} \cup T_{M_2} \cup T_{\mathbb{T}}$, with $T_{M_i} := T\cap M_i$ and $T_{\mathbb{T}}:= T\cap \mathbb{T}$, then  $T_{M_i}$ is a bottom tangle in $M_i$. Let $2n_i$ be the cardinal of $T\cap \mathbb{D}_i$ (so that $T_{M_i}$ is a $n_i$ bottom tangle) and let $\widehat{T_{\mathbb{T}}}\in \BT(n, n_1+n_2)$ be the $n$ bottom tangle in $\mathbf{H}_{n_1}\wedge \mathbf{H}_{n_2}$ obtained  from $T_{\mathbb{T}}\subset \mathbb{T}$ by gluing the handlebody $H_{n_1}$ with the trivial bottom tangle $T_{n_1}$ of Figure \ref{fig_BTexemple} to the boundary disc $e_1$ of $\mathbb{T}$ and gluing $H_{n_2}$ with $T_{n_2}$ to $e_2$.  Set 
 $$g^{(n)} (T) := (T_{M_1}, T_{M_2}, \widehat{T_{\mathbb{T}}}) \in V_{n_1, n_2}^{(n)}.$$
 To prove that $g^{(n)}$ induces a map $g^{(n)}:  P_{M_1\wedge M_2}(\mathbf{H}_n) \to P_{M_1}\otimes_D P_{M_2} (\mathbf{H}_n)$, 
  we need to show that if $T\cong T'$ are two isotopic bottom tangles in good positions then $g^{(n)}(T) \sim g^{(n)}(T')$.
  If $T$ and $T'$ are isotopic, then we can pass from $T$ to $T'$ by a finite sequence of these two elementary moves:
 \begin{enumerate}
 \item Perform an isotopy inside $M_1, M_2$ or $\mathbb{T}$ whose restriction to $\mathbb{D}_i$ is the identity.
 \item  Pass a tangle of $\mathfrak{bt}$ through $\mathbb{D}_1$ or $\mathbb{D}_2$ as illustrated in Figure \ref{fig_MoveII}. 
 \end{enumerate}
 
 \begin{figure}[!h] 
\centerline{\includegraphics[width=11cm]{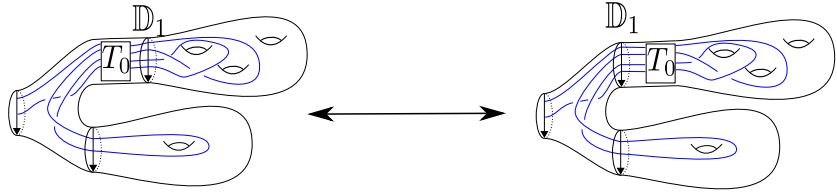} }
\caption{Passing a tangle of $\mathfrak{bt}(4,4)$ through $\mathbb{D}_1$.} 
\label{fig_MoveII} 
\end{figure}

If $T, T'$ differ by an n isotopy inside $M_1, M_2$ or $\mathbb{T}$, then $g^{(n)}(T)=g^{(n)}(T')$. Now suppose that there exists a tangle $T_0 \in \mathfrak{bt}(n_1, n_1')$ such that $T_{M_2}=T'_{M_2}$, $T_{M_1}=T'_{M_1}\cdot T_0$ and $T'_{\mathbb{T}}= T_0 \cdot T'_{\mathbb{T}}$ as in Figure \ref{fig_MoveII}. Let $\alpha_{T_0} \in \BT(n_1, n_1')$ be the bottom tangle $\alpha_{T_0}:= T_{n_1}\cdot T_0$. Then $g^{(n)}(T)= (T_{M_1}' \circ \alpha_{T_0}, T_{M_2}, T_{\mathbb{T}})$ and $g^{(n)}(T')=(T_{M_1}', T_{M_2}, \varphi_{\alpha_{T_0}, \id} (T_{\mathbb{T}}))$ so $g^{(n)}(T)\sim g^{(n)}(T')$ and the map  $g^{(n)}$ is well defined. That $g^{(n)}$ is the inverse of $f^{(n)}$ is an easy consequence of the definition, therefore $f : P_{M_1}\otimes_D P_{M_2} \xrightarrow{\cong} P_{M_1\wedge M_2}$ is an isomorphism. Tensoring on the right by $\bullet \otimes_{BT} Q_{B_qG}$ and using Lemmas \ref{lemma_QFG_QRS} and \ref{lemma_tenstens}, we get an isomorphism $\Rep_q^G(\mathbf{M}_1\wedge \mathbf{M}_2) \cong \Rep_q^G(\mathbf{M}_1) \overline{\otimes} \Rep_q^G(\mathbf{M}_2)$.

\end{proof}

\section{Stated skein modules and algebras}\label{sec3}

\subsection{The functor $\mathcal{S}_q$ and its properties}

\subsubsection{Stated skein modules}

Let $V\in \mathcal{C}_q^{\SL_2}$ be the standard $2$-dimensional representation $\rho : U_q \mathfrak{sl}_2 \rightarrow \mathrm{End}(V)$ of $U_q\mathfrak{sl}_2$, where $V$ has  basis $(v_+, v_-)$ and 
$$ \rho(E) = \begin{pmatrix} 0 & 1 \\ 0 & 0 \end{pmatrix}, \quad \rho(F) = \begin{pmatrix} 0 & 0 \\ 1 & 0 \end{pmatrix}, \quad \rho(K) = \begin{pmatrix} q & 0 \\ 0 & q^{-1} \end{pmatrix}.
$$
Here we work over the ring $k=k_{\SL_2}= \mathbb{Z}[q^{\pm 1/4}]$. We will use the notation $A:= q^{1/2}$ and $A^{1/2}:=q^{1/4}$. Recall from Definition \ref{def_HT} the isomorphism $\hT : V \xrightarrow{\cong} V$ defined by $\hT \begin{pmatrix} v_+ & v_- \end{pmatrix} := \begin{pmatrix} v_+ & v_- \end{pmatrix} \begin{pmatrix} 0 & -A^{5/2} \\ A^{1/2} & 0 \end{pmatrix}=\begin{pmatrix} A^{1/2} v_- & -A^{5/2} v_+\end{pmatrix}$. Its dual is characterized by
$\hT_* \begin{pmatrix} v_+ \\ v_- \end{pmatrix} := \begin{pmatrix} 0 & -A^{5/2} \\ A^{1/2} & 0 \end{pmatrix}  \begin{pmatrix} v_+ \\ v_- \end{pmatrix} = \begin{pmatrix} -A^{5/2} v_- \\ A^{1/2} v_+\end{pmatrix}$. 

For $\mathbf{M}=(M, \iota_M) \in \mathcal{M}$, a \textit{stated tangle} is a pair $(T,s)$ where $T\subset M$ is a tangle (in the sense of definition \ref{def_tangles}) and $s: \partial T \to V$ a map. When $\mathbf{M}=\mathbf{\Sigma}\times I$ is a thickened surface, we will represent a stated tangle by drawing its 2-dimensional projection diagram and draw an arrow on each boundary arc $a$ of $\mathbf{\Sigma}$ to represent the height order of $\partial T \cap a$ as in Figure \ref{fig_statedtangle}. When a point in $\partial T$ has a state $v_+$ or $v_-$, we will only draw a $+$ or $-$ in front of it for simplicity.
Recall from Definition \ref{def_tangles} that isotopies are required to preserve these height orders.
\begin{figure}[!h] 
\centerline{\includegraphics[width=6cm]{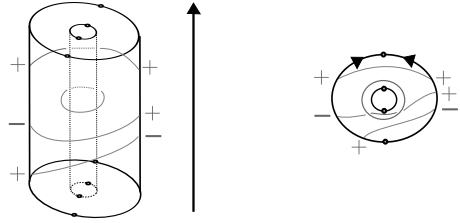} }
\caption{On the left: a stated tangle. On the right: its associated diagram. The arrows represent the height orders. } 
\label{fig_statedtangle} 
\end{figure} 

\begin{definition}(\cite{BonahonWongqTrace, LeStatedSkein, BloomquistLe})\label{def_skein}
\begin{enumerate}
\item The (Kauffman-bracket) \textit{stated skein module}  $\mathcal{S}_q(\mathbf{M})$ is the quotient of the free $k$-module generated by stated tangles by the submodule spanned by isotopy and by the following skein relations:

  	\begin{equation*} 
\begin{tikzpicture}[baseline=-0.4ex,scale=0.5,>=stealth]	
\draw [fill=gray!45,gray!45] (-.6,-.6)  rectangle (.6,.6)   ;
\draw[line width=1.2,-] (-0.4,-0.52) -- (.4,.53);
\draw[line width=1.2,-] (0.4,-0.52) -- (0.1,-0.12);
\draw[line width=1.2,-] (-0.1,0.12) -- (-.4,.53);
\end{tikzpicture}
=A
\begin{tikzpicture}[baseline=-0.4ex,scale=0.5,>=stealth] 
\draw [fill=gray!45,gray!45] (-.6,-.6)  rectangle (.6,.6)   ;
\draw[line width=1.2] (-0.4,-0.52) ..controls +(.3,.5).. (-.4,.53);
\draw[line width=1.2] (0.4,-0.52) ..controls +(-.3,.5).. (.4,.53);
\end{tikzpicture}
+A^{-1}
\begin{tikzpicture}[baseline=-0.4ex,scale=0.5,rotate=90]	
\draw [fill=gray!45,gray!45] (-.6,-.6)  rectangle (.6,.6)   ;
\draw[line width=1.2] (-0.4,-0.52) ..controls +(.3,.5).. (-.4,.53);
\draw[line width=1.2] (0.4,-0.52) ..controls +(-.3,.5).. (.4,.53);
\end{tikzpicture}
\hspace{.5cm}
\text{ and }\hspace{.5cm}
\begin{tikzpicture}[baseline=-0.4ex,scale=0.5,rotate=90] 
\draw [fill=gray!45,gray!45] (-.6,-.6)  rectangle (.6,.6)   ;
\draw[line width=1.2,black] (0,0)  circle (.4)   ;
\end{tikzpicture}
= -(q+q^{-1}) 
\begin{tikzpicture}[baseline=-0.4ex,scale=0.5,rotate=90] 
\draw [fill=gray!45,gray!45] (-.6,-.6)  rectangle (.6,.6)   ;
\end{tikzpicture}
;
\end{equation*}

\begin{equation*} 
\begin{tikzpicture}[baseline=-0.4ex,scale=0.5,>=stealth]
\draw [fill=gray!45,gray!45] (-.7,-.75)  rectangle (.4,.75)   ;
\draw[->] (0.4,-0.75) to (.4,.75);
\draw[line width=1.2] (0.4,-0.3) to (0,-.3);
\draw[line width=1.2] (0.4,0.3) to (0,.3);
\draw[line width=1.1] (0,0) ++(90:.3) arc (90:270:.3);
\draw (0.65,0.3) node {\scriptsize{$i$}}; 
\draw (1.2,-0.3) node {\scriptsize{$ht(j)$}}; 
\end{tikzpicture}
=
\begin{tikzpicture}[baseline=-0.4ex,scale=0.5,>=stealth]
\draw [fill=gray!45,gray!45] (-.7,-.75)  rectangle (.4,.75)   ;
\draw[->] (0.4,-0.75) to (.4,.75);
\draw[line width=1.2] (0.4,-0.3) to (0,-.3);
\draw[line width=1.2] (0.4,0.3) to (0,.3);
\draw[line width=1.1] (0,0) ++(90:.3) arc (90:270:.3);
\draw (1.2,0.3) node {\scriptsize{$ht_*(i)$}}; 
\draw (0.65,-0.3) node {\scriptsize{$j$}}; 
\end{tikzpicture}
=\delta_{i,j}
 , \quad \forall i,j \in \{-, +\};
\end{equation*}

\begin{equation*}
\heightcurve 
=
\sum_{i=\pm} 
\begin{tikzpicture}[baseline=-0.4ex,scale=0.5, >=stealth]
	\draw [fill=gray!60,gray!45] (-.7,-.75)  rectangle (.4,.75)   ;
	\draw[->] (0.4,-0.75) to (.4,.75);
	\draw[line width=1.2] (0.4,-0.3) to (-.7,-.3);
	\draw[line width=1.2] (0.4,0.3) to (-.7,.3);
	\draw (0.65,0.3) node {\scriptsize{$i$}}; 
	\draw (1.2,-0.3) node {\scriptsize{$ht_*(i)$}}; 
	\end{tikzpicture}
 = \sum_{i=\pm} 
 \begin{tikzpicture}[baseline=-0.4ex,scale=0.5, >=stealth]
	\draw [fill=gray!60,gray!45] (-.7,-.75)  rectangle (.4,.75)   ;
	\draw[->] (0.4,-0.75) to (.4,.75);
	\draw[line width=1.2] (0.4,-0.3) to (-.7,-.3);
	\draw[line width=1.2] (0.4,0.3) to (-.7,.3);
	\draw (1.1,0.3) node {\scriptsize{$ht(i)$}}; 
	\draw (0.65,-0.3) node {\scriptsize{$i$}}; 
	\end{tikzpicture}
.
\end{equation*}

\begin{equation*} 
\tresalacon{->}{\alpha_1v_1 + \alpha_2 v_2} = \alpha_1 \traitalacon{->}{v_1} + \alpha_2 \traitalacon{->}{v_2}, \quad \forall \alpha_1, \alpha_2 \in k, v_1, v_2 \in V.
\end{equation*}

\item For $f: \mathbf{M}_1 \to \mathbf{M}_2$ an embedding of marked $3$-manifolds, we denote by $f_*: \mathcal{S}_q(\mathbf{M}_1) \to \mathcal{S}_q(\mathbf{M}_2)$ the linear map sending $[T, s]$ to $[f(T), s\circ f^{-1}]$. We thus get a functor $$\mathcal{S}_q : \mathcal{M}\to \Mod_k.$$
\item For $\mathbf{\Sigma}\times I \in \MS$ a thickened marked surface, $\mathcal{S}_q(\mathbf{\Sigma}):= \mathcal{S}_q(\mathbf{\Sigma}\times I)$ has an algebra structure where the product of two classes of stated tangles $[T_1,s_1]$ and $[T_2,s_2]$ is defined by  isotoping $T_1$ and $T_2$  in $\Sigma\times (0, 1) $ and $\Sigma\times (-1, 0)$ respectively and then setting $[T_1,s_1]\cdot [T_2,s_2]=[T_1\cup T_2, s_1\cup s_2]$. Figure \ref{fig_product} illustrates this product. So we get, by restriction,  a functor $$\mathcal{S}_q: \MS \to \Alg_k.$$
\end{enumerate}
\end{definition}

\begin{figure}[!h] 
\centerline{\includegraphics[width=8cm]{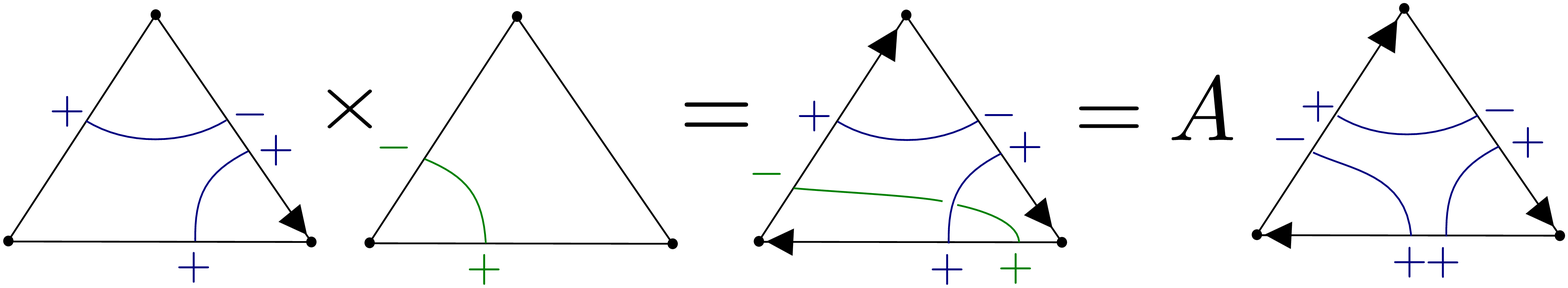} }
\caption{An illustration of the product in stated skein algebras.} 
\label{fig_product} 
\end{figure}

\begin{remark}\label{remark_skeinrelations}
An immediate consequence of the definition (detailed in \cite{LeStatedSkein}) are the following skein relations:

\begin{equation*} 
\begin{tikzpicture}[baseline=-0.4ex,scale=0.5,>=stealth]
\draw [fill=gray!45,gray!45] (-.7,-.75)  rectangle (.4,.75)   ;
\draw[->] (-0.7,-0.75) to (-.7,.75);
\draw[line width=1.2] (-0.7,-0.3) to (-0.3,-.3);
\draw[line width=1.2] (-0.7,0.3) to (-0.3,.3);
\draw[line width=1.15] (-.4,0) ++(-90:.3) arc (-90:90:.3);
\draw (-0.9,0.3) node {\scriptsize{$i$}}; 
\draw (-1.7,-0.3) node {\scriptsize{$\hT^{-1}(j)$}}; 
\end{tikzpicture}
=
\begin{tikzpicture}[baseline=-0.4ex,scale=0.5,>=stealth]
\draw [fill=gray!45,gray!45] (-.7,-.75)  rectangle (.4,.75)   ;
\draw[->] (-0.7,-0.75) to (-.7,.75);
\draw[line width=1.2] (-0.7,-0.3) to (-0.3,-.3);
\draw[line width=1.2] (-0.7,0.3) to (-0.3,.3);
\draw[line width=1.15] (-.4,0) ++(-90:.3) arc (-90:90:.3);
\draw (-1.7,0.3) node {\scriptsize{$\hT^{-1}(i)$}}; 
\draw (-0.9,-0.3) node {\scriptsize{$j$}}; 
\end{tikzpicture}
=\delta_{i,j}
 , \quad \forall i,j \in \{-, +\};
\end{equation*}

\begin{equation*}
\heightcurveright
= \sum_{i= \pm} 
 \hspace{.2cm} 
	\begin{tikzpicture}[baseline=-0.4ex,scale=0.5, >=stealth]
	\draw [fill=gray!60,gray!45] (-.7,-.75)  rectangle (.4,.75)   ;
	\draw[->] (-0.7,-0.75) to (-0.7,.75);
	\draw[line width=1.2] (0.4,-0.3) to (-.7,-.3);
	\draw[line width=1.2] (0.4,0.3) to (-.7,.3);
	\draw (-1,0.3) node {\scriptsize{$i$}}; 
	\draw (-1.8,-0.3) node {\scriptsize{$\hT^{-1}_*(i)$}}; 
	\end{tikzpicture}
= \sum_{i= \pm} 
 \hspace{.2cm} 
\begin{tikzpicture}[baseline=-0.4ex,scale=0.5, >=stealth]
	\draw [fill=gray!60,gray!45] (-.7,-.75)  rectangle (.4,.75)   ;
	\draw[->] (-0.7,-0.75) to (-0.7,.75);
	\draw[line width=1.2] (0.4,-0.3) to (-.7,-.3);
	\draw[line width=1.2] (0.4,0.3) to (-.7,.3);
	\draw (-1.8,0.3) node {\scriptsize{$\hT^{-1}(i)$}}; 
	\draw (-1,-0.3) node {\scriptsize{$i$}}; 
	\end{tikzpicture}
.
\end{equation*}

\end{remark}

\subsubsection{Splitting morphisms and comodule structures}

The stated skein functor has a good behavior for all three operations that we defined on $\mathcal{M}$. First, it is an immediate consequence of the definition that $\mathcal{S}_q : (\mathcal{M}, \bigsqcup) \to (\Mod_k ,\otimes_k)$ is monoidal. Next for $a,b$ two distinct boundary discs of $\mathbf{M} \in \mathcal{M}$, there is a linear map 
$$ \theta_{a \# b} : \mathcal{S}_q( \mathbf{M}_{a \# b}) \to \mathcal{S}_q(\mathbf{M}), $$
named \textit{splitting morphism} defined in \cite{LeStatedSkein, BloomquistLe} defined as follows. Let $c \subset M_{a \# b} $ be the common image of $a$ and $b$. For $[T_0,s_0]\in \mathcal{S}_q ( \mathbf{M}_{a \# b})$, isotope $T_0$ such that it intersects $c$ transversally and such that the framing of every point of $T_0\cap c$ points in the height direction.
Let $T\subset M$ be the framed tangle obtained by cutting $T_0$ along $c$. 
Any two states $s_a : \partial_a T \rightarrow \{-,+\}$ and $s_b : \partial_b T \rightarrow \{-,+\}$ give rise to a state $(s_a, s_0, s_b)$ on $T$. 
Both the sets $\partial_a T$ and $\partial_b T$ are in canonical bijection with the set $T_0\cap c$ by the map quotient map $M\to M_{a \# b}$. Hence the two sets of states $s_a$ and $s_b$ are both in canonical bijection with the set $\mathrm{St}(c):=\{ s: c \cap T_0 \rightarrow \{-,+\} \}$. 

\begin{definition}\label{def_gluing_map}(\cite{LeStatedSkein, BloomquistLe})
The \textit{splitting morphism} $\theta_{a \# b} : \mathcal{S}_q( \mathbf{M}_{a \# b}) \to \mathcal{S}_q(\mathbf{M})$ is the linear map given, for any $(T_0, s_0)$ as above, by: 
$$ \theta_{a \# b} \left( [T_0,s_0] \right) := \sum_{s \in \mathrm{St}(c)} [T, (s, s_0 , s) ].$$
\end{definition}

\begin{figure}[!h] 
\centerline{\includegraphics[width=8cm]{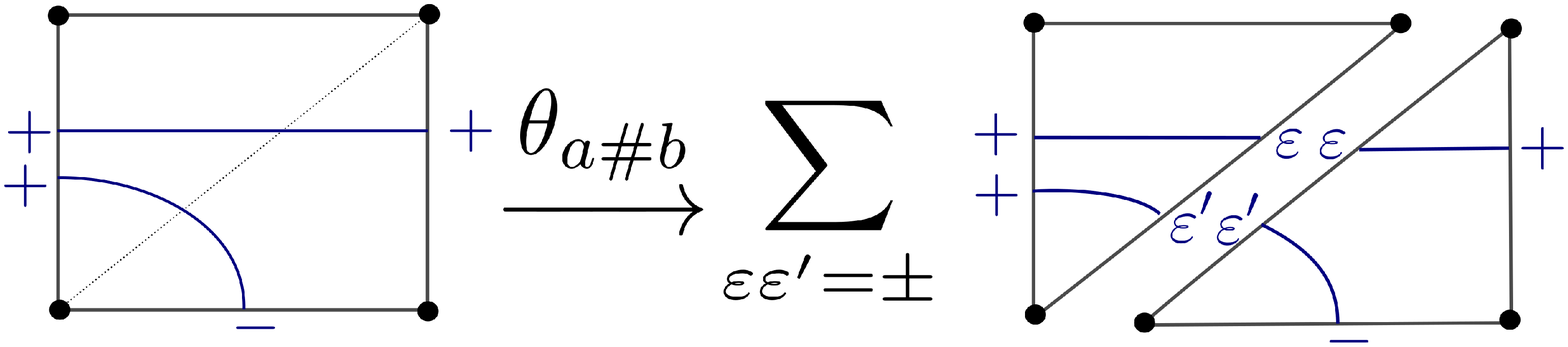} }
\caption{An illustration of the splitting morphism $\theta_{a\#b}$.} 
\label{fig_gluingmap} 
\end{figure} 

\begin{theorem}(\cite[Theorem $3.1$]{LeStatedSkein})
 When $\mathbf{M}$ is a thickened surface, then $\theta_{a \# b}$ is an injective morphism of algebras.
\end{theorem}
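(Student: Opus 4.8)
The plan is to show that $\theta_{a\#b}$ is a well-defined injective algebra morphism in three stages: first that it respects the skein relations (well-definedness), then that it is multiplicative, then that it is injective. For well-definedness, I would check that each defining skein relation of $\mathcal{S}_q(\mathbf{M}_{a\#b})$ is sent to a relation (or a combination of relations) in $\mathcal{S}_q(\mathbf{M})$. The Kauffman bracket relation and the value of a trivial loop are purely local away from the cutting circle $c$ (after a small isotopy pushing the relevant disc off $c$), so they are immediate. The height-exchange relation and the two "cap" relations near a marked disc can always be realized in a little ball disjoint from $c$, hence are preserved. The only subtlety is that $T_0$ must first be isotoped into a position transverse to $c$ with framing along the height direction; I would invoke the fact (already used in \cite{LeStatedSkein, BloomquistLe}) that any two such transverse positions of $T_0$ are related by isotopies that either avoid $c$ or push a single strand across $c$, and that the summation $\sum_{s\in \mathrm{St}(c)}$ with the states $(s,s_0,s)$ is exactly designed so that pushing a strand across $c$ produces, via the cap relations $\sum_i [\cdots i \cdots ht_*(i)\cdots] = [\cdots]$, the same element. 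This is the heart of the well-definedness argument and is the step I expect to require the most care, though it is essentially the content of the cited references.

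For multiplicativity, suppose $a,b$ lie in the boundary of a thickened surface $\mathbf{\Sigma}\times I$, so that $\mathcal{S}_q(\mathbf{\Sigma})$ and $\mathcal{S}_q((\mathbf{\Sigma}\times I)_{a\#b})$ are both algebras via stacking in the $I$-direction. Given $[T_0,s_0]$ and $[T_0',s_0']$, I would represent their product by stacking $T_0$ above $T_0'$; cutting the stacked tangle along $c$ and expanding over $\mathrm{St}(c)$ gives, after reorganizing the double sum over the states on the two copies of $c$, exactly the product $\theta_{a\#b}([T_0,s_0])\cdot\theta_{a\#b}([T_0',s_0'])$ in $\mathcal{S}_q(\mathbf{\Sigma})$. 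The compatibility of units is clear. No sign or scalar corrections appear because the stated skein relations used in the splitting are state-sum relations with trivial coefficients.

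For injectivity, I would use the standard basis of stated skein algebras of surfaces (e.g.\ from \cite{LeStatedSkein}): choose a basis $\mathcal{B}$ of $\mathcal{S}_q((\mathbf{\Sigma}\times I)_{a\#b})$ consisting of stated diagrams in reduced position with respect to $c$, so that each basis element $D$ has a well-defined intersection pattern with $c$ and a well-defined cut $\widehat{D}$. Then $\theta_{a\#b}(D) = \sum_{s\in\mathrm{St}(c)}[\widehat{D},(s,s_0,s)]$, and the leading term (with respect to an appropriate ordering, e.g.\ by the number and heights of intersection points with $c$, or via Le's ordering on the cut surface) is a single basis element of $\mathcal{S}_q(\mathbf{\Sigma})$ appearing with coefficient $1$; distinct $D$ produce distinct leading terms because the cut diagram $\widehat{D}$ together with the state data recovers $D$. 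Hence the matrix of $\theta_{a\#b}$ in these bases is unitriangular, so $\theta_{a\#b}$ is injective. The main obstacle throughout is the well-definedness step — verifying that the state-sum defining $\theta_{a\#b}$ is independent of the transverse position of $T_0$ — and I would cite \cite[Theorem~3.1]{LeStatedSkein} and \cite{BloomquistLe} for the detailed verification rather than reproduce it.
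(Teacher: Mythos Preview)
The paper does not supply its own proof of this statement: it simply cites \cite[Theorem~3.1]{LeStatedSkein} and moves on. Your sketch is a faithful outline of L\^e's original argument --- well-definedness via invariance under strand-pushing across $c$ using the boundary relations, multiplicativity by compatibility with stacking, and injectivity via a unitriangular matrix in the diagrammatic bases --- so there is nothing to compare and no gap to flag.
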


Recall that the bigon $\mathbb{B}$ is a thickened disc with two boundary arcs, say $a_L$ and $a_R$.
For $\varepsilon, \varepsilon' \in \{-, +\}$, let $\alpha_{\varepsilon \varepsilon'}\in \mathcal{S}_q(\mathbb{B})$ be the class of the stated arc with arc $\alpha$ connecting $a_L$ to $a_R$ with state $\varepsilon$ on $\alpha\cap a_L$ and state $\varepsilon'$ on $\alpha\cap a_R$, i.e. $\alpha_{\varepsilon \varepsilon'}= \adjustbox{valign=c}{\includegraphics[width=1.3cm]{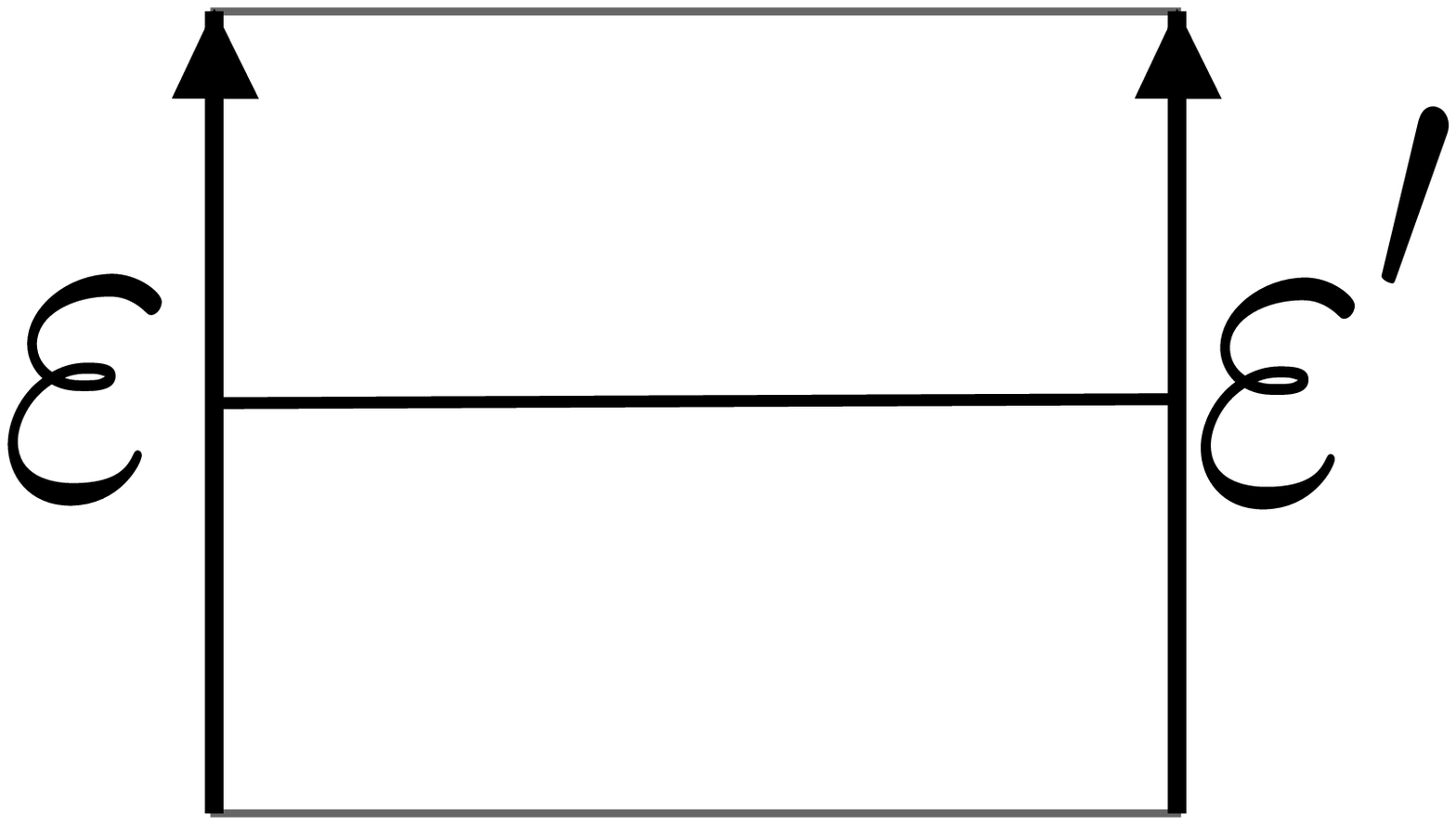}}$.
 By gluing two copies $\mathbb{B} \bigsqcup \mathbb{B}'$ of the bigon together, by identifying $a_R$ with $a_L'$, we get another bigon. The splitting morphism
$$\Delta := \theta_{a_R \# a_L'} : \mathcal{S}_q(\mathbb{B})^{\otimes 2} \to \mathcal{S}_q(\mathbb{B}), $$
endows $\mathcal{S}_q(\mathbb{B})$ with a Hopf algebra structure with coproduct $\Delta$ and counit $\epsilon \begin{pmatrix} \alpha{++} & \alpha_{+-} \\ \alpha_{-+} & \alpha_{--} \end{pmatrix} = \begin{pmatrix} 1&0 \\ 0&1\end{pmatrix}$. 
Moreover, $\mathcal{S}_q(\mathbb{B})$ has a structure of cobraided Hopf algebra where the co-R matrix  $\mathbf{r}: \mathcal{S}_q(\mathbb{B})^{\otimes 2} \to k$  is defined by the formula
  $$ \mathbf{r} \left( \adjustbox{valign=c}{\includegraphics[width=1.7cm]{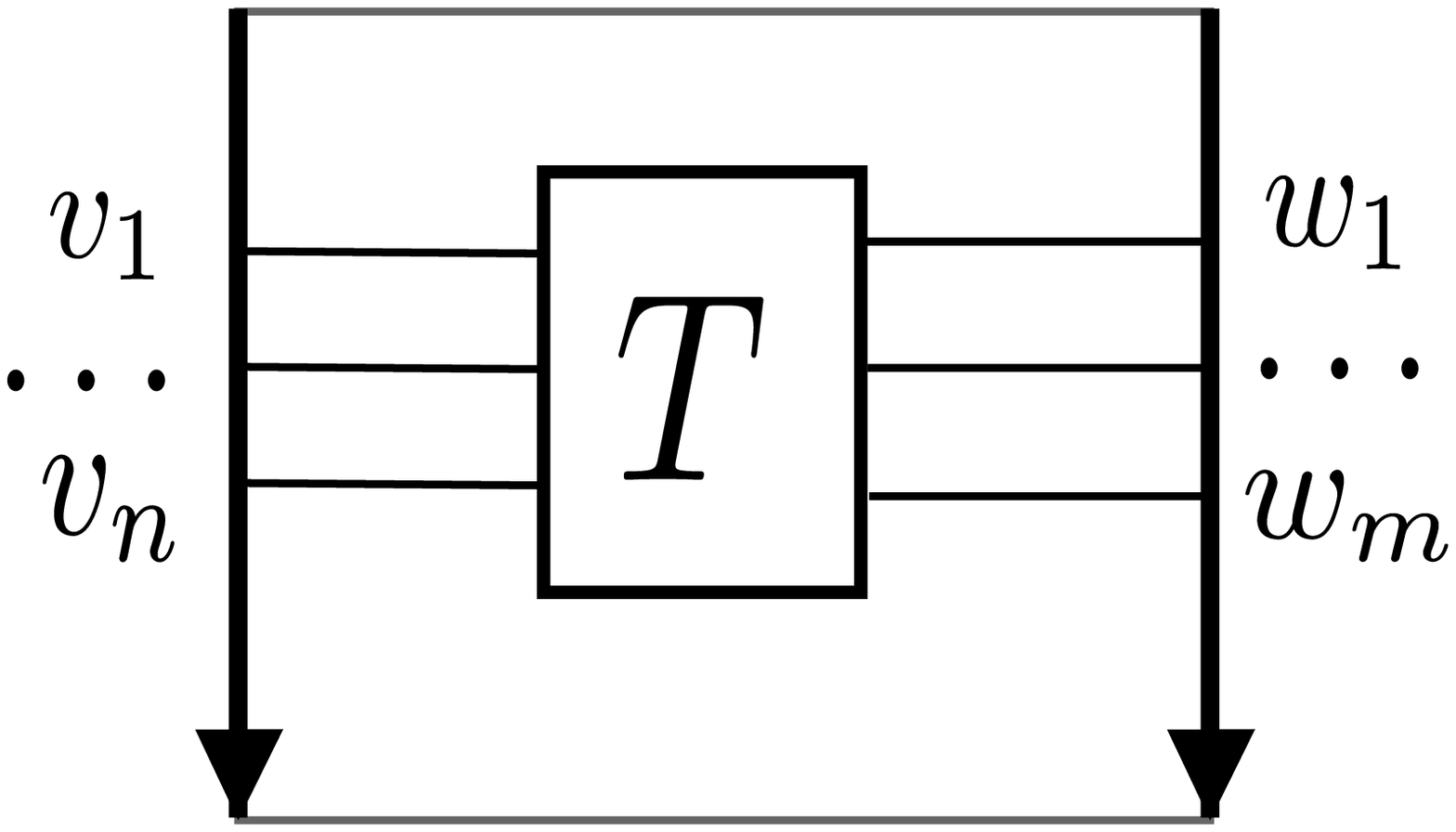}} \otimes\adjustbox{valign=c}{\includegraphics[width=1.7cm]{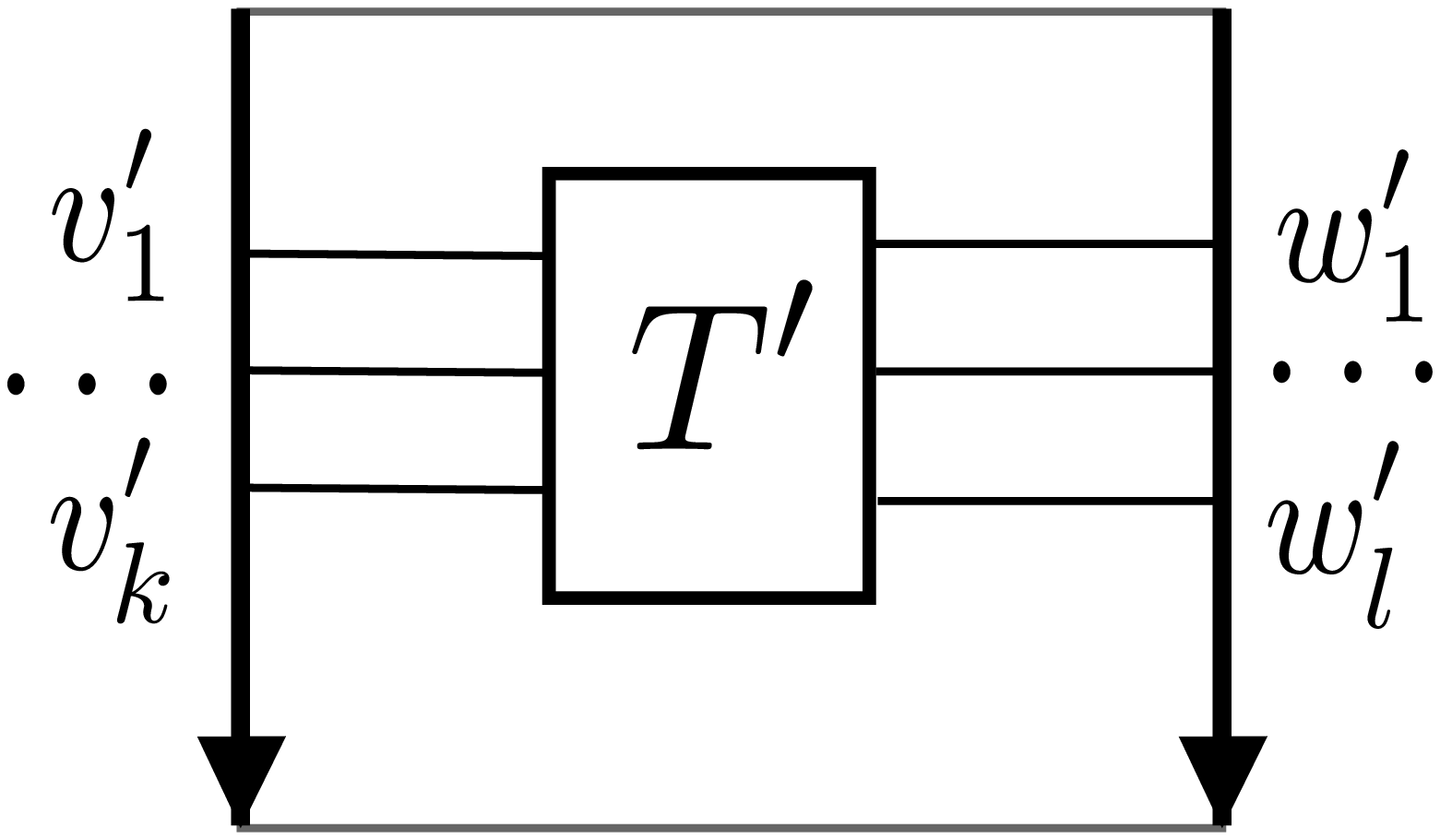}}  \right)
  := \epsilon\left(  \adjustbox{valign=c}{\includegraphics[width=3cm]{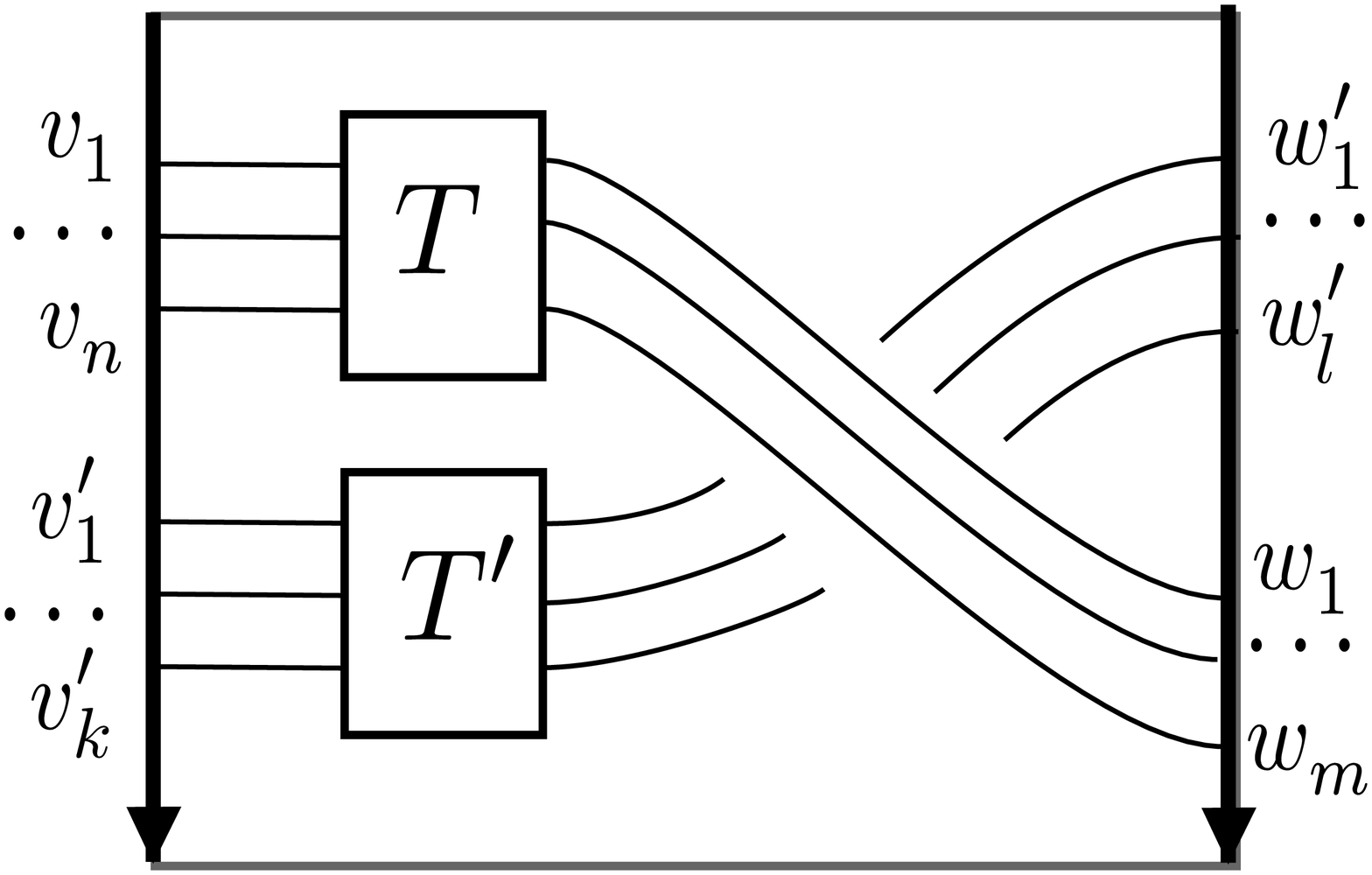}} \right),$$
    The coribbon structure is given by the (positive) co-twist
  $$\Theta \left(\adjustbox{valign=c}{\includegraphics[width=1.7cm]{TangleHTwistL.eps}}  \right) = \epsilon \left( \adjustbox{valign=c}{\includegraphics[width=2cm]{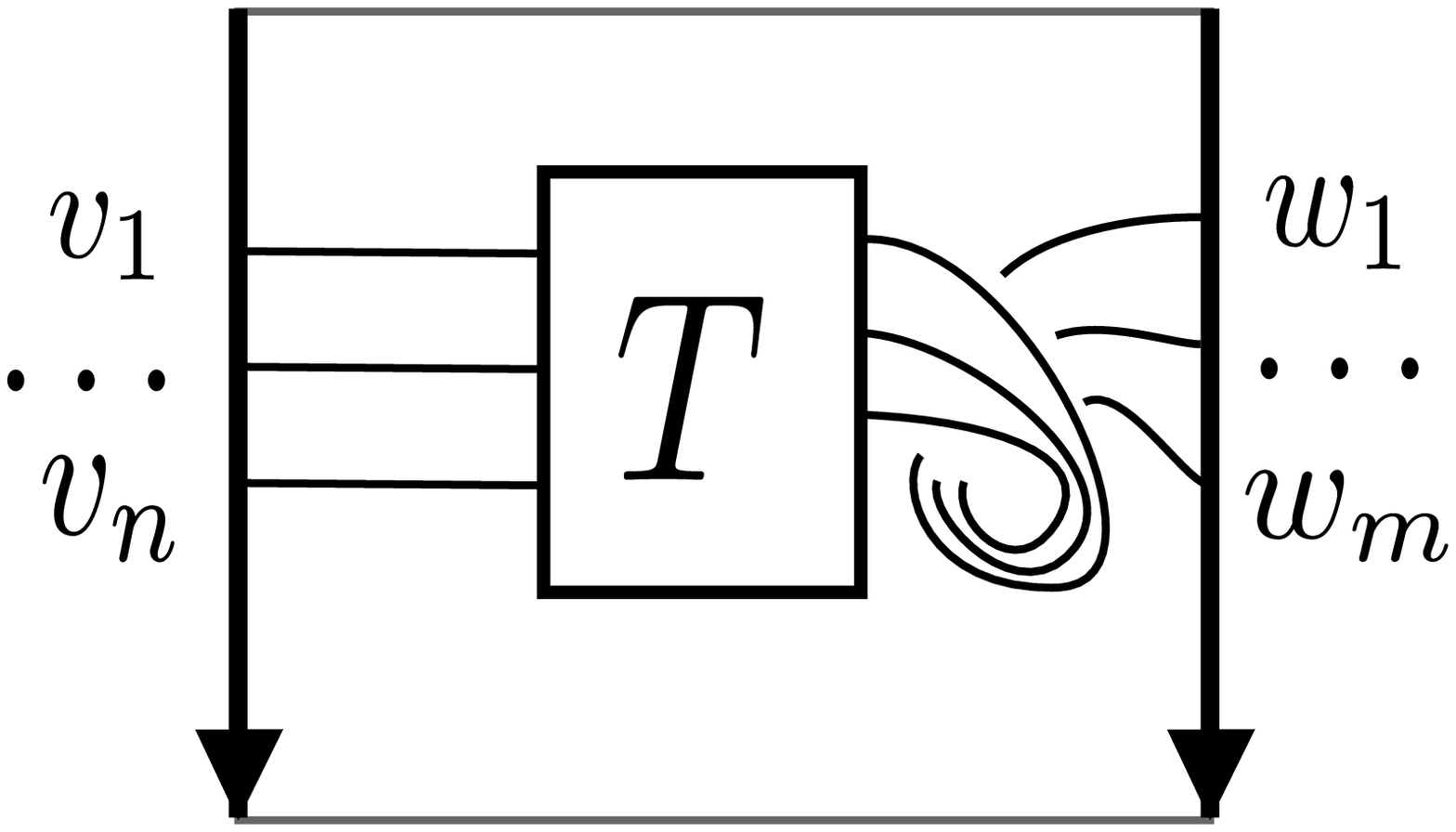}} \right).$$

\par 
  The  half-coribbon structure on $\mathcal{S}_q(\mathbb{B})$ is defined by the half-twist $t: \mathcal{S}_q(\mathbb{B})\to k$:
$$ t \left(  \adjustbox{valign=c}{\includegraphics[width=1.7cm]{TangleHTwistL.eps}} \right) = \varepsilon \left(  \adjustbox{valign=c}{\includegraphics[width=2cm]{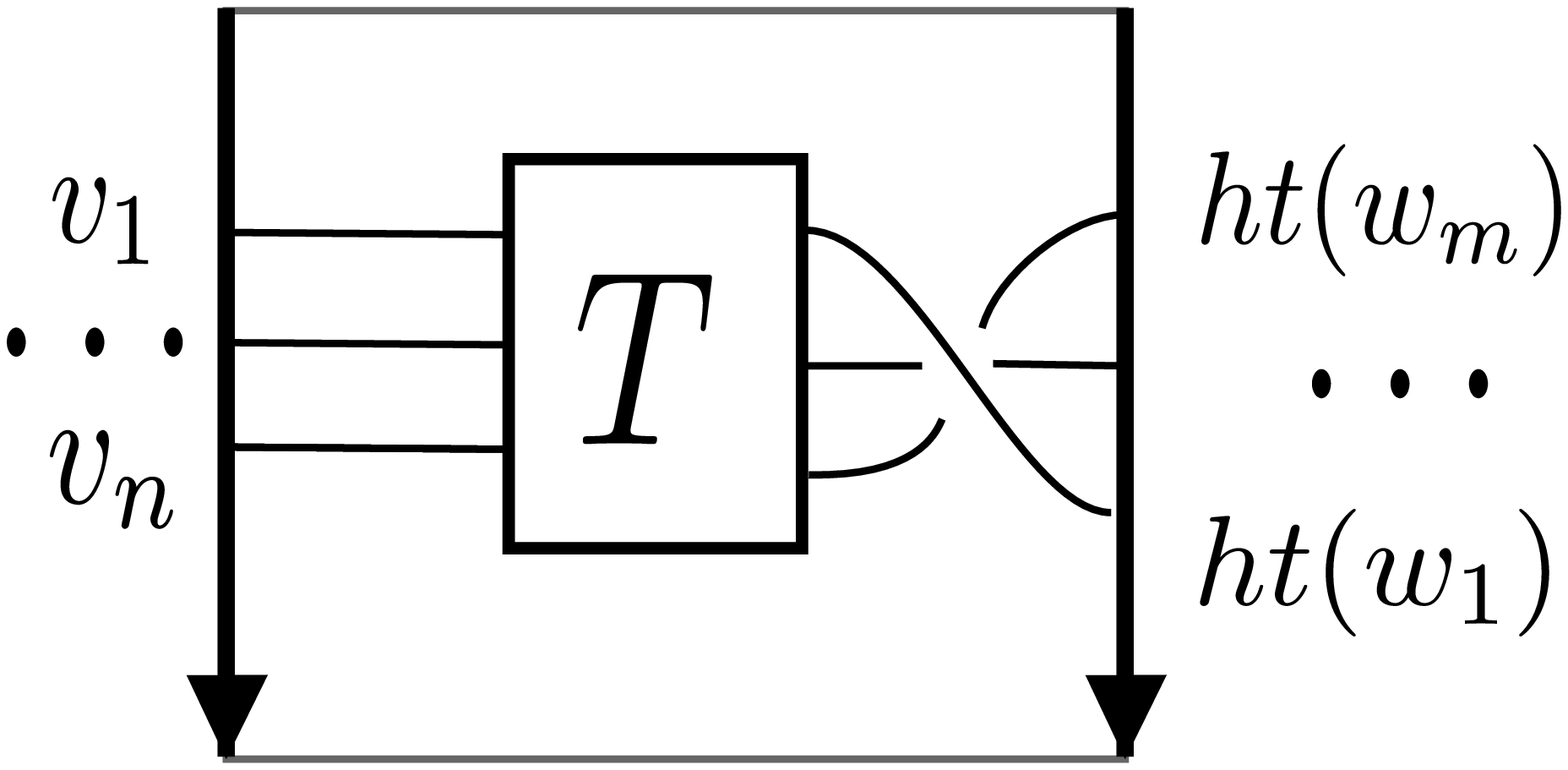}} \right)= \varepsilon \left(  \adjustbox{valign=c}{\includegraphics[width=2cm]{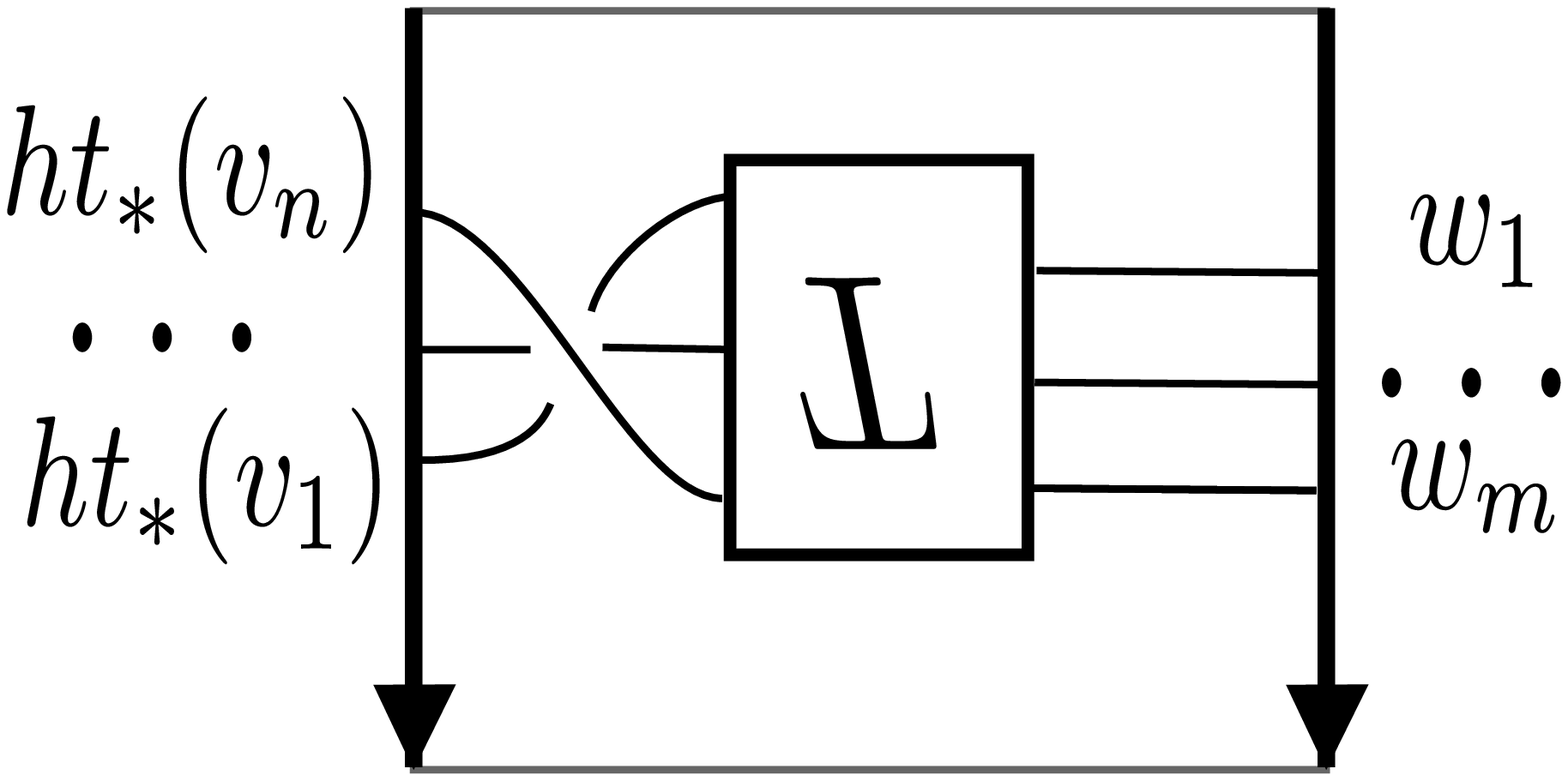}} \right).$$

The rotation operator $\rot : \mathcal{S}_q(\mathbb{B}) \to \mathcal{S}_q(\mathbb{B})$ can be visualized as a $90$ degrees rotation (hence the name) as follows:
$$ \rot \left(  \adjustbox{valign=c}{\includegraphics[width=1.5cm]{TangleHTwistL.eps}} \right) = \left(  \adjustbox{valign=c}{\includegraphics[width=1.5cm]{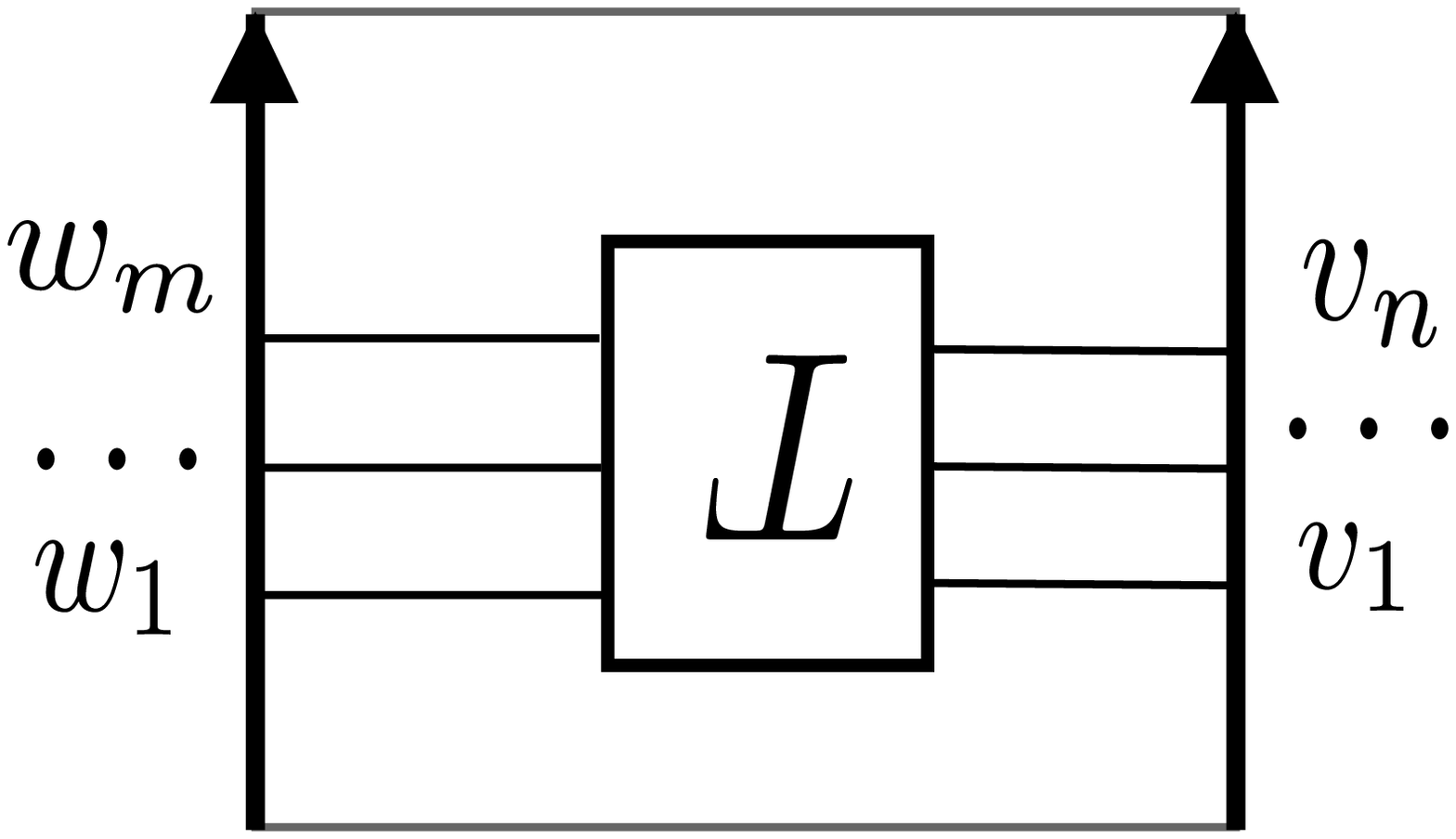}} \right).$$
Therefore, the antipode can be depicted graphically as:
$$ S \left(  \adjustbox{valign=c}{\includegraphics[width=1.5cm]{TangleHTwistL.eps}} \right) = \adjustbox{valign=c}{\includegraphics[width=2.5cm]{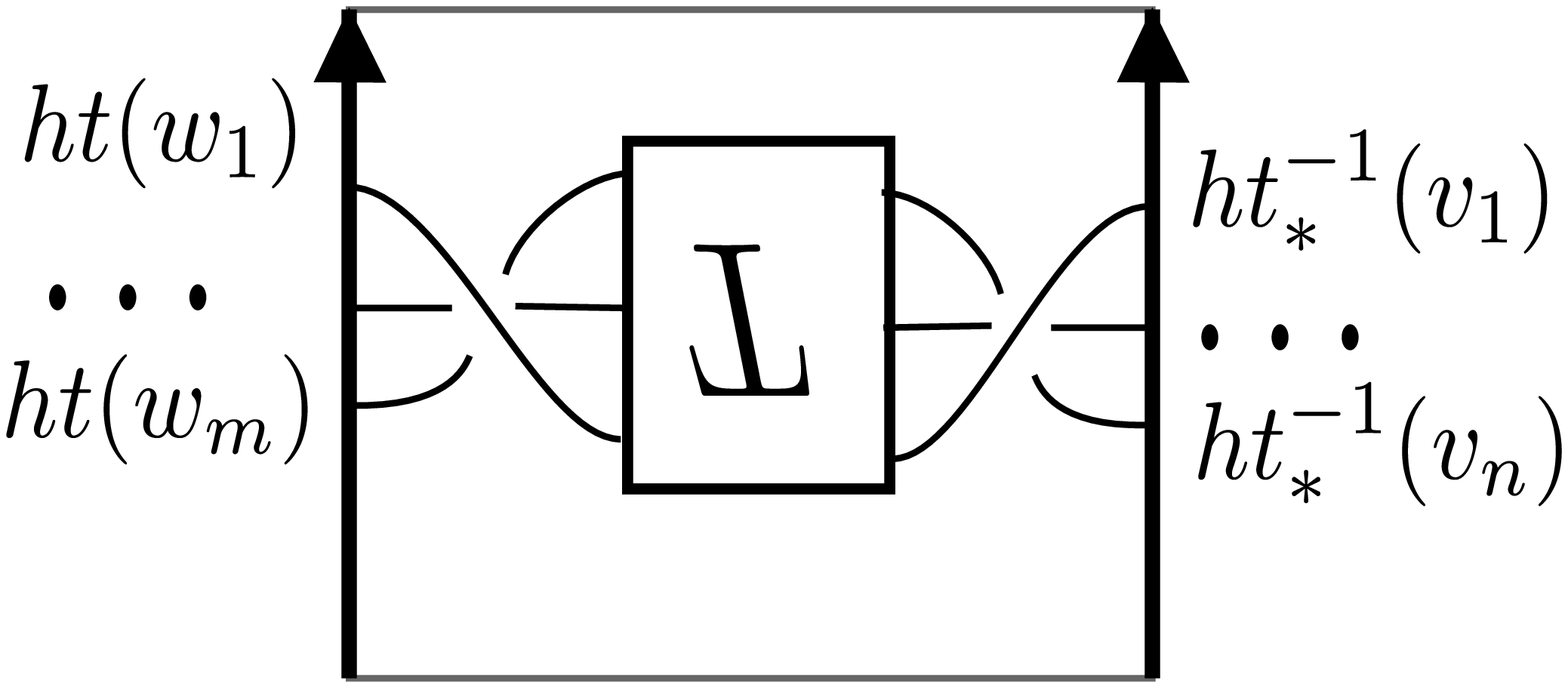}} = \adjustbox{valign=c}{\includegraphics[width=2.7cm]{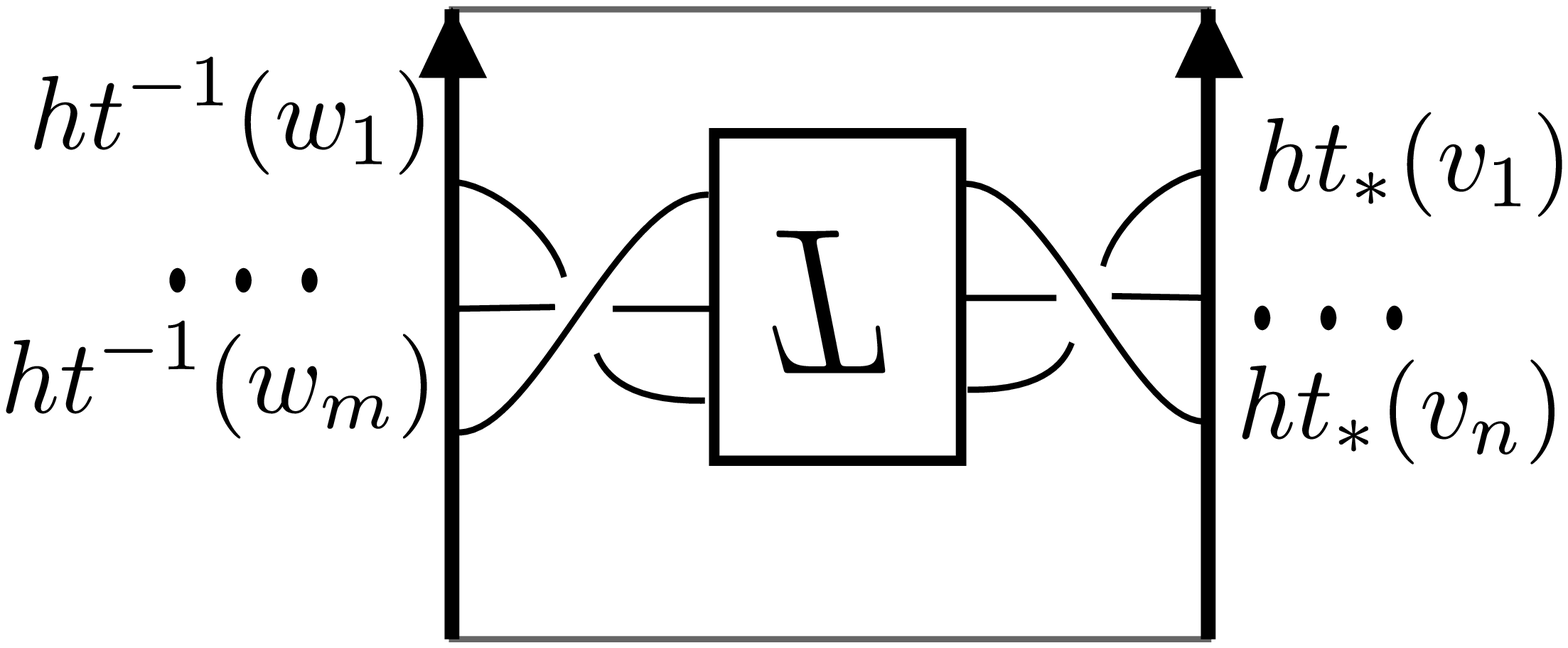}}= \adjustbox{valign=c}{\includegraphics[width=2.7cm]{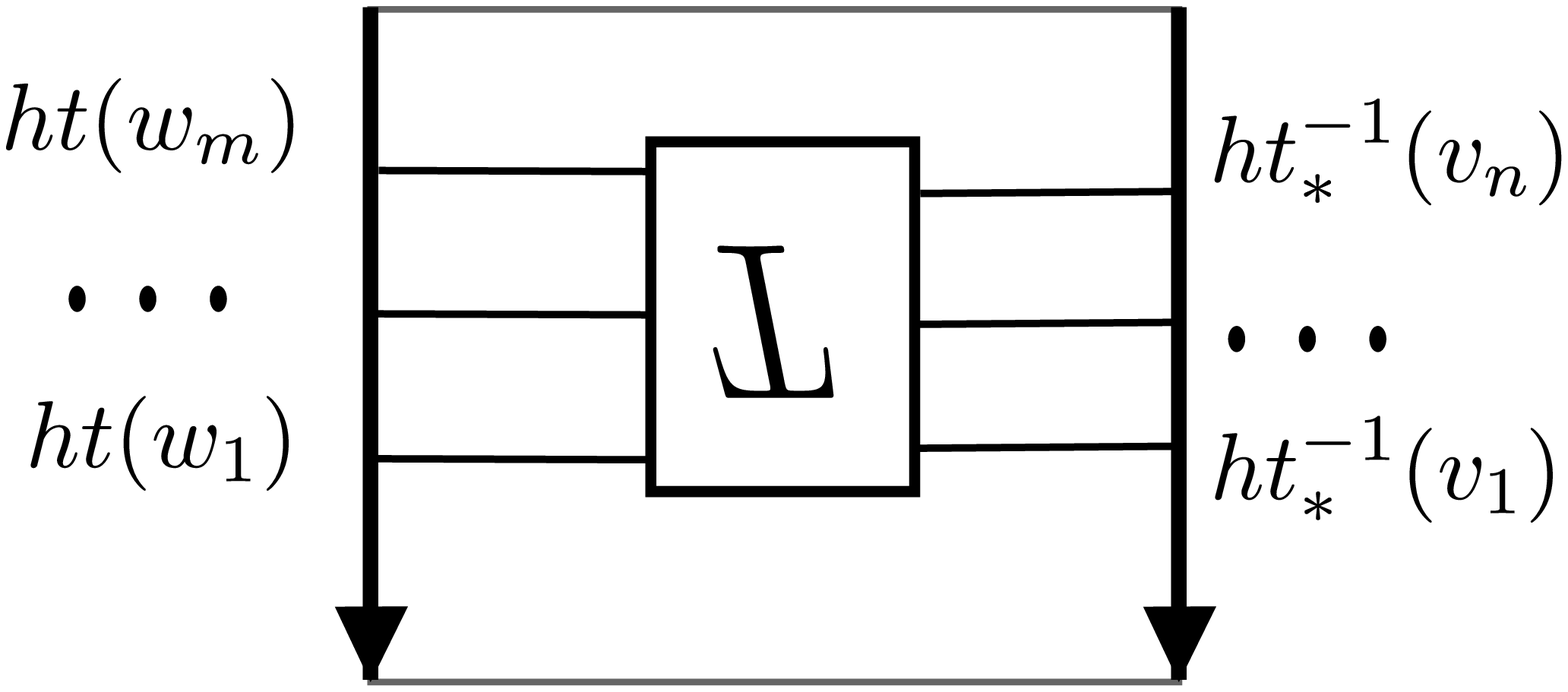}}.$$

\begin{theorem}\label{theorem_bigon}(\cite{LeStatedSkein}, \cite{KojuQuesneyClassicalShadows}, \cite{CostantinoLe19}, \cite{Haioun_Sskein_FactAlg})
There exists an isomorphism of half-coribbon Hopf algebras $\mathcal{S}_q(\mathbb{B})\cong \mathcal{O}_q[\SL_2]$ sending the generators $\alpha_{++}, \alpha_{+-}, \alpha_{-+}, \alpha_{--}$ to $a,b,c,d$ respectively, where $\mathcal{O}_q[\SL_2]$ is equipped with the  half-coribbon structure given by $t_1$.
\end{theorem}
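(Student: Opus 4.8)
The plan is to reduce the algebra and coalgebra parts of the statement to Lê's computation \cite{LeStatedSkein} of the stated skein algebra of the bigon surface, and then to verify the co-$R$-matrix, the co-twist and (most delicately) the half-twist by direct skein-theoretic pictures, keeping careful track of the powers of $q^{1/4}$ and of the signs that distinguish the half-twist $t_1$ from $t_2$.

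First I would show that $\alpha_{++}\mapsto a$, $\alpha_{+-}\mapsto b$, $\alpha_{-+}\mapsto c$, $\alpha_{--}\mapsto d$ extends to a well-defined algebra isomorphism. For well-definedness one checks that the defining relations of $\mathcal{O}_q[\SL_2]$ hold among the classes $\alpha_{\varepsilon\varepsilon'}\in\mathcal{S}_q(\mathbb{B})$: superposing two stated arcs in the bigon produces a diagram with two crossings, which one resolves by the Kauffman bracket relation, and the resulting diagrams are then straightened using the sliding relations of Definition \ref{def_skein} and Remark \ref{remark_skeinrelations}; accounting for the coefficients reproduces exactly the $q$-commutation relations, i.e. $\mathscr{R}(M\odot M)=(M\odot M)\mathscr{R}$ for $M=\left(\begin{smallmatrix}\alpha_{++}&\alpha_{+-}\\\alpha_{-+}&\alpha_{--}\end{smallmatrix}\right)$, while the trivial-loop relation of Definition \ref{def_skein} yields $\alpha_{++}\alpha_{--}-q^{-1}\alpha_{+-}\alpha_{-+}=1$. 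Surjectivity is immediate since the $\alpha_{\varepsilon\varepsilon'}$ generate $\mathcal{S}_q(\mathbb{B})$; injectivity follows by comparing the PBW-type bases of ordered monomials on the two sides (Lê's basis theorem), and since that basis holds over all of $k=\mathbb{Z}[q^{\pm1/4}]$ the isomorphism is integral, with no flatness issue.

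Next I would match the Hopf structure: applying the splitting (cutting) operation to $\alpha_{\varepsilon\varepsilon'}$ and using Definition \ref{def_gluing_map} gives $\Delta(\alpha_{\varepsilon\varepsilon'})=\sum_{j\in\{+,-\}}\alpha_{\varepsilon j}\otimes\alpha_{j\varepsilon'}$, which is the matrix coproduct $\Delta(M)=M\otimes M$, and $\epsilon(\alpha_{\varepsilon\varepsilon'})=\delta_{\varepsilon\varepsilon'}$ follows from the cap relations; since an antipode on a bialgebra is unique, the isomorphism automatically intertwines antipodes, and one can cross-check this against the graphical formula $S(M)=\left(\begin{smallmatrix}\alpha_{--}&-q\alpha_{+-}\\-q^{-1}\alpha_{-+}&\alpha_{++}\end{smallmatrix}\right)$. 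For the braided data, $\mathbf{r}(\alpha_{\varepsilon_1\varepsilon_1'}\otimes\alpha_{\varepsilon_2\varepsilon_2'})$ is by definition $\epsilon$ evaluated on the diagram with a single positive crossing of the two arcs; resolving that crossing as above gives precisely the entries of the matrix $\mathscr{R}$ of Section \ref{sec_QG}, and the same computation on the positive kink gives $\Theta(\alpha_{\varepsilon\varepsilon'})=-q^{3/2}\delta_{\varepsilon\varepsilon'}$, as required.

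The last step, and the one I expect to be the real obstacle, is the half-twist. Using the graphical definition of $t$ together with the explicit matrix of $\hT$ recalled immediately before the theorem — namely $\hT(v_+,v_-)=(A^{1/2}v_-,-A^{5/2}v_+)$ with $A=q^{1/2}$ — one evaluates $\varepsilon$ on the half-twisted arc and obtains $t(\alpha_{++})=t(\alpha_{--})=0$, $t(\alpha_{-+})=q^{1/4}$, $t(\alpha_{+-})=-q^{5/4}$, which is exactly $t_1\left(\begin{smallmatrix}a&b\\c&d\end{smallmatrix}\right)=\left(\begin{smallmatrix}0&-q^{5/4}\\q^{1/4}&0\end{smallmatrix}\right)$; that $t$ is $\star$-invertible with $t\star t=\Theta$ then follows from $\omega_1^2=\theta$ (or, again, from a direct picture). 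The genuine difficulty is thus bookkeeping: a single misplaced sign — in the Kauffman normalization, in the choice of twist $\theta=\varepsilon\theta_0$ rather than $\theta_0$, or in the orientation of the half-twisted arc — would land on $t_2$ instead of $t_1$, so the conventions must be pinned down with care throughout.
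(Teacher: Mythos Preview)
The paper does not give its own proof of this theorem; it is stated with attribution to \cite{LeStatedSkein}, \cite{KojuQuesneyClassicalShadows}, \cite{CostantinoLe19}, and \cite{Haioun_Sskein_FactAlg}, and is used as a black box thereafter. Your outline is essentially the argument found in those references: L\^e establishes the Hopf algebra isomorphism via the basis theorem and the skein relations, the cobraided and coribbon structures are checked in \cite{KojuQuesneyClassicalShadows, CostantinoLe19}, and the identification of the half-twist with $t_1$ is the contribution of \cite{Haioun_Sskein_FactAlg}. Your sketch is correct and faithful to that literature, including the warning that the bookkeeping of signs and quarter-powers of $q$ is the only genuine subtlety in distinguishing $t_1$ from $t_2$.
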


Let $\mathbf{M} \in \mathcal{M}$ and $b$ a boundary disc of $\mathbf{M}$. By gluing the bigon $\mathbb{B}$ to $\mathbf{M}$ while identifying $a_R$ with $b$ we get a marked $3$-manifold isomorphic to $\mathbf{M}$ so, identifying $\mathcal{S}_q(\mathbb{B})$ with $\mathcal{O}_q[\SL_2]$ using the isomorphism of Theorem \ref{theorem_bigon},  the splitting morphism
$$\Delta_c^L := \theta_{a_R \# c} : \mathcal{S}_q(\mathbf{M}) \to \mathcal{O}_q[\SL_2] \otimes \mathcal{S}_A(\mathbf{M})$$
endows $\mathcal{S}_q(\mathbf{M})$ with a structure of left $\mathcal{O}_q[\SL_2]$-comodule. Similarly, while gluing $b$ with $a_L$ we get left comodule map:
$$ \Delta_c^R := \theta_{c \# a_L} \mathcal{S}_q(\mathbf{M}) \to \mathcal{S}_A(\mathbf{M}) \otimes \mathcal{O}_q[\SL_2].$$
The comodule map $\Delta_c^L$ is depicted in Figure \ref{fig_comodule}. Note that the two comodules are related by the functor $\rot_*: \LComod_{\mathcal{O}_q[\SL_2]} \to \RComod_{\mathcal{O}_q[\SL_2]}$, i.e. $\Delta_c^R = \tau \circ (\rot \otimes \id) \circ \Delta_c^L$.

Therefore, the stated skein functor restricts to functors:
$$ \mathcal{S}_q : \mathcal{M}^{(n)} \to (\mathcal{O}_q[\SL_2])^{\otimes n}-\RComod.$$

\begin{figure}[!h] 
\centerline{\includegraphics[width=12cm]{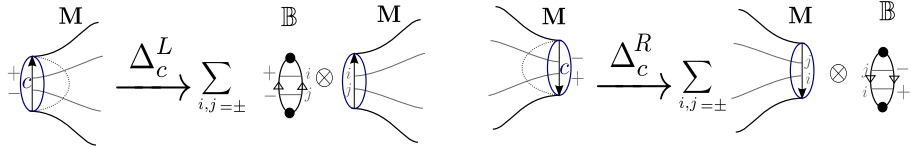} }
\caption{An illustration of the comodule maps $\Delta_c^L, \Delta_c^R$.} 
\label{fig_comodule} 
\end{figure}

\subsubsection{The quantum fusion operation}\label{sec_qfusion}

\begin{definition}\label{def_fusion_quantique}
Let $(A,\mu_A, \eta, \Delta, \epsilon_A, S, \mathbf{r})$  be a cobraided Hopf algebra and $C \in \mathrm{RComod}_{A \otimes A}$ with comodule map $\Delta_{A\otimes A} : C \to C\otimes A \otimes A$.
Write $\Delta^1:= (\id \otimes \epsilon \otimes \id) \circ \Delta_{A\otimes A}: C\to C\otimes A$ and $\Delta^2: (\epsilon \otimes \id \otimes \id) \circ \Delta_{A\otimes A}: C \to C \otimes A$.
 The \textit{quantum fusion} ${C}_{1\circledast 2}$ is the comodule in $\mathrm{RComod}_A$ where 
 \begin{enumerate}
 \item $C_{1\circledast 2}= C$ as a $k$-module;
 \item The comodule map is $\Delta_A:= (\id \otimes \mu_A ) \circ \Delta_{A\otimes A} $.
 \end{enumerate}
 Moreover if $(C, \mu, \epsilon)$ is an algebra object in $\mathrm{RComod}_{A\otimes A}$, its quantum fusion  has a structure of algebra object  $(C_{1\circledast 2}, \mu_{1\circledast 2}, \epsilon_{1\circledast 2})$ in $\mathrm{RComod}_A$ where $\epsilon_{1 \circledast 2}=\epsilon$ and the product is the composition
 $$ \mu_{1 \circledast 2} : C\otimes C \xrightarrow{\Delta_1\otimes \Delta_2} C\otimes A \otimes C \otimes A \xrightarrow{\id \otimes \tau_{A,C} \otimes \id} C\otimes C \otimes A \otimes A \xrightarrow{\mu \otimes \mathbf{r} } C.$$
 \end{definition}
 
 \begin{remark}
 \begin{enumerate}
 \item If $V$ and $W$ are two algebra objects in $\mathrm{RComod}_A$, then $V\otimes_k W$ is an algebra object in $\mathrm{RComod}_{A\otimes A}$ and its quantum fusion $(V\otimes_k W)_{1 \circledast 2}$ is the braided tensor product $V\overline{\otimes} W$  of Definition \ref{def_transmutation}.
 \item When $A$ is a deformation quantization of some Poisson-Lie group $G$ and $C$ is the deformation quantization of some (smooth) Poisson $G$-variety, it is proved in \cite[Theorem $13$]{KojuMCGRepQT} that, at the semi-classical level, the quantum fusion operation recovers Alekseev-Malkin's (classical) fusion operation as defined in \cite{AlekseevMalkin_PoissonLie} (see \cite[Section $4.3$]{KojuSurvey} for details). This explains the name "quantum fusion operation".
 \end{enumerate}
 \end{remark}
 
 Now consider a marked surface $\mathbf{M}_{a\circledast b}$ obtained by fusioning two boundary disc $a$ and $b$ of $\mathbf{M}$. Recall that $\mathbb{T}$ is a ball with three boundary discs, say $e_1, e_2, e_3$ and  that  $\mathbf{M}_{a\circledast b}$ is obtained from $\mathbf{M}\bigsqcup \mathbb{T}$ by gluing $a$ to $e_1$ and $b$ to $e_2$. 
 Define a linear map $\Psi_{a\circledast b} : \mathcal{S}_q(\mathbf{M}) \to \mathcal{S}_q(\mathbf{M}_{a\#b})$ by $\Psi_{a\circledast b} ([T,s]^{\mathfrak{o}}):= [T',s']$ where $(T',s')$ is obtained from $(T,s)$ by gluing to each point of $T\cap a$ a straight line in $\mathbb{T}$ between $e_1$ and $e_3$ and by gluing to each point of $T\cap b$ a straight line in $\mathbb{T}$ between $e_2$ and $e_3$ as illustrated in Figure \ref{fig_fusion}. 
 
  \begin{figure}[!h] 
\centerline{\includegraphics[width=6cm]{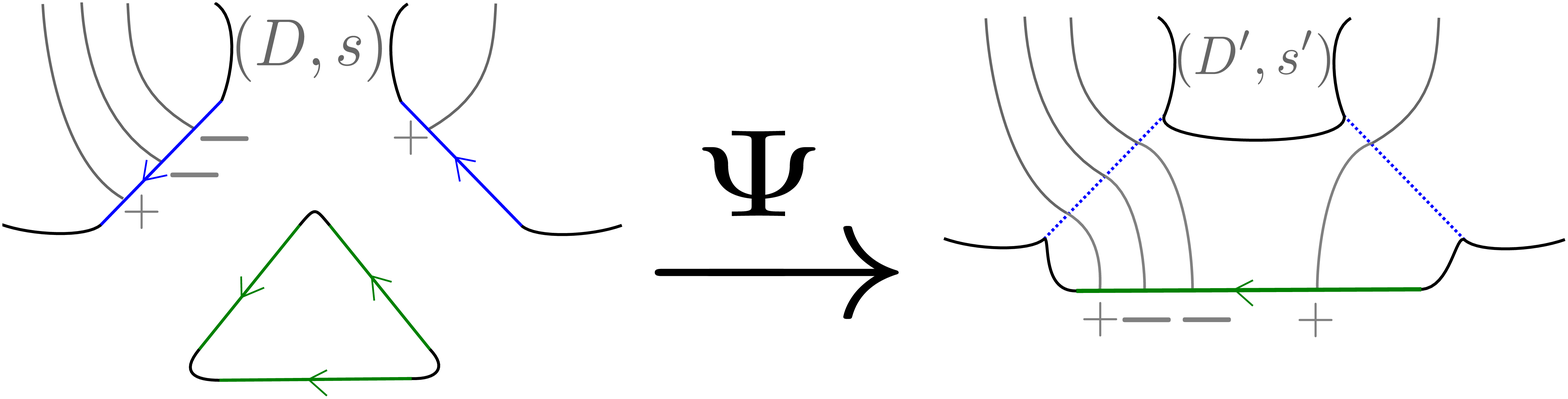} }
\caption{An illustration of $\Psi_{a\circledast b}$.} 
\label{fig_fusion} 
\end{figure}

 \begin{theorem}\label{theorem_fusion}(\cite{CostantinoLe19, Higgins_SSkeinSL3, LeSikora_SSkein_SLN, KojuMCGRepQT}) The linear map $\Psi_{a\circledast b}$ is an isomorphism of $k$-modules which identifies $\mathcal{S}_q(\mathbf{M}_{a\circledast b})$  with the quantum fusion $\mathcal{S}_q(\mathbf{M})_{a\circledast b}$. If $\mathbf{M}$ is a thickened marked surface, then $\Psi_{a\circledast b}$ is an isomorphism of algebras.
 \end{theorem}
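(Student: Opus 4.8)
The plan is to build an explicit two‑sided inverse of $\Psi_{a\circledast b}$ out of the description of $\mathbf{M}_{a\circledast b}$ as $\mathbf{M}$ with a copy of the triangle $\mathbb{T}$ glued on along $e_1\leftrightarrow a$ and $e_2\leftrightarrow b$, and then to read off the comodule and algebra structures that this isomorphism transports. First I would check that $\Psi_{a\circledast b}$ is well defined. The Kauffman and trivial‑loop relations of $\mathcal{S}_q(\mathbf{M})$ are supported in the interior of $M$ and are carried to the corresponding relations of $\mathcal{S}_q(\mathbf{M}_{a\circledast b})$ verbatim once the straight parallel arcs are appended inside the glued copy of $\mathbb{T}$; the only genuine point is that the boundary relations at $a$ and $b$ (the height‑reordering relation, the $\delta_{i,j}$‑relation, and the state‑sum cap relation) become, after appending these arcs, an isotopy or a skein identity inside $\mathbb{T}$ together with a collar. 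Each such case is an elementary picture involving two parallel unlinked arcs whose states are transported through $\hT^{\pm 1}$, i.e. it amounts to the compatibility of the coaction along $e_3$ with the coactions along $a$ and $b$; I would dispatch these one by one. This step is slightly tedious but not hard.

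The technical heart is a reduction lemma for the triangle: in $\mathcal{S}_q(\mathbb{T})$ every stated tangle equals a $k$‑linear combination of disjoint unions of parallel straight arcs, each running from $e_1$ to $e_3$ or from $e_2$ to $e_3$, with the $e_1e_3$‑arcs lying above the $e_2e_3$‑arcs along $e_3$. The proof proceeds by the standard skein reduction in a ball: closed components are removed by the trivial‑loop relation; strands with both endpoints on $e_1$ (resp. on $e_2$), strands joining $e_1$ to $e_2$, and strands joining $e_3$ to itself are isotoped inside $\mathbb{T}$ so as to bounce off $e_3$ and are then split into straight $e_ie_3$‑arcs, over a sum of states, by the state‑sum cap relation; finally the resulting straight arcs are put into parallel position and their states normalised using the reordering relation, $\hT$, and the bigon identification $\mathcal{S}_q(\mathbb{B})\cong\mathcal{O}_q[\SL_2]$ of Theorem \ref{theorem_bigon}. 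This is in essence the triangle computation of \cite{CostantinoLe19, LeSikora_SSkein_SLN}, which I would invoke. Granting the lemma, surjectivity of $\Psi_{a\circledast b}$ is immediate: isotope a representative $T$ of a class in $\mathcal{S}_q(\mathbf{M}_{a\circledast b})$ transverse to the glued copy of $\mathbb{T}$, apply the lemma inside that copy, and observe that every resulting term lies in the image of $\Psi_{a\circledast b}$.

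For injectivity I would exhibit a left inverse $\Phi$ of $\Psi_{a\circledast b}$ by splitting off the copy of $\mathbb{T}$ along the image of $a\cup b$ (Definition \ref{def_gluing_map}), landing in $\mathcal{S}_q(\mathbf{M})\otimes\mathcal{S}_q(\mathbb{T})$, and then contracting the $e_3$‑legs of the $\mathbb{T}$‑factor back along the straight arcs; the $\delta_{i,j}$‑relation yields $\Phi\circ\Psi_{a\circledast b}=\mathrm{id}$, whence $\Psi_{a\circledast b}$ is a $k$‑module isomorphism. It then remains to identify the transported structures with those of Definition \ref{def_fusion_quantique}. For the comodule structure, a bigon glued along $e_3$ slides down the straight arcs inside $\mathbb{T}$ and splits as a bigon glued along $a$ together with one glued along $b$ — the two packets of arcs never crossing inside the triangle — so the $\mathcal{O}_q[\SL_2]$‑coaction on $\mathcal{S}_q(\mathbf{M}_{a\circledast b})$ becomes exactly $(\id\otimes\mu_A)\circ\Delta_{A\otimes A}$, i.e. the fused coaction.

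Finally, when $\mathbf{M}$ is a thickened marked surface, $\mathbf{M}_{a\circledast b}$ is one too and $\Psi_{a\circledast b}$ is clearly compatible with unit and with the product up to the following correction: superimposing two diagrams in $\mathbf{M}_{a\circledast b}$ interleaves their two packets of straight arcs near $e_3$, and restoring the standard ordering slides one packet past the other; the Kauffman resolution of the crossings so created is, on the boundary, precisely the co‑$R$‑matrix $\mathbf{r}$, reproducing the product $\mu_{1\circledast 2}$ of Definition \ref{def_fusion_quantique}. Hence $\Psi_{a\circledast b}$ is an algebra isomorphism onto $\mathcal{S}_q(\mathbf{M})_{a\circledast b}$. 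I expect the main obstacle to be the triangle reduction lemma; once it, together with the routine well‑definedness verification, is secured, the construction of $\Phi$ and the identification of the fused structures are direct skein manipulations.
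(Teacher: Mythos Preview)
The paper does not give its own proof of this statement; it refers to \cite{CostantinoLe19, Higgins_SSkeinSL3, LeSikora_SSkein_SLN, KojuMCGRepQT} and records that Higgins' argument extends word-by-word to the general setting. Your outline is essentially that argument: the triangle reduction lemma is the technical core, and once it is in hand, surjectivity of $\Psi_{a\circledast b}$ and the identification of the fused comodule and algebra structures follow by the skein manipulations you describe. Your packaging of injectivity via an explicit left inverse $\Phi$ built from the splitting morphism is a clean variant; to make the ``contraction'' step fully precise, observe that it is the composite $\mathcal{S}_q(\mathbb{T})\to\mathcal{S}_q(\mathbb{B})\xrightarrow{\epsilon}k$ induced by the embedding of marked $3$-manifolds $\mathbb{T}\hookrightarrow\mathbb{B}$ sending $e_1,e_2$ into one boundary disc and $e_3$ into the other, so that each straight arc evaluates to $\epsilon(\alpha_{ij})=\delta_{ij}$ and $\Phi\circ\Psi_{a\circledast b}=\mathrm{id}$ becomes immediate.
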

 
Theorem \ref{theorem_fusion} was proved by Costantino-L\^e in \cite[Theorem $4.13$]{CostantinoLe19} in the particular case where $\mathbf{M}=\mathbf{\Sigma}_1 \times I \bigsqcup \mathbf{\Sigma}_2 \times I$ is the disjoint union of two thickened marked surfaces with $a$ in $\mathbf{\Sigma}_1$ and $b$ in $\mathbf{\Sigma}_2$. Another proof was proposed by Higgins in \cite{Higgins_SSkeinSL3} (for the $\SL_3$ stated skein algebras). As proved independently in \cite[Proposition $7.6$]{LeSikora_SSkein_SLN} and \cite[Theorem $2.7$]{KojuMCGRepQT}, Higgins' proof extends word-by-word to the more general context of Theorem \ref{theorem_fusion}.

\begin{corollary}
The functors $\mathcal{S}_q : (\mathcal{M}^{(1)}, \wedge) \to (\overline{\mathcal{C}_q^{\SL_2}},  \overline{\otimes})$ and $\mathcal{S}_q: (\MS^{(1)}, \wedge) \to (\Alg(\overline{\mathcal{C}_q^{\SL_2}}), \overline{\otimes})$ are lax monoidal.
\end{corollary}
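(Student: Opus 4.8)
The plan is to recognize the monoidal comparison for $\mathcal{S}_q$ as a combination of its compatibility with disjoint union and Theorem~\ref{theorem_fusion}. By Definition~\ref{def_operations} the product $\wedge$ is the fusion of the two boundary discs, $\mathbf{M}_1\wedge\mathbf{M}_2=(\mathbf{M}_1\bigsqcup\mathbf{M}_2)_{a\circledast b}$ with $a,b$ the unique boundary discs of $\mathbf{M}_1,\mathbf{M}_2$. Since $\mathcal{S}_q:(\mathcal{M},\bigsqcup)\to(\Mod_k,\otimes_k)$ is monoidal and this identification is visibly one of $\mathcal{O}_q[\SL_2]^{\otimes 2}$-comodules (the two coactions coming from the two separate discs), one has a natural isomorphism $\mathcal{S}_q(\mathbf{M}_1\bigsqcup\mathbf{M}_2)\cong\mathcal{S}_q(\mathbf{M}_1)\otimes_k\mathcal{S}_q(\mathbf{M}_2)$ of $\mathcal{O}_q[\SL_2]^{\otimes 2}$-comodules. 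I would then apply $\Psi_{a\circledast b}$ and Theorem~\ref{theorem_fusion}, which identify $\mathcal{S}_q\big((\mathbf{M}_1\bigsqcup\mathbf{M}_2)_{a\circledast b}\big)$ with the quantum fusion $\mathcal{S}_q(\mathbf{M}_1\bigsqcup\mathbf{M}_2)_{a\circledast b}$, and invoke the first item of the Remark following Definition~\ref{def_fusion_quantique}, which identifies the quantum fusion of $V\otimes_k W$ with the braided tensor product $V\overline{\otimes}W$. Composing these gives a natural isomorphism
$$ \mu_{\mathbf{M}_1,\mathbf{M}_2}:\ \mathcal{S}_q(\mathbf{M}_1)\overline{\otimes}\mathcal{S}_q(\mathbf{M}_2)\ \xrightarrow{\ \cong\ }\ \mathcal{S}_q(\mathbf{M}_1\wedge\mathbf{M}_2) $$
of $\mathcal{O}_q[\SL_2]$-comodules; so in fact $\mathcal{S}_q$ will be strong, hence lax, monoidal.

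For the remaining structure: the monoidal unit of $(\mathcal{M}^{(1)},\wedge)$ is $\mathbf{H}_0$ (the ball with one boundary disc, using $\mathbf{H}_a\wedge\mathbf{H}_b\cong\mathbf{H}_{a+b}$ noted after Lemma~\ref{lemma_BT}), a direct computation from the skein relations gives $\mathcal{S}_q(\mathbf{H}_0)\cong k$ with trivial coaction, and the unit constraint $k\to\mathcal{S}_q(\mathbf{H}_0)$ is this identification. Naturality of $\mu_{\mathbf{M}_1,\mathbf{M}_2}$ in both arguments is inherited from naturality of the disjoint-union isomorphism and functoriality of $\Psi_{a\circledast b}$ under embeddings that are the identity on the fusing triangle. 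The step I expect to be the only real obstacle is verifying that $\mu$ is compatible with the associativity and unit constraints (the pentagon and triangle identities), since the associator on $(\mathcal{M}^{(1)},\wedge)$ is the one coming from the framed little discs action of Definition~\ref{def_ribbonstr}; after unwinding the definitions of $\Psi_{a\circledast b}$ and of the splitting morphisms $\theta_{a\#b}$, both composites around the pentagon describe the same reassociation of a triple gluing $\mathbf{M}_1\cup\mathbb{T}\cup\mathbf{M}_2\cup\mathbb{T}\cup\mathbf{M}_3$, and the required equality reduces to the associativity of the fusion operation together with a finite, routine skein computation; the triangle identity is handled similarly using $\mathcal{S}_q(\mathbf{H}_0)\cong k$.

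Finally, for the restriction to $(\MS^{(1)},\wedge)$, I would first note that $\mathbf{M}_1\wedge\mathbf{M}_2$ is again (isomorphic to) a thickened marked surface when $\mathbf{M}_1,\mathbf{M}_2$ are, so that the functor is defined; then $\mathcal{S}_q(\mathbf{M}_1\bigsqcup\mathbf{M}_2)\cong\mathcal{S}_q(\mathbf{M}_1)\otimes_k\mathcal{S}_q(\mathbf{M}_2)$ and $\Psi_{a\circledast b}$ are isomorphisms of algebras by Theorem~\ref{theorem_fusion}, and the Remark following Definition~\ref{def_fusion_quantique} also identifies the quantum fusion of $V\otimes_k W$ as an algebra object with the braided tensor product algebra $V\overline{\otimes}W$; hence the same $\mu_{\mathbf{M}_1,\mathbf{M}_2}$ is an algebra isomorphism and the coherence identities carry over verbatim from the comodule case.
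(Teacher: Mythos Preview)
Your proposal is correct and is exactly the argument the paper intends: the corollary is stated immediately after Theorem~\ref{theorem_fusion} with no separate proof, and the deduction you spell out (disjoint union monoidality, then $\Psi_{a\circledast b}$ via Theorem~\ref{theorem_fusion}, then the Remark identifying $(V\otimes_k W)_{1\circledast 2}$ with $V\overline{\otimes}W$) is precisely the one the reader is meant to supply. You actually provide more detail than the paper does, correctly noting that the comparison maps are isomorphisms (so the functor is in fact strong monoidal) and addressing the unit and coherence constraints, which the paper leaves entirely implicit.
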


\subsection{Costantino-L\^e's skein interpretation of the transmutation}\label{sec_skein_transmutation}

Since $\mathcal{S}_q : \mathcal{M}^{(1)}_{\con} \to \overline{\mathcal{C}_q^{\SL_2}}$ is monoidal, the image by $\mathcal{S}_q$ of the Hopf algebra object $\mathcal{H}$ is a Hopf algebra object $\mathcal{S}_q(\mathbf{H}_1)$ in $\overline{\mathcal{C}_q^{\SL_2}}$. Clearly, its algebra structure is the same as the one in Definition \ref{def_skein}. We still denote by $\ad: \mathcal{S}_q(\mathbf{H}_1) \to \mathcal{S}_q(\mathbf{H}_2)$ the image of $\ad$ by $\mathcal{S}_q$ turning $\mathcal{S}_q(\mathbf{H}_1)$ into a comodule over itself in the braided sense. Define a linear isomorphism $f: \mathcal{S}_q(\mathbb{B}) \xrightarrow{\cong} \mathcal{S}_q(\mathbf{H}_1)$, with inverse $f^{-1}$, by the formula:
$$ f \left(  \adjustbox{valign=c}{\includegraphics[width=1.7cm]{TangleHTwistL.eps}} \right) = \adjustbox{valign=c}{\includegraphics[width=1.7cm]{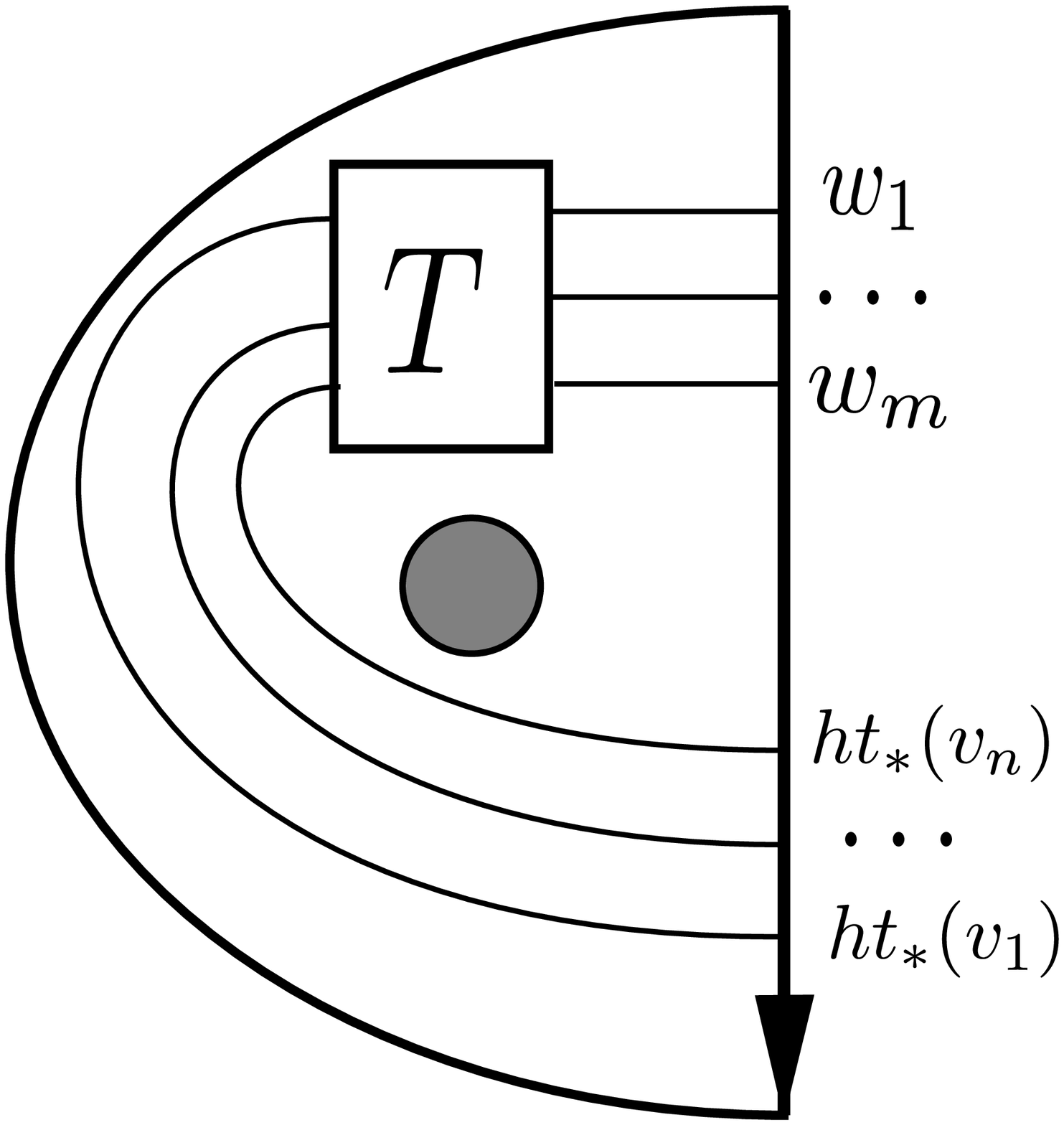}}, \quad f^{-1}\left( \adjustbox{valign=c}{\includegraphics[width=1.7cm]{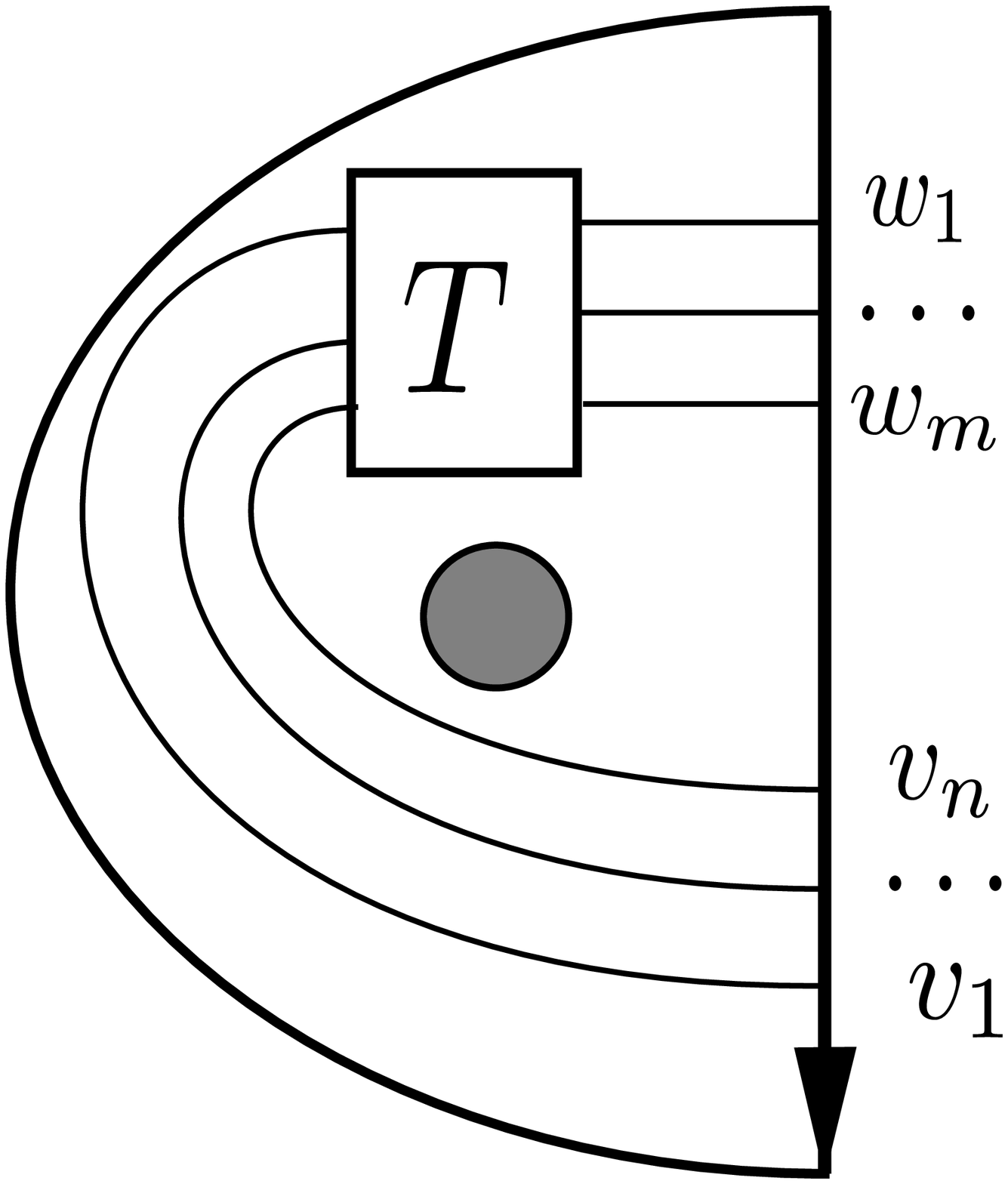}} \right) =  \adjustbox{valign=c}{\includegraphics[width=2.1cm]{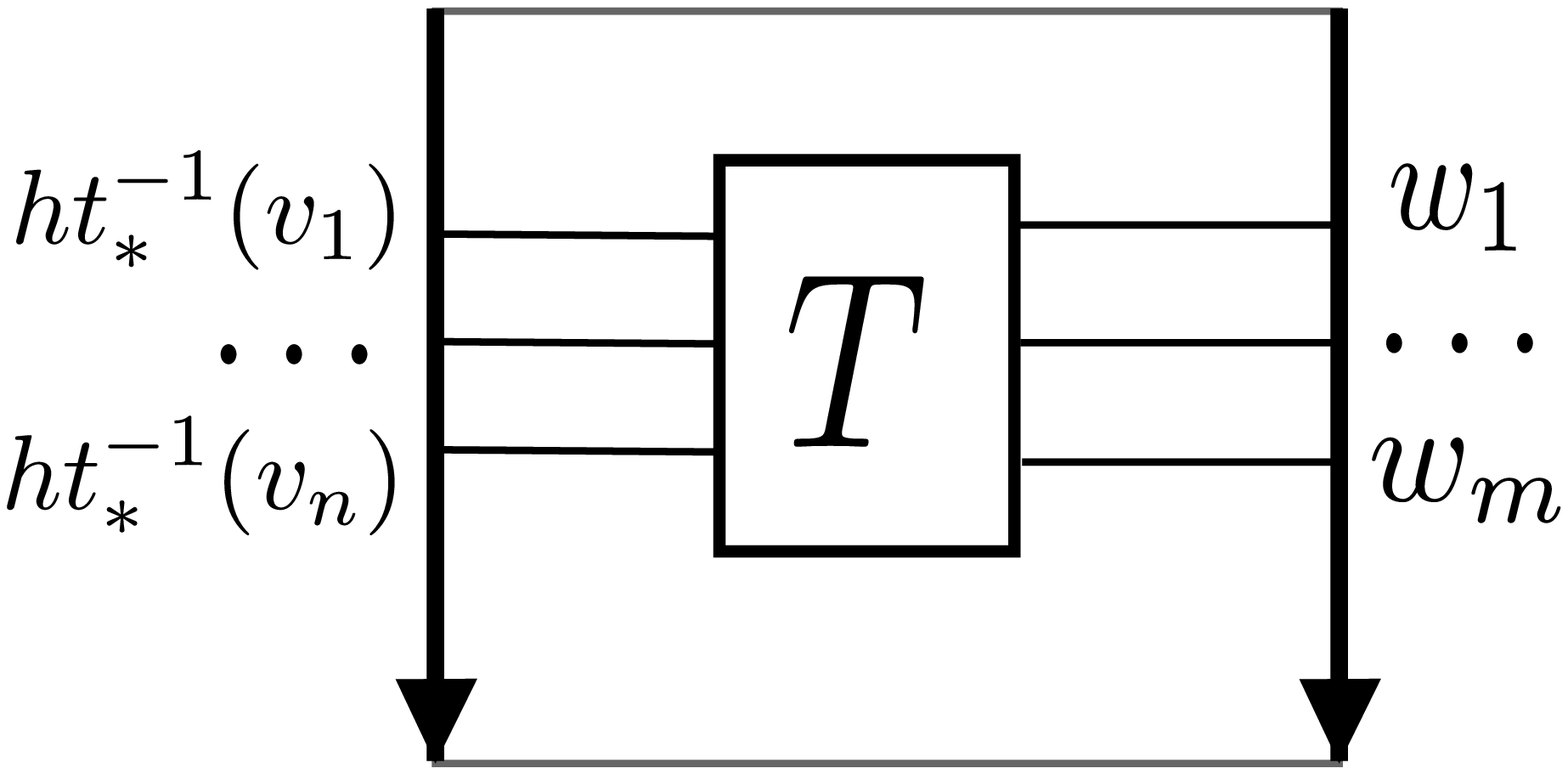}} .$$
Clearly, $f$ sends skein relations in $\mathbb{B}$ to skein relations in $\mathbf{H}_1$ so $f$ is well defined. The following is a slight reformulation of Costantino-L\^e's skein interpretation of the transmutation in \cite{CostantinoLe19}. Since our conventions are different and since the details of the proof will be crucial in the rest of the paper (and are left to the reader in \cite{CostantinoLe19}) we reformulate slightly differently and reprove their result.

\begin{theorem}\label{theorem_skein_transmutation}
The linear isomorphism $f$ is an isomorphism $f: B \mathcal{S}_q(\mathbb{B}) \xrightarrow{\cong} \mathcal{S}_q(\mathbf{H}_1)$ of Hopf algebra objects in $\overline{\mathcal{C}_q^{\SL_2}}$ between the transmutation of $\mathcal{S}_q(\mathbb{B})$ and $\mathcal{S}_q(\mathbf{H}_1)$. Moreover, $f$ intertwines the braided adjoint coaction $\Ad^B$ and $\ad$. 
\end{theorem}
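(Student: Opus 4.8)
Since $f$ is a $k$-linear isomorphism by construction, the statement reduces to checking that $f$ is a morphism of Hopf algebra objects in $\overline{\mathcal{C}_q^{\SL_2}}$ which is in addition compatible with the cotwists: namely that $f$ is (i) a morphism for the external $\mathcal{O}_q[\SL_2]$-coactions, (ii) compatible with products and units, (iii) compatible with coproducts and counits, and (iv) compatible with the cotwists $\theta$. Compatibility with the antipodes is then automatic, since any bialgebra morphism between Hopf algebra objects commutes with the antipode (one could alternatively verify it directly from \eqref{eq_transmuted_antipod}). Granting (i)--(iv), the last assertion of the theorem follows formally: $\Ad^B$ and $\ad$ are given by the same expression (Definition~\ref{def_transmutation}, Definition~\ref{def_HabiroHopf}) built from the coproduct, the antipode, the product, and the braiding of $B\mathcal{S}_q(\mathbb{B})$ and of $\mathcal{S}_q(\mathbf{H}_1)$ respectively; and a morphism of comodules automatically intertwines the braidings of $\overline{\mathcal{C}_q^{\SL_2}}$, so an $f$ satisfying (i)--(iv) intertwines $\Ad^B$ with $\ad$.

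Items (iii), (iv) and the compatibility with units and counits are direct diagrammatic comparisons: the coalgebra structure and the cotwist of $\mathcal{S}_q(\mathbb{B})\cong\mathcal{O}_q[\SL_2]$ are left unchanged by the transmutation procedure, and under $f$ the pictures computing the coproduct of $\mathcal{S}_q(\mathbb{B})$ (splitting along a middle arc of the bigon), its counit (capping the bigon arc) and its cotwist transform into exactly Habiro's coproduct, counit and cotwist on $\mathcal{S}_q(\mathbf{H}_1)$ of Figure~\ref{fig_BTHopfAlg}; the only point requiring care is keeping track of the half-twist $\hT$ built into the left-hand arc of the definition of $f$, which is precisely the discrepancy between $f$ and the fusion map $\Psi_{a_L\circledast a_R}$ considered below. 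For (i), one computes the right $\mathcal{O}_q[\SL_2]$-coaction $\Delta_c^R$ on $\mathcal{S}_q(\mathbf{H}_1)$ by splitting along an arc parallel to the boundary disc and isotoping that arc once around the monogon curve: the two passages produce an antipode on one side and the identity on the other, so that the coaction on $f(x)$ is $\sum x_{(2)}\otimes S(x_{(1)})x_{(3)}$, the adjoint coaction $\Ad$ underlying $B\mathcal{S}_q(\mathbb{B})$. Equivalently: $\mathbf{H}_1$ is the fusion $\mathbb{B}_{a_L\circledast a_R}$ of the bigon along its two boundary arcs, so by Theorem~\ref{theorem_fusion} $\mathcal{S}_q(\mathbf{H}_1)$ is the quantum fusion $\mathcal{S}_q(\mathbb{B})_{a_L\circledast a_R}$, whose underlying comodule, obtained by multiplying the left and right regular coactions of $\mathcal{O}_q[\SL_2]$ on itself, is exactly $(\mathcal{O}_q[\SL_2],\Ad)$.

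The substantial point is (ii). The transmuted product is $\underline{\mu}(x\otimes y)=\sum x_{(2)}y_{(2)}\,r(S(x_{(1)})x_{(3)}\otimes S(y_{(1)}))$ by \eqref{eq_transmuted_prod}, while the product on $\mathcal{S}_q(\mathbf{H}_1)$ is the skein product given by stacking in the $I$-direction. To compute $f(x)\cdot f(y)$ one stacks two monogon curves and, using the skein relations together with the height-exchange moves of Remark~\ref{remark_skeinrelations} near the boundary disc, disentangles the two curves; the reordering of strands at the boundary disc introduces precisely the co-$R$-matrix $\mathscr{R}$ of $\mathcal{S}_q(\mathbb{B})$ applied to the relevant matrix coefficients, the strands returning around the two monogons contributing the antipodes $S(x_{(1)})$, $x_{(3)}$, $S(y_{(1)})$, and the boundary state conventions contributing the rotation operator $\rot$. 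Matching the result against \eqref{eq_transmuted_prod} gives $f\circ\underline{\mu}=\mu_{\mathcal{S}_q(\mathbf{H}_1)}\circ(f\otimes f)$. In the language of Definition~\ref{def_fusion_quantique}, this is the statement that the quantum fusion product $\mu_{1\circledast 2}$ of $\mathcal{S}_q(\mathbb{B})=\mathcal{O}_q[\SL_2]$, for its bi-coaction on itself, coincides with Majid's transmuted product.

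The main obstacle is exactly this last computation: one must check that the reordering move on $\partial\mathbf{H}_1$ reproduces the factor $r(S(x_{(1)})x_{(3)}\otimes S(y_{(1)}))$ on the nose, rather than some twisted or conjugated variant, where sign and power-of-$q$ errors are easy to make, all the more so because the conventions here differ from those of \cite{CostantinoLe19}. Once the products, coproducts, counits, units and cotwists are matched, the antipode is forced, $f$ is an isomorphism of (dual BP) Hopf algebra objects in $\overline{\mathcal{C}_q^{\SL_2}}$, and, as explained in the first paragraph, $f$ intertwines $\Ad^B$ with $\ad$.
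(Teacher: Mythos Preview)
The proposal is correct and follows essentially the same approach as the paper: both verify that $f$ respects the $\mathcal{O}_q[\SL_2]$-comodule structure, the (transmuted) product, and the coproduct via direct skein computations, observe that antipode compatibility is then automatic, and deduce the $\Ad^B/\ad$ intertwining from the fact that both coactions are assembled from the same formula in the Hopf-algebra data and the braiding. Your remark that a comodule morphism automatically intertwines the braidings is a minor streamlining over the paper's explicit check, and your reinterpretation of (i)--(ii) via the quantum-fusion identification $\mathcal{S}_q(\mathbf{H}_1)\cong\mathcal{S}_q(\mathbb{B})_{a_L\circledast a_R}$ is a pleasant alternative viewpoint, but the substance of the argument is the same.
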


\begin{corollary}\label{coro_skein_transmutation} (\cite{CostantinoLe19, Faitg_LGFT_SSkein})
One has an isomorphism $\widetilde{f} : B_q\SL_2 \xrightarrow{\cong} \mathcal{S}_q(\mathbf{H}_1)$ of Hopf algebra objects in $\overline{\mathcal{C}_q^{\SL_2}}$ characterized by:

$$ \widetilde{f} \begin{pmatrix} a & b \\ c & d \end{pmatrix} = \begin{pmatrix} 0 & -A^{5/2} \\ A^{1/2} & 0 \end{pmatrix} \begin{pmatrix} \CC{+}{+} & \CC{+}{-} \\ \CC{-}{+}& \CC{-}{-}\end{pmatrix}= \begin{pmatrix} -A^{5/2}\CC{-}{+} & -A^{5/2}\CC{-}{-} \\
A^{1/2}\CC{+}{+} & A^{1/2}\CC{+}{-} \end{pmatrix} .$$

So one gets an algebra isomorphism $(B_q\SL_2)^{\overline{\otimes} n } \cong \mathcal{S}_q(\mathbf{H}_n)$ and the two functors  $\restriction{\mathcal{S}_q}{\BT} : \BT \to \overline{\mathcal{C}_q^{\SL_2}}$ and $Q_{B_q\SL_2} :\BT \to \overline{\mathcal{C}_q^{\SL_2}}$ are isomorphic.
\end{corollary}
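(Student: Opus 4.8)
The plan is to deduce the corollary from Theorem \ref{theorem_skein_transmutation} by transport of structure along the identification of Theorem \ref{theorem_bigon}. First I would fix the isomorphism of half-coribbon Hopf algebras $g : \mathcal{S}_q(\mathbb{B}) \xrightarrow{\cong} \mathcal{O}_q[\SL_2]$ of Theorem \ref{theorem_bigon} sending $\alpha_{++},\alpha_{+-},\alpha_{-+},\alpha_{--}$ to $a,b,c,d$. Since the transmutation of Definition \ref{def_transmutation} depends only on the coribbon structure and is compatible with isomorphisms of coribbon Hopf algebras, $g$ induces --- after using $g$ to identify the category of $\mathcal{S}_q(\mathbb{B})$-comodules with $\overline{\mathcal{C}_q^{\SL_2}}$ --- an isomorphism of Hopf algebra objects $Bg : B\mathcal{S}_q(\mathbb{B}) \xrightarrow{\cong} B\mathcal{O}_q[\SL_2] = B_q\SL_2$ in $\overline{\mathcal{C}_q^{\SL_2}}$. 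I would then set $\widetilde{f} := f \circ (Bg)^{-1} : B_q\SL_2 \xrightarrow{\cong} \mathcal{S}_q(\mathbf{H}_1)$ with $f$ the isomorphism of Theorem \ref{theorem_skein_transmutation}; being a composite of isomorphisms of Hopf algebra objects in $\overline{\mathcal{C}_q^{\SL_2}}$, $\widetilde{f}$ is one too.

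To check the explicit formula I would evaluate $\widetilde{f}$ on $a,b,c,d$. Transmutation is the identity on underlying comodules and on elements, so $(Bg)^{-1}$ carries $a,b,c,d$ to $\alpha_{++},\alpha_{+-},\alpha_{-+},\alpha_{--}$. Unwinding the pictorial definition of $f$, the image $f(\alpha_{\varepsilon\varepsilon'})$ is the monogon class of $\mathbf{H}_1$ with a half-twist box inserted on the leg carrying the state $\varepsilon$; on states the half-twist $\hT$ acts by the matrix $\begin{pmatrix} 0 & -A^{5/2} \\ A^{1/2} & 0 \end{pmatrix}$ of Definition \ref{def_HT}, so $f(\alpha_{\varepsilon\varepsilon'}) = \sum_{i = \pm} \hT_{\varepsilon i}\,\CC{i}{\varepsilon'}$, which is exactly the matrix identity of the statement. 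The main obstacle of the whole argument sits here: tracking the signs, the fractional powers of $A$, and which leg of the bigon arc is twisted, as dictated by the figures defining $f$; in the same breath one should re-confirm that $f$ (hence $\widetilde{f}$) intertwines the cotwists --- this is implicit in Theorem \ref{theorem_skein_transmutation}, but worth re-reading off the pictorial definitions of the cotwists on $\mathcal{S}_q(\mathbb{B})$ and $\mathcal{S}_q(\mathbf{H}_1)$ --- so that $\widetilde{f}$ preserves the whole dual BP structure $(\mu,\eta,\Delta,\epsilon,S,\theta)$, and thus the BP relations.

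For the algebra isomorphism I would use $\mathbf{H}_n \cong \mathbf{H}_1^{\wedge n}$ in $\BT$ together with Theorem \ref{theorem_fusion}: writing $\mathbf{M}_1 \wedge \mathbf{M}_2$ as the fusion of the two discs of $\mathbf{M}_1 \sqcup \mathbf{M}_2$ and using that the quantum fusion of a tensor product of comodule algebras is their braided tensor product, one gets $\mathcal{S}_q(\mathbf{M}_1 \wedge \mathbf{M}_2) \cong \mathcal{S}_q(\mathbf{M}_1)\overline{\otimes}\mathcal{S}_q(\mathbf{M}_2)$ as algebra objects, hence $\mathcal{S}_q(\mathbf{H}_n) \cong \mathcal{S}_q(\mathbf{H}_1)^{\overline{\otimes} n}$; applying $\widetilde{f}^{\overline{\otimes} n}$ gives $(B_q\SL_2)^{\overline{\otimes} n} \cong \mathcal{S}_q(\mathbf{H}_n)$.

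Finally, for the two functors: since $\mathcal{S}_q$ is monoidal it carries Habiro's dual BP Hopf algebra object $\mathcal{H}$ on $\mathbf{H}_1$ to a dual BP Hopf algebra object on $\mathcal{S}_q(\mathbf{H}_1)$, so $\restriction{\mathcal{S}_q}{\BT}$ is a braided functor $\BT \to \overline{\mathcal{C}_q^{\SL_2}}$ sending $\mathbf{H}_1$ to $\mathcal{S}_q(\mathbf{H}_1)$ and preserving its structure morphisms; the uniqueness in Theorem \ref{theorem_functorBT} then forces $\restriction{\mathcal{S}_q}{\BT} = Q_{\mathcal{S}_q(\mathbf{H}_1)}$. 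On the other hand $\widetilde{f}$, being an isomorphism of dual BP Hopf algebra objects, induces a braided natural isomorphism $Q_{B_q\SL_2} \xrightarrow{\cong} Q_{\mathcal{S}_q(\mathbf{H}_1)}$ whose component at $\mathbf{H}_n$ is $\widetilde{f}^{\wedge n}$ (every object of $\BT$ being isomorphic to some $\mathbf{H}_n$); naturality need only be checked on the generating morphisms of the PROP $\BT$, where it is immediate from the fact that $\widetilde{f}$ respects the braiding and all the structure morphisms. Composing the two identifications gives $\restriction{\mathcal{S}_q}{\BT} \cong Q_{B_q\SL_2}$.
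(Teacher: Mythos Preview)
Your proposal is correct and is essentially the argument the paper intends: the corollary is stated immediately after Theorem~\ref{theorem_skein_transmutation} without proof, and your reconstruction --- compose $f$ with the transmutation of the isomorphism of Theorem~\ref{theorem_bigon}, read off the explicit formula from the Notations (where $f$ sends the bigon arc with left state $v$ to $\CC{\hT_*(v)}{w}$), use Theorem~\ref{theorem_fusion} for $\mathcal{S}_q(\mathbf{H}_n)\cong\mathcal{S}_q(\mathbf{H}_1)^{\overline{\otimes}n}$, and invoke the uniqueness in Theorem~\ref{theorem_functorBT} for the functor identification --- is exactly the natural filling-in. Your caveat about checking that $\widetilde{f}$ also preserves the cotwist $\theta$ (so that the full dual BP structure is respected and Theorem~\ref{theorem_functorBT} applies) is well placed: Theorem~\ref{theorem_skein_transmutation} as stated addresses only the Hopf algebra structure and $\Ad^B$, so this is indeed a small additional verification, and it follows directly from comparing the pictorial cotwist in Figure~\ref{fig_BTHopfAlg} with the co-twist $\Theta$ on $\mathcal{S}_q(\mathbb{B})$.
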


\begin{remark}
\begin{enumerate}
\item (Comparison with Costantino-L\^e)
Let $\Theta: \mathcal{S}_q (\mathbf{H}_1) \to \mathcal{S}_{q^{-1}}(\mathbf{H}_1)$ be the (involutive) isomorphism sending a class $[T,s]$ for which $s:\partial T \to \{v_-, v_+\}$ is valued in the standard basis,  to $[\varphi(T), s\circ \varphi^{-1}]$ , where $\varphi: \mathbb{D}_1\times [-1,1] \cong \mathbb{D}_1 \times [-1, 1]$ sends $(x,t)$ to $(x, 1-t)$. Then $\Theta$ is an anti-morphism of algebras, i.e. $\Theta(xy)=\Theta(y)\Theta(x)$ (see \cite{LeStatedSkein} where $\Theta$ is called \textit{reflexion involution} for details). Recall from Definition \ref{def_HT} the (involutive) anti-morphism of algebras $C_t: \mathcal{S}_q(\mathbb{B}) \to \mathcal{S}_q(\mathbb{B})$. The composition $\Theta \circ f \circ C_t$ is an algebra morphism between $B \mathcal{S}_q(\mathbb{B})$ and $\mathcal{S}_{q^{-1}}(\mathbf{H}_1)$. Changing $q$ to $q^{-1}$, we get an isomorphism $g: B_{q^{-1}}\SL_2 \to \mathcal{S}_q(\mathbf{H}_1)$ characterized by the formula
$$ g \begin{pmatrix} a & b \\ c & d \end{pmatrix} = \begin{pmatrix} \CCC{+}{+} & \CCC{+}{-} \\ \CCC{-}{+}& \CCC{-}{-}\end{pmatrix} \begin{pmatrix} 0 & -A^{5/2} \\ A^{1/2} & 0 \end{pmatrix} = \begin{pmatrix} A^{1/2}\CCC{+}{-} & -A^{5/2}\CCC{+}{+} \\
A^{1/2}\CCC{-}{-} & -A^{5/2}\CCC{-}{+} \end{pmatrix} .$$
Costantino and L\^e proved in \cite[Proposition $4.17$]{CostantinoLe19} that $g$ is a surjective morphism of Hopf algebras (there is a typo in \cite[Proposition $4.17$]{CostantinoLe19} where $-A^{5/2}$ were incidentally replaced by $-A^{3/2}$). The authors also claim the injectivity of $g$ without giving any argument.
\item (Comparison with Faitg) In \cite{Faitg_LGFT_SSkein}, Faitg considered an algebra denoted by $\mathcal{L}_{0,1}$ named \textit{quantum moduli algebra} which  appeared originally in \cite{AlekseevGrosseSchomerus_LatticeCS1,AlekseevGrosseSchomerus_LatticeCS2, BuffenoirRoche, BuffenoirRoche2} and proved in \cite[Lemma $5.6$]{Faitg_LGFT_SSkein} the existence of an isomorphism of algebras $\mathcal{S}_q(\mathbf{H}_1)^{op} \cong \mathcal{L}_{0,1}$, where $ \mathcal{S}_q(\mathbf{H}_1)^{op}$ is the skein algebra with opposite product (tangle are stacked from bottom to top in  \cite{Faitg_LGFT_SSkein} instead of top to bottom). By comparing the definition of $\mathcal{L}_{0,1}$ in \cite[Equation $51$]{Faitg_LGFT_SSkein} with Equation \eqref{eq_BSL2} in Section \ref{sec_QG}, one sees that $\mathcal{L}_{0,1}$ is canonically isomorphic to $B_q\SL_2$ and that Faitg isomorphism corresponds to the isomorphism $h:=\widetilde{f}\circ C_t: B_{q}\SL_2 \cong \mathcal{S}_q(\mathbf{H}_1)^{op}$. To prove that $h$ is isomorphism, Faitg constructed an explicit inverse (see also \cite{KojuPresentationSSkein} where an alternative proof is presented using PBW bases).
\item The two above items imply that Theorem \ref{theorem_skein_transmutation} is simply a reformulation of the theorems of Costantino-L\^e and Faitg. However the previous proofs are made by blind computations (left to the reader in \cite{CostantinoLe19}), using the knowledge of explicit vectorial bases for stated skein algebras. The purpose of this subsection is to provide a more conceptual proof by directly reinterpreting Majid's general formulas \eqref{eq_transmuted_prod} and \eqref{eq_transmuted_antipod} of Definition \ref{def_transmutation} in the skein framework. This idea is obviously present in Costantino and L\^e's work and no originality is claimed in this section. The advantage of the above proof is that it generalizes to arbitrary group $G$. Indeed, the stated skein functor can be generalized to every reducible algebraic group $G$ (the case detailed in this paper is $G=\SL_2$) as $\mathcal{S}_q^{G} : \mathcal{M}\to \Mod_k$ where $k=k_G$. The (easy) case $G=\mathbb{C}^*$ is defined in \cite{KojuQuesneyQNonAb}, the $\SL_3$ case was studied by Higgins in \cite{Higgins_SSkeinSL3}, the $\SL_n$ case was studied by L\^e-Sikora \cite{LeSikora_SSkein_SLN} and the general case will appear in the next forthcoming paper \cite{KojuTannakianSSkein_ToAppear}. Unlike the $\SL_2$ case, finding explicit bases for stated skein algebras in general is quite difficult (see \cite{Higgins_SSkeinSL3} in the $\SL_3$ case), hence the necessity of finding bases-independent proofs in order to work for a general group. The proofs of Theorems \ref{theorem_bigon} ($\mathcal{S}_q^G(\mathbb{B}) \cong \mathcal{O}_qG$) and \ref{theorem_fusion} extend to any group, so does the above proof of Theorem \ref{theorem_skein_transmutation}.  Even though we only discuss the $\SL_2$ case, the whole paper is designed so that all results and proofs extend straightforwardly to every (connected reducible complex) group $G$.
\end{enumerate}
\end{remark}

\begin{notations} In order to simplify the computations, a generic stated tangle $\adjustbox{valign=c}{\includegraphics[width=1.5cm]{TangleHTwistL.eps}}$ in the bigon will simply be denoted by $v\adjustbox{valign=c}{\includegraphics[width=1cm]{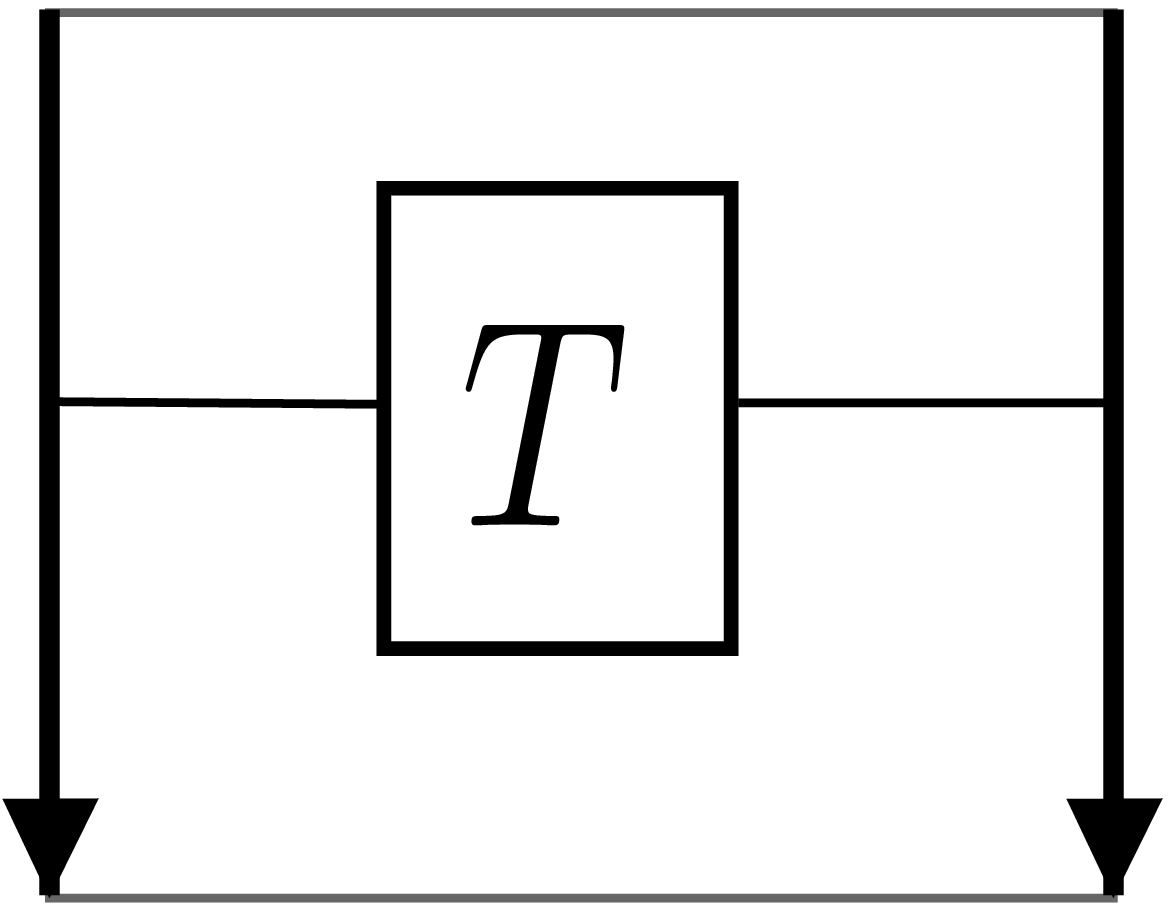}}w$, where it is understood $T$ has arbitrary numbers $n,m\geq 0$ of left and right endpoints and that $v \in V^n$ and $w\in V^m$. 
Its image $ \adjustbox{valign=c}{\includegraphics[width=1.7cm]{TangleMonogon.eps}}$ by $f$ will be denoted by $\CC{\hT_*(v)}{w}$, where for $v=(v_1, \ldots, v_n) \in V^n$ we wrote $\hT_*(v)=(\hT_*(v_1), \ldots, \hT_*(v_n))$.
\end{notations}

\begin{proof}[Proof of Theorem \ref{theorem_skein_transmutation}]
Let $\underline{\mu}, \underline{S}$ be the transmuted products and antipode of $B \mathcal{S}_q(\mathbb{B})$ as in Definition \ref{def_transmutation}.
\vspace{2mm}
 \par $f$ \textit{is an isomorphism of $\mathcal{S}_q(\mathbb{B})$-comodule:}
\\ Let us prove that $(f\otimes \id)\circ \Ad \circ f^{-1}$ coincides with the (right) comodule structure of $\mathcal{S}_q(\mathbf{H}_1)$. Let $x= \adjustbox{valign=c}{\includegraphics[width=0.5cm]{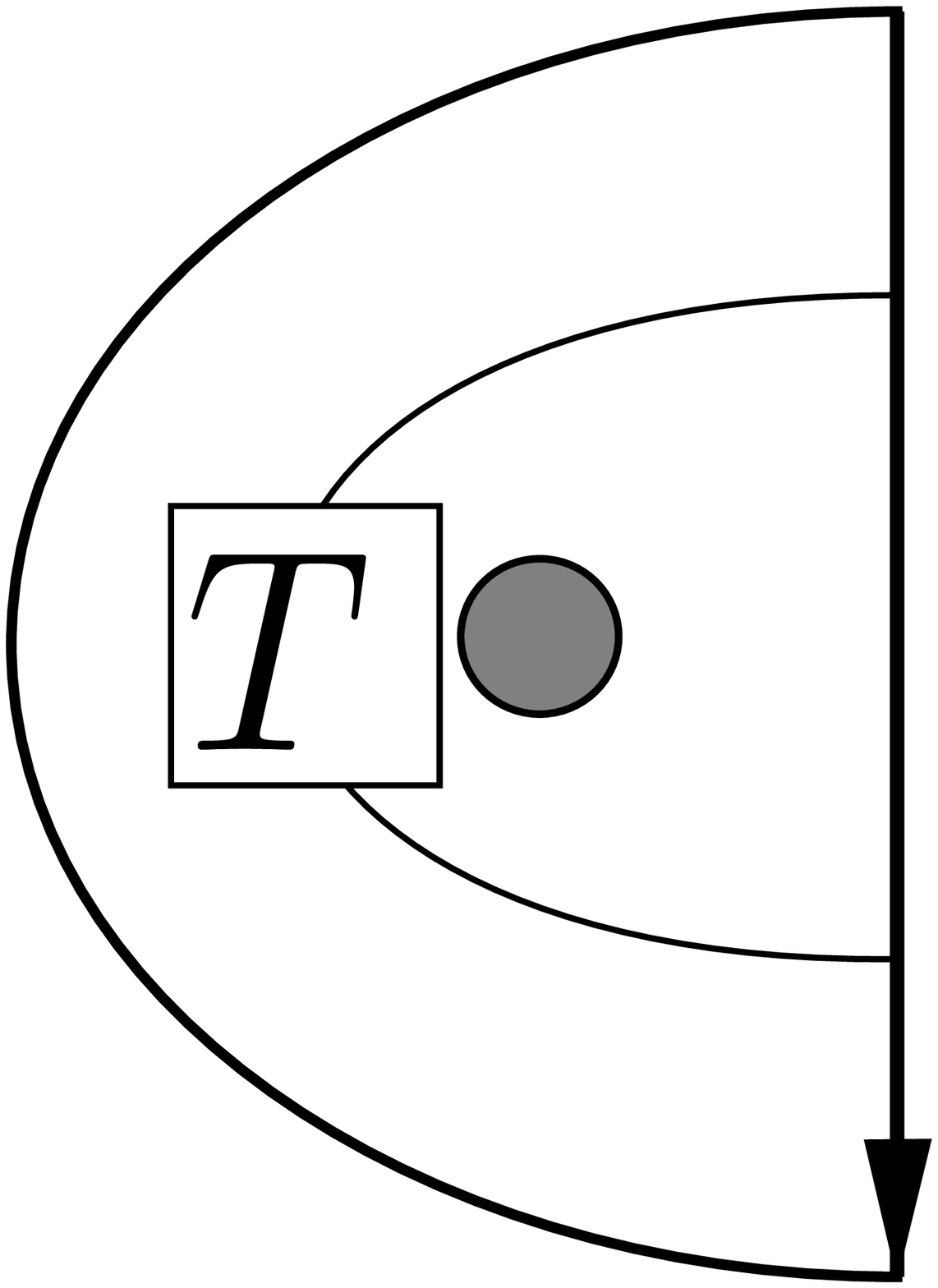}}{}^{b}_{a}$ so that $f^{-1}(x)=\hT_{*}^{-1}(a)\adjustbox{valign=c}{\includegraphics[width=1cm]{TangleSimple.eps}}b$. By definition,  $\Ad= (\id\otimes \mu)(\tau \otimes \id) (S\otimes \id \otimes \id) \Delta^{(2)}$. Now
$$ \Delta^{(2)} (f^{-1}(x)) = \sum_{ij} \hT_*^{-1}(a) \adjustbox{valign=c}{\includegraphics[width=1cm]{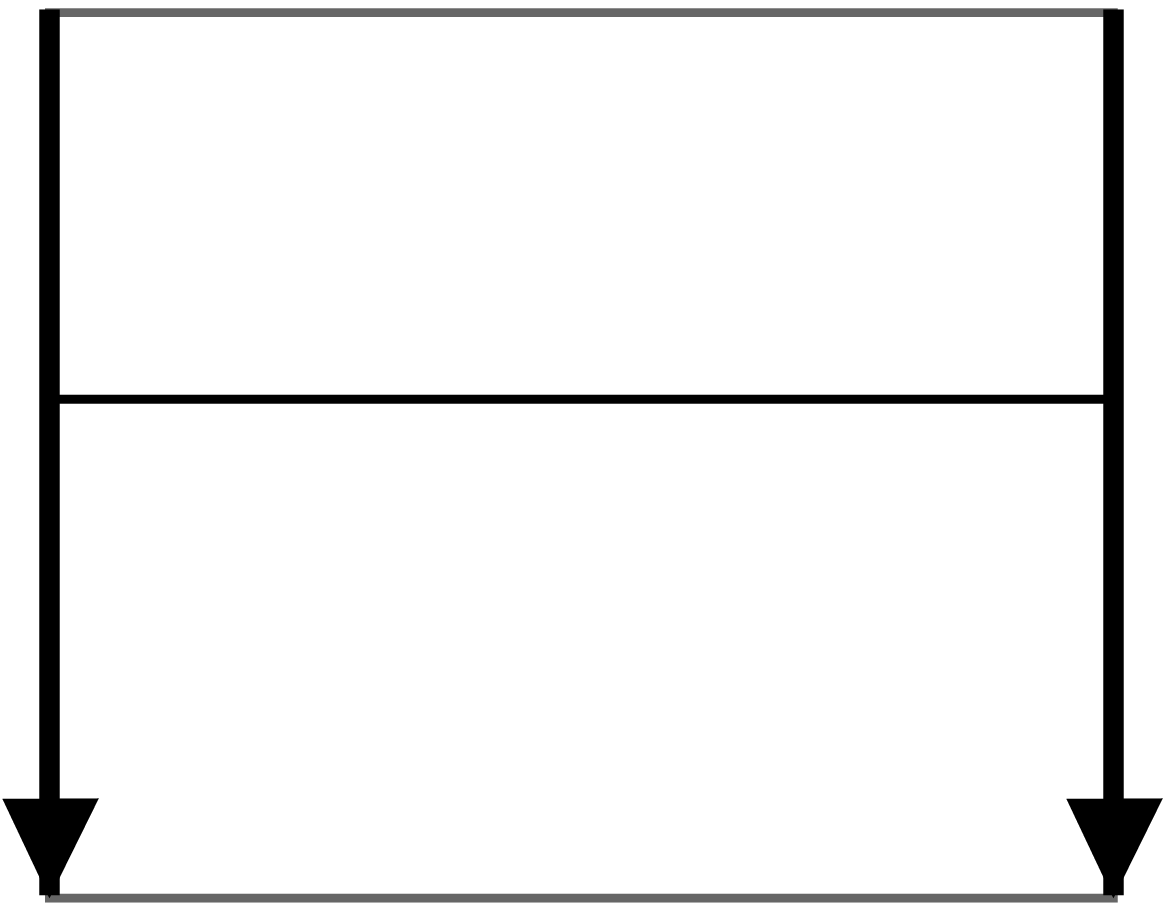}} i \otimes i \adjustbox{valign=c}{\includegraphics[width=1cm]{TangleSimple.eps}} j \otimes j \adjustbox{valign=c}{\includegraphics[width=1cm]{TangleVSimple.eps}} b.$$
So 
$$(S\otimes \id \otimes \id)  \Delta^{(2)} (f^{-1}(x)) = \sum_{ij}\hT^{-1}(i) \adjustbox{valign=c}{\includegraphics[width=1cm]{TangleVSimple.eps}} a \otimes i \adjustbox{valign=c}{\includegraphics[width=1cm]{TangleSimple.eps}} j \otimes j \adjustbox{valign=c}{\includegraphics[width=1cm]{TangleVSimple.eps}} b, $$
from which we get
$$\Ad (f^{-1}(x)) = \sum_{ij}  i \adjustbox{valign=c}{\includegraphics[width=1cm]{TangleSimple.eps}} j \otimes {}_{\hT^{-1}(i)}^{j}\adjustbox{valign=c}{\includegraphics[width=1cm]{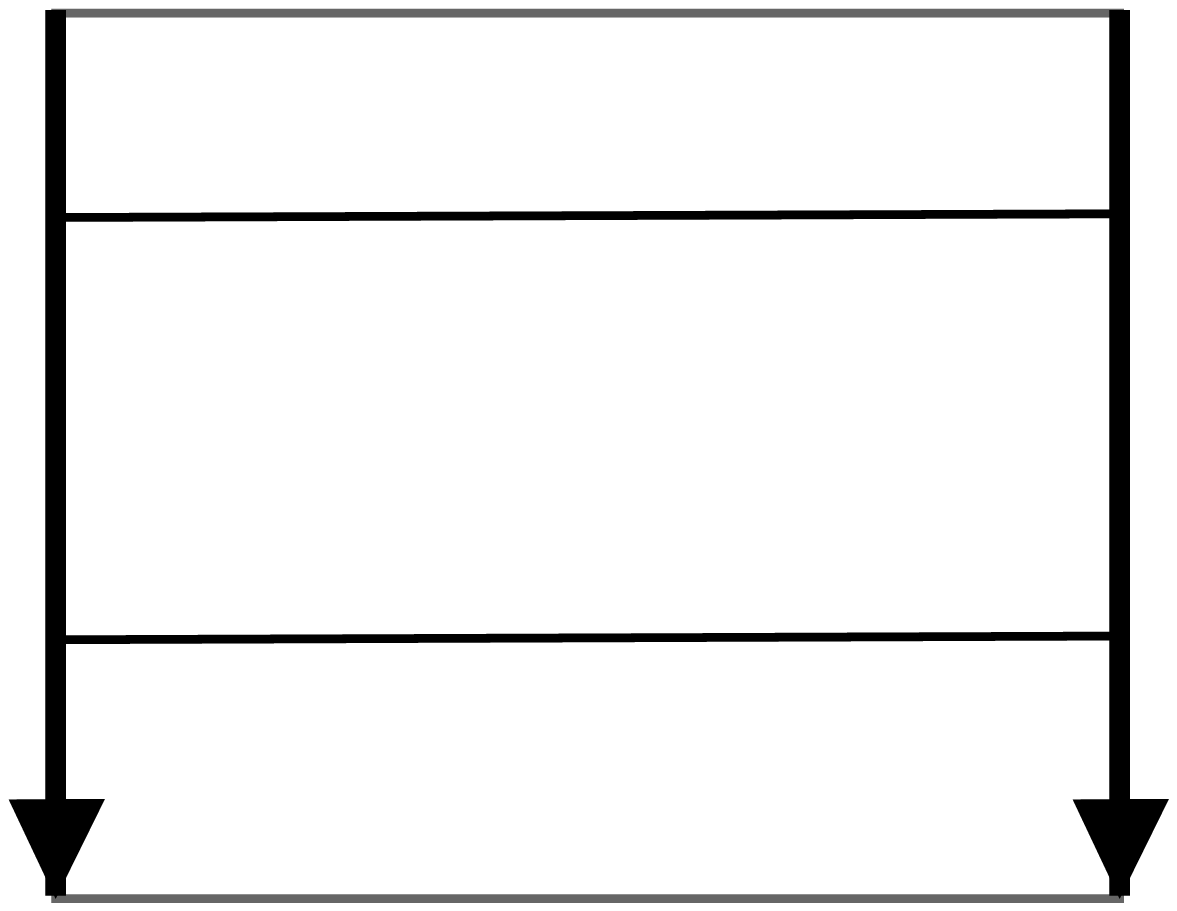}}{}_a^b, $$
so
$$ (f\otimes \id)\circ \Ad \circ f^{-1} (x) = \sum_{ij}\adjustbox{valign=c}{\includegraphics[width=0.8cm]{TangleMonogonSimpleT.eps}}{}_{\hT_*(i)}^j   \otimes {}_{\hT^{-1}(i)}^{j}\adjustbox{valign=c}{\includegraphics[width=1cm]{TangleDouble.eps}}{}_a^b= \sum_{ij}\adjustbox{valign=c}{\includegraphics[width=0.8cm]{TangleMonogonSimpleT.eps}}{}_{i}^j   \otimes {}_{i}^{j}\adjustbox{valign=c}{\includegraphics[width=1cm]{TangleDouble.eps}}{}_a^b.$$
Thus we recover the comodule structure of $\mathcal{S}_q(\mathbf{H}_1)$.

 \par $f$ \textit{is an isomorphism of algebras:}
\\ Let us prove that $f\circ \underline{\mu} \circ (f^{-1} \otimes f^{-1})$ coincides with the product of $\mathcal{S}_q(\mathbf{H}_1)$. Let $x= \adjustbox{valign=c}{\includegraphics[width=0.5cm]{TangleMonogonSimpleT.eps}}{}^{b}_{a}$ and $y=\adjustbox{valign=c}{\includegraphics[width=0.5cm]{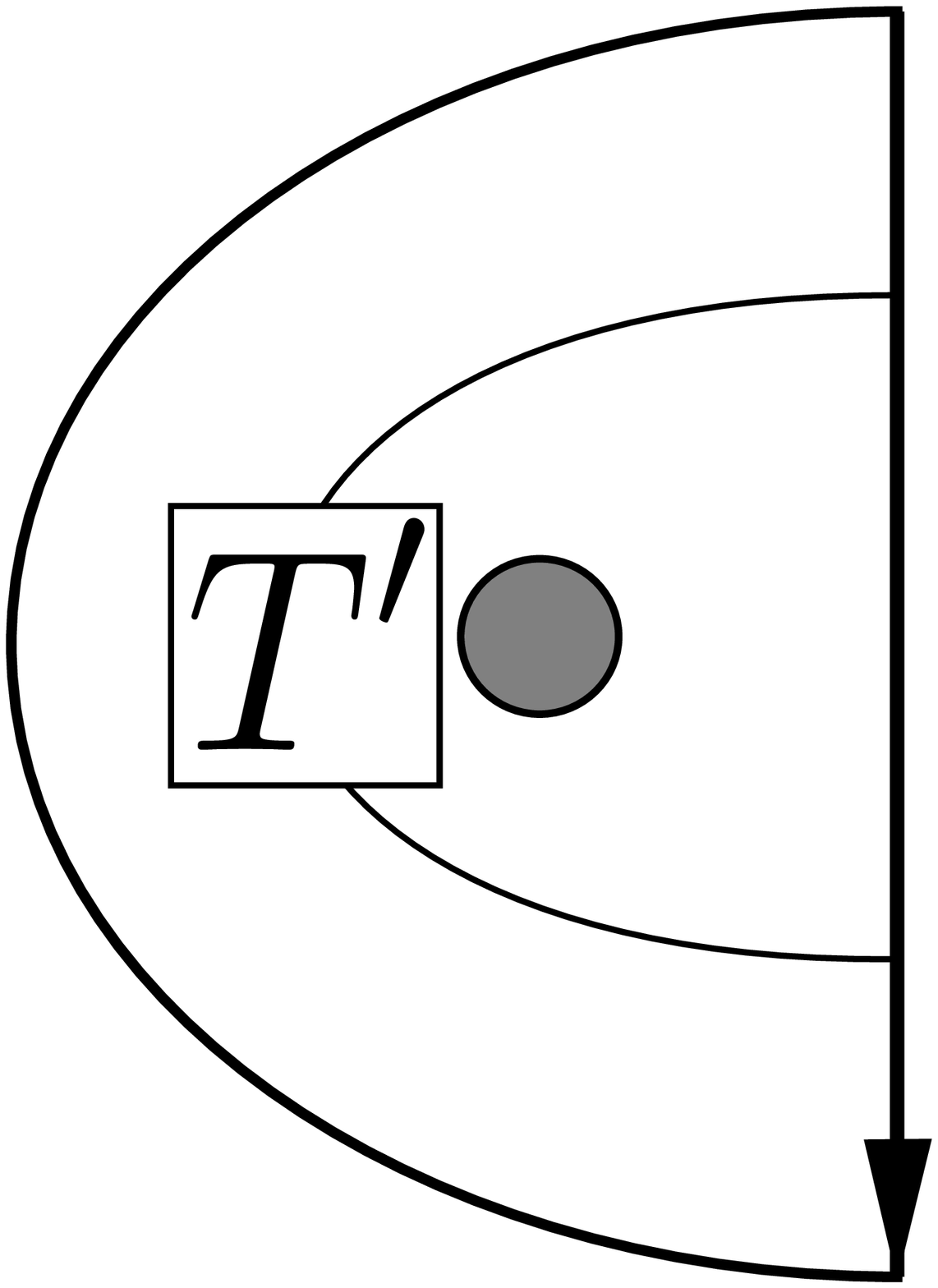}}{}^{d}_{c}$ so that $f^{-1}(x)\otimes f^{-1}(y)= \hT_{*}^{-1}(a)\adjustbox{valign=c}{\includegraphics[width=1cm]{TangleSimple.eps}}b \otimes \hT_{*}^{-1}(c)\adjustbox{valign=c}{\includegraphics[width=1cm]{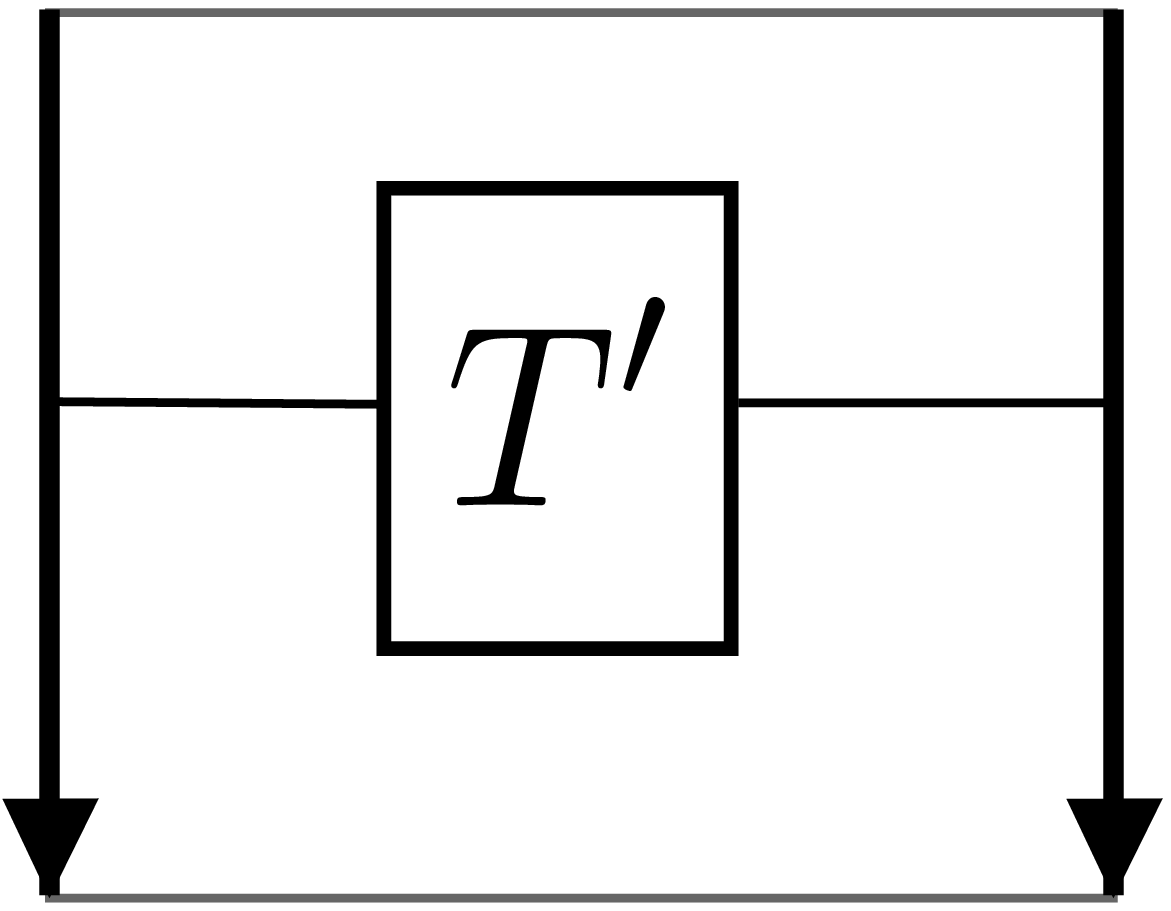}}d$. Equation \eqref{eq_transmuted_prod} in Definition \ref{def_transmutation} reads:
$$ \underline{\mu} \left( f^{-1}(x)\otimes f^{-1}(y) \right) = \sum_{i,j,k}  \adjustbox{valign=c}{\includegraphics[width=1.5cm]{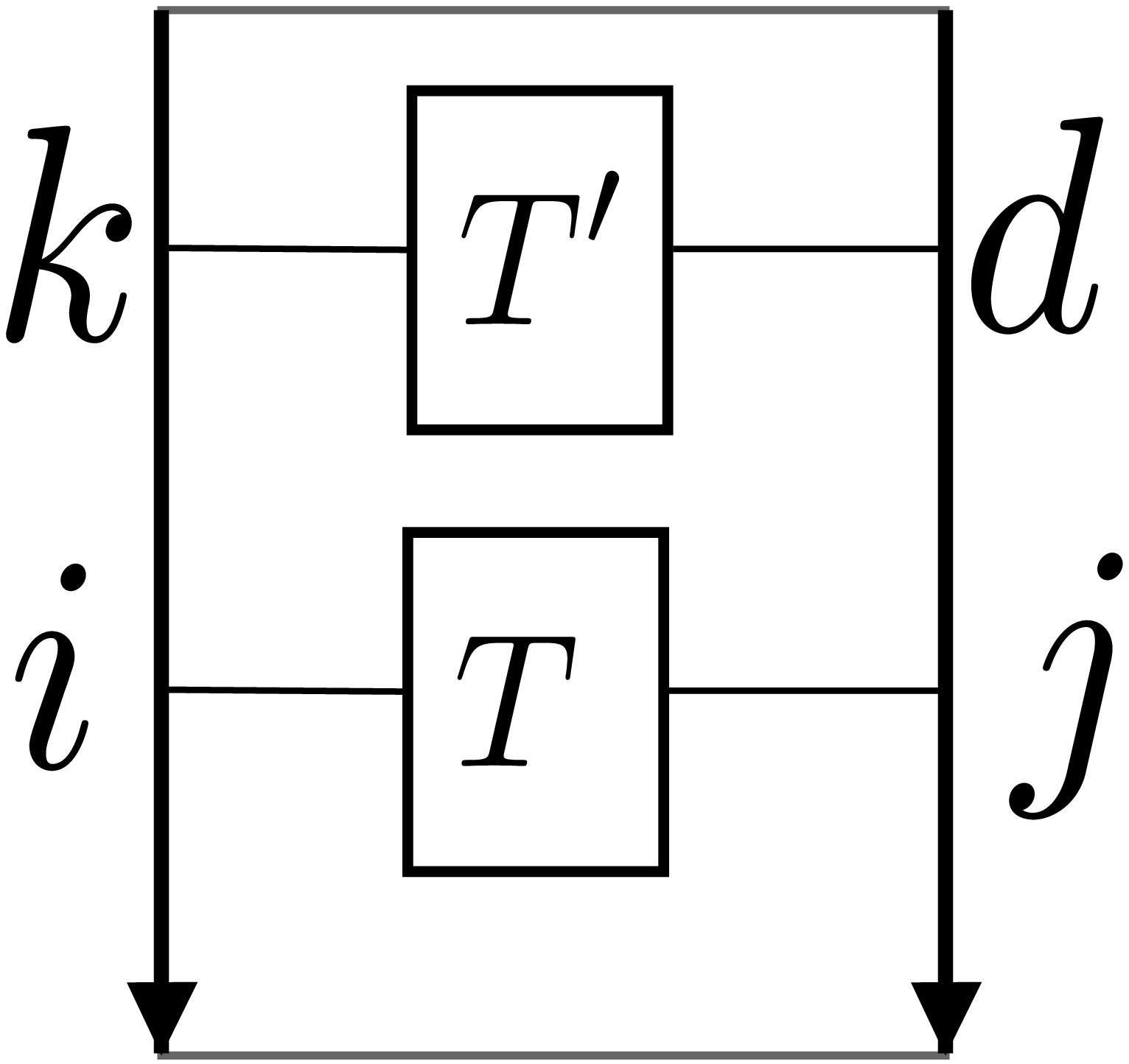}}\quad \epsilon \left(  \adjustbox{valign=c}{\includegraphics[width=3cm]{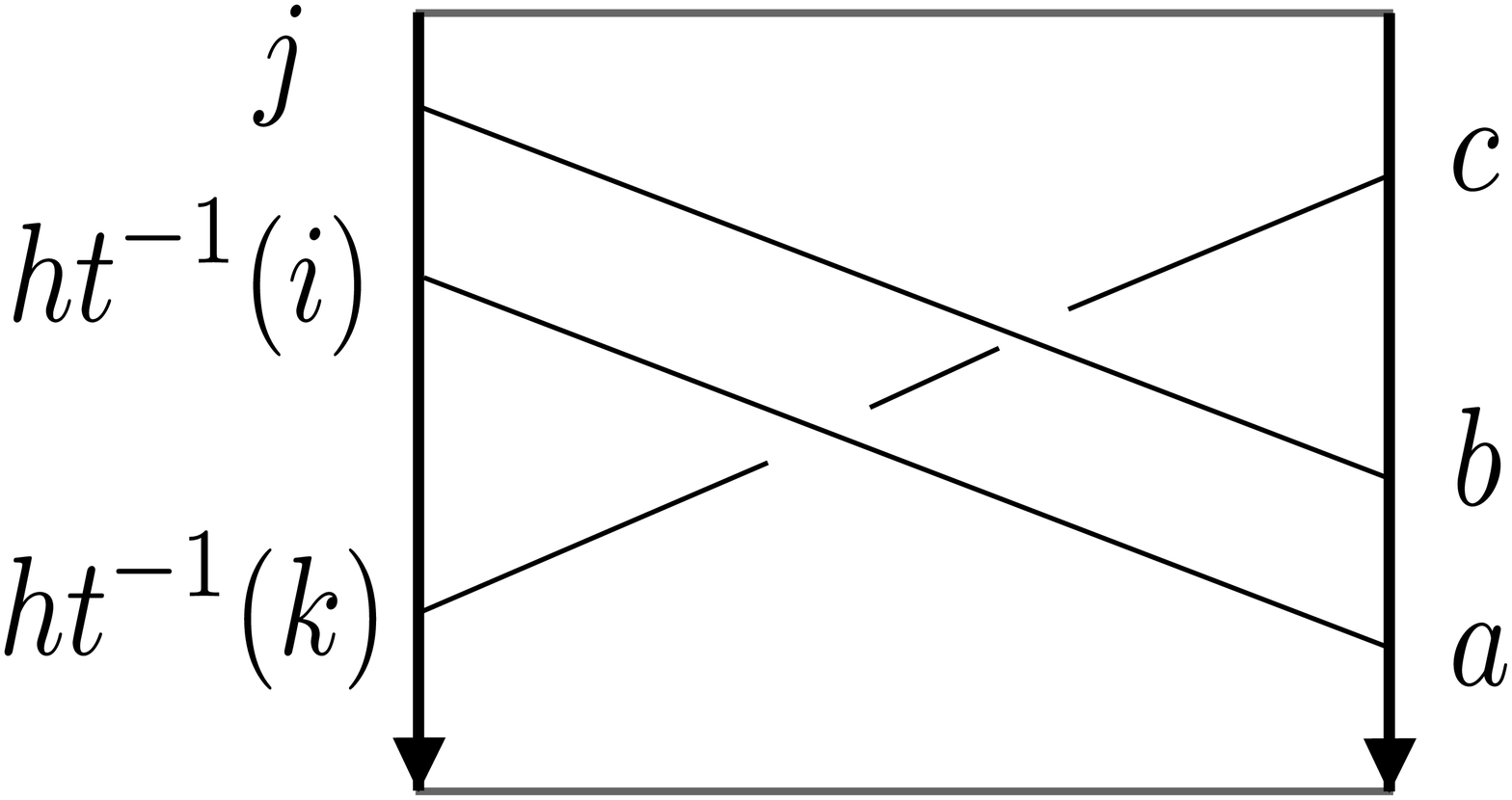}} \right).$$
Therefore: 
$$ f\circ \underline{\mu} \circ (f^{-1} \otimes f^{-1})(x\otimes y) = \sum_{i,j,k}  \adjustbox{valign=c}{\includegraphics[width=1.5cm]{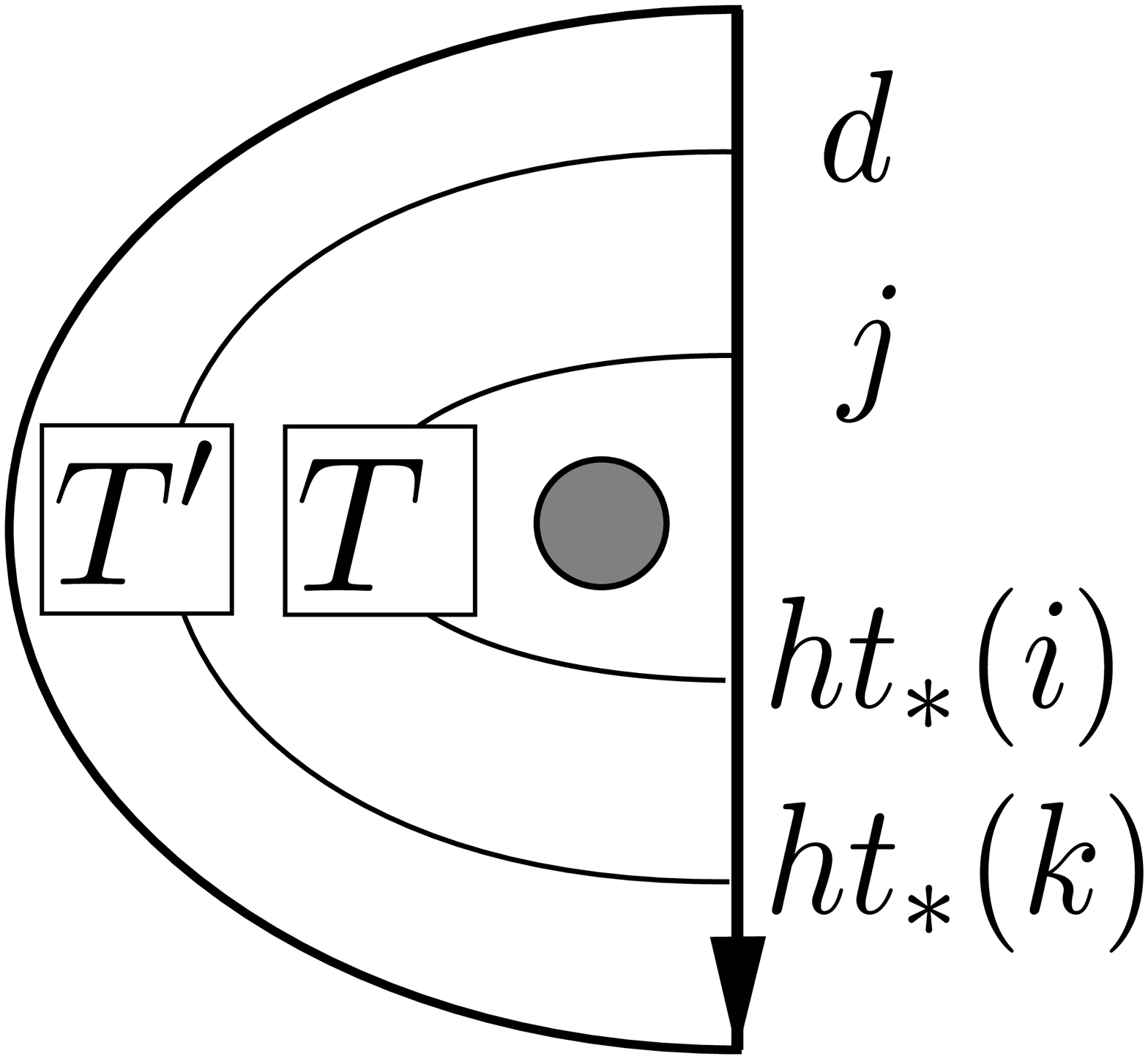}}\quad \epsilon \left(  \adjustbox{valign=c}{\includegraphics[width=3cm]{TangleTransmutedProd2.eps}} \right) = \adjustbox{valign=c}{\includegraphics[width=2cm]{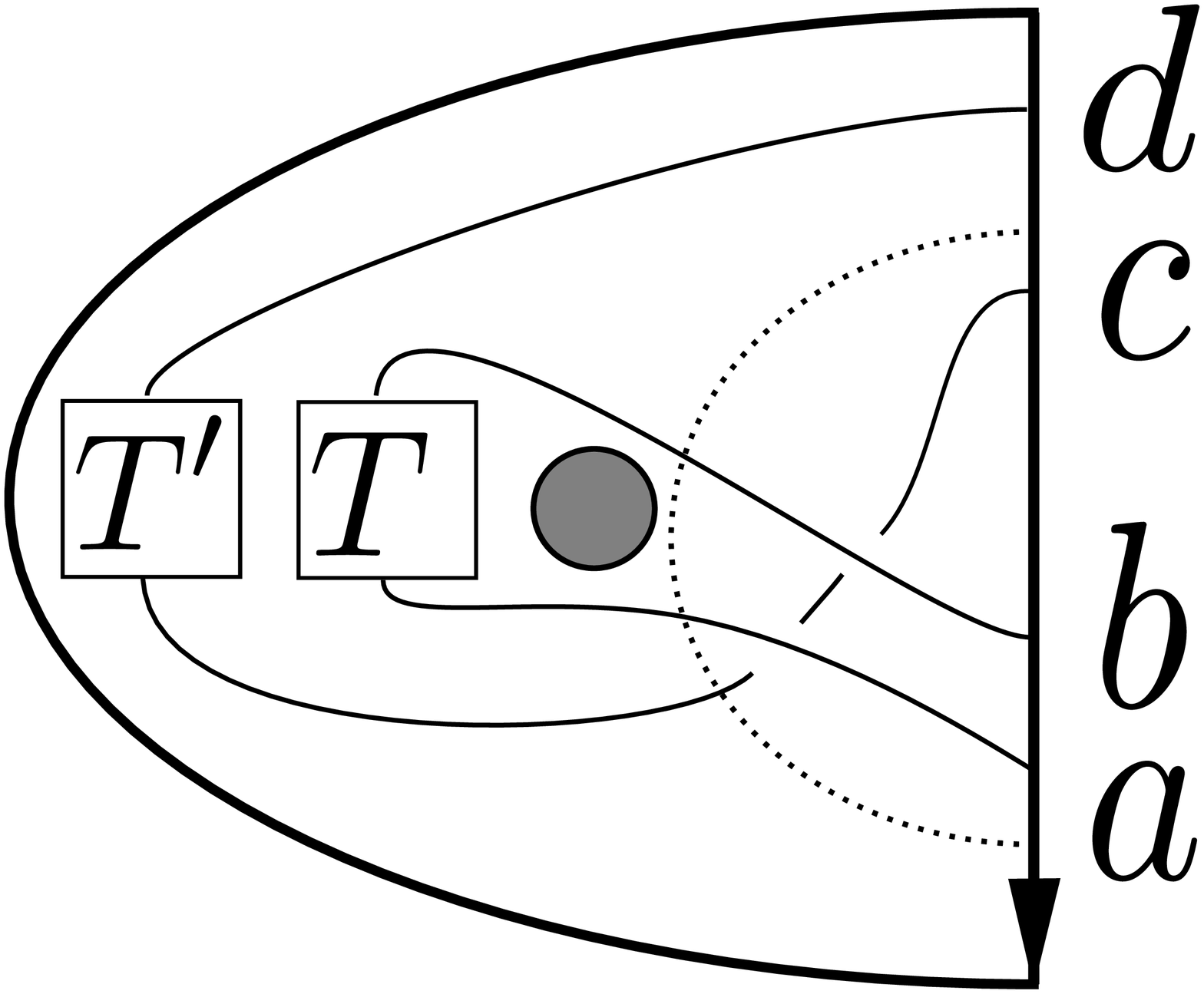}}, $$
so we recover the product in the skein algebra $\mathcal{S}_q(\mathbf{H}_1)$.
\vspace{2mm}
 \par $f$ \textit{is an isomorphism of co-algebras:}
\\ Let us prove that $(f\wedge f) \circ \Delta \circ f^{-1}$ coincides with the image by $\mathcal{S}_q$ of the coproduct in Figure \ref{fig_BTHopfAlg}. Let $x= \adjustbox{valign=c}{\includegraphics[width=0.5cm]{TangleMonogonSimpleT.eps}}{}^{b}_{a}$ so that $f^{-1}(x)=\hT_{*}^{-1}(a)\adjustbox{valign=c}{\includegraphics[width=1cm]{TangleSimple.eps}}b$. Then
$$\Delta ( f^{-1}(x)) = \sum_i \hT_{*}^{-1}(a)\adjustbox{valign=c}{\includegraphics[width=1cm]{TangleSimple.eps}}i \otimes i \adjustbox{valign=c}{\includegraphics[width=1cm]{TangleVSimple.eps}}b, $$
so 
$$ (f\wedge f ) (\Delta(f^{-1}(x))) = \sum_i  \adjustbox{valign=c}{\includegraphics[width=2cm]{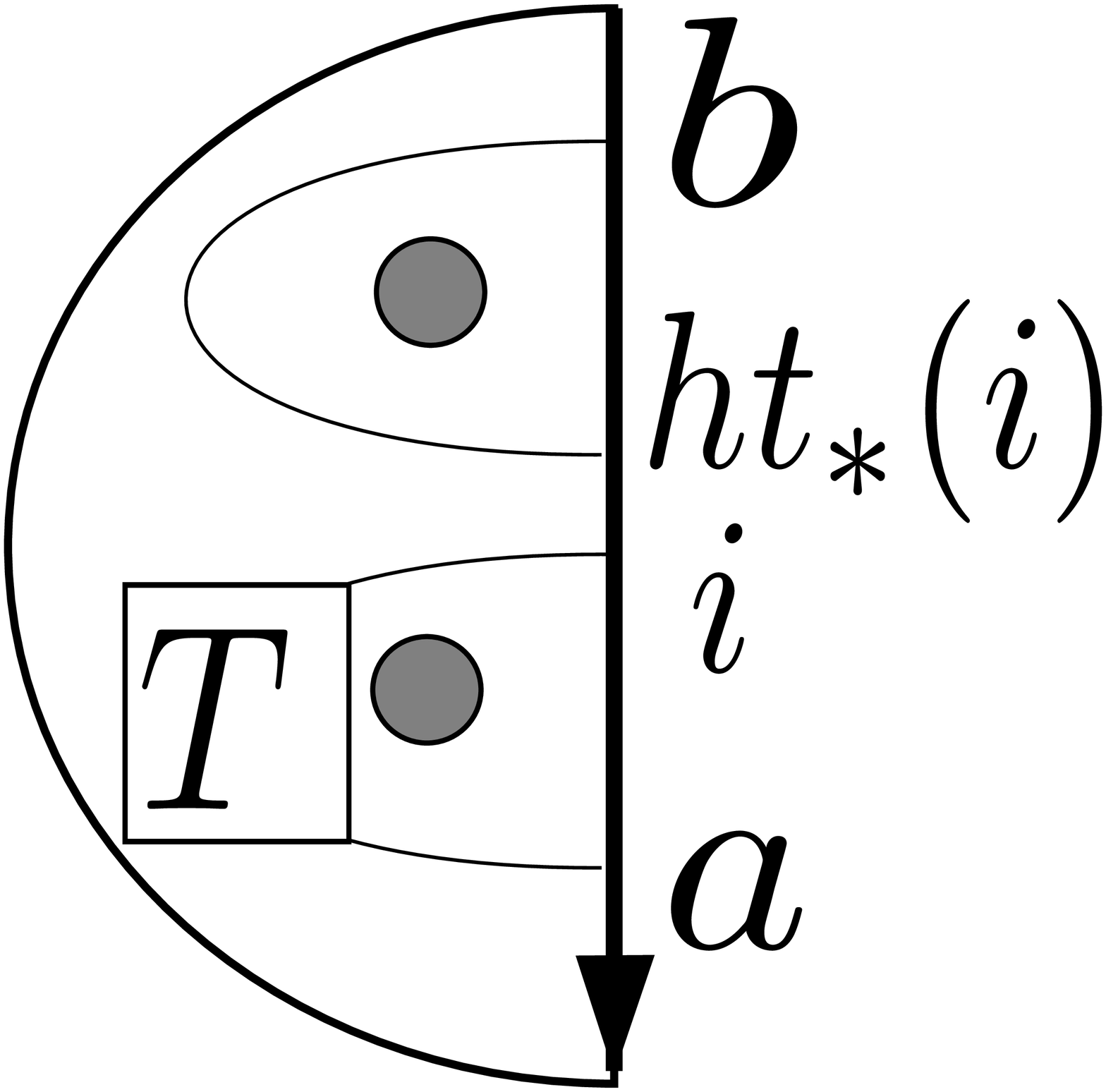}}= \adjustbox{valign=c}{\includegraphics[width=1.5cm]{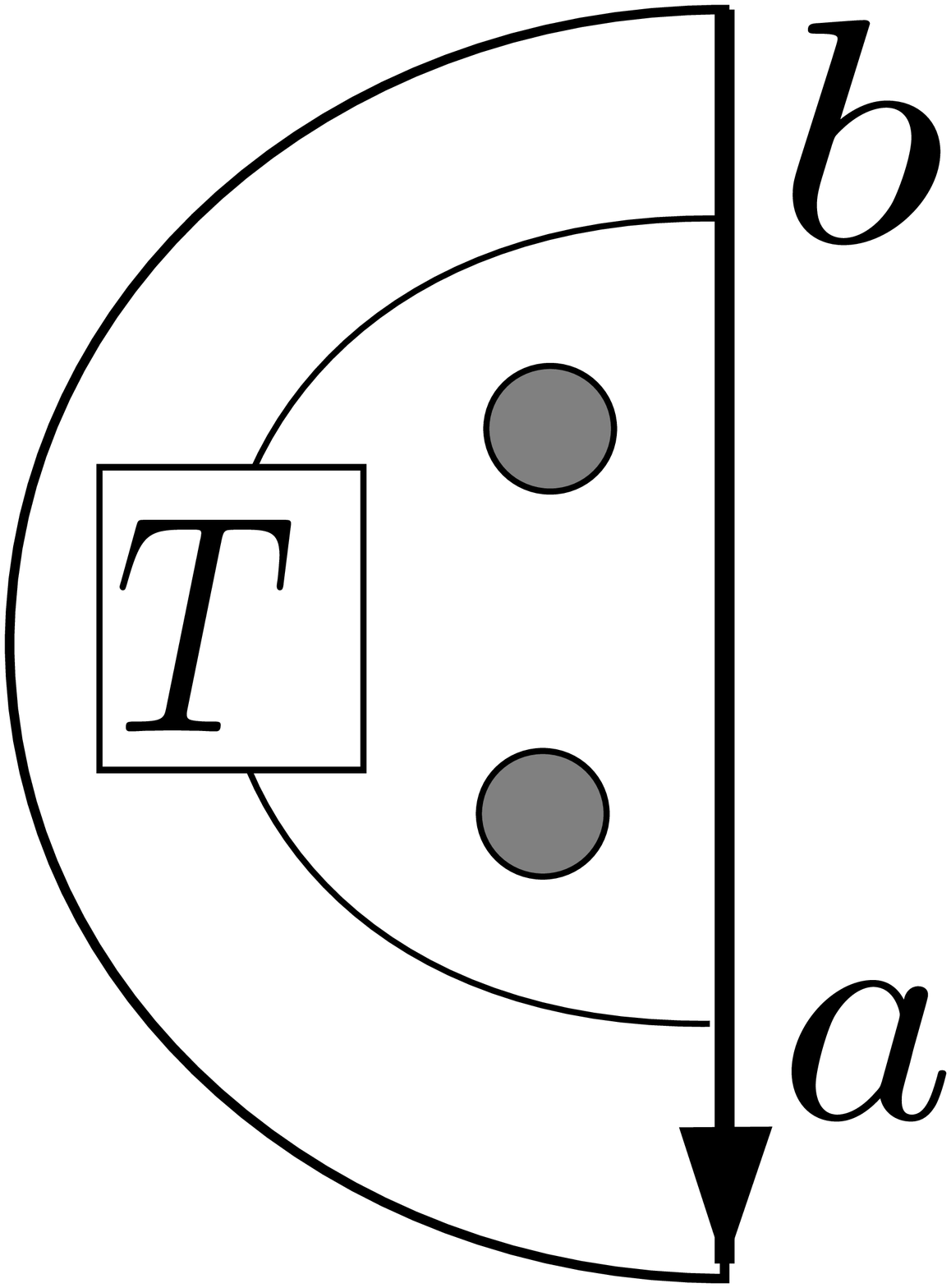}}.$$
\vspace{2mm}
 \par  \textit{Skein interpretation of the antipode:}
\\ At this stage we have proved that $f$ is an isomorphism of bialgebras so it preserves the antipode. However, it is instructive to verify directly that indeed the rather abstract Equation \eqref{eq_transmuted_antipod} in Definition \ref{def_transmutation} has a very natural topological interpretation given in Figure \ref{fig_BTHopfAlg}. Let $x= \adjustbox{valign=c}{\includegraphics[width=0.5cm]{TangleMonogonSimpleT.eps}}{}^{b}_{a}$ as before. 
Then Equation \eqref{eq_transmuted_antipod} reads: 
$$ f \circ \overline{S}( f^{-1}(x)) = \sum_{ijk}  \adjustbox{valign=c}{\includegraphics[width=0.5cm]{TangleMonogonSimpleT.eps}}{}^{\hT_*(i)}_{j} \epsilon \left(  \adjustbox{valign=c}{\includegraphics[width=3cm]{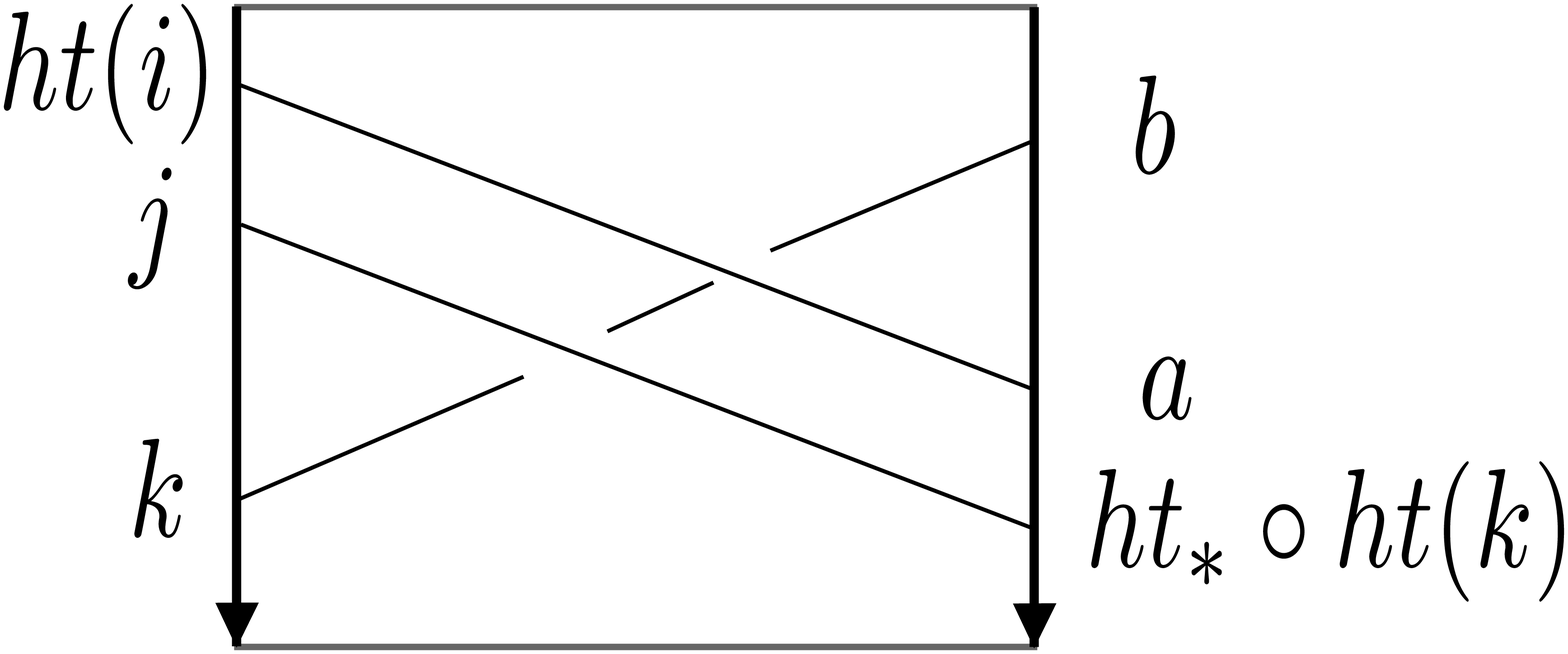}} \right).
$$
Now, for a basis vector $v_k$, then  $$\hT_* \circ \hT (v_k) = \sum_{l m } \left< v_l^* , \hT_* (v_m) \right> \left< v_m^*, \hT(v_k)\right> v_m = \sum_{l m } \epsilon \left( \adjustbox{valign=c}{\includegraphics[width=1cm]{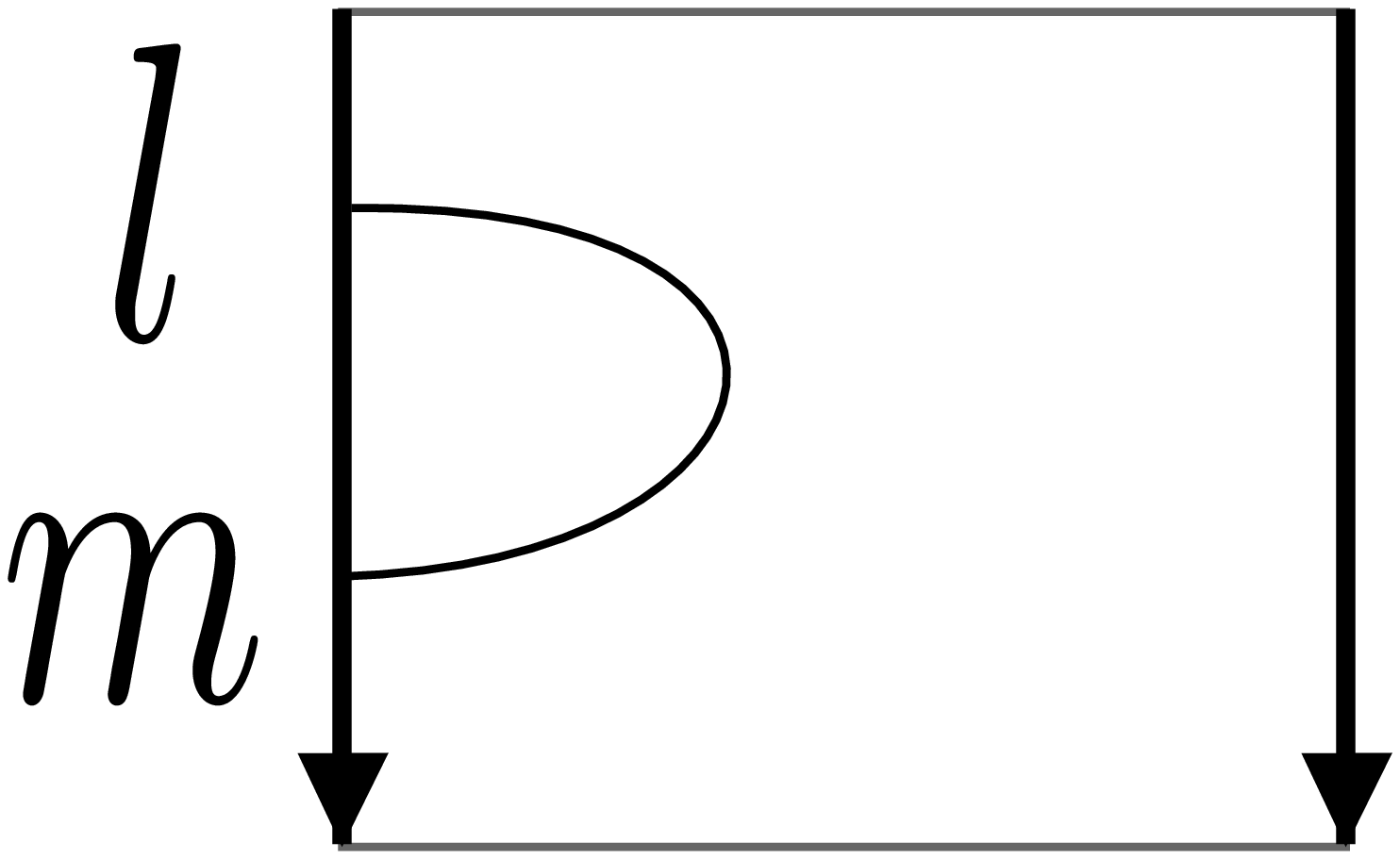}} \right) \epsilon \left( \adjustbox{valign=c}{\includegraphics[width=1cm]{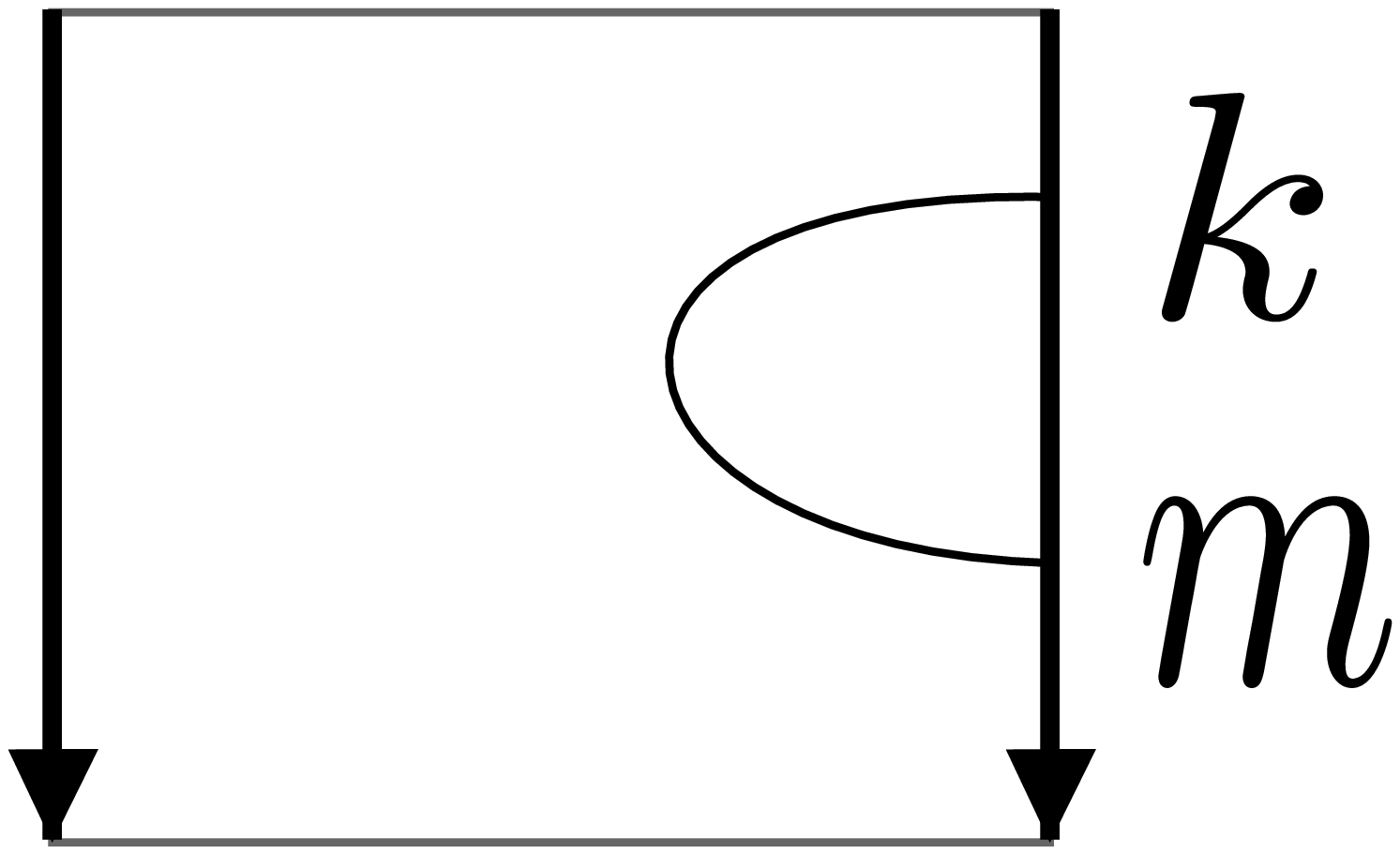}} \right) v_m.$$
Therefore: 
$$ f \circ \overline{S}( f^{-1}(x)) = \sum_{ijklm}  \adjustbox{valign=c}{\includegraphics[width=0.5cm]{TangleMonogonSimpleT.eps}}{}^{i}_{j} \epsilon \left( \adjustbox{valign=c}{\includegraphics[width=1cm]{TangleTransmutedAntipod4.eps}} \right) \epsilon \left( \adjustbox{valign=c}{\includegraphics[width=1cm]{TangleTransmutedAntipod5.eps}} \right) \epsilon \left(  \adjustbox{valign=c}{\includegraphics[width=2cm]{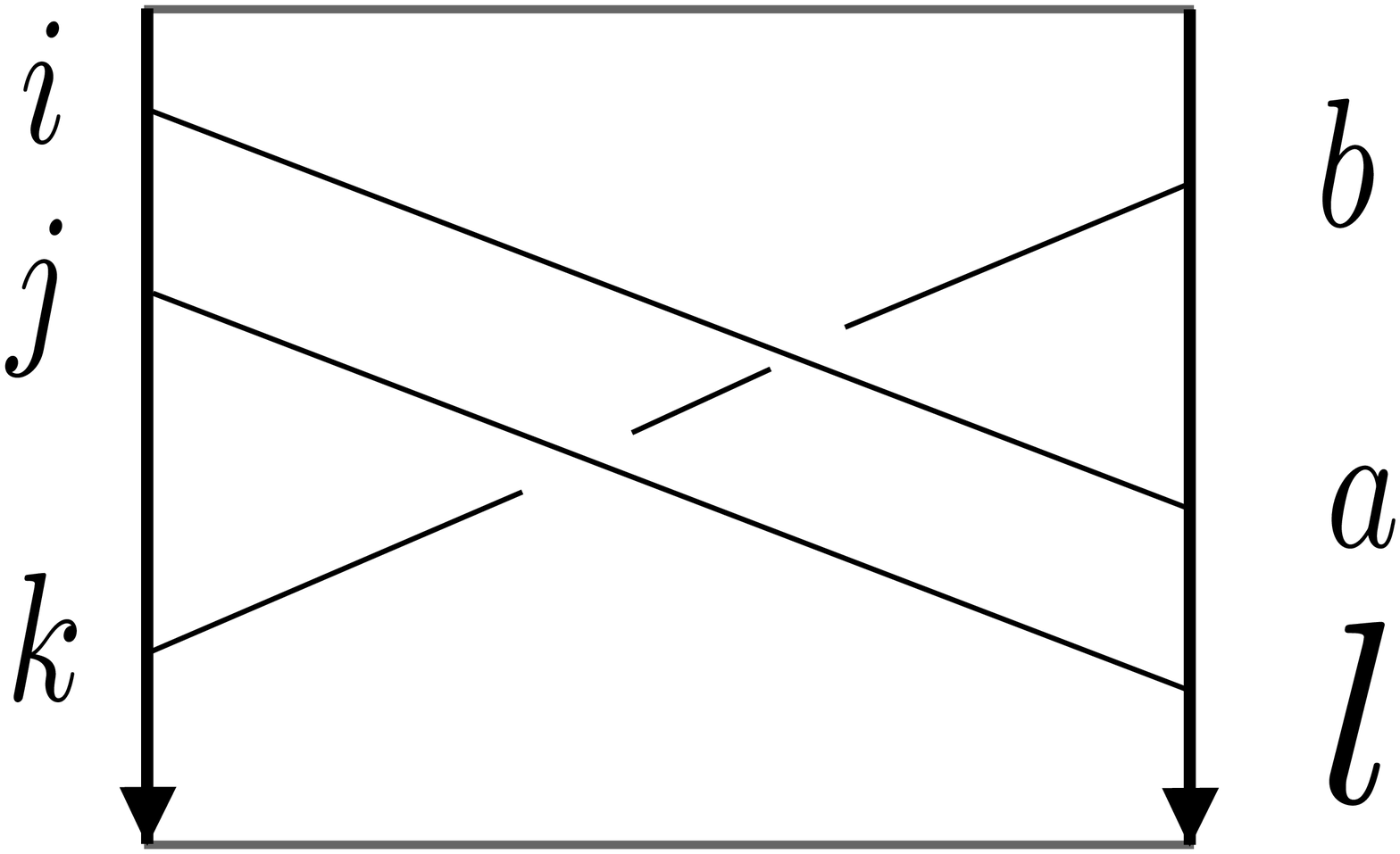}} \right) = \adjustbox{valign=c}{\includegraphics[width=3cm]{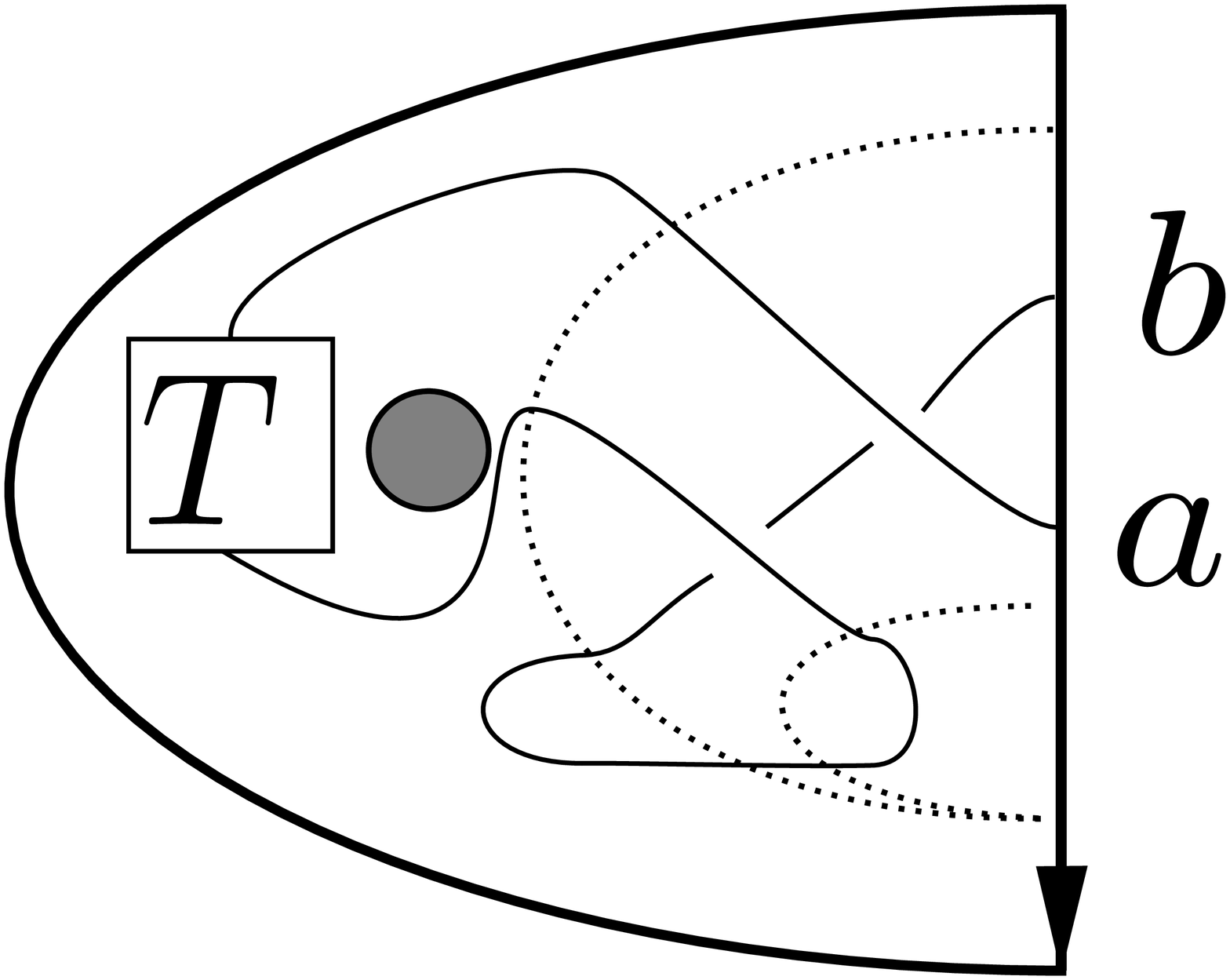}} .$$
\vspace{2mm}
 \par $f$ \textit{preserves the braiding:}
\\ Let $\Psi_0 : B\mathcal{S}_q(\mathbb{B})^{\otimes 2} \to B\mathcal{S}_q(\mathbb{B})^{\otimes 2} $ be the braiding from item $(5)$ of Definition \ref{def_HT} associated to $B\mathcal{S}_q(\mathbb{B})$ seen as a comodule over $\mathcal{S}_q(\mathbb{B})$ via $\Ad$. Said differently, 
$$ \Psi_0= (\tau \otimes r) (\id \otimes \tau \otimes \id)(\Ad \otimes \Ad).$$
Let us prove that $(f\wedge f)\circ \Psi_0 \circ (f^{-1}\otimes f^{-1})$ coincides with the image by $\mathcal{S}_q$ of the braiding $\Psi$ of Figure \ref{fig_BTHopfAlg}. Let $x= \adjustbox{valign=c}{\includegraphics[width=0.5cm]{TangleMonogonSimpleT.eps}}{}^{b}_{a}$ and $y=\adjustbox{valign=c}{\includegraphics[width=0.5cm]{TangleMonogonSimpleT2.eps}}{}^{d}_{c}$. Using the above expression for $\Psi_0$, we compute:
$$ (f\wedge f)\circ \Psi_0 \circ (f^{-1}\otimes f^{-1})(x\otimes y) = \sum_{ijkl} \adjustbox{valign=c}{\includegraphics[width=1.7cm]{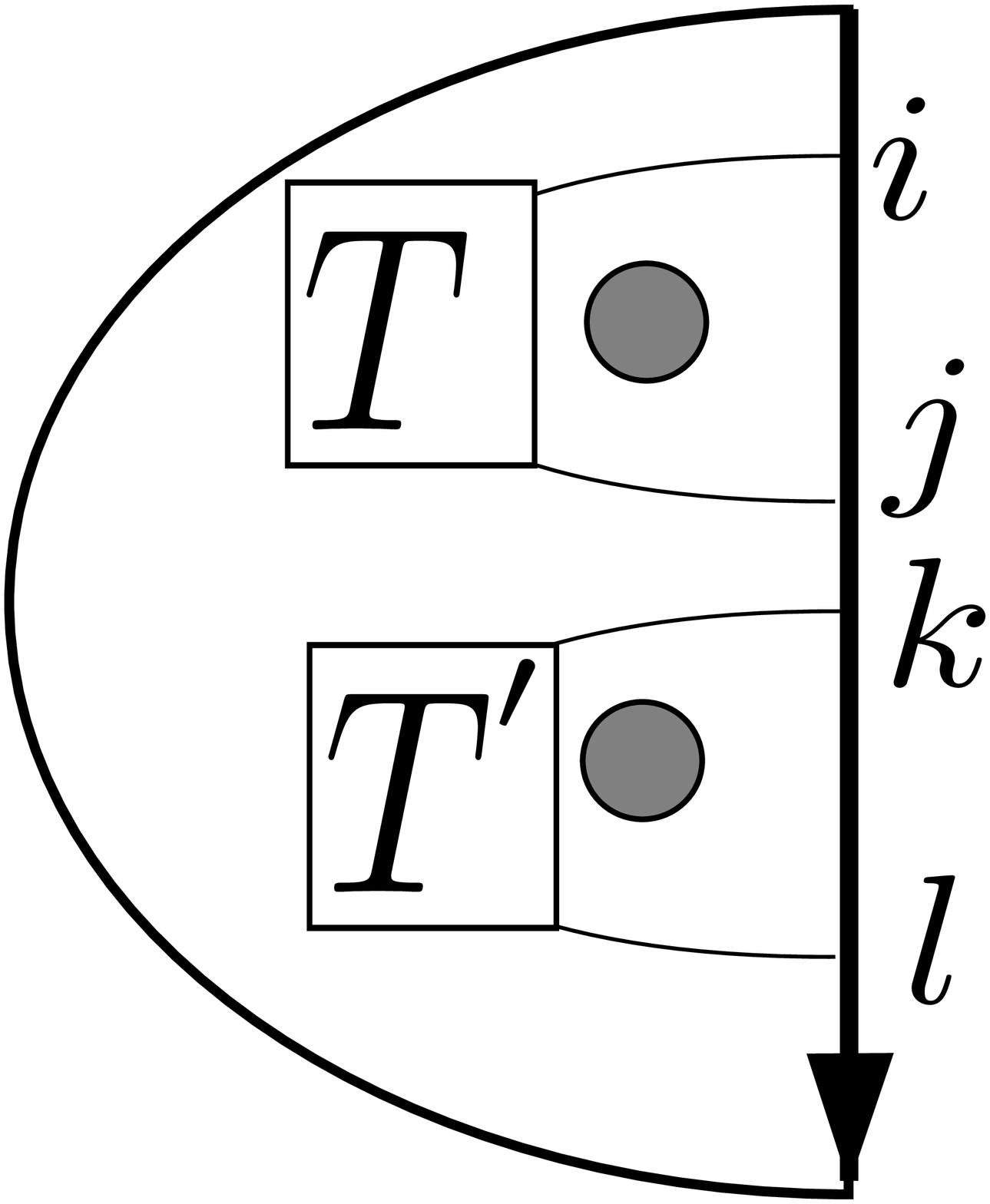}}  \epsilon \left( \adjustbox{valign=c}{\includegraphics[width=2cm]{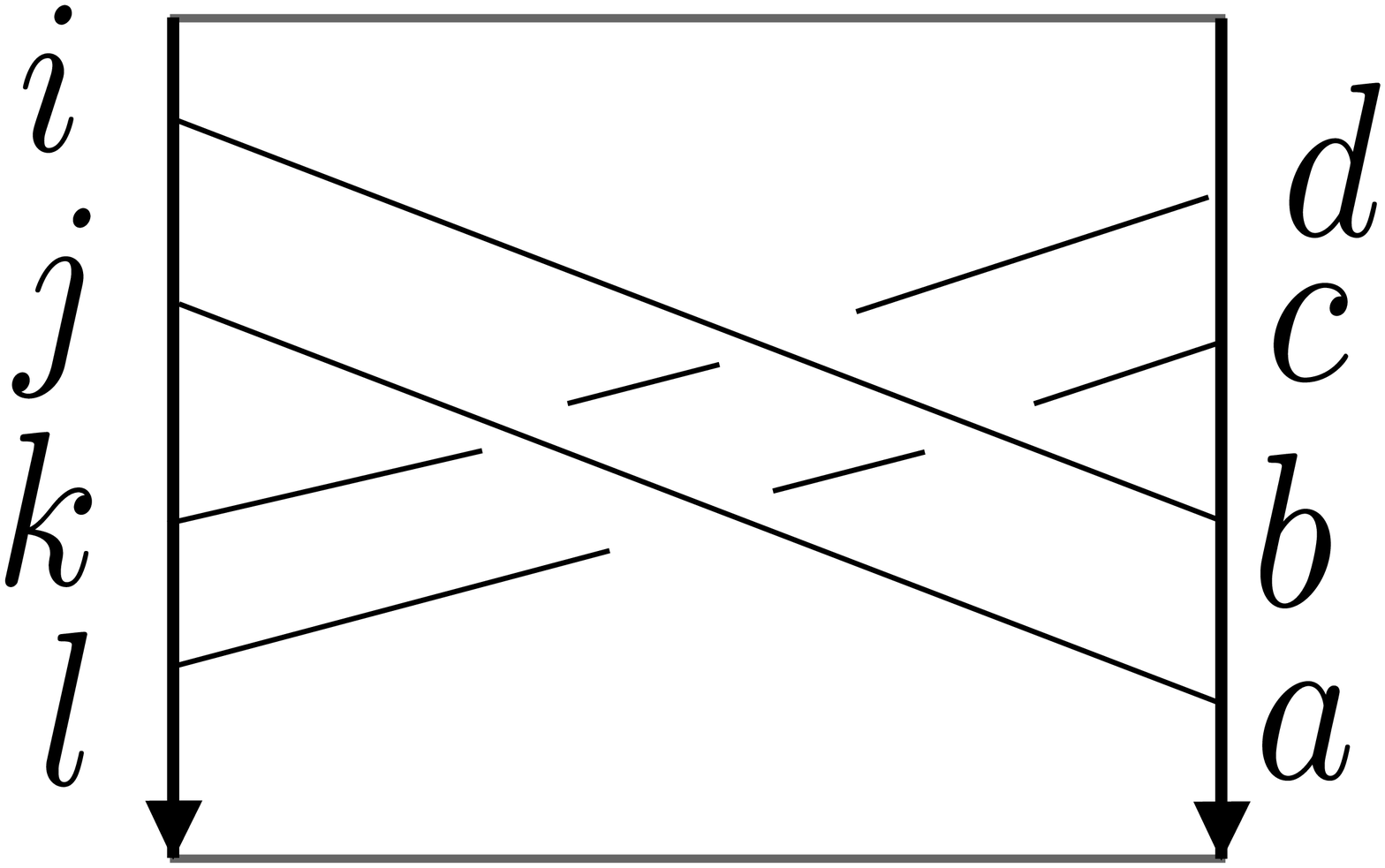}} \right) = \adjustbox{valign=c}{\includegraphics[width=2.5cm]{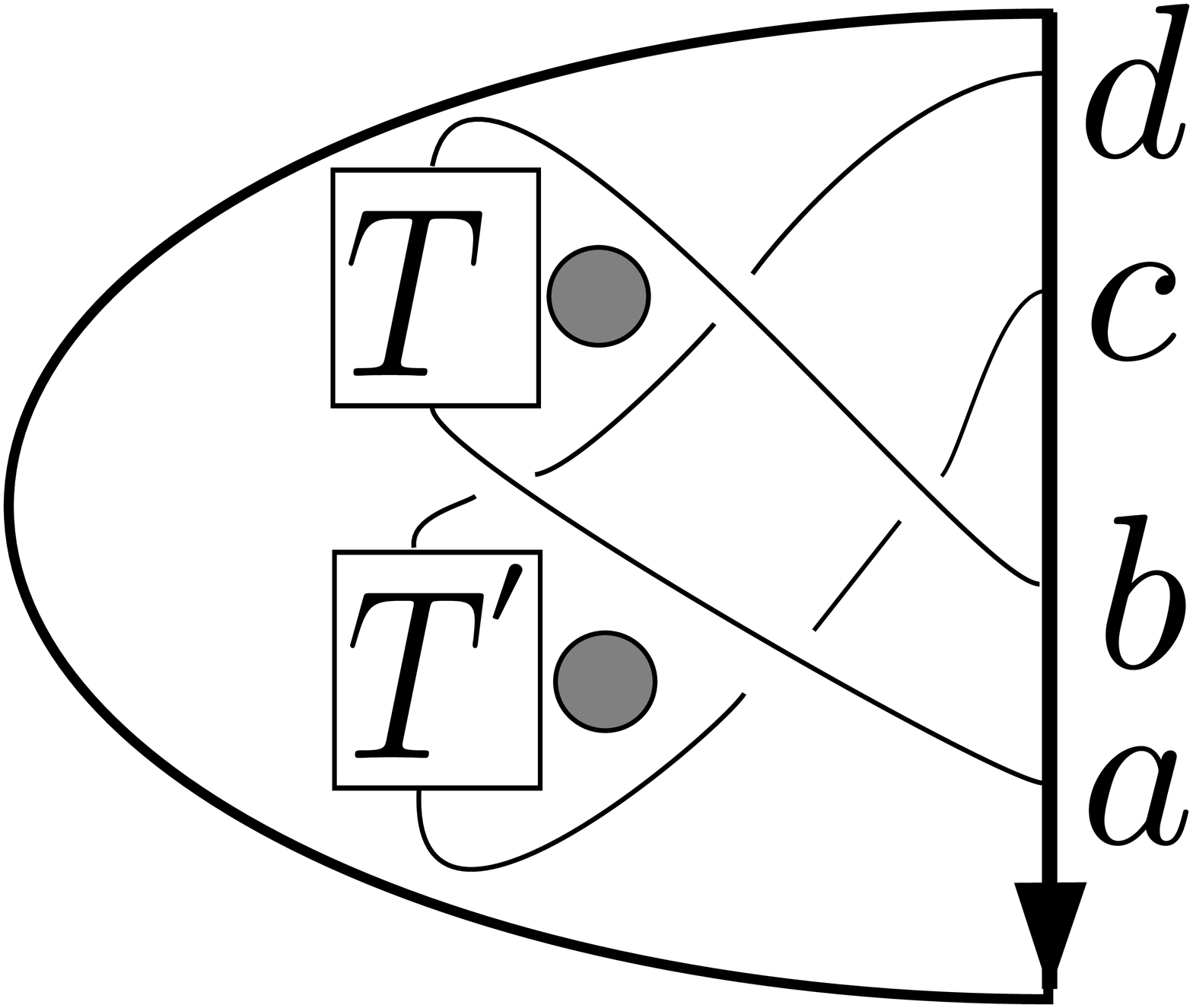}} .$$
Now, the fact that $f$ intertwines the comodule maps $\Ad^B$ and $\ad$ comes from that it intertwines the product, coproduct, antipode and braiding and from the fact that both comodules map are defined with the same formula, namely that
$$ \Ad^B=  \adjustbox{valign=c}{\includegraphics[width=1.5cm]{AdB.eps}}\quad \mbox{ and } \quad \ad = \adjustbox{valign=c}{\includegraphics[width=1.5cm]{Aad.eps}}.$$

\end{proof}

\subsection{Relating $\mathcal{S}_q$ to $\Rep_{q}^{\SL_2}$}

\begin{theorem}\label{theorem_SkeinQRep} 
Let $\restriction{\mathcal{S}_q}{\BT} : \BT \to \overline{\mathcal{C}_q^{\SL_2}}$ be the restriction of the functor $\mathcal{S}_q: \mathcal{M}^{(1)}_{\con}  \to\overline{\mathcal{C}_q^{\SL_2}}$ and $i: \BT\hookrightarrow \mathcal{M}^{(1)}_{\con}$ the inclusion. Then $\mathcal{S}_q = Lan_i\restriction{\mathcal{S}_q}{\BT}$, i.e. $\mathcal{S}_q$ is the left Kan extension lying in the diagram:
 $$
 \begin{tikzcd}
 {} & \mathcal{M}^{(1)}_{\con}\ar[rd,  "\mathcal{S}_q"] & {} \\
 \BT  \ar[ru, "i"] \ar[rr, "\restriction{\mathcal{S}_q}{\BT} "] &{}&\overline{\mathcal{C}_q^{\SL_2}}
 \end{tikzcd}
 $$ 

\end{theorem}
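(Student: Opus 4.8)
The plan is to show that $\mathcal{S}_q$ satisfies the universal property of the left Kan extension of $\restriction{\mathcal{S}_q}{\BT}$ along $i$. Recall from Section \ref{sec_Cat} that the left Kan extension is computed by the coend formula
$$ \Lan_i \restriction{\mathcal{S}_q}{\BT}(\mathbf{M}) = \int^{\mathbf{H}_n \in \BT} k\left[\Hom_{\mathcal{M}^{(1)}_{\con}}(\mathbf{H}_n, \mathbf{M})\right] \otimes_k \mathcal{S}_q(\mathbf{H}_n) = k[P_M] \otimes_{\BT} \restriction{\mathcal{S}_q}{\BT}, $$
exactly as in Lemma \ref{lemma_QFG_QRS}. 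There is a tautological natural transformation $\eta : \restriction{\mathcal{S}_q}{\BT} \to \mathcal{S}_q \circ i$ (in fact an equality), so the universal property of the Kan extension produces a canonical natural transformation $\sigma : \Lan_i \restriction{\mathcal{S}_q}{\BT} \Rightarrow \mathcal{S}_q$. Concretely, for $\mathbf{M} \in \mathcal{M}^{(1)}_{\con}$ this is the map
$$ \sigma_{\mathbf{M}} : k[P_M] \otimes_{\BT} \restriction{\mathcal{S}_q}{\BT} \to \mathcal{S}_q(\mathbf{M}), \qquad [f] \otimes \alpha \mapsto f_*(\alpha), $$
where $f : \mathbf{H}_n \to \mathbf{M}$ is an embedding and $\alpha \in \mathcal{S}_q(\mathbf{H}_n)$. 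The whole content of the theorem is that $\sigma_{\mathbf{M}}$ is an isomorphism for every $\mathbf{M}$.

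\emph{Surjectivity} is the easier half and I would dispose of it first. Every class in $\mathcal{S}_q(\mathbf{M})$ is a $k$-linear combination of classes $[T,s]$ of stated tangles; it therefore suffices to hit each $[T,s]$. Given such a tangle $T \subset M$, one can choose a regular neighborhood $N(T \cup \mathbb{D}_M)$ of the tangle together with the based disc; after a small isotopy making the endpoints of $T$ lie on the based disc in the prescribed height order, this neighborhood is a handlebody with one based disc, i.e. it is (the image of an embedding of) some $\mathbf{H}_n \in \BT$, and $T$ together with its framing is isotopic to the image of a bottom tangle in $\mathbf{H}_n$. This realizes $[T,s]$ as $f_*(\alpha)$ for an embedding $f : \mathbf{H}_n \to \mathbf{M}$ and a class $\alpha \in \mathcal{S}_q(\mathbf{H}_n)$, so $\sigma_{\mathbf{M}}$ is surjective.

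\emph{Injectivity} is where the real work lies, and I expect it to be the main obstacle. The issue is that two pairs $([f],\alpha)$ and $([f'],\alpha')$ which are \emph{not} identified in the coend $k[P_M]\otimes_{\BT}\restriction{\mathcal{S}_q}{\BT}$ could a priori still satisfy $f_*(\alpha) = f'_*(\alpha')$ in $\mathcal{S}_q(\mathbf{M})$. To rule this out I would construct an explicit inverse, or at least a left inverse, to $\sigma_{\mathbf{M}}$. Pick a handle decomposition (or a CW-like presentation by compression bodies) of $\mathbf{M}$ exhibiting $M$ as built from a single $0$-handle and finitely many $1$-handles together with $2$- and $3$-handles; equivalently, choose a handlebody $\mathbf{H}_N \hookrightarrow \mathbf{M}$ onto which $\mathbf{M}$ deformation-retracts after adding relations, so that $\mathbf{M}$ is obtained from $\mathbf{H}_N$ by a finite sequence of gluings along discs/$2$-handles. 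One then expresses $\mathcal{S}_q(\mathbf{M})$ as a colimit (coequalizer) of $\mathcal{S}_q(\mathbf{H}_N)$ along the splitting morphisms of Definition \ref{def_gluing_map} associated to these handle attachments — this is essentially the content of the surjectivity of the splitting maps from $\mathbf{H}_N$ and the fact that $\BT$ is a PROP generating, under gluing, all marked $3$-manifolds up to the relations coming from $2$- and $3$-handles. Matching this colimit presentation of $\mathcal{S}_q(\mathbf{M})$ term-by-term with the coend presentation of $\Lan_i\restriction{\mathcal{S}_q}{\BT}(\mathbf{M})$ — the coend's defining relations $[f\circ g]\otimes \alpha \sim [f]\otimes g_*(\alpha)$ for $g \in \BT(n,m)$ being exactly the relations needed to pass between different handle presentations — shows that $\sigma_{\mathbf{M}}$ is a bijection. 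The delicate point is verifying that the \emph{only} relations in $\mathcal{S}_q(\mathbf{M})$ beyond those visible in $\mathcal{S}_q(\mathbf{H}_N)$ are generated by isotopies of bottom tangles and by the morphisms of $\BT$ — i.e. that no "exotic" relations are created when $2$- and $3$-handles are attached — which is where one invokes Lemma \ref{lemma_BT} (the identification $\Hom_{\mathcal{M}^{(1)}_{\con}}(\mathbf{H}_n,\mathbf{M}) \cong P_n(\mathbf{M})$, together with the description of isotopies of bottom tangles via the moves of $\mathfrak{bt}$) and the functoriality of splitting morphisms. In short: build the inverse handle-by-handle, and check at each handle attachment that the coend relations and the skein relations coincide.
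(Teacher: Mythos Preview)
Your surjectivity argument is fine and is exactly what the paper does. The difficulty, as you correctly identify, is injectivity, but your proposed route through handle decompositions is both overcomplicated and incomplete. You never actually establish that $\mathcal{S}_q(\mathbf{M})$ is the coequalizer you describe, nor that its relations match the coend relations; you merely assert that this ``should'' work and defer the ``delicate point'' to unspecified invocations of Lemma~\ref{lemma_BT} and $\mathfrak{bt}$-moves. In particular, splitting morphisms go from the glued manifold to the unglued one, so the direction of your proposed colimit is unclear, and nothing in the paper provides a presentation of $\mathcal{S}_q(\mathbf{M})$ by attaching $2$- and $3$-handles to a handlebody.

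The paper bypasses all of this by constructing the inverse $G_{\mathbf{M}}$ directly, with two simple observations. First, for a fixed stated tangle $(T,s)$ in $\mathbf{M}$, the poset $\mathcal{E}$ of embeddings $f:\mathbf{H}_n\hookrightarrow\mathbf{M}$ whose image contains $T$ (ordered by inclusion of images) is nonempty and \emph{filtrant}: any two such embeddings sit inside a common larger one. If $f\prec g$ then $f=g\circ h$ for some $h\in\BT$, and the coend relation $[g\circ h]\otimes\alpha=[g]\otimes h_*\alpha$ immediately gives $G_f([T,s])=G_g([T,s])$; filtrancy then makes $G_{\mathbf{M}}([T,s]):=G_f([T,s])$ independent of the choice of $f$. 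Second, skein relations are \emph{local}: any single skein relation $\sum_i\alpha_i[T^i,s^i]=0$ involves only finitely many tangles, all lying in the image of a common $f\in\mathcal{E}$, and the same relation already holds in $\mathcal{S}_q(\mathbf{H}_n)$; hence $G_{\mathbf{M}}$ descends through the skein quotient. That $F_{\mathbf{M}}$ and $G_{\mathbf{M}}$ are mutually inverse is then immediate. No handle decomposition, no matching of presentations --- just filtrancy of handlebody neighbourhoods and locality of skein relations.
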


\begin{proof}
Let $\mathbf{M} \in \mathcal{M}^{(1)}_{\con}$ and consider the left Kan extension of Section \ref{sec_Cat}:
$$ L(\mathbf{M})=Lan_i\restriction{\mathcal{S}_q}{\BT} (\mathbf{M}):= \int^{n\geq 0} k[\Hom_{\mathcal{M}}( \mathbf{H}_n, \mathbf{M})]\otimes \mathcal{S}_q(\mathbf{H}_n).$$
Define a linear morphism $F^{(n)}: k[\Hom_{\mathcal{M}}( \mathbf{H}_n, \mathbf{M})]\otimes \mathcal{S}_q(\mathbf{H}_n) \to \mathcal{S}_q(\mathbf{M})$ by $F^{(n)} ( f \otimes x):= f_{*}(x)$. For $g : \mathbf{H}_n \to \mathbf{H}_m$ a morphism in $\BT$, $f : \mathbf{H}_m \to \mathbf{M}$ an embedding and $x\in \mathcal{S}_q(\mathbf{H}_n)$ then $F^{(n)}(f\circ g \otimes x) = f_{*}\circ g_{*} (x) = F^{(m)}(f \otimes g_* x)$ so the morphisms $F^{(n)}$ induce a linear map $F_{\mathbf{M}} : L(\mathbf{M}) \to \mathcal{S}_q(\mathbf{M})$. Clearly the  morphisms $F_{\mathbf{M}}$ are natural in $\mathbf{M}$ so define a natural morphism $F: L \to \mathcal{S}_q$. Let us define an inverse morphism $G$. Let $(T,s)$ be a stated tangle in $\mathbf{M}$. Let $\mathcal{E}\subset \bigsqcup_{n\geq 0}\Hom_{\mathbf{M}^{(1)}}(\mathbf{H}_n, \mathbf{M})$ be the subset of embeddings $f: \mathbf{H}_n \to \mathbf{M}$ such that $T$ is included in the image of $f$. For $f,g \in \mathcal{E}$, write $f\prec g$ if the image of $f$ is included in the image of $g$. First, since $M$ is connected, the set $\mathcal{E}$ is not empty. Next, the partially ordered set $(\mathcal{E}, \prec)$ is filtrant: for $f,g \in \mathcal{E}$, one can always find $h \in \mathcal{E}$ such that the image of $h$ contains both the images of $f$ and $g$. 
For $f: \mathbf{H}_n \to \mathbf{M}$ an embedding in $\mathcal{E}$, consider the stated tangle $(T_f, s_f)$ in $\mathbf{H}_n$ such that $(T,s)=(f(T_f), s_f\circ f^{-1})$ and define $G_f ([T,s]):= [f \otimes [T_f,s_f] ] \in L(\mathbf{M})$. For $f,  g$ two elements in $\mathcal{E}$  with $f: \mathbf{H}_n \to \mathbf{M}$ and $g: \mathbf{H}_m \to \mathbf{M}$ such that $f\prec g$, then one can find $h: \mathbf{H}_n \to \mathbf{H}_m$ such that $f=g\circ h$. Thus one has $[T_g, s_g]=h_*[T_f,s_f]$ so $G_g([T,s])= [g \otimes h_*[T_f,s_f ]] = [g \circ h , [T_f, s_f]] = G_f([T,s])$. Since $\prec$ is filtrant, this proves that $G_f([T,s])$ is independant of $f$, let us denote it by $G_{\mathbf{M}}(T,s)$ which is clearly invariant by isotopy. Extend $G_{\mathbf{M}}$ linearly to the free $k$ module generated by stated tangles in $\mathbf{M}$. Let us prove that $G_{\mathbf{M}}$ passes to the quotient to a linear morphism 
$G_{\mathbf{M}} : \mathcal{S}_q(\mathbf{M})\to L(\mathbf{M})$, i.e. that it sends the skein relations of Definition \ref{def_skein} to $0$. If $X = \sum_i \alpha_i (T^i, s^i)$ is a linear combination of stated tangles such that $\sum_i \alpha_i [T^i, s^i]=0$ in $\mathcal{S}_q(\mathbf{M})$ (i.e. $X$ is a skein relation), consider an embedding $f:\mathbf{H}_n \to \mathbf{M} $ such that every $T_i$ is included in the image of $f$ and denote by $(T_f^i, s^i_f) \subset H_n$ the stated tangle such that $(T^i, s^i) = (f(T_f^i), s_f^i \circ f^{-1})$ as before. By locality of the skein relations, $X_f: = \sum_i \alpha_i (T_f^i, s_f^i)$ is a skein relation in $\mathbf{H}_n$ so its class in $\mathcal{S}_q(\mathbf{H}_n)$ vanishes. Therefore $G_{\mathbf{M}}( X) = [ f \otimes [X_f]]= 0$ so one has a linear map $G_{\mathbf{M}} : \mathcal{S}_q(\mathbf{M})\to L(\mathbf{M})$.

 Clearly the maps $G_{\mathbf{M}}$ are natural in $\mathbf{M}$ so define a natural morphism $G : \mathcal{S}_q \to L$. It is a straightforward consequence of the definitions of $F$ and $G$ that they are inverse to each other. 

\end{proof}

The following implies Theorem \ref{theorem1}.

\begin{corollary}\label{coro_RepSkein}
The functors $\Rep_q^{\SL_2} : \mathcal{M}^{(1)}_{\con} \to \overline{\mathcal{C}_q^{\SL_2}}$ and $\mathcal{S}_q: \mathcal{M}^{(1)}_{\con} \to \overline{\mathcal{C}_q^{\SL_2}}$ are isomorphic.
\end{corollary}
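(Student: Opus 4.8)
The plan is to combine the two left Kan extension descriptions already established in the excerpt. Theorem~\ref{theorem_SkeinQRep} identifies $\mathcal{S}_q$ on $\mathcal{M}^{(1)}_{\con}$ as the left Kan extension $\Lan_i \bigl(\restriction{\mathcal{S}_q}{\BT}\bigr)$ along $i:\BT\hookrightarrow\mathcal{M}^{(1)}_{\con}$, while by definition $\Rep_q^{\SL_2}=\Lan_i Q_{B_q\SL_2}$ along the same inclusion $i$. Since left Kan extensions are unique up to unique isomorphism, it therefore suffices to produce an isomorphism of functors $\BT\to\overline{\mathcal{C}_q^{\SL_2}}$ between $\restriction{\mathcal{S}_q}{\BT}$ and $Q_{B_q\SL_2}$, and this is exactly the content of Corollary~\ref{coro_skein_transmutation}: the isomorphism $\widetilde{f}:B_q\SL_2\xrightarrow{\cong}\mathcal{S}_q(\mathbf{H}_1)$ of braided Hopf algebra objects (coming from Theorem~\ref{theorem_skein_transmutation}) shows that $\restriction{\mathcal{S}_q}{\BT}$ sends $\mathbf{H}_1$ to a dual BP Hopf algebra object isomorphic to $Q_{B_q\SL_2}(\mathbf{H}_1)=B_q\SL_2$, compatibly with all the structure morphisms $(\mu,\eta,\Delta,\epsilon,S,\theta)$, and by the uniqueness clause of Theorem~\ref{theorem_functorBT} the two braided functors out of $\BT$ must coincide up to canonical isomorphism.

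Concretely, first I would invoke Corollary~\ref{coro_skein_transmutation} to get the natural isomorphism $\widetilde{f}\colon Q_{B_q\SL_2}\xrightarrow{\cong}\restriction{\mathcal{S}_q}{\BT}$ of braided functors $\BT\to\overline{\mathcal{C}_q^{\SL_2}}$ (the statement there says precisely that these two functors are isomorphic). Then I would apply the explicit coend formula for the left Kan extension from Section~\ref{sec_Cat},
$$ \Lan_i H(\mathbf{M}) = \int^{n\geq 0} k[\Hom_{\mathcal{M}^{(1)}_{\con}}(\mathbf{H}_n,\mathbf{M})]\otimes H(\mathbf{H}_n), $$
which is functorial in $H$, so that a natural isomorphism $H\cong H'$ of the input functors induces a natural isomorphism $\Lan_i H\cong\Lan_i H'$ of the extensions. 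Applying this with $H=Q_{B_q\SL_2}$ and $H'=\restriction{\mathcal{S}_q}{\BT}$, and using Theorem~\ref{theorem_SkeinQRep} to identify $\Lan_i\bigl(\restriction{\mathcal{S}_q}{\BT}\bigr)$ with $\mathcal{S}_q$ and the definition of $\Rep_q^{\SL_2}$ to identify $\Lan_i Q_{B_q\SL_2}$ with $\Rep_q^{\SL_2}$, we obtain the desired isomorphism $\Psi:\Rep_q^{\SL_2}\xrightarrow{\cong}\mathcal{S}_q$ of functors on $\mathcal{M}^{(1)}_{\con}$.

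There is essentially no remaining obstacle at this level: the two genuinely substantive inputs---that $\mathcal{S}_q$ is itself a left Kan extension from $\BT$ (Theorem~\ref{theorem_SkeinQRep}), and that $\restriction{\mathcal{S}_q}{\BT}\cong Q_{B_q\SL_2}$ via the skein reinterpretation of Majid's transmutation (Corollary~\ref{coro_skein_transmutation}, resting on Theorem~\ref{theorem_skein_transmutation})---have both been proved already. The only point requiring a word of care, which I would spell out, is the naturality and functoriality of the coend construction $H\mapsto\Lan_i H$: one should check that the isomorphism $\widetilde{f}$ commutes with the structure maps of the coend (i.e. the maps $k[\Hom(\mathbf{H}_n,\mathbf{M})]\otimes H(\mathbf{H}_n)\to k[\Hom(\mathbf{H}_m,\mathbf{M})]\otimes H(\mathbf{H}_m)$ induced by morphisms $\mathbf{H}_n\to\mathbf{H}_m$ in $\BT$), which is automatic since $\widetilde{f}$ is a natural transformation of functors on $\BT$. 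Alternatively, and perhaps more cleanly, I would simply invoke the uniqueness of left Kan extensions directly: since $\mathcal{S}_q$ and $\Rep_q^{\SL_2}$ are both left Kan extensions along $i$ of isomorphic functors, they are canonically isomorphic, and one reads off $\Psi$ from the universal property.
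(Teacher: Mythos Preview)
Your proposal is correct and is exactly the argument the paper intends: the corollary is stated without proof immediately after Theorem~\ref{theorem_SkeinQRep} precisely because it follows at once by combining that theorem with the definition $\Rep_q^{\SL_2}=\Lan_i Q_{B_q\SL_2}$ and the isomorphism $\restriction{\mathcal{S}_q}{\BT}\cong Q_{B_q\SL_2}$ from Corollary~\ref{coro_skein_transmutation}. Your write-up simply spells out explicitly what the paper leaves implicit.
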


\subsection{Coinvariant vectors}

Let $\mathbf{M}=(M, \iota_M) \in \mathcal{M}^{(1)}_{\con}$ and consider the unmarked $3$-manifold $M\in \mathcal{M}^{(0)}_{\con}$ without marked disc. The identity $\id_M: M\to M$ is a marked $3$-manifolds embedding $\iota: M\to \mathbf{M}$ so induces a morphism $\iota_*: \mathcal{S}_q(M) \to \mathcal{S}_q(\mathbf{M})$ between the usual skein module of $M$ to the stated skein module of $\mathbf{M}$. Clearly, the elements of the image of $\iota_*$ are coinvariant for the $\mathcal{O}_q[\SL_2]$ coaction. By Theorem \ref{theorem_SkeinQRep}, the submodule $\mathcal{S}_q(\mathbf{M})^{coinv}$ of coinvariant vectors is isomorphic to Habiro's quantum character variety. So in order to relate skein module with quantum character variety, we need to understand whether $\iota_*: \mathcal{S}_q(M) \to \mathcal{S}_q(\mathbf{M})^{coinv}$ is an isomorphism or not. 
Recall that $k_{\SL_2}= \mathbb{Z}[q^{\pm 1/4}]$ and consider the field of fractions $K_{\SL_2}:=\mathbb{Q}(q^{1/4})$ and the $K_{\SL_2}$ vector space $\mathcal{S}_q^{rat}(\mathbf{M}):= \mathcal{S}_q(\mathbf{M})\otimes_{k_{\SL_2}}K_{\SL_2}$. It is a comodule structure over $\mathcal{O}_q\SL_2^{rat}:=\mathcal{O}_q\SL_2\otimes_{k_{\SL_2}}K_{\SL_2}$.

The goal of this subsection is to prove the following:

\begin{theorem}\label{theorem_surjectivity}
The morphism $\iota_*: \mathcal{S}_q(M) \to \mathcal{S}_q(\mathbf{M})^{coinv}$ is surjective. Moreover, after tensoring by $K_{\SL_2}$ it becomes an isomorphism  $\iota_*: \mathcal{S}^{rat}_q(M) \to \mathcal{S}^{rat}_q(\mathbf{M})^{coinv}$.
\end{theorem}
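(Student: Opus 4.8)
The plan is to exploit the filtration of $\mathcal{S}_q(\mathbf{M})$ by the number of endpoints lying on the marked disc $\mathbb{D}_M$. Every skein relation of Definition \ref{def_skein}, as well as every isotopy, can only decrease this number, so one gets a well defined exhaustive filtration $F_0\subseteq F_1\subseteq\cdots\subseteq\mathcal{S}_q(\mathbf{M})$, where $F_k$ is spanned by classes of stated tangles with at most $2k$ endpoints on $\mathbb{D}_M$ (there is always an even number of them, since $\mathbf{M}$ has a single disc). By construction $F_0=\iota_*(\mathcal{S}_q(M))$, and these elements are coinvariant; so the content of the surjectivity statement is the reverse inclusion $\mathcal{S}_q(\mathbf{M})^{coinv}\subseteq F_0$. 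Inspecting the comodule map $\Delta_c^L=\theta_{a_R\#c}$ (which splits off a bigon, reads the states at $\mathbb{D}_M$, and multiplies the corresponding generators $a,b,c,d$ of $\mathcal{O}_q[\SL_2]\cong\mathcal{S}_q(\mathbb{B})$, cf. Theorem \ref{theorem_bigon}), one checks $\Delta_c^L(F_k)\subseteq\mathcal{O}_q[\SL_2]\otimes F_k$, hence an induced coaction $\mathrm{gr}_k\to\mathcal{O}_q[\SL_2]\otimes\mathrm{gr}_k$ on $\mathrm{gr}_k:=F_k/F_{k-1}$. Therefore, if $x\in\mathcal{S}_q(\mathbf{M})^{coinv}$ has top filtration degree $2k$, its leading term $\bar x\in\mathrm{gr}_k$ is a coinvariant vector of $\mathrm{gr}_k$; so it suffices to show $\mathrm{gr}_k$ has no nonzero coinvariant for $k\geq 1$, after which $x\in F_0$ automatically.

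To control $\mathrm{gr}_k$, I would read the trivial–arc and cup relations of Definition \ref{def_skein} as saying precisely the following: a stated tangle carrying, on two height–adjacent endpoints of $\mathbb{D}_M$, a state belonging to the quantum exterior square $\Lambda^2_q V\subseteq V\otimes V$ can be isotoped so that those two strands bound a trivial cap just below $\mathbb{D}_M$, hence its class drops into $F_{k-1}$. Since the kernel of the quantum symmetrization $V^{\otimes 2k}\twoheadrightarrow V_{2k}$ onto the Weyl module of highest weight $2k$ is generated by the subspaces $V^{\otimes j}\otimes\Lambda^2_q V\otimes V^{\otimes 2k-2-j}$, it follows that for every stated tangle $T$ with $2k$ endpoints the multilinear assignment $v_{s_1}\otimes\cdots\otimes v_{s_{2k}}\mapsto[T,(s_1,\dots,s_{2k})]$ factors, modulo $F_{k-1}$, through an $\mathcal{O}_q[\SL_2]$–colinear map $V_{2k}\to\mathrm{gr}_k$. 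Thus $\mathrm{gr}_k$ is spanned by images of colinear maps out of $V_{2k}$; since $V_{2k}$ has no trivial subcomodule for $k\geq1$, and after tensoring with $K_{\SL_2}$ the category $\overline{\mathcal{C}_q^{\SL_2,rat}}$ is semisimple (Lemma \ref{lemma_cocompletion}), one concludes that $\mathrm{gr}_k$ has vanishing coinvariants for $k\geq1$. The main obstacle I anticipate is making this last step robust over the ground ring $k_{\SL_2}=\mathbb{Z}[q^{\pm1/4}]$ rather than over $K_{\SL_2}$: a quotient of copies of $V_{2k}$ need not have vanishing coinvariants over a non-field (non-invertible quantum integers, absence of Jones–Wenzl idempotents), so this requires either a direct analysis of the generating colinear maps or a reduction of the integral statement to the generic one.

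For the second part, over $K_{\SL_2}$ everything is clean: by Lemma \ref{lemma_cocompletion} the category $\overline{\mathcal{C}_q^{\SL_2,rat}}$ is semisimple, so $\mathcal{S}_q^{rat}(\mathbf{M})$ splits into isotypic components, the filtration $F_\bullet$ splits as comodules, and the Haar functional $h$ of $\mathcal{O}_q[\SL_2]^{rat}$ yields a co-Reynolds projection $\pi=(\id\otimes h)\circ\Delta_c^L$ onto $\mathcal{S}_q^{rat}(\mathbf{M})^{coinv}$; combined with the first part this gives $F_0\otimes K_{\SL_2}=\mathcal{S}_q^{rat}(\mathbf{M})^{coinv}$ and re-proves surjectivity of $\iota_*^{rat}$ onto the coinvariants. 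It then remains to see that $\iota_*^{rat}:\mathcal{S}_q^{rat}(M)\to F_0\otimes K_{\SL_2}$ is injective, and for this I would construct a retraction $\kappa:\mathcal{S}_q^{rat}(\mathbf{M})\to\mathcal{S}_q^{rat}(M)$ with $\kappa\circ\iota_*^{rat}=\id$, by capping the endpoints on $\mathbb{D}_M$ inside an external collar with the standard cup morphisms (equivalently, by applying the counit of the braided Hopf algebra $\mathcal{S}_q(\mathbf{H}_1)\cong B_q\SL_2$ of Corollary \ref{coro_skein_transmutation} in a boundary collar); well-definedness of $\kappa$ uses the trivial–arc relations and, over $K_{\SL_2}$, meets no obstruction from non-invertible quantum integers. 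Alternatively, injectivity can be extracted from Corollary \ref{coro_RepSkein} together with the decomposition $\mathcal{S}_q(\mathbf{M})\cong k[P_M]\otimes_{\BT}Q_{B_q\SL_2}$, by identifying $\mathcal{S}_q^{rat}(M)$ with the image of the $n=0$ component. Composing, $\iota_*^{rat}$ is injective and surjective onto $\mathcal{S}_q^{rat}(\mathbf{M})^{coinv}$, hence an isomorphism.
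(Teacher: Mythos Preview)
Your filtration strategy is the same one the paper uses, but the paper executes it through an intermediary object rather than directly on $\mathcal{S}_q(\mathbf{M})$. The paper introduces a \emph{skein quantum plane} $\mathcal{S}_q[\mathbb{A}^2]$ (left-stated tangles in the bigon, no skein relations on $a_R$) together with the internal skein module $F_M:\TL\to\Mod_k$, and exhibits an isomorphism $\mathcal{S}_q(\mathbf{M})\cong F_{\mathbb{A}}\otimes_{\TL}F_M$. The filtration by number of boundary points then lives entirely on the $F_{\mathbb{A}}$ factor, where the paper has explicit PBW bases. This is exactly what fills the gap you correctly flag over $k_{\SL_2}$: instead of arguing abstractly that $\mathrm{gr}_k$ is a quotient of copies of $V_{2k}$ (which, as you note, need not have vanishing coinvariants integrally), the paper proves directly via a leading-term argument on the basis $\{x^iy^j\}$ of $\mathcal{O}_q[\mathbb{A}^2]$ that $(\mathcal{O}_q[\mathbb{A}^2])^{coinv}=k$, and then bootstraps this to $\mathcal{S}_q[\mathbb{A}^2]$ by the Diamond Lemma (Lemmas~\ref{lemma_coinv1}, \ref{lemma_coinv2}). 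The transfer from $F_{\mathbb{A}}$ to $\mathcal{S}_q(\mathbf{M})$ is then done by tensoring the left-exact sequence $0\to F_{\mathbb{A}}^{(0)}\to F_{\mathbb{A}}\to \mathcal{S}_q(\mathbb{B})\otimes F_{\mathbb{A}}$ with $F_M$ over $\TL$, and invoking exactness of this functor over $K_{\SL_2}$ (Lemma~\ref{lemma_exact}, via semisimplicity of $\overline{\mathcal{C}_q^{\SL_2,rat}}$).

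Your injectivity argument over $K_{\SL_2}$ has a genuine gap. The retraction $\kappa:\mathcal{S}_q^{rat}(\mathbf{M})\to\mathcal{S}_q^{rat}(M)$ you describe (``cap off endpoints in a collar'') is not well-defined as stated: capping requires a choice of planar matching of the $2k$ endpoints, and different matchings produce different closed tangles; moreover it is not clear that any such choice is compatible with the boundary skein relations so as to yield a linear map out of $\mathcal{S}_q^{rat}(\mathbf{M})$. The counit of $B_q\SL_2$ acts on $\mathcal{S}_q(\mathbf{H}_1)$, not on $\mathcal{S}_q(\mathbf{M})$, so that formulation does not help either. Likewise the ``$n=0$ component'' of $k[P_M]\otimes_{\BT}Q_{B_q\SL_2}$ is not $\mathcal{S}_q(M)$ on the nose; the coend identifies many components. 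The paper avoids all of this: over $K_{\SL_2}$ the functor $\bullet\otimes_{\TL}F_M$ is exact (because every object of $\overline{\mathcal{C}_q^{\SL_2,rat}}$ is flat), so the tensored sequence remains left-exact and one reads off both injectivity and surjectivity of $\iota_*^{rat}$ simultaneously.
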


In particular the kernel of $\iota_*: \mathcal{S}_q(M) \to \mathcal{S}_q(\mathbf{M})^{coinv}$  lies in the torsion submodule of $\mathcal{S}_q(M)$.

\begin{corollary} The Kauffman-bracket skein module $\mathcal{S}^{rat}_q(M)$ is isomorphic to the quantum character variety $\Char_q^{\SL_2, rat}(\mathbf{M}):=(\Rep_q^{\SL_2, rat}(\mathbf{M}))^{coinv}$ while working over the field $K_{\SL_2}$.
\end{corollary}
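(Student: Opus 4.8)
The plan is to establish Theorem~\ref{theorem_surjectivity}; the displayed Corollary is then immediate, since by Theorem~\ref{theorem_SkeinQRep} (equivalently Corollary~\ref{coro_RepSkein}) the coinvariants $\mathcal{S}_q^{rat}(\mathbf{M})^{coinv}$ are identified with Habiro's $\Char_q^{\SL_2,rat}(\mathbf{M})$. As a preliminary I would record that the image of $\iota_*$ is exactly the $k$-span $F_0$ of classes $[T]$ of framed links $T\subset M$ disjoint from $\mathbb{D}_M$: such a $T$ is a state-free tangle in $\mathbf{M}$ with $\iota_*[T]=[T]$, and $\Delta_c^R[T]=[T]\otimes 1$, so $F_0\subseteq\mathcal{S}_q(\mathbf{M})^{coinv}$. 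Thus the theorem reduces to proving $\mathcal{S}_q(\mathbf{M})^{coinv}=F_0$ (surjectivity, over $k$) together with injectivity of $\iota_*\otimes_k K_{\SL_2}$.

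For surjectivity I would use the exhaustive filtration $F_0\subseteq F_1\subseteq\cdots$ of $\mathcal{S}_q(\mathbf{M})$ by the submodules $F_n$ spanned by stated tangles with at most $2n$ endpoints on $\mathbb{D}_M$ (every endpoint of a tangle in a $1$-marked manifold lies on $\mathbb{D}_M$, and there are evenly many). Since the coaction $\Delta_c^R$ of Section~\ref{sec3} only reshuffles boundary states, each $F_n$ is an $\mathcal{O}_q[\SL_2]$-subcomodule. As $(-)^{coinv}$ is left exact, from $0\to F_{n-1}\to F_n\to F_n/F_{n-1}\to 0$ one gets $F_{n-1}^{coinv}=F_n^{coinv}$ as soon as $(F_n/F_{n-1})^{coinv}=0$; granting this, every coinvariant element, lying in some $F_n$, lies in $F_n^{coinv}=F_0^{coinv}=F_0$. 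To see $(F_n/F_{n-1})^{coinv}=0$, isotope a representative so that near $\mathbb{D}_M$ it consists of $2n$ parallel vertical strands; using Theorem~\ref{theorem_bigon} one identifies the coaction on $F_n/F_{n-1}$ with the tautological $\mathcal{O}_q[\SL_2]$-coaction on $V^{\otimes 2n}$ acting on the boundary states, $V$ being the standard comodule carried by one strand. The coinvariants of $V^{\otimes 2n}$ are spanned by the Temperley--Lieb cup tensors; but a boundary configuration carrying such a cup state is, by the height-curve/cup relation of Definition~\ref{def_skein}, reducible to a tangle with strictly fewer endpoints, hence lies in $F_{n-1}$, so it vanishes in the quotient. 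Over $K_{\SL_2}$ this is clean by semisimplicity of $\overline{\mathcal{C}_q^{\SL_2,rat}}$; over $k$ it additionally uses that the $U_q\mathfrak{sl}_2$-invariants of $V^{\otimes 2n}$ form a saturated $k$-submodule spanned by cup tensors (integral Temperley--Lieb calculus).

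For the rational statement I would invoke Lemma~\ref{lemma_cocompletion}: $\overline{\mathcal{C}_q^{\SL_2,rat}}=\mathcal{O}_q\SL_2^{rat}-\RComod$ is semisimple, so $\mathcal{O}_q\SL_2^{rat}$ is cosemisimple and admits a normalized two-sided Haar integral $\mathbf{h}\colon\mathcal{O}_q\SL_2^{rat}\to K_{\SL_2}$ with $\mathbf{h}(1)=1$. Then $p:=(\mathrm{id}\otimes\mathbf{h})\circ\Delta_c^R$ is a $K_{\SL_2}$-linear projector of $\mathcal{S}_q^{rat}(\mathbf{M})$ onto $\mathcal{S}_q^{rat}(\mathbf{M})^{coinv}$, the identity on coinvariants. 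Since $\mathbf{h}$ on a product of matrix coefficients of $V^{\otimes k}$ is the projection onto the trivial isotypic part, realized by cup diagrams, $p$ has a topological description: it replaces the $2n$ strand-ends at $\mathbb{D}_M$ by the $\mathbf{h}$-weighted sum of Temperley--Lieb cups, and these cups push off $\mathbb{D}_M$ into $M$. This is meant to define a $K_{\SL_2}$-linear capping map $r\colon\mathcal{S}_q^{rat}(\mathbf{M})\to\mathcal{S}_q^{rat}(M)$ with $\iota_*\circ r=p$. On the other hand $r\circ\iota_*=\mathrm{id}$, because a link in $M$ missing $\mathbb{D}_M$ has nothing to cap and $\mathbf{h}(1)=1$. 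Hence $\iota_*$ is split injective, and combined with surjectivity it is an isomorphism onto $\mathcal{S}_q^{rat}(\mathbf{M})^{coinv}$.

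I expect the genuine obstacle to be the well-definedness of $r$: one must verify that the capping formula respects the defining relations of Definition~\ref{def_skein}, in particular the height-curve relation, the relation equating a capped stated arc near $\mathbb{D}_M$ with $\delta_{i,j}$, and the isotopies that permute boundary heights on $\mathbb{D}_M$. This should reduce to the bi-invariance $(\mathbf{h}\otimes\mathrm{id})\circ\Delta(x)=\mathbf{h}(x)1=(\mathrm{id}\otimes\mathbf{h})\circ\Delta(x)$ and the precise form of $\mathbf{h}$ on matrix coefficients, transported through the half-coribbon isomorphism of Theorem~\ref{theorem_bigon} --- this is where the coribbon structure of $\mathcal{O}_q\SL_2$ really enters. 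The secondary, purely integral point (saturatedness of the $V^{\otimes 2n}$-invariants over $k$) is what confines the surjectivity argument's cleanest form to the field $K_{\SL_2}$, and should be handled by the standard integral skein/Temperley--Lieb bookkeeping.
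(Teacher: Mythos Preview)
Your surjectivity argument via the filtration $F_n$ by number of boundary points is essentially the same strategy the paper uses, though the paper packages it through the auxiliary ``skein quantum plane'' $\mathcal{S}_q[\mathbb{A}^2]$ (Definition~\ref{def_skein_qplane}, Lemma~\ref{lemma_coinv2}) and then transports the result to $\mathcal{S}_q(\mathbf{M})$ via a tensor product over the Temperley--Lieb category. The advantage of that packaging is that the coinvariant computation over the integral ring $k$ (your ``saturated submodule'' concern) is done once and for all in the quantum plane by a direct PBW/leading-term argument (Lemma~\ref{lemma_coinv1}), rather than by appealing to integral $\TL$ calculus inside the manifold.

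Your injectivity argument is genuinely different. The paper does not construct a retraction at all: instead it shows (Lemma~\ref{lemma_exact}) that over $K_{\SL_2}$ the functor $\bullet\otimes_{\TL}F_M$ is \emph{exact}, because $\overline{\mathcal{C}_q^{\SL_2,rat}}$ is semisimple and hence every object is flat. Tensoring the left exact sequence $0\to F_{\mathbb{A}}^{(0)}\to F_{\mathbb{A}}\to \mathcal{S}_q(\mathbb{B})\otimes F_{\mathbb{A}}$ with $F_M$ then stays left exact, and identifying the two ends with $\mathcal{S}_q^{rat}(M)$ and $\mathcal{S}_q^{rat}(\mathbf{M})$ gives injectivity of $\iota_*$ directly. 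Both your approach and the paper's ultimately rest on cosemisimplicity of $\mathcal{O}_q\SL_2^{rat}$, but the paper exploits it as flatness while you exploit it as existence of a Haar integral.

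Your main gap --- the well-definedness of the topological capping map $r$ --- is real, and you are right to flag it. The problem is that $r$ must respect not just interior skein relations but all the boundary relations of Definition~\ref{def_skein} and the height-exchange identities; verifying this from bi-invariance of $\mathbf{h}$ alone is not automatic, since the Haar-weighted cup insertion interacts with the specific half-twist normalizations $\hT,\hT_*$ appearing in those relations. Making this precise essentially forces you to realize $r$ through $\TL$-morphisms (the trivial-isotypic projector $e_n\in\TL(2n,2n)$ followed by caps), at which point well-definedness becomes the statement that gluing a $\TL$-morphism at the boundary is well-defined on skein classes --- i.e.\ exactly the $\TL$-module structure on $F_M$ that underlies the paper's proof. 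So your route is valid in principle but, once made rigorous, largely reconstructs the paper's machinery; the paper's flatness argument is the shorter path to the same destination.
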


The reason we need to work over the field $K_{\SL_2}$ instead of the ring $k_{\SL_2}$ is that the category $\overline{\mathcal{C}^{\SL_2, rat}_q}$ is semi-simple, so its elements are flat.

\begin{definition} 
\begin{enumerate}
\item The \textit{quantum plane} $\mathcal{O}_q[\mathbb{A}^2]$ is the quotient of the (non commutative) $k$-algebra freely generated by two generators $x$ and $y$ by the relation $xy=q^{-1}yx$.
\item Define a left comodule map $\Delta^L: \mathcal{O}_q[\mathbb{A}^2] \to \mathcal{O}_q[\SL_2] \otimes \mathcal{O}_q[\mathbb{A}^2]$ by the formula:
$$ \Delta^L \begin{pmatrix} x \\ y \end{pmatrix} := \begin{pmatrix} a & b \\ c & d \end{pmatrix} \otimes \begin{pmatrix} x \\ y \end{pmatrix} = \begin{pmatrix} a\otimes x + b\otimes y \\  c\otimes x + d\otimes y \end{pmatrix}.$$
\item The quantum plane is graded $\mathcal{O}_q[\mathbb{A}^2]= \oplus_{n\geq 0} \mathcal{O}_q[\mathbb{A}^2]^{(n)}$ where $\mathcal{O}_q[\mathbb{A}^2]^{(n)}= \Span \left( x^i y^j, i+j=n \right)$ and clearly $\mathcal{O}_q[\mathbb{A}^2]^{(n)} \cdot \mathcal{O}_q[\mathbb{A}^2]^{(m)}\subset \mathcal{O}_q[\mathbb{A}^2]^{(n+m)}$. The comodule map $\Delta^L$ restricts to comodule maps $\Delta^L_{(n)}:  \mathcal{O}_q[\mathbb{A}^2]^{(n)} \to \mathcal{O}_q[\SL_2] \otimes \mathcal{O}_q[\mathbb{A}^2]^{(n)}$.
\end{enumerate}
\end{definition}

\begin{lemma}\label{lemma_coinv1}
The set of coinvariant vectors $\mathcal{O}_q[\mathbb{A}^2]^{\coinv}$ is the set $\mathcal{O}_q[\mathbb{A}^2]^{(0)} \cong k$ of scalars.
\end{lemma}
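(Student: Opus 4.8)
The plan is to reduce the statement to a computation in a single homogeneous degree and then to a linear‑independence fact about a PBW basis of $\mathcal{O}_q[\SL_2]$. First I would record the elementary structure: $\mathcal{O}_q[\mathbb{A}^2]$ is a free $k$-module with basis $\{x^iy^j : i,j\geq 0\}$, and $\Delta^L$ extends multiplicatively to a comodule‑algebra structure — the only thing to check for this is that $\Delta^L(x)\Delta^L(y)=q^{-1}\Delta^L(y)\Delta^L(x)$ in $\mathcal{O}_q[\SL_2]\otimes\mathcal{O}_q[\mathbb{A}^2]$, which is a one‑line verification from $ac=q^{-1}ca$, $db=qbd$, $bc=cb$, $ad=1+q^{-1}bc$, $da=1+qbc$. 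Since $\Delta^L$ preserves the grading (it restricts to $\Delta^L_{(n)}$ on $\mathcal{O}_q[\mathbb{A}^2]^{(n)}$), a coinvariant vector is a finite sum of its homogeneous components, each of which is again coinvariant, so $\mathcal{O}_q[\mathbb{A}^2]^{\coinv}=\bigoplus_{n\geq 0}\bigl(\mathcal{O}_q[\mathbb{A}^2]^{(n)}\bigr)^{\coinv}$. The case $n=0$ is trivial because $\Delta^L(1)=1\otimes 1$, so it suffices to prove $\bigl(\mathcal{O}_q[\mathbb{A}^2]^{(n)}\bigr)^{\coinv}=0$ for every $n\geq 1$.

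Next I would isolate one ``matrix coefficient'' of the coaction. Writing elements of $\mathcal{O}_q[\SL_2]\otimes\mathcal{O}_q[\mathbb{A}^2]$ uniquely as $\sum P_{ij}\otimes x^iy^j$ with $P_{ij}\in\mathcal{O}_q[\SL_2]$, I compute the component along $x^n$. Since $\Delta^L$ is an algebra map, $\Delta^L(x^iy^j)=(a\otimes x+b\otimes y)^i(c\otimes x+d\otimes y)^j$; in the expansion every term has, as its second tensor factor, a monomial of total degree $i+j=n$ whose $x$-degree equals the number of factors from which the $x$-summand was chosen, and since $xy=q^{-1}yx$ preserves the $x$-degree, this second factor is $x^n$ precisely for the unique term $(a\otimes x)^i(c\otimes x)^j=a^ic^j\otimes x^n$. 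Hence for $v=\sum_{i+j=n}\lambda_{ij}x^iy^j$ with $\lambda_{ij}\in k$, the component of $\Delta^L(v)$ along $x^n$ is $\sum_{i+j=n}\lambda_{ij}\,a^ic^j$, while the component of $1\otimes v$ along $x^n$ is $\lambda_{n0}\cdot 1$.

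Now I would conclude using that the ordered monomials $\{a^ic^j : i,j\geq 0\}$ form part of a standard PBW $k$-basis of $\mathcal{O}_q[\SL_2]$ (for instance the basis $\{a^ib^jc^k\}\cup\{b^jc^kd^l : l\geq 1\}$), and that $1=a^0c^0$ is a basis element distinct from $a^ic^j$ for every $(i,j)$ with $i+j=n\geq 1$. The coinvariance condition $\Delta^L(v)=1\otimes v$ then forces $\sum_{i+j=n}\lambda_{ij}\,a^ic^j=\lambda_{n0}\cdot 1$, i.e. a vanishing $k$-linear relation among the $n+2$ pairwise distinct basis elements $1,a^n,a^{n-1}c,\ldots,c^n$; therefore all $\lambda_{ij}=0$ and $v=0$. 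This yields $\bigl(\mathcal{O}_q[\mathbb{A}^2]^{(n)}\bigr)^{\coinv}=0$ for $n\geq 1$, and combined with the grading decomposition, $\mathcal{O}_q[\mathbb{A}^2]^{\coinv}=\mathcal{O}_q[\mathbb{A}^2]^{(0)}\cong k$.

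There is no real obstacle: the argument is a short direct computation. The one point worth a remark is that it works over the ring $k=\mathbb{Z}[q^{\pm 1/4}]$ rather than over a field precisely because it rests on a basis/linear‑independence statement (the PBW basis of $\mathcal{O}_q[\SL_2]$) and not on semisimplicity of the comodule category. Alternatively one could identify $\mathcal{O}_q[\mathbb{A}^2]^{(n)}$ with the quantum symmetric power $\mathrm{Sym}^n$ of the standard comodule, i.e. with the simple object $V_n$, and invoke $\Hom(V_0,V_n)=0$ for $n\geq 1$; but the computational route above is self‑contained and will reappear as a model for the analogous statements later in the paper.
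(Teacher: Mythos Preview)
Your proof is correct and follows essentially the same approach as the paper: both arguments rest on the PBW basis of $\mathcal{O}_q[\SL_2]$ and the observation that the contribution of $\Delta^L(x^iy^j)$ to the top monomial in the second tensor factor is $a^ic^j$. The only cosmetic difference is that you first use the grading to reduce to a single homogeneous degree and then isolate the $x^n$-coefficient, whereas the paper phrases the same step as a leading-term comparison in a lexicographic order on $\mathbb{N}^6$; your formulation is arguably cleaner since it makes the use of the grading explicit.
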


\begin{proof}
The algebras $\mathcal{O}_q[\mathbb{A}^2]$ and $ \mathcal{O}_q[\SL_2]$ are quadratic and quadratic inhomogeneous respectively and they both satisfy the Koszul condition. An easy application of the Diamond lemma for PBW bases implies that they have bases $\mathcal{B}_1:= \{x^iy^j, i,j\geq 0\}$ and $\mathcal{B}_2:= \{a^{n_a}b^{n_b} d^{n_d}, n_a, n_b, n_c \geq 0\} \cup \{ a^{n_a}c^{n_c}d^{n_d}, n_a, n_c, n_d\geq 0\}$ respectively (see \cite{Kassel} for details). Therefore  $\mathcal{O}_q[\mathbb{A}^2]\otimes  \mathcal{O}_q[\SL_2]$ has basis $\mathcal{B}:=\mathcal{B}_1 \otimes \mathcal{B}_2$. For $\mathbf{n}=(n_x, n_y, n_a, n_b, n_c, n_d)\in \mathbb{N}^{6}$ (here $\mathbb{N}=\mathbb{Z}^{\geq 0}$), let $z^{\mathbf{n}}:= x^{n_x} y^{n_y} \otimes a^{n_a}b^{n_b} c^{n_c}d^{n_d} \in \mathcal{O}_q[\mathbb{A}^2]\otimes  \mathcal{O}_q[\SL_2]$. Let $\mathcal{N} \subset \mathbb{N}^6$ be the subset of $6$-tuples such that $n_bn_c =0$ so that $\mathcal{B}=\{ z^{\mathbf{n}}, \mathbf{n}\in \mathcal{N} \}$. Equip $\mathcal{N}$ with the lexicographic order $\prec$. For $z=\sum_{\mathbf{n}\in \mathcal{N}} \alpha_{\mathbf{n}} z^{\mathbf{n}} \in  \mathcal{O}_q[\mathbb{A}^2]\otimes  \mathcal{O}_q[\SL_2]$, let $\mathbf{n}_0$ be the biggest index (for $\prec$) such that $\alpha_{\mathbf{n}_0}\neq 0$ and define the \textit{leading term} of $z$ to be 
$$ \lt (z) := \alpha_{\mathbf{n}_0} z^{\mathbf{n_0}}.$$
Let $X:= \sum_{i,j\geq 0} x_{i,j} x^iy^j \in \mathcal{O}_q[\mathbb{A}^2]$ be a coinvariant vector and let $(i_0,j_0) \in \mathbb{N}^2$ be the biggest index (for the lexicographic order) such that $x_{i_0, j_0}\neq 0$. By definition, $X$ is coinvariant means that 
$$ \sum_{i,j\geq 0} x_{i,j} 1 \otimes x^i y^j  = \sum_{i,j \geq 0} x_{i,j} (a\otimes x + b\otimes y)^i (c\otimes x + d \otimes y)^j.$$
Taking the leading terms of each side of this equality, and using the $q$-binomial formula (\cite[Proposition $IV.2.2$]{Kassel}) we obtain the equality:
$$ x_{i_0, j_0} 1\otimes x^{i_0}y^{j_0} = x_{i_0, j_0} a^{i_0}c^{j_0} \otimes x^{i_0}y^{j_0}.$$
Therefore $(i_0,j_0)=(0,0)$ and $X$ is scalar.
\end{proof}

\begin{definition}\label{def_skein_qplane}
\begin{enumerate}
\item Recall that $\mathbb{B}$ is a ball with two boundary discs $a_L$ and $a_R$. For $T$ a tangle in $\mathbb{B}$ a \textit{left state} is a map $s^L : T\cap a_L \to V$. 
 The \textit{skein quantum plane} $\mathcal{S}_q[\mathbb{A}^2]$ is the quotient of the $k$-module freely generated by isotopy classes of left stated tangles $(T,s^L)$ in $\mathbb{B}$ by the ideal generated by the skein relations of Definition \ref{def_skein} applied either in the interior of $\mathbb{B}$ or in a ball intersecting $a_L$ (so there are no skein relations along $a_R$). It has an algebra with product given by stacking tangles like in stated skein algebras.
 \item Define a left comodule map $\Delta^L : \mathcal{S}_q(\mathbb{A}^2) \to \mathcal{S}_q(\mathbb{B}) \otimes \mathcal{S}_q(\mathbb{A}^2)$ as follows. For $(T,s^L)$ a left stated tangle choose an arbitrary right stated tangle $s^R$ so that $[T,(s^L, s^R)]$ is an element of $\mathcal{S}_q(\mathbb{B})$. Split $T$ along a disc as $T=T^L\cup T^R$ such that, by definition of the splitting morphism $\Delta=\theta_{a_R\# a'_L}$, one has $\Delta([T,(s^L, s^R)])= \sum_s [T_L, (s^L, s)] \otimes [T_R, (s, s^R)]$. Then define $\Delta^L ([T, s^L]):= \sum_s [T_L, (s^L, s)] \otimes [T_R, s]$. This formula clearly does not depend on the choice of $s^R$ and the fact that $\Delta$ does not depend on the choice of the splitting $T= T_L\cup T_R$ implies that $\Delta^L$ does not depend on this choice either.
 The left comodule $\Delta^L$ is illustrated in Figure \ref{fig_coproduct_qplane}. 
 \item Define a filtration $\mathcal{S}_q[\mathbb{A}^2]=\cup_{n\geq 0} \mathcal{F}^{(n)}$, where $\mathcal{F}^{(n)}:= \Span( [T,s^L], |\partial T\cap a_L| \leq n)$. One has $\mathcal{F}^{(n)}\cdot \mathcal{F}^{(m)}\subset \mathcal{F}^{(n+m)}$ and 
 the comodule map $\Delta^L$ restricts to comodule maps $\Delta^L_{(n)}: \mathcal{F}^{(n)} \to \mathcal{S}_q(\mathbb{B}) \otimes \mathcal{F}^{(n)}$.
  Set $\Gr_0(\mathcal{S}_q[\mathbb{A}^2])= \mathcal{F}_0$ and for $n\geq 1$, write  $\Gr_n (\mathcal{S}_q[\mathbb{A}^2]):=  \quotient{ \mathcal{F}_n}{\mathcal{F}_{n-1}}$. The graded algebra $\Gr (\mathcal{S}_q[\mathbb{A}^2]):= \oplus_{n\geq 0} \Gr_n( \mathcal{S}_q[\mathbb{A}^2])$ receives, by passing to the quotient, a left $\mathcal{S}_q(\mathbb{B})$ comodule structure, so does its graded components $\Gr_n(\mathcal{S}_q[\mathbb{A}^2])$. 
  \end{enumerate}
 \end{definition}
 
 \begin{figure}[!h] 
\centerline{\includegraphics[width=8cm]{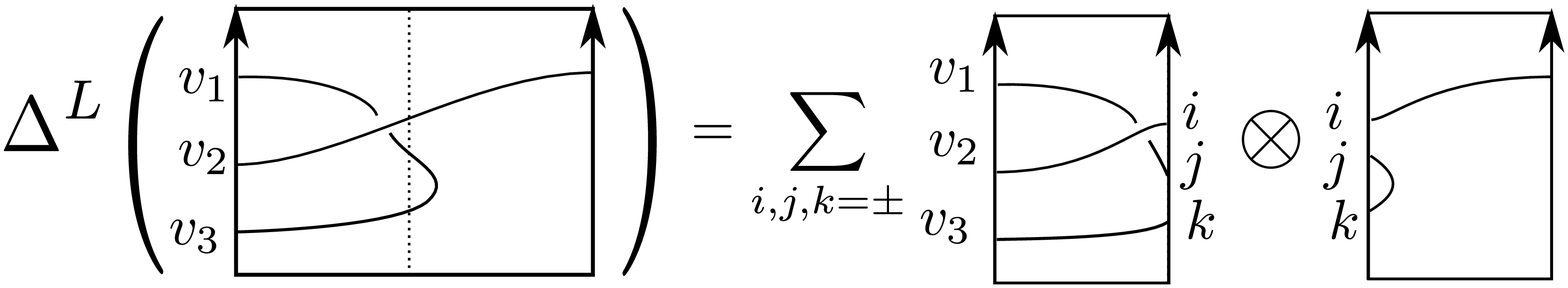} }
\caption{An illustration of the coproduct in the skein quantum plane.} 
\label{fig_coproduct_qplane} 
\end{figure}

 Recall that $\alpha$ is a connected tangle in $\mathbb{B}$ (an arc) connecting $a_L$ to $a_R$. Let $\alpha_+, \alpha_- \in \mathcal{S}_q[\mathbb{A}^2]$ be the classes of the right stated tangles $(\alpha, +)$ and $(\alpha, -)$.
 Recall from Theorem \ref{theorem_bigon} the Hopf algebra isomorphism $f_{\mathbb{B}} : \mathcal{O}_q[\SL_2] \xrightarrow{\cong} \mathcal{S}_q(\mathbb{B})$ sending $a,b,c,d$ to $\alpha_{++}, \alpha_{+-}, \alpha_{-+}, \alpha_{--}$ respectively.
 
  Let $k[\mathbb{N}]=k[X_1,X_2, \ldots, X_n, \ldots]$ be the algebra of polynomials with an infinite number of variables and consider
  $\mathcal{O}_q[\mathbb{A}^2] \star k[\mathbb{N}]$ the free product of $\mathcal{O}_q[\mathbb{A}^2]$ with $k[\mathbb{N}]$. Define a comodule map $\Delta^L : \mathcal{O}_q[\mathbb{A}^2] \star k[\mathbb{N}] \to \mathcal{O}_q[\SL_2] \otimes (\mathcal{O}_q[\mathbb{A}^2] \star k[\mathbb{N}])$ whose restriction to $\mathcal{O}_q[\mathbb{A}^2]$ is the standard coproduct and such that $\Delta^L(X_i)=1\otimes X_i$. Extend the grading to  $\mathcal{O}_q[\mathbb{A}^2] \star k[\mathbb{N}]$ by stating that $X_i$ has degree $0$.
 
 \begin{lemma}\label{lemma_coinv2}
 \begin{enumerate}
 \item There is an  isomorphism of graded algebras $f_{\mathbb{A}^2}: \mathcal{O}_q[\mathbb{A}^2]\star k[\mathbb{N}] \xrightarrow{\cong} \Gr \left( \mathcal{S}_q[\mathbb{A}^2]\right)$ characterized by the fact that $f_{\mathbb{A}^2}(x)= \alpha_+$, $f_{\mathbb{A}^2}(y)=\alpha_-$ and $f(X_i)= \adjustbox{valign=c}{\includegraphics[width=1cm]{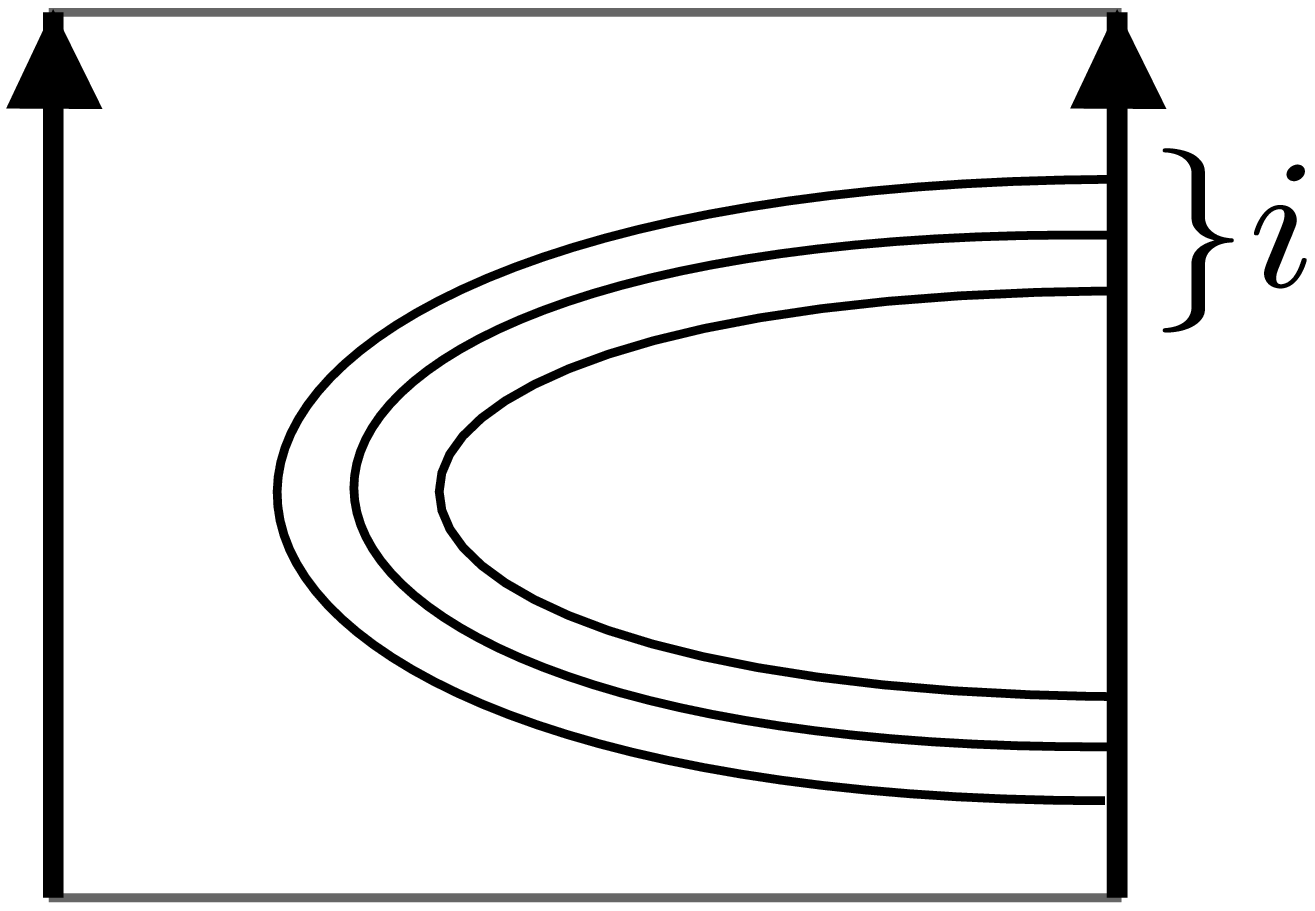}}$.
 \item The following diagram commutes: 
 $$ \begin{tikzcd}
 \mathcal{O}_q[\mathbb{A}^2]\star k[\mathbb{N}] \ar[d, "f_{\mathbb{A}^2}", "\cong"'] \ar[r, "\Delta^L"] &
 \mathcal{O}_q[\SL_2] \otimes (\mathcal{O}_q[\mathbb{A}^2]\star k[\mathbb{N}]) \ar[d, "f_{\mathbb{B}}\otimes f_{\mathbb{A}^2}", "\cong"'] \\
 \Gr(\mathcal{S}_q[\mathbb{A}^2]) \ar[r, "\Delta^L"] &
 \mathcal{S}_q(\mathbb{B}) \otimes \Gr \left( \mathcal{S}_q[\mathbb{A}^2] \right)
\end{tikzcd}
$$
\item 
The submodule $ \mathcal{F}^{(0)}$, spanned by left stated tangles $[T,s^L]$ such that $T\cap a_L=\emptyset$,  is equal to the submodule of  coinvariant vectors of $ \mathcal{S}_q[\mathbb{A}^2] $.
\end{enumerate}
\end{lemma}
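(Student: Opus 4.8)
The plan is to prove the three items in order, deducing (2) and (3) from (1). The guiding principle throughout is that every skein move available in $\mathcal{S}_q[\mathbb{A}^2]$ — resolving a crossing, deleting a contractible loop, simplifying a turnback at $a_L$, reordering two left endpoints — either preserves or strictly decreases the number $|\partial T\cap a_L|$, so the filtration $\mathcal{F}^{(\bullet)}$ behaves well and passing to $\Gr$ kills exactly the lower‑order corrections. \emph{For item (1)}, I would first check that $f_{\mathbb{A}^2}$ is a well‑defined morphism of algebras. Since the source is a free product, this only requires the relations of each factor: the $q$‑Weyl relation $\alpha_+\alpha_-=q^{-1}\alpha_-\alpha_+$ in $\Gr_2$, and the pairwise commutation of the $f(X_i)$ in $\Gr_0=\mathcal{F}^{(0)}$. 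The first follows from the height‑exchange skein relation of Definition \ref{def_skein} applied in a ball meeting $a_L$: exchanging the two left endpoints of the two stacked arcs produces $q^{-1}\alpha_-\alpha_+$ plus a turnback term in which the strands join, and the latter has one fewer pair of left endpoints, hence dies in the associated graded. The commutation of the $f(X_i)$, and more generally the identification of $\mathcal{F}^{(0)}$ with $k[\mathbb{N}]$, comes from a direct analysis of tangles with no $a_L$ endpoints: resolve all crossings, discard contractible loops, and reorganize the remaining $a_R$‑based pieces; no cross‑relations need be checked because the source is a \emph{free} product.

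Next I would establish surjectivity of $f_{\mathbb{A}^2}$ by induction on the filtration degree $n$. Given a crossingless left‑stated tangle with $n$ endpoints on $a_L$, isotope it so that $n$ strands run straight out to $a_L$ at the top, use the $a_L$‑relations to put the left states into the order $+\cdots+-\cdots-$ and to absorb left turnbacks at the cost only of terms in $\mathcal{F}^{(n-1)}$, and observe that what remains below has no left endpoint, hence lies in $k[\mathbb{N}]$ by the previous step. Modulo $\mathcal{F}^{(n-1)}$ this writes the class of the tangle as $f_{\mathbb{A}^2}$ of a monomial $x^iy^j$ times a monomial in the $X_i$, proving surjectivity onto $\Gr(\mathcal{S}_q[\mathbb{A}^2])$. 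Injectivity then follows from a PBW‑type basis theorem for $\mathcal{S}_q[\mathbb{A}^2]$, obtained by reading the straightening procedure above as a confluent rewriting system (a Diamond Lemma argument in the spirit of the known bases for stated skein algebras of surfaces): the normal forms it produces are exactly the images of the monomial $k$‑basis of $\mathcal{O}_q[\mathbb{A}^2]\star k[\mathbb{N}]$, so those images are $k$‑linearly independent. This gives the graded algebra isomorphism of item (1).

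\emph{For item (2)}, one checks the square on the algebra generators $x,y,X_i$: the coaction $\Delta^L(\alpha_\pm)$ is computed by splitting the arc near $a_L$ and matches $\Delta^L(x)$, $\Delta^L(y)$ under $f_{\mathbb{B}}$ (using $f_{\mathbb{B}}(a)=\alpha_{++}$, etc.), while $f(X_i)\in\mathcal{F}^{(0)}$ is coinvariant and is therefore sent to $1\otimes f(X_i)$. Since $f_{\mathbb{A}^2}$, $f_{\mathbb{B}}\otimes f_{\mathbb{A}^2}$ and both maps labelled $\Delta^L$ are morphisms of algebras, agreement on the generators forces agreement on all of $\mathcal{O}_q[\mathbb{A}^2]\star k[\mathbb{N}]$. \emph{For item (3)}, one inclusion is immediate: a left‑stated tangle with no endpoint on $a_L$ splits as $\Delta^L([T,s^L])=1\otimes[T,s^L]$, so $\mathcal{F}^{(0)}\subseteq\mathcal{S}_q[\mathbb{A}^2]^{\coinv}$. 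Conversely, let $0\neq v\in\mathcal{S}_q[\mathbb{A}^2]^{\coinv}$ and let $n$ be minimal with $v\in\mathcal{F}^{(n)}$; since the coaction is filtered and descends to $\Gr$ (Definition \ref{def_skein_qplane}), the class $\bar v\in\Gr_n(\mathcal{S}_q[\mathbb{A}^2])$ is a nonzero coinvariant. Transporting through the isomorphism $f_{\mathbb{A}^2}$ and using item (2), $\bar v$ corresponds to a nonzero coinvariant in the degree‑$n$ part of $\mathcal{O}_q[\mathbb{A}^2]\star k[\mathbb{N}]$. A leading‑term computation in the exact style of the proof of Lemma \ref{lemma_coinv1} — order reduced words lexicographically, apply $\Delta^L$, and compare leading terms via the $q$‑binomial formula — shows every coinvariant of $\mathcal{O}_q[\mathbb{A}^2]\star k[\mathbb{N}]$ has $x,y$‑degree $0$, i.e. lies in $k[\mathbb{N}]$; hence $n=0$ and $v\in\mathcal{F}^{(0)}$.

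The main obstacle is concentrated in item (1): one must pin down the normal form precisely and verify confluence of the rewriting, so that the straightening procedure outputs \emph{exactly} the monomial basis of $\mathcal{O}_q[\mathbb{A}^2]\star k[\mathbb{N}]$ rather than an overcomplete set — this is what makes injectivity a consequence of surjectivity instead of requiring a separate computation. A subsidiary point is the identification of $\mathcal{F}^{(0)}$ with the \emph{commutative} polynomial algebra $k[\mathbb{N}]$, which has to be extracted carefully from the geometry of $a_R$‑based tangles in $\mathbb{B}$. Once (1) is in place, the comodule bookkeeping in (2) and the leading‑term argument in (3) are routine, the latter being essentially the argument of Lemma \ref{lemma_coinv1} applied to the free product.
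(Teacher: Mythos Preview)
Your proposal is correct and follows essentially the same route as the paper: for (1) you verify the $q$-Weyl relation in $\Gr_2$ via the height-exchange skein relation (the turnback correction drops to lower filtration) and then run a Diamond Lemma/confluent rewriting argument to produce the monomial basis, exactly as the paper does by adapting L\^e's argument from \cite{LeStatedSkein}; for (2) you check on generators, as does the paper; for (3) your minimal-$n$ descent to $\Gr$ combined with the Lemma~\ref{lemma_coinv1} argument is the same mechanism the paper uses, phrased slightly more explicitly. The only minor difference is that the paper invokes Lemma~\ref{lemma_coinv1} directly for the free product (since the $X_i$ are coinvariant and the grading is preserved) rather than rerunning the leading-term computation, but this is a matter of presentation rather than strategy.
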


\begin{proof}
$(1)$ The fact that $\alpha_+ \alpha_- = q^{-1} \alpha_- \alpha_+$ follows from the following skein relation: 
$$ \qplanedouble{+}{-}= q^{-1} \qplanedouble{-}{+} + q^{1/4} \heightcurverightdeux \equiv q^{-1} \qplanedouble{-}{+} \pmod{\mathcal{F}_1}. $$
Therefore the graded algebra morphism $f_{\mathbb{A}^2}: \mathcal{O}_q[\mathbb{A}^2]\star k[X] \xrightarrow{\cong} \Gr \left( \mathcal{S}_q[\mathbb{A}^2]\right)$ is well defined. 
The polynomial algebra $k[\mathbb{N}]$ has basis elements $X^{\mathbf{n}}= X_1^{n_1}\ldots X_k^{n_k}\ldots$ for $\mathbf{n}: \mathbb{N} \to \mathbb{N}$ a map with finite support.
Recall the basis $\mathcal{B}_1=\{x^iy^j, i,j\geq 0\}$ of $\mathcal{O}_q[\mathbb{A}^2]$ so that  $\widehat{\mathcal{B}_1}:= \{ b_1 X^{\mathbf{n}_1}\ldots b_kX^{\mathbf{n}_k}, k\geq 0, b_i \in \mathcal{B}_1\}$ is a basis of  $\mathcal{O}_q[\mathbb{A}^2]\star k[\mathbb{N}] $. We need to show that $\mathcal{B}':=f( \widehat{\mathcal{B}_1})$ is a basis of $\Gr\left( \mathcal{S}_q[\mathbb{A}^2]\right)$ in order to prove that $f_{\mathbb{A}^2}$ is an isomorphism. This is equivalent to proving that $\mathcal{B}'$ is a basis of $\mathcal{S}_q[\mathbb{A}^2]$. The proof is a straightforward adaption of L\^e's arguments in the proof of \cite[Theorem $2.8$]{LeStatedSkein}, based on the Diamond lemma,  that we now develop. Identify the ball $\mathbb{B}$ with a thickened disc $\mathbb{D}^2\times I$ and let $\pi: \mathbb{D}^2\times I \to \mathbb{D}^2$ the projection map on $\mathbb{D}^2\times \{0\}$. A tangle $T$ is in generic position if its framing at every point points in the height direction towards $1$ and if the projection $\pi: T\to \pi(T)$ only has transverse double points in the interior of $\mathbb{D}^2$. A \textit{diagram} $D$ of $T$ is then the data to the planar graph $\pi(T)$ together with for each of its double point, the over/under crossing information. A left stated diagram is $(D,s^L)$ with $s^L: D\cap a_L \to \{v_-, v_+\}$. Note that the states are elements of the basis $\{v_+, v_-\}$ instead of arbitrary vectors in $V$.
Fix the orientation $\mathfrak{o}$ of both boundary arcs $a_L, a_R$  corresponding to the arrows in the pictures $ \qplanedouble{+}{-}$. Then a stated diagram defines an element $[D,s^L] \in \mathcal{O}_q[\mathbb{A}^2]$ and $\mathcal{O}_q[\mathbb{A}^2]$ is the quotient of the free $k$-module generated by (planar) isotopy classes of left stated diagrams by the framed Reidemeister moves together with the skein relations. Let $\overline{\mathcal{B}}'$ be the set of left stated diagrams. Then $\mathcal{B}'$ is the set of classes of elements $(D,s^L)$ of  $\overline{\mathcal{B}}'$ such that $(1)$ $D$ does not have any crossing, $(2)$ $D$ does not have any connected component with both endpoints in $a_L$ and $(3)$ the state $s^L$ is $\mathfrak{o}$ increasing, i.e if $p_1, p_2 \in D\cap a_L$ are such that $p_1$ is on top $p_2$ then $s^L(p_1)\geq s^L(p_2)$ (here $v_+>v_-$). Define a binary operation $\to$ on $k[\mathcal{B}']$ as follows. If $[D,s] \in \mathcal{B}'$ and $E'\in k[\mathcal{B}']$ write $D\to E'$ if they are related by one of the following skein manipulation: 

  \begin{align*}
{}& \begin{tikzpicture}[baseline=-0.4ex,scale=0.5,>=stealth]	
\draw [fill=gray!45,gray!45] (-.6,-.6)  rectangle (.6,.6)   ;
\draw[line width=1.2,-] (-0.4,-0.52) -- (.4,.53);
\draw[line width=1.2,-] (0.4,-0.52) -- (0.1,-0.12);
\draw[line width=1.2,-] (-0.1,0.12) -- (-.4,.53);
\end{tikzpicture}
\to q^{1/2}
\begin{tikzpicture}[baseline=-0.4ex,scale=0.5,>=stealth] 
\draw [fill=gray!45,gray!45] (-.6,-.6)  rectangle (.6,.6)   ;
\draw[line width=1.2] (-0.4,-0.52) ..controls +(.3,.5).. (-.4,.53);
\draw[line width=1.2] (0.4,-0.52) ..controls +(-.3,.5).. (.4,.53);
\end{tikzpicture}
+q^{-1/2}
\begin{tikzpicture}[baseline=-0.4ex,scale=0.5,rotate=90]	
\draw [fill=gray!45,gray!45] (-.6,-.6)  rectangle (.6,.6)   ;
\draw[line width=1.2] (-0.4,-0.52) ..controls +(.3,.5).. (-.4,.53);
\draw[line width=1.2] (0.4,-0.52) ..controls +(-.3,.5).. (.4,.53);
\end{tikzpicture}
, \quad 
\begin{tikzpicture}[baseline=-0.4ex,scale=0.5,rotate=90] 
\draw [fill=gray!45,gray!45] (-.6,-.6)  rectangle (.6,.6)   ;
\draw[line width=1.2,black] (0,0)  circle (.4)   ;
\end{tikzpicture}
\to  -(q+q^{-1}) 
\begin{tikzpicture}[baseline=-0.4ex,scale=0.5,rotate=90] 
\draw [fill=gray!45,gray!45] (-.6,-.6)  rectangle (.6,.6)   ;
\end{tikzpicture}
\\
{}&
\begin{tikzpicture}[baseline=-0.4ex,scale=0.5,>=stealth]
\draw [fill=gray!45,gray!45] (-.7,-.75)  rectangle (.4,.75)   ;
\draw[->] (-0.7,-0.75) to (-.7,.75);
\draw[line width=1.2] (-0.7,-0.3) to (-0.3,-.3);
\draw[line width=1.2] (-0.7,0.3) to (-0.3,.3);
\draw[line width=1.15] (-.4,0) ++(-90:.3) arc (-90:90:.3);
\draw (-0.9,0.3) node {\scriptsize{$+$}}; 
\draw (-0.9,-0.3) node {\scriptsize{$+$}}; 
\end{tikzpicture}
\to 
0
 , \quad 
 \begin{tikzpicture}[baseline=-0.4ex,scale=0.5,>=stealth]
\draw [fill=gray!45,gray!45] (-.7,-.75)  rectangle (.4,.75)   ;
\draw[->] (-0.7,-0.75) to (-.7,.75);
\draw[line width=1.2] (-0.7,-0.3) to (-0.3,-.3);
\draw[line width=1.2] (-0.7,0.3) to (-0.3,.3);
\draw[line width=1.15] (-.4,0) ++(-90:.3) arc (-90:90:.3);
\draw (-0.9,0.3) node {\scriptsize{$-$}}; 
\draw (-0.9,-0.3) node {\scriptsize{$-$}}; 
\end{tikzpicture}
\to 
0
, \quad 
\begin{tikzpicture}[baseline=-0.4ex,scale=0.5,>=stealth]
\draw [fill=gray!45,gray!45] (-.7,-.75)  rectangle (.4,.75)   ;
\draw[->] (-0.7,-0.75) to (-.7,.75);
\draw[line width=1.2] (-0.7,-0.3) to (-0.3,-.3);
\draw[line width=1.2] (-0.7,0.3) to (-0.3,.3);
\draw[line width=1.15] (-.4,0) ++(-90:.3) arc (-90:90:.3);
\draw (-0.9,0.3) node {\scriptsize{$+$}}; 
\draw (-0.9,-0.3) node {\scriptsize{$-$}}; 
\end{tikzpicture}
\to 
q^{-1/4}
\begin{tikzpicture}[baseline=-0.4ex,scale=0.5,>=stealth]
\draw [fill=gray!45,gray!45] (-.7,-.75)  rectangle (.4,.75)   ;
\draw[->] (-0.7,-0.75) to (-.7,.75);
\end{tikzpicture}
\\
\mbox{or} \quad &
\begin{tikzpicture}[baseline=-0.4ex,scale=0.5,>=stealth]
\draw [fill=gray!45,gray!45] (-.7,-.75)  rectangle (.4,.75)   ;
\draw[->] (-0.7,-0.75) to (-.7,.75);
\draw[line width=1.2] (-0.7,-0.3) to (0.4,-.3);
\draw[line width=1.2] (-0.7,0.3) to (0.4,.3);
\draw (-0.9,0.3) node {\scriptsize{$-$}}; 
\draw (-0.9,-0.3) node {\scriptsize{$+$}}; 
\end{tikzpicture}
\to q^{-1} 
\begin{tikzpicture}[baseline=-0.4ex,scale=0.5,>=stealth]
\draw [fill=gray!45,gray!45] (-.7,-.75)  rectangle (.4,.75)   ;
\draw[->] (-0.7,-0.75) to (-.7,.75);
\draw[line width=1.2] (-0.7,-0.3) to (0.4,-.3);
\draw[line width=1.2] (-0.7,0.3) to (0.4,.3);
\draw (-0.9,0.3) node {\scriptsize{$+$}}; 
\draw (-0.9,-0.3) node {\scriptsize{$-$}}; 
\end{tikzpicture}
+ q^{1/4} \heightcurveright
.
\end{align*}

More generally, write $E\to E'$ if $E=\sum_i \alpha_i [D_i,s_i]$ and there exists $i_0$ with $\alpha_{i_0}\neq 0$, $[D_{i_0},s_{i_0}] \to E_{i_0}$ related by a skein relation as above and such that $E'=\alpha_{i_0}E_{i_0} + \sum_{i\neq i_0} \alpha_i E_i$. Let $\sim$ be the equivalence relation on $k[\overline{\mathcal{B}}']$ generated by $\to$. Note that two stated diagrams related by framed Reidemeister moves are equivalent for $\sim$ (this follows using the relations given by the Kauffman-bracket skein relations), therefore $\mathcal{S}_q[\mathcal{A}]= \quotient{k[\overline{\mathcal{B}}']}{\sim}$. The arguments in the proof of \cite[Lemma $2.10$]{LeStatedSkein} extend world-by-world and show that $\to$ is terminal and locally confluent, therefore the Diamond lemma implies that the set of initial objects for $\to $ is a basis of $\mathcal{S}_q[\mathcal{A}]$. This set is precisely $\mathcal{B}'$. Since this basis is made of graded elements, its image in $\Gr\left( \mathcal{S}_q[\mathcal{A}]\right)$ is a basis as well. Therefore $f_{\mathcal{A}}$ is an isomorphism. This concludes the proof of $(1)$.

\par $(2)$ It is sufficient to prove the commutativity of the diagram for the generators $x,y, X$ of $\mathcal{O}_q[\mathbb{A}^2]\star k[X]$. This follows from the following computations:
$$ \Delta^L\left( \traitalacondeux{+} \right)=  \left( \traitalacontrois{+}{+}\otimes \traitalacondeux{+}\right) + \left(\traitalacontrois{+}{-}\otimes \traitalacondeux{-} \right); $$
$$ \Delta^L\left( \traitalacondeux{-} \right)=  \left( \traitalacontrois{-}{+}\otimes \traitalacondeux{+}\right) + \left(\traitalacontrois{-}{-}\otimes \traitalacondeux{-}\right); $$
$$ \Delta^L \left( \heightcurverightdeux\right) = 1 \otimes \heightcurverightdeux.$$

\par $(3)$ Since $X$ is coinvariant by definition, by Lemma \ref{lemma_coinv1} the subset of coinvariant vectors of $\mathcal{O}_q[\mathbb{A}^2]\star k[\mathbb{N}]$ is its graded $0$ part, so the same is true for  $\Gr(\mathcal{S}_q[\mathbb{A}^2])$ by $(2)$ and since projection $ \mathcal{S}_q[\mathbb{A}^2]\to  \Gr(\mathcal{S}_q[\mathbb{A}^2])$ sends coinvariant vectors to coinvariant vectors and preserves the grading, the results follows.
\end{proof}

\begin{definition}
\begin{enumerate}
\item 
For  $n\geq 0$ let $[n]$ be the $n$-tuple of framed points $(p_1,\ldots, p_n)$ where $p_i:= (0 , \frac{i}{n}) \in \mathbb{D}^2\subset  \mathbb{R}^2$ with framing pointing towards the height direction. For $n, m\geq 0$ a $[n]-[m]$ tangle is a tangle $T$ in $\mathbb{B}$ such that $\iota^{-1}_{a_R} (\partial T \cap a_R ) = [n]$ and $\iota^{-1}_{a_L}(\partial T \cap a_L) = [m]$. The \textit{Temperley-Lieb category} $\TL$ has objects the non negative integers $n\geq 0$ and the set of morphisms $\TL(n,m)$ is the quotient of $k$-module freely generated by isotopy classes of  $[n]-[m]$ tangles by the ideal generated by the Kauffman-bracket relations: 
  $$  \begin{tikzpicture}[baseline=-0.4ex,scale=0.5,>=stealth]	
\draw [fill=gray!45,gray!45] (-.6,-.6)  rectangle (.6,.6)   ;
\draw[line width=1.2,-] (-0.4,-0.52) -- (.4,.53);
\draw[line width=1.2,-] (0.4,-0.52) -- (0.1,-0.12);
\draw[line width=1.2,-] (-0.1,0.12) -- (-.4,.53);
\end{tikzpicture}
= q^{1/2}
\begin{tikzpicture}[baseline=-0.4ex,scale=0.5,>=stealth] 
\draw [fill=gray!45,gray!45] (-.6,-.6)  rectangle (.6,.6)   ;
\draw[line width=1.2] (-0.4,-0.52) ..controls +(.3,.5).. (-.4,.53);
\draw[line width=1.2] (0.4,-0.52) ..controls +(-.3,.5).. (.4,.53);
\end{tikzpicture}
+q^{-1/2}
\begin{tikzpicture}[baseline=-0.4ex,scale=0.5,rotate=90]	
\draw [fill=gray!45,gray!45] (-.6,-.6)  rectangle (.6,.6)   ;
\draw[line width=1.2] (-0.4,-0.52) ..controls +(.3,.5).. (-.4,.53);
\draw[line width=1.2] (0.4,-0.52) ..controls +(-.3,.5).. (.4,.53);
\end{tikzpicture}
, \quad 
\begin{tikzpicture}[baseline=-0.4ex,scale=0.5,rotate=90] 
\draw [fill=gray!45,gray!45] (-.6,-.6)  rectangle (.6,.6)   ;
\draw[line width=1.2,black] (0,0)  circle (.4)   ;
\end{tikzpicture}
=  -(q+q^{-1}) 
\begin{tikzpicture}[baseline=-0.4ex,scale=0.5,rotate=90] 
\draw [fill=gray!45,gray!45] (-.6,-.6)  rectangle (.6,.6)   ;
\end{tikzpicture}
.$$ 
The composition is obtained by gluing the tangles together.
\item Define a right module $F_M: \TL\to \Mod_k$ by letting  $F_M(n)$ be the quotient of the $k$-module freely generated by isotopy classes of tangles $T$ in $\mathbf{M}$ such that $\iota_M^{-1}(T)=[n]$, by the ideal generated by the Kauffman-bracket relations. When $[T'] \in \TL(n,m)$ is the class of   a $[n]-[m]$ tangle we define $F_M([T]) : F_M(n) \to F_M(m)$ to be the linear map sending the class $[T]\in F_M(n)$  of a tangle in $\mathbf{M}$ to $[T'\cup T]$ obtained by gluing the bigon $\mathbb{B}$ to $\mathbf{M}$ while gluing $\mathbb{D}_M$ to $a_R$. The functor $F_M$ was called the \textit{internal skein module} in \cite{GunninghamJordanSafranov_FinitenessConjecture} and its restriction to marked surfaces  appeared earlier in \cite{BenzviBrochierJordan_FactAlg1, Cooke_FactorisationHomSkein}. 
\item In addition to the filtration of Definition \ref{def_skein_qplane}, the skein quantum plane $\mathcal{S}_q[\mathbb{A}]$ has a graduation $\mathcal{S}_q[\mathbb{A}]= \oplus_{n\geq 0} \mathcal{S}_q[\mathbb{A}]^{<n>}$, where 
$$\mathcal{S}_q[\mathbb{A}]^{<n>}:= \Span\left( [T,s^L], |\partial T \cap a_R| = n \right).$$
So whereas the filtration of Definition \ref{def_skein_qplane} is based on the number of intersection points of $T$ with $a_L$, this graduation counts the number of intersection points of $T$ with $a_R$. 
 Clearly $\mathcal{S}_q[\mathbb{A}]^{<n>} \cdot \mathcal{S}_q[\mathbb{A}]^{<m>}= \mathcal{S}_q[\mathbb{A}]^{<n+m>}$, so we have an algebra graduation. The comodule structure of $\mathcal{S}_q[\mathbb{A}]$ preserves each graded component so induces comodule maps 
$\Delta^L_{<n>} : \mathcal{S}_q[\mathbb{A}]^{<n>} \to \mathcal{S}_q(\mathbb{B}) \otimes \mathcal{S}_q[\mathbb{A}]^{<n>}$. 
\item 
 Define a left module $F_{\mathbb{A}} : \TL^{op}\to \Mod_k$  by sending $n$ to $\mathcal{S}_q[\mathbb{A}]^{<n>}$. The action on morphisms is defined as follows. For  $T' \in \TL(m,n)$ then $F_{\mathbb{A}} (T')$ sends $[T, s^L] \in \mathcal{S}_q[\mathbb{A}]^{<n>}$ to $[T\cup T', s^L]$ where $T\cup T'$ is obtained by gluing the bigon $\mathbb{B}$ where lives $T$ to the bigon $\mathbb{B}'$ where lives $T'$ by gluing $a_R$ to $a'_L$. The comodules maps $\Delta^L_{<n>}$ define a natural morphism $\Delta^L : F_{\mathbb{A}} \to \mathcal{S}_q(\mathbb{B})\otimes F_{\mathbb{A}}$. 
\end{enumerate}
\end{definition}

\begin{lemma}\label{lemma_exact}
The functor $\bullet \otimes_{\TL} F_M : \widehat{\TL} \to \Mod_k$, sending a functor $G: \TL^{op} \to \Mod_k$ to the $k$-module $G\otimes_{\TL}F_M$, is right exact while working over the ring $k_{\SL_2}$ and becomes exact when working over the field $K_{\SL_2}$.
\end{lemma}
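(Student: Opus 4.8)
The plan is to exhibit $\bullet \otimes_{\TL} F_M$ as a composite of functors whose exactness properties are controlled. First I would recall that, by the semisimplicity of the relevant categories over $K_{\SL_2}$, the category $\TL \otimes_k K_{\SL_2}$ is equivalent (via its Cauchy completion) to the category $\mathcal{C}_q^{\SL_2,rat}$ of finite-dimensional modules over $\dot U_q\SL_2 \otimes K_{\SL_2}$; this is exactly the standard identification of the Temperley–Lieb category with the category of $U_q\mathfrak{sl}_2$-tilting modules (over the field, all tilting modules are projective=injective and the category is semisimple). More precisely, $\widehat{\TL}^{rat} := \Fun((\TL^{op}, \Mod_{K_{\SL_2}})$ is the free cocompletion of $\TL$, hence equivalent to $\widehat{\mathcal{C}_q^{\SL_2,rat}} \simeq \overline{\mathcal{C}_q^{\SL_2,rat}} = \mathcal{O}_q\SL_2^{rat}-\RComod$ by Lemma \ref{lemma_cocompletion}. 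Under this chain of equivalences the right $\TL$-module $F_M$ (over $K_{\SL_2}$) corresponds to an object of $\overline{\mathcal{C}_q^{\SL_2,rat}}$, namely to $\mathcal{S}_q^{rat}(\mathbf{M})$ itself, since both are obtained by gluing the bigon $\mathbb{B}$ to $\mathbf{M}$ and the resulting $\TL$-module structure is by construction the internal/external skein module of \cite{GunninghamJordanSafranov_FinitenessConjecture}; and the coend $G \otimes_{\TL} F_M$ translates into the underlying $k$-module (cotensor / coinvariants) functor.

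Once this translation is made, right exactness is automatic: the coend $G \mapsto G \otimes_{\TL} F_M$ is a colimit, hence a left adjoint (it is the composite of the tautological left Kan extension $\widehat{\TL} \to \Mod_k$ along the inclusion of the point with the evaluation against $F_M$), and left adjoints are right exact. This gives the first half of the statement over the ground ring $k_{\SL_2}$ with no further work; the only content there is that a coend of $k$-modules computes a cokernel, which follows from the coequalizer presentation of the coend.

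For exactness over the field, the point is that over $K_{\SL_2}$ the category $\TL^{rat}$ is semisimple, so \emph{every} right $\TL^{rat}$-module is a (possibly infinite) direct sum of representables $\TL(\bullet, n)$, and in particular $F_M \otimes K_{\SL_2}$ is such a direct sum; equivalently, $F_M^{rat}$, viewed as an object of the semisimple category $\overline{\mathcal{C}_q^{\SL_2,rat}}$, is a direct sum of simples $V_\lambda$, hence \emph{flat}. Then $\bullet \otimes_{\TL} F_M^{rat}$ commutes with kernels as well, because tensoring a short exact sequence $0 \to G' \to G \to G'' \to 0$ in $\widehat{\TL}^{rat}$ with a direct sum of representables $\bigoplus_i \TL(\bullet, n_i)$ gives $\bigoplus_i (G'(n_i) \to G(n_i) \to G''(n_i))$ by the Yoneda/co-Yoneda lemma $G \otimes_{\TL} \TL(\bullet, n) \cong G(n)$, and direct sums of exact sequences of $K_{\SL_2}$-vector spaces are exact. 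I would write this as: the functor is the composite of the exact "evaluate on each simple summand" functor with an exact direct-sum functor.

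The main obstacle — and the only step requiring genuine care — is the identification, over $K_{\SL_2}$, of the right $\TL$-module $F_M^{rat}$ with a direct sum of representables (equivalently, the flatness of $F_M^{rat}$ as a $\TL^{rat}$-module, equivalently its semisimplicity as an $\mathcal{O}_q\SL_2^{rat}$-comodule). This is where semisimplicity of $\overline{\mathcal{C}_q^{\SL_2,rat}}$ from Lemma \ref{lemma_cocompletion}(1) is used: $F_M^{rat}$ is a $\mathcal{O}_q\SL_2^{rat}$-comodule (via the gluing of the bigon, exactly as the $\Delta_c^L$ construction above), hence by semisimplicity a direct sum of the $V_\lambda$, and under the equivalence $\overline{\mathcal{C}_q^{\SL_2,rat}} \simeq \widehat{\mathcal{C}_q^{\SL_2,rat}} \simeq \widehat{\TL}^{rat}$ the simple comodules $V_\lambda$ correspond precisely to the representable functors $\TL(\bullet, \lambda)$ (after Cauchy completion, $\TL^{rat}$ having the $V_\lambda$ as its simple objects and being semisimple). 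Granting this, both halves follow from the formal properties of coends recalled in Section \ref{sec_Cat}.
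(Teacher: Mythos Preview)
Your proof is correct and follows essentially the same line as the paper's: both arguments hinge on the equivalence $\widehat{\TL}\simeq\overline{\mathcal{C}_q^{\SL_2}}$ (via Cauchy completion and Lemma~\ref{lemma_cocompletion}) and on the semisimplicity of $\overline{\mathcal{C}_q^{\SL_2,rat}}$ over the field. The paper makes the translation explicit by introducing $G_M:=E'\otimes_{\mathcal{C}_q^{\SL_2}}F_M\in\overline{\mathcal{C}_q^{\SL_2}}$ and then arguing that $\bullet\otimes_{\dot{U}_q\SL_2}G_M$ is right exact in general and exact over $K_{\SL_2}$ because $G_M$, like every object of a semisimple category, is flat. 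Your version is marginally more direct: right exactness comes for free since the coend is a left adjoint (no Morita translation needed), and over the field you decompose $F_M^{rat}$ into representables and invoke co-Yoneda rather than naming the flat object $G_M$. One small notational slip: the representable \emph{right} $\TL$-modules are $\TL(n,\bullet)$, not $\TL(\bullet,n)$, so the co-Yoneda identity should read $G\otimes_{\TL}\TL(n,\bullet)\cong G(n)$; the self-duality of $\TL$ makes this largely cosmetic, but it is worth getting right.
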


\begin{proof}
Let $\iota: \TL\to \mathcal{C}_q^{\SL_2}$ be the ribbon functor sending $[n]$ to $V^{\otimes n}$ (see \cite{Tingley_MinusSign} for details), which identifies $\mathcal{C}_q^{\SL_2}$ with the Cauchy closure of $\TL$. Then the functor $\iota^*: \widehat{\mathcal{C}_q^{\SL_2}} \to \widehat{\TL}$, defined by $\iota^*(G):= G\circ \iota$, is an equivalence of categories. Still denote by $F_M\in \widehat{\mathcal{C}_q^{\SL_2}} $ a fixed lift of $F_M$ by $\iota^*$ (unique up to unique isomorphism). It is sufficient to prove that the functor $\bullet \otimes_{\TL} F_M : \widehat{\mathcal{C}_q^{\SL_2}} \to \Mod_k$ is exact.
Recall from Lemma \ref{lemma_cocompletion} the $\dot{U}_q\SL_2-\mathcal{C}_q^{\SL_2}$ bimodule $E'$ such that $E'\otimes_{\mathcal{C}_q^{\SL_2}} \bullet$ is an equivalence of categories. Write $G_M:= E'\otimes_{\mathcal{C}_q^G} F_M$ so that the following diagram commutes:
\begin{equation}\label{diagram_KiTu}
\begin{tikzcd}
\widehat{\mathcal{C}_q^{\SL_2}} \ar[rrd, "\bullet \otimes_{\TL}F_M"] \ar[dd, "E'\otimes_{\mathcal{C}_q^{\SL_2}} \bullet", "\cong"'] &{}&{}\\
{}&{}& \Mod_k \\
\overline{\mathcal{C}_q^{\SL_2}} \ar[rru, "\bullet {\otimes}_{\dot{U}_q\SL_2} G_M"'] &{}&{}
\end{tikzcd}
\end{equation}
\par Since $\bullet {\otimes}_{\dot{U}_q[\SL_2]} G_M$ is right exact,  the commutativity of Diagram \ref{diagram_KiTu} implies that $\bullet \otimes_{\TL} F_M$ is right exact as well.
By Lemma \ref{lemma_cocompletion}, $\overline{\mathcal{C}_q^{\SL_2, rat}}$ is semi-simple, so all its modules are projective and thus flat. In particular $G_M$ is flat when we work over $K_{\SL_2}$ so the functor $\bullet {\otimes}_{\dot{U}_q[\SL_2]} G_M$ is exact and the commutativity of Diagram \ref{diagram_KiTu} implies that $\bullet \otimes_{\TL} F_M$ is exact as well.
\end{proof}

\begin{remark} It was conjectured in \cite[Remark $2.21$]{GunninghamJordanSafranov_FinitenessConjecture} and proved in \cite[Theorem $1.1$]{Haioun_Sskein_FactAlg} that $G_M$ is isomorphic to the stated skein module $\mathcal{S}^{rat}_q(\mathbf{M})$.
\end{remark}

\begin{proof}[Proof of Theorem \ref{theorem_surjectivity}]
By Lemma \ref{lemma_coinv2}, the submodule of coinvariant vectors of $F_{\mathbb{A}}(n)= \mathcal{S}_q[\mathbb{A}]^{<n>}$ is the submodule $F^{(0)}_{\mathbb{A}}(n) \subset F_{\mathbb{A}}(n)$ spanned by $[T,s^L]$ where $T\cap a_L = \emptyset$. Let $F^{(0)}_{\mathbb{A}}: \TL^{op} \to \Mod_k$ the associated functor and $i: F^{(0)}_{\mathbb{A}}\hookrightarrow F_{\mathbb{A}}$ the inclusion morphism. 
 We have a left exact sequence in $\widehat{\TL}$: 
$$0\to  F_{\mathbb{A}}^{(0)} \xrightarrow{i} F_{\mathbb{A}} \xrightarrow{ \Delta^L - \eta\otimes \id} \mathcal{S}_q(\mathbb{B})\otimes F_{\mathbb{A}}.$$
By tensoring with $F_{\mathbf{M}}$ we get a sequence:
$$0\to  F_{\mathbb{A}}^{(0)}\otimes_{\TL} F_{\mathbf{M}}  \xrightarrow{i\otimes \id} F_{\mathbb{A}}\otimes_{\TL} F_{\mathbf{M}}  \xrightarrow{ (\Delta^L - \eta\otimes \id)\otimes \id} \mathcal{S}_q(\mathbb{B})\otimes F_{\mathbb{A}}\otimes_{\TL} F_{\mathbf{M}}.$$
 which is exact while working over $K_{\SL_2}$ and only right exact while working over $k_{\SL_2}$ by  Lemma \ref{lemma_exact}.
Define an  isomorphism $f: F_{\mathbb{A}}\otimes_{\TL} F_{\mathbf{M}} \xrightarrow{\cong} \mathcal{S}_q(\mathbf{M}) $ as follows. For $[T, s^L] \in F_{\mathbb{A}}(n)$ and $T' \in F_{\mathbf{M}}(n)$, set $f\left( [ [T,s^L] \otimes [T']]\right):= [T\cup T', s^L]$ where $T\cup T'$ is the tangle obtained by gluing $T$ to $T'$ while gluing $a_R$ to $\mathbb{D}_M$. The inverse morphism $f^{-1}$ is defined by splitting stated tangles in the same manner than in the definition of $\theta_{a_R\# a_L}$. Define an isomorphism $f_0: F_{\mathbb{A}}^{(0)}\otimes_{\TL} F_{\mathbf{M}} \xrightarrow{\cong} \mathcal{S}_q(M)$ in the same manner. The commutativity of the following diagram is a straightforward consequence of the definitions:
$$
\begin{tikzcd}
 0 \ar[r] & F_{\mathbb{A}}^{(0)}\otimes_{\TL} F_{\mathbf{M}}
  \ar[d, "f_0", "\cong"'] \ar[r, "i \otimes \id"] &
   F_{\mathbb{A}}\otimes_{\TL} F_{\mathbf{M}} 
   \ar[d, "f", "\cong"'] \ar[rr, " (\Delta^L - \eta\otimes \id)\otimes \id" ]&{}&
    \mathcal{S}_q(\mathbb{B})\otimes F_{\mathbb{A}}\otimes_{\TL} F_{\mathbf{M}} 
    \ar[d, "\id \otimes f", "\cong"']     \\
    0 \ar[r] &
\mathcal{S}^{(rat)}_q(M) \ar[r, "\iota_*"] &
\mathcal{S}^{(rat)}_q(\mathbf{M}) \ar[rr, "\Delta^L - \eta \otimes \id"] &{}&
\mathcal{S}^{(rat)}_q(\mathbb{B})\otimes \mathcal{S}_q(\mathbf{M})
\end{tikzcd}
$$
So the exactness of the first line when working over $K_{\SL_2}$ (resp. the right exactness of the first line when working over $k_{\SL_2}$)  implies the exactness (resp. right exactness) of the second which concludes the proof.

\end{proof}

\begin{remark}
Identifying locally finite $U_q\mathfrak{sl}_2$ modules with $\mathcal{O}_q[\SL_2]$-comodules, the $n+1$ dimensional irreducible representation $V_n$ of $U_q\mathfrak{sl}_2$ (so $V_1=V$ in our notations) corresponds to the comodule $\mathcal{O}_q[\mathbb{A}]^{(n)}$. It is easy to see that the quotient $\widetilde{\mathcal{S}_q[\mathbb{A}]}$ of 
${\mathcal{S}_q[\mathbb{A}]}$ by the ideal generated by the $\adjustbox{valign=c}{\includegraphics[width=1cm]{TangleXi.eps}}$ is isomorphic to the limit $\lim_{n\geq 0} (\mathcal{O}_q[\mathbb{A}]^{(1)})^{\otimes n}$ and the equality $\Gr( \widetilde{\mathcal{S}_q[\mathbb{A}]})= \oplus_{n\geq 0}\mathcal{O}_q[\mathbb{A}]^{(n)}$ in Lemma \ref{lemma_coinv2} can be reinterpreted dually in terms of $U_q\mathfrak{sl}_2$-modules,   by the equality in $K^0(\mathcal{C}_q^{\SL_2})$:
$$ [(V_{1}) ^{\otimes n}] = [V_{n}] + \mbox{lower terms}, $$
 where "lower terms" is a linear combinations of $[V_i]$ with $i< n$. The latter equality comes from the fact that $[V_n]= S_n([V_1])$ in $K^0(\mathcal{C}_q^{\SL_2})$ (by the quantum Clebsch-Gordan formula) and that the $n$-th Chebyshev polynomial of second species satisfies $S_n(X) = X^n + \mbox{lower terms}$.
\end{remark}

\subsection{Spherical boundary component}

Recall that $k_G= \mathbb{Z}[q^{\pm 1/n}]$ for $n=n_G$ and consider the field of fractions $K_G:=\mathbb{Q}(q^{1/n})$ and the $K_G$ vector spaces $\mathcal{S}_q^{rat}(\mathbf{M}):= \mathcal{S}_q(\mathbf{M})\otimes_{k_{\SL_2}}K_{\SL_2}$ and $\Rep_q^{G,rat}(\mathbf{M}):=\Rep_q^G(\mathbf{M})\otimes_{k_G}K_G$. They both admit a comodule structure over $\mathcal{O}_qG^{rat}:=\mathcal{O}_qG\otimes_{k_G}K_G$. 
The goal of this subsection is to prove the 

\begin{theorem}\label{theorem_spherical}
Let $\mathbf{M} \in \mathcal{M}_{\con}^{(1)}$ be such that the connected component of $\partial M$ containing the boundary disc $\mathbb{D}_M$ is a sphere. Then every element of $\Rep_q^{G, rat}(\mathbf{M})$ are coinvariant, i.e. $\Rep_q^{G, rat}(\mathbf{M})=\Char_q^{G, rat}(\mathbf{M})$. Similarly, every element of $\mathcal{S}^{rat}_q(\mathbf{M})$ are coinvariant, i.e. $\mathcal{S}^{rat}_q(\mathbf{M})=\mathcal{S}^{rat}_q(\mathbf{M})^{coinv}$.
\end{theorem}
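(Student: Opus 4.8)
The plan is to reduce the statement about a general $\mathbf{M}$ with spherical boundary component to a purely algebraic fact about a distinguished element $\mathbf{M}_0$, using the functoriality and monoidality established so far. First I would observe that if the boundary component of $\partial M$ containing $\mathbb{D}_M$ is a sphere $S^2$, then $\mathbb{D}_M$ sits inside $S^2$ and the complement $S^2 \setminus \mathring{\mathbb{D}}_M$ is again a disc. One can therefore find a collar $S^2 \times [0,1] \subset M$ near that boundary component, and the inclusion of this collar together with the base disc exhibits $\mathbf{M}$ as receiving an embedding from (or gluing along) a very simple marked piece: concretely, I would argue that $\mathbf{M}$ is isomorphic (in $\mathcal{M}^{(1)}_{\con}$, possibly after an isotopy) to $\mathbf{M}' \wedge \mathbf{H}_0$ or, better, that there is a marked $3$-manifold $\widehat{M}$ with \emph{no} base disc and $\mathbf{M} \cong \widehat{M}$ with a base disc pushed into a newly created spherical component obtained by removing an open ball --- i.e. $\mathbf{M}$ is the image of the unmarked manifold under the functor $\iota$ of Section on coinvariants, but with the disc on a sphere. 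The key point is that when the base disc lies on a sphere, every bottom tangle can be isotoped off that sphere, so the \emph{bottom tangle picture} in $\mathbf{M}$ does not "see" the marking: $P_n(\mathbf{M})$ is canonically the set of $n$-bottom tangles that avoid the distinguished spherical region, and passing a tangle across $\mathbb{D}_M$ back and forth corresponds to the action of $\mathfrak{bt}$ on the trivial tangle in $\mathbf{H}_0$.

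Making this precise, I would use the Van Kampen machinery: write $\mathbf{M} = \widehat{\mathbf{M}} \cup_{\mathbb{B}^3 \setminus \text{ball}} \mathbf{B}$ where $\mathbf{B}$ is essentially a ball with its base disc on the boundary sphere and $\widehat{\mathbf{M}}$ carries the "rest" of $M$ glued along a disc in $S^2$. Then by Lemma \ref{lemma_monoidal} (monoidality of $P_\bullet$, hence of $\Rep_q^G$ and of $\mathcal{S}_q$ via Corollary \ref{coro_skein_transmutation} and Theorem \ref{theorem_SkeinQRep}) and the fact that $\mathcal{S}_q$ is a left Kan extension, it suffices to prove the coinvariance statement for the base building block, i.e. to show that $\Rep_q^{G,rat}(\mathbf{B})$ and $\mathcal{S}_q^{rat}(\mathbf{B})$ are entirely coinvariant where $\mathbf{B}$ is the ball with base disc on a spherical boundary component with the "rest of the world" attached through a single disc in $S^2$. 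But the rest of the world attached along a disc $\mathbb{D} \subset S^2 \setminus \mathbb{D}_M$ is precisely what the \emph{bigon} and its comodule structure record: the coaction $\Delta_c^L$ on $\mathcal{S}_q(\mathbf{M})$ is implemented by splitting along the base disc $\mathbb{D}_M$, and because $\mathbb{D}_M$ and $\mathbb{D}$ cobound an annulus in $S^2$, sliding a tangle from one side of $\mathbb{D}_M$ to the other is an isotopy --- so every class is fixed by the coaction up to the antipode-type relations of the bigon, which over $K_G$ (semisimplicity) forces it to be coinvariant.

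The cleanest implementation, and the one I would actually write, is via the skein quantum plane computation already done in Lemma \ref{lemma_coinv2}: there we proved that the coinvariants of $\mathcal{S}_q[\mathbb{A}^2]$ (left states) are exactly $\mathcal{F}^{(0)}$, the tangles not touching $a_L$. Dually, I want to show that when $\mathbb{D}_M$ lies on a sphere, \emph{every} tangle class in $\mathcal{S}_q(\mathbf{M})$ can be represented disjointly from a neighbourhood of $\mathbb{D}_M$ in that sphere after an isotopy, so $\mathcal{S}_q(\mathbf{M}) = \iota_*(\mathcal{S}_q(M))$ on the nose. Indeed, given any stated tangle $(T,s)$, its endpoints lie in $\mathring{\mathbb{D}}_M \subset S^2$; push all endpoints together and then, using that $S^2 \setminus \mathbb{D}_M$ is a disc in $\partial M$, use a finger move/light-bulb-type isotopy near the sphere to cancel the endpoints in pairs against caps, exactly the move $\heightcurve$ of Definition \ref{def_skein} read backwards --- this expresses $[T,s]$ as a sum of classes of tangles with strictly fewer endpoints on $\mathbb{D}_M$, inducting down to a tangle with empty boundary, i.e. an element of $\iota_*\mathcal{S}_q(M)$. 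Since the image of $\iota_*$ is manifestly coinvariant (the coaction $\Delta_c^L$ of an endpoint-free tangle is $1 \otimes (\cdot)$), this proves $\mathcal{S}_q^{rat}(\mathbf{M}) = \mathcal{S}_q^{rat}(\mathbf{M})^{coinv}$. The same argument, transported through the isomorphism $\Psi$ of Theorem \ref{theorem1} (equivalently Corollary \ref{coro_RepSkein}), gives the $\Rep_q^{G,rat}$ statement; alternatively one runs it intrinsically by showing $P_n(\mathbf{M})$ with $\mathbb{D}_M$ on a sphere is in the image of $P_\bullet(M) \to P_\bullet(\mathbf{M})$ up to the $\mathfrak{bt}$-action which on $\mathbf{H}_0$ is trivial, so the Kan-extension formula of Lemma \ref{lemma_QFG_QRS} forces full coinvariance.

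The main obstacle I anticipate is making the "push the endpoints off the sphere" step rigorous at the level of the module $\mathcal{S}_q(\mathbf{M})$ rather than just for a single tangle: one must check that the inductive unknotting-of-endpoints procedure is compatible with the skein relations (so that it descends to a well-defined statement $\iota_*$ is surjective), and one must be careful that the isotopy genuinely takes place inside $M$, using that the sphere bounds a collar and that $S^2 \setminus \mathbb{D}_M$ is a disc in $\partial M$ --- this is where connectedness and the spherical hypothesis are really used. Once that geometric reduction is in place, the algebraic half is immediate from the already-established semisimplicity of $\overline{\mathcal{C}_q^{G,rat}}$ and the coinvariance of endpoint-free tangles, so I would budget essentially all the care for the isotopy/induction argument and phrase the rest as a short corollary of Theorem \ref{theorem_surjectivity} and Corollary \ref{coro_RepSkein}.
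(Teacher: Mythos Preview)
Your proposed reduction has a genuine gap at the ``push the endpoints off the sphere'' step. Isotopies of stated tangles must keep $\partial T$ inside $\mathbb{D}_M$, so no isotopy can lower the number of endpoints; and the boundary skein relation $\heightcurve = \sum_i \heightexch{->}{i}{ht_*(i)}$ goes the wrong way (it \emph{creates} a pair of stated endpoints from a cap, it does not annihilate a single stated arc). The cap-evaluation relations $\bigonheightcurve{i}{ht(j)}=\delta_{i,j}$ only apply to arcs that are boundary-parallel, so they do not simplify a non-contractible $1$-bottom tangle $\alpha$. In fact the paper's own Remark~\ref{remark_spherical} shows that your claim is literally false over $k_{\SL_2}$: for such an $\alpha$ one has $(q^4-1)\alpha_{ij}=q(1-q^2)D_{ij}\alpha^0$, so $\alpha_{++}$ is a nonzero element of $\mathcal{S}_q(\mathbf{M})$ that is \emph{not} coinvariant and \emph{not} in the image of $\iota_*$; it only becomes a multiple of a closed curve after inverting $q^4-1$. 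So any correct argument must produce torsion factors and explain why they disappear over $K_G$; your sketch never produces such a factor, and the appeal to semisimplicity at the end does no work because the preceding geometric step is where the argument fails.

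The paper's proof avoids this trap entirely: rather than trying to show $\iota_*$ is surjective, it shows that $V=\Rep_q^{G,rat}(\mathbf{M})$ lies in the M\"uger center of $\overline{\mathcal{C}_q^{G,rat}}$. The spherical hypothesis is used via a single isotopy (Figure~\ref{fig_spherical}): the trivial $1$-bottom tangle $\beta_0$ stacked above any $n$-bottom tangle $\alpha$ is isotopic to $\beta_0$ stacked below $\alpha$ and dragged through the field-goal transform $T$, because $\beta_0$ can be slid all the way around the sphere. Translated through $Q_{B_qG}$, this yields $\epsilon(x)\,v=(\epsilon\otimes\id_V)\circ\tau_V(v\otimes x)$ for all $x\in B_qG$, $v\in V$, which is exactly the hypothesis of Lemma~\ref{lemma_spherical}; triviality of the M\"uger center over $K_G$ then forces $V$ to be a sum of trivial comodules. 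This is where working over the field is genuinely used, and it replaces your missing endpoint-reduction induction with a clean categorical statement.
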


Together with Theorem \ref{theorem_surjectivity} this implies the 

\begin{corollary}\label{coro_spherical} 
Let $\mathbf{M} \in \mathcal{M}_{\con}^{(1)}$ be such that the connected component of $\partial M$ containing the boundary disc $\mathbb{D}_M$ is a sphere. Then the usual (non stated) skein module of $M$ over $K_{\SL_2}$ is equal to the stated skein module of $\mathbf{M}$ over $K_{\SL_2}$, i.e. the inclusion $i_*: \mathcal{S}^{rat}_q(M)\to \mathcal{S}^{rat}_q(\mathbf{M})$ is an isomorphism.
\end{corollary}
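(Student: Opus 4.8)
The plan is to deduce the theorem from Theorem~\ref{theorem_surjectivity} and then from a vanishing statement inside the Temperley--Lieb module picture used in its proof. First note that the two assertions are equivalent: for $G=\SL_2$ they coincide under the isomorphism $\Rep_q^{\SL_2}\cong\mathcal{S}_q$ of Corollary~\ref{coro_RepSkein}, and for a general connected reductive $G$ the argument below carries over verbatim with $\mathcal{S}_q$, $\mathcal{O}_q[\SL_2]$, $\overline{\mathcal{C}_q^{\SL_2}}$ replaced by $\mathcal{S}_q^G$, $\mathcal{O}_qG$, $\overline{\mathcal{C}_q^G}$ (together with the evident $G$-analogue of Theorem~\ref{theorem_surjectivity}), the whole of Section~\ref{sec3} being set up to extend. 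So assume $G=\SL_2$. Since the image of $\iota_*$ always lies in the coinvariants and, by Theorem~\ref{theorem_surjectivity}, $\iota_*\colon\mathcal{S}^{rat}_q(M)\to\mathcal{S}^{rat}_q(\mathbf{M})^{coinv}$ is an isomorphism, it suffices to prove that $\iota_*\colon\mathcal{S}^{rat}_q(M)\to\mathcal{S}^{rat}_q(\mathbf{M})$ is \emph{surjective onto the entire module}: then $\mathcal{S}^{rat}_q(\mathbf{M})=\mathrm{im}(\iota_*)\subseteq\mathcal{S}^{rat}_q(\mathbf{M})^{coinv}$ forces equality and $\iota_*$ becomes an isomorphism. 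Equivalently, I must show that every class of a stated tangle in $\mathbf{M}$ equals, over $K_{\SL_2}$, a combination of classes of tangles disjoint from $\mathbb{D}_M$.

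To do this I would run the argument through the description obtained while proving Theorem~\ref{theorem_surjectivity}. The isomorphisms $F_{\mathbb{A}}^{rat}\otimes_{\TL}F_M^{rat}\cong\mathcal{S}^{rat}_q(\mathbf{M})$ and $(F_{\mathbb{A}}^{(0)})^{rat}\otimes_{\TL}F_M^{rat}\cong\mathcal{S}^{rat}_q(M)$ identify $\iota_*$ with $i\otimes\mathrm{id}$, where $i\colon F_{\mathbb{A}}^{(0)}\hookrightarrow F_{\mathbb{A}}$ is the inclusion of the sub-left-$\TL$-module of coinvariants (Lemma~\ref{lemma_coinv2}). As $\bullet\otimes_{\TL}F_M^{rat}$ is exact over $K_{\SL_2}$ (Lemma~\ref{lemma_exact}), surjectivity of $\iota_*$ is equivalent to the vanishing
$$\mathrm{coker}(i)\otimes_{\TL}F_M^{rat}=0 .$$
By Lemma~\ref{lemma_coinv2}, $\mathrm{coker}(i)$ is, after passing to associated graded, the part of $\mathcal{O}_q[\mathbb{A}^2]\star k[\mathbb{N}]$ of strictly positive $\mathcal{O}_q[\mathbb{A}^2]$-degree --- roughly, the span of bigon tangles carrying at least one non-removable state on $a_L$. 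So the theorem reduces to the assertion that this positive-degree left $\TL$-module is annihilated, over $K_{\SL_2}$, upon tensoring with the internal skein module $F_M$ of a $3$-manifold whose marked disc lies on a $2$-sphere.

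That vanishing is the crux, and the step I expect to be the main obstacle; it is where the spherical hypothesis enters, through the topological fact that a spherical boundary component lets one \emph{absorb the boundary strands}. Concretely, writing $D=\overline{S\setminus\mathring{\mathbb{D}}_M}$ for the complementary disc, a collar $S\times[0,1]\subset M$ contains a $3$-ball $B_0$ one of whose faces is $\mathbb{D}_M$; since $S\times[0,1]$ is simply connected, one can isotope any stated tangle so that its intersection with $B_0$ becomes, after applying the Kauffman-bracket relations, a combination of tangles each containing a cup at $\mathbb{D}_M$ joining two adjacent endpoints, and each such cup is removed by the boundary relation of Definition~\ref{def_skein} equating it to $\delta_{i,j}$ (together with its half-twisted variant from Remark~\ref{remark_skeinrelations}), decreasing the number of endpoints and producing only Temperley--Lieb-cup coefficients. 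Over $K_{\SL_2}$, where $\overline{\mathcal{C}_q^{\SL_2, rat}}$ is semisimple, this should exhibit $F_M^{rat}$ as a right $\TL$-module supported (in the relevant sense) in $\TL$-degree $0$, so that $\mathrm{coker}(i)\otimes_{\TL}F_M^{rat}=0$ by a co-Yoneda / projectivity argument. Making the isotopy precise --- in particular arranging the cups compatibly with the height orders on $\mathbb{D}_M$, keeping track of the coefficients $\hT,\hT^{-1}$, and handling tangles with no two directly cappable endpoints (where one must first slide a component across the sphere) --- and pinning down the exact $\TL$-module structure of $F_M^{rat}$ that forces the vanishing is the technical heart; the surrounding homological algebra is already available from Theorem~\ref{theorem_surjectivity}.
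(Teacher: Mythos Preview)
Your reduction is sound up to the point where you identify the remaining task as the vanishing
\[
\mathrm{coker}(i)\otimes_{\TL}F_M^{rat}=0,
\]
and you are right that, via the Morita equivalence of Lemma~\ref{lemma_cocompletion}, this vanishing is equivalent to saying that $\mathcal{S}^{rat}_q(\mathbf{M})$ has only trivial $\mathcal{O}_q[\SL_2]$-isotypic components, i.e.\ that every vector is coinvariant. So your detour through the $\TL$-module picture lands you exactly on Theorem~\ref{theorem_spherical}, and the paper simply cites that theorem together with Theorem~\ref{theorem_surjectivity} to conclude.

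The gap is in your sketch of why this vanishing holds. The mechanism you describe --- isotope so that near $\mathbb{D}_M$ one sees a cup joining two adjacent endpoints, then remove it using the boundary relation $\bigonheightcurve{i}{\hT_*(j)}=\delta_{i,j}$ --- would, if it worked, work equally well over $k_{\SL_2}$: those moves never require dividing by anything. But the result is \emph{false} over $k_{\SL_2}$ (see Remark~\ref{remark_spherical}), so this mechanism cannot be correct as stated. What the spherical hypothesis actually gives is different: it says that the trivial arc $\beta_0$ placed below a tangle is isotopic to $\beta_0$ placed above and then pulled around all strands (the field-goal move of Figure~\ref{fig_spherical}). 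Resolving the resulting crossings with the Kauffman bracket produces a relation of the form $(q^{4}-1)\alpha_{ij}=q(1-q^{2})D_{ij}\alpha^{0}$, and only after inverting $q^{4}-1$ does this express a stated arc in terms of closed curves. Your sketch never surfaces this coefficient, which is precisely where passing to $K_{\SL_2}$ is used.

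The paper's argument packages this cleanly: the field-goal isotopy is exactly the identity $(\epsilon\otimes\id_V)\circ\tau_V=\id_V\otimes\epsilon$ of Lemma~\ref{lemma_spherical}, which places $\mathcal{S}^{rat}_q(\mathbf{M})$ in the M\"uger center of $\overline{\mathcal{C}_q^{\SL_2,rat}}$; semisimplicity over $K_{\SL_2}$ then forces the comodule to be trivial. If you want to salvage your approach, the honest thing is to replace the ``create a cup'' step by this field-goal isotopy and track the $(q^{4}-1)$ factor explicitly; but at that point you are essentially reproving Theorem~\ref{theorem_spherical}, and the M\"uger-center formulation is both shorter and makes the role of $K_{\SL_2}$ transparent.
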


Theorem \ref{theorem_spherical} and its proof are very similar and largely inspired (though different) from the work of Gunningham-Jordan-Safronov in \cite[Corollary $4.2$]{GunninghamJordanSafranov_FinitenessConjecture} however our argument, illustrated in Figure \ref{fig_spherical}, is more topological and arguably more enlightening (in \cite{GunninghamJordanSafranov_FinitenessConjecture} no marked $3$-manifold with spherical boundary component is mentioned). 
As explained in Remark \ref{remark_spherical}, Theorem \ref{theorem_spherical} would not hold if we were working over the ring $k_G$ instead of $K_G$. Let $\mathcal{C}_q^{G,rat}=\mathcal{C}_q^G\otimes_{k_G}K_G$ the category of finite dimensional $\mathcal{O}_qG^{rat}$ comodules and $\overline{\mathcal{C}_q^{G,rat}} := \overline{\mathcal{C}_q^G}\otimes_{k_G}K_G$ the category of possibly infinite dimensional comodules. The heart of the proof and the reason why we need to work over the field $K_G$ is the fact that the M\"uger center of $\overline{\mathcal{C}_q^{G,rat}}$ is trivial whereas the M\"uger center of $\overline{\mathcal{C}_q^{G}}$ is not.
\par 
Throughout this subsection, we fix $\mathbf{M}$ satisfying the hypothesis of Theorem \ref{theorem_spherical}. Recall that in the braided quantum group $B_qG:= \int^{W\in \mathcal{C}_q^G} W^* \otimes W \in  \overline{\mathcal{C}_q^G}$, the counit $\epsilon: B_qG \to k$ is defined by $\epsilon([w^*\otimes w]):= \overrightarrow{ev}_W(w^*\otimes w)$. For $V, W \in  \overline{\mathcal{C}_q^G}$, 
consider the map
$$ \tau_{V, W} := V\otimes W^* \otimes W \to W^* \otimes W \otimes V, \quad \tau_{V,W}:= (\id_{W^*}\otimes c_{V, W})(c_{W^*, V}^{-1} \otimes \id_W) =   \adjustbox{valign=c}{\includegraphics[width=1.5cm]{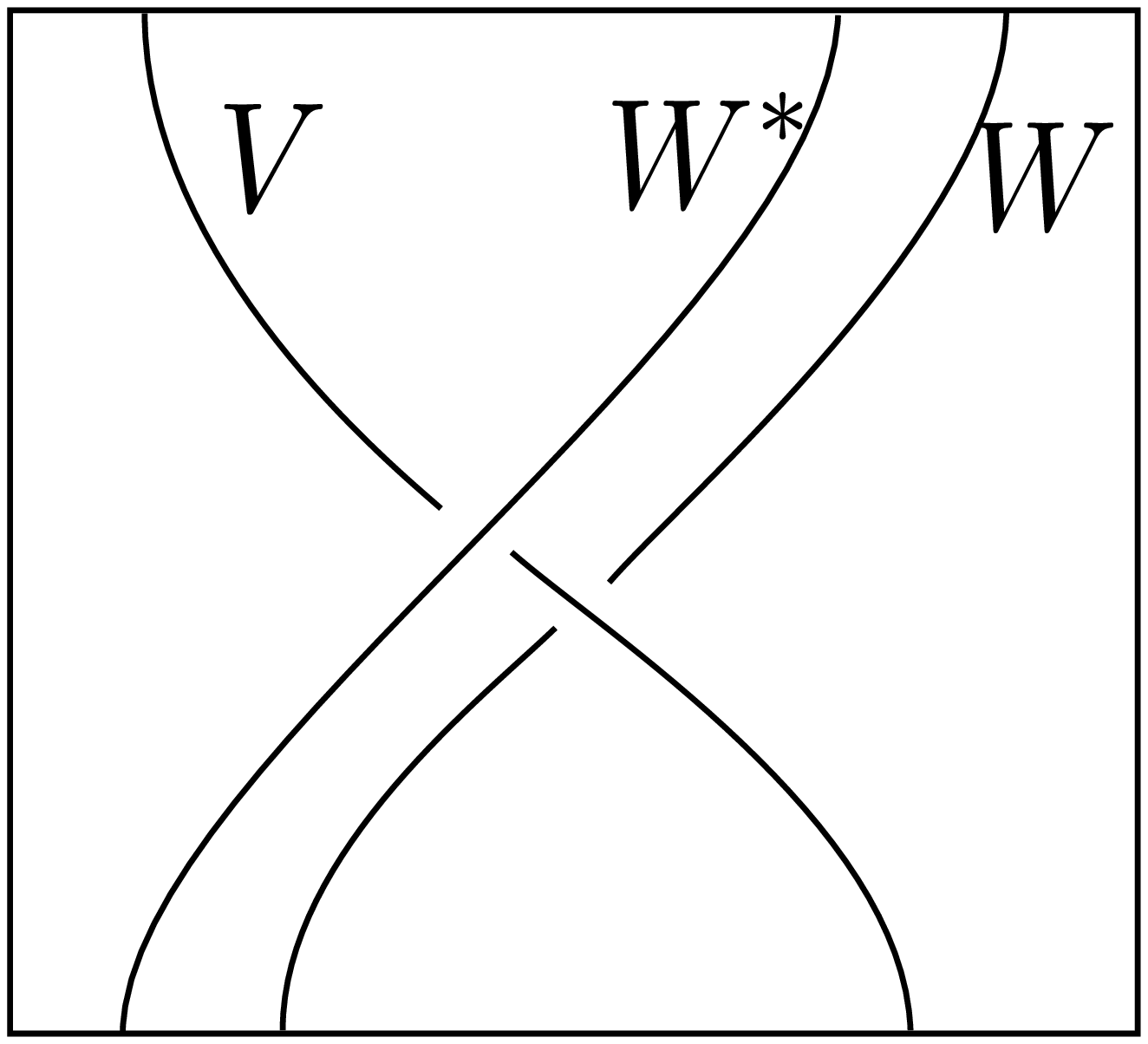}}.$$
Then taking the colimit over $W\in \mathcal{C}_q^G$, one gets a morphism $\tau_V : V\otimes B_qG \to B_qG \otimes V$ named the \textit{field goal transformed} by Lyubashenko (see \cite{Lyubashenko_ModularTransoTensorCat}). The following lemma is essentially a reformulation of \cite[Corollary $1.28$, Proposition $1.37$]{GunninghamJordanSafranov_FinitenessConjecture}. We add the (very simple) proof for the reader convenience.

\begin{lemma}[\cite{GunninghamJordanSafranov_FinitenessConjecture}]\label{lemma_spherical}  Suppose that $V\in  \overline{\mathcal{C}_q^{G, rat}}$ satisfies 
$$ \epsilon(x) v = (\epsilon \otimes \id_V)\circ \tau_V (v\otimes x) , \quad \mbox{ for all }x\in B_qG, v\in V, $$
then $V$ is a direct sum of some copies of the trivial comodule $\mathds{1}\in \overline{\mathcal{C}_q^{G, rat}}$.
\end{lemma}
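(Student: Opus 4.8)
The hypothesis says precisely that the field goal transformation $\tau_V$ acts ``trivially'' on $V$ after composing with the counit of $B_qG$. The first thing I would do is unwind what this means representation-theoretically. Since $\overline{\mathcal{C}_q^{G,rat}}$ is semisimple (Lemma \ref{lemma_cocompletion}), it suffices to show that $V$ has no nontrivial simple summand, i.e.\ that $\Hom(V_\lambda, V) = 0$ for every $\lambda \in X_G^+ \setminus \{0\}$. So I would fix a simple $V_\lambda$ with $\lambda \neq 0$ and a morphism $j : V_\lambda \hookrightarrow V$, and aim for a contradiction.

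The key computation is to evaluate the hypothesis with $v$ in the image of $j$ and with $x$ ranging over a well-chosen piece of $B_qG = \int^{W} W^*\otimes W$. I would pick $x = [w^* \otimes w]$ for $W = V_\mu$ a simple comodule and $w \in V_\mu$, $w^* \in V_\mu^*$, and compute both sides. The left side is $\epsilon([w^*\otimes w]) v = \langle w^*, w\rangle\, v$. The right side, $(\epsilon \otimes \id_V)\circ \tau_V(v \otimes [w^*\otimes w])$, unfolds — using the definition $\tau_{V_\lambda, V_\mu} = (\id \otimes c_{V_\lambda, V_\mu})(c_{V_\mu^*, V_\lambda}^{-1}\otimes \id)$ composed with $\overrightarrow{ev}$ — into the double-braiding (monodromy) operator $c_{V_\mu, V_\lambda}\, c_{V_\lambda, V_\mu}$ contracted against the evaluation/coevaluation of $V_\mu$; concretely it produces the partial trace over $V_\mu$ of the squared braiding acting on $V_\lambda \otimes V_\mu$. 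By the standard formula for the ribbon structure, the $S$-matrix entry $S_{\lambda\mu}$, one gets that the hypothesis forces $\dim_q(V_\mu)\,\operatorname{id}_{V_\lambda}$ (a scalar coming from the trivial double braiding) to equal the operator $(\id_{V_\lambda}\otimes \overrightarrow{\mathrm{ev}}_{V_\mu})(\ldots)$, and after taking the quantum trace over $V_\lambda$ as well this says $S_{\lambda\mu} = \dim_q(V_\lambda)\dim_q(V_\mu)$ for \emph{all} $\mu$ — equivalently, that $V_\lambda$ is transparent, i.e.\ lies in the M\"uger center of $\overline{\mathcal{C}_q^{G,rat}}$.

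The final step is to invoke triviality of the M\"uger center of $\overline{\mathcal{C}_q^{G,rat}}$ over the field $K_G$: the only transparent simple object is $\mathds{1}$, so $\lambda = 0$, contradicting $\lambda \neq 0$. Hence $\Hom(V_\lambda, V) = 0$ for all $\lambda \neq 0$, and semisimplicity gives $V \cong \mathds{1}^{\oplus n}$ for some cardinal $n$. I would remark that this is exactly the point where working over $K_G$ rather than the ring $k_G$ is essential, as flagged in the surrounding text: over $k_G$ there are extra transparent objects (the M\"uger center is larger), and the argument breaks down.

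\textbf{Main obstacle.} The delicate part is the diagrammatic bookkeeping in the second step: correctly identifying $(\epsilon \otimes \id_V)\circ \tau_V$ applied to $V_\lambda \otimes B_qG$ with a partial-quantum-trace of the double braiding, keeping track of which coevaluation/evaluation and which orientation of braiding appears (the $c^{-1}$ in $\tau_{V,W}$ matters), and checking that the colimit over $W$ is compatible with these contractions. Once the hypothesis is correctly translated into ``$V_\lambda$ is transparent,'' the rest is a direct appeal to semisimplicity and the known structure of the M\"uger center. I would be careful to phrase the transparency condition intrinsically (vanishing of the appropriate morphism $V_\lambda \otimes W \to W \otimes V_\lambda$ being the identity braiding for all $W$) so as not to depend on choosing bases.
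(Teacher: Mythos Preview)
Your overall architecture is the same as the paper's: the hypothesis means $V$ lies in the M\"uger center, and over $K_G$ the M\"uger center is trivial. But your execution takes an unnecessary detour and introduces a genuine gap.

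The paper's argument is a four-line chain of equivalences applied to $V$ itself, with no decomposition into simples:
\[
c_{W,V}c_{V,W}=\id_{V\otimes W}\ \Leftrightarrow\ (\id_V\otimes\overrightarrow{ev}_W)(c_{W,V}c_{V,W}\otimes\id_{W^*})=\id_V\otimes\overrightarrow{ev}_W\ \Leftrightarrow\ (\overrightarrow{ev}_W\otimes\id_V)\tau_{V,W}=\id_V\otimes\overrightarrow{ev}_W,
\]
the first $\Leftrightarrow$ using only rigidity (cap off and uncap via coevaluation), the second being a diagrammatic rewriting of the same morphism. Taking the colimit over $W$ turns the last line into exactly your hypothesis. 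No basis, no trace, no $S$-matrix.

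Your version first passes to a simple summand $V_\lambda$ (harmless but unnecessary), and then sums over a basis of $V_\mu$ to obtain a \emph{partial trace} of the double braiding, eventually landing on $S_{\lambda\mu}=\dim_q(V_\lambda)\dim_q(V_\mu)$. That last step is where the gap is: in a non-modular category like $\overline{\mathcal{C}_q^{G,rat}}$, the $S$-matrix identity is strictly weaker than transparency, and the claimed ``equivalently'' is not justified. The fix is simply not to sum. The hypothesis holds for each individual $[w^*\otimes w]$, and since the $w^*$ separate points of $W$, the pointwise equations already force $c_{W,V}c_{V,W}(v\otimes w)=v\otimes w$ for all $v,w$; this is exactly the rigidity step the paper uses. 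Your ``main obstacle'' paragraph shows you sensed this, but the body of the argument should be rewritten to avoid traces entirely.
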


\begin{proof}
Let us show that $V$ belongs to the  M\"uger center $\mathcal{Z}^{M\ddot{u}ger} $ of $\overline{\mathcal{C}_q^{G, rat}}$:
\begin{align*}
 V\in \mathcal{Z}^{M\ddot{u}ger} & \Leftrightarrow c_{W, V} \circ c_{V,W} = \id_{V \otimes W} , \quad \mbox{ for all } W\in  \mathcal{C}_q^G, \\
{} & \Leftrightarrow (\id_V \otimes \overrightarrow{ev}_W) \circ (c_{W, V} c_{V,W} \otimes \id_{W^*}) =  (\id_V \otimes \overrightarrow{ev}_W) , \quad \mbox{ for all } W\in  \mathcal{C}_q^G, \\
{} & \Leftrightarrow (\overrightarrow{ev}_W \otimes \id_V) \tau_{V,W} = \id_V \otimes \overrightarrow{ev}_W , \quad \mbox{ for all } W\in  \mathcal{C}_q^G, \\
{} & \Leftrightarrow (\id_V \otimes \epsilon) = (\epsilon \otimes \id_V) \circ \tau_V \\
{} & \Leftrightarrow \epsilon(x) v = (\epsilon \otimes \id_V)\circ \tau_V (v\otimes x) , \quad \mbox{ for all }x\in B_qG, v\in V.
\end{align*}
We conclude using the  fact that any element in $\mathcal{Z}^{M\ddot{u}ger}$ is a trivial comodule. 

\end{proof}

\begin{proof}[Proof of Theorem \ref{theorem_spherical}]
Write $V= \Rep_q^{G, rat}(\mathbf{M})$ and let us prove that it satisfies the hypothesis of Lemma \ref{lemma_spherical}. The proof is illustrated in Figure \ref{fig_spherical}. Let $x\in B_qG$ and  $v\in \Rep_q^{G,rat}(\mathbf{M})$ be an element of the form $v:= [y\otimes \alpha] $ with $\alpha\in P_n(M)$ a $n$-bottom tangle and $y\in (B_qG)^{\otimes n}$. Let us prove that $ \epsilon(x) v = (\epsilon \otimes \id_V)\circ \tau_V (v\otimes x) $. Let $\beta_0 \in P_1(M)$ be the trivial $1$-bottom tangle made of an arc lying in a small neighborhood of $\mathbb{D}_M$. More precisely, if $\boldsymbol{\eta}\in BT(1,0)$ is the counit of Figure \ref{fig_BTHopfAlg} and $\eta_M$ is the only element of $P_0(M)$ then $\beta_0:= \eta_M\circ \boldsymbol{\eta}$. Let $\beta_0 \cup \alpha \in P_{n+1}(M)$ be the $n+1$ bottom tangle obtained by stacking $\beta_0$ on top (in the height direction) of $\alpha$. Similarly, let $\alpha\cup \beta_0 \in P_{n+1}(M)$ be the bottom tangle obtained by stacking $\beta_0$ on the bottom of $\alpha$. In more precise terms, we set 
$$ \beta_0 \cup \alpha:= \alpha \circ (\boldsymbol{\eta} \wedge \mathds{1} \wedge \ldots \wedge \mathds{1}), \quad \alpha\cup \beta_0 := \alpha \circ ( \mathds{1} \wedge \ldots \wedge \mathds{1} \wedge \boldsymbol{\eta}).$$
So by definition of the counit $\epsilon$, one has 
$$ \epsilon(x) v = [(x\otimes y) \otimes (\beta_0\cup \alpha)] = [(y\otimes x) \otimes (\alpha\cup \beta_0)] \in \Rep_q^G(\mathbf{M}).$$
Let 
$$T:=     \adjustbox{valign=c}{\includegraphics[width=2cm]{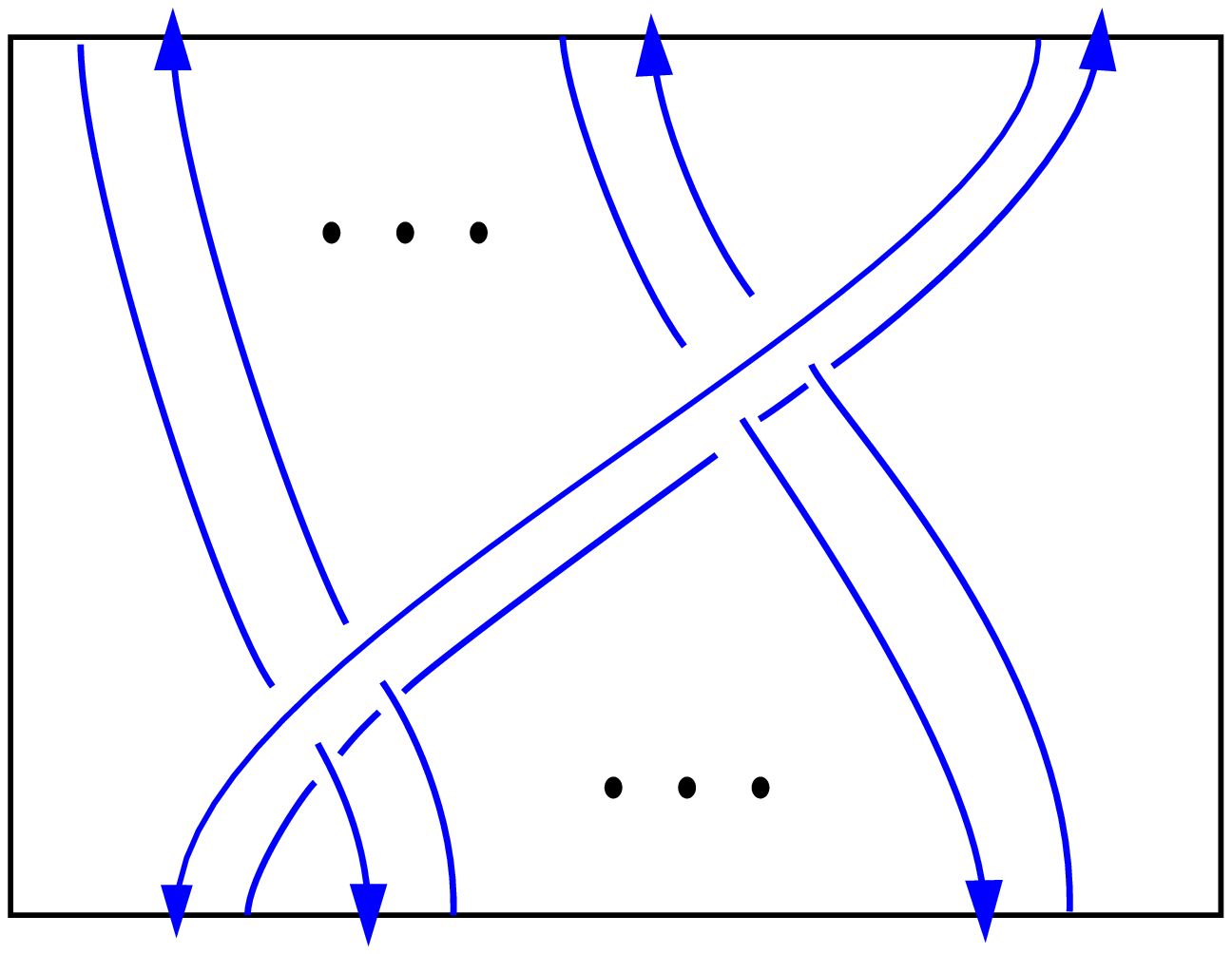}}   \in \mathfrak{bt}(n+1, n+1).$$
Then by definition of the field goal transform, one has 
$$ (\epsilon \otimes \id_V) \circ \tau_V(v\otimes x) = [ (x\otimes y) \otimes \left((\alpha\cup \beta_0)\cdot T\right)] \in \Rep_q^{G, rat}(\mathbf{M}).$$
The equality $ \epsilon(x) v = (\epsilon \otimes \id_V)\circ \tau_V (v\otimes x) $ follows from the fact, illustrated in Figure \ref{fig_spherical}, that the $n+1$ bottom tangles $\beta_0\cup \alpha$ and $(\alpha \cup \beta_0)\cdot T$ are isotopic in $\mathbf{M}$ because the boundary component containing $\mathbb{D}_M$ is a sphere. We conclude using Lemma \ref{lemma_spherical}.

 \begin{figure}[!h] 
\centerline{\includegraphics[width=16cm]{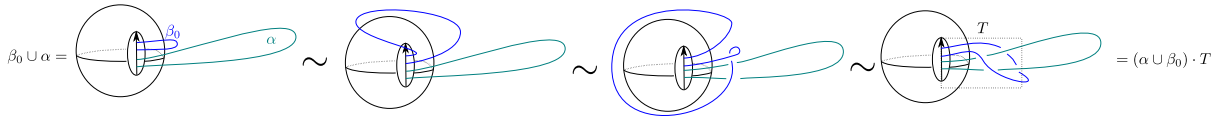} }
\caption{An illustration of the isotopy $\beta_0\cup \alpha \sim (\alpha \cup \beta_0)\cdot T$.} 
\label{fig_spherical} 
\end{figure}

\end{proof}

\begin{remark}\label{remark_spherical}
Note that in Theorem \ref{theorem_spherical}, it is important to work over the field $K_G$ instead of the ring $k_G$. Indeed, as we shall prove in Section \ref{sec_classical}, the module $\mathcal{S}_q(\mathbf{M}) \otimes_{q^{1/4}=1}\mathbb{Z}$, obtained by setting $q^{1/4}=1$, is isomorphic as an $\mathcal{O}[\SL_2]$-comodule to the ring of regular functions of the representation scheme with coaction given by conjugacy. Since the action of $\SL_2$ on the space $\Hom(\pi_1(M), \SL_2)$ by conjugacy is not trivial, this implies that $\mathcal{S}_q(\mathbf{M})$ does not have a trivial coaction, i.e. that the analogue of Theorem \ref{theorem_spherical} over $k_{\SL_2}$ does not hold. Let us illustrate this phenomenon on a concrete example. Let $\alpha$ be a non contractible $1$-bottom tangle in $\mathbf{M}$ and for $i,j=\pm$ let $\alpha_{ij}\in \mathcal{S}_q(\mathbf{M})$ denote the class of the arc $\alpha$ with state $v_i$ at its higher endpoint and $v_j$ at its lower endpoint. Let $\alpha^0\in \mathcal{S}_q(\mathbf{M})$ be the class of the closed arc obtained from $\alpha$ by gluing its two endpoints together in the interior of $M$. 
Write $\begin{pmatrix} D_{++} & D_{+-} \\ D_{-+} & D_{--} \end{pmatrix} := \begin{pmatrix} 0& -A^{5/2} \\ A^{1/2} & 0 \end{pmatrix}$ the matrix coefficients of the half-twist.
A simple skein computation illustrated in Figure \ref{fig_spherical_cex} shows that in $\mathcal{S}_q(\mathbf{M})$ we have the equality in $\mathcal{S}_q(\mathbf{M})$:
\begin{equation}\label{eq_spherical_cool}
(q^4-1) \alpha_{ij} = q(1-q^2) D_{ij} \alpha^0.
\end{equation}
In particular $\alpha_{++}\in \mathcal{S}_q(\mathbf{M})$ is not a coinvariant vector, but since $(q^4-1) \alpha_{++} = 0$, its class in $\mathcal{S}_q^{rat}(\mathbf{M})$ vanishes. In general, Equation \eqref{eq_spherical_cool} shows that the class of any stated arc $\alpha_{ij}$ in $\mathcal{S}_q^{rat}(\mathbf{M})$ is proportional to the coinvariant vector $\alpha^0$, so is coinvariant. The proof of Theorem \ref{theorem_spherical} shows that $\mathcal{S}_q(\mathbf{M})$ belongs to the M\"uger center of $\overline{\mathcal{C}_q^G}$ even though it is not a trivial comodule because of the existence of torsion elements which are not coinvariant.

 \begin{figure}[!h] 
\centerline{\includegraphics[width=16cm]{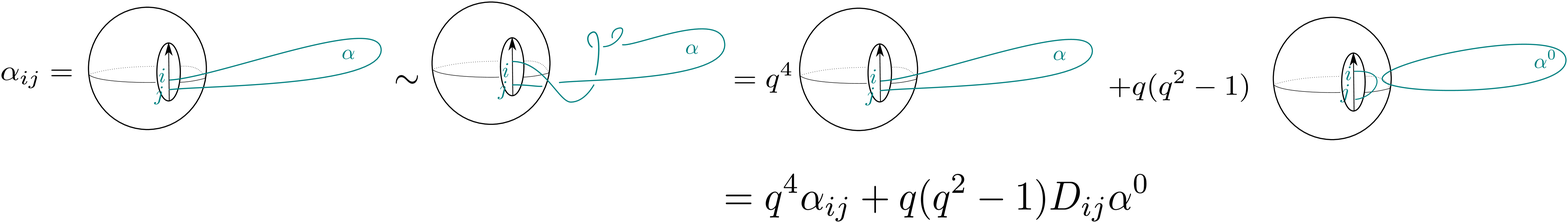} }
\caption{An illustration of Equation \eqref{eq_spherical_cool}. The isotopy $\sim$ is similar to the isotopy of Figure \ref{fig_spherical}. The second equality is obtained by resolving the crossings using the Kauffman-bracket skein relations.} 
\label{fig_spherical_cex} 
\end{figure} 

\end{remark}

\section{Classical limit}\label{sec_classical}
Let $\mathcal{S}_{+1}(\mathbf{M}):= \mathcal{S}_q(\mathbf{M}) \otimes_{q^{1/4}=1}\mathbb{Z}$. The goal of this section is to endow $\mathcal{S}_{+1}(\mathbf{M})$ with a ring structure and to prove that it is isomorphic to the ring of regular functions of the representation scheme $\mathcal{R}_{\SL_2}(\mathbf{M})$.

\subsection{Ring structure on $\mathcal{S}_{+1}(\mathbf{M})$}

Let $\mathcal{O}[\SL_2]:= \mathcal{O}_q[\SL_2]\otimes_{q^{1/4}=1}\mathbb{Z}$: it is the classical integral form of $\SL_2$. Let $\mathcal{S}_{+1}: \mathcal{M}_{\con}^{(1)} \to \RComod_{\mathcal{O}[\SL_2]}$ be the composition of $\mathcal{S}_q$ with the change of scalars $\bullet\otimes_{q^{1/4}=1}\mathbb{Z}: \Mod_k \to \Mod_{\mathbb{Z}}$. 

\begin{lemma}\label{lemma_classic1} $\mathcal{S}_{+1}$ is the left Kan extension of $\restriction{\mathcal{S}_{+1}}{\BT}$ along $\iota: \BT\to \mathcal{M}_{\con}^{(1)}$. \end{lemma}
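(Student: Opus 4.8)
The plan is to imitate the proof of Theorem~\ref{theorem_SkeinQRep} essentially verbatim, since nothing in that argument used the specific value of $q^{1/4}$. More precisely, recall that $\mathcal{S}_{+1}=\mathcal{S}_q\otimes_{q^{1/4}=1}\mathbb{Z}$ is obtained by post-composing $\mathcal{S}_q$ with the base-change functor $(-)\otimes_{q^{1/4}=1}\mathbb{Z}:\Mod_k\to\Mod_{\mathbb{Z}}$, and that this base-change functor is a left adjoint (its right adjoint is restriction of scalars along $\mathbb{Z}\hookrightarrow k$, $n\mapsto n$, using the $k$-module structure on $\mathbb{Z}$ via $q^{1/4}\mapsto 1$). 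By Lemma~\ref{lemma_KAN}, left adjoints preserve left Kan extensions, so from $\mathcal{S}_q=\mathrm{Lan}_{\iota}\bigl(\restriction{\mathcal{S}_q}{\BT}\bigr)$ (Theorem~\ref{theorem_SkeinQRep}) we immediately get
$$\mathcal{S}_{+1}=\bigl((-)\otimes_{q^{1/4}=1}\mathbb{Z}\bigr)\circ\mathrm{Lan}_{\iota}\bigl(\restriction{\mathcal{S}_q}{\BT}\bigr)=\mathrm{Lan}_{\iota}\Bigl(\bigl((-)\otimes_{q^{1/4}=1}\mathbb{Z}\bigr)\circ\restriction{\mathcal{S}_q}{\BT}\Bigr)=\mathrm{Lan}_{\iota}\bigl(\restriction{\mathcal{S}_{+1}}{\BT}\bigr),$$
which is exactly the assertion.

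Alternatively — and this is worth including in case one wants an argument that does not invoke Lemma~\ref{lemma_KAN} directly in this form — one can redo the explicit verification of Theorem~\ref{theorem_SkeinQRep} with $\mathcal{S}_q$ replaced by $\mathcal{S}_{+1}$: define $L(\mathbf{M}):=\int^{n\ge 0}\mathbb{Z}[\Hom_{\mathcal{M}}(\mathbf{H}_n,\mathbf{M})]\otimes\mathcal{S}_{+1}(\mathbf{H}_n)$ and exhibit the mutually inverse natural transformations $F:L\to\mathcal{S}_{+1}$ (built from pushforward $f_*$ of stated tangles) and $G:\mathcal{S}_{+1}\to L$ (built by choosing, for each stated tangle $(T,s)$ in a connected $\mathbf{M}$, a handlebody neighbourhood $f:\mathbf{H}_n\hookrightarrow\mathbf{M}$ containing $T$, and using the filtrancy of the poset $(\mathcal{E},\prec)$ of such neighbourhoods to see the construction is independent of the choice). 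The key point is that the skein relations of Definition~\ref{def_skein} are local, so they are respected under both $F$ and $G$ regardless of the coefficient ring; setting $q^{1/4}=1$ only changes the scalars appearing in the relations, not the locality argument. This verification is word-for-word identical to the proof already given, so I would not reproduce it.

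I would present the proof via the first, short route, with a one-sentence remark that the second route is available. The only thing to check carefully is that $(-)\otimes_{q^{1/4}=1}\mathbb{Z}$ genuinely admits a right adjoint (equivalently, is cocontinuous), so that Lemma~\ref{lemma_KAN} applies: this is the standard extension-of-scalars/restriction-of-scalars adjunction for the ring map $k=\mathbb{Z}[q^{\pm1/4}]\to\mathbb{Z}$, $q^{1/4}\mapsto 1$, and poses no difficulty. There is no real obstacle here; the lemma is a formal consequence of Theorem~\ref{theorem_SkeinQRep} together with Lemma~\ref{lemma_KAN}, and the mild subtlety — that one must know the base-change is a left adjoint, not merely right exact — is resolved in one line.
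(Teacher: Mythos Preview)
Your first route is exactly the paper's proof: the authors invoke Lemma~\ref{lemma_KAN} together with the fact that the base-change functor $\bullet\otimes_{q^{1/4}=1}\mathbb{Z}$ has a right adjoint, and conclude in one line. Your identification of the right adjoint as restriction of scalars along $k\to\mathbb{Z}$ is correct (the paper phrases it as $\Hom_k(\cdot,\mathbb{Z})$), and the alternative direct verification you sketch is unnecessary but also valid.
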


\begin{proof}
The lemma follows from Lemma \ref{lemma_KAN} together with the fact that the functor $\bullet \otimes_{q^{1/4}=1}  \mathbb{Z}$ has right adjoint $\Hom_{k}(\cdot, \mathbb{Z})$.
\end{proof}

Note that, thanks to the skein relation 
$\begin{tikzpicture}[baseline=-0.4ex,scale=0.5,>=stealth]	
\draw [fill=gray!45,gray!45] (-.6,-.6)  rectangle (.6,.6)   ;
\draw[line width=1.2,-] (-0.4,-0.52) -- (.4,.53);
\draw[line width=1.2,-] (0.4,-0.52) -- (0.1,-0.12);
\draw[line width=1.2,-] (-0.1,0.12) -- (-.4,.53);
\end{tikzpicture}
= 
\begin{tikzpicture}[baseline=-0.4ex,scale=0.5,>=stealth] 
\draw [fill=gray!45,gray!45] (-.6,-.6)  rectangle (.6,.6)   ;
\draw[line width=1.2] (-0.4,-0.52) ..controls +(.3,.5).. (-.4,.53);
\draw[line width=1.2] (0.4,-0.52) ..controls +(-.3,.5).. (.4,.53);
\end{tikzpicture}
+
\begin{tikzpicture}[baseline=-0.4ex,scale=0.5,rotate=90]	
\draw [fill=gray!45,gray!45] (-.6,-.6)  rectangle (.6,.6)   ;
\draw[line width=1.2] (-0.4,-0.52) ..controls +(.3,.5).. (-.4,.53);
\draw[line width=1.2] (0.4,-0.52) ..controls +(-.3,.5).. (.4,.53);
\end{tikzpicture}
$, the elements of $\mathcal{S}_{+1}(\mathbf{M})$ are transparent, i.e. the class of a stated tangle in $\mathcal{S}_{+1}(\mathbf{M})$ does not change when we change a crossing $\Crosspos$ to $\Crossneg$.

\begin{definition}
Equip $\mathcal{S}_{+1}(\mathbf{M})$ with the product defined on two classes of stated tangles  $[T_1,s_1], [T_2, s_2] \in \mathcal{S}_{+1}(\mathbf{M})$ by first isotoping $T_1$ and $T_2$ such that they do not intersect and such that the heights of $\partial T_1$ are bigger than the heights of $\partial T_2$ and then defining
$[T_1,s_1] \cdot [T_2, s_2] := [T_1 \cup T_2, (s_1, s_2)]$.
\end{definition}

The transparence skein relation $\Crosspos=\Crossneg$ implies that the class $[T_1 \cup T_2, (s_1, s_2)]$ does not depend on the choice of the representatives for $T_1\cup T_2$. Moreover, thanks to the height exchange relations 
$\heightexch{->}{i}{j}=  \heightexch{<-}{i}{j}$ (proved in \cite[Lemma $2.4.b$]{LeStatedSkein}), the class $[T,s]$ of a stated tangle does not change if we change the height order of the elements of $\partial T$. In particular $\mathcal{S}_{+1}(\mathbf{M})$  is a commutative ring.

\begin{remark}
 If we had chosen $q^{1/4}=-1$, we still would have obtained a commutative ring; it is not difficult to prove that this ring is (non canonically) isomorphic to $\mathcal{S}_{+1}(\mathbf{M})$. However for the more standard choice $q^{1/4}=\sqrt{-1}$, for which $A=-1$, the product is still defined but the stated skein module is no longer commutative (unlike the usual skein module) because the height exchange relation $\heightexch{->}{i}{j}=  \heightexch{<-}{i}{j}$  does not hold anymore. The inconvenient of having chosen $A=+1$ is that, unlike for $A=-1$, the class of a stated tangle depends on the framing (up to a sign) because of the skein relation 
 $\adjustbox{valign=c}{\includegraphics[width=2cm]{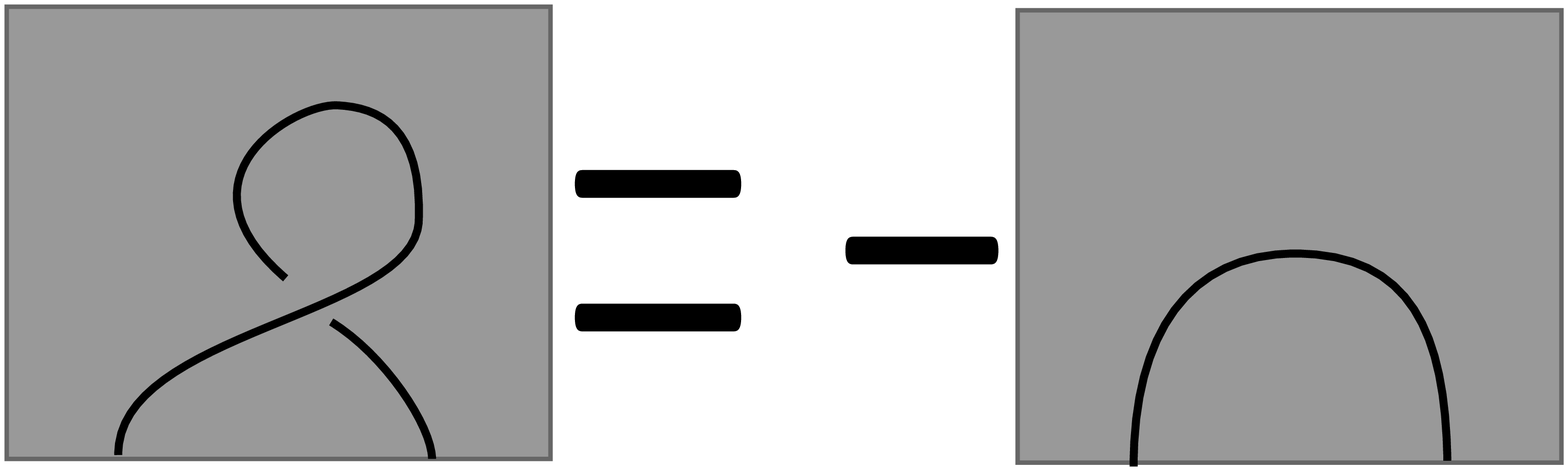}}$. This is the main reason why we will need to consider spin functions in the next subsection.
\end{remark}

\subsection{Spin functions}

Let us define a functor $\tau \in \widehat{\BT}_{\mathbb{Z}}=[\BT^{op}, \Mod_{\mathbb{Z}}]$. For $n\geq 0$  set $\tau(\mathbf{H}_n):=\mathrm{H}^1(\mathbf{H}_n; \mathbb{Z}/2\mathbb{Z})$ and for $\mu: \mathrm{H}_a \to \mathrm{H}_b$ set $\tau(\mu):=\mu^* : \mathrm{H}^1(\mathbf{H}_b; \mathbb{Z}/2\mathbb{Z}) \to \mathrm{H}^1(\mathbf{H}_a; \mathbb{Z}/2\mathbb{Z})$. Let us identify $\mathrm{H}^1(\mathbf{H}_1; \mathbb{Z}/2\mathbb{Z})$ with $\mathbb{Z}/2\mathbb{Z}$.


\begin{definition}\label{def_SpinFunction} For $\mathbf{M} \in \mathcal{M}_{\con}^{(1)}$, a \textit{spin function} is a natural morphism $w: P_M \to \tau$ such that   $w_1\left(\adjustbox{valign=c}{\includegraphics[width=1cm]{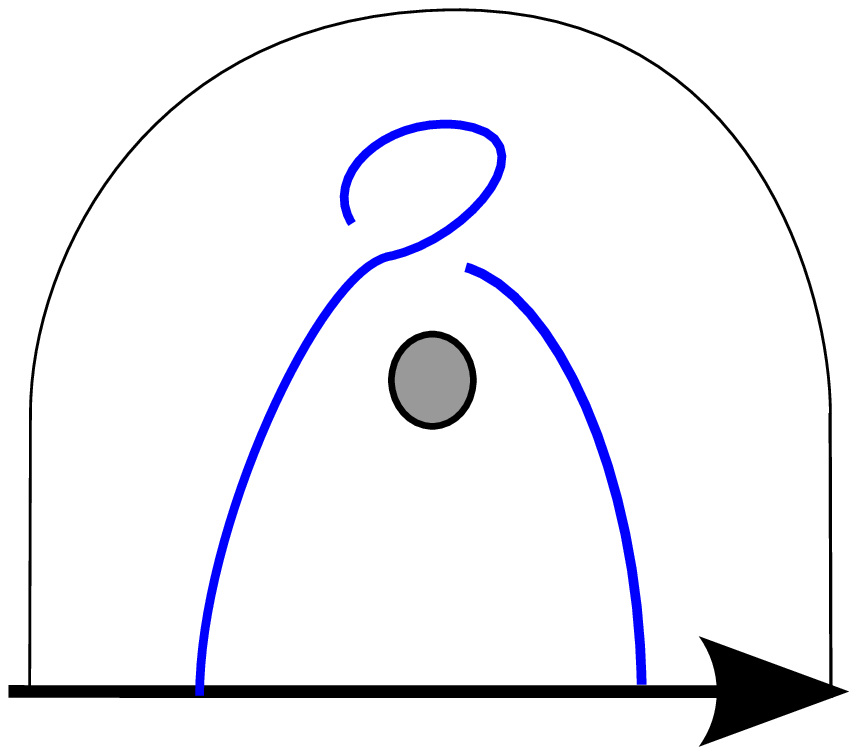}}  \right) = 1 \in \mathbb{Z}/2\mathbb{Z} \cong \mathrm{H}^1(\mathbf{H}_1; \mathbb{Z}/2\mathbb{Z})$.
\end{definition}

So a spin function is a collection of maps $w_n: P_n(\mathbf{M}) \to \mathrm{H}^1(\mathbf{H}_n; \mathbb{Z}/2\mathbb{Z})$ such that $w_1\left(\adjustbox{valign=c}{\includegraphics[width=1cm]{BT_Theta.eps}}  \right)  =1$ and such that for every morphism $\mu : \mathbf{H}_a \to \mathbf{H}_b$, the following diagram commutes:
$$
\begin{tikzcd}
P_b(\mathbf{M}) \ar[r, "\mu^*"] \ar[d, "w_b"] & P_a(M) \ar[d, "w_a"] \\
\mathrm{H}^1(\mathbf{H}_b; \mathbb{Z}/2\mathbb{Z})\ar[r, "\mu^*"] & \mathrm{H}^1(\mathbf{H}_a; \mathbb{Z}/2\mathbb{Z})
\end{tikzcd}
$$

\begin{remark}\label{remark_spin}
\begin{enumerate}
\item A spin function is completely determined by the map $w_1 : P_1(\mathbf{M}) \to \mathbb{Z}/2\mathbb{Z}$. Indeed, let $[\beta_i] \in \mathrm{H}_1(\mathbf{H}_n; \mathbb{Z}/2\mathbb{Z})$ be the class of a simple closed curve encircling the $i$-th hole of $\mathbb{D}_n\times \{1/2\} \subset \mathbf{H}_n$ once, and consider the isomorphism $\mathrm{H}^1(\mathbf{H}_n; \mathbb{Z}/2\mathbb{Z}) \cong (\mathbb{Z}/2\mathbb{Z})^n$ sending $\varphi$ to $(\varphi([\beta_1]), \ldots, \varphi([\beta_n]))$.
 If $\alpha= \alpha^{(1)}\cup \ldots \cup \alpha^{(n)}$ is a $n$-bottom tangle with connected components $\alpha^{(i)}$, by naturality of $w$ and under the above isomorphism one has $w_n(\alpha) = (w_1(\alpha^{(1)}), \ldots, w_1(\alpha^{(n)}))$ so the maps $w_n$ are determined by $w_1$.
\item For $\alpha$ a $1$-bottom tangle and $s$ a state on $\alpha$, the class $(-1)^{w_1(\alpha)}[\alpha, s] \in \mathcal{S}_{+1}(\mathbf{M})$ does not depend on the framing of $\alpha$ thanks to the condition $w_1\left(\adjustbox{valign=c}{\includegraphics[width=1cm]{BT_Theta.eps}}  \right) = 1$ and to the skein relation $\adjustbox{valign=c}{\includegraphics[width=2cm]{TwistRel.eps}}$.
\item When $\mathbf{M}= \mathbf{\Sigma}\times I$ is a thickened marked surface, then the set of spin functions is in one-to-one correspondence with the set of isomorphism classes of spin structures on $\Sigma$ (hence the name "spin functions"). Indeed, fix a Riemannian metric on $\Sigma$ and let $\pi: UM\to M$ be the unitary tangent bundle. Let $[\theta] \in \mathrm{H}_1(UM; \mathbb{Z}/2\mathbb{Z})$ be the homology class of a loop in $UM$ whose projection in $M$ is the constant point $p_M$  and such that $\theta$ makes a full positive twist in the fiber $\pi^{-1}(p_M)$. For $w$ a spin function, one associates $\mathbf{w} :  \mathrm{H}_1(UM; \mathbb{Z}/2\mathbb{Z}) \to \mathbb{Z}/2\mathbb{Z}$ a group morphism such that $\mathbf{w}([\theta])=1$ as follows. Every class $[\alpha]\in  \mathrm{H}_1(UM; \mathbb{Z}/2\mathbb{Z})$ can be represented by a $1$-bottom tangle $\alpha$ (where we isotope both endpoints to $p_M$) and we set $\mathbf{w}([\alpha]):= w_1(\alpha)$. That $\mathbf{w}$ is a group morphism follows from the naturality with respect to the morphism $\mu: \mathbf{H}_2\to \mathbf{H}_1$ of Figure \ref{fig_BTHopfAlg}  and that $\mathbf{w}([\theta])=1$ follows from the fact that  $w_1\left(\adjustbox{valign=c}{\includegraphics[width=1cm]{BT_Theta.eps}}  \right) =1$. The morphism $\mathbf{w}$ defines a (regular) double covering $p:\widetilde{U\Sigma}\to U\Sigma$ and the condition $w([\theta])\neq 0$ ensures that each fiber of $\pi$ lifts to a non trivial double covering by $p$. Since $\Spin(2)$ is the only non trivial double covering of $\SO(2)$, the composition $\pi\circ p : \widetilde{U\Sigma} \to \Sigma$ is a principal $\Spin(2)$ bundle over $U\Sigma$, i.e. a spin structure (see Milnor \cite{MilnorSpinStructure} for details).
\end{enumerate}
\end{remark}

\subsection{Representation and character schemes}

Let $\Gamma$ be a finitely presented group and consider the commutative ring 
$$\mathcal{O}[\mathcal{R}_{\SL_2}(\Gamma)]:= \quotient{\mathbb{Z}[X^{\gamma}_{ij}, i,j\in \{+, -\}, \gamma \in \Gamma]}{(M_{\alpha}M_{\beta}=M_{\alpha\beta}, \det(M_{\alpha})=1, \alpha, \beta\in \Gamma)}, $$
where $M_{\gamma}=\begin{pmatrix} X^{\gamma}_{++} & X^{\gamma}_{+-} \\ X^{\gamma}_{-+} & X^{\gamma}_{--} \end{pmatrix}$ is a $2\times 2$ matrix with coefficients in the polynomial ring $\mathbb{Z}[X^{\gamma}_{ij}]$ (so the relation $M_{\alpha}M_{\beta}=M_{\alpha\beta}$ represents in fact four relations). Set $\mathcal{O}[\SL_2]:= \quotient{\mathbb{Z}[x_{ij}, i,j\in \{-, +\}]}{(x_{++}x_{--}-x_{+-}x_{-+}-1)}$.
The set of characters $\Hom_{\Alg}\left(\mathcal{O}[\mathcal{R}_{\SL_2}(\Gamma)], \mathbb{C}\right)$ is in bijection with the set of representations $\rho: \Gamma \to \SL_2(\mathbb{C})$. The group $\SL_2(\mathbb{C})$ acts (on the left) by conjugacy on the set of representations by the formula: 
$$ (g\cdot \rho) (\gamma):= g \rho(\gamma)g^{-1} , \quad \mbox{ for all }g\in \SL_2, \gamma \in \Gamma.$$
The above action is algebraic, i.e. induced by a comodule map $\Delta^R: \mathcal{O}[\mathcal{R}_{\SL_2}(\Gamma)] \to \mathcal{O}[\mathcal{R}_{\SL_2}(\Gamma)] \otimes_{\mathbb{Z}} \mathcal{O}[\SL_2]$ defined by the formula:
$$ \Delta^R (X_{ij}^{\gamma}):= \sum_{a,b= \pm}  X_{ab}^{\gamma} \otimes x_{i a}S(x_{bj}) .$$
For instance 
$$\Delta^R (X_{++}^{\gamma}) =  X_{++}^{\gamma} \otimes x_{++}x_{--} +  X_{-+}^{\gamma}\otimes x_{+-}x_{--} -  X_{+-}^{\gamma}\otimes x_{-+}x_{++} -  X_{--}^{\gamma}\otimes x_{-+}x_{+-}.$$

Let $\mathcal{O}[\mathcal{X}_{\SL_2}(\mathbf{M})] \subset \mathcal{O}[\mathcal{R}_{\SL_2}(\Gamma)]$ be the subalgebra of coinvariant vectors.
\par Both $\mathcal{O}[\mathcal{R}_{\SL_2}(\Gamma)]$ and $\mathcal{O}[\mathcal{X}_{\SL_2}(\Gamma)]$ are finitely generated, however they might be non reduced, i.e. their nilradical $\sqrt{0}$ might be non trivial. 

\begin{definition} The $\SL_2$-\textit{representation scheme} is $\mathcal{R}_{\SL_2}(\Gamma):= \Spec(\mathcal{O}[\mathcal{R}_{\SL_2}(\Gamma)])$. The $\SL_2$-\textit{character scheme} is $\mathcal{X}_{\SL_2}(\Gamma):=\Spec(\mathcal{O}[\mathcal{X}_{\SL_2}(\Gamma)] )$. 
\par The $\SL_2$-\textit{representation variety} is $\mathcal{R}_{\SL_2}^{red}(\Gamma):= \Spec(\quotient{\mathcal{O}[\mathcal{R}_{\SL_2}(\Gamma)])\otimes_{\mathbb{Z}}\mathbb{C}}{\sqrt{0}})$. The $\SL_2$-\textit{character variety} is $\mathcal{X}_{\SL_2}^{red}(\Gamma):= \Spec(\quotient{\mathcal{O}[\mathcal{X}_{\SL_2}(\Gamma)])\otimes_{\mathbb{Z}}\mathbb{C}}{\sqrt{0}})$.
\end{definition}
 Recall that to  $\mathbf{M}\in \mathcal{M}^{(1)}_{\con}$ one associates a canonical based point $p_M:=\iota_M(0)\in \mathbb{D}_M$. Since every compact $3$-manifold can be triangulated (see e.g. \cite{Hempel_3mfd}), we easily deduced that the fundamental group $\pi_1(\mathbf{M}):=\pi_1(M, p_M)$ is finitely presented.
Write $\mathcal{R}_{\SL_2}(\mathbf{M}):= \mathcal{R}_{\SL_2}(\pi_1(\mathbf{M}))$ and $\mathcal{X}_{\SL_2}(\mathbf{M}):= \mathcal{X}_{\SL_2}(\pi_1(\mathbf{M}))$. In \cite[Corollary $1.3$]{KapovitchMillson}, Kapovitch-Millson proved the existence of large families of $3$-manifolds for which $\mathcal{X}_{\SL_2}(\mathbf{M})$ (and thus $\mathcal{R}_{\SL_2}(\mathbf{M})$) is non reduced, so it is important to distinguish between representation schemes and varieties. However, when $M$ is a thickened surface, $\mathcal{R}_{\SL_2}(\mathbf{\Sigma}\times I)$ is reduced (see \cite{PS00, ChaMa}). An embedding $f: \mathbf{M} \to \mathbf{M}'$ between two marked $3$-manifolds induces a group morphism $f_{*}: \pi_1(\mathbf{M})\to \pi_1(\mathbf{M}')$ and thus a  morphism $f_* : \mathcal{O}[\mathcal{R}_{\SL_2}(\mathbf{M})] \to \mathcal{O}[\mathcal{R}_{\SL_2}(\mathbf{M}')]$ of $\mathcal{O}[\SL_2]$ comodules. We denote by $\mathcal{O}[\mathcal{R}] : \mathcal{M}_{\con}^{(1)} \to {\mathcal{O}[\SL_2]}-\RComod$ the functor sending $\mathbf{M}$ to $\mathcal{O}[\mathcal{R}_{\SL_2}(\mathbf{M})]$. 

\begin{lemma}\label{lemma_LKan_Classical}
The following diagram is a left Kan extension:
 $$
 \begin{tikzcd}
 {} & \mathcal{M}^{(1)}_{\con}
 \ar[rd, "{\mathcal{O}[\mathcal{R}]}"] 
 & {} \\
 \BT  \ar[ru, "i"] 
 \ar[rr, "\restriction{\mathcal{O}[\mathcal{R}]}{\BT}"]
  &{}&  {\mathcal{O}[\SL_2]}-\RComod
 \end{tikzcd}
 $$ 

In particular, one has:
$$ \mathcal{O}[\mathcal{R}_{\SL_2}(\mathbf{M})] \cong \restriction{\mathcal{O}[\mathcal{R}]}{\BT}\otimes_{\BT} \mathbb{Z}[P_M].$$
\end{lemma}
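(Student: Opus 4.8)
The plan is to mimic the proof of Theorem~\ref{theorem_SkeinQRep} almost verbatim, replacing stated tangles by matrix coefficients. Write $L := \Lan_i\restriction{\mathcal{O}[\mathcal{R}]}{\BT}$. By the coend formula of Section~\ref{sec_Cat} and by Lemma~\ref{lemma_BT},
$$ L(\mathbf{M}) = \int^{n\geq 0}\mathbb{Z}[P_n(\mathbf{M})]\otimes\mathcal{O}[\mathcal{R}_{\SL_2}(\mathbf{H}_n)],\qquad \mathcal{O}[\mathcal{R}_{\SL_2}(\mathbf{H}_n)]=\mathcal{O}[\SL_2]^{\otimes n}, $$
the last equality because $\pi_1(\mathbf{H}_n)$ is the free group $F_n$. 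The universal property of the left Kan extension supplies a natural morphism $F : L\to\mathcal{O}[\mathcal{R}]$, given on representatives by $F_{\mathbf{M}}([\varphi\otimes x]):=\varphi_*(x)$ for $\varphi:\mathbf{H}_n\to\mathbf{M}$ and $x\in\mathcal{O}[\mathcal{R}_{\SL_2}(\mathbf{H}_n)]$; it is a morphism of $\mathcal{O}[\SL_2]$-comodules since each $\varphi_*$ intertwines the conjugation coactions. The remaining task is to build an inverse $G:\mathcal{O}[\mathcal{R}]\to L$; once this is done, the displayed "In particular" is just the explicit form of the left Kan extension, exactly as in Lemma~\ref{lemma_QFG_QRS}.

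To construct $G_{\mathbf{M}}$ I would proceed as follows. An arbitrary element of $\mathcal{O}[\mathcal{R}_{\SL_2}(\mathbf{M})]$ is a polynomial in finitely many matrix coefficients $X^{\gamma_k}_{i_kj_k}$, $\gamma_k\in\pi_1(\mathbf{M})$; representing each $\gamma_k$ by an embedded framed loop based near $p_M$, making the family disjoint and stacking its members at distinct heights by a generic isotopy, one realizes it as $\varphi(T_n)$ for an embedding $\varphi:\mathbf{H}_n\to\mathbf{M}$, the given polynomial then being $\varphi_*(P)$ for an explicit $P\in\mathcal{O}[\SL_2]^{\otimes n}$; set $G_{\mathbf{M}}(\varphi_*(P)):=[\varphi\otimes P]$. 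As in Theorem~\ref{theorem_SkeinQRep}, independence of $\varphi$ follows because the poset of embeddings whose image contains a fixed compact set is non-empty (connectedness of $M$) and filtrant, together with the $\BT$-equivariance built into the coend (reordering components uses the braiding $\Psi$ of Figure~\ref{fig_BTHopfAlg}; erasing a framing kink uses the framing-change endomorphism of $\mathbf{H}_1$, which $\restriction{\mathcal{O}[\mathcal{R}]}{\BT}$ sends to the identity since it does not alter the underlying loop). Naturality of $G$ and the identities $FG=\id$, $GF=\id$ are then routine verifications.

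The main obstacle is to show that $G_{\mathbf{M}}$ descends to the quotient defining $\mathcal{O}[\mathcal{R}_{\SL_2}(\mathbf{M})]$, i.e. that it annihilates the defining relations. The relations $\det M_\alpha=1$ and $M_{\alpha\beta}=M_\alpha M_\beta$ are unproblematic, being \emph{local}: they already hold in $\mathcal{O}[\mathcal{R}_{\SL_2}(\mathbf{H}_n)]$ after the loops are realized in a handlebody — for multiplicativity one uses that the concatenation $\gamma_1\gamma_2$ is the $\BT$-morphism $\mathbf{H}_1\to\mathbf{H}_2$ given by the product word $xy\in F_2=\pi_1(\mathbf{H}_2)$, which $\restriction{\mathcal{O}[\mathcal{R}]}{\BT}$ sends to the comultiplication $x_{ij}\mapsto\sum_k x_{ik}\otimes x_{kj}$, after which the coend relation does the rest. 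The delicate point is the identification $X^\gamma_{ij}=X^{\gamma'}_{ij}$ whenever $\gamma=\gamma'$ in $\pi_1(\mathbf{M})$: a homotopy of loops in $\mathbf{M}$ fragments into moves each supported in a ball, but these come in two kinds — isotopies, handled exactly as the isotopy invariance in the proof of Theorem~\ref{theorem_SkeinQRep} by passing to a handlebody containing the supporting ball, and crossing changes, which are \emph{not} manifestly reflected in the coend. To absorb a crossing change I would exploit that $\mathcal{O}[\mathcal{R}]$ only depends on $\pi_1$: fix a handle decomposition $M=\mathbf{H}\cup(\text{$2$-handles along }c_1,\dots,c_r)\cup(\text{$3$-handles})$, so that $\pi_1(\mathbf{M})=\pi_1(\mathbf{H})/\langle\langle c_1,\dots,c_r\rangle\rangle$ and every relator is a product of conjugates of the $c_i$; then each $c_i$ bounds the embedded core disk of its $2$-handle, so the bottom tangle representing $c_i$ is the \emph{trivial} bottom tangle and the relation $M_{c_i}=I$ collapses in the coend to the already-established $M_1=I$, while the conjugations and multiplications assembling the relators from the $c_i$ are realized by $\BT$-morphisms and absorbed as before. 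With this in hand $F$ and $G$ are mutually inverse and natural, which proves the lemma. (Alternatively one can organize the argument by observing that $\mathcal{O}[\mathcal{R}_{\SL_2}(-)]:\Gp\to{\mathcal{O}[\SL_2]}-\RComod$ preserves filtered colimits — it turns $\mathcal{R}_{\SL_2}(\operatorname{colim}\Gamma_\lambda)=\lim\mathcal{R}_{\SL_2}(\Gamma_\lambda)$ into a filtered colimit of coordinate rings — which reduces the lemma to the purely topological assertions that $i\downarrow\mathbf{M}$ is filtered and $\pi_1(\mathbf{M})=\operatorname{colim}_{i\downarrow\mathbf{M}}F_n$; the handle decomposition of $M$ is again what makes the group relations visible there.)
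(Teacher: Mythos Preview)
Your overall strategy---build $F:L\to\mathcal{O}[\mathcal{R}]$ from functoriality and an inverse $G$ by lifting generators to bottom tangles---is exactly the paper's, and you correctly isolate the one genuine obstacle: well-definedness of $G$ under a crossing change of the chosen bottom tangle representative of $\gamma$. But your resolution via a handle decomposition is both over-engineered and not quite complete. Knowing that $\gamma(\gamma')^{-1}$ lies in $\langle\!\langle c_1,\dots,c_r\rangle\!\rangle$ in $\pi_1(\mathbf{H})$ does not by itself identify the classes $[\varphi\otimes x_{ij}]$ and $[\varphi'\otimes x_{ij}]$ in the coend: you would still need to manufacture a chain of $\BT$-relations taking $\nu_*(x_{ij})$ to $\nu'_*(x_{ij})$ inside $\mathcal{O}[\SL_2]^{\otimes g}$ tensored with a fixed $\psi$, and ``$M_{c_i}=I$ collapses because $c_i$ bounds a disk'' only helps for the \emph{specific} attaching-circle representative, not for the bottom tangles you actually started with.

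The paper's argument is the one you already use for framing kinks, applied verbatim to crossing changes. If $\alpha,\alpha'\in P_n(\mathbf{M})$ differ by a single crossing change, enlarge to a handlebody $\psi:\mathbf{H}_m\hookrightarrow\mathbf{M}$ whose image contains both tangles and the ball where the change happens; write $\alpha=\psi\circ\nu$, $\alpha'=\psi\circ\nu'$ with $\nu,\nu'\in\BT(n,m)$ differing by a crossing change. Then $\nu$ and $\nu'$ induce the \emph{same} map $\pi_1(\mathbf{H}_n)\to\pi_1(\mathbf{H}_m)$, hence $\nu_*=\nu'_*$ on $\mathcal{O}[\mathcal{R}_{\SL_2}(\mathbf{H}_n)]$, and the coend relation gives $[\alpha\otimes X]=[\psi\otimes\nu_*X]=[\psi\otimes\nu'_*X]=[\alpha'\otimes X]$. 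This is precisely your framing-change argument; the detour through $2$-handles is unnecessary. Equivalently, the paper packages this by introducing the quotient functor $\pi_M\in\widehat{\BT}$, $\mathbf{H}_n\mapsto\pi_1(\mathbf{M})^n$, proving $\restriction{\mathcal{O}[\mathcal{R}]}{\BT}\otimes_{\BT}\mathbb{Z}[P_M]\cong\restriction{\mathcal{O}[\mathcal{R}]}{\BT}\otimes_{\BT}\mathbb{Z}[\pi_M]$ by the above, and then matching the latter with $\mathcal{O}[\mathcal{R}_{\SL_2}(\mathbf{M})]$ by an explicit bijection on monomials.
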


\begin{proof} For each $n\geq 0$, we fix a basis of the free group $\pi_1(\mathbf{H}_n)$ inducing an isomorphism $ \mathcal{O}[\mathcal{R}_{\SL_2}(\mathbf{H}_n)]\cong \mathcal{O}[\SL_2]^{\overline{\otimes}n}$.
Let $L:= \Lan_i \restriction{\mathcal{O}[\mathcal{R}]}{\BT}$ be the left Kan extension of $\restriction{\mathcal{O}[\mathcal{R}]}{\BT}$ along $i$ given by the formula:
$$ L(\mathbf{M}):= \restriction{\mathcal{O}[\mathcal{R}]}{\BT}\otimes_{\BT} \mathbb{Z}[P_M] =\int^{n\geq 0} \mathcal{O}[\SL_2]^{\overline{\otimes}n} \otimes \mathbb{Z}[P_n(\mathbf{M})].$$
By definition of the coend, one has 
$$ L(\mathbf{M})= \quotient{ \left( \oplus_{n\geq 0}\mathcal{O}[\SL_2]^{\overline{\otimes}n} \otimes \mathbb{Z}[P_n(\mathbf{M})]\right)}{\mathcal{J}}, $$
where $\mathcal{J}$ is the ideal generated by elements $(\mu_* X \otimes \gamma) - (X\otimes \mu^* \gamma)$ with $\mu: \mathbf{H}_n \to \mathbf{H}_m$, $\gamma\in P_m(\mathbf{M})$ and $X\in \mathcal{O}[\SL_2]^{\overline{\otimes}n}$.
\par Let $\pi_{M} \in \widehat{\BT}$ be the functor sending $\mathbf{H}_n$ to $\Hom_{\Gp}(\pi_1(\mathbf{H}_n), \pi_1(\mathbf{M}))\cong \pi_1(\mathbf{M})^n$ (the latter isomorphism is given by the fixed basis of $\pi_1(\mathbf{H}_n)$). A $1$-bottom tangle $\alpha\subset M$ defines an element $[\alpha] \in \pi_1(\mathbf{M})$ by forgetting the framing and isotoping locally both endpoints of $\alpha$ to $p_M$. For a $n$-bottom tangle $\alpha \in P_n(\mathbf{M})$ with components $\alpha= \alpha^{(1)}\cup \ldots \cup \alpha^{(n)}$,  ordered such that the heights of the endpoints $\partial \alpha^{(i)}$ is bigger than the heights of the endpoints $\partial \alpha^{(i+1)}$, we set $[\alpha]:= ([\alpha^{(1)}], \ldots, [\alpha^{(n)}]) \in \pi_1(\mathbf{M})^n$. 

Define a natural morphism $\eta^M: P_M \Rightarrow \pi_M$ by sending $\alpha\in P_n(\mathbf{M})\cong P_M(\mathbf{H}_n)$ to $\eta^M_n(\alpha):= [\alpha]$. The map $\eta^M_n: P_n(\mathbf{M}) \to \pi_1(\mathbf{M})^n$ is clearly surjective and two bottom tangles $\alpha$, $\alpha'$ have the same image if and only if one can pass from $\alpha$ to $\alpha'$ by a finite sequence of crossings changes $\Crosspos \leftrightarrow \Crossneg$ and twist moves $\adjustbox{valign=c}{\includegraphics[width=2cm]{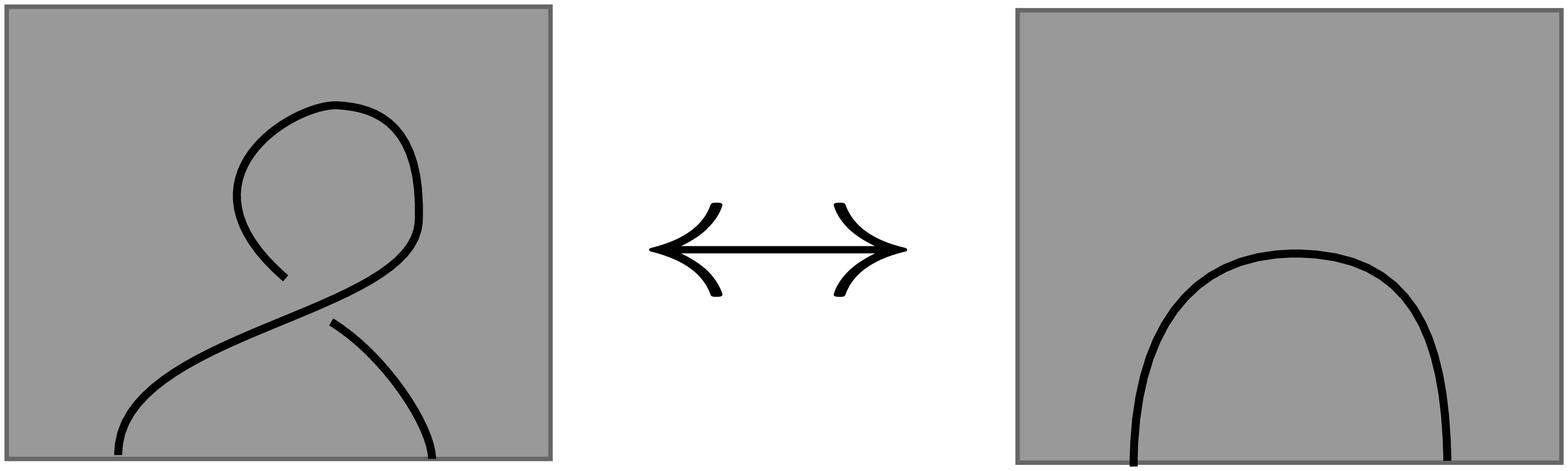}}$. Write $\alpha\sim \alpha'$ if $\alpha$ and $\alpha'$ are related by such a finite sequence, so that $\eta^M_n$ induces a bijection $\quotient{P_n(\mathbf{M})}{\sim} \xrightarrow{\cong} \pi_1(\mathbf{M})^n$. We claim that 
$$ L(\mathbf{M}):= \restriction{\mathcal{O}[\mathcal{R}]}{\BT}\otimes_{\BT} \mathbb{Z}[P_M] \xrightarrow[\id \otimes \eta^M]{\cong}  \restriction{\mathcal{O}[\mathcal{R}]}{\BT}\otimes_{\BT} \mathbb{Z}[\pi_M],$$
i.e. that for $X\in \mathcal{O}[\SL_2]^{\overline{\otimes}n}$ and $\alpha, \alpha' \in P_n(\mathbf{M})$ which differ by some change of crossings $\Crosspos \leftrightarrow \Crossneg$, then $[X\otimes \alpha]=[X \otimes \alpha'] \in L(\mathbf{M})$. Indeed, in such case, one has an endomorphism $\mu : \mathbf{H}_n \to \mathbf{H}_n$ such that $\mu(\alpha)= \alpha'$ and such that $\eta^M(\mu)=\id$. For such an endomorphism $\mu$ whose action on $\pi_1(\mathbf{M})$ is trivial, the induced morphism $\mu_*\in \End(\mathcal{O}[\mathcal{R}_{\SL_2}(\mathbf{H}_n)])$ is the identity, therefore $\alpha \otimes X - \alpha' \otimes X \in \mathcal{J}$, which proves the claim.

\par We can thus write
$$L(\mathbf{M})= \quotient{ \left( \oplus_{n\geq 0}\mathcal{O}[\SL_2]^{\overline{\otimes}n} \otimes \mathbb{Z}[\pi_1(M)^n]\right)}{\mathcal{J}'}, $$
with $\mathcal{J}'$ the image of $\mathcal{J}$ though the quotient map.
On the other hand, write 
$$ \mathcal{O}[\mathcal{R}_{\SL_2}(\mathbf{M})] = \quotient{A}{\mathcal{I}}, $$
where 
$$ A:= \quotient{\mathbb{Z}[X^{\gamma}_{ij}, i,j\in \{+, -\}, \gamma \in \pi_1(\mathbf{M})]}{(\det(M_{\alpha})=1, \alpha\in \pi_1(\mathbf{M}))},$$
and $\mathcal{I}$ is the ideal generated by the relations given by the matrix coefficients of the equalities $M_{\alpha}M_{\beta}=M_{\alpha \beta}$, $\alpha, \beta \in \pi_1(\mathbf{M})$. Let
$$\widetilde{\kappa}_{\mathbf{M}}:  A \cong \oplus_{n\geq 0}\mathcal{O}[\SL_2]^{\overline{\otimes}n} \otimes \mathbb{Z}[\pi_1(M)^n]$$
 be the isomorphism sending a monomial $X_{a_1 b_1}^{\gamma_1} \ldots X_{a_n b_n}^{\gamma_n}$ to $[(x_{a_1 b_1}\otimes \ldots \otimes x_{a_n b_n}) \otimes (\gamma_1, \ldots, \gamma_n)]$. By definition of $\mathcal{J}'$, we have $\widetilde{\kappa}_{\mathbf{M}}(\mathcal{I}) = \mathcal{J}'$, so $\widetilde{\kappa}$ induces an isomorphism 
$$ \kappa_{\mathbf{M}} :  \mathcal{O}[\mathcal{R}_{\SL_2}(\mathbf{M})]  \cong L(\mathbf{M}).$$
That $\kappa_{\mathbf{M}}$ is an isomorphism of $\mathcal{O}[\SL_2]$ comodules and that it is natural in $\mathbf{M}$  are immediate consequences of the definitions. We thus have defined a natural isomorphism $\kappa: \mathcal{O}[\mathcal{R}]\cong L$. This concludes the proof.

\end{proof}

\subsection{Classical limit of stated skein modules}

 Let $\mathbf{M}\in \mathcal{M}^{(1)}_{\con}$ and $\gamma \in \pi_1(\mathbf{M})$. Let $T(\gamma)$ be a tangle isotopic to $\gamma$ with arbitrary framing with distinct endpoints $\partial{T}=\{s,t\} \in \mathbb{D}_M$ such that $\gamma$ is oriented from $s$ (starting point) to $t$ (target point). For $i,j \in \{+, -\}$, let $\gamma_{ij}:= [T(\gamma), s_{ij}] \in \mathcal{S}_{+1}(\mathbf{M})$ be the class of the tangle $T(\gamma)$ with state defined by $s_{ij}(s):= v_i$ and $s_{ij}(t):= v_j$. The main result of this section is the following: 

\begin{theorem}\label{theorem_classical} Let $w$ be a spin function.
We have a ring isomorphism $\eta_{w}: \mathcal{O}[\mathcal{R}_{\SL_2}(\mathbf{M})] \xrightarrow{\cong} \mathcal{S}_{+1}(\mathbf{M})$ whose image on the generators $X_{ij}^{\gamma}$ is given by 
$$ \eta_{w} \begin{pmatrix} X^{\gamma}_{++} & X^{\gamma}_{+-} \\ X^{\gamma}_{-+} & X^{\gamma}_{--} \end{pmatrix} := (-1)^{w(\gamma)}\begin{pmatrix} 0 & -1 \\ 1 & 0 \end{pmatrix} \begin{pmatrix} \gamma_{++} & \gamma_{+-} \\ \gamma_{-+} & \gamma_{--} \end{pmatrix}.$$
Moreover $\eta_w$ is a morphism of $\mathcal{O}[\SL_2]$ comodules.
\end{theorem}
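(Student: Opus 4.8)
The plan is to exploit the left Kan extension descriptions on both sides and reduce the statement to the case of handlebodies $\mathbf{H}_n$, where everything is explicit. By Lemma \ref{lemma_classic1} we have $\mathcal{S}_{+1} = \Lan_i \restriction{\mathcal{S}_{+1}}{\BT}$ and by Lemma \ref{lemma_LKan_Classical} we have $\mathcal{O}[\mathcal{R}] = \Lan_i \restriction{\mathcal{O}[\mathcal{R}]}{\BT}$, so it suffices to produce a natural isomorphism of $\mathcal{O}[\SL_2]$-comodule-valued functors $\restriction{\mathcal{O}[\mathcal{R}]}{\BT} \xrightarrow{\cong} \restriction{\mathcal{S}_{+1}}{\BT}$ and then Kan-extend it; the universal property gives the isomorphism $\eta$ for general $\mathbf{M}$ and automatically its naturality and the fact that it is a comodule morphism. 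The subtlety is the spin function $w$: it does not live on $\BT$ alone but depends on $\mathbf{M}$, so rather than literally Kan-extending a single natural transformation I would Kan-extend the ``framing-corrected'' version. Concretely, using Remark \ref{remark_skeinrelations} part (2) on the framing-independence of $(-1)^{w_1(\alpha)}[\alpha,s]$, and the presentation of $\mathcal{O}[\mathcal{R}_{\SL_2}(\mathbf{M})]$ as $\restriction{\mathcal{O}[\mathcal{R}]}{\BT}\otimes_{\BT}\mathbb{Z}[P_M]$ together with $\mathcal{S}_{+1}(\mathbf{M}) = \restriction{\mathcal{S}_{+1}}{\BT}\otimes_{\BT} k[P_M]$ (Theorem \ref{theorem_SkeinQRep} specialized at $q^{1/4}=1$), I would define $\eta_w$ on elementary tensors $X\otimes \alpha$ (with $\alpha\in P_n(\mathbf{M})$, $X\in \mathcal{O}[\SL_2]^{\overline{\otimes}n}$) by the formula in the statement, with the sign $(-1)^{\sum_i w_1(\alpha^{(i)})}$ inserted componentwise, and check this is well-defined on the coend.

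The key computational input is Corollary \ref{coro_skein_transmutation} and Theorem \ref{theorem_bigon} specialized to $q^{1/4}=1$: there $\mathcal{S}_{+1}(\mathbf{H}_1)$ is identified (via $\widetilde f$) with the classical function algebra, and the matrix $\begin{pmatrix} 0 & -1 \\ 1 & 0\end{pmatrix}$ appearing in the statement is precisely $\hT$ at $q^{1/4}=1$, i.e. the half-twist matrix $\begin{pmatrix} 0 & -A^{5/2} \\ A^{1/2} & 0\end{pmatrix}$ evaluated at $A=1$. So at the level of $\mathbf{H}_1$, the claimed formula $\eta_w\begin{pmatrix} X^\gamma_{++} & X^\gamma_{+-} \\ X^\gamma_{-+} & X^\gamma_{--}\end{pmatrix} = (-1)^{w(\gamma)}\begin{pmatrix}0&-1\\1&0\end{pmatrix}\begin{pmatrix}\gamma_{++} & \gamma_{+-}\\ \gamma_{-+} & \gamma_{--}\end{pmatrix}$ is essentially the content of Corollary \ref{coro_skein_transmutation} read backwards, and the sign $(-1)^{w(\gamma)}$ is exactly what makes the right-hand side framing-independent so that it descends from bottom tangles to $\pi_1$-elements (the isomorphism $\quotient{P_n(\mathbf{M})}{\sim}\cong \pi_1(\mathbf{M})^n$ from the proof of Lemma \ref{lemma_LKan_Classical}). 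I would first verify on $\mathbf{H}_n$ that $\eta_w$ is a well-defined ring isomorphism: the source $\mathcal{O}[\mathcal{R}_{\SL_2}(\mathbf{H}_n)] \cong \mathcal{O}[\SL_2]^{\otimes n}$ is a polynomial-type algebra (commutative since $A=1$), and $\mathcal{S}_{+1}(\mathbf{H}_n)\cong \mathcal{S}_{+1}(\mathbb{B})^{\otimes n}$ with $\mathcal{S}_{+1}(\mathbb{B})\cong \mathcal{O}[\SL_2]$ by Theorem \ref{theorem_bigon}; then check the defining relations $M_\alpha M_\beta = M_{\alpha\beta}$ and $\det M_\alpha = 1$ are respected — this uses the comodule/coproduct compatibility of $\mathcal{S}_q(\mathbb{B})$ and the identity $\hT S \hT^{-1} = (\text{transpose})$, i.e. that the rotation operator $\rot$ and antipode $S$ interact with $\hT$ in the right way at $A=1$.

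The main steps, in order, are: (1) record that at $q^{1/4}=1$ the half-twist matrix becomes $\begin{pmatrix}0&-1\\1&0\end{pmatrix}$ and that $\mathcal{S}_{+1}(\mathbf{M})$ is commutative and framing-insensitive up to the $w$-sign (Remark \ref{remark_spin}(2)); (2) define $\eta_w$ on $\BT$, check it is a natural isomorphism of $\mathcal{O}[\SL_2]$-comodules compatible with the Hopf structure maps $(\mu,\eta,\Delta,\epsilon,S,\theta)$ — here I expect to verify that both sides send the generator $\mathbf{H}_1$ to the ``same'' dual BP Hopf algebra object, so Theorem \ref{theorem_functorBT} gives $\restriction{\mathcal{O}[\mathcal{R}]}{\BT}\cong\restriction{\mathcal{S}_{+1}}{\BT}$ canonically; (3) transport this isomorphism through the two left Kan extension formulas of Lemmas \ref{lemma_classic1} and \ref{lemma_LKan_Classical}, obtaining $\eta_w\colon \mathcal{O}[\mathcal{R}_{\SL_2}(\mathbf{M})]\xrightarrow{\cong}\mathcal{S}_{+1}(\mathbf{M})$ for all $\mathbf{M}$, together with naturality and the comodule property for free; (4) check it is a ring isomorphism — the products on both sides are the $\otimes_{\BT}$-images of the PROP structure, but one should double-check that the coend ring structure on $\mathcal{O}[\mathcal{R}]\otimes_{\BT}\mathbb{Z}[P_M]$ matches the group-algebra-style product on $\mathcal{O}[\mathcal{R}_{\SL_2}(\mathbf{M})]$, which is essentially the computation already done in Lemma \ref{lemma_LKan_Classical} via the isomorphism $\widetilde\kappa_{\mathbf{M}}$, and that the skein product at $A=1$ matches via stacking tangles; (5) finally check the explicit formula on generators $X^\gamma_{ij}$, which is the $\mathbf{H}_1$ computation of step (2) re-read through $\eta^M\colon P_M\Rightarrow\pi_M$. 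The main obstacle I anticipate is step (2)–(4): making precise that the $w$-sign correction is \emph{functorial}, i.e. that it can be absorbed into a genuine natural transformation on $\BT$ rather than a choice depending on $\mathbf{M}$ — the cleanest route is to observe, via Remark \ref{remark_spin}(1), that $w$ is determined by $w_1$ and is multiplicative under the $\BT$-composition, so that $(-1)^{w(\cdot)}$ twists the identification $\mathcal{S}_{+1}(\mathbf{H}_1)\cong\mathcal{O}[\mathcal{R}_{\SL_2}(\mathbf{H}_1)]$ by a Hopf-algebra automorphism compatible with all structure maps (a sign character), hence extends along $i$; one then checks that the $w$-dependence of $\eta_w$ is exactly this twist and that different spin functions give isomorphic but possibly distinct $\eta_w$. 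The ring-isomorphism verification (step 4) is the place where a genuine, if routine, computation with the $A=1$ Kauffman relations and the height-exchange relation is unavoidable.
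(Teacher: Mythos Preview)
Your overall architecture is right---both sides are left Kan extensions from $\BT$, so the problem reduces to comparing the two functors on handlebodies and then extending. The genuine gap is in step (2). You propose to invoke Theorem~\ref{theorem_functorBT} by checking that $\restriction{\mathcal{O}[\mathcal{R}]}{\BT}$ and $\restriction{\mathcal{S}_{+1}}{\BT}$ send $\mathbf{H}_1$ to ``the same'' dual BP Hopf algebra object. They do not: the cotwist $\theta\in\BT(1,0)$ is sent by $\mathcal{O}[\mathcal{R}]$ to the counit $\epsilon$ (since $\theta$ is homotopically trivial), but by $\mathcal{S}_{+1}$ to $-\epsilon$ (because of the skein relation $\adjustbox{valign=c}{\includegraphics[width=2cm]{TwistRel.eps}}$ at $A=1$). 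Hence the two right $\BT$-modules are \emph{not} naturally isomorphic, and no choice of Hopf-algebra automorphism of $\mathcal{O}[\SL_2]$ can repair this---an automorphism would have to intertwine the cotwist with itself, not change it by a sign. Your proposed fix, absorbing $(-1)^{w(\cdot)}$ into a ``sign character'' on $\BT$, does not work because $w$ lives on $P_M$, not on $\BT$; there is no intrinsic spin function on the handlebodies compatible with all of $\BT$.

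The paper handles exactly this obstruction by introducing two intermediate quotients $\BT\to\BT_1\to\BT_0$ (Notations~\ref{notations_extension}): $\mathcal{S}_{+1}$ factors through $\BT_1$ (framing mod~$2$ matters) while $\mathcal{O}[\mathcal{R}]$ factors through $\BT_0$ (framing is forgotten). The discrepancy between them is governed by a free $(\mathbb{Z}/2\mathbb{Z})^a$-action on $\BT_1(a,b)$, and Lemma~\ref{lemma_classical1} produces a natural isomorphism $\omega$ only over the section $s:\BT_0\hookrightarrow\BT_1$. The spin function $w$ then enters not as a twist of the functor on $\BT$, but as the data needed to define a map on elementary tensors $x\otimes y\mapsto \omega_n^{-1}(x\cdot \theta_{w_n(y)})\otimes p(y)$ that is simultaneously $G_n$-balanced (using $w_1(\theta)=1$) and compatible with the remaining $\BT_1$-morphisms (using naturality of $w$). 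This is the content of Lemmas~\ref{lemma_classical1}--\ref{lemma_classical2} and the proof that $\widetilde f(\mathcal{I}_1')=\mathcal{I}_0$. Your coend well-definedness check in step~(3) would, if carried out, force you to rediscover exactly this structure; as written, step~(2) asserts something false and the plan does not yet contain the mechanism that makes the sign correction coherent.
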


In particular, we obtain a variant of a  classical theorem of Bullock: 
\begin{corollary}\label{coro_classical} One has an isomorphism $\mathcal{O}[\mathcal{X}_{\SL_2}({M})] \cong \mathcal{S}_{+1}(M)$ sending a trace function $\tau_{\gamma}$ to $(-1)^{w(\gamma)}[\gamma]$.
\end{corollary}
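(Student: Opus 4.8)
\textbf{Proof plan for Corollary \ref{coro_classical}.}

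The plan is to deduce this from Theorem \ref{theorem_classical} by restricting the isomorphism $\eta_w : \mathcal{O}[\mathcal{R}_{\SL_2}(\mathbf{M})] \xrightarrow{\cong} \mathcal{S}_{+1}(\mathbf{M})$ to coinvariant vectors. Since $\eta_w$ is an isomorphism of $\mathcal{O}[\SL_2]$-comodules, it restricts to an isomorphism of the submodules of coinvariant vectors: $\eta_w : \mathcal{O}[\mathcal{R}_{\SL_2}(\mathbf{M})]^{\coinv} \xrightarrow{\cong} \mathcal{S}_{+1}(\mathbf{M})^{\coinv}$. By definition $\mathcal{O}[\mathcal{X}_{\SL_2}(\mathbf{M})] = \mathcal{O}[\mathcal{R}_{\SL_2}(\mathbf{M})]^{\coinv}$, so the left side is already what we want. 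For the right side, I would invoke Theorem \ref{theorem0}(2) / Theorem \ref{theorem_surjectivity} at $q^{1/4}=1$: the inclusion $\iota_* : \mathcal{S}_{+1}(M) \to \mathcal{S}_{+1}(\mathbf{M})^{\coinv}$ is surjective, and one checks it is injective at the classical limit. Injectivity at $q^{1/4}=1$ should follow because $\mathcal{O}[\mathcal{R}]$ is a flat (indeed free, up to the relations) $\mathbb{Z}$-module construction, or more concretely because the classical skein module $\mathcal{S}_{+1}(M)$ injects into $\mathcal{S}_{+1}(\mathbf{M})$ — an unstated tangle in $M$ that is a skein relation in $\mathbf{M}$ is already a skein relation in $M$, by locality and the fact that no skein relations at $q^{1/4}=1$ require boundary states. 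Hence $\mathcal{S}_{+1}(M) \cong \mathcal{S}_{+1}(\mathbf{M})^{\coinv}$.

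Next I would identify the generators under this chain of isomorphisms. The ring $\mathcal{O}[\mathcal{X}_{\SL_2}(\mathbf{M})]$ is generated by the trace functions $\tau_\gamma := X^\gamma_{++} + X^\gamma_{--}$ for $\gamma \in \pi_1(\mathbf{M})$ (this is classical — the coordinate ring of the $\SL_2$-character scheme is generated by traces of words; one may cite Procesi or Brumfiel-Hilden, or simply note that $\tau_\gamma$ is visibly coinvariant since $\operatorname{tr}$ is conjugation-invariant). Applying $\eta_w$ and the formula of Theorem \ref{theorem_classical}, one computes
$$
\eta_w(\tau_\gamma) = \eta_w(X^\gamma_{++}) + \eta_w(X^\gamma_{--}) = (-1)^{w(\gamma)}\bigl( (-1)\,\gamma_{-+} + 1\cdot \gamma_{+-} \bigr) = (-1)^{w(\gamma)} (\gamma_{+-} - \gamma_{-+}).
$$
It then remains to recognize $\gamma_{+-} - \gamma_{-+}$ as the image in $\mathcal{S}_{+1}(\mathbf{M})$ of the unstated closed curve $[\gamma] \in \mathcal{S}_{+1}(M)$ obtained by closing up the arc $T(\gamma)$. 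This is a purely local skein computation in a neighborhood of $\mathbb{D}_M$: closing the two endpoints of $T(\gamma)$ with a cap near the boundary disc and resolving via the skein/counit relations of Definition \ref{def_skein} at $q^{1/4}=1$ (so $A=1$, and the relevant coefficients come from the cap $\sum_i \cdots = \gamma_{+-} - \gamma_{-+}$, using the duality pairing $ht_*$). One checks the sign works out so that $\iota_*([\gamma]) = (-1)^{w(\gamma)}\eta_w(\tau_\gamma)$ — equivalently $\eta_w(\tau_\gamma) = (-1)^{w(\gamma)} \iota_*([\gamma])$, which, transported back through $\iota_*^{-1}$, gives the claimed correspondence $\tau_\gamma \mapsto (-1)^{w(\gamma)}[\gamma]$.

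The main obstacle I anticipate is not any single hard step but rather bookkeeping: getting the signs and the cap-resolution coefficients exactly right in the local computation $\gamma_{+-} - \gamma_{-+} = \iota_*([\gamma])$ up to the spin sign, and making sure the $(-1)^{w(\gamma)}$ factors are consistent with the framing-independence discussion of Remark \ref{remark_spin}(2) (recall that at $A=1$ a single curve's class depends on framing up to sign, which is exactly why the spin function enters). A secondary point requiring a word of justification is that the trace functions really do generate $\mathcal{O}[\mathcal{X}_{\SL_2}(\mathbf{M})]$ \emph{as a scheme}, i.e. at the level of possibly non-reduced coordinate rings over $\mathbb{Z}$ — but this is standard invariant theory for $\SL_2$ and can be cited. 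Everything else is a formal consequence of Theorem \ref{theorem_classical} together with the identification of coinvariants already established in Section \ref{sec3}.
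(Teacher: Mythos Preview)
Your approach is exactly the one the paper has in mind: the corollary is stated immediately after Theorem \ref{theorem_classical} with the words ``In particular'' and no separate proof, so restricting the comodule isomorphism $\eta_w$ to coinvariants and identifying $\mathcal{S}_{+1}(\mathbf{M})^{\coinv}$ with $\mathcal{S}_{+1}(M)$ is precisely the intended argument, and your trace computation $\eta_w(\tau_\gamma)=(-1)^{w(\gamma)}(\gamma_{+-}-\gamma_{-+})$ together with the local cap relation is the right way to extract the explicit formula.

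One point deserves a sharper justification than you give (and than the paper gives): the injectivity of $\iota_*:\mathcal{S}_{+1}(M)\to\mathcal{S}_{+1}(\mathbf{M})^{\coinv}$. Theorem \ref{theorem_surjectivity} only asserts surjectivity over $k$ and isomorphism over $K_{\SL_2}$; specializing at $q^{1/4}=1$ preserves surjectivity but not a priori injectivity, and your locality sentence (``no skein relations at $q^{1/4}=1$ require boundary states'') is not literally true, since the boundary relations are still present. The clean fix is to observe that the proof of Theorem \ref{theorem_surjectivity} (specifically Lemma \ref{lemma_exact}) goes through verbatim at $q^{1/4}=1$ over any field of characteristic zero, because the classical category $\overline{\mathcal{C}_{+1}^{\SL_2}}$ is then semisimple; this yields the isomorphism over $\mathbb{Q}$, and if you want it over $\mathbb{Z}$ you must either argue torsion-freeness separately or restrict the statement accordingly. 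This subtlety aside, your plan and the paper's are identical.
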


\begin{remark}
\begin{enumerate}
\item In the particular case where $\mathbf{M}=\mathbf{\Sigma}\times I$ is a thickened marked surface, Theorem \ref{theorem_classical} was proved independently in \cite{KojuQuesneyClassicalShadows} and \cite{CostantinoLe19} using triangulations. An alternative proof using explicit finite presentations of stated skein algebras was also presented in \cite{KojuPresentationSSkein}.
\item Corollary \ref{coro_classical} is closely related to a theorem of Bullock in \cite{Bullock} who constructed an isomorphism between $\mathcal{O}[\mathcal{X}_{\SL_2}(\mathbf{M})] $ and the skein algebra  $\mathcal{S}_{A=-1}(M)$ evaluated in $A=-1$ by sending a trace function $\tau_{\gamma}$ to $-[\gamma]$. Putting these two results together, one obtains an isomorphism (depending on $w$) between the skein algebra in $A=-1$ and the skein algebra in $A=+1$ sending $[\gamma]$ to $(-1)^{w(\gamma)+1}[\gamma]$. In the particular case where  $\mathbf{M}=\mathbf{\Sigma}\times I$ is a thickened surface (so $w$ is given by a spin structure on $\Sigma$  as explained in Remark \ref{remark_spin}), the existence of such an isomorphism $\mathcal{S}_{+1}(\Sigma)\cong \mathcal{S}_{-1}(\Sigma)$ was proved by Barett in \cite{Barett}. 
\end{enumerate}
\end{remark}

\begin{notations}\label{notations_extension}
Let us define two quotients $\BT_0$ and $\BT_1$ of $\BT$. The objects of $\BT_0$ and of $\BT_1$ are the same than the objects of $\BT$, so are the handlebodies $\mathbf{H}_n$ for $n\geq 0$. Identify $\BT(a,b)=\BT(\mathbf{H}_a, \mathbf{H}_b)$ with the set $P_a(\mathbf{H}_b)$ of isotopy classes of $a$-bottom tangles in $\mathbf{H}_b$. 
\par The set  $\BT_0(a,b)$ is defined as the quotient of $P_a(\mathbf{H}_b)$ by the skein relations $\Crosspos \leftrightarrow \Crossneg$ and $\adjustbox{valign=c}{\includegraphics[width=2cm]{TwistRel+1.eps}}$. 
\par The set $\BT_1(a,b)$ is defined as the quotient of $P_a(\mathbf{H}_b)$ by the skein relations $\Crosspos \leftrightarrow \Crossneg$ and $\adjustbox{valign=c}{\includegraphics[width=2cm]{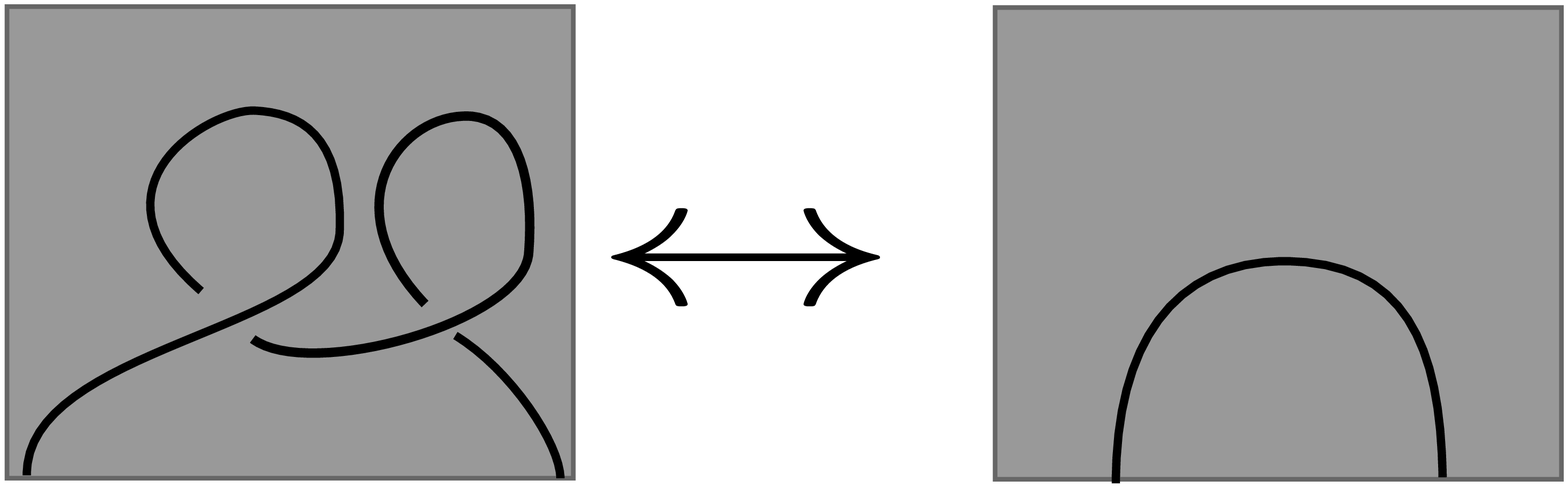}}$.
The compositions in $\BT_0$ and $\BT_1$ are induced by the compositions in $\BT$ after passing to the quotient. We thus have projection functors
$$ \BT \xrightarrow{p'} \BT_1 \xrightarrow{p} \BT_0$$
which are the identity on objects. For $1\leq i \leq q$, let $\theta_i^{a} \in \BT_1(a,a)$ be the bottom tangle 
$$\theta_i^a:= \adjustbox{valign=c}{\includegraphics[width=3cm]{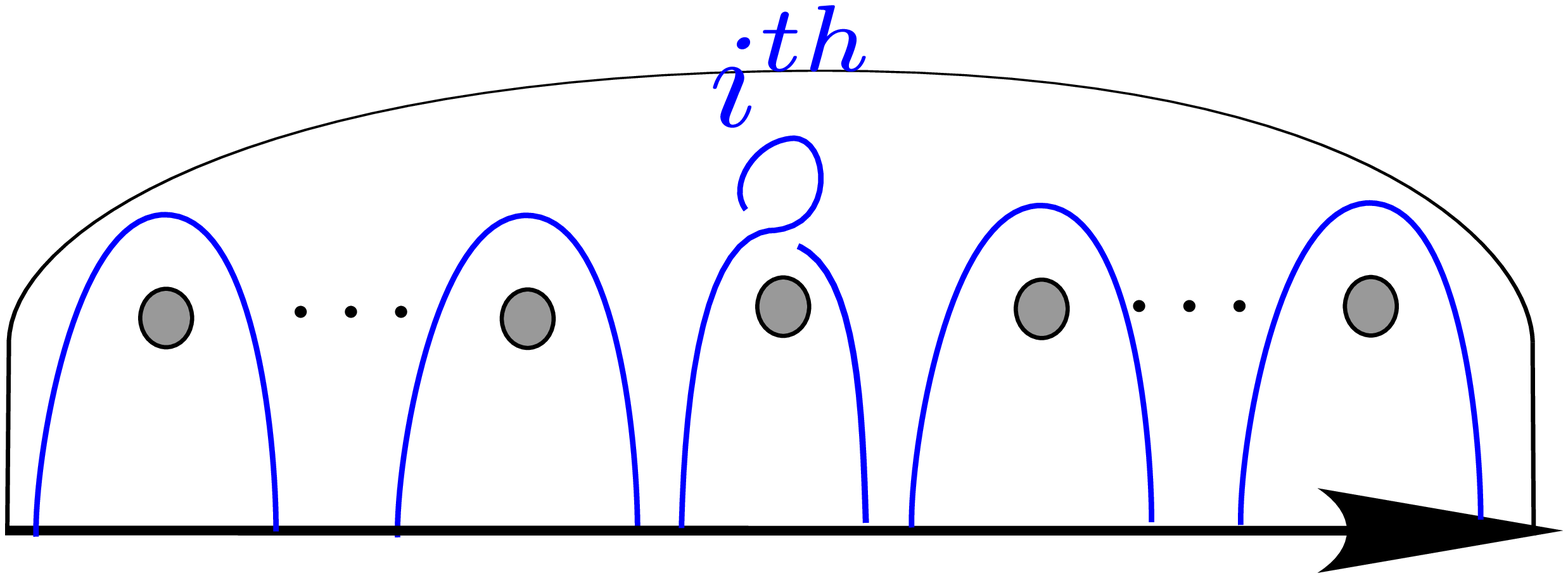}}. $$ Then $(\theta_i^a)^2=\id$ and the $\theta_i^a$ pairwise commute so they generate a group $G_a \cong (\mathbb{Z}/2\mathbb{Z})^a \subset \BT_1(a,a)$. This group naturally acts freely on the right of $\BT_1(a,b)$ and the quotient map $p: \BT_1(a,b) \to \BT_0(a,b)$ induces an isomorphism 
$$ \widetilde{p} : \quotient{\BT_1(a,b)}{G_a} \xrightarrow{\cong} \BT_0(a,b).$$
Define also a section $s: \BT_0(a,b) \hookrightarrow \BT_1(a,b)$ such that $s\circ p = \id$ by lifting a class $[\alpha] \in \BT_0(a,b)$ to the class $[\alpha_0] \in \BT_1(a,b)$ where the bottom tangle $\alpha_0$ is chosen such that each of its component has self-linking number $0$. The section $s$ defines an isomorphism $\BT_1(a,b)\cong \BT_0(a,b)\times G_a$. We will now denote a morphism $\mu \in \BT_1(a,b)$ by $\mu=(\mu_0, g)$ where $\mu_0=s(\mu)$ and $g\in G_a$ is the unique element such that $\mu_0 \cdot g = \mu$. Note that $s: \BT_0 \to \BT_1$ is a functor.

\end{notations}

Note that the right $\BT$-module $\restriction{\mathcal{O}[\mathcal{R}]}{\BT}$ passes to the quotient to a right $\BT_0$ module:  this follows from the facts that for $\mu : \mathbf{H}_a \to \mathbf{H}_b$ an embedding, the morphism $\mu_* : \mathcal{O}[\mathcal{R}_{\SL_2}(\mathbf{H}_a)] \to \mathcal{O}[\mathcal{R}_{\SL_2}(\mathbf{H}_b)]$ only depends on the morphism $\mu_* : \pi_1(\mathbf{H}_a)\to \pi_1(\mathbf{H}_b)$ i.e. only depends on the class of $\mu$ in $\BT_0(a,b)$. 
We defined during the proof of Lemma \ref{lemma_LKan_Classical} an explicit isomorphism $\mathcal{O}[\mathcal{R}_{\SL_2}(\mathbf{M})] \cong \restriction{\mathcal{O}[\mathcal{R}]}{\BT} \otimes_{\BT} \mathbb{Z}[\pi_M]$ where $\pi_M$ is a right $\BT_0$ module obtained as a quotient of $P_M$. Therefore one has  an isomorphism
$$ \kappa_{\mathbf{M}}: \mathcal{O}[\mathcal{R}_{\SL_2}(\mathbf{M})] \cong \restriction{\mathcal{O}[\mathcal{R}]}{\BT_0} \otimes_{\BT_0} \mathbb{Z}[\pi_M].$$
On the other hand, the right $\BT$-module $\restriction{\mathcal{S}_{+1}}{\BT}$ passes to the quotient to a right $\BT_1$ module: for $\mu : \mathbf{H}_a \to \mathbf{H}_b$ an embedding, the fact that the morphism $\mu_*: \mathcal{S}_{+1}(\mathbf{H}_a) \to \mathcal{S}_{+1}(\mathbf{H}_b)$ is invariant under the skein relations $\Crosspos \leftrightarrow \Crossneg$ and $\adjustbox{valign=c}{\includegraphics[width=2cm]{TwistReldouble.eps}}$ follows from the fact that the same skein relations hold in $\mathcal{S}_{+1}(\mathbf{M})$. By Lemma \ref{lemma_classic1}, one has an isomorphism
$$ \mathcal{S}_{+1}(\mathbf{M}) \cong \restriction{\mathcal{S}_{+1}}{\BT} \otimes_{\BT}  \mathbb{Z}[P_M] $$
which is explicited  in the proof of Theorem \ref{theorem_SkeinQRep} where it is denoted by $G^{-1}$. Let $\pi_M^{fr}$ be the left $\BT_1$ module sending $\mathbf{H}_n$ to the quotient of $P_n(\mathbf{M})$ by the relations $\Crosspos \leftrightarrow \Crossneg$ and $\adjustbox{valign=c}{\includegraphics[width=2cm]{TwistReldouble.eps}}$: so $\pi_M^{fr}$ is a quotient of $P_M$ and we have quotient maps (of left $\BT$-modules) $P_M \to \pi_M^{fr} \to \pi_M$. The isomorphism of Lemma \ref{lemma_classic1} can rewritten as 
$$ g_M : \mathcal{S}_{+1}(\mathbf{M}) \cong \restriction{\mathcal{S}_{+1}}{\BT_1} \otimes_{\BT_1}  \mathbb{Z}[\pi^{fr}_M].$$
In order to prove Theorem \ref{theorem_classical}, we are thus reduced to construct an isomorphism $$\restriction{\mathcal{S}_{+1}}{\BT_1} \otimes_{\BT_1}  \mathbb{Z}[\pi^{fr}_M] \cong \restriction{\mathcal{O}[\mathcal{R}]}{\BT_0} \otimes_{\BT_0} \mathbb{Z}[\pi_M].$$
By precomposing the functor $\restriction{\mathcal{S}_{+1}}{\BT_1} : \BT_1 \to \Mod_{\mathbb{Z}}$ with $s:\BT_0 \to \BT_1$, we can see $\restriction{\mathcal{S}_{+1}}{\BT_1} $ as a right $\BT_0$ module. We now define an explicit isomorphism between this right $\BT_0$-module and $ \restriction{\mathcal{O}[\mathcal{R}]}{\BT_0}$. 

Let $\beta_i \in P_1(\mathbf{H}_n)$ be the oriented bottom tangle depicted by $\beta_i:= \adjustbox{valign=c}{\includegraphics[width=2cm]{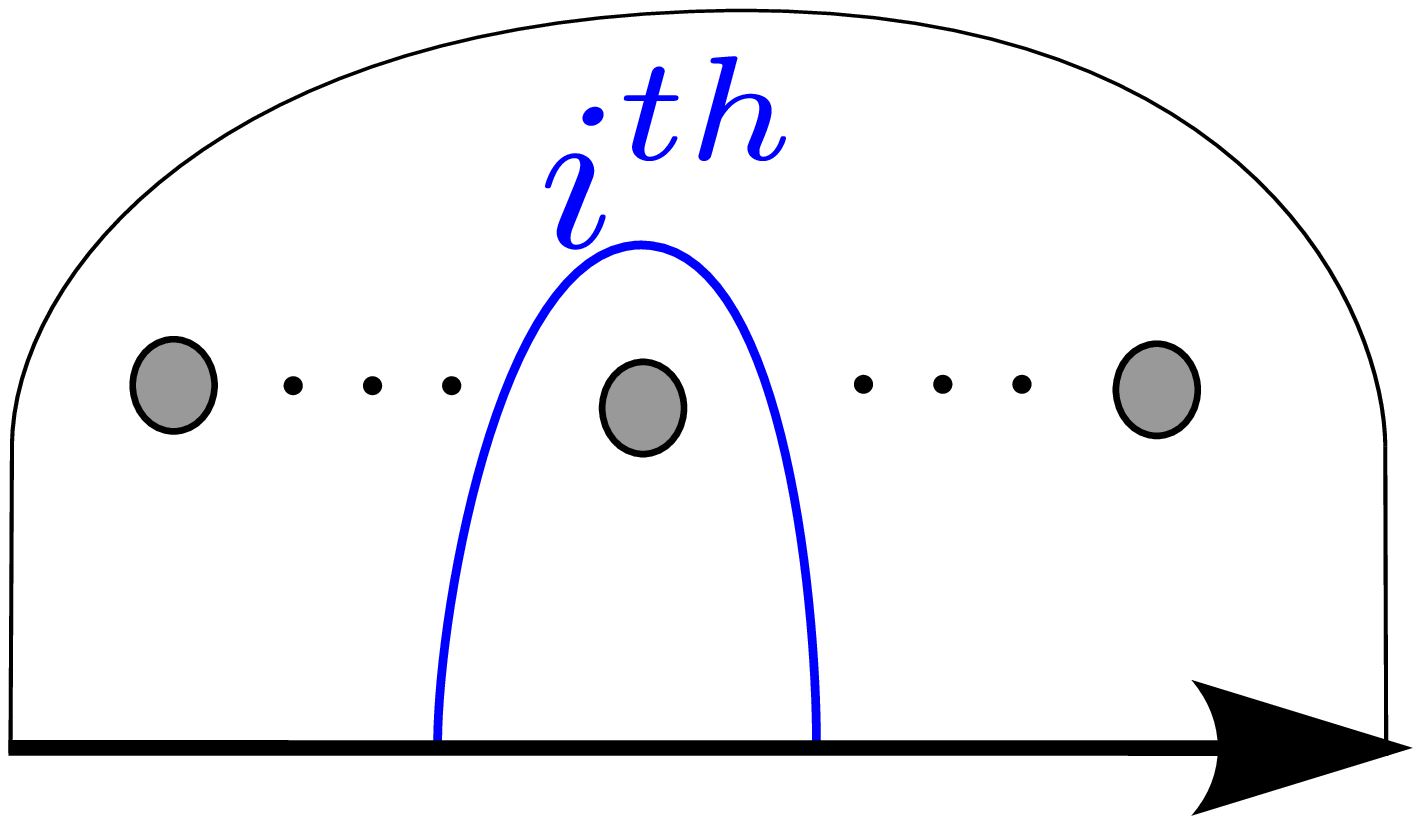}}$. 

\begin{lemma}\label{lemma_classical1}
For each $n\geq 0$, one has  isomorphisms $\omega_n: \mathcal{O}[\mathcal{R}_{\SL_2}(\mathbf{H}_n)] \xrightarrow{\cong} \mathcal{S}_{+1}(\mathbf{H}_n)$ of $\mathcal{O}[\SL_2]$ comodules algebras  characterized by the formula
$$ \omega_n \begin{pmatrix} X^{\beta_i}_{++} & X^{\beta_i}_{+-} \\ X^{\beta_i}_{-+} & X^{\beta_i}_{--} \end{pmatrix} := \begin{pmatrix} 0 & -1 \\ 1 & 0 \end{pmatrix} \begin{pmatrix} {\beta_i}_{++} & {\beta_i}_{+-} \\ {\beta_i}_{-+} & {\beta_i}_{--} \end{pmatrix}.$$
Moreover, for $\mu_0 \in \BT_0(a,b)$ then the following diagram commutes
$$ \begin{tikzcd}
\mathcal{O}[\mathcal{R}_{\SL_2}(\mathbf{H}_a)]
\ar[r, "(\mu_0)_*"] \ar[d, "\omega_a"] &
\mathcal{O}[\mathcal{R}_{\SL_2}(\mathbf{H}_b)] 
\ar[d, "\omega_b"] \\
\mathcal{S}_{+1}(\mathbf{H}_a) 
\ar[r, "s(\mu_0)_*"] &
\mathcal{S}_{+1}(\mathbf{H}_b) 
\end{tikzcd}
$$
\end{lemma}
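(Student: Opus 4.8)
The plan is to bootstrap everything from Corollary \ref{coro_skein_transmutation} specialized at $A^{1/2}=q^{1/4}=1$, together with a concrete identification of the classical limit of the braided quantum group.

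First I would pin down both sides of $\omega_n$ concretely. Since $\mathbf{H}_n$ retracts onto the trivial bottom tangle $T_n$, the group $\pi_1(\mathbf{H}_n)$ is free on the classes $\beta_1,\dots,\beta_n$, so $\mathcal{O}[\mathcal{R}_{\SL_2}(\mathbf{H}_n)]$ is generated as an algebra by the entries $X^{\beta_i}_{jk}$ subject only to $\det M_{\beta_i}=1$; in particular it is the $n$-fold product $\mathcal{O}[\SL_2]^{\otimes n}$ with the simultaneous-conjugation coaction, i.e.\ $\mathcal{O}[\mathcal{R}_{\SL_2}(\mathbf{H}_1)]^{\overline{\otimes} n}$, where at $q^{1/4}=1$ the braided tensor product $\overline{\otimes}$ for the adjoint coaction reduces to the ordinary tensor product (the braiding of $\mathcal{O}[\SL_2]-\RComod$ becomes the flip). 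On the skein side, Corollary \ref{coro_skein_transmutation} gives a Hopf-algebra isomorphism $\widetilde{f}\colon B_q\SL_2\xrightarrow{\ \cong\ }\mathcal{S}_q(\mathbf{H}_1)$ in $\overline{\mathcal{C}_q^{\SL_2}}$, hence, using that $\mathcal{S}_q$ is lax monoidal for $(\wedge,\overline{\otimes})$, comodule-algebra isomorphisms $(B_q\SL_2)^{\overline{\otimes} n}\cong\mathcal{S}_q(\mathbf{H}_n)$.

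Next I would compute $B_{+1}\SL_2:=B_q\SL_2\otimes_{q^{1/4}=1}\mathbb{Z}$. At $q=1$ the co-$R$-matrix $\mathscr{R}$ becomes the permutation matrix on $(k^2)^{\otimes 2}$, so the transmuted product of Definition \ref{def_transmutation} collapses to the ordinary product of $\mathcal{O}[\SL_2]$, and the defining relations \eqref{eq_BSL2} reduce to the commutation of the generators $a,b,c,d$ together with $ad-cb=1$; thus $B_{+1}\SL_2\cong\mathcal{O}[\SL_2]$ with its standard Hopf structure and the adjoint coaction. Combining, $\mathcal{S}_{+1}(\mathbf{H}_n)\cong(B_{+1}\SL_2)^{\overline{\otimes} n}\cong\mathcal{O}[\SL_2]^{\otimes n}\cong\mathcal{O}[\mathcal{R}_{\SL_2}(\mathbf{H}_n)]$ as $\mathcal{O}[\SL_2]$-comodule algebras, and I would define $\omega_n$ to be this composite. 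Evaluating the formula for $\widetilde{f}$ in Corollary \ref{coro_skein_transmutation} at $A^{1/2}=1$ turns the half-twist matrix $\begin{pmatrix}0&-A^{5/2}\\A^{1/2}&0\end{pmatrix}$ into $\begin{pmatrix}0&-1\\1&0\end{pmatrix}$, which is exactly the asserted value of $\omega_n$ on each block $M_{\beta_i}$; since these entries generate the algebra, $\omega_n$ is characterized by the formula, and being built from a morphism of $\overline{\mathcal{C}_q^{\SL_2}}$ it is automatically a morphism of comodules.

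For the naturality square I would argue as follows. The functor $\restriction{\mathcal{S}_{+1}}{\BT}$ descends to $\BT_1$ (the relations it must respect are crossing changes and the double-twist move, both of which hold in every $\mathcal{S}_{+1}(\mathbf{M})$), whereas $\restriction{\mathcal{O}[\mathcal{R}]}{\BT}$ descends to $\BT_0$, as observed just before the lemma, because $\mu_*$ on $\mathcal{O}[\mathcal{R}]$ depends only on $\mu_*$ on $\pi_1$. Under the transmutation identification, $\restriction{\mathcal{S}_{+1}}{\BT}$ is the braided functor $Q_H$ of Theorem \ref{theorem_functorBT} attached to the classical dual BP Hopf algebra $H=\mathcal{O}[\SL_2]=B_{+1}\SL_2$, while $\restriction{\mathcal{O}[\mathcal{R}]}{\BT}$ sends Habiro's Hopf structure on $\mathbf{H}_1$ to the standard Hopf structure on $\mathcal{O}[\SL_2]$ (a routine unwinding of the definitions: concatenation of bottom tangles becomes multiplication of fundamental-group elements, the counit becomes evaluation at the trivial loop, the braiding becomes the flip). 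Passing to $\BT_0$ kills the cotwist, which is the only structure morphism on which the two functors could a priori differ; hence the uniqueness in Theorem \ref{theorem_functorBT} shows that $(\omega_n)_{n\ge 0}$ is an isomorphism of right $\BT_0$-modules $\restriction{\mathcal{O}[\mathcal{R}]}{\BT_0}\xrightarrow{\ \cong\ }\restriction{\mathcal{S}_{+1}}{\BT_1}\circ s$, which is precisely the commutativity of the displayed square; equivalently, one checks the square directly on the PROP generators $\mu,\eta,\Delta,\epsilon,S$ and the braiding of $\BT_0$, on each of which both composites equal the corresponding structure morphism of $\mathcal{O}[\SL_2]$ and are intertwined by $\omega$ via Theorem \ref{theorem_skein_transmutation}. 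The main obstacle is not any computation but this framing bookkeeping: a single full twist acts on $\mathcal{S}_{+1}(\mathbf{H}_1)$ as multiplication by $-A^{3}\big|_{A=1}=-1$, not by $1$, so $\restriction{\mathcal{S}_{+1}}{\BT}$ genuinely fails to descend to $\BT_0$ and the two right $\BT$-modules are not isomorphic; getting the reconciliation through the section $s\colon\BT_0\hookrightarrow\BT_1$ exactly right, so that the naturality square is literally correct, is the delicate point.
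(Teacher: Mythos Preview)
Your proposal is correct and follows essentially the same route as the paper. The paper handles the first part (that each $\omega_n$ is an isomorphism of $\mathcal{O}[\SL_2]$-comodule algebras) by citing external results (\cite{KojuQuesneyClassicalShadows, KojuPresentationSSkein}), whereas you give a self-contained argument inside the paper by specializing Corollary~\ref{coro_skein_transmutation} at $q^{1/4}=1$ and observing $B_{+1}\SL_2\cong\mathcal{O}[\SL_2]$; this is a legitimate and arguably cleaner alternative. For the naturality square the two arguments coincide: the paper invokes Theorem~\ref{theorem_presBT} to reduce to the generators $\mu,\eta,\Delta,\epsilon,S^{\pm1},\theta^{\pm1}$ and then checks each case (for $\mu,\eta$ via the algebra-morphism property, for $\Delta,\epsilon,S^{\pm1}$ via the computations in the proof of Theorem~\ref{theorem_skein_transmutation}, and $\theta^{\pm1}$ is trivial in $\BT_0$), which is exactly your ``equivalently, one checks the square directly on the PROP generators'' option; your appeal to the uniqueness clause of Theorem~\ref{theorem_functorBT} is a repackaging of the same reduction.
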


\begin{proof}
The fact that the $\omega_n$ are isomorphisms of rings and of $\mathcal{O}[\SL_2]$-comodules is a particular (easy) case of \cite[Theorem $3.17$]{KojuQuesneyClassicalShadows} (see also \cite[Theorem $4.7$]{KojuPresentationSSkein}). By Theorem \ref{theorem_presBT},  every morphism in $\BT_0$ is obtained by composition and tensoring the generating morphisms $(\mu, \eta, \Delta, \epsilon, S^{\pm 1},  \theta^{\pm 1}) $ of Figure \ref{fig_BTHopfAlg}. So to prove the naturality of $\omega$, we need to prove the commutativity of the diagram in the particular cases where $s(\mu_0)$ is one of these morphisms. For $(\mu, \eta)$ this follows from the fact that $\omega_n$ are algebras morphisms. For $\Delta, \epsilon, S^{\pm 1}$ this follows from the computations made in the proof of Theorem \ref{theorem_skein_transmutation} to prove that $f$ is a morphism of braided Hopf algebra object. For $\theta^{\pm 1}$ this is obvious. This completes the proof.

\end{proof}

Recall from Notations \ref{notations_extension} the group $G_a\subset \BT_1(a,a)$ generated by the involutive elements $\theta_i^a$. Let us describe the right action of $G_a$ on $\mathcal{S}_{+1}(\mathbf{H}_a)$. First define $\Theta \in \Aut( \mathcal{O}[\SL_2])$ the automorphism sending the generator $x_{ij}$ for $i,j =\pm $ to $-x_{ij}$. Recall that as a $\mathbb{Z}$-module one has $\mathcal{S}_{+1}(\mathbf{H}_a) \cong \mathcal{S}_{+1}(\mathbf{H}_1)^{\otimes a} \cong (\mathcal{O}[\SL_2])^{\otimes a}$ and define $\Theta_i^a \in \Aut (\mathcal{S}_{+1}(\mathbf{H}_a))$ by $\Theta_i^a = \id^{\otimes i-1}\otimes \Theta \otimes \id^{\otimes a-i}$. Said differently, the automorphism $\Theta_i^a$ sends each generator $(\beta_i)_{\varepsilon, \varepsilon'}$ to $- (\beta_i)_{\varepsilon, \varepsilon'}$ and each $(\beta_j)_{\varepsilon, \varepsilon'}$ with $j\neq i$ to $+(\beta_j)_{\varepsilon, \varepsilon'}$. In general, for $g=\theta_1^{n_1}\ldots \theta_a^{n_a} \in G_a$, we write $\Theta_g:= (\Theta_1^a)^{n_1}\ldots (\Theta_a^a)^{n_a}$.

\begin{lemma}\label{lemma_classical2} For $g\in G_a$ and $x\in \mathcal{S}_{+1}(\mathbf{H}_a)$, one has $x\cdot g = \Theta_g(x)$, where $x \cdot g$ denotes the right action coming from the functoriality of $\mathcal{S}_{+1}$.
\end{lemma}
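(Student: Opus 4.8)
\textbf{Proof plan for Lemma \ref{lemma_classical2}.}

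The statement is local in nature: it concerns the automorphism of $\mathcal{S}_{+1}(\mathbf{H}_a)$ induced by the functoriality of $\mathcal{S}_{+1}$ applied to the bottom tangle $\theta_i^a \in \BT_1(a,a)$, which geometrically is the self-embedding of $\mathbf{H}_a$ that adds one full twist to the $i$-th handle (the self-linking of the $i$-th strand changes by $\pm 2$). Since $G_a$ is generated by the $\theta_i^a$, which pairwise commute and square to the identity, and since $x \mapsto x\cdot g$ is a right action by ring automorphisms, it suffices to prove the single identity $x\cdot \theta_i^a = \Theta_i^a(x)$ for all $x$, and then the general case $x\cdot g = \Theta_g(x)$ for $g = \prod_j (\theta_j^a)^{n_j}$ follows by composing. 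Moreover, because $\mathcal{S}_{+1}(\mathbf{H}_a) \cong \mathcal{S}_{+1}(\mathbf{H}_1)^{\otimes a} \cong \mathcal{O}[\SL_2]^{\otimes a}$ as a ring (via the generators $(\beta_j)_{\varepsilon\varepsilon'}$), and the embedding $\theta_i^a$ only touches the $i$-th handle while acting as the identity on the others, the functoriality of $\mathcal{S}_{+1}$ reduces everything to the case $a = 1$: I must show that $x\cdot \theta_1^1 = \Theta(x)$ on $\mathcal{S}_{+1}(\mathbf{H}_1)$, where $\Theta$ sends each generator $(\beta_1)_{\varepsilon\varepsilon'}$ (equivalently $a,b,c,d$) to its negative.

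For the $a=1$ computation, I would take a generator $(\beta_1)_{ij} \in \mathcal{S}_{+1}(\mathbf{H}_1)$, realized as an arc $\alpha$ through the handle with states $v_i, v_j$ at its endpoints and chosen framing with self-linking $0$. Applying the self-embedding $\theta_1^1$ produces the same underlying arc but with the framing twisted by one full turn, i.e. self-linking $\pm 1$. The point is then exactly the framing-dependence skein relation of the stated skein module: at $A = +1$ (or more precisely $q^{1/4}=1$), the twist relation reads $\adjustbox{valign=c}{\includegraphics[width=2cm]{TwistRel.eps}}$, so changing the framing by a full positive twist multiplies the class of a connected stated tangle by the scalar coming from that relation, which at $q^{1/4}=1$ is $-1$ (this is precisely the coefficient $-A^{-3}$ or its inverse evaluated at $A=1$; it is consistent with the definition of $\BT_1$ in Notations \ref{notations_extension}, where only the \emph{double} twist is killed, and with the appearance of the spin correction in Theorem \ref{theorem_classical}). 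Hence $(\beta_1)_{ij}\cdot \theta_1^1 = -(\beta_1)_{ij} = \Theta((\beta_1)_{ij})$ on generators. Since $\mathcal{S}_{+1}$ is a functor to rings (Definition \ref{def_skein} and the product of the previous subsection) and $\theta_1^1$ acts by a ring endomorphism, this extends multiplicatively to all of $\mathcal{S}_{+1}(\mathbf{H}_1)$, giving $x\cdot \theta_1^1 = \Theta(x)$.

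Returning to general $a$: write $x$ as a polynomial in the $(\beta_j)_{\varepsilon\varepsilon'}$ using the tensor decomposition $\mathcal{S}_{+1}(\mathbf{H}_a)\cong \mathcal{S}_{+1}(\mathbf{H}_1)^{\otimes a}$ (this is the $q^{1/4}=1$ specialization of the braided tensor decomposition $(B_q\SL_2)^{\overline{\otimes}a}\cong \mathcal{S}_q(\mathbf{H}_a)$ from Corollary \ref{coro_skein_transmutation}; at $A=+1$ the braiding is trivial, so it is an honest tensor product of rings). The embedding $\theta_i^a$ factors as $\id^{\otimes i-1}\wedge \theta_1^1 \wedge \id^{\otimes a-i}$ under the identification $\mathbf{H}_a\cong \mathbf{H}_1\wedge\cdots\wedge\mathbf{H}_1$, and functoriality of $\mathcal{S}_{+1}$ together with its lax monoidality for $\wedge$ gives $(\cdot)\cdot \theta_i^a = \id^{\otimes i-1}\otimes(\cdot\,\theta_1^1)\otimes\id^{\otimes a-i}$, which by the $a=1$ case is exactly $\Theta_i^a$. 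Finally, for $g = \theta_1^{n_1}\cdots\theta_a^{n_a}$ one composes these, and since the $\Theta_i^a$ are commuting involutions one gets $x\cdot g = \Theta_g(x)$ as claimed. The only genuinely delicate point is pinning down the sign in the twist relation at $q^{1/4}=1$: I expect this is the main obstacle, but it is resolved directly by evaluating the framing skein relation of \cite{LeStatedSkein} at $A=1$, and consistency is guaranteed by the way $\BT_1$ was defined (the \emph{squared} twist, i.e. sign squared $=+1$, acts trivially) and by the role of the spin function $w$ in Theorem \ref{theorem_classical}.
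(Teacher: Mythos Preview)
Your proposal is correct and follows essentially the same approach as the paper: check the identity on the algebra generators $(\beta_j)_{\varepsilon\varepsilon'}$ and the group generators $\theta_i^a$, and conclude using the framing skein relation $\adjustbox{valign=c}{\includegraphics[width=2cm]{TwistRel.eps}}$ at $A=+1$. The paper's proof is a two-line version of yours; your explicit reduction to $a=1$ via the tensor decomposition and the careful justification of multiplicativity are sound but not strictly needed, since the paper simply observes directly that $(\beta_i)_{\varepsilon\varepsilon'}\cdot\theta_j^a=(-1)^{\delta_{i,j}}(\beta_i)_{\varepsilon\varepsilon'}$ in $\mathcal{S}_{+1}(\mathbf{H}_a)$.
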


\begin{proof}
We need to prove that for each generator $(\beta_i)_{\varepsilon, \varepsilon'}$ of $\mathcal{S}_{+1}(\mathbf{H}_a)$ and for each generator $\theta_j^a$ of $G_a$, one has $(\beta_i)_{\varepsilon, \varepsilon'} \cdot \theta_i^a = (-1)^{\delta_{i,j}} (\beta_i)_{\varepsilon, \varepsilon'}$. This follows from the skein relation $\adjustbox{valign=c}{\includegraphics[width=2cm]{TwistRel.eps}}$ in $\mathcal{S}_{+1}(\mathbf{H}_a)$.

\end{proof}

Note that the left $\BT$ module $\eta$ of Definition \ref{def_SpinFunction} passes to the quotient to a left $\BT_1$ module and that 
a spin function is, by definition, a morphism $w: \pi_M^{fr} \to \eta$ of left $\BT_1$ modules. 
\par Let $[\beta_i] \in \mathrm{H}_1(\mathbf{H}_a; \mathbb{Z}/2\mathbb{Z})$ be the homology class of the closed curve obtained from $\beta_i$ by joining its endpoints, i.e. the simple closed curve encircling the $i$-th hole of the punctured disc $\mathbb{D}_a$. Let $\varphi_i \in  \mathrm{H}^1(\mathbf{H}_a; \mathbb{Z}/2\mathbb{Z})$ be the dual element sending $[\beta_j]$ to $\delta_{ij}$. 
Consider the isomorphism $G_a \cong \mathrm{H}^1(\mathbf{H}_a; \mathbb{Z}/2\mathbb{Z})$ sending $\theta_i^a$ to $\varphi_i$. Using this isomorphism, for $\alpha \in \pi_M^{fr}(\mathbf{H}_a)$, we can consider the automorphism $\Theta_{w_1(\alpha)} \in \Aut(\mathcal{S}_{+1}(\mathbf{H}_a))$ and the group element $\theta_{w_1(\alpha)}\in G_a$.

\begin{proof}[Proof of Theorem \ref{theorem_classical}]
Let us define an isomorphism
$$ f : \restriction{\mathcal{S}_{+1}}{\BT_1} \otimes_{\BT_1}  \mathbb{Z}[\pi^{fr}_M] \cong \restriction{\mathcal{O}[\mathcal{R}]}{\BT_0} \otimes_{\BT_0} \mathbb{Z}[\pi_M].$$
Identify the tensor products as 
\begin{align*}
& \restriction{\mathcal{S}_{+1}}{\BT_1} \otimes_{\BT_1}  \mathbb{Z}[\pi^{fr}_M] = \quotient{\left( \oplus_{n \geq 0} \mathcal{S}_{+1}(\mathbf{H}_n) \otimes_{\mathbb{Z}} \mathbb{Z}[\pi^{fr}_M(\mathbf{H}_n)] \right)}{\mathcal{I}_1}, \\
& \restriction{\mathcal{O}[\mathcal{R}]}{\BT_0} \otimes_{\BT_0} \mathbb{Z}[\pi_M] = \quotient{\left( \oplus_{n \geq 0} \mathcal{O}[\mathcal{R}_{\SL_2}(\mathbf{H}_n)] \otimes_{\mathbb{Z}} \mathbb{Z}[\pi_M(\mathbf{H}_n)] \right)}{\mathcal{I}_0},
\end{align*}
where $\mathcal{I}_i$ is spanned by elements of the form $u \cdot \mu \otimes v - u\otimes \mu\cdot v$ for $\mu$ a morphism in $\BT_i$. Let us slightly re-write the first quotient as
$$ \restriction{\mathcal{S}_{+1}}{\BT_1} \otimes_{\BT_1}  \mathbb{Z}[\pi^{fr}_M] = \quotient{\left( \oplus_{n \geq 0} \mathcal{S}_{+1}(\mathbf{H}_n) \otimes_{\mathbb{Z}[G_n]} \mathbb{Z}[\pi^{fr}_M(\mathbf{H}_n)] \right)}{\mathcal{I}'_1}$$
where $\mathcal{I}_1'$ is spanned by elements $u \cdot \mu \otimes v - u\otimes \mu\cdot v$ where $\mu$ is a morphism in $\BT_1$ such that $s(\mu)=\mu$.
\par
For $n\geq 0$, we define an isomorphism 
$$f_n : \mathcal{S}_{+1}(\mathbf{H}_n) \otimes_{\mathbb{Z}[G_n]} \mathbb{Z}[\pi^{fr}_M(\mathbf{H}_n)] \xrightarrow{\cong}  \mathcal{O}[\mathcal{R}_{\SL_2}(\mathbf{H}_n)] \otimes_{\mathbb{Z}} \mathbb{Z}[\pi_M(\mathbf{H}_n)] $$
by $$f_n ( x\otimes y):=  \omega_n^{-1}(x\cdot w_n(y)) \otimes p(y). $$ Here $p: \pi^{fr}_M(\mathbf{H}_n) \to \pi_M(\mathbf{H}_n)$ is the quotient map and $w_n : \pi_M^{fr}(\mathbf{H}_n) \to \mathrm{H}^1(\mathbf{H}_a; \mathbb{Z}/2\mathbb{Z})\cong G_n$ is the spin function. 
The fact that $f_n$ is well-defined, i.e. that $f_n(x\cdot g \otimes y)=f_n(x\otimes g\cdot y)$ for all $g\in G_n$, comes from the naturality of $w_n$ with respect to the morphisms in $G_n$ together with the fact that $w_1\left(\adjustbox{valign=c}{\includegraphics[width=1cm]{BT_Theta.eps}}  \right)  =1$.
 The fact that $f_n$ is an isomorphism comes from the fact that $\omega_n$ is an isomorphism together with the fact that $\pi_M(\mathbf{H}_n)=\quotient{\pi_M^{fr}(\mathbf{H}_n)}{G_n}$. The fact that $f_n$ is $\mathcal{O}[\SL_2]$ equivariant follows from the fact that $\omega_n$ is equivariant.
\par 
Let $\widetilde{f}:= \oplus_{n\geq 0} f_n$ and let us prove that $\widetilde{f}(\mathcal{I}'_1)= \mathcal{I}_0$. Consider an element $X= u \cdot \mu \otimes v - u\otimes \mu\cdot v \in \mathcal{I}'_1$ with $\mu\in  \BT_1(a,b)$ and $u\in \mathcal{S}_{+1}(\mathbf{H}_b)$, $v\in \pi_M^{fr}(\mathbf{H}_a)$. Then
\begin{align*}
f_a (u \cdot \mu \otimes v) &:= \omega_a^{-1}( u \cdot \mu \cdot \theta_{w_a(v)}) \otimes p(v) & {} \\
 {} & = \omega_a^{-1}(u \cdot \theta_{w_b(\mu\cdot v)} \cdot \mu) \otimes p(v) & \mbox{ by naturality of }w \\
 {} &= \omega_a^{-1}(u \cdot \theta_{w_b(\mu \cdot v)})\cdot \mu \otimes p(v) & \mbox{ by  Lemma \ref{lemma_classical1} and the fact that }s(\mu)=\mu.
 \end{align*}
 Thus,  writing $x:= u\cdot \theta_{w_b(\mu \cdot v)}$ and $y:=p(v)$ one has 
 $$ f_a( u\cdot \mu \otimes v) - f_b(u\otimes \mu \cdot v) = x \cdot \mu \otimes y - x \otimes \mu \cdot y \in \mathcal{I}_0.$$
 Therefore, we have proved the inclusion  $\widetilde{f}(\mathcal{I}'_1)\subset \mathcal{I}_0$. To prove the reverse inclusion $\mathcal{I}_0 \subset \widetilde{f}(\mathcal{I}'_1)$, consider $x \in \mathcal{O}[\mathcal{R}_{\SL_2}(\mathbf{H}_b)]$, $y\in \pi_M(\mathbf{H}_a)$ and $\mu_0 \in \BT_0(a,b)$ and consider the generator $X:= x\cdot \mu_0 \otimes y - x\otimes \mu_0\cdot y \in \mathcal{I}_0$. Let $\mu\in \BT_1(a,b)$ be a lift of $\mu_0$ such that $s(\mu)=\mu$ and choose $v \in \pi_M^{fr}(\mathbf{H}_a)$ a lift of $y$ such that $w_a(v)=0$. Set $u:= \omega_b(x)$ and $Y:= u \cdot \mu \otimes v - u \otimes \mu \cdot v \in \mathcal{I}_1'$. Then by definition one has $\widetilde{f}(Y)=X$ so $\mathcal{I}_0 \subset \widetilde{f}(\mathcal{I}'_1)$. We thus have proved that $\widetilde{f}(\mathcal{I}'_1)= \mathcal{I}_0$ so the isomorphism $\widetilde{f}$ induces an isomorphism 
 $$ f : \restriction{\mathcal{S}_{+1}}{\BT_1} \otimes_{\BT_1}  \mathbb{Z}[\pi^{fr}_M] \cong \restriction{\mathcal{O}[\mathcal{R}]}{\BT_0} \otimes_{\BT_0} \mathbb{Z}[\pi_M]$$
 which is $\mathcal{O}[\SL_2]$-equivariant since $\widetilde{f}$ is equivariant as well.
 Define the isomorphism $\eta_w^{-1} :  \mathcal{S}_{+1}(\mathbf{M}) \xrightarrow{\cong} \mathcal{O}[\mathcal{R}_{\SL_2}(\mathbf{M})]$ as the composition
 $$ \eta_w^{-1} :  \mathcal{S}_{+1}(\mathbf{M}) \xrightarrow{g_M} \restriction{\mathcal{S}_{+1}}{\BT_1} \otimes_{\BT_1}  \mathbb{Z}[\pi^{fr}_M]  \xrightarrow{f} \mathcal{O}[\mathcal{R}_{\SL_2}(\mathbf{M})] \xrightarrow{\kappa_M^{-1}}  \mathcal{O}[\mathcal{R}_{\SL_2}(\mathbf{M})].$$
 That $\eta_w$ is equivariant follows from the fact that each above map is equivariant. It remains to prove the  explicit formula for a stated arc given in Theorem \ref{theorem_classical}. Write $C_{++}=C_{--}:=0$, $C_{-+}:=-C_{+-}=1$ so that the formula we need to prove writes $\eta_w(X_{ij}^{\gamma})= (-1)^{w_1(\gamma)} \sum_{k=\pm} C_{ik} \gamma_{kj}$. The arc $\gamma$ defines a $1$-bottom tangles $\gamma\in P_1(M)$ and we denote by $\gamma_0, \gamma_1$ its images in the quotients $\pi_M(\mathbf{H}_1)$ and $\pi_M^{fr}(\mathbf{H}_1)$ respectively. Then $\kappa_M$ (defined in the proof of  Lemma \ref{lemma_LKan_Classical}) sends $X_{ij}^{\gamma}$ to the class $[X^{\beta_1}_{ij}\otimes \gamma_0]$. The isomorphism $g_M$ (defined from the isomorphism $G^{-1}$ in the proof of Theorem \ref{theorem_SkeinQRep} by tensoring by $\mathbb{Z}$) sends a stated arc $\gamma_{ab}$ to the class $[(\beta_1)_{ab} \otimes \gamma_1]$. Now the isomorphism $\omega_1$ in Lemma \ref{lemma_classical1} reads $\omega_1(X^{\beta_1}_{ij})=\sum_k C_{ik} (\beta_1)_{kj}$ so the desired formula follows from the equalities
 $$ \eta_M(X_{ij}^{\gamma})= g_M^{-1}\circ f \circ \kappa_M(X_{ij}^{\gamma})= g_M^{-1}\circ f ([X^{\beta_1}_{ij} \otimes \gamma_0]) = (-1)^{w_1(\gamma)} g_M^{-1}(\sum_{k= \pm} C_{ik} [(\beta_1)_{kj} \otimes \gamma_1]) = (-1)^{w_1(\gamma)} \sum_{k=\pm} C_{ik} \gamma_{kj}, $$
 where we used that $(\beta_1)_{kj} \cdot \theta_{w_1(\gamma)} = \Theta_{w_1(\gamma)}((\beta_1)_{kj})= (-1)^{w_1(\gamma)}(\beta_1)_{kj}$ as proved in Lemma \ref{lemma_classical2}. This concludes the proof.
 
\end{proof}

\section{Quantum Van Kampen theorems}\label{sec4}

\subsection{Quantum Van Kampen for quantum fundamental groups}\label{sec_VK_FG}

Recall that $\mathbb{D}^2$ is the unit disc of $\mathbb{R}^2$, $h: \mathbb{D}^2 \to [-1,1]$ is the projection on the $y$ axis  and let $\partial_{+}\mathbb{D}, \partial_{-}\mathbb{D} \subset \partial \mathbb{D}^2$ and $\mathbb{D}^{+}, \mathbb{D}^- \subset \mathbb{D}^2$ be the subsets of points $p$ for which $h(p)\geq 0$ and $h(p)\leq 0$ respectively. For $\mathbf{M}=(M, \iota_M) \in \mathcal{M}_{\con}^{(1)}$, we write $\partial_{\pm}\mathbb{D}_M:= \iota_M(\partial_{\pm} \mathbb{D})$ and $\mathbb{D}_M^{\pm}:= \iota_M(\mathbb{D}^{\pm})$.   
\par
Let $\mathbf{M}_1, \mathbf{M}_2 \in \mathcal{M}_{\con}^{(1)}$, $\mathbf{\Sigma}=(\Sigma, a) \in \MS$ a connected marked surface with a single boundary arc $a$. Consider oriented embeddings $\phi_1: \overline{\Sigma} \hookrightarrow \partial M_1$ and $\phi_2: \Sigma \hookrightarrow \partial M_2$ sending $a$ to $\partial_- \mathbb{D}_{M_1}$ and $\partial_+\mathbb{D}_{M_2}$ respectively. Here $\overline{\Sigma}$ is $\Sigma$ with the opposite orientation. 
\begin{definition}
Let $\mathbf{M}_1\cup_{\Sigma} \mathbf{M}_2 \in \mathcal{M}_{\con}^{(1)}$ be the marked $3$-manifold where 
$$M_1\cup_{\Sigma} M_2= \quotient{ M_1 \bigsqcup M_2}{(\phi_1(p)\sim \phi_2(p), p\in \Sigma)}$$ and the boundary disc $\mathbb{D}_{M_1\cup_{\Sigma} M_2}$ is obtained by gluing $\mathbb{D}_{M_1}$ with $\mathbb{D}_{M_2}$ by identifying $\phi_1(a)=\partial_-\mathbb{D}_{M_1}$ with $\phi_2(a)=\partial_+ \mathbb{D}_{M_2}$.
\end{definition}
By construction, there is a natural projection map $\pi^0 : \mathbf{M}_1\bigsqcup \mathbf{M}_2 \to \mathbf{M}_1\cup_{\Sigma} \mathbf{M}_2$. Since we prefer to work in the monoidal category $(\mathcal{M}^{(1)}_{\con}, \wedge)$, let us define a morphism $\pi : \mathbf{M}_1 \wedge \mathbf{M}_2 \to  \mathbf{M}_1\cup_{\Sigma} \mathbf{M}_2$ such that the diagram 
$$ \begin{tikzcd} 
M_1 \bigsqcup M_2 \ar[rd, "\pi^0"] \ar[dd, "\iota"] & {} \\
{} & M_1\cup_{\Sigma} M_2 \\
M_1\wedge M_2 \ar[ru, "\pi"] & {}
\end{tikzcd}$$
commutes up to isotopy (i.e. such that $\pi^0$ is isotopic to $\pi\circ \iota$), where $\iota$ is the natural inclusion. Recall that $\mathbb{T}$ is a ball $\mathbb{B}^3$ with three boundary discs and that  $M_1\wedge M_2$ is obtained from $M_1\bigsqcup M_2\bigsqcup \mathbb{T}$ by gluing $\mathbb{D}_{M_1}$ to one disc, say $e_1$ of $\mathbb{T}$ and by gluing $\mathbb{D}_2$ to another disc, say $e_2$ of $\mathbb{T}$. The closed intervals $\partial_- \mathbb{D}_{M_1}$ and $\partial_+ \mathbb{D}_{M_2}$ are glued along closed intervals $\partial_- e_1$ and $\partial_+e_2$ respectively. Isotope $e_2$ to a disc $e_2'$  such that $\partial_- e_1$ coincides with  $\partial_+e_2'$ and let $M'$ be the $3$-manifold obtained by gluing $M_1$ to $e_1$ and $M_2$ to $e_2'$ as in Figure \ref{fig_QuotientMap}, so that $M'$ is isotopic to $M_1\wedge M_2$. The manifold $M:= \quotient{M'}{(\phi_1(p)\sim \phi_2(p), p\in \Sigma)}$ is isotopic to $M_1\cup_{\Sigma} M_2$ so the quotient map 
$M'\to M$ defines a surjective morphism $\pi : \mathbf{M}_1 \wedge \mathbf{M}_2 \to  \mathbf{M}_1\cup_{\Sigma} \mathbf{M}_2$ and clearly  $\pi^0$ is isotopic to $\pi\circ \iota$.

\begin{definition}\label{def_gluing_morphism} The morphism 
$\pi : \mathbf{M}_1 \wedge \mathbf{M}_2 \to  \mathbf{M}_1\cup_{\Sigma} \mathbf{M}_2$ is called the \textit{gluing morphism}.
\end{definition}

 \begin{figure}[!h] 
\centerline{\includegraphics[width=16cm]{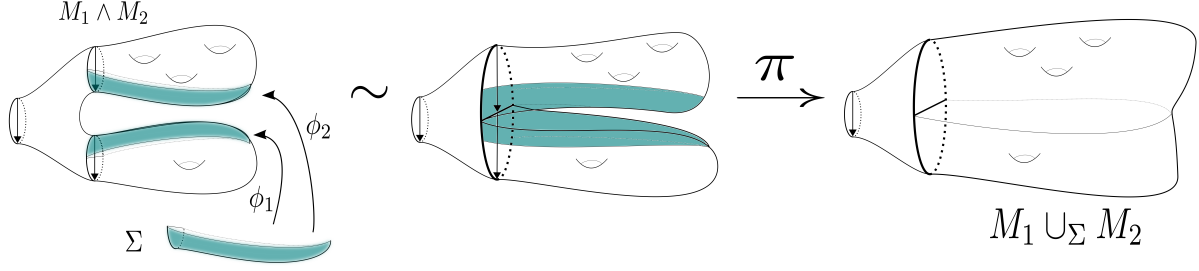} }
\caption{An illustration of the gluing morphism $\pi : \mathbf{M}_1 \wedge \mathbf{M}_2 \to  \mathbf{M}_1\cup_{\Sigma} \mathbf{M}_2$ .} 
\label{fig_QuotientMap} 
\end{figure} 

\begin{lemma}\label{lemma_associativity} The gluing operation is associative up to isotopy, i.e. one has $(\mathbf{M}_1\cup_{\Sigma}\mathbf{M}_2)\cup_{\Sigma'} \mathbf{M}_3 = \mathbf{M}_1\cup_{\Sigma}(\mathbf{M}_2\cup_{\Sigma'} \mathbf{M}_3)$ and if $f_{(12)}:\mathbf{M}_1\wedge \mathbf{M}_2 \to \mathbf{M}_1 \cup_{\Sigma}\mathbf{M}_2$, $f_{(23)}: \mathbf{M}_2\wedge \mathbf{M}_3 \to \mathbf{M}_2\cup_{\Sigma'} \mathbf{M}_3$, 
$f_{(12,3)}: (\mathbf{M}_1\cup_{\Sigma}\mathbf{M}_2)\wedge \mathbf{M}_3 \to \mathbf{M}_1\cup_{\Sigma}\mathbf{M}_2\cup_{\Sigma'} \mathbf{M}_3$ 
and $f_{(1,23)}: \mathbf{M}_1\wedge (\mathbf{M}_2\cup_{\Sigma'} \mathbf{M}_3) \to  \mathbf{M}_1\cup_{\Sigma}\mathbf{M}_2\cup_{\Sigma'} \mathbf{M}_3$
 are the four gluing morphisms, then $f_{(12,3)}\circ (f_{(12)}\wedge \id_{M_3})$ is isotopic to $f_{(1,23)}\circ (\id_{M_1}\wedge f_{(23)})$.
 \end{lemma}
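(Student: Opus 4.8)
The plan is to prove both statements --- associativity of the glued $3$-manifolds up to isotopy, and the coherence of the four gluing morphisms --- by reducing everything to explicit manipulations of the building blocks $M_i$, the thickened surfaces $\Sigma\times[-1,1]$, $\Sigma'\times[-1,1]$, and the triangles $\mathbb{T}$ appearing in the $\wedge$-construction. First I would recall that, by Definition \ref{def_operations} and Definition \ref{def_gluing_morphism}, both sides of the claimed equality $(\mathbf{M}_1\cup_{\Sigma}\mathbf{M}_2)\cup_{\Sigma'} \mathbf{M}_3 = \mathbf{M}_1\cup_{\Sigma}(\mathbf{M}_2\cup_{\Sigma'} \mathbf{M}_3)$ are, by definition, quotients of $M_1\bigsqcup M_2\bigsqcup M_3$ by an equivalence relation generated by the gluing data $\phi_1,\phi_2$ (along $\Sigma$) and $\phi_1',\phi_2'$ (along $\Sigma'$). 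Since $\phi_2$ lands in $\partial M_2$ and $\phi_1'$ also lands in $\partial M_2$, and since the two surfaces $\Sigma,\Sigma'$ are embedded disjointly (after a preliminary isotopy moving them apart --- which is possible because each is glued along a single boundary arc $a$ to a \emph{half} of the base disc $\mathbb{D}_{M_2}$, so one can take $\Sigma$ near $\partial_+\mathbb{D}_{M_2}$ and $\Sigma'$ near $\partial_-$), the two equivalence relations commute and generate the same relation regardless of the order in which they are imposed. This gives the underlying-space equality (up to homeomorphism), and one checks the marked disc $\mathbb{D}$ is produced the same way on both sides --- it is $\mathbb{D}_{M_1}$ glued to $\mathbb{D}_{M_2}$ glued to $\mathbb{D}_{M_3}$ along the arcs $\phi_1(a)=\partial_-\mathbb{D}_{M_1}$, $\phi_1'(a')=\partial_-\mathbb{D}_{M_2}$, etc., which is manifestly independent of the parenthesization.

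Next, for the coherence statement, I would unwind the definition of the gluing morphism $\pi:\mathbf{M}_1\wedge\mathbf{M}_2\to\mathbf{M}_1\cup_\Sigma\mathbf{M}_2$ given just above Definition \ref{def_gluing_morphism}: it is obtained by isotoping the triangle $\mathbb{T}$ so that the arcs $\partial_-e_1$ and $\partial_+e_2'$ coincide, then passing to the quotient identifying $\phi_1(p)\sim\phi_2(p)$. Both composites $f_{(12,3)}\circ(f_{(12)}\wedge\id_{M_3})$ and $f_{(1,23)}\circ(\id_{M_1}\wedge f_{(23)})$ are then embeddings of $\mathbf{M}_1\wedge\mathbf{M}_2\wedge\mathbf{M}_3$ (using associativity of $\wedge$, Remark after Definition \ref{def_operations}) into $\mathbf{M}_1\cup_\Sigma\mathbf{M}_2\cup_{\Sigma'}\mathbf{M}_3$; I would exhibit each as ``collapse the two triangles $\mathbb{T}_{12}$ and $\mathbb{T}_{23}$ and perform the two quotient identifications'', and observe that the only difference between the two composites is the order in which one collapses/isotopes the triangles and performs the gluings. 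Since the two triangles can be taken with disjoint support (again using that each $\Sigma,\Sigma'$ occupies a distinct half of the relevant base disc), the corresponding isotopies commute, so the two composite embeddings are isotopic --- which is exactly equality of morphisms in $\mathcal{M}^{(1)}_{\con}$, since by Convention \ref{convention_bold} morphisms there are isotopy classes. I would draw the analogue of Figure \ref{fig_QuotientMap} with three pieces to make this visible.

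The main obstacle will be bookkeeping: making precise the preliminary isotopy that moves $\Sigma$ and $\Sigma'$ (hence the triangles $\mathbb{T}_{12}$, $\mathbb{T}_{23}$) into disjoint position inside $\partial M_2$ and $\mathbb{D}_{M_2}$, and verifying that this isotopy is compatible with the height conventions on the base discs (the requirement from Definition \ref{def_MMfd}(2) that discs mapped into a common disc have disjoint heights, and the $\partial_\pm$ conventions from the start of Section \ref{sec_VK_FG}). Concretely one must arrange $\phi_2(a)=\partial_+\mathbb{D}_{M_2}$ and $\phi_1'(a')=\partial_-\mathbb{D}_{M_2}$ to be nested correctly within $\mathbb{D}_{M_2}$ so that both gluings can be performed simultaneously; this is where a careful picture is essential, but it is a routine (if fiddly) general-position argument in dimension $3$ and involves no new ideas. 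Everything else is a direct consequence of associativity of disjoint union, the definition of $\wedge$, and the definition of the quotient maps, so once the disjointness of the gluing regions is set up the two isotopies are produced by an Alexander-trick-style argument and the proof concludes.
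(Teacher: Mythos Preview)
Your proposal is correct and follows the same approach as the paper, which simply states that ``the lemma is a straightforward consequence of the definitions'' without further elaboration. Your careful unwinding of the $\wedge$-construction, the quotient maps, and the disjoint-support argument for the two triangles is exactly what that one-line proof is gesturing at; the paper just declines to spell out the bookkeeping.
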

 
 \begin{proof} The lemma is a straightforward consequence of the definitions.
 \end{proof}

\par By replacing $\mathbf{M}_1$ and $\mathbf{M}_2$ by $\mathbf{\Sigma}\times I$ in the preceding discussion, while gluing $\Sigma \times \{-1\}$ of the first copy to $\Sigma \times \{+1\}$ in the second copy, we get a gluing morphism 
$\mu : (\mathbf{\Sigma}\times I)\wedge (\mathbf{\Sigma}\times I) \to \mathbf{\Sigma}\times I$ which endows $\mathbf{\Sigma}\times I$ with a structure of algebra in $(\mathcal{M}^{(1)}_{\con}, \wedge)$. Similarly, by gluing $M_1$ to $\Sigma\times I$ while identifying $\phi_1(p)\sim (p, +1)$ for $p\in \Sigma$, we obtain a gluing morphism $\nabla_1: \mathbf{M}_1\wedge \mathbf{\Sigma}\times I \to \mathbf{M}_1$ turning $\mathbf{M}_1$ into a right $\mathbf{\Sigma}\times I$ module in 
$(\mathcal{M}^{(1)}_{\con}, \wedge)$. Similarly, by gluing $\mathbf{\Sigma}\times I$ to $\mathbf{M}_2$ while identifying $\phi_2(p)\sim (p, -1)$, we get a gluing morphism $\nabla_2 : \mathbf{\Sigma}\times I \wedge \mathbf{M}_2 \to \mathbf{M}_2$ turning $\mathbf{M}_2$ into a left $\mathbf{\Sigma}\times I$ module. Note that the diagram 
$$ \begin{tikzcd}
 \mathbf{M}_1\wedge (\mathbf{\Sigma} \times I) \wedge \mathbf{M}_2
 \ar[r,shift left=.75ex,"\nabla_1\wedge \id"]
  \ar[r,shift right=.75ex,swap,"\id \wedge \nabla_2"] 
&   \mathbf{M}_1\wedge \mathbf{M}_2 \ar[r, "\pi"] & \mathbf{M}_1\cup_{\Sigma} \mathbf{M}_2
\end{tikzcd}$$
commutes up to isotopy by Lemma \ref{lemma_associativity}. By abuse of notations, for $i=1,2$, still denote by $\nabla_i$ the image of $\nabla_i$ by the quantum fundamental group functor $P$ and let us identify the elements $P_{M_1\wedge M_2}\cong P_{M_1}\otimes_D P_{M_2}$ and $P_{M_1\wedge \Sigma \wedge M_2} \cong P_{M_1} \otimes_D P_{\Sigma}\otimes_D P_{M_2}$ in $\widehat{\BT}$ using the isomorphism of (the proof of) Lemma \ref{lemma_monoidal}.

\begin{definition}
Let 
 $P_{M_1} \otimes_{P_{\Sigma}} P_{M_2}\in \widehat{\BT}$ be the coequalizer
$$ \begin{tikzcd}
 P_{M_1} \otimes_D P_{\Sigma} \otimes_D P_{M_2} 
 \ar[r,shift left=.75ex,"\nabla_1\otimes \id"]
  \ar[r,shift right=.75ex,swap,"\id \otimes \nabla_2"] 
&  P_{M_1} \otimes_D P_{M_2} \ar[r] & P_{M_1}\otimes_{P_{\Sigma}} P_{M_2}.
\end{tikzcd}$$
\end{definition}

Since the diagram 
$$ \begin{tikzcd}
 P_{M_1} \otimes_D P_{\Sigma} \otimes_D P_{M_2} 
 \ar[r,shift left=.75ex,"\nabla_1\otimes \id"]
  \ar[r,shift right=.75ex,swap,"\id \otimes \nabla_2"] 
&  P_{M_1} \otimes_D P_{M_2} \ar[r, "P(\pi)"] & P_{M_1\cup_{\Sigma}M_2}
\end{tikzcd}$$
commutes, there exists a unique morphism $\kappa : P_{M_1} \otimes_{P_{\Sigma}} P_{M_2} \to P_{M_1\cup_{\Sigma}M_2}$ making commuting the obvious diagram.

The following was proposed without proof by Habiro in \cite{Habiro_QCharVar}:
\begin{theorem}\label{theorem_QVK_FG}(Quantum Van Kampen theorem for quantum fundamental groups)
\\ The morphism $\kappa : P_{M_1} \otimes_{P_{\Sigma}} P_{M_2} \to P_{M_1\cup_{\Sigma}M_2}$ is an isomorphism.
\end{theorem}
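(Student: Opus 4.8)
The plan is to establish the isomorphism at the level of the sets $P_n(\mathbf{M})$ of bottom tangles, i.e. to show that for every $n\geq 0$ the map $\kappa_{\mathbf{H}_n} : (P_{M_1}\otimes_{P_\Sigma}P_{M_2})(\mathbf{H}_n) \to P_{M_1\cup_\Sigma M_2}(\mathbf{H}_n) = P_n(M_1\cup_\Sigma M_2)$ is a bijection, naturally in $\mathbf{H}_n$. Since all the functors involved take values in $\Set$ and a colimit of set-valued functors is computed objectwise, it suffices to check bijectivity for each fixed $n$. The case $n=0$ is trivial (both sides are singletons), so I would assume $n\geq 1$ throughout.

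\textbf{Surjectivity.} First I would unwind the definition of the coequalizer exactly as in the proof of Lemma \ref{lemma_monoidal}: an element of $(P_{M_1}\otimes_{P_\Sigma}P_{M_2})(\mathbf{H}_n)$ is represented by a triple $(T_1, T_2, T)$ with $T_1\in P_a(M_1)$, $T_2\in P_b(M_2)$ and $T\in \BT(n,a+b)$, modulo the $\BT$-action coming from $\otimes_D$ together with the extra relation induced by $\nabla_1,\nabla_2$ (sliding a $\mathfrak{bt}$-tangle through the glued surface $\Sigma\times I$). Given an arbitrary $n$-bottom tangle $T_0\subset M_1\cup_\Sigma M_2$, I would isotope $T_0$ into \emph{standard position} with respect to the gluing surface $\Sigma$ (transverse intersection, framings in the height direction, distinct heights along $\mathbb{D}_i$, and the ordering condition on connected components), exactly as in Lemma \ref{lemma_monoidal}. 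Cutting along $\Sigma$ then splits $T_0$ into $T_0\cap M_1$, $T_0\cap M_2$, plus the portion in the gluing region, which — after gluing trivial bottom tangles onto the appropriate discs of $\mathbb{T}$ — produces a triple mapping to $T_0$ under $\kappa$. This gives surjectivity.

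\textbf{Injectivity.} For injectivity I would show that the standard-position representative is well defined up to the coequalizer relation. If $T_0\cong T_0'$ are two standard-position $n$-bottom tangles in $M_1\cup_\Sigma M_2$, then one can pass from $T_0$ to $T_0'$ by a finite sequence of two elementary moves: (i) an isotopy supported entirely inside $M_1$, inside $M_2$, or inside a collar $\Sigma\times[-1,1]$ of the gluing surface, restricting to the identity on $\Sigma$; and (ii) pushing a tangle of $\mathfrak{bt}$ across the gluing surface $\Sigma$, in perfect analogy with \emph{Move (II)} of Figure \ref{fig_MoveII} in the proof of Lemma \ref{lemma_monoidal} — but now the move across $\Sigma$ is precisely what gets killed by the extra $\nabla_1,\nabla_2$ coequalizer relation, rather than merely by the $\otimes_D$ relation. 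Moves of type (i) leave the triple unchanged (or change it by a $\BT$-morphism, hence not at all in the coend), and moves of type (ii) change the triple by exactly an element of the form identified by $\nabla_1\otimes\id$ and $\id\otimes\nabla_2$. Hence $\kappa_{\mathbf{H}_n}$ is injective. Naturality in $\mathbf{H}_n$ is a straightforward diagram chase from the naturality of the splitting construction, as in Lemma \ref{lemma_monoidal}.

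\textbf{Main obstacle.} The technical heart — and the step I expect to require the most care — is the claim that any isotopy between two standard-position bottom tangles in $M_1\cup_\Sigma M_2$ decomposes into the two elementary moves above. This is a transversality/general-position argument: one perturbs the ambient isotopy so that the one-parameter family of intersections of the tangle with $\Sigma$ is generic, and then checks that the only codimension-one degenerations are a Reidemeister-type tangency at $\Sigma$ (absorbed into moves of type (ii), since $\mathfrak{bt}$ allows crossing changes) or a creation/annihilation of a pair of transverse intersection points (also a move of type (ii)). This is exactly the analogue of the argument used in Lemma \ref{lemma_monoidal} for the quotient map $M\to M_{\mathbb{D}^{(i)}\#\mathbb{D}^{(j)}}$; the one new subtlety here is that $\Sigma$ is an arbitrary surface rather than a disc, so one must also keep track of the ordering condition on the heights of the boundary points along $\partial_-\mathbb{D}_{M_1} = \partial_+\mathbb{D}_{M_2}$ when the cut-open tangle is reassembled, but this is handled by the same bookkeeping as in Definition \ref{def_tangles} and Lemma \ref{lemma_monoidal}. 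Once this geometric lemma is in place, the rest is formal manipulation of coends.
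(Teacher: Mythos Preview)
Your overall strategy — prove bijectivity of $\kappa_{\mathbf{H}_n}$ for each $n$ by a cut-and-reassemble argument modelled on Lemma~\ref{lemma_monoidal} — is the right shape, but the analogy with Lemma~\ref{lemma_monoidal} breaks down at a crucial point, and this produces a genuine gap in both your surjectivity and injectivity arguments.

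\textbf{The gap in surjectivity.} In Lemma~\ref{lemma_monoidal} the gluing of $M_1\wedge M_2$ is performed along the \emph{boundary discs} $\mathbb{D}_{M_i}$ themselves; hence when you cut a bottom tangle of $M_1\wedge M_2$ along $\mathbb{D}_1\cup\mathbb{D}_2$, the resulting pieces have their endpoints on the boundary discs of $\mathbf{M}_i$ and are therefore again bottom tangles. In the Van Kampen situation the gluing is along a genuine surface $\Sigma$ that meets the boundary disc only in the arc $a$. If you put a bottom tangle $T_0\subset M_1\cup_\Sigma M_2$ in transverse position with respect to $\Sigma$ and cut, the pieces $T_0\cap M_i$ acquire endpoints at interior points of $\Sigma$, not on $\mathbb{D}_{M_i}$; they are \emph{not} bottom tangles in $\mathbf{M}_i$, and they do not represent anything in $P_a(M_1)\times P_b(M_2)$. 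Your sentence ``after gluing trivial bottom tangles onto the appropriate discs of $\mathbb{T}$'' does not fix this: there is no triangle $\mathbb{T}$ with discs matching these interior intersection points. The paper handles this by never cutting along $\Sigma$ at all; instead it embeds $M_1\wedge M_2$ into $M$ and uses a \emph{push} operation (Definition~\ref{def_push}) to drag each intersection point of $T_0$ with $\Sigma^0$ along a path in $\Sigma^0\cup\mathbb{B}_M$ into the ball $\mathbb{B}_M$ near the boundary disc, thereby isotoping $T_0$ entirely into the image of $M_1\wedge M_2$. This is the step your argument is missing.

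\textbf{The gap in injectivity.} Your proposed transversality argument (``codimension-one degenerations are tangencies with $\Sigma$'') describes a local cap being pushed across $\Sigma$; but such a cap has its endpoints at interior points of $\Sigma$ and is not an element of $P_s(\mathbf{\Sigma})$, so it is not what the $\nabla_1,\nabla_2$ coequalizer relation identifies. The coequalizer relation moves a bottom tangle in $\Sigma\times I$ — one whose endpoints lie on the arc $a$ — from one side to the other, which is a very different kind of move. The paper's proof does not proceed by transversality to $\Sigma$; instead it reformulates both sides as quotients of $P_n(M_1\wedge M_2)$ by two equivalence relations $\sim_\phi$ (isotopies supported in $\overline{M}_1$ or $\overline{M}_2$) and $\sim_\psi$ (isotopies in $M$), and proves $\sim_\phi=\sim_\psi$ by a double induction on the ``size'' $(a,b)$ of an isotopy $H:[0,1]^2\to UM$, subdividing the square into rectangles each mapped into $U\overline{M}_1$ or $U\overline{M}_2$ (Lemmas~\ref{lem2}, \ref{lem3}), after first reducing to $1$-bottom tangles (Lemma~\ref{lem1}). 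The pushes along paths in $\Sigma^0\cup\mathbb{B}_M$ reappear here to adjust the isotopy at the corners of the subdivision so that the pieces can be reassembled. This inductive decomposition of the isotopy square, together with the push operation, is the substantive content you would still need to supply.
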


The proof of Theorem \ref{theorem_QVK_FG} is the most technical part of the paper and will be cut into several lemmas.
Let us first introduce some terminology. Write $M:= M_1\cup_{\Sigma}M_2$ for simplicity. Consider the embedding $\iota_{M_1\wedge M_2}: M_1\wedge M_2 \hookrightarrow M$ defined as the composition
$$ \iota_{M_1\wedge M_2}: M_1\wedge M_2 \subset M_1\wedge (\Sigma\times I) \wedge M_2 \xrightarrow{\pi \circ (\nabla_1\wedge \id)} M.$$
Here $M_1\wedge (\Sigma\times I) \wedge M_2 $ is, by definition, a ball $\mathbb{B}^3$ with three discs on the boundary to which we have attached $M_1, M_2, \Sigma \times I$ and we see $M_1\wedge M_2 \subset M_1\wedge (\Sigma\times I) \wedge M_2$ as the union of $\mathbb{B}^3$ with $M_1, M_2$ only. Define also an embedding $\iota_{\Sigma}$  as the composition:
$$ \iota_{\Sigma} : \Sigma\times I \subset M_1\wedge (\Sigma\times I) \wedge M_2 \xrightarrow{\pi \circ (\nabla_1\wedge \id)} M.$$
So $M$ is the union $M= \iota_{M_1\wedge M_2}(M_1\wedge M_2) \cup \iota_{\Sigma}(\Sigma\times I)$.
Write $\mathbb{B}_M := \pi \circ (\nabla_1\wedge \id)(\mathbb{B}^3)$, so $\mathbb{B}_M \subset M$ is a ball containing the boundary disc $\mathbb{D}_M$ and contained in the image of $\iota_{M_1\wedge M_2}$. We can further decompose $M$ as the union $M= \iota_{M_1\wedge M_2}(M_1) \cup \iota_{M_1\wedge M_2}(M_2) \cup  \iota_{\Sigma}(\Sigma\times I) \cup \mathbb{B}_M$ as illustrated in Figure \ref{fig_decomp_cheloud}.
 \begin{figure}[!h] 
\centerline{\includegraphics[width=10cm]{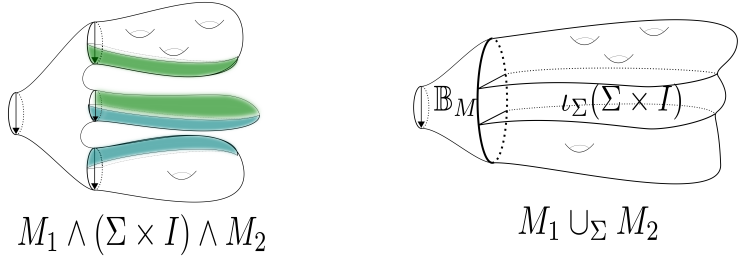} }
\caption{An illustration of the decomposition $M= \iota_{M_1\wedge M_2}(M_1) \cup \iota_{M_1\wedge M_2}(M_2) \cup  \iota_{\Sigma}(\Sigma\times I) \cup \mathbb{B}_M$ .} 
\label{fig_decomp_cheloud} 
\end{figure} 
 Let $\overline{M}_1:= \iota_{M_1\wedge M_2}(M_1) \cup  \iota_{\Sigma}(\Sigma\times I) \cup \mathbb{B}_M \subset M$ and $\overline{M}_2:= \iota_{M_1\wedge M_2}(M_2) \cup  \iota_{\Sigma}(\Sigma\times I) \cup \mathbb{B}_M \subset M$. So $\overline{M}_1\cup \overline{M}_2 = M$ and $\overline{M}_1\cap \overline{M}_2= \mathbb{B}_M \cup  \iota_{\Sigma}(\Sigma\times I) $. For $n\geq 0$, by abuse of notations, we denote by $P_n (M_1\wedge M_2)$ the set of isotopy classes of $n$-bottom tangles in $\iota_{M_1\wedge M_2}(M_1\wedge M_2)$, i.e. we now see $M_1\wedge M_2$ as a submanifold of $M$ via $\iota_{M_1\wedge M_2}$.

\begin{definition} For $n\geq 1$, and $\alpha, \beta \in P_n(M_1\wedge M_2)$, write: 
\begin{enumerate}
\item $\alpha \sim_{\phi} \beta$ if $\alpha$ and $\beta$ are related by a composition of isotopies, each having support either in $\overline{M}_1$ or $\overline{M}_2$; 
\item $\alpha \sim_{\psi} \beta$ if $\alpha$ and $\beta$ are isotopic in $M$.
\end{enumerate}
\end{definition}

Identify the spaces $P_M(\mathbf{H}_n)$ and $P_n(M)$ using Lemma \ref{lemma_BT} so that $P_{M_1}\otimes_{P_{\Sigma}}P_{M_2} (\mathbf{H}_n)$ becomes identified with the coequalizer:
$$ \begin{tikzcd}
 P_n(\mathbf{M}_1\wedge (\mathbf{\Sigma} \times I) \wedge \mathbf{M}_2)
 \ar[r,shift left=.75ex,"\nabla_1\wedge \id"]
  \ar[r,shift right=.75ex,swap,"\id \wedge \nabla_2"] 
&   P_n(\mathbf{M}_1\wedge \mathbf{M}_2) \ar[r, "P(\pi)"] & P_{M_1}\otimes_{P_{\Sigma}}P_{M_2} (\mathbf{H}_n).
\end{tikzcd}$$
Here $\nabla_i$ has image $\overline{M}_i$, so we have an isomorphism
\begin{equation}\label{eq_id1}
 \varphi: P_{M_1}\otimes_{P_{\Sigma}}P_{M_2} (\mathbf{H}_n) \cong \quotient{P_n(\mathbf{M}_1\wedge \mathbf{M}_2)}{\sim_{\phi}}.
 \end{equation}
The embedding $\iota_{M_1\wedge M_2}$ induces a map $\Psi:= (\iota_{M_1\wedge M_2})^*: P_n(M_1\wedge M_2) \to P_n(M)$ such that $\Psi(\alpha) = \Psi(\beta) \Leftrightarrow \alpha \sim_{\psi} \beta$.
Let $\Sigma^0:= \iota_{\Sigma}(\Sigma\times \{0\}) \subset M$ and $\Sigma^0\times I:= \iota_{\Sigma}(\Sigma \times I)$.
 To prove the surjectivity of $\Psi$, we introduce the: 

\begin{definition}\label{def_push}
Let $\alpha$ be an $n$ bottom tangle in $M_1\wedge M_2 \subset M$ which is transverse to $\Sigma^0$.  Let $v\in \alpha \cap \Sigma^0$, so $v=\iota_{\Sigma}(v_0)$ for $v_0 \in \Sigma$,  and suppose that the connected component $\alpha(v)$ of $\alpha \cap (\Sigma^0 \times I)$ is the straight line $\iota_{\Sigma}(v_0\times I)$ with a fixed framing of the form $\iota_{\Sigma}(\overrightarrow{v}_0\times I)$ for $\overrightarrow{v}_0$ a fixed vector in the unitary tangent bundle $U_{v_0}\Sigma$ of $v_0$. Let $w\in \mathbb{B}_M$ and $\gamma: v \to w$ a smooth path in $\Sigma_0\cup \mathbb{B}_M$ which only intersects $\alpha$ in $v$. 
\par The \textit{push} of $\alpha$ along $\gamma$ is the bottom tangle $\alpha'$ obtained from $\alpha$ by replacing the framed arc $\alpha(v)$ by an arc of the form $\alpha'(v)=\gamma^{-1}\gamma$, i.e. $\alpha'(v)$ goes from $\iota(v_0, -1)$ to $w$ along $\gamma$ and comes back to $\iota(v_0, +1)$ along $\gamma^{-1}$ so that $\alpha'(v)$ does not intersect $\Sigma_0$ in Figure \ref{fig_push}. The framing is chosen such that $\alpha(v)$ and $\alpha'(v)$ are isotopic, so are $\alpha$ and $\alpha'$.
\end{definition}

 \begin{figure}[!h] 
\centerline{\includegraphics[width=10cm]{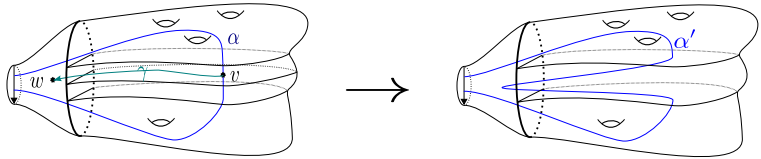} }
\caption{An illustration of the pushing operation.} 
\label{fig_push} 
\end{figure} 

\begin{lemma} The map  $\Psi: P_n(M_1\wedge M_2) \to P_n(M)$ is surjective. Therefore, $\Psi$ induces a bijection:
\begin{equation}\label{eq_id2}
P_M(\mathbf{H}_n) \cong \quotient{P_n(\mathbf{M}_1\wedge \mathbf{M}_2)}{\sim_{\psi}}.
\end{equation}
\end{lemma}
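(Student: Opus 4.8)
The plan is to show that every $n$-bottom tangle in $M:=M_1\cup_\Sigma M_2$ is isotopic to one contained in $\iota_{M_1\wedge M_2}(M_1\wedge M_2)$, the sole obstruction being its intersections with the separating surface $\Sigma^0$, which I will remove using the pushing operation of Definition~\ref{def_push}.

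First I would represent a class in $P_n(M)\cong P_M(\mathbf{H}_n)$ (the identification being Lemma~\ref{lemma_BT}) by a bottom tangle $T$, make $T$ transverse to $\Sigma^0$ so that $T\cap\Sigma^0=\{v_1,\dots,v_r\}$ is finite, and then, using the collar $\iota_\Sigma:\Sigma\times I\hookrightarrow M$, perform a small isotopy supported near $\Sigma^0$ bringing $T$ into the normal form required by Definition~\ref{def_push}: near each $v_j$ the component of $T\cap(\Sigma^0\times I)$ is a vertical segment $\iota_\Sigma(v_j^0\times I)$ carrying a product framing $\iota_\Sigma(\overrightarrow{v}{}_j^0\times I)$.

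Next I would eliminate the intersection points by induction on $r$. If $r\geq 1$, connectedness of $\Sigma$ makes $\Sigma^0\cup\mathbb{B}_M$ connected; since $T\cap\Sigma^0$ is finite and $T\cap\mathbb{B}_M$ is one-dimensional, a general-position argument yields an embedded path $\gamma$ in $\Sigma^0\cup\mathbb{B}_M$ from $v_1$ to a point $w\in\mathbb{B}_M$ meeting $T$ only at $v_1$. Pushing $T$ along $\gamma$ produces $T'$, isotopic to $T$ in $M$, in which the arc through $v_1$ has been replaced by an arc lying in $\mathbb{B}_M$ together with two short legs pushed just off $\Sigma^0$; thus $T'\cap\Sigma^0$ has one fewer point while the normal form survives near the remaining $v_j$. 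Iterating, I reach a bottom tangle isotopic to $T$ and disjoint from $\Sigma^0$. A tangle disjoint from $\Sigma^0$ lies in $M\setminus\Sigma^0$, and a final isotopy pushing the parts of it inside the two half-collars $\iota_\Sigma(\Sigma\times(0,1])$ and $\iota_\Sigma(\Sigma\times[-1,0))$ towards $\iota_{M_1\wedge M_2}(M_1)$ and $\iota_{M_1\wedge M_2}(M_2)$ along the collar coordinate puts it inside $\iota_{M_1\wedge M_2}(M_1\wedge M_2)$, i.e. in the image of $\Psi$. This proves surjectivity of $\Psi$; combined with the already recorded equivalence $\Psi(\alpha)=\Psi(\beta)\Leftrightarrow\alpha\sim_\psi\beta$ and with $P_M(\mathbf{H}_n)\cong P_n(M)$, it yields the bijection $P_M(\mathbf{H}_n)\cong P_n(\mathbf{M}_1\wedge\mathbf{M}_2)/\!\!\sim_\psi$ of \eqref{eq_id2}.

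The hard part will be the bookkeeping in the inductive step: one must check that each individual push is realized by an ambient isotopy of $M$ and introduces no new intersections with $\Sigma^0$ — so the two short legs of the new arc must be kept strictly off the zero level, and the portion of $\gamma$ running through $\mathbb{B}_M$ must be chosen to miss both $T$ and the arcs created by earlier pushes. This is precisely why I would carry out the pushes one at a time rather than simultaneously.
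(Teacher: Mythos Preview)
Your proof is correct and follows essentially the same approach as the paper: isotope the tangle to be transverse to $\Sigma^0$ in the normal form of Definition~\ref{def_push}, then push the intersection points one by one into $\mathbb{B}_M$ to obtain an isotopic tangle lying in $\iota_{M_1\wedge M_2}(M_1\wedge M_2)$. Your version is more explicit about the inductive bookkeeping and the final collar isotopy, which the paper leaves implicit.
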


\begin{proof}
Starting with a $n$-bottom tangle $\alpha$ in $M$, we first isotope it such that it intersects $\Sigma^0$ transversally as in Definition \ref{def_push} and then push one-by-one each point of $\alpha \cap \Sigma^0$ inside $\mathbb{B}_M$ along arbitrary paths in order to get a bottom tangle $\alpha'$ such that: $(1)$ $\alpha'$ is isotopic to $\alpha$ and $(2)$ $\alpha'$ is in the image of $\iota_{M_1\wedge M_2}$. This proves the surjectivity.
\end{proof}

Obviously, $\alpha \sim_{\phi} \beta$ implies $\alpha \sim_{\psi} \beta$ so we get a surjective map $\quotient{P_n(\mathbf{M}_1\wedge \mathbf{M}_2)}{\sim_{\phi}} \to \quotient{P_n(\mathbf{M}_1\wedge \mathbf{M}_2)}{\sim_{\psi}}$ which, using the isomorphisms of Equations \eqref{eq_id1} and \eqref{eq_id2}, corresponds the morphism $\kappa : P_{M_1} \otimes_{P_{\Sigma}} P_{M_2} \to P_{M_1\cup_{\Sigma}M_2}$ of Theorem \ref{theorem_QVK_FG}. To prove the injectivity, we need to prove the

\begin{proposition}\label{prop_QVK}
For $n\geq 1$ and $\alpha, \beta \in P_n(M_1\wedge M_2)$, $\alpha \sim_{\psi} \beta$ implies $\alpha \sim_{\phi} \beta$.
\end{proposition}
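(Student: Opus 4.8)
The plan is to prove Proposition \ref{prop_QVK} by analyzing a generic isotopy $H: \alpha \times [0,1] \to M$ witnessing $\alpha \sim_\psi \beta$, and decomposing it into elementary pieces that can each be realized as a $\sim_\phi$-move. First I would perturb $H$ so that the trace of the tangle endpoints stays within $\mathbb{B}_M$ and so that the $1$-parameter family of tangles $\alpha_t := H(\alpha \times \{t\})$ is transverse to the fixed separating surface $\Sigma^0 \subset M$ for all but finitely many parameter values $t_1 < \dots < t_k$; between consecutive critical times the isotopy is ``$\Sigma^0$-transverse'' and at each $t_i$ exactly one of the standard codimension-one degeneracies occurs, namely a tangency of $\alpha_t$ with $\Sigma^0$ (a birth/death of a pair of intersection points of $\alpha_t \cap \Sigma^0$, or a finger move pushing a strand through $\Sigma^0$). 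Using the pushing operation of Definition \ref{def_push}, I would set up a normal form: for each $t$, after pushing all points of $\alpha_t \cap \Sigma^0$ into $\mathbb{B}_M$ along chosen paths in $\Sigma^0 \cup \mathbb{B}_M$, one obtains a bottom tangle $\widetilde{\alpha_t}$ lying in $\iota_{M_1 \wedge M_2}(M_1 \wedge M_2)$, well-defined up to $\sim_\phi$ once we show that the auxiliary choices (which paths, in which order) do not matter modulo $\sim_\phi$.

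The key reduction is therefore a \emph{local model analysis}. For a $\Sigma^0$-transverse sub-isotopy (no tangency), the ambient isotopy restricts to an isotopy of $\alpha_t \cap \overline{M}_1$ inside $\overline{M}_1$ together with an isotopy of $\alpha_t \cap \overline{M}_2$ inside $\overline{M}_2$, compatible along $\Sigma^0 \times I \cup \mathbb{B}_M$, and this is exactly an instance of $\sim_\phi$ (one handles the strands in $\Sigma^0 \times I$ by keeping them straight, which is the hypothesis built into Definition \ref{def_push}). The subtle point is the independence of the normal form from the choice of pushing paths: two paths $\gamma, \gamma'$ from $v \in \alpha_t \cap \Sigma^0$ to points of $\mathbb{B}_M$ differ (since $\Sigma$ is connected and $\mathbb{B}_M$ is a ball) by a loop in $\Sigma^0 \cup \mathbb{B}_M$, and sliding the pushed strand from following $\gamma$ to following $\gamma'$ is realized by an isotopy supported in a neighborhood of $\Sigma^0 \times I \cup \mathbb{B}_M \subset \overline{M}_1 \cap \overline{M}_2$; such an isotopy can be decomposed as one supported in $\overline{M}_1$ followed by one supported in $\overline{M}_2$ (or vice versa), which is a $\sim_\phi$-move. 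Likewise the ordering in which distinct points of $\alpha_t \cap \Sigma^0$ are pushed only changes the result by isotopies supported near $\mathbb{B}_M$, again a $\sim_\phi$-move. Finally, at a critical time $t_i$ where a tangency occurs, I would check by inspecting the two local models (a small cap being created/annihilated on one side of $\Sigma^0$, or a finger passing through) that the pushed normal forms $\widetilde{\alpha_{t_i - \epsilon}}$ and $\widetilde{\alpha_{t_i + \epsilon}}$ differ by an isotopy supported in a ball meeting $\Sigma^0$, hence supported in $\overline{M}_1 \cup \overline{M}_2$ in a way decomposable into the two pieces — again $\sim_\phi$.

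Concatenating these elementary $\sim_\phi$-equivalences across all the $t_i$ and across the transverse intervals, together with the initial and final pushes (note $\alpha = \alpha_0$ and $\beta = \alpha_1$ already lie in $M_1 \wedge M_2$, and the identity isotopy on each endpoint shows $\alpha \sim_\phi \widetilde{\alpha_0}$ and $\beta \sim_\phi \widetilde{\alpha_1}$ — one should be slightly careful here, deforming $\alpha$ first so that its strands near $\Sigma^0$ are straight, which is an isotopy in $M_1 \wedge M_2$ hence in $\overline M_1$), yields $\alpha \sim_\phi \beta$. The main obstacle I anticipate is the bookkeeping in the path-independence step: making precise that ``change of pushing path'' and ``change of pushing order'' genuinely decompose into moves supported on the two sides $\overline{M}_1$ and $\overline{M}_2$ separately, rather than needing simultaneous support in both. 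This requires choosing a product-neighborhood identification $\Sigma^0 \times I \cup \mathbb{B}_M \cong (\Sigma \times I) \cup \mathbb{B}^3$ once and for all and doing the strand slides within it, then observing that this collar sits inside $\overline{M}_1$ and the ball $\mathbb{B}_M$ sits inside $\overline{M}_2$ (or conversely), so that any compactly-supported isotopy there splits as required. Once this local-to-global scheme is in place, combining it with the surjectivity already proved and with the identifications \eqref{eq_id1}, \eqref{eq_id2} gives that $\kappa$ is a bijection, proving Theorem \ref{theorem_QVK_FG}.
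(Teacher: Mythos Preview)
Your Cerf-style approach (generic transversality of the family $\alpha_t$ to $\Sigma^0$, plus local models at the finitely many tangency times) is genuinely different from the paper's. The paper instead imitates Hatcher's proof of the classical Van Kampen theorem: after reducing to $n=1$ (Lemma~\ref{lem1}, by isolating one strand at a time), one covers the domain $[0,1]^2$ of the isotopy $H$ by a grid so fine that each cell maps into $U\overline{M}_1$ or $U\overline{M}_2$, calls the minimal grid dimensions the \emph{size} $(a,b)$ of $H$, and proves the claim by a double induction on this size (Lemmas~\ref{lem2} and~\ref{lem3}), using the push only to manufacture intermediate checkpoints in $M_1\wedge M_2$. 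Your route has the pleasant feature of working for all $n$ at once. Two points in your sketch should be tightened, though. The ``main obstacle'' you flag (path- and order-independence of the push) is in fact the easy part: any two pushes differ by an isotopy supported in $\Sigma^0\times I\cup\mathbb{B}_M=\overline{M}_1\cap\overline{M}_2$, which is already a single $\sim_\phi$-move --- no decomposition is needed. Conversely, the steps you pass over quickly are where the work is. On a transverse sub-interval, saying the isotopy ``restricts to an isotopy of $\alpha_t\cap\overline{M}_i$ inside $\overline{M}_i$'' does not yield a $\sim_\phi$-move (that relation compares \emph{full} bottom tangles, not pieces); the correct statement is that the pushed normal forms $\widetilde{\alpha_t}$ form a continuous family of embedded tangles \emph{inside} $M_1\wedge M_2$, hence are already equal in $P_n(M_1\wedge M_2)$ before $\sim_\phi$ is invoked. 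And at a birth/death the change between $\widetilde{\alpha_{t_i-\epsilon}}$ and $\widetilde{\alpha_{t_i+\epsilon}}$ is not supported in a small ball: the finger-tip lands in $\iota(M_j)$ while the push arcs run all the way to $\mathbb{B}_M$, so verifying $\sim_\phi$ there needs at least one $\overline{M}_2$-move (retract the tip through $\Sigma^0\times I\subset\overline{M}_2$ into $\mathbb{B}_M$) followed by one $\overline{M}_1$-move. This works, but it is more than inspecting a local model, and the paper's grid induction packages exactly this bookkeeping more uniformly.
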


\begin{lemma}\label{lem1}
If Proposition \ref{prop_QVK} is true for $n=1$, then it is true for every $n\geq 1$.
\end{lemma}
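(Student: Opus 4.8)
The plan is to reduce the general-$n$ case of Proposition \ref{prop_QVK} to the case $n=1$ by a ``bundling'' argument. Given an $n$-bottom tangle $\alpha = \alpha^{(1)} \cup \cdots \cup \alpha^{(n)}$ in $M_1 \wedge M_2$, whose connected components are height-ordered, I would group all $n$ components into a single $1$-bottom tangle by the following standard trick: inside the ball $\mathbb{B}_M$ near the boundary disc $\mathbb{D}_M$, attach to the $2n$ endpoints of $\alpha$ a fixed planar tangle $\tau_n \in \mathfrak{bt}(n,1)$ (in the sense of Definition \ref{def_bt}) that merges the $n$ strands into one, so that $\widehat{\alpha} := \alpha \cdot \tau_n$ is a $1$-bottom tangle; concretely $\widehat{\alpha}$ is obtained by running a single arc that successively ``visits'' each $\alpha^{(i)}$. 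Because $\tau_n$ lives in a collar of $\mathbb{D}_M$, and that collar is contained in $\mathbb{B}_M \subset \overline{M}_1 \cap \overline{M}_2$, this operation is compatible with all three equivalences in sight.

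The key steps, in order, are as follows. First, I would check that $\alpha \mapsto \widehat{\alpha}$ is well-defined on isotopy classes and that it intertwines the relations: $\alpha \sim_\phi \beta$ implies $\widehat\alpha \sim_\phi \widehat\beta$, and $\alpha \sim_\psi \beta$ implies $\widehat\alpha \sim_\psi \widehat\beta$ — both are immediate since $\tau_n$ is supported in $\mathbb{B}_M$ and the isotopies/ambient isotopies can be taken to fix that collar. Second, and this is the content, I would construct a left inverse to bundling \emph{up to $\sim_\phi$}: namely, from $\widehat\alpha$ one recovers $\alpha$ by ``cutting along $\mathbb{D}_M$'' the part of $\widehat\alpha$ lying in the collar, i.e. by applying the cabling $\tau_n^{-1}$-type move that splits the single strand back into $n$ strands inside $\mathbb{B}_M$; since $\mathbb{B}_M \subset \overline{M}_1$, this splitting is realized by an isotopy supported in $\overline{M}_1$, so the composite $\alpha \rightsquigarrow \widehat\alpha \rightsquigarrow \alpha$ is the identity on $\sim_\phi$-classes. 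Third, given $\alpha \sim_\psi \beta$ with $\alpha,\beta \in P_n(M_1\wedge M_2)$, we get $\widehat\alpha \sim_\psi \widehat\beta$ in $P_1(M_1\wedge M_2)$; invoking Proposition \ref{prop_QVK} for $n=1$ gives $\widehat\alpha \sim_\phi \widehat\beta$; applying the left inverse (step two) to both sides and using that it respects $\sim_\phi$ yields $\alpha \sim_\phi \beta$, which is the desired conclusion.

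The main obstacle I anticipate is purely bookkeeping rather than conceptual: one must be careful that the bundling tangle $\tau_n$ is chosen once and for all so that $\widehat{\,\cdot\,}$ is genuinely a map on $P_n$, and that both the ``visiting'' arc and its framing can be pushed entirely into the collar of $\mathbb{D}_M$ without ever meeting $\Sigma^0$ or leaving $\mathbb{B}_M$ — this is exactly where connectedness of $M$ and the fact that $\mathbb{B}_M$ is a ball containing $\mathbb{D}_M$ are used. A second subtlety is that when undoing the bundling one needs the splitting isotopy to be supported in a single piece ($\overline{M}_1$), not spread across $\overline{M}_1$ and $\overline{M}_2$; this holds because $\mathbb{B}_M$ itself is contained in $\overline{M}_1$, so no crossing of $\Sigma^0$ is needed. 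Once these points are nailed down the reduction is formal. (Remark: one could alternatively argue by induction on $n$, bundling only the top two components at a time via a single element of $\mathfrak{bt}(2,1)$ supported in $\mathbb{B}_M$; the one-step version above seems cleaner and avoids an induction.)
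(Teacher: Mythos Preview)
Your step two is where the argument breaks. The ``unbundling'' you describe---cutting off the $\tau_n$ piece of $\widehat\alpha$ in the collar of $\mathbb{D}_M$---is an operation on a specific \emph{representative}, not a map on $\sim_\phi$-classes of $1$-bottom tangles. In step three you are therefore implicitly asserting that the bundling $\alpha\mapsto\widehat\alpha$ is injective on $\sim_\phi$-classes, and it is not. Take $n=2$ and any $1$-bottom tangle $\gamma$ in $M_1\wedge M_2$ whose class in $\pi_1(M)$ is nontrivial, and form the $2$-bottom tangles $\alpha=(\eta,\gamma)$ and $\alpha'=(\gamma,\eta)$ with $\eta$ a small trivial arc in $\mathbb{B}_M$. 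Then $\widehat\alpha$ and $\widehat{\alpha'}$ are each isotopic to $\gamma$ by an isotopy supported in $\mathbb{B}_M\subset\overline{M}_1\cap\overline{M}_2$, so $\widehat\alpha\sim_\phi\widehat{\alpha'}$; yet $\alpha\not\sim_\psi\alpha'$ (the top component changes homotopy class in $\pi_1(M)$), hence $\alpha\not\sim_\phi\alpha'$. Nor can this be repaired by declaring the unbundling to be $\gamma\mapsto\gamma\cdot\sigma_n$ for some fixed $\sigma_n\in\mathfrak{bt}(1,n)$: any $\tau_n\in\mathfrak{bt}(n,1)$ must cap at least one pair among its $2n$ top endpoints (there are only two bottom endpoints), so the composite $\tau_n\sigma_n\in\mathfrak{bt}(n,n)$ is never the identity, and $\alpha\cdot\tau_n\cdot\sigma_n$ is not $\alpha$ in general.

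The paper's argument is of a different nature and keeps the $n$ strands separate throughout. Given an isotopy $H$ in $M$ between $\alpha$ and $\beta$, one first replaces $H$ by a composition of isotopies each of which moves a single component while fixing the other $n-1$; for each such piece one removes an open tubular neighborhood $N$ of the fixed strands and applies the $n=1$ case to the moving strand inside $M':=M\setminus N$, with the induced pieces $\overline{M}_i\setminus N$. This sidesteps the loss of information inherent in any bundling.
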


\begin{proof}
Let $n\geq 2$ and $\alpha, \beta \in \boldsymbol{P}_n(\mathbf{M}_1\wedge \mathbf{M}_2)$ and suppose that $\alpha$ and $\beta$ are related by an isotopy $H$ in $M$.  Let $\alpha= \alpha^{(1)} \cup \ldots \cup \alpha^{(n)}$ be the $n$ components of $\alpha$. Up to changing $H$ by an isotopic isotopy, one can suppose that $H$ is a composition of isotopies each of which moves one strand while leaving the other $n-1$ strands fixed. Therefore, by induction on the number of such isotopies in the decomposition, we are reduced to prove the lemma in the case where $H$ leaves $\alpha^{(2)}, \ldots, \alpha^{(n)}$ fixed whereas it only isotopes the component $\alpha^{(1)}$. Choosing an open tubular neighborhood $N$ of $\alpha^{(2)}\cup \ldots \cup \alpha^{(n)}$ and replacing $M$ by $M'= M\setminus N$, the isotopy $H$ induces an isotopy $H'$ between $\alpha^{(1)}$ and $\beta^{(1)}$ in $\boldsymbol{P}_1(M')$. Since we assume that Proposition \ref{prop_QVK} holds for $n=1$, we can decompose $H'$ as a composition of isotopies with support in $\overline{M_1} \setminus N$ and $\overline{M_2}\setminus N$. Therefore $\alpha \sim_{\phi} \beta$ and this concludes the proof.
\end{proof}

We are reduced to prove Proposition \ref{prop_QVK} in the case $n=1$. We will imitate the proof of the classical Van Kampen theorem as it appears in Hatcher's book \cite{Hatcher_Book}.  Fix a Riemannian metric on $M$ and let $\pi: UM\to M$ be the unitary tangent bundle. A $1$-ribbon tangle $\alpha$ can be seen as a smooth map $c_{\alpha} : [0,1] \to UM$ such that $(1)$ $c_{\alpha}$ is an embedding, $(2)$ $c_{\alpha}(0), c_{\alpha}(1) \in \mathbb{D}_M$ have distinct heights and framings towards the height direction and $(3)$ the framing is normal to the direction, i.e. for all $t\in [0,1]$ if $v:=\pi \circ c_{\alpha}(t)$ then $c_{\alpha}(t)$ is orthogonal to $\restriction{\frac{d}{dt}\pi\circ c_{\alpha}}{t}$ in $U_vM$. 
Conditions $(1)$, $(2)$ and $(3)$ are the reasons why the proof of the quantum Van Kampen theorem is slightly more involved than the proof of its classical version in \cite{Hatcher_Book}.
\par An isotopy between $\alpha$ and $\beta$ is a smooth map $H: [0,1]^2 \to UM$ such that $H(0, \cdot) = c_{\alpha}(\cdot)$, $H(1, \cdot)=c_{\beta}(\cdot)$ and $H(s, \cdot)$ satisfies $(1)$, $(2)$ and $(3)$ for all $s\in [0,1]$. 
Since $\overline{M}_1 \cup \overline{M}_2= M$, we can always find integers $a,b \geq 1$ and sequences $0=t_0 < \ldots < t_a=1$ and $0=s_0< \ldots < s_b=1$ such that $H([s_i, s_{i+1}] \times [t_j, t_{j+1}])$ is either included in $U\overline{M}_1$ or in $U\overline{M}_2$ for all $0\leq j \leq a-1$, $0\leq i \leq b-1$. The \textit{size} of $H$ is the minimal such pair $(a,b)$. We will prove Lemma \ref{lem1} by decreasing induction on $a$ and $b$. More precisely, let $\mathcal{P}(a,b)$ be the property: if $\alpha, \beta\in P_1(\mathbf{M}_1\wedge \mathbf{M}_2)$ are related by an isotopy in $M$ of size $(n,m)$ with $n\leq a$, $m\leq b$ then $\alpha\sim_{\phi} \beta$.

\begin{lemma}\label{lem2} If $\mathcal{P}(a,b)$ holds, so does $\mathcal{P}(a,b+1)$. \end{lemma}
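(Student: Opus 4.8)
\textbf{Proof plan for Lemma \ref{lem2}.} The idea is to push the partition of the isotopy square by one column: given an isotopy $H$ of size $(a, b+1)$ realizing $\alpha \sim_\psi \beta$, I want to factor it through an intermediate tangle $\gamma$ so that $\alpha \sim_\psi \gamma$ via an isotopy of size $(a,1)$ and $\gamma \sim_\psi \beta$ via an isotopy of size $(a,b)$, with both $\gamma$ and the intermediate arcs lying in $M_1 \wedge M_2$. Applying $\mathcal{P}(a,1)$ (which follows from $\mathcal{P}(a,b)$, or is handled as the base of the nested induction) to the first factor and $\mathcal{P}(a,b)$ to the second then gives $\alpha \sim_\phi \gamma \sim_\phi \beta$, as required. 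Concretely, using the $t$-partition $0 = t_0 < \dots < t_a = 1$ and the $s$-partition $0 = s_0 < \dots < s_{b+1} = 1$ witnessing the size, I consider the intermediate isotopy "slice" $H(s_1, \cdot)$: this is a $1$-bottom tangle $\gamma'$ in $M$, but it need not lie in $M_1 \wedge M_2$. So the first step is to produce from $H(s_1, \cdot)$ a genuine representative $\gamma \in P_1(M_1 \wedge M_2)$ by the pushing operation of Definition \ref{def_push}: isotope $\gamma'$ to be transverse to $\Sigma^0$ with straight framed components in $\Sigma^0 \times I$, then push each intersection point of $\gamma' \cap \Sigma^0$ into $\mathbb{B}_M$ along a path. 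This pushing is itself an isotopy supported in a neighbourhood of $\Sigma^0 \times I \cup \mathbb{B}_M$, hence entirely inside $\overline M_1 \cap \overline M_2$; in particular it is an $\sim_\phi$-move, so it costs nothing.

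The key point is to check that after this pushing the two halves of the partitioned square really do have the claimed sizes. The restriction $H|_{[0,s_1]\times[0,1]}$ has $a$ rows and a single column, each block $H([0,s_1]\times[t_j,t_{j+1}])$ lying in one of $U\overline M_1$, $U\overline M_2$; composing with the pushing isotopy (also of single-column type, in fact supported in $\overline M_1 \cap \overline M_2$) keeps the column count at $1$ while possibly subdividing the $t$-partition finitely further, which does not increase $a$ — or, more carefully, one subdivides the $t$-partition to a common refinement for all the isotopies involved, and verifies the refined blocks still map into a single $U\overline M_i$; the row count remains $\le a$. The restriction $H|_{[s_1,1]\times[0,1]}$ has $a$ rows and $b$ columns after reindexing $s_1, \dots, s_{b+1}$, again composed with the (column-$1$) pushing isotopy at the $s_1$ end, giving size $\le (a,b)$. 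Thus $\alpha \sim_\phi \gamma$ by $\mathcal{P}(a,1)$ and $\gamma \sim_\phi \beta$ by $\mathcal{P}(a,b)$.

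The main obstacle I anticipate is bookkeeping around conditions $(1)$, $(2)$, $(3)$ on the family $H(s,\cdot)$: the pushing operation must be performed so that the resulting arcs remain embedded, keep their endpoints at distinct heights in $\mathbb{D}_M$ with the correct framing, and have framing normal to the tangent direction throughout — exactly the subtleties flagged after Definition \ref{def_push} and in the discussion of $c_\alpha$. One must choose the pushing paths $\gamma: v \to w$ generically (disjoint for distinct intersection points, meeting $\gamma'$ only at $v$) and interpolate the framing smoothly; this is where the argument is genuinely more delicate than Hatcher's classical version. A secondary technical nuisance is that pushing at the interface $s = s_1$ must be compatible with the block structure on both sides simultaneously, which is why I refine both partitions to a common one before invoking $\mathcal{P}$. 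None of this is deep, but it is the part that needs care; everything else is a direct transcription of the classical Van Kampen subdivision argument into the bottom-tangle setting, using $\overline M_1 \cup \overline M_2 = M$ and $\overline M_1 \cap \overline M_2 = \mathbb{B}_M \cup \iota_\Sigma(\Sigma \times I)$.
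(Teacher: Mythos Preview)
Your proposal is correct and follows essentially the same route as the paper: cut the $s$-direction of the isotopy square into a single column and a remaining block of $b$ columns, push the intermediate slice into $M_1\wedge M_2$ via an isotopy supported in $\overline M_1\cap\overline M_2$, and apply the induction hypothesis to each piece. The paper cuts at the last interior $s$-value rather than at $s_1$, but this is a symmetric and inessential choice; your observation that $\mathcal{P}(a,1)$ is already contained in $\mathcal{P}(a,b)$ (since the latter covers all sizes $(n,m)$ with $n\le a$, $m\le b$) is exactly what the paper uses too.

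One small clarification: you worry about needing to refine the $t$-partition when composing with the pushing isotopy, but this is unnecessary. Since the pushing isotopy is supported in $\overline M_1\cap\overline M_2$, each block $[t_j,t_{j+1}]$ of that isotopy lies in \emph{both} $U\overline M_1$ and $U\overline M_2$; hence the original $t$-partition from $H$ works for the composed isotopy without refinement, and the extra column contributed by the pushing can be absorbed into the adjacent column of $H$. This keeps the row count exactly $a$ and is how the paper handles it as well.
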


\begin{proof}
Suppose that $\alpha, \beta$ are related by an isotopy $H$ of size $(a,b+1)$ with parameters $0=t_0 < \ldots < t_a=1$ and $0=s_0< \ldots < s_b=1$. Let $\eta \in \boldsymbol{P}_1(\mathbf{M})$ be the bottom tangle parametrized by $c_{\eta}(t):= H(s_{a-1}, t)$. Let $H_1$ be the isotopy between $\alpha$ and $\eta$ defined by $H_1(s,t)=H(\frac{s}{s_{a-1}}, t)$ and $H_2$ the isotopy between $\eta$ and $\beta$ be $H_2(s,t)=H\left( (1-s_{a-1})s+s_{a-1}, t \right)$. One can suppose that  $\eta$ intersects $\Sigma_0$ transversally  in at most $a-1$ points. For each such point $v$, choose a path $\gamma_v$ between $v$ and a point in $\mathbb{B}_M$ not intersecting $\eta$ and let $\eta'$ be the bottom tangle obtained from $\eta$ by pushing each $v$ along $\gamma_v$. So $\eta' \subset M_1\wedge M_2$ and we have an isotopy $H_3$ between $\eta$ and $\eta'$ with support in $\overline{M}_1\cap \overline{M}_2$. By composing $H_1$ with $H_3$ we get an isotopy between $\alpha$ and $\eta'$ of size $(a,b)$ so $\alpha\sim_{\phi} \eta'$ by hypothesis. By composing the inverse of $H_3$ with $H_2$ we get an isotopy between $\eta'$ and $\beta$ of size $(a,1)$ so $\eta' \sim_{\phi} \beta$ by hypothesis. Therefore $\alpha\sim_{\phi} \beta$.
\end{proof}

\begin{lemma}\label{lem3} If $\mathcal{P}(a,1)$ holds, so does $\mathcal{P}(a+1,1)$. \end{lemma}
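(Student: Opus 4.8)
The plan is to mimic the proof of Lemma \ref{lem2}, this time performing an induction that reduces the number of columns in the grid decomposition of an isotopy rather than the number of rows. Since the roles of the two coordinates of the grid are not quite symmetric (the $t$-coordinate parametrizes the tangle, so cutting along a $t$-slice does not produce a new tangle but only a partial arc), the argument in this direction requires a slightly different move.

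Concretely, suppose $\alpha, \beta \in P_1(\mathbf{M}_1\wedge \mathbf{M}_2)$ are related by an isotopy $H$ of size $(a+1,1)$ with parameters $0=t_0<\ldots<t_{a+1}=1$ and $0=s_0<s_1=1$. The point is that $H$ already maps the whole strip $[0,1]\times[t_a,t_{a+1}]$ into $U\overline{M}_{i_0}$ for a single $i_0\in\{1,2\}$ (this is what it means for the size to have second coordinate $1$ once we restrict $t$ to that last interval) — so first I would re-examine: for each fixed $t$-interval $[t_j,t_{j+1}]$ the map $H$ on $[0,1]\times[t_j,t_{j+1}]$ lies entirely in one $U\overline{M}_{i(j)}$. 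The strategy is then to isotope $\alpha$ across the last column. Let $\eta$ be the bottom tangle parametrized by $c_\eta(t):=c_\alpha(t)$ for $t\le t_a$ and $c_\eta(t):=c_\beta(t)$ for $t\ge t_a$, suitably smoothed near $t_a$; this makes sense because $H(\cdot,t_a)$ is a fixed point-with-framing in $UM$, so $c_\alpha$ and $c_\beta$ agree at $t=t_a$ up to a small isotopy supported near that point, which we may first absorb. The isotopy $H$ restricted to $[0,1]\times[t_a,1]$ lies in $U\overline{M}_{i(a)}$ and realizes an isotopy from $\eta$ to $\beta$ supported in $\overline{M}_{i(a)}$; and $\eta$ is related to $\alpha$ by an isotopy of size $(a,1)$ (namely $H$ on $[0,1]\times[0,t_a]$, reparametrized), hence $\alpha\sim_\phi\eta$ by $\mathcal P(a,1)$. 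Therefore $\alpha\sim_\phi\beta$.

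One subtlety, which I expect to be the main obstacle, is that $\eta$ as defined above need not lie in $M_1\wedge M_2$ — the arc $c_\beta$ on $[t_a,1]$ may dip into the "other side" relative to $c_\alpha$, so $\eta$ may cross $\Sigma^0$. As in the proof of Lemma \ref{lem2}, one remedies this by pushing the (at most finitely many) intersection points $\eta\cap\Sigma^0$ into $\mathbb{B}_M$ along disjoint arcs in $\Sigma^0\cup\mathbb{B}_M$ avoiding $\eta$, using Definition \ref{def_push}; this push is an isotopy supported in $\overline{M}_1\cap\overline{M}_2\subset\overline M_i$ for each $i$, and produces $\eta'\in P_1(\mathbf{M}_1\wedge\mathbf{M}_2)$ with $\eta\sim_\phi\eta'$. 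Then $\alpha\sim_\phi\eta'$ (composing the size-$(a,1)$ isotopy with the push) and $\eta'\sim_\phi\beta$ (composing the push with the $U\overline{M}_{i(a)}$-supported isotopy), so $\alpha\sim_\phi\beta$, establishing $\mathcal P(a+1,1)$. Care must be taken that all these concatenated isotopies still satisfy conditions $(1)$, $(2)$, $(3)$ (embeddedness, height-ordered endpoints, normal framing) at every time, but this is handled exactly as in the earlier lemmas by a small perturbation, so I would not belabor it.

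Combining Lemmas \ref{lem3} and \ref{lem2}: $\mathcal P(1,1)$ holds trivially (an isotopy of size $(1,1)$ is supported in a single $\overline M_i$), Lemma \ref{lem3} gives $\mathcal P(a,1)$ for all $a\ge 1$ by induction, and then Lemma \ref{lem2} gives $\mathcal P(a,b)$ for all $a,b\ge 1$ by induction on $b$. This proves Proposition \ref{prop_QVK} for $n=1$, hence for all $n$ by Lemma \ref{lem1}, hence the injectivity of $\kappa$, completing the proof of Theorem \ref{theorem_QVK_FG}.
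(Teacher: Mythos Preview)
Your argument has a genuine gap at the definition of $\eta$. You assert that ``$H(\cdot,t_a)$ is a fixed point-with-framing in $UM$'', i.e.\ that $c_\alpha(t_a)=c_\beta(t_a)$ as framed points, but this is false in general: the map $s\mapsto H(s,t_a)$ is an arbitrary path in $U(\overline{M}_1\cap\overline{M}_2)=U(\mathbb{B}_M\cup\iota_\Sigma(\Sigma\times I))$, not a constant. The hypothesis $b=1$ only says that each strip $[0,1]\times[t_j,t_{j+1}]$ lands in a single $U\overline{M}_{i(j)}$; it places no constraint whatsoever on $H$ along the interface $t=t_a$. Hence your piecewise formula for $c_\eta$ is discontinuous at $t_a$, and the ``small isotopy supported near that point'' you invoke does not exist. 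Your later pushing step addresses a different issue (forcing $\eta$ back into $M_1\wedge M_2$) and does nothing to repair this discontinuity. You have also swapped which half of the cut uses $\mathcal P(a,1)$: with your $\eta$ the passage $\alpha\to\eta$ only moves the last segment and is the one supported in a single $\overline{M}_{i(a)}$, while $\eta\to\beta$ moves the first $a$ segments and is the one of size $(a,1)$.

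This gap is precisely what the paper's proof spends its effort on. Before attempting to cut at $t_1$, the paper first (Step~1) pushes $\beta$ so that $c_\beta(t_1)$ lies in $\mathbb{B}_M$, and then (Step~2) pushes $\alpha$ along the path $\delta\colon s\mapsto \pi(H(s,t_1))$ inside $\mathbb{B}_M\cup\iota_\Sigma(\Sigma\times I)$ so that the resulting isotopy satisfies $H(s,t_1)\equiv\overrightarrow{v}_1\in U\mathbb{B}_M$ for all $s$. Only after this reduction does gluing $c_\beta|_{[0,t_1]}$ to $c_\alpha|_{[t_1,1]}$ produce a well-defined bottom tangle, and the decomposition into one isotopy supported in a single $\overline{M}_i$ and one of size $(a,1)$ goes through. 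These two reduction steps are the missing idea in your proposal.
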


\begin{proof} 
Suppose that $\mathcal{P}(a,1)$ holds.
Let $\alpha$ and $\beta$ be related by an isotopy $H$ of size $(a+1,1)$ and parameters $0=t_0<\ldots< t_{a+1}$.
Consider the two squares $R_1=[0, t_1]\times [0,1]$ and $R_2=[t_1, t_2]\times [0,1]$. By minimality of $a$, $H(R_1)$ and $H(R_2)$ lies in distinct $\overline{M}_i$. Without loss of generality, one can suppose that $H(R_1)\subset \overline{M}_1$ and $H(R_2)\subset \overline{M}_2$. Let $e:= \{t_1\} \times [0,1]=R_1\cap R_2$ so that $H(e) \subset \overline{M}_1\cap \overline{M}_2 = \mathbb{B}_M \cup \iota_{\Sigma}(\Sigma\times I)$. Let $\alpha_s$ be the bottom tangle parametrized by $H(\cdot, s)$ so that $\alpha_0=\alpha$ and $\alpha_1=\beta$.

\par We want to "cut" the homotopy $H$ in two parts: the restriction to $R_1$, seen as an isotopy of size $(1,1)$  and the restriction to $[t_1,1]\times [0,1]$ seen an isotopy of size $(a,1)$. The equivalence $\alpha\sim_{\phi}\beta$ will then result from $\mathcal{P}(a,1)$.  We will proceed in three steps.
\par \textit{Step 1: Reduction to the case where $H(t_1, 1)\in \mathbb{B}_M$}
 \\ Let $v_1:= \pi (H(t_1,1)) \in  \mathbb{B}_M \cup \iota_{\Sigma}(\Sigma\times I)$. By slightly isotoping $\beta$, one can suppose that $v_1\in \mathbb{B}_M\cup \Sigma^0$. If $v_1\in \Sigma^0$, choose $w_1\in \mathbb{B}_M$ and $\gamma$ path between $v_1$ and $w_1$ not intersecting $\beta$ and let $\beta'$ be the push of $\beta$ along $\gamma$. By post-composing $H$ with an isotopy between $\beta$ and $\beta'$ with support in $\mathbb{B}_M \cup \iota_{\Sigma}(\Sigma\times I)$, one gets an isotopy $H'$ between $\alpha$ and $\beta'$ of the same size $(a+1,1)$. Since $\beta\sim_{\phi} \beta'$, up to replacing $\beta$ and $H$ by $\beta'$ and $H'$, we can, and do,  suppose that $H(t_1, 1)\in \mathbb{B}_M$.

\par \textit{Step 2: Reduction to the case where $H( e)$ is a constant vector  in $U\mathbb{B}_M$}
\\ For $s\in [0,1]$, write $\overrightarrow{v}_s:= H(t_1,s) \in  U (\mathbb{B}_M\cup \iota_{\Sigma}(\Sigma \times I))$ and $v_s= \pi(\overrightarrow{v}_s)$. So $s\mapsto v_s$ parametrizes a smooth path $\delta$ in $\mathbb{B}_M\cup \iota_{\Sigma}(\Sigma \times I)$ between $v_0$ and $v_1 \in \mathbb{B}_M$. Up to slightly isotoping $\alpha$ and $H$, we can suppose that $\delta$ does not intersect $\alpha$. Let $\alpha'$ be the push of $\alpha$ along $\delta$ and let $H^0$ be an isotopy between $\alpha'$ and $\alpha$ with support in $\mathbb{B}_M \cup \iota_{\Sigma}(\Sigma\times I)$. Let $H'$ be the isotopy between $\alpha'$ and $\beta$ obtained by composing $H^0$ with $H$. By replacing $\alpha$ by $\alpha'$ and $H$ by $H'$, we are reduced to the case where the path $s\to \overrightarrow{v}_s$ is a contractible loop in $U\mathbb{B}_M$ (with base point $\overrightarrow{v}_1$). So, up to isotoping $H$, one can (and do) suppose that $H(e)$ is the constant vector 
$\overrightarrow{v}_1$.

\par \textit{Step 3: Cutting the isotopy in two}
Let $\eta$ be the $1$-bottom tangle parametrized by $c_{\eta}(t) = H(1, t)$ for $0\leq t \leq t_1$ and $c_{\eta}(t)=H(0,t)$ for $t_1\leq t \leq 1$. Define an isotopy $H^L$ between $\alpha$ and $\eta$ by $H^L(s,t)=H(s,t)$ for $0\leq t \leq t_1$ and $H^L(s,t)=H(0,t)$ for $t_1\leq t \leq 1$. Since $H^L$ has support in $H(R_1)\subset \overline{M}_1$, one has $\alpha \sim_{\phi} \eta$. Define an isotopy $H^R$ between $\eta$ and $\beta$ by $H^R(s,t)= H(1,t)$ for $0\leq t \leq t_1$ and $H^R(s,t)=H(s,t)$ for $t_1\leq t \leq 1$. Then $H^R$ has size $(1,a)$ so by hypothesis, we have $\eta\sim_{\phi} \beta$. Therefore $\alpha \sim_{\phi} \beta$. 

\end{proof}

\begin{proof}[Proof of Proposition \ref{prop_QVK}]
Since the property $\mathcal{P}(1,1)$ is trivially satisfied, by induction on $a$ Lemma \ref{lem3} implies that $\mathcal{P}(a, 1)$ holds for every $a\geq 1$ and by induction on $b$, Lemma \ref{lem2} implies that $\mathcal{P}(a,b)$ holds for every $a,b\geq 1$. Therefore Proposition \ref{prop_QVK} holds for $n=1$ and so, by Lemma \ref{lem1}, it holds for every $n\geq 1$. This completes the proof.
\end{proof}

\begin{proof}[Proof of Theorem \ref{theorem_QVK_FG}]
The theorem follows from the commutativity of the square: 
$$ \begin{tikzcd}
P_{M_1}\otimes_{P_{\Sigma}}P_{M_2} (\mathbf{H}_n)
\ar[r, "\kappa"] \ar[d, "\cong", "\varphi"'] &
P_M(\mathbf{H}_n) 
\ar[d, "\cong", "\Psi"'] \\
\quotient{P_n(\mathbf{M}_1\wedge \mathbf{M}_2)}{\sim_{\phi}} 
\ar[r, "="] &
\quotient{P_n(\mathbf{M}_1\wedge \mathbf{M}_2)}{\sim_{\psi}} 
\end{tikzcd}$$
where the equality on the bottom follows from Proposition \ref{prop_QVK}.

\end{proof}

\subsection{Quantum Van Kampen for quantum representation spaces and stated skein modules}\label{sec_QVK_Skein}

Let $\mathbf{M}_1$, $\mathbf{M}_2$ and $\mathbf{\Sigma}$ be as in the last subsection. Since $\mathbf{M}_1$ and $\mathbf{M}_2$ are right and left modules over $\mathbf{\Sigma}\times I$ in $(\mathcal{M}^{(1)}_{\con}, \wedge)$, then $\Rep^G_q(\mathbf{M}_1)$ and $\Rep^G_q(\mathbf{M}_2)$ are left and right modules over $\Rep^G_q(\mathbf{\Sigma})$ by monoidality of $\Rep^G_q$. The following was suggested by Habiro in \cite{Habiro_QCharVar}.

\begin{theorem}\label{theorem_QVK_Rep} 
One has an isomorphism $\Rep^G_q(\mathbf{M}_1\cup_{\Sigma} \mathbf{M}_2) \cong \Rep^G_q(\mathbf{M}_1) \otimes_{\Rep^G_q(\mathbf{\Sigma})}\Rep^G_q(\mathbf{M}_2)$.
\end{theorem}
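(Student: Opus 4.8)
The plan is to deduce Theorem \ref{theorem_QVK_Rep} from the quantum Van Kampen theorem for quantum fundamental groups (Theorem \ref{theorem_QVK_FG}) by applying the Kan-extension machinery of Section \ref{sec_Cat}, in exact parallel with the proof of Lemma \ref{lemma_monoidal}. First I would recall that by Lemma \ref{lemma_QFG_QRS} one has $\Rep^G_q(\mathbf{M}) \cong k[P_M]\otimes_{\BT}Q_{B_qG}$ for every $\mathbf{M}\in \mathcal{M}^{(1)}_{\con}$, naturally in $\mathbf{M}$; the same formula applied to $\mathbf{\Sigma}\times I$ gives $\Rep^G_q(\mathbf{\Sigma}) \cong k[P_{\Sigma}]\otimes_{\BT}Q_{B_qG}$, and this identification is compatible with the algebra and (bi)module structures since the gluing morphisms $\mu$, $\nabla_1$, $\nabla_2$ of Section \ref{sec_VK_FG} are sent by $P_{\bullet}$ and then by $\bullet\otimes_{\BT}Q_{B_qG}$ to the corresponding structure maps. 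Applying the free-$k$-module functor $k[\cdot]:\Set\to\Mod_k$ (which is monoidal, hence sends the Day-convolution coequalizer defining $P_{M_1}\otimes_{P_{\Sigma}}P_{M_2}$ in $\widehat{\BT}$ to the analogous coequalizer in $\widehat{\BT}_k$) to the isomorphism $\kappa:P_{M_1}\otimes_{P_{\Sigma}}P_{M_2}\xrightarrow{\cong}P_{M_1\cup_{\Sigma}M_2}$ of Theorem \ref{theorem_QVK_FG} yields an isomorphism of left $\BT$-modules
$$ k[P_{M_1}]\otimes_{k[P_{\Sigma}]}k[P_{M_2}] \xrightarrow{\cong} k[P_{M_1\cup_{\Sigma}M_2}], $$
where the tensor product on the left is the coequalizer in $\widehat{\BT}_k$ of the two maps $\nabla_1\otimes\id,\ \id\otimes\nabla_2: k[P_{M_1}]\otimes_D k[P_{\Sigma}]\otimes_D k[P_{M_2}]\rightrightarrows k[P_{M_1}]\otimes_D k[P_{M_2}]$.

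Next I would tensor this isomorphism on the right by $\bullet\otimes_{\BT}Q_{B_qG}$. The functor $\bullet\otimes_{\BT}Q_{B_qG}:\widehat{\BT}_k\to\Mod_k$ is a left adjoint (it is a convolution product, hence cocontinuous), so it preserves coequalizers; therefore it carries the coequalizer presentation of $k[P_{M_1}]\otimes_{k[P_{\Sigma}]}k[P_{M_2}]$ to a coequalizer presentation of $\big(k[P_{M_1}]\otimes_{k[P_{\Sigma}]}k[P_{M_2}]\big)\otimes_{\BT}Q_{B_qG}$. Using Lemma \ref{lemma_tenstens} (which computes $(F_1\otimes_D F_2)\otimes_{\BT}Q_{B_qG}\cong (F_1\otimes_{\BT}Q_{B_qG})\otimes_k(F_2\otimes_{\BT}Q_{B_qG})$, with $Q_{B_qG}$ monoidal as a right $\BT$-module in the required sense, and with the braided tensor product $\overline{\otimes}$ recovered as in the proof of Lemma \ref{lemma_monoidal}) together with Lemma \ref{lemma_QFG_QRS}, the left-hand side becomes exactly the coequalizer
$$ \Rep^G_q(\mathbf{M}_1)\overline{\otimes}\Rep^G_q(\mathbf{\Sigma})\overline{\otimes}\Rep^G_q(\mathbf{M}_2) \rightrightarrows \Rep^G_q(\mathbf{M}_1)\overline{\otimes}\Rep^G_q(\mathbf{M}_2), $$
whose legs are $\nabla_{1*}\overline{\otimes}\id$ and $\id\overline{\otimes}\nabla_{2*}$; this coequalizer is by definition the relative tensor product $\Rep^G_q(\mathbf{M}_1)\otimes_{\Rep^G_q(\mathbf{\Sigma})}\Rep^G_q(\mathbf{M}_2)$ over the algebra $\Rep^G_q(\mathbf{\Sigma})$ in $\overline{\mathcal{C}_q^G}$. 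On the right-hand side, $k[P_{M_1\cup_{\Sigma}M_2}]\otimes_{\BT}Q_{B_qG}\cong\Rep^G_q(\mathbf{M}_1\cup_{\Sigma}\mathbf{M}_2)$ again by Lemma \ref{lemma_QFG_QRS}. Chasing through the identifications one checks that the resulting isomorphism agrees with the comodule map $\Rep^G_q(\pi)$ induced by the gluing morphism, so it is an isomorphism of $\mathcal{O}_qG$-comodules, and (since all the maps involved respect the module structures coming from $\wedge$) of algebras where appropriate.

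The main obstacle I anticipate is bookkeeping rather than conceptual: one must verify carefully that the bimodule structures match up, i.e. that under the identification $\Rep^G_q(\mathbf{\Sigma})\cong k[P_{\Sigma}]\otimes_{\BT}Q_{B_qG}$ the algebra structure induced by the gluing morphism $\mu:(\mathbf{\Sigma}\times I)\wedge(\mathbf{\Sigma}\times I)\to\mathbf{\Sigma}\times I$ coincides with the one making $\Rep^G_q$ monoidal, and likewise for the module actions $\nabla_{1*},\nabla_{2*}$; this is where Lemma \ref{lemma_associativity} is used to guarantee that the two coequalizer legs are the correct module-action maps. A secondary point is to confirm that the braided tensor product $\overline{\otimes}$ appearing after Lemma \ref{lemma_tenstens} is precisely the ambient monoidal product in $\overline{\mathcal{C}_q^G}$ under which $\Rep^G_q(\mathbf{\Sigma})$ is an algebra object, so that the word "relative tensor product over $\Rep^G_q(\mathbf{\Sigma})$" has its usual meaning; this follows from Remark \ref{remark_CYK}-style compatibility and the first item of the Remark after Definition \ref{def_fusion_quantique}, but should be stated explicitly. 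Once these compatibilities are in place the theorem is immediate. The statement for stated skein modules in Theorem \ref{theorem2} then follows by the same argument using Corollary \ref{coro_RepSkein} (identifying $\mathcal{S}_q$ with $\Rep^{\SL_2}_q$), or equivalently by replacing $Q_{B_qG}$ with $\restriction{\mathcal{S}_q}{\BT}$ throughout and invoking Theorem \ref{theorem_SkeinQRep} in place of Lemma \ref{lemma_QFG_QRS}.
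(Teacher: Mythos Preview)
Your proposal is correct and follows essentially the same route as the paper: linearize the coequalizer of Theorem \ref{theorem_QVK_FG} via $k[\cdot]$, tensor over $\BT$ with $Q_{B_qG}$ (using that this functor is right exact/cocontinuous), and identify the resulting terms using Lemmas \ref{lemma_tenstens} and \ref{lemma_QFG_QRS}. The paper's proof is terser and phrases the colimit preservation as ``tensoring on the right preserves right exact sequences'' rather than invoking left-adjointness, but the argument is the same; your additional remarks on matching the (bi)module structures and the meaning of $\overline{\otimes}$ are legitimate bookkeeping that the paper leaves implicit.
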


\begin{proof}
By Theorem \ref{theorem_QVK_FG}, the following is a coequalizer

$$ \begin{tikzcd}
 P_{M_1} \otimes_D P_{\Sigma} \otimes_D P_{M_2} 
 \ar[r,shift left=.75ex,"\nabla_1\otimes \id"]
  \ar[r,shift right=.75ex,swap,"\id \otimes \nabla_2"] 
&  P_{M_1} \otimes_D P_{M_2} \ar[r, "P(\pi)"] & P_{M_1 \cup_{\Sigma} M_2}
\end{tikzcd}$$
thus the following sequence is exact:

$$ \begin{tikzcd}
 k[P_{M_1}] \otimes_D k[P_{\Sigma}] \otimes_D k[P_{M_2}] 
 \ar[rr,"\nabla_1\otimes \id- \id \otimes \nabla_2"] 
&{}&  k[P_{M_1}] \otimes_D k[P_{M_2}] \ar[r, "P(\pi)"] & k[P_{M_1 \cup_{\Sigma} M_2}] 
\ar[r] & 0.
\end{tikzcd}$$

Since tensoring on the right preserves right exact sequences, the following sequence is exact as well:

\begin{multline*}
 (k[P_{M_1}] \otimes_D k[P_{\Sigma}] \otimes_D k[P_{M_2}]) \otimes_{\BT} Q_{B_qG} 
 \xrightarrow{\nabla_1\otimes \id- \id \otimes \nabla_2}
  (k[P_{M_1}] \otimes_D k[P_{M_2}])\otimes_{\BT} Q_{B_qG} 
  \\ \xrightarrow{P(\pi)} k[P_{M_1 \cup_{\Sigma} M_2}]\otimes_{\BT} Q_{B_qG} 
\to 0.
\end{multline*}
Using Lemmas \ref{lemma_tenstens} and  \ref{lemma_QFG_QRS}, we get the exact sequence

$$ \begin{tikzcd} \Rep_q^G(\mathbf{M}_1)\otimes_{\Rep_q^G(\mathbf{\Sigma})} \Rep_q^G(\mathbf{M}_2)  \ar[rr,"\nabla_1\otimes \id- \id \otimes \nabla_2"] 
&{}&\Rep_q^G(\mathbf{M}_1)\otimes \Rep_q^G(\mathbf{M}_2) \ar[r, "P(\pi)"] & \Rep_q^G(\mathbf{M}_1\cup_{\mathbf{\Sigma}}\mathbf{M}_2)
\ar[r] & 0, 
\end{tikzcd}$$
which says that $\Rep_q^G(\mathbf{M}_1\cup_{\mathbf{\Sigma}} \mathbf{M}_2)\cong \Rep_q^G(\mathbf{M}_1)\otimes_{\Rep_q^G(\mathbf{\Sigma})}\Rep_q^G(\mathbf{M}_2)$. 
\end{proof}

Putting Corollary \ref{coro_RepSkein} and Theorem \ref{theorem_QVK_FG} together, we obtain the following reformulation of Theorem \ref{theorem2}.

\begin{corollary}\label{coro_QVK_skein} One has an isomorphism $\mathcal{S}_q(\mathbf{M}_1\cup_{\Sigma} \mathbf{M}_2) \cong \mathcal{S}_q(\mathbf{M}_1) \otimes_{\mathcal{S}_q(\mathbf{\Sigma})}\mathcal{S}_q(\mathbf{M}_2)$.
\end{corollary}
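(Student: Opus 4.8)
The plan is to observe that Corollary \ref{coro_QVK_skein} follows essentially for free by transporting Theorem \ref{theorem_QVK_Rep} across the natural isomorphism $\Psi : \Rep_q^{\SL_2} \xrightarrow{\cong} \mathcal{S}_q$ of functors $\mathcal{M}^{(1)}_{\con} \to \overline{\mathcal{C}_q^{\SL_2}}$ provided by Corollary \ref{coro_RepSkein}. The only point requiring care is that one must check that this functor isomorphism is compatible with the various gluing and module structures appearing in the statement, so that the abstract isomorphism of Theorem \ref{theorem_QVK_Rep} specializes to an isomorphism of the skein-theoretic objects.

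First I would recall that $\mathbf{M}_1$ and $\mathbf{M}_2$ are a right, respectively left, module over the algebra object $\mathbf{\Sigma}\times I$ in the monoidal category $(\mathcal{M}^{(1)}_{\con}, \wedge)$, with structure morphisms $\nabla_1 : \mathbf{M}_1 \wedge (\mathbf{\Sigma}\times I) \to \mathbf{M}_1$ and $\nabla_2 : (\mathbf{\Sigma}\times I)\wedge \mathbf{M}_2 \to \mathbf{M}_2$ from Section \ref{sec_VK_FG}. Since $\mathcal{S}_q : (\mathcal{M}^{(1)}_{\con}, \wedge) \to (\overline{\mathcal{C}_q^{\SL_2}}, \overline{\otimes})$ is lax monoidal (as established by the Corollary following Theorem \ref{theorem_fusion}), it sends this module data to the statement that $\mathcal{S}_q(\mathbf{M}_1)$ and $\mathcal{S}_q(\mathbf{M}_2)$ are right and left modules over the algebra $\mathcal{S}_q(\mathbf{\Sigma})$, with structure maps $\mathcal{S}_q(\nabla_1)$ and $\mathcal{S}_q(\nabla_2)$ after applying the monoidal structure isomorphism $\mathcal{S}_q(\mathbf{M}_1)\overline{\otimes}\mathcal{S}_q(\mathbf{\Sigma})\cong \mathcal{S}_q(\mathbf{M}_1\wedge(\mathbf{\Sigma}\times I))$ and its analogue. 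The same applies verbatim to $\Rep_q^{\SL_2}$, and the natural isomorphism $\Psi$ intertwines these because it is (by construction in the proof of Corollary \ref{coro_RepSkein}, and Corollary \ref{coro_skein_transmutation}) a \emph{monoidal} natural isomorphism: both $\Rep_q^{\SL_2}$ and $\mathcal{S}_q$ are left Kan extensions along $i : \BT\hookrightarrow \mathcal{M}^{(1)}_{\con}$ of functors on $\BT$ which are isomorphic as right $\BT$-modules (Corollary \ref{coro_skein_transmutation} identifies $\restriction{\mathcal{S}_q}{\BT}$ with $Q_{B_q\SL_2}$), and Lemma \ref{lemma_monoidal} shows the Kan extension is computed by the same tensor-over-$\BT$ formula that respects $\wedge$ versus $\overline{\otimes}$.

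Concretely, the cleanest route is simply to rerun the proof of Theorem \ref{theorem_QVK_Rep} with $G = \SL_2$ and with the right $\BT$-module $Q_{B_q\SL_2}$ replaced by the isomorphic module $\restriction{\mathcal{S}_q}{\BT}$: starting from the coequalizer of Theorem \ref{theorem_QVK_FG}, apply $k[\,\cdot\,]$, then tensor on the right over $\BT$ with $\restriction{\mathcal{S}_q}{\BT}$ (which preserves right exactness), and invoke Lemmas \ref{lemma_tenstens} and \ref{lemma_QFG_QRS} together with Theorem \ref{theorem_SkeinQRep} (which states $\mathcal{S}_q = \Lan_i \restriction{\mathcal{S}_q}{\BT}$). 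This yields directly the right exact sequence
$$ \mathcal{S}_q(\mathbf{M}_1)\otimes_{\mathcal{S}_q(\mathbf{\Sigma})}\mathcal{S}_q(\mathbf{M}_2) \xrightarrow{\nabla_1\otimes\id - \id\otimes \nabla_2} \mathcal{S}_q(\mathbf{M}_1)\overline{\otimes}\mathcal{S}_q(\mathbf{M}_2) \xrightarrow{\mathcal{S}_q(\pi)} \mathcal{S}_q(\mathbf{M}_1\cup_{\Sigma}\mathbf{M}_2) \to 0, $$
which is precisely the asserted isomorphism $\mathcal{S}_q(\mathbf{M}_1\cup_{\Sigma}\mathbf{M}_2)\cong \mathcal{S}_q(\mathbf{M}_1)\otimes_{\mathcal{S}_q(\mathbf{\Sigma})}\mathcal{S}_q(\mathbf{M}_2)$.

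The main obstacle — really the only genuine content beyond bookkeeping — is verifying that the $\mathcal{S}_q(\mathbf{\Sigma})$-module structures on $\mathcal{S}_q(\mathbf{M}_i)$ arising from the splitting/gluing morphisms $\nabla_i$ coincide, under the isomorphisms of Lemma \ref{lemma_monoidal} and Corollary \ref{coro_skein_transmutation}, with the ones that make the tensor product $\otimes_{\mathcal{S}_q(\mathbf{\Sigma})}$ meaningful; this is a compatibility-of-structure-morphisms check that propagates through the identifications $\mathcal{S}_q(\mathbf{H}_n)\cong (B_q\SL_2)^{\overline{\otimes}n}$ and the PROP structure of $\BT$. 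Granting the monoidality of $\Psi$, which is implicit in Corollary \ref{coro_RepSkein} and its proof, this is routine, and I would state Corollary \ref{coro_QVK_skein} as an immediate consequence of Theorem \ref{theorem_QVK_Rep} and Corollary \ref{coro_RepSkein} as the paper does.
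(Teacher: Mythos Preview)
Your proposal is correct and matches the paper's approach: the paper simply states that the corollary follows by ``putting Corollary~\ref{coro_RepSkein} and Theorem~\ref{theorem_QVK_FG} together,'' and your second route (rerunning the proof of Theorem~\ref{theorem_QVK_Rep} with $\restriction{\mathcal{S}_q}{\BT}$ in place of $Q_{B_q\SL_2}$, invoking Theorem~\ref{theorem_SkeinQRep}) is exactly this. Your displayed exact sequence inherits the same typo as the paper's proof of Theorem~\ref{theorem_QVK_Rep}: the leftmost term should be the triple braided tensor product $\mathcal{S}_q(\mathbf{M}_1)\overline{\otimes}\mathcal{S}_q(\mathbf{\Sigma})\overline{\otimes}\mathcal{S}_q(\mathbf{M}_2)$, not the relative tensor product, which is the cokernel.
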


One can derive from Corollary \ref{coro_QVK_skein} and Theorem \ref{theorem_surjectivity} an alternative proof of a theorem of Gunningham-Jordan-Safronov in \cite{GunninghamJordanSafranov_FinitenessConjecture} as follows. Let $M$ be a closed compact connected oriented $3$-manifold and suppose that $M=N_1\cup_{\Sigma}N_2$ is obtained by gluing two (compact, oriented, connected) $3$-manifolds along their connected boundary $\Sigma$. Choose a ball $\mathbb{B}^3 \subset M$ which intersects $\Sigma\subset M$ along a disc $\mathbb{D}= \mathbb{B}^3\cap \Sigma$ such that $\mathbb{D}$ cuts $\mathbb{B}^3$ in two hemispheres. Let $\mathbb{D}_M \subset \partial \mathbb{B}^3$ be a disc such that $\mathbb{D}_M$ intersects $\mathbb{D}$ along a closed interval $a$ which cuts $\mathbb{D}_M$ into two half-discs $\mathbb{D}_M^+\subset N_1$ and $\mathbb{D}_M^- \subset N_2$. Choose an oriented homeomorphism $\iota_M: \mathbb{D}^2 \cong \mathbb{D}_M$ sending $\mathbb{D}^{\pm}$ to $\mathbb{D}_M^{\pm}$ and set $\iota_{\pm}: \mathbb{D}_2\cong \mathbb{D}_{\pm} \xrightarrow{\iota_M} \mathbb{D}_M^{\pm}$.
Let $\mathbf{M}:= (M\setminus \mathring{\mathbb{B}}^3, \iota_M) \in \mathcal{M}_{\con}^{(1)}$ and consider $\mathbf{N}_1:= (N_1\setminus \mathring{\mathbb{B}}^3_+, \iota^+)$ and $\mathbf{N}_2:= (N_2 \setminus \mathring{\mathbb{B}}^3_-, \iota^-)$ and write $\mathbf{\Sigma}:= (\Sigma\setminus \mathring{\mathbb{D}}, a) \in \MS_{\con}^{(1)}$ such that $\mathbf{M}= \mathbf{N}_1\cup_{\mathbf{\Sigma}} \mathbf{N}_2$. By Theorem \ref{theorem_surjectivity}, the embedding $\iota: M \to \mathbf{M}$ induces an isomorphism $\iota_*: \mathcal{S}_q(M) \cong (\mathcal{S}_q(\mathbf{M}))^{coinv}$ and by Corollary \ref{coro_QVK_skein}, one has an isomorphism $\mathcal{S}_q(\mathbf{M})\cong \mathcal{S}_q(\mathbf{N}_1)\otimes_{\mathcal{S}_q(\mathbf{\Sigma})} \mathcal{S}_q(\mathbf{N}_2)$. Therefore, using Theorem \ref{theorem_spherical}, we have re-proved the 

\begin{theorem}(Gunningham-Jordan-Safronov \cite[Theorem $2$, Corollary $1$]{GunninghamJordanSafranov_FinitenessConjecture}) We have  isomorphisms:
\begin{align*}
{}& \mathcal{S}_q(M) \cong  \left(\mathcal{S}_q(\mathbf{N}_1)\otimes_{\mathcal{S}_q(\mathbf{\Sigma})} \mathcal{S}_q(\mathbf{N}_2)\right)^{coinv}, \quad \mbox{ and }\\
{}& \mathcal{S}^{rat}_q(M) \cong  \mathcal{S}^{rat}_q(\mathbf{N}_1)\otimes_{\mathcal{S}^{rat}_q(\mathbf{\Sigma})} \mathcal{S}^{rat}_q(\mathbf{N}_2).
\end{align*}
\end{theorem}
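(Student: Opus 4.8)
The plan is to obtain the two isomorphisms by assembling three results already established in the paper: the quantum Van Kampen theorem for stated skein modules (Corollary \ref{coro_QVK_skein}), the identification of the usual skein module with the coinvariants of the stated one (Theorem \ref{theorem_surjectivity}), and the fact that a spherical boundary component containing the marked disc forces every rational vector to be coinvariant (Theorem \ref{theorem_spherical}, or rather its consequence Corollary \ref{coro_spherical}). The only genuinely new ingredient is topological: exhibiting the closed manifold $M$ as the underlying manifold of a marked $3$-manifold that is a $\cup_{\mathbf{\Sigma}}$-gluing of marked pieces.

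First I would carry out the setup sketched just before the statement. Fix a ball $\mathbb{B}^3\subset M$ meeting $\Sigma$ in a properly embedded disc $\mathbb{D}=\mathbb{B}^3\cap\Sigma$ that cuts $\mathbb{B}^3$ into two hemispheres, choose a disc $\mathbb{D}_M\subset\partial\mathbb{B}^3$ and an oriented parametrization $\iota_M:\mathbb{D}^2\xrightarrow{\cong}\mathbb{D}_M$ carrying $\mathbb{D}^{\pm}$ onto the half-discs $\mathbb{D}_M^{\pm}$ lying in $N_1$, $N_2$ respectively, and form $\mathbf{M}=(M\setminus\mathring{\mathbb{B}}^3,\iota_M)$, $\mathbf{N}_1=(N_1\setminus\mathring{\mathbb{B}}^3_+,\iota^+)$, $\mathbf{N}_2=(N_2\setminus\mathring{\mathbb{B}}^3_-,\iota^-)$ and $\mathbf{\Sigma}=(\Sigma\setminus\mathring{\mathbb{D}},a)$. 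A direct inspection of the construction of $\cup_{\mathbf{\Sigma}}$ in Section \ref{sec_VK_FG} shows $\mathbf{M}=\mathbf{N}_1\cup_{\mathbf{\Sigma}}\mathbf{N}_2$. Two standard facts are used here: removing an open ball from a compact oriented $3$-manifold does not change its Kauffman-bracket skein module, so that $\mathcal{S}_q(M)\cong\mathcal{S}_q(M\setminus\mathring{\mathbb{B}}^3)$; and the marked disc $\mathbb{D}_M$ lies in the $2$-sphere $\partial\mathbb{B}^3$, which is a connected component of $\partial(M\setminus\mathring{\mathbb{B}}^3)$, so that Theorem \ref{theorem_spherical} applies to $\mathbf{M}$.

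Next I would apply Corollary \ref{coro_QVK_skein} to $\mathbf{M}=\mathbf{N}_1\cup_{\mathbf{\Sigma}}\mathbf{N}_2$, yielding an isomorphism of $\mathcal{O}_q[\SL_2]$-comodules $\mathcal{S}_q(\mathbf{M})\cong\mathcal{S}_q(\mathbf{N}_1)\otimes_{\mathcal{S}_q(\mathbf{\Sigma})}\mathcal{S}_q(\mathbf{N}_2)$. For the integral assertion, I apply the functor of coinvariant vectors to both sides and identify the left-hand side with $\mathcal{S}_q(M)$ using Theorem \ref{theorem_surjectivity} together with $\mathcal{S}_q(M)\cong\mathcal{S}_q(M\setminus\mathring{\mathbb{B}}^3)$, obtaining $\mathcal{S}_q(M)\cong\bigl(\mathcal{S}_q(\mathbf{N}_1)\otimes_{\mathcal{S}_q(\mathbf{\Sigma})}\mathcal{S}_q(\mathbf{N}_2)\bigr)^{coinv}$. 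For the rational assertion, I would first tensor the Van Kampen isomorphism with $K_{\SL_2}$: as the relative tensor product is a coequalizer and $\bullet\otimes_k K_{\SL_2}$ is a left adjoint, base change passes through, giving $\mathcal{S}^{rat}_q(\mathbf{M})\cong\mathcal{S}^{rat}_q(\mathbf{N}_1)\otimes_{\mathcal{S}^{rat}_q(\mathbf{\Sigma})}\mathcal{S}^{rat}_q(\mathbf{N}_2)$; then Corollary \ref{coro_spherical} identifies $\mathcal{S}^{rat}_q(M)$ with $\mathcal{S}^{rat}_q(\mathbf{M})$, and composing the two gives the claim.

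Since all three substantial inputs are already in hand, I expect no serious obstacle; the steps that require care are bookkeeping ones. The principal one is checking that the Van Kampen isomorphism of Corollary \ref{coro_QVK_skein} is indeed an isomorphism of $\mathcal{O}_q[\SL_2]$-comodules for the coaction induced on the relative tensor product from the marked disc of $\mathbf{M}$, so that applying $(\cdot)^{coinv}$ to both sides is legitimate; this is part of the statement of that corollary and holds ultimately because $\mathcal{S}_q$ is a braided balanced functor and the splitting morphisms are natural. A secondary point is the torsion caveat in Theorem \ref{theorem_surjectivity}: its kernel lies in the torsion submodule, which is exactly why the integral statement is phrased with coinvariants of a relative tensor product rather than as a bare equality of skein modules, and why the cleaner, unconditional form of the result is the rational one.
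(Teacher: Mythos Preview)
Your proposal is correct and follows essentially the same route as the paper: assemble the setup $\mathbf{M}=\mathbf{N}_1\cup_{\mathbf{\Sigma}}\mathbf{N}_2$, apply the quantum Van Kampen Corollary~\ref{coro_QVK_skein} to get $\mathcal{S}_q(\mathbf{M})\cong\mathcal{S}_q(\mathbf{N}_1)\otimes_{\mathcal{S}_q(\mathbf{\Sigma})}\mathcal{S}_q(\mathbf{N}_2)$, then invoke Theorem~\ref{theorem_surjectivity} for the integral statement and Theorem~\ref{theorem_spherical}/Corollary~\ref{coro_spherical} for the rational one. Your extra remarks (base-change through the coequalizer, equivariance of the Van Kampen isomorphism, the ball-removal invariance of the ordinary skein module) are exactly the routine checks the paper leaves implicit.
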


Applied to a Heegaard splitting $M=H_g\cup_{\Sigma_g}H_g$ of $M$, this theorem, together with an algebraic analogue of a theorem of Kashiwara-Schapira, was proved in \cite[Theorem $1$]{GunninghamJordanSafranov_FinitenessConjecture} to imply that $\mathcal{S}^{rat}_q(M)$ has finite dimension. Note that, because of Corollary \ref{coro_classical}, $\mathcal{S}_q(M)$ is infinitely generated whenever the character variety of $M$ is not $0$ dimensional so the torsion part of $\mathcal{S}_q(M)$ is infinitely generated in this case.

\subsection{Self-gluing and quantum HNN extensions}\label{sec_selfgluing}

In the last subsections, we have considered the operation of gluing two $3$ manifolds along a surface and proved that $P_{M_1\cup_{\Sigma} M_2}$ is isomorphic to $P_{M_1}\otimes_{P_{\Sigma}}P_{M_2}$. We now consider the operation, giving a $3$ manifold $M$ and two attaching maps $\phi_1: \Sigma \hookrightarrow \partial M$, $\phi_2: \overline{\Sigma} \hookrightarrow \partial M$ of gluing the two copies of $\Sigma$ together to get a new manifold $M_{\phi_1 \# \phi_2}$ and will prove that $P_{M_{\phi_1 \# \phi_2}}$ is isomorphic to $\mathrm{HH}^0(P_{\Sigma}, P_M)$, where $\mathrm{HH}^0$ is defined in a braided sense. We will proceed by identifying $M_{\phi_1 \# \phi_2}$ with $\Sigma\times I \cup_{\overline{\Sigma} \wedge \Sigma} M$ and use the quantum Van Kampen theorem. The proof will then be a braided version of the classical proof of the fact that for $M$ a bimodule over some algebra $A$, then $\mathrm{HH}^0(A, M)$ is isomorphic to $A \otimes_{A^{op}\otimes A} M$.
\vspace{2mm}
\par 
Let $\mathbf{M}\in \mathcal{M}_{\con}^{(1)}$, $\mathbf{\Sigma}\in \MS_{\con}^{(1)}$ and $\phi_1: \Sigma \hookrightarrow \partial M$, $\phi_2: \overline{\Sigma} \hookrightarrow \partial M$ two embeddings such that: $(1)$ the boundary arc $a$ of $\mathbf{\Sigma}$ is sent to $\partial_+ \mathbb{D}_M$ by both $\phi_1$ and $\phi_2$ and $(2)$ the 
 images of $\phi_1$ and $\phi_2$  intersect in a contractible neighborhood of  $\partial_+ \mathbb{D}_M$. We define an embedding $\phi: \overline{\Sigma}\wedge \Sigma \hookrightarrow \partial M$, illustrated in Figure \ref{fig_self_gluing}, as follows.
\par 
 Divide the arc $\partial_+ \mathbb{D}_M$ into two segments $\partial_+ \mathbb{D}_M= \partial_{+,L} \mathbb{D}_M\cup \partial_{+, R} \mathbb{D}_M$ where $\partial_{+,L} \mathbb{D}_M= \iota_M \left( \partial \mathbb{D}^2 \cap \mathbb{R}^{\leq 0} \times \mathbb{R}^{\geq 0}\right)$ and $\partial_{+,R} \mathbb{D}_M= \iota_M \left( \partial \mathbb{D}^2 \cap \mathbb{R}^{\geq 0} \times \mathbb{R}^{\geq 0}\right)$. Write $\{p^+\}=\partial_{+,L} \mathbb{D}_M \cap \partial_{+,R} \mathbb{D}_M$. Isotope $\phi_1$ and $\phi_2$ slightly in the neighborhood of $\partial \mathbb{D}_M$ such that $(1)$ $\phi_1$ maps $a$ to $\partial_{+,L} \mathbb{D}_M$ and $\phi_2$ maps $a$ to $\partial_{+,R} \mathbb{D}_M$ and $(2)$ the intersection of the images of $\phi_1$ and $\phi_2$ is $\{p^+\}$. Define an  embedding $\phi: \overline{\Sigma}\wedge \Sigma \hookrightarrow \partial M$ by pushing slightly $\partial_{+,L} \mathbb{D}_M$  and $\partial_{+,R} \mathbb{D}_M$ inside $\partial M$ and considering an embedding of the  triangle $T$ with edges $\partial_+ \mathbb{D}_M$,  $\partial_{+,L} \mathbb{D}_M$  and $\partial_{+,R} \mathbb{D}_M$. This embedding together with $\phi_1, \phi_2$ glue together to define $\phi$.
\par Consider the cylinder $\mathbf{\Sigma}\times I$ and define an embedding $\psi: \Sigma \wedge \overline{\Sigma} \hookrightarrow \partial (\Sigma\times I)$ as follows. Write $\partial_- \mathbb{D}_{\mathbf{\Sigma}}= \partial_{-, L} \mathbb{D}_{\mathbf{\Sigma}} \cup \partial_{-, R} \mathbb{D}_{\mathbf{\Sigma}}$ as before. Isotope the embedding $\Sigma \cong \Sigma\times \{+1\} \hookrightarrow \partial (\Sigma\times I)$ to get an embedding $\psi_1: \Sigma \hookrightarrow \partial (\Sigma\times I)$ sending $a$ to $\partial_{-, R} \mathbb{D}_{\mathbf{\Sigma}}$. Isotope the embedding $\overline{\Sigma} \cong \Sigma \times \{-1\} \hookrightarrow \partial \Sigma \times I$ to get an embedding $\psi_2: \overline{\Sigma} \hookrightarrow \partial (\Sigma \times I)$ sending $a$ to $\partial_{-, L} \mathbb{D}_{\mathbf{\Sigma}}$. Like before, push slightly $\partial_{-, L} \mathbb{D}_{\mathbf{\Sigma}}$ and $\partial_{+, L} \mathbb{D}_{\mathbf{\Sigma}}$ inside and define an embedding of the triangle with edges $\partial_- \mathbb{D}_{\mathbf{\Sigma}}$, $\partial_{-, L} \mathbb{D}_{\mathbf{\Sigma}}$ and $\partial_{-, R} \mathbb{D}_{\mathbf{\Sigma}}$. This embedding together with $\psi_1$, $\psi_2$ define the embedding $\psi: \Sigma \wedge \overline{\Sigma} \hookrightarrow \partial (\Sigma\times I)$.

 \begin{figure}[!h] 
\centerline{\includegraphics[width=14cm]{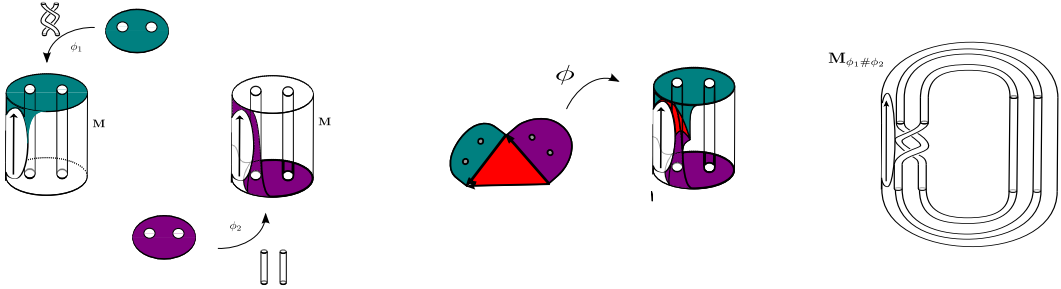} }
\caption{An illustration of the self-gluing operation where $\mathbf{M}= \mathbb{D}_2\times I$, the attaching map $\phi_1$ is given by a braid and $\phi_2$ is given by the identity. The resulting manifold $\mathbf{M}_{\phi_1\# \phi_2}$ is the exterior of the link obtained by closing the braid (here the Hopf link) with a ball removed.}
\label{fig_self_gluing} 
\end{figure}

\begin{definition} The marked $3$ manifold $\mathbf{M}_{\phi_1 \# \phi_2} \in \mathcal{M}_{\con}^{(1)}$ is the gluing $\mathbf{M}_{\phi_1 \# \phi_2} := (\mathbf{\Sigma}\times I)\cup_{\overline{\Sigma}\wedge \Sigma} \mathbf{M}$ defined by the two attaching maps $\phi$ and $\psi$. We denote by $\Psi: \mathbf{M} \to \mathbf{M}_{\phi_1 \# \phi_2}$ the natural embedding.
\end{definition}
 
 While gluing the cylinder $\mathbf{\Sigma}\times I$ to $\mathbf{M}$ using the gluing maps $\phi_1$ and $\psi_2$ we get a marked $3$-manifold $(\mathbf{\Sigma}\times I)\cup_{\Sigma}\mathbf{M}$ isomorphic to $\mathbf{M}$, so the gluing map defines an embedding
  $\nabla_{\phi_1}^L : (\mathbf{\Sigma}\times I) \wedge \mathbf{M} \to \mathbf{M}$. Similarly, while gluing $\mathbf{M}$ with the cylinder $\mathbf{\Sigma}\times I$ using $\phi_2$ and $\psi_1$, we also get a marked $3$-manifold $(\mathbf{\Sigma}\times I)\cup_{\Sigma}\mathbf{M}$ isomorphic to $\mathbf{M}$, so the gluing map defines an embedding  $\nabla_{\phi_2}^R : \mathbf{M} \wedge (\mathbf{\Sigma}\times I)\to \mathbf{M}$. 
  
  
  \begin{definition}\label{def_HH0} Let $(\mathcal{C}, \otimes, c_{\cdot, \cdot}, \theta_{\cdot})$ be a braided balanced category, $A\in \mathcal{C}$ an algebra object and $M \in \mathcal{C}$ an $A$-bimodule object with left and right modules maps $\nabla^L : A\otimes M \to M$ and $\nabla^R: M\otimes A \to M$. The $0$-th Hochschild cohomology group $\mathrm{HH}^0(A,M)$  is  (when it exists) the coequalizer
    $$ \begin{tikzcd}
 A\otimes M
 \ar[rr,shift left=.75ex,"\nabla^L"]
  \ar[rr,shift right=.75ex,swap,"\nabla^R \circ c_{A, M} \circ (\theta_A \otimes \id_M)"] 
&{}& {M}
\ar[r]
&\mathrm{HH}^0(A,M).
\end{tikzcd}$$
  \end{definition}
  
  \begin{remark}\label{remark_twistopposite} Note that if $(M, \nabla^R)$ is a right $A$-module object in $\mathcal{C}$ then $(M, \nabla^R \circ c_{A,M} \circ (\theta_A \otimes \id_M) )$ is a left $A$-module object.
  \end{remark}
  
  The maps $\nabla_{\phi_1}^L$ and $\nabla_{\phi_2}^R$ give $\mathbf{M}$ the structure of a bimodule over $\mathbf{\Sigma}\times I$ in $\mathcal{M}_{\con}^{(1)}$. Composing with $P: \mathcal{M}_{\con}^{(1)}\to \widehat{\BT}$, we obtain that $P_M$ is a $P_{\Sigma}$ bimodule in $\widehat{\BT}$. Similarly composing with $\Rep_q^G$, we obtain that $\Rep_q^G(\mathbf{M})$ is a bimodule over $\Rep_q^G(\mathbf{\Sigma})$ in $\overline{\mathcal{C}_q^G}$.
  
  \begin{theorem}\label{theorem_selfgluing}
  \begin{enumerate}
  \item 
One has $P_{M_{\phi_1\# \phi_2}}= \mathrm{HH}^0(P_{\Sigma}, P_M)$.
\item One has $\Rep_q^G(\mathbf{M}_{\phi_1\# \phi_2}) = \mathrm{HH}^0( \Rep_q^G(\mathbf{\Sigma}), \Rep_q^G(\mathbf{M}))$ and  $\mathcal{S}_q(\mathbf{M}_{\phi_1\# \phi_2}) = \mathrm{HH}^0( \mathcal{S}_q(\mathbf{\Sigma}), \mathcal{S}_q(\mathbf{M}))$.
\end{enumerate}
\end{theorem}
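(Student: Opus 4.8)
The plan is to first establish item (1) purely topologically, then deduce item (2) by applying the Kan-extension machinery exactly as in the proofs of Theorems \ref{theorem_QVK_Rep} and \ref{coro_QVK_skein}. For item (1), the key observation is that the self-glued manifold can be rewritten as an ordinary gluing along a surface: by construction $\mathbf{M}_{\phi_1 \# \phi_2} = (\mathbf{\Sigma}\times I)\cup_{\overline{\Sigma}\wedge \Sigma} \mathbf{M}$, where the gluing surface is the disjoint-then-fused surface $\overline{\Sigma}\wedge \Sigma$ whose thickening $\mathbf{\Sigma}\times I$-analogue satisfies $\mathcal{S}_q((\overline{\mathbf{\Sigma}}\wedge\mathbf{\Sigma})\times I)\cong \mathcal{S}_q(\mathbf{\Sigma})^{op}\overline{\otimes}\mathcal{S}_q(\mathbf{\Sigma})$ (using the quantum fusion Theorem \ref{theorem_fusion} together with the braided-tensor description). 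So the first step is to apply the quantum Van Kampen Theorem \ref{theorem_QVK_FG} to this decomposition, obtaining
$$P_{M_{\phi_1\#\phi_2}} \cong P_{\Sigma\times I} \otimes_{P_{\overline{\Sigma}\wedge\Sigma}} P_M.$$
The second step is the braided-algebra identity that for a bimodule $M$ over $A$ in a braided balanced category, $\mathrm{HH}^0(A,M)$ is computed as $A \otimes_{A^{op}\overline{\otimes}A} M$, where $A$ is viewed as an $A^{op}\overline{\otimes}A$-module via left and (twisted) right multiplication and $M$ as an $A^{op}\overline{\otimes}A$-module via its two actions. Here $A = P_{\Sigma}$ (i.e. $\mathbf{\Sigma}\times I$ as an algebra object in $\widehat{\BT}$), and $A^{op}\overline{\otimes}A = P_{\overline{\Sigma}\wedge\Sigma}$; the left $\mathbf{\Sigma}\times I$-module structure on $\mathbf{\Sigma}\times I$ corresponding to $\nabla^L\sqcup(\nabla^R\circ c\circ(\theta\otimes\id))$ is exactly the attaching map $\psi: \Sigma\wedge\overline{\Sigma}\hookrightarrow \partial(\Sigma\times I)$ built from the cylinder's two ends, and the $P_{\overline{\Sigma}\wedge\Sigma}$-module structure on $P_M$ is the one induced by $\phi$. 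Matching these module structures with $\nabla_{\phi_1}^L$, $\nabla_{\phi_2}^R$, the twist appearing in Definition \ref{def_HH0} is accounted for precisely by the cotwist $\theta$ in the balanced structure of $\mathcal{M}^{(1)}_{\con}$ coming from the framed little discs action (Definition \ref{def_ribbonstr}); this is the point where the "braided sense" of $\mathrm{HH}^0$ enters and is essential.

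Having identified $P_{M_{\phi_1\#\phi_2}}$ with the coequalizer of Definition \ref{def_HH0} applied to $P_\Sigma$ and $P_M$, item (2) follows formally: tensoring the coequalizer diagram on the right by $\bullet\otimes_{\BT} Q_{B_qG}$ and invoking Lemmas \ref{lemma_tenstens} and \ref{lemma_QFG_QRS} — just as in the proof of Theorem \ref{theorem_QVK_Rep} — yields the right exact sequence computing $\mathrm{HH}^0(\Rep_q^G(\mathbf{\Sigma}), \Rep_q^G(\mathbf{M}))$, and hence $\Rep_q^G(\mathbf{M}_{\phi_1\#\phi_2})\cong \mathrm{HH}^0(\Rep_q^G(\mathbf{\Sigma}), \Rep_q^G(\mathbf{M}))$. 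The stated skein statement is then immediate from Corollary \ref{coro_RepSkein} (the isomorphism $\Rep_q^{\SL_2}\cong \mathcal{S}_q$), noting that this isomorphism is compatible with the monoidal and balanced structures so that it intertwines the two $\mathrm{HH}^0$ constructions. One must check along the way that $\mathcal{S}_q$ sends the twist of $\mathcal{M}^{(1)}_{\con}$ to the twist of $\overline{\mathcal{C}_q^{\SL_2}}$, which is part of the braided balanced functoriality already invoked in Section \ref{sec3}.

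The main obstacle I anticipate is not any single computation but the bookkeeping of orientations and the twist in Step 2: one has to verify with care that the two attaching maps $\nabla_{\phi_1}^L$ and $\nabla_{\phi_2}^R$ assemble, after passing through the Van Kampen identification $\mathbf{M}_{\phi_1\#\phi_2}\cong (\mathbf{\Sigma}\times I)\cup_{\overline{\Sigma}\wedge\Sigma}\mathbf{M}$, into precisely the pair of parallel arrows $\nabla^L$ and $\nabla^R\circ c_{A,M}\circ(\theta_A\otimes\id_M)$ of Definition \ref{def_HH0}, with the correct cotwist factor and no spurious braidings. This amounts to analyzing how a bottom tangle crossing the gluing surface gets pushed (in the sense of Definition \ref{def_push}) past the fused disc $\overline{\Sigma}\wedge\Sigma$, and tracking the resulting framing change; I would carry this out by drawing the analogue of Figure \ref{fig_self_gluing} for the cylinder side and comparing with Figure \ref{fig_QuotientMap}. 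Once this topological matching is pinned down, the rest is a mechanical transcription of the classical fact $\mathrm{HH}^0(A,M)\cong A\otimes_{A^{op}\otimes A}M$ into the braided setting and an application of right-exactness of the relevant tensor functors.
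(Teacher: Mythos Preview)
Your proposal is correct and follows essentially the same route as the paper: identify $\mathbf{M}_{\phi_1\#\phi_2}$ as $(\mathbf{\Sigma}\times I)\cup_{\overline{\Sigma}\wedge\Sigma}\mathbf{M}$, apply the quantum Van Kampen Theorem \ref{theorem_QVK_FG}, and then reduce the resulting coequalizer to the $\mathrm{HH}^0$ coequalizer via a braided version of the identity $\mathrm{HH}^0(A,M)\cong A\otimes_{A^{op}\otimes A}M$, with part (2) following by right-exactness of $\bullet\otimes_{\BT}Q_{B_qG}$. The paper carries out the reduction step you flag as the ``main obstacle'' by writing down explicit epimorphisms $j_1,j_2$ between the two parallel pairs and invoking a short categorical lemma (Lemma \ref{lemma_diagrams}) that transfers coequalizers along such epimorphisms, rather than by appealing to a general braided $\mathrm{HH}^0$ formula; but this is exactly the bookkeeping you anticipate.
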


By the quantum Van Kampen Theorem \ref{theorem_QVK_FG}, the image by $P: \mathcal{M}_{\con}^{(1)}\to \widehat{\BT}$ of the following sequence is a coequalizer (here and henceforth we write $\mathbf{\Sigma}$ instead of $\mathbf{\Sigma}\times I$ for simplicity):
 $$ \begin{tikzcd}
 \mathbf{\Sigma} \wedge( \overline{\mathbf{\Sigma}} \wedge \mathbf{\Sigma} ) \wedge \mathbf{M}
 \ar[rr,shift left=.75ex,"\nabla_1\wedge \id"]
  \ar[rr,shift right=.75ex,swap,"\id \wedge \nabla_2"] 
&{}& \mathbf{\Sigma} \wedge \mathbf{M}
\ar[r, "\Psi'"]
&\mathbf{M}_{\phi_1\# \phi_2}.
\end{tikzcd}$$
We want to deduce from this fact that the image by $P$ of the following sequence is a coequalizer too:
 $$ \begin{tikzcd}
 \mathbf{\Sigma}  \wedge \mathbf{M}
 \ar[rr,shift left=.75ex,"\nabla^L_{\phi_1}"]
  \ar[rr,shift right=.75ex,swap,"\nabla_{\phi_2}^R \circ \Psi_{\Sigma, M}"] 
&{}&  \mathbf{M}
\ar[r, "\Psi"]
&\mathbf{M}_{\phi_1\# \phi_2}.
\end{tikzcd}$$

To analyze the morphisms $\nabla_1$ and $\nabla_2$, let us introduce the homeomorphism $\hT_{\Sigma}: \overline{\Sigma}\times I \xrightarrow{\cong} \Sigma \times I$ which consists in performing a half-twist in a tubular neighborhood of the boundary disc $\mathbb{D}_{\Sigma}$ while leaving the rest invariant. The morphism $\hT_{\Sigma}$ does not define a morphism $\overline{\mathbf{\Sigma}} \to \mathbf{\Sigma}$ in $\mathcal{M}_{\con}^{(1)}$ because it reverses the height order of the boundary disc instead of preserving it. However, for any other marked $3$-manifold $\mathbf{N}$ one can define an isomorphism $\id\wedge \hT_{\Sigma} : \mathbf{N} \wedge \overline{\mathbf{\Sigma}} \xrightarrow{\cong} \mathbf{N}\wedge \mathbf{\Sigma}$ in $\mathcal{M}_{\con}^{(1)}$ which consists in performing a half-twist in a neighborhood of $\mathbb{D}_{\overline{\Sigma}}$. Then by definition of the gluing maps, the morphisms $\nabla_1$, $\nabla_2$ decompose as 
$$ \nabla_1: \mathbf{\Sigma}\wedge \overline{\mathbf{\Sigma}} \wedge \mathbf{\Sigma} \xrightarrow{ \id \wedge \hT_{\Sigma}^{-1} \wedge \id} {\mathbf{\Sigma}}\wedge {\mathbf{\Sigma}} \wedge \mathbf{\Sigma}  \xrightarrow{\psi_{\Sigma, \Sigma} \wedge \id} {\mathbf{\Sigma}}\wedge {\mathbf{\Sigma}} \wedge \mathbf{\Sigma} \xrightarrow{\mu^{(2)}_{\Sigma}} \mathbf{\Sigma}$$
and 
$$ \nabla_2 : \overline{\mathbf{\Sigma}} \wedge \mathbf{\Sigma} \wedge \mathbf{M}  \xrightarrow{ \id \wedge \nabla_{\phi_1}^L} \overline{\mathbf{\Sigma}}\wedge \mathbf{M} \xrightarrow{\psi_{\overline{\Sigma}, M}} \mathbf{M}\wedge \overline{\mathbf{\Sigma}} \xrightarrow{\id\wedge \hT_{\Sigma}} \mathbf{M}\wedge \mathbf{\Sigma} \xrightarrow{\nabla_{\phi_2}^R} \mathbf{M}.
$$
Define a morphism $j_1: \mathbf{\Sigma} \wedge (\overline{\mathbf{\Sigma}}\wedge \mathbf{\Sigma}) \wedge \mathbf{M} \to \mathbf{\Sigma} \wedge \mathbf{M}$ as the composition: 
\begin{multline*}
 j_1: \mathbf{\Sigma} \wedge (\overline{\mathbf{\Sigma}}\wedge \mathbf{\Sigma}) \wedge \mathbf{M}
 \xrightarrow{ \id \wedge \hT_{\Sigma}^{-1} \wedge \id \wedge \id}  
 \mathbf{\Sigma} \wedge {\mathbf{\Sigma}}\wedge \mathbf{\Sigma} \wedge \mathbf{M} \\
 \xrightarrow{ \psi_{\Sigma, \Sigma}\wedge \id \wedge \id} 
  \mathbf{\Sigma} \wedge {\mathbf{\Sigma}}\wedge \mathbf{\Sigma} \wedge \mathbf{M} 
  \xrightarrow{\id \wedge \mu_{\Sigma} \wedge \id}
{\mathbf{\Sigma}}\wedge \mathbf{\Sigma} \wedge \mathbf{M} 
\xrightarrow{\id \wedge \nabla_{\phi_1}^L}
\mathbf{\Sigma} \wedge \mathbf{M}, 
\end{multline*}

and set $j_2:= \nabla_{\phi_1}^L$. Then in the diagram

\begin{equation}\label{big_diagram}
\begin{tikzcd}
 \mathbf{\Sigma} \wedge( \overline{\mathbf{\Sigma}} \wedge \mathbf{\Sigma} ) \wedge \mathbf{M}
 \ar[rr,shift left=.75ex,"\nabla_1\wedge \id"]
 \ar[dd, "j_1"] 
  \ar[rr,shift right=.75ex,swap,"\id \wedge \nabla_2"] 
&{}& \mathbf{\Sigma} \wedge \mathbf{M}
\ar[dd, "j_2"]
\ar[rd, "\Psi'"]
&{} \\
{}&{}&{}&
\mathbf{M}_{\phi_1\# \phi_2} \\
\mathbf{\Sigma}  \wedge \mathbf{M}
 \ar[rr,shift left=.75ex,"\nabla^L_{\phi_1}"]
  \ar[rr,shift right=.75ex,swap,"\nabla_{\phi_2}^R \circ \Psi_{\Sigma, M}\circ (\theta_{\Sigma}\wedge \id_M)"] 
&{}&  \mathbf{M}
\ar[ru, "\Psi"]
\end{tikzcd}
\end{equation}
the three following subdiagrams commute (up to isotopy):

\begin{equation}\label{3_diagrams}
\begin{tikzcd}
 \mathbf{\Sigma} \wedge( \overline{\mathbf{\Sigma}} \wedge \mathbf{\Sigma} ) \wedge \mathbf{M}
  \ar[r,"\nabla_1\wedge \id"]
 \ar[d, "j_1"] 
& \mathbf{\Sigma} \wedge \mathbf{M}
\ar[d, "j_2"]
\\
\mathbf{\Sigma}  \wedge \mathbf{M}
 \ar[r,"\nabla^L_{\phi_1}"]
 &  \mathbf{M}
\end{tikzcd}
\quad
\begin{tikzcd}
 \mathbf{\Sigma} \wedge( \overline{\mathbf{\Sigma}} \wedge \mathbf{\Sigma} ) \wedge \mathbf{M}
  \ar[r,"\id \wedge \nabla_2"]
 \ar[d, "j_1"] 
& \mathbf{\Sigma} \wedge \mathbf{M}
\ar[d, "j_2"]
\\
\mathbf{\Sigma}  \wedge \mathbf{M}
 \ar[r,"\nabla_{\phi_2}^R \circ \Psi_{\Sigma, M}\circ (\theta_{\Sigma}\wedge \id_M)"]
 &  \mathbf{M}
\end{tikzcd}
\quad
\begin{tikzcd}
 \mathbf{\Sigma} \wedge \mathbf{M}
\ar[dd, "j_2"] \ar[rd, "\Psi'"]
&{} \\
{}& 
\mathbf{M}_{\phi_1\# \phi_2}\\
\mathbf{M} 
\ar[ru, "\Psi"]
\end{tikzcd}
\end{equation}

\begin{lemma}\label{lemma_diagrams}
Let $\mathcal{C}$ a  category and consider a diagram
$$
\begin{tikzcd}
A 
 \ar[r,shift left=.75ex,"f"]
  \ar[r,shift right=.75ex,swap,"g"] 
\ar[dd, "j_1"] &
B \ar[dd, "j_2"] \ar[rd, "h"] &{} \\
{}&{}&D \\
A'
 \ar[r,shift left=.75ex,"f'"]
  \ar[r,shift right=.75ex,swap,"g'"]
& B'  \ar[ru, "h'"]&{}
\end{tikzcd}
$$

such that:
$(i)$  the following diagrams are commutative: 
$$ 
\begin{tikzcd}
A 
  \ar[r,"f"] 
\ar[dd, "j_1"] &
B \ar[dd,   "j_2"]  \\
{}&{} \\
A'
  \ar[r,"f'"]
& B'  
\end{tikzcd}
\quad
\begin{tikzcd}
A 
  \ar[r,"g"] 
\ar[dd, "j_1"] &
B \ar[dd,   "j_2"]  \\
{}&{} \\
A'
  \ar[r,"g'"]
& B'  
\end{tikzcd}
\quad 
\begin{tikzcd}
B \ar[dd, "j_2"] \ar[rd, "h"] &{} \\
{} & D \\
B' \ar[ru, "h'"] &{}
\end{tikzcd};
$$
\\ $(ii)$ $j_1$ and $j_2$  are epimorphisms
\\ and $(iii)$ the following diagram is a coequalizer:
$$
\begin{tikzcd}
A
 \ar[r,shift left=.75ex,"f"]
  \ar[r,shift right=.75ex,swap,"g"] &
B  \ar[r, "h"] & D 
\end{tikzcd}.
$$
Then the following diagram is a coequalizer:
$$
\begin{tikzcd}
A'
 \ar[r,shift left=.75ex,"f'"]
  \ar[r,shift right=.75ex,swap,"g'"] &
B' \ar[r, "h'"] & D 
\end{tikzcd}.
$$
\end{lemma}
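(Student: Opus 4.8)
The plan is to transport the universal property of the top coequalizer $(iii)$ to the bottom row by a purely formal diagram chase: the commuting cells $(i)$ serve to relate the two parallel pairs, and the epimorphism hypothesis $(ii)$ serves to cancel the vertical maps.

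First I would verify that $h'$ coequalizes $(f',g')$. Using the two commuting squares in $(i)$ together with the identity $h\circ f = h\circ g$ coming from $(iii)$, one computes
$$ h'\circ f'\circ j_1 = h'\circ j_2\circ f = h\circ f = h\circ g = h'\circ j_2\circ g = h'\circ g'\circ j_1, $$
and since $j_1$ is an epimorphism this yields $h'\circ f' = h'\circ g'$.

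Next, given an object $E$ of $\mathcal{C}$ and a morphism $k\colon B'\to E$ with $k\circ f' = k\circ g'$, I would form $k\circ j_2\colon B\to E$. The commuting squares give $(k\circ j_2)\circ f = k\circ f'\circ j_1 = k\circ g'\circ j_1 = (k\circ j_2)\circ g$, so $k\circ j_2$ coequalizes $(f,g)$; by the universal property in $(iii)$ there is a unique $\varphi\colon D\to E$ with $\varphi\circ h = k\circ j_2$. The commuting triangle in $(i)$ then gives $\varphi\circ h'\circ j_2 = \varphi\circ h = k\circ j_2$, and cancelling the epimorphism $j_2$ yields $\varphi\circ h' = k$.

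Finally I would address uniqueness: a coequalizer is always an epimorphism, so $h$ is epi, and from $h = h'\circ j_2$ it follows that $h'$ is epi as well; hence $\varphi$ is the unique morphism with $\varphi\circ h' = k$. This shows $(D,h')$ is a coequalizer of $(f',g')$. There is no genuine obstacle in this argument; the only point requiring attention is keeping straight which of $j_1$, $j_2$ is being invoked as an epimorphism at each cancellation step, and noting that the uniqueness clause rests ultimately on $h$ being a coequalizer (hence epi) rather than on any hypothesis imposed directly on $h'$.
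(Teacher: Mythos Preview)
Your proof is correct and follows essentially the same route as the paper's: both verify $h'\circ f'=h'\circ g'$ by precomposing with $j_1$ and cancelling, and both push a test morphism $k$ back along $j_2$ to invoke the universal property of $(iii)$. The only cosmetic difference is in the uniqueness step: the paper observes directly that the conditions $k\circ j_2 = m\circ h$ and $k = m\circ h'$ are \emph{equivalent} (via the triangle and $j_2$ epi), so uniqueness transfers immediately from $(iii)$; you instead deduce that $h'$ is epi from $h = h'\circ j_2$ and cancel. Both work.
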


\begin{proof}
First, the equality $h'\circ f'= h'\circ g'$ follows from the fact that $j_1$ is an epimorphism together with the commutativity of the diagrams in $(i)$ and $(iii)$.
Let $k: B' \to E$ be a morphism such that $k\circ f' = k\circ g'$ and let us prove that there exists a unique morphism $m: D \to E$ such that $k=m\circ h'$. Since $k\circ f' \circ j_1= k\circ g'\circ j_1$ by commutativity of the first two diagrams in Hypothesis $(i)$, one has $(k\circ j_2 )\circ f=(k\circ j_2)\circ g$. By $(iii)$, there exists a unique $m:D\to E$ such that $k\circ j_2= m\circ h$. By commutativity of the third diagram in $(i)$, the last identity is equivalent to $k\circ j_2= m\circ h' \circ j_2$ and since $j_2$ is an epimorphism $(ii)$, this is equivalent to $k=m\circ h'$. This proves the claim.
\end{proof}

\begin{proof}[Proof of Theorem \ref{theorem_selfgluing}]
We apply Lemma \ref{lemma_diagrams} to the image by $P$ of Diagram \ref{big_diagram}. The commutativity of Diagrams \ref{3_diagrams} implies hypothesis $(i)$. Hypothesis $(ii)$ follows from the fact that, for $k=1,2$,  the codomain of  $j_k$ retracts by deformation on its image, therefore for every $n\geq 0$, $P_n(j_k) $ is surjective, so $P(j_k)$ is an epimorphism.  Hypothesis $(iii)$ follows from the quantum Van Kampen Theorem \ref{theorem_QVK_FG}. Therefore $P_{M_{\phi_1\# \phi_2}}= \mathrm{HH}^0(P_{\Sigma}, P_M)$. The corresponding results for $\Rep_q^G$ and $\mathcal{S}_q$ are deduced from the fact that $\bullet \otimes_{\BT} Q_{B_qG}$ preserves right exact sequences exactly in the same manner than in the proof of Theorem \ref{theorem_QVK_Rep} so we leave the details to the reader.
\end{proof}

\section{Quantum representation spaces of mapping tori}\label{sec5}

\subsection{The quantum adjoint coaction}\label{sec_QAd}
Let $\mathbf{\Sigma}=(\Sigma, \{a\}) \in \MS^{(1)}_{\con}$ be a connected $1$-marked surface let $\ad_{\Sigma} : \mathbf{\Sigma}\times I \to (\mathbf{\Sigma}\times I) \wedge \mathbf{H}_1$ be the embedding whose image by the quantum fundamental group functor is the morphism $P(\ad_{\Sigma})$ illustrated in Figure \ref{fig_qad}. More precisely, consider the marked surface $\mathbf{\Sigma}\wedge {\mathbb{D}_1}$ obtained from $\mathbf{\Sigma}\bigsqcup \mathbb{D}_1\bigsqcup (\mathbb{D}^2, \{e_1,e_2,e_3\})$ by gluing $e_1$ with $a$ and $e_2$ with the boundary arc of the annulus $\mathbb{D}_1$. In particular, $ (\mathbf{\Sigma}\times I) \wedge \mathbf{H}_1$ is the image by the functor $\cdot \times I : \MS \to \mathcal{M}$ of $\mathbf{\Sigma}\wedge \mathbf{\mathbb{D}_1}$. 
Define an embedding $\ad^0: \Sigma \to \Sigma \wedge \mathbb{D}_1$ whose restriction outside a collar neighborhood $N(a) \cong [0,1]^2$of $a$ is the identity and sending $N(a)$ to a band between $a$ and $e_3$ making one turn around the hole of the annulus $\mathbb{D}_1$. Then $\ad_{\Sigma}$ is the image of $\ad^0$ by  $\cdot \times I : \MS \to \mathcal{M}$ of $\mathbf{\Sigma}\wedge \mathbf{\mathbb{D}_1}$.

 \begin{figure}[!h] 
\centerline{\includegraphics[width=8cm]{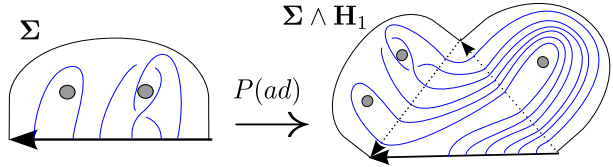} }
\caption{An illustration of the quantum adjoint coaction.} 
\label{fig_qad} 
\end{figure} 

\begin{definition} The \textit{quantum adjoint coactions} are the morphisms $\Ad_{\Sigma}: \Rep_q^G(\mathbf{\Sigma}) \to \Rep_q^G(\mathbf{\Sigma}) \overline{\otimes} B_qG$ and $\Ad_{\Sigma}: \mathcal{S}_q(\mathbf{\Sigma}) \to \mathcal{S}_q(\mathbf{\Sigma}) \overline{\otimes} B_q\SL_2$  obtained by taking the images of $\ad_{\Sigma}$ by $\Rep_q^G$ and $\mathcal{S}_q$ respectively and using the lax monoidality.
\end{definition}

The quantum adjoint coaction equips $\mathcal{S}_q(\mathbf{\Sigma})$  (resp. $\Rep_q^G(\mathbf{\Sigma})$) with a structure of $B_q\SL_2$ (resp. $B_qG$) right comodule object in the braided category $\overline{\mathcal{C}_q^{\SL_2}}$ (resp. $\overline{\mathcal{C}_q^G}$). 
In the particular case where $\mathbf{\Sigma}= \mathbf{H}_1$, we have proved in Theorem \ref{theorem_skein_transmutation} that  $\Ad_{\mathbf{H}_1}$ coincides with Majid's braided adjoint coaction. In particular, $\mathcal{S}_q(\mathbf{\Sigma})$ is an $\mathcal{O}_q[\SL_2]$ right comodule in $\Mod_k$ and a $B_q\SL_2$ right comodule in $\overline{\mathcal{C}_q^{\SL_2}}$. Recall from Section \ref{sec_skein_transmutation} the isomorphism $f:B\mathcal{S}_q(\mathbb{B}) \xrightarrow{\cong} \mathcal{S}_q(\mathbf{H}_1)$ where $B\mathcal{S}_q(\mathbb{B})$ is equal to $\mathcal{S}_q(\mathbb{B})$ as a co-algebra.

\begin{lemma}\label{lemma_braidedcoinv}
The following diagram (in $\Mod_k$) commutes:
$$ \begin{tikzcd}
{} & \mathcal{S}_q(\mathbf{\Sigma})\otimes \mathcal{S}_q(\mathbb{B}) \ar[dd, "\id\otimes f"] \\
\mathcal{S}_q(\mathbf{\Sigma}) \ar[ru, "\Delta^R_a"] \ar[rd, "\Ad_{\Sigma}"] & {} \\
{}& \mathcal{S}_q(\mathbf{\Sigma})\overline{\otimes}\mathcal{S}_q(\mathbf{H}_1)
\end{tikzcd}
$$

In particular, the subspace of $\mathcal{O}_q[\SL_2]$ coinvariant vectors of $\mathcal{S}_q(\mathbf{\Sigma})$ coincides with its subspace of $B_q\SL_2$ coinvariant vectors.
\end{lemma}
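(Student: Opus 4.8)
The statement has two parts: the commutativity of the triangle relating the right $\mathcal{O}_q[\SL_2]$-comodule map $\Delta^R_a$ with the braided adjoint coaction $\Ad_{\Sigma}$, and the consequence that the two notions of coinvariance coincide. The strategy for the triangle is to trace through the definitions of both maps in terms of splitting morphisms and the gluing morphism $\ad_{\Sigma}$ of Figure \ref{fig_qad}. The map $\Delta^R_a = \theta_{c\# a_L}$ is the splitting morphism obtained by gluing the bigon $\mathbb{B}$ to $\mathbf{\Sigma}\times I$ along $a_L$; on the other hand $\Ad_{\Sigma}$ is, by definition, the image under $\mathcal{S}_q$ of the embedding $\ad_{\Sigma}: \mathbf{\Sigma}\times I \to (\mathbf{\Sigma}\times I)\wedge \mathbf{H}_1$ composed with the lax monoidality isomorphism $\mathcal{S}_q((\mathbf{\Sigma}\times I)\wedge \mathbf{H}_1)\cong \mathcal{S}_q(\mathbf{\Sigma})\overline{\otimes}\mathcal{S}_q(\mathbf{H}_1)$ of the Corollary after Theorem \ref{theorem_fusion}. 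So I would first unwind $\ad_{\Sigma}$ as the fusion of the two boundary discs coming from the band $\ad^0$ which wraps once around the handle of $\mathbb{D}_1$; the key point is that this band, read in skein terms, is exactly the "monogon with a loop around the hole" which is the image $f(\cdot)$ of a bigon arc, together with a $90$-degree rotation (the half-twist $\hT$) accounting for the difference between the left comodule map and the right comodule map — precisely the content of the relation $\Delta_c^R = \tau\circ(\rot\otimes\id)\circ\Delta_c^L$ recorded after Figure \ref{fig_comodule}. Concretely, I would verify the diagram on a generating stated tangle in $\mathbf{\Sigma}$ by a direct isotopy argument in $(\mathbf{\Sigma}\times I)\wedge \mathbf{H}_1$: pushing the strand through the band $\ad^0$ and resolving via the defining relation of $f$, matching the skein picture of $\Delta^R_a$ after applying $\id\otimes f$.

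**Second part.** Once the triangle commutes, the equality of coinvariants is formal. A vector $x\in \mathcal{S}_q(\mathbf{\Sigma})$ is $\mathcal{O}_q[\SL_2]$-coinvariant iff $\Delta^R_a(x) = x\otimes [1^*\otimes 1]$ (the grouplike unit of $\mathcal{O}_q[\SL_2]\cong \mathcal{S}_q(\mathbb{B})$), while it is $B_q\SL_2$-coinvariant iff $\Ad_{\Sigma}(x) = x\otimes \eta_{B_q\SL_2}(1)$ where $\eta_{B_q\SL_2}$ is the unit of the braided Hopf algebra $\mathcal{S}_q(\mathbf{H}_1)\cong B_q\SL_2$. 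Since $f$ is an isomorphism of coalgebras sending the counit-unit structure of $\mathcal{S}_q(\mathbb{B})$ to that of $\mathcal{S}_q(\mathbf{H}_1)$ (this is part of Theorem \ref{theorem_skein_transmutation}, where $f$ is shown to be a bialgebra isomorphism, and in particular $f(1_{\mathbb{B}}) = 1_{\mathbf{H}_1}$), the identity $\id\otimes f$ carries $x\otimes 1_{\mathbb{B}}$ to $x\otimes 1_{\mathbf{H}_1}$. Thus $\Delta^R_a(x) = x\otimes 1_{\mathbb{B}}$ iff $(\id\otimes f)\Delta^R_a(x) = x\otimes 1_{\mathbf{H}_1}$ iff $\Ad_{\Sigma}(x) = x\otimes 1_{\mathbf{H}_1}$, which is exactly what is wanted; the underlying $k$-modules of $\mathcal{S}_q(\mathbf{\Sigma})\otimes\mathcal{S}_q(\mathbb{B})$ and $\mathcal{S}_q(\mathbf{\Sigma})\overline{\otimes}\mathcal{S}_q(\mathbf{H}_1)$ are the same, so there is no subtlety in comparing the two equations.

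**The main obstacle.** The delicate point is the first part: making the isotopy argument identifying $\ad_{\Sigma}$ (a topological gluing wrapping a band around a handle) with the skein-theoretic description of $\Delta^R_a$ after transport by $f$. One has to be careful about (i) the half-twist discrepancy between left and right comodule structures — the band $\ad^0$ must be threaded so that it induces $\rot$, not its inverse — and (ii) the fusion procedure (Theorem \ref{theorem_fusion}) implicit in identifying $\mathcal{S}_q((\mathbf{\Sigma}\times I)\wedge\mathbf{H}_1)$ with the braided tensor product, which introduces the co-$R$-matrix factor $\mathbf{r}$; one needs that this matches the braiding already present in the definition of $\Ad_{\Sigma}$ as a braided comodule map. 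Both of these are bookkeeping rather than conceptual difficulties — the cleanest route is to check the diagram on the bigon arcs $\alpha_{\varepsilon\varepsilon'}$ seen inside $\mathbf{\Sigma}$ via $f$ itself, reducing everything to a computation already essentially carried out in the proof of Theorem \ref{theorem_skein_transmutation} for the case $\mathbf{\Sigma} = \mathbf{H}_1$ and then invoking naturality of the splitting morphisms with respect to the embedding $\mathbf{H}_1\hookrightarrow \mathbf{\Sigma}\times I$.
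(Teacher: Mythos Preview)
Your proposal is correct and takes essentially the same approach as the paper: a direct skein verification on an arbitrary stated tangle, followed by the formal coinvariance argument (which you have exactly right). The paper's proof is even more direct than your final suggested route, however: rather than reducing to the generators $\alpha_{\varepsilon\varepsilon'}$ and invoking naturality with respect to $\mathbf{H}_1\hookrightarrow\mathbf{\Sigma}\times I$, it simply takes a generic stated tangle $[T,s]$ with $n$ endpoints on $a$, applies $\mathcal{S}_q(\ad_\Sigma)$ pictorially, uses the boundary skein relation of Remark~\ref{remark_skeinrelations} to expand the strands wrapping around the handle into a state sum, applies the fusion isomorphism $\Psi$, and then reads off $(\id\otimes f^{-1})$ of the result as $\Delta^R_a([T,s])$ in one line --- no reduction to algebra generators is needed, and the half-twist bookkeeping you flag is absorbed into the definition of $f$ itself.
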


\begin{proof} 
Let $[T,s] =  \adjustbox{valign=c}{\includegraphics[width=2cm]{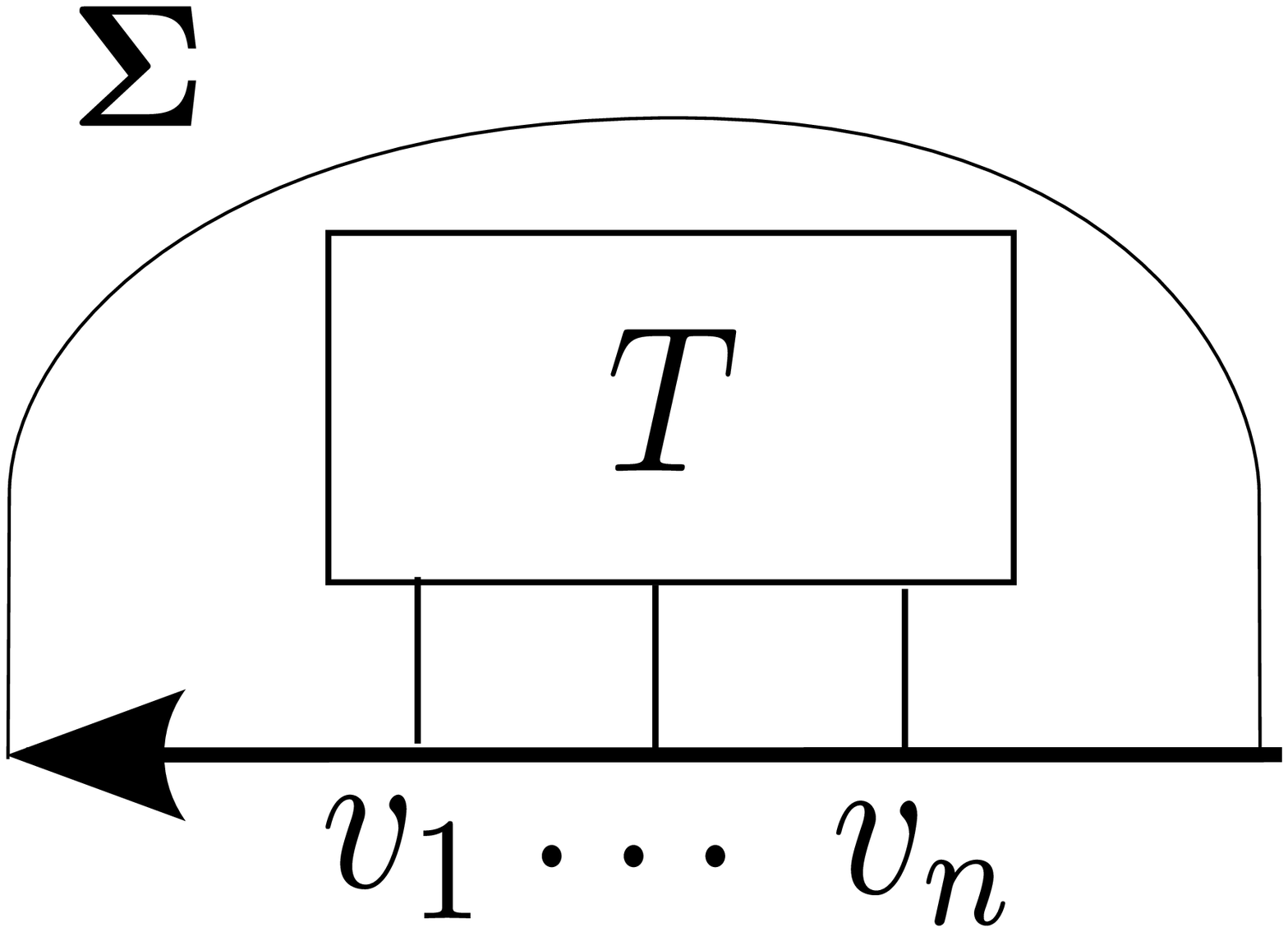}}\in \mathcal{S}_q(\mathbf{\Sigma})$ be the class of an arbitrary stated tangle in $\mathbf{\Sigma}$. Applying the skein relation in Remark \ref{remark_skeinrelations}, one finds:
$$ \mathcal{S}_q(\ad_{\Sigma}) ([T,s]) =\adjustbox{valign=c}{\includegraphics[width=2.5cm]{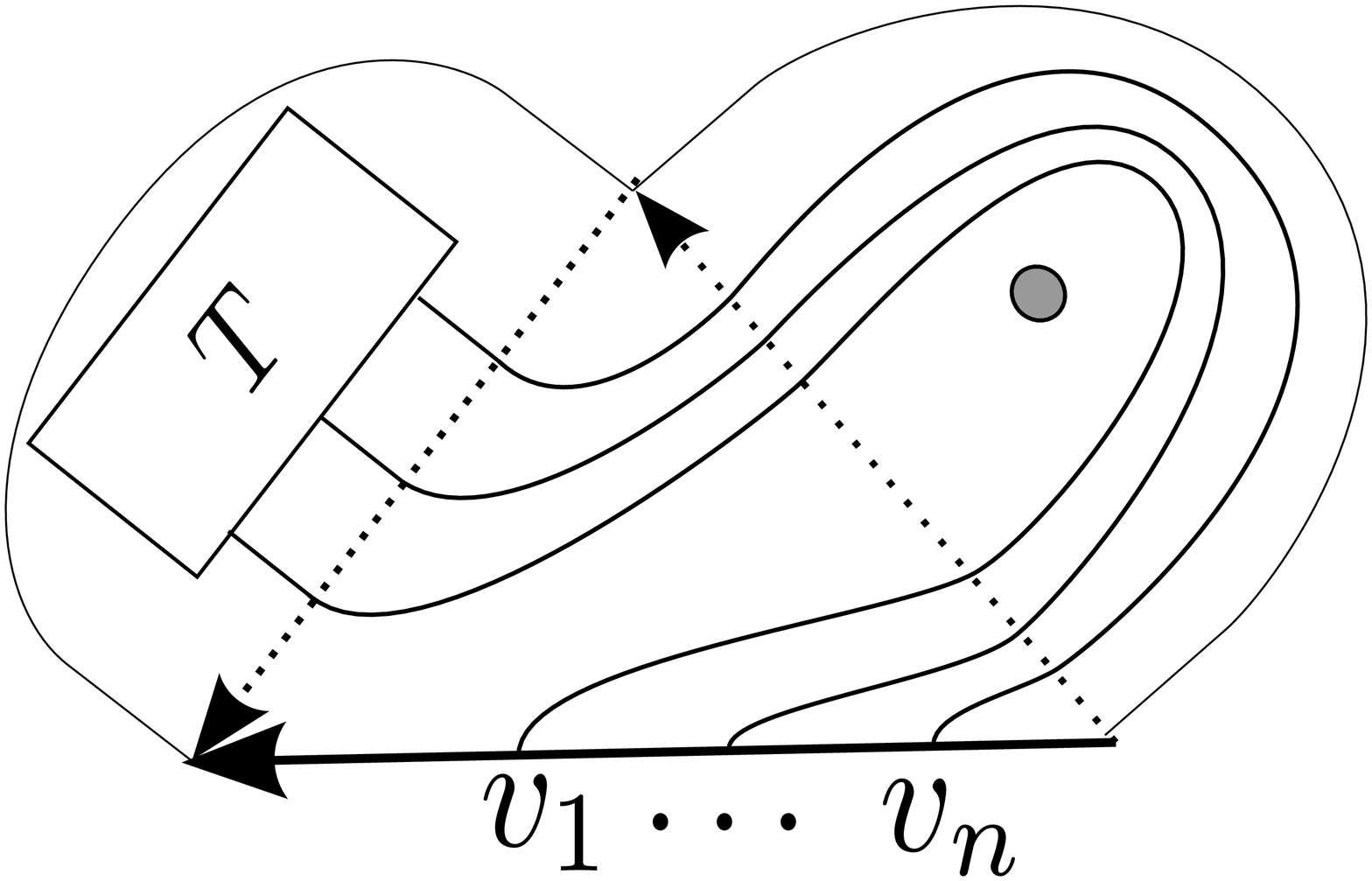}} = \sum_{i_1, \ldots, i_n} \adjustbox{valign=c}{\includegraphics[width=3cm]{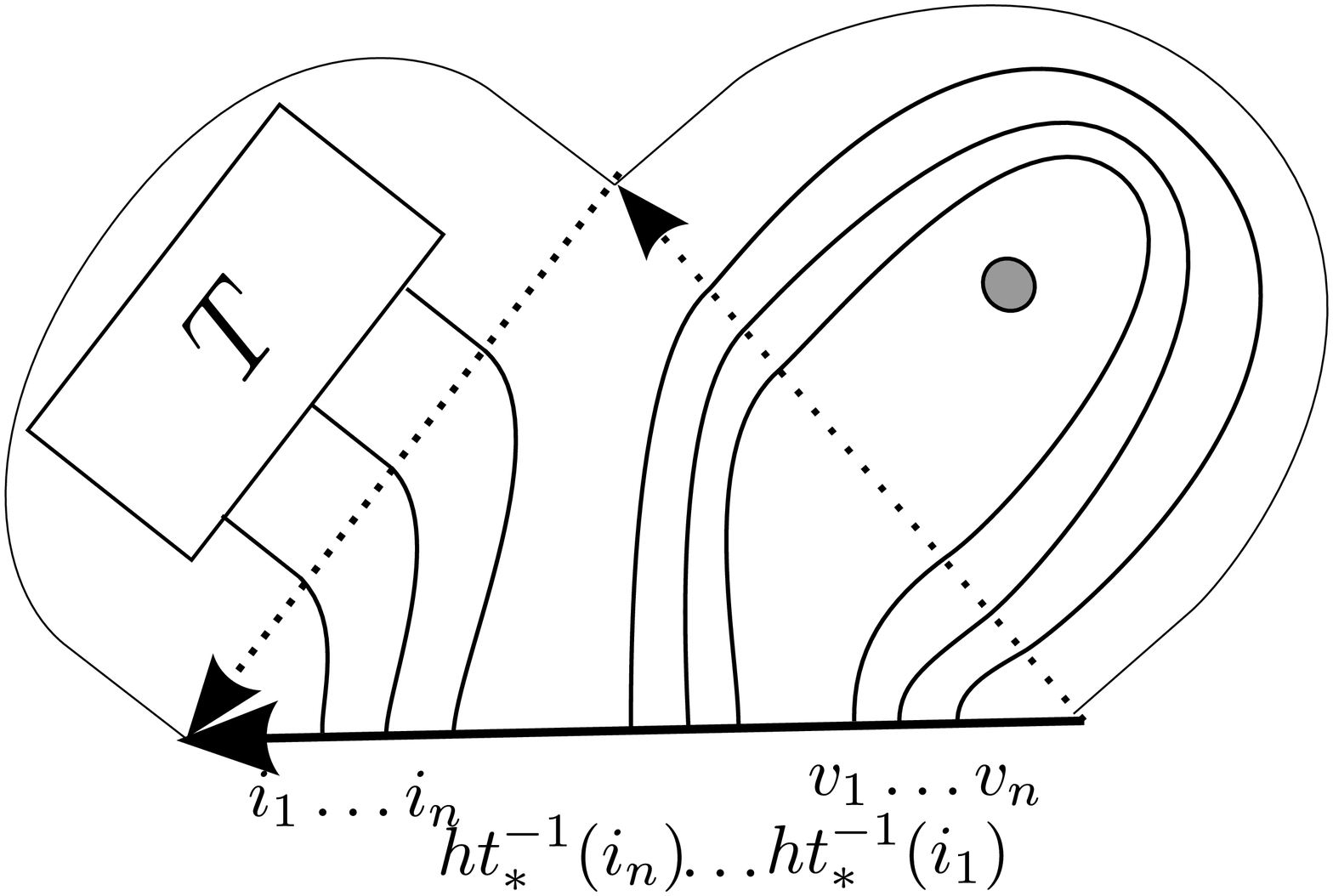}} \xrightarrow[\cong]{\Psi} \sum_{i_1, \ldots, i_n} \adjustbox{valign=c}{\includegraphics[width=4cm]{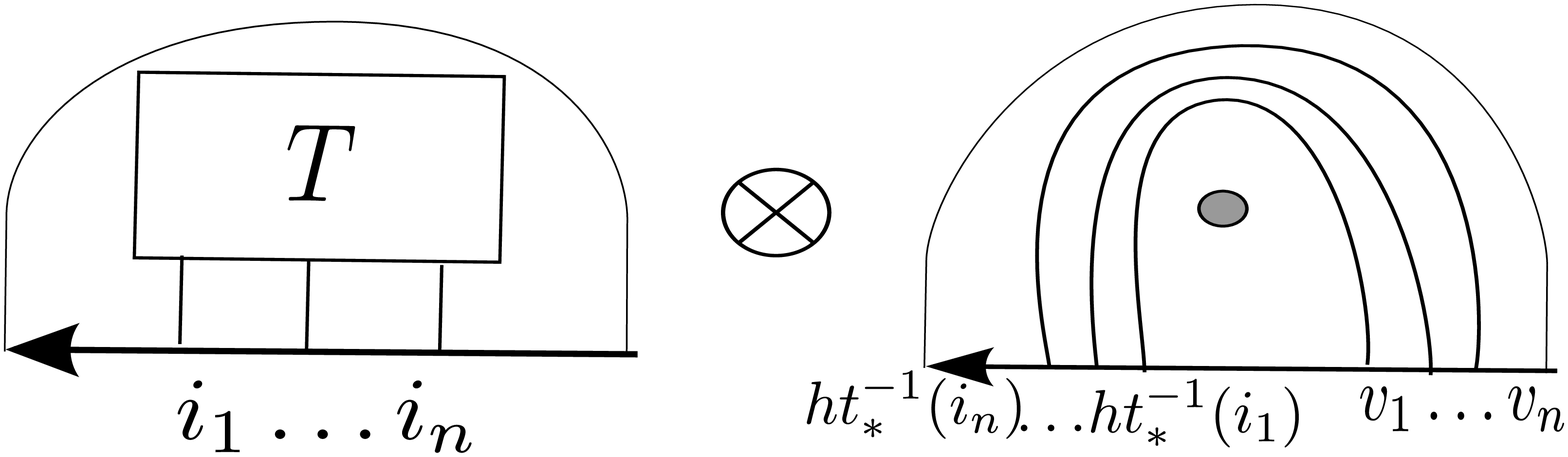}},  $$
where $\Psi: \mathcal{S}_q(\mathbf{\Sigma}\wedge \mathbf{H}_1) \xrightarrow{\cong} \mathcal{S}_q(\mathbf{\Sigma})\overline{\otimes} \mathcal{S}_q(\mathbf{H}_1)$ is the isomorphism of Section \ref{sec_qfusion}.
Applying $\id \otimes f^{-1}$, we obtain:
$$ (\id\otimes f^{-1}) \Ad_{\Sigma} ([T,s]) = \sum_{i_1, \ldots, i_n} \adjustbox{valign=c}{\includegraphics[width=4cm]{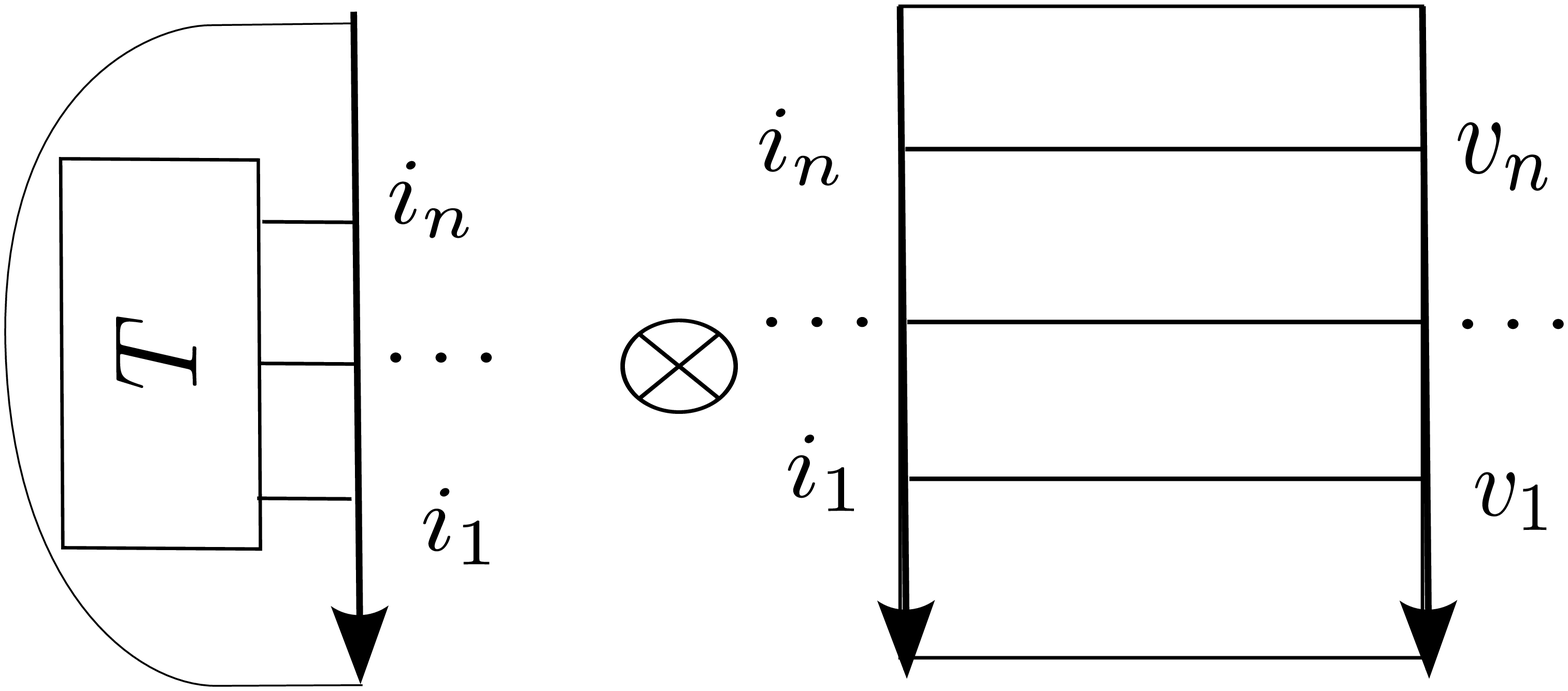}} =\Delta^R_a ([T,s]).$$

\end{proof}

\subsection{Mapping tori}
Let $\mathbf{\Sigma}=(\Sigma, \{a\}) \in \MS^{(1)}_{\con}$ be a connected $1$-marked surface and denote by $\partial$ the connected component of $\partial \Sigma$ containing $a$. Let $\phi: \Sigma \to \Sigma$ be an oriented diffeomorphism whose restriction to $\partial$ is the identity. Let $M_{\phi}:= \quotient{\Sigma \times I}{\left((\phi(x), +1)\sim (x,-1), x\in \Sigma\right)}$ be the associated mapping torus and $\pi: \Sigma\times I \to M_{\phi}$ the quotient map. Consider a parametrized disc $\mathbb{D}_{M_{\phi}} \subset \partial M_{\phi}$ contained in the image of $\partial \times I$ and let $\mathbf{M}_{\phi} := (M_{\phi}, \{\mathbb{D}_{M_{\phi}}\}) \in \mathcal{M}^{(1)}_{\con}$. The goal of this section is to describe $\Rep_q^G(\mathbf{M}_{\phi})$.
\vspace{2mm}
\par Let $\mathbf{M}:= (\mathbf{\Sigma}\times I) \wedge \mathbf{H}_1= (\Sigma\wedge \mathbb{D}_1)\times I$ and consider two attaching maps $\phi_1: \Sigma \hookrightarrow \partial M$ and $\phi_2: \overline{\Sigma} \hookrightarrow \partial M$ defined as follows. 
\begin{itemize}
\item The embedding $\phi_2: \overline{\Sigma} \to \partial M_1$ is the composition $\phi_2: \overline{\Sigma}\xrightarrow{ \overline{\ad_{\Sigma}^0}} \overline{\Sigma \wedge \mathbb{D}_1} \cong (\Sigma\wedge \mathbb{D}_1)\times \{-1\} \subset \partial M$.
\item The embedding $\phi_1: \Sigma \to \partial M_1$ is the composition $\phi_1: \Sigma \xrightarrow{\phi} \Sigma \xrightarrow{\iota_1} \Sigma\wedge \mathbb{D}_1 \cong (\Sigma\wedge \mathbb{D}_1)\times \{+1\} \subset \partial M$.
\end{itemize}
Then $\mathbf{M}_{\phi_1 \# \phi_2}$ is isomorphic to $\mathbf{M}_{\phi}$, so Theorem \ref{theorem_selfgluing} implies that $\Rep_q^G(\mathbf{M}_{\phi}) \cong \mathrm{HH}^0( \Rep_q^G(\mathbf{\Sigma}), \Rep_q^G(\mathbf{M}))$. Note that, since $\mathbf{M}$ is a thickened surface, it has an algebra structure with product $\mu_M$. Write $\mu_M^{top}:= \mu_M \circ \psi_{M, M} \circ (\theta_M\wedge \id_M): M\wedge M \to M$ the \textit{twisted opposite product}. By Remark \ref{remark_twistopposite}, $\mu_M^{top}$ is not really a product since it is not associative but rather equips $M$ with a structure of left module over itself in the sense that one has the equality
$$ \mu^{top}_M (\mu_M \otimes \id_M) = \mu^{top}_M(\id_M \otimes \mu^{top}_M).$$
By definition,  the module morphisms $\nabla_{\phi_1}^L$ and $\nabla_{\phi_2}^R$ decompose as $ \nabla_{\phi_1}^L= \mu_M^{top} \circ (\ad_{\Sigma} \wedge \id)$ and $\nabla_{\phi_2}^R= \mu_M \circ (\iota_1 \phi \wedge \id)$. Therefore we have proved the 

\begin{theorem}
The following sequence is exact: 
$$ \Rep_q^G(\mathbf{\Sigma}) \overline{\otimes} (\Rep_q^G(\mathbf{\Sigma})\overline{\otimes} B_qG) \xrightarrow{ \mu \circ (\iota_1\phi_*\otimes \id) - \mu^{top} \circ (\Ad_{\Sigma} \otimes \id)} \Rep_q^G(\mathbf{\Sigma})\overline{\otimes} B_qG \to \Rep_q^G(\mathbf{M}_{\phi})\to 0.$$
\end{theorem}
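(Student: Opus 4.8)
The plan is to deduce the statement from the self-gluing Theorem \ref{theorem_selfgluing}(2), specialised to the mapping-torus geometry set up just above, together with the monoidality of $\Rep_q^G$. With $\mathbf{M}=(\mathbf{\Sigma}\times I)\wedge\mathbf{H}_1$ and the attaching maps $\phi_1,\phi_2$ built from $\phi$ and $\ad^0_\Sigma$ one has $\mathbf{M}_{\phi_1\#\phi_2}\cong\mathbf{M}_\phi$, so Theorem \ref{theorem_selfgluing}(2) gives $\Rep_q^G(\mathbf{M}_\phi)\cong\mathrm{HH}^0\big(\Rep_q^G(\mathbf{\Sigma}),\Rep_q^G(\mathbf{M})\big)$, the bimodule structure on $\Rep_q^G(\mathbf{M})$ coming from $\nabla^L_{\phi_1}$ and $\nabla^R_{\phi_2}$. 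By Definition \ref{def_HH0} this $\mathrm{HH}^0$ is the coequalizer of the parallel pair $\Rep_q^G(\nabla^L_{\phi_1})$ and $\Rep_q^G(\nabla^R_{\phi_2})\circ\psi\circ(\theta\otimes\id)$; since $\overline{\mathcal{C}_q^G}$ is abelian, such a coequalizer is the cokernel of the difference of the two maps, so $\Rep_q^G(\mathbf{M}_\phi)$ already sits in a right-exact sequence $\Rep_q^G(\mathbf{\Sigma})\overline\otimes\Rep_q^G(\mathbf{M})\to\Rep_q^G(\mathbf{M})\to\Rep_q^G(\mathbf{M}_\phi)\to 0$.

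It remains to put this in the desired shape. First, by Lemma \ref{lemma_monoidal} and the construction $\Rep_q^G=\Lan_iQ_{B_qG}$ (Theorem \ref{theorem_functorBT}), $\Rep_q^G(\mathbf{M})\cong\Rep_q^G(\mathbf{\Sigma})\overline\otimes\Rep_q^G(\mathbf{H}_1)=\Rep_q^G(\mathbf{\Sigma})\overline\otimes B_qG$, which turns the source and target of the two maps into $\Rep_q^G(\mathbf{\Sigma})\overline\otimes\big(\Rep_q^G(\mathbf{\Sigma})\overline\otimes B_qG\big)$ and $\Rep_q^G(\mathbf{\Sigma})\overline\otimes B_qG$. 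Second, applying the (monoidal, braided-balanced-compatible) functor $\Rep_q^G$ to the decomposition $\nabla^L_{\phi_1}=\mu_M^{top}\circ(\ad_\Sigma\wedge\id)$ — using that $\Rep_q^G$ sends $\mu_M$ to $\mu$ and the braiding and twist of $\mathcal{M}^{(1)}_{\con}$ to $\psi$ and $\theta$, hence $\mu_M^{top}$ to $\mu^{top}$, and that $\Rep_q^G(\ad_\Sigma)=\Ad_\Sigma$ by the definition of the quantum adjoint coaction — one gets $\Rep_q^G(\nabla^L_{\phi_1})=\mu^{top}\circ(\Ad_\Sigma\otimes\id)$. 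Likewise, applying $\Rep_q^G$ to $\nabla^R_{\phi_2}=\mu_M\circ(\iota_1\phi\wedge\id)$ and precomposing with $\psi\circ(\theta\otimes\id)$ as prescribed by Definition \ref{def_HH0}, and using naturality of the twist with respect to $(\iota_1\phi)_*$ together with naturality of the braiding (and Remark \ref{remark_twistopposite} to track the left/right module conventions), one identifies the second map with $\mu\circ(\iota_1\phi_*\otimes\id)$. The difference of the two is precisely the arrow in the statement, up to an overall sign immaterial for a cokernel, so the sequence is right-exact.

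The genuinely hard part of this circle of ideas — the quantum Van Kampen Theorem \ref{theorem_QVK_FG} and the self-gluing Theorem \ref{theorem_selfgluing} — is already in place, so in the present proof the only slightly delicate point is the last identification: one must carry out a careful diagram chase in $(\mathcal{M}^{(1)}_{\con},\wedge)$, tracking the half-twist $\hT_\Sigma$, the orientation-reversed copy $\overline{\mathbf{\Sigma}}$, and the braidings entering the definitions of $\phi_1$, $\phi_2$ and the gluing morphisms $\nabla^L_{\phi_1},\nabla^R_{\phi_2}$, so as to check that the $\psi\circ(\theta\otimes\id)$ of Definition \ref{def_HH0} combines with $\nabla^R_{\phi_2}$ to produce exactly $\mu\circ(\iota_1\phi_*\otimes\id)$ and not a differently twisted variant; everything else is formal. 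Finally, running the same argument with $\mathcal{S}_q$ in place of $\Rep_q^G$ — using Corollary \ref{coro_skein_transmutation} to identify $\mathcal{S}_q(\mathbf{H}_1)\cong B_q\SL_2$ and $\restriction{\mathcal{S}_q}{\BT}\cong Q_{B_q\SL_2}$ — yields the stated-skein analogue $\mathcal{S}_q(\mathbf{\Sigma})\overline\otimes\big(\mathcal{S}_q(\mathbf{\Sigma})\overline\otimes B_q\SL_2\big)\to\mathcal{S}_q(\mathbf{\Sigma})\overline\otimes B_q\SL_2\to\mathcal{S}_q(\mathbf{M}_\phi)\to 0$.
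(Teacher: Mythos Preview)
Your proposal is correct and follows essentially the same route as the paper: realise $\mathbf{M}_\phi$ as the self-gluing $\mathbf{M}_{\phi_1\#\phi_2}$ of $\mathbf{M}=(\mathbf{\Sigma}\times I)\wedge\mathbf{H}_1$, invoke Theorem~\ref{theorem_selfgluing}(2) to get $\Rep_q^G(\mathbf{M}_\phi)\cong\mathrm{HH}^0(\Rep_q^G(\mathbf{\Sigma}),\Rep_q^G(\mathbf{M}))$, and then identify the two bimodule maps as $\nabla_{\phi_1}^L=\mu_M^{top}\circ(\ad_\Sigma\wedge\id)$ and $\nabla_{\phi_2}^R=\mu_M\circ(\iota_1\phi\wedge\id)$ after using monoidality to write $\Rep_q^G(\mathbf{M})\cong\Rep_q^G(\mathbf{\Sigma})\overline\otimes B_qG$. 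Your extra care in tracking how the $\psi\circ(\theta\otimes\id)$ of Definition~\ref{def_HH0} combines with $\nabla_{\phi_2}^R$ to yield $\mu\circ(\iota_1\phi_*\otimes\id)$ is exactly the point the paper leaves implicit when it simply asserts the decompositions ``by definition''.
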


See Figure \ref{fig_BraidClosure} for an illustration. 

 \begin{figure}[!h] 
\centerline{\includegraphics[width=12cm]{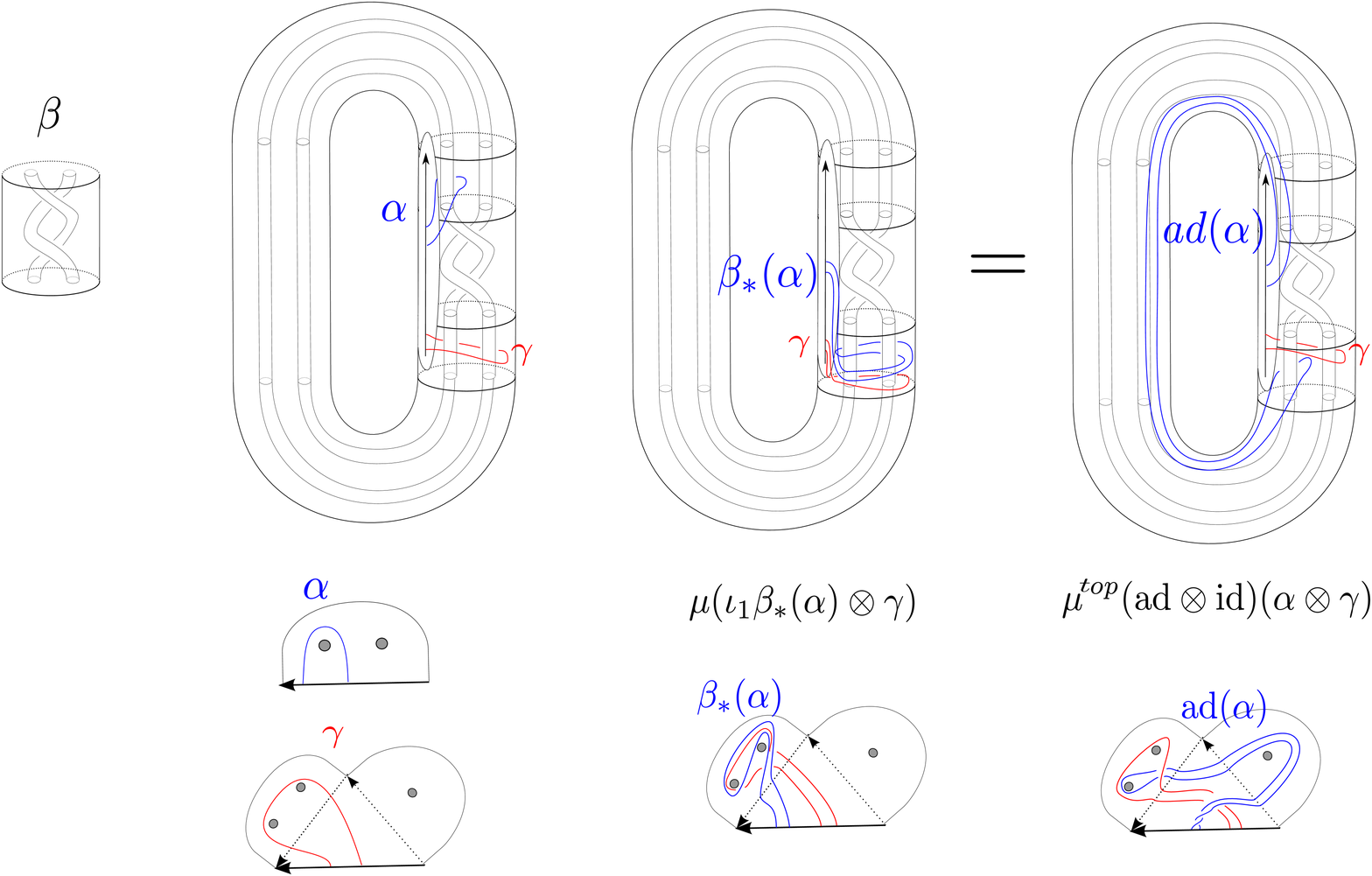} }
\caption{An illustration of the equality 
$P \left(\mu^{top} \circ (\ad_{\Sigma} \wedge \id)\right) (\alpha \otimes \gamma) = P\left( \mu \circ(\iota_1 \beta_* \otimes \id) \right) (\alpha \otimes \gamma)$
 in $P_{M_{\beta}}$ where $\Sigma=\mathbb{D}_2$,  $\beta$ a $2$ strands braid, $\alpha \in  P_1(\mathbb{D}_2)$ and $\beta \in P_1(\mathbb{D}_2\wedge \mathbf{H}_1)$.}
\label{fig_BraidClosure} 
\end{figure}

\section{Quantum representation spaces of links exterior}\label{sec6}

\subsection{Links exterior}

 Let $\beta \in B_n$ be a braid, seen as a mapping class of $\mathbb{D}_n$, and let $L\subset S^3$ be the link obtained by closing $\beta$.  Since $\mathbf{H}_n=\mathbb{D}_n\times I$, the mapping class $\beta$ induces a morphism (still denoted by the same letter) $\beta: \mathbf{H}_n \to \mathbf{H}_n$ in $\mathcal{M}^{(1)}_{\con}$. 
 Let $\mathbf{M}_L\in \mathcal{M}^{(1)}_{\con}$ be the marked $3$ manifold where $M_L= S^3 \setminus(\mathring{N}(L) \cup \mathring{B}^3)$ is obtained by removing from $S^3$ the union of the  interior of a tubular neighborhood of $L$ and an open ball, and with a single boundary disc $\mathbb{D}_L \subset \partial \mathbb{B}^3$. The product $\mu_{\mathbb{D}_n} : \mathbb{D}_n \wedge \mathbb{D}_n \to \mathbb{D}_n$ defines a product $\mu: \Rep_q^G(\mathbb{D}_n)^{\overline{\otimes} 2} \to \Rep_q^G(\mathbb{D}_n)$. Set $\mu^{top}:= \mu \circ \psi \circ (\theta\otimes \id)$.
 
 \begin{theorem}\label{theorem_LinksExt}
One has an exact sequence in $\overline{\mathcal{C}_q^G}$: 
$$ \Rep_q^G(\mathbb{D}_n)\overline{\otimes} \Rep_q^G(\mathbb{D}_n) \xrightarrow{ \mu^{top} - \mu \circ (\beta_*\otimes \id)} \Rep_q^G(\mathbb{D}_n) \to \Rep_q^G(\mathbf{M}_{L}) \to 0.$$
Said differently, identifying $\Rep_q^G(\mathbb{D}_n)$ with $(B_qG)^{\overline{\otimes}n}$, one has 
$$ \Rep_q^G(\mathbf{M}_L) \cong \quotient{ (B_qG)^{\overline{\otimes}n}}{ \left( \mu^{top}(x\otimes y) - \beta_*(x)y \right)}.$$
 \end{theorem}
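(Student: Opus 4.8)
The strategy is to realize the link exterior $\mathbf{M}_L$ as a self-gluing in the sense of Section \ref{sec_selfgluing} and then apply Theorem \ref{theorem_selfgluing} to reduce the statement to a braided Hochschild cohomology computation. Concretely, let $\mathbf{\Sigma}:= \mathbb{D}_n$ with its marked boundary arc, and consider the thickened surface $\mathbf{M}:= \mathbb{D}_n \times I = \mathbf{H}_n$. The key geometric observation is that the exterior $M_L = S^3\setminus(\mathring{N}(L)\cup \mathring{B}^3)$ of the closure of $\beta\in B_n$ is obtained from $\mathbf{H}_n$ by gluing $\mathbb{D}_n\times\{+1\}$ to $\mathbb{D}_n\times\{-1\}$ via the braid $\beta$, i.e. $M_L \cong (M)_{\phi_1\#\phi_2}$ where $\phi_1 : \Sigma \hookrightarrow \partial M$ is $\iota_1\circ \beta$ (the composition of $\beta$ with the standard inclusion of $\mathbb{D}_n\times\{+1\}$) and $\phi_2 : \overline{\Sigma}\hookrightarrow \partial M$ is the standard inclusion of $\mathbb{D}_n\times\{-1\}$ (up to the orientation-reversal bookkeeping and the neighborhood-of-$\partial_+\mathbb{D}_M$ conditions of the self-gluing construction). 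First I would check carefully that this identification of marked $3$-manifolds holds: the Markov closure of $\beta$ in $S^3$, with a ball removed so that the base disc sits on the removed ball, is exactly the mapping torus–like self-gluing described in Figure \ref{fig_self_gluing}.

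With that identification in hand, Theorem \ref{theorem_selfgluing} gives $\Rep_q^G(\mathbf{M}_L) = \Rep_q^G((M)_{\phi_1\#\phi_2}) = \mathrm{HH}^0(\Rep_q^G(\mathbf{\Sigma}), \Rep_q^G(\mathbf{M}))$, where $\mathbf{M}=\mathbf{H}_n$ is an algebra object in $\overline{\mathcal{C}_q^G}$ and the bimodule structure on $\Rep_q^G(\mathbf{M})$ over $\Rep_q^G(\mathbf{\Sigma})$ is induced by the two module maps $\nabla_{\phi_1}^L$ and $\nabla_{\phi_2}^R$. Here, since $\mathbf{\Sigma}=\mathbb{D}_n$ and $\mathbf{M}=\mathbf{H}_n=\mathbf{\Sigma}\times I$, both $\mathbf{\Sigma}$ and $\mathbf{M}$ carry the \emph{same} underlying thickened-surface algebra, so that $\Rep_q^G(\mathbf{\Sigma}) = \Rep_q^G(\mathbf{M}) = \Rep_q^G(\mathbb{D}_n) \cong (B_qG)^{\overline{\otimes}n}$ (the last isomorphism via Corollary \ref{coro_skein_transmutation}/Lemma \ref{lemma_monoidal}). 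Under this identification, the right module map $\nabla_{\phi_2}^R$ is the ordinary product $\mu$ twisted by $\iota_1\circ\beta_*$ on the left factor, i.e. $\nabla_{\phi_2}^R = \mu\circ(\beta_*\otimes \id)$, while the left module map $\nabla_{\phi_1}^L$, being the gluing along the standard (identity) inclusion, is the plain product $\mu$. Then by Definition \ref{def_HH0} the coequalizer presenting $\mathrm{HH}^0$ reads
$$ \Rep_q^G(\mathbb{D}_n)\overline{\otimes}\Rep_q^G(\mathbb{D}_n) \xrightarrow{\ \nabla^L\ -\ \nabla^R\circ c\circ(\theta\otimes\id)\ } \Rep_q^G(\mathbb{D}_n) \to \mathrm{HH}^0 \to 0, $$
and the second arrow in the difference is precisely $\mu^{top}\circ(\beta_*\otimes\id)$ or $\mu\circ(\beta_*\otimes\id)$ depending on which of the two maps gets twisted — one has to match conventions. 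I would align the roles of $\phi_1,\phi_2$ with the conventions in Definition \ref{def_HH0} (recalling Remark \ref{remark_twistopposite}) so that the stated form $\mu^{top} - \mu\circ(\beta_*\otimes\id)$ comes out, rather than its mirror; the two presentations give isomorphic cokernels anyway since the braiding and twist are invertible, so this is a cosmetic point.

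The one genuinely substantive step is the geometric identification in the first paragraph: one must verify that the attaching data $(\phi_1,\phi_2)$ for the self-gluing in Section \ref{sec_selfgluing} — with $\mathbf{\Sigma}$ the punctured disc, $\phi_1=\iota_1\circ\beta$, $\phi_2$ standard — produces a marked $3$-manifold isomorphic to $\mathbf{M}_L$, including the placement of the base disc on the removed ball and the (minor) compatibility of the orientation-reversal homeomorphism $\hT_\Sigma$ used in the construction of $\cup_{\overline{\Sigma}\wedge\Sigma}$. This is the analogue, for braid closures, of the mapping-torus identification $\mathbf{M}_{\phi_1\#\phi_2}\cong \mathbf{M}_\phi$ used in Section \ref{sec5}, and I expect it to be essentially the argument of Figure \ref{fig_self_gluing} made precise. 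After that, everything is a direct invocation of Theorem \ref{theorem_selfgluing} together with the lax monoidality of $\Rep_q^G$ (Lemma \ref{lemma_monoidal}) and the decomposition of $\nabla_{\phi_i}$ into products and the braid action, exactly parallel to the proof of Theorem \ref{theorem4} in Section \ref{sec5}. I would also remark that $\theta$ acts on $B_qG$ as the cotwist, so $\mu^{top}=\mu\circ\psi\circ(\theta\otimes\id)$ is the twisted opposite product already appearing in Theorem \ref{theorem4}, making the final displayed quotient formula immediate. The exactness statement in $\overline{\mathcal{C}_q^G}$ (as opposed to merely $\Mod_k$) follows because all the maps involved — $\mu$, $\mu^{top}$, $\beta_*$, the braiding — are morphisms of $\mathcal{O}_qG$-comodules, so the coequalizer is computed in $\overline{\mathcal{C}_q^G}$ and its cokernel is again a comodule.
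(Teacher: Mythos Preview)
Your proposal is correct and follows essentially the same argument as the paper: realize $\mathbf{M}_L$ as the self-gluing $(\mathbb{D}_n\times I)_{\phi_1\#\phi_2}$ with $\phi_1$ involving the braid $\beta$ on the top face and $\phi_2$ the identity on the bottom face, then invoke Theorem \ref{theorem_selfgluing}. Your momentary swap of which of $\nabla_{\phi_1}^L,\nabla_{\phi_2}^R$ carries the $\beta_*$ (the paper takes $\nabla_{\phi_1}^L=\mu\circ(\beta_*\otimes\id)$ and $\nabla_{\phi_2}^R=\mu$) is exactly the convention-matching you flag, and resolving it as in the paper yields the stated presentation directly.
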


\begin{proof}
The marked $3$-manifold $\mathbf{M}_L$ is obtained from $\mathbf{M}:= \mathbb{D}_n\times I$ by self-gluing using the attaching maps $\phi_1: \mathbb{D}_n \hookrightarrow \partial M$, $\phi_1: \mathbb{D}_n \xrightarrow{\beta} \mathbb{D}_n \cong \mathbb{D}_n \times \{ +1\} \subset \partial (\mathbb{D}_n \times I)$ and $\phi_2 : \overline{\mathbb{D}_n} \hookrightarrow \partial M$, $\phi_2: \overline{\mathbb{D}_n} \cong \mathbb{D}_n \times\{-1\} \subset \partial (\mathbb{D}_n \times I)$. The left module map $\nabla_{\phi_1}^L$ is the composition $\mu \circ(\beta_* \otimes \id)$ and the right module map $\nabla_{\phi_2}^R$ is just the product $\mu_{\mathbb{D}_n}$ so the results follows from Theorem \ref{theorem_selfgluing}.
\end{proof}

\begin{corollary}
Let $M_L = S^3 \setminus N(L) \in \mathcal{M}^{(0)}$ and $\mathcal{S}_q^{rat}(M_L)$ be the standard (non stated) skein module. Then 
$$ \mathcal{S}_q^{rat}(M_L) \cong \quotient{ (B_q\SL_2 ^{rat})^{\overline{\otimes}n}}{ \left( \mu^{top}(x\otimes y) - \beta_*(x)y \right)}.$$
\end{corollary}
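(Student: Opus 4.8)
The plan is to deduce this corollary by combining Theorem \ref{theorem_LinksExt} (applied with $G = \SL_2$, after tensoring everything with the field $K_{\SL_2}$) with the identification of coinvariant vectors provided by Theorems \ref{theorem_surjectivity} and \ref{theorem_spherical}. First I would record that the unmarked manifold $M_L = S^3 \setminus N(L)$ underlies the marked manifold $\mathbf{M}_L$ obtained by removing an open ball from $M_L$ and attaching the base disc to the spherical boundary component $\partial \mathbb{B}^3$. By construction this distinguished boundary component is a $2$-sphere, so Theorem \ref{theorem_spherical} applies: every vector of $\mathcal{S}_q^{rat}(\mathbf{M}_L)$ is coinvariant, i.e. $\mathcal{S}_q^{rat}(\mathbf{M}_L) = \mathcal{S}_q^{rat}(\mathbf{M}_L)^{coinv}$. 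On the other hand Theorem \ref{theorem_surjectivity} gives that $\iota_* : \mathcal{S}_q^{rat}(M_L) \to \mathcal{S}_q^{rat}(\mathbf{M}_L)^{coinv}$ is an isomorphism. Chaining these two identifications yields $\mathcal{S}_q^{rat}(M_L) \cong \mathcal{S}_q^{rat}(\mathbf{M}_L)$ (this is exactly Corollary \ref{coro_spherical}).

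Next I would invoke the last displayed isomorphism of Theorem \ref{theorem_LinksExt} in the case $G = \SL_2$: identifying $\Rep_q^{\SL_2}(\mathbb{D}_n)$ with $(B_q\SL_2)^{\overline{\otimes} n}$ via Corollary \ref{coro_skein_transmutation}, one has
$$ \Rep_q^{\SL_2}(\mathbf{M}_L) \cong \quotient{ (B_q\SL_2)^{\overline{\otimes}n}}{ \left( \mu^{top}(x\otimes y) - \beta_*(x)y \right)}. $$
Tensoring the exact sequence of Theorem \ref{theorem_LinksExt} with $K_{\SL_2}$ over $k_{\SL_2}$ (which is exact since $K_{\SL_2}$ is flat over $k_{\SL_2}$) and using that $\Rep_q^{\SL_2,rat}(\mathbb{D}_n) \cong (B_q\SL_2^{rat})^{\overline{\otimes}n}$, this becomes
$$ \Rep_q^{\SL_2,rat}(\mathbf{M}_L) \cong \quotient{ (B_q\SL_2^{rat})^{\overline{\otimes}n}}{ \left( \mu^{top}(x\otimes y) - \beta_*(x)y \right)}. $$
Finally, by Corollary \ref{coro_RepSkein} the functors $\Rep_q^{\SL_2}$ and $\mathcal{S}_q$ on $\mathcal{M}^{(1)}_{\con}$ are isomorphic, so $\Rep_q^{\SL_2,rat}(\mathbf{M}_L) \cong \mathcal{S}_q^{rat}(\mathbf{M}_L)$; combined with $\mathcal{S}_q^{rat}(M_L) \cong \mathcal{S}_q^{rat}(\mathbf{M}_L)$ from the first paragraph, this gives the claimed description of $\mathcal{S}_q^{rat}(M_L)$.

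The only point requiring genuine care — and the step I expect to be the main obstacle — is checking that the quotient presentation is compatible with the rationalization, i.e. that the cokernel of $\mu^{top} - \mu \circ (\beta_* \otimes \id)$ commutes with $-\otimes_{k_{\SL_2}} K_{\SL_2}$. This is where the flatness of $K_{\SL_2}$ over $k_{\SL_2}$ is used: right exactness of $-\otimes K_{\SL_2}$ already suffices to pass the cokernel through, so there is no subtlety beyond observing that $\mu$, $\mu^{top}$ and $\beta_*$ are all defined over $k_{\SL_2}$ and that $(B_q\SL_2)^{\overline{\otimes}n} \otimes_{k_{\SL_2}} K_{\SL_2} = (B_q\SL_2^{rat})^{\overline{\otimes}n}$. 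A secondary (but purely bookkeeping) point is to confirm that the spherical boundary hypothesis of Theorem \ref{theorem_spherical} really holds for $\mathbf{M}_L$, which is immediate from the definition of $\mathbf{M}_L$ as $M_L$ with an open ball removed and the base disc embedded in the boundary of that ball.
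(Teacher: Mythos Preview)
Your proposal is correct and follows exactly the route the paper intends: the corollary is stated without proof in the paper, being an immediate consequence of Theorem \ref{theorem_LinksExt} (specialized to $G=\SL_2$ and tensored with $K_{\SL_2}$), Corollary \ref{coro_RepSkein}, and Corollary \ref{coro_spherical}. One small bookkeeping remark: the underlying unmarked manifold of $\mathbf{M}_L$ is literally $S^3 \setminus (\mathring{N}(L)\cup \mathring{B}^3)$ rather than $S^3\setminus N(L)$, so strictly speaking Corollary \ref{coro_spherical} gives $\mathcal{S}_q^{rat}(S^3\setminus(\mathring{N}(L)\cup\mathring{B}^3))\cong \mathcal{S}_q^{rat}(\mathbf{M}_L)$; you then use the (trivial) fact that removing an open ball from a $3$-manifold does not change its ordinary skein module to identify this with $\mathcal{S}_q^{rat}(M_L)$.
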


\subsection{Comparison with the constructions in \cite{MurakamiVdV_QRepSpaces} }

We preserve the notations of the last subsection. The following was introduced by the second author together with Van-der-Veen in  \cite{MurakamiVdV_QRepSpaces}:
\begin{definition}
Let $\mathcal{I}_{\beta} \subset (B_q\SL_2)^{\overline{\otimes} n}$ be the right ideal generated by elements $\beta_*(x) - x$. We set 
$$ \mathcal{A}_{\beta} := \quotient{ (B_q\SL_2)^{\overline{\otimes} n} }{\mathcal{I}_{\beta}}.$$
\end{definition}

It is proved in  \cite{MurakamiVdV_QRepSpaces} that $\mathcal{I}_{\beta}$ is a bilateral ideal preserved by the $\mathcal{O}_q\SL_2$ coaction so $\mathcal{A}_{\beta}$ is an algebra in $\overline{\mathcal{C}_q^{\SL_2}}$. Moreover the adjoint coaction $\Ad :  (B_q\SL_2)^{\overline{\otimes} n} \to  (B_q\SL_2)^{\overline{\otimes} n} \overline{\otimes} B_q\SL_2$ sends $\mathcal{I}_{\beta}$ inside $\mathcal{I}_{\beta}\otimes B_q\SL_2$ so induces a comodule morphism (still denoted by the same letter) $\Ad: \mathcal{A}_{\beta} \to \mathcal{A}_{\beta} \otimes B_q\SL_2$ which is an algebra morphism.

\begin{definition}
The subalgebra $\mathcal{A}_{\beta}^{coinv} \subset \mathcal{A}_{\beta}$ is the subalgebra of coinvariant vectors for the $\Ad$ coaction.
\end{definition}

The main result of  \cite{MurakamiVdV_QRepSpaces} is the fact that if $\beta$ and $\beta'$ are two braids which admit the same Markov closure, then $\mathcal{A}_{\beta}\cong \mathcal{A}_{\beta'}$ in $\Alg(\overline{\mathcal{C}_q^{\SL_2}})$ therefore the isomorphism classes of both $\mathcal{A}_{\beta}$ and $\mathcal{A}_{\beta}^{coinv}$ only depends on the link $L$. In  \cite{MurakamiVdV_QRepSpaces}, the algebra $\mathcal{A}_{\beta}$ was called \textit{quantum representation variety} whereas $\mathcal{A}_{\beta}^{coinv}$ was named \textit{quantum character variety} of $L$. The initial motivation for the present paper was to relate these algebras to Habiro's quantum representation and character varieties and to (stated) skein modules. As can be shown by taking a simple example (such as the trivial knot) the two modules $\Rep_q^{\SL_2}(\mathbf{M}_L) $ and $\mathcal{A}_{\beta}$ are not isomorphic. However Theorem \ref{theorem_LinksExt} shows that they are very similar; indeed one has
$$ \mathcal{A}_{\beta}= \quotient{(B_q\SL_2)^{\overline{\otimes} n}}{ \left( \mu(x\otimes y) - \beta_*(x)y\right)} \quad \mbox{ and }\quad \Rep_q^{\SL_2}(\mathbf{M}_L)\cong  \quotient{(B_q\SL_2)^{\overline{\otimes} n}}{ \left( \mu^{top}(x\otimes y) - \beta_*(x)y\right)} $$
so we just have changed $\mu$ to $\mu^{top}$ in the quotient. Moreover, since taking the coinvariant vectors for the $\mathcal{O}_q[\SL_2]$ coaction than for the braided $B_q\SL_2$ quantum coaction by Lemma \ref{lemma_braidedcoinv}, the skein module of $M_L$ and $\mathcal{A}_{\beta}^{coinv}$ are very similar. We expect that the computational techniques developed in \cite{Murakami_RIMS} can be adapted to compute the peripheral ideal of links exteriors using this analogy.



\appendix

\section{Finite presentation of the category of bottom tangles}\label{appendix_presentation_BT}

In this appendix, we explain how the works of Kerler \cite{Kerler_PresTanglesCat, Kerler_AlgCobordisms} and Bobtcheva-Piergallini \cite{BobtchevaPiergallini}  permit to prove the following reformulation of Theorem \ref{theorem_functorBT}:

\begin{theorem}\label{theorem_presBT} The category $\BT$ is presented by the generators $(\mu, \eta, \Delta, \epsilon, S^{\pm 1}, \theta)$ of Figure \ref{fig_BTHopfAlg} and the relations of Figure \ref{fig_BPHopf}. \end{theorem}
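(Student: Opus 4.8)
The statement claims that the PROP $\BT$ is generated by the Habiro structure morphisms $(\mu, \eta, \Delta, \epsilon, S^{\pm 1}, \theta^{\pm 1})$ subject exactly to the dual BP Hopf algebra relations of Figure \ref{fig_BPHopf}. The strategy is to transport a known presentation of the Crane--Yetter--Kerler category $\mathcal{C}^{CYK}$ (equivalently, of the relevant category of $3$-cobordisms) through the faithful functor $\partial$ of Remark \ref{remark_CYK}. First I would recall the precise results of Kerler \cite{Kerler_PresTanglesCat, Kerler_AlgCobordisms} and Bobtcheva--Piergallini \cite{BobtchevaPiergallini}: Kerler's work gives a presentation of the category of connected cobordisms between surfaces with one boundary circle in terms of the one-holed torus as a (BP) Hopf algebra object, while Bobtcheva--Piergallini's $4$-dimensional analysis supplies the two remaining relations $(r8)$, $(r9)$ that complete Kerler's list into a full presentation. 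Combining these yields: the subcategory of $\mathcal{C}^{CYK}$ generated by $\Sigma_{1,1}$ and its Hopf structure morphisms is presented by exactly the relations of Figure \ref{fig_BPHopf}.

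\textbf{Key steps.} (1) Identify $\BT$ with this subcategory of $\mathcal{C}^{CYK}$. By Remark \ref{remark_CYK}, $\partial : \mathcal{M}^{(1)}_{\con} \to \mathcal{C}^{CYK}$ restricts to a faithful functor on $\BT$, and it sends $\mathbf{H}_1$ to $\Sigma_{1,1}$ with $\mathcal{H}$ mapping to $\mathcal{H}^{CYK}$. I must check that this restriction is \emph{full} onto the subcategory generated by $\Sigma_{1,1}$ and its structure maps: since every $\mathbf{H}_n$ is a wedge power of $\mathbf{H}_1$ and $\partial(\mathbf{H}_n) = \Sigma_{1,n}$ (the $n$-holed surface of genus $n$) is the corresponding tensor power of $\Sigma_{1,1}$, and since the generating cobordisms realizing $\mu, \eta, \Delta, \epsilon, S, \theta$ all lie in the image of $\partial$ by construction (Figure \ref{fig_BTHopfAlg} is literally defined so that $\partial$ carries Habiro's maps to the CYK maps), fullness follows from the fact that any morphism in the generated subcategory is a composite of generators, each of which is $\partial$ of a morphism in $\BT$. (2) Deduce that $\BT$, being equivalent via the faithful-and-full $\partial$ to a category with the stated presentation, has that same presentation. (3) Combine with Theorem \ref{theorem_functorBT}: the existence and uniqueness of $Q_H : \BT \to \mathcal{C}$ for any dual BP Hopf algebra object $H$ in a braided category $\mathcal{C}$ is precisely the universal property of a PROP presented by those generators and relations, so Theorem \ref{theorem_presBT} and Theorem \ref{theorem_functorBT} are logically equivalent once the presentation is established.

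\textbf{Main obstacle.} The delicate point is the bookkeeping needed to match conventions: Kerler and Bobtcheva--Piergallini work with cobordism categories (and in the $4$-dimensional case, with Kirby calculus / handle decompositions), whereas we need the statement for the PROP $\BT$ of bottom tangles in handlebodies. I would need to verify carefully that (a) the generating morphisms of Figure \ref{fig_BTHopfAlg}, read as bottom tangles, are sent by $\partial$ exactly to Kerler's generating cobordisms (up to the normalization of orientations and framings), and (b) the relations $(r1)$--$(r9)$ of \cite{BobtchevaPiergallini}, once translated, coincide term-by-term with the relations of Figure \ref{fig_BPHopf}, including the two ``last relations at the bottom'' involving $B$ and $B^-$. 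A secondary subtlety is the role of the antipode: $\mathcal{H}^{CYK}$ carries extra structure (integral and cointegral, as noted in Remark \ref{remark_CYK}) that $\mathcal{H}$ does not, so I must make sure the presentation we extract is for the \emph{sub}category generated only by $(\mu, \eta, \Delta, \epsilon, S^{\pm 1}, \theta^{\pm 1})$ and not for the larger category where the integral is also a generator; this is handled by observing that $\partial(\BT)$ is exactly this subcategory since $\BT$ has no morphism realizing the cointegral. With these identifications in place, the theorem follows formally.
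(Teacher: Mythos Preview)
Your proposal has a genuine gap at its core step. You assert that ``the subcategory of $\mathcal{C}^{CYK}$ generated by $\Sigma_{1,1}$ and its Hopf structure morphisms is presented by exactly the relations of Figure \ref{fig_BPHopf}'' and treat this as an immediate consequence of the Bobtcheva--Piergallini presentation. It is not. Their result (Theorem \ref{theorem_BP}) presents the \emph{entire} category $\mathcal{C}^{CYK}$ by generators $G' = G \cup \{\lambda, \Lambda\}$ (adding the integral and cointegral) and relations $Rel' = Rel \cup Rel''$, where $Rel''$ consists of the relations of Figure \ref{fig_RelationsIntegral} involving $\lambda$ and $\Lambda$. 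A sub-PROP generated by a subset $G$ of the generators is \emph{not} in general presented by the subset $Rel$ of relations mentioning only $G$: two words in $G$ could become equal in $\mathcal{C}^{CYK}$ only by virtue of a chain of relations passing through $\lambda$ or $\Lambda$. Your ``secondary subtlety'' paragraph notices that $\lambda, \Lambda$ are present but resolves the issue incorrectly: the observation that $\partial(\BT)$ coincides with this subcategory (which itself requires Habiro's generation result, Theorem \ref{theorem_Habiro_generators}, that you never invoke) says nothing about the subcategory's presentation.

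The paper closes this gap with a retraction argument you are missing. Let $\mathcal{D}$ be the PROP presented by $(G, Rel)$ and $f: \mathcal{D} \to \mathcal{C}^{CYK}$ the canonical functor. The key observation is that under the substitution $\lambda \mapsto \epsilon$, $\Lambda \mapsto \eta$, every relation in $Rel''$ becomes a relation already in $Rel$; hence this substitution defines a functor $g: \mathcal{C}^{CYK} \to \mathcal{D}$ with $g \circ f = \id_{\mathcal{D}}$, forcing $f$ to be faithful. Since $\partial : \BT \to \mathcal{C}^{CYK}$ is faithful and $G$ generates $\BT$ by Theorem \ref{theorem_Habiro_generators}, the natural functor $\mathcal{D} \to \BT$ is both surjective and faithful (its composite with $\partial$ is $f$), hence an equivalence.
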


Let us fix the terminology used here. Let $\mathcal{C}$ be a PROP, i.e. a monoidal category with set of objects $\mathbb{N}$ and such that $a\otimes b=a+b$, and consider a set $G$ of morphisms in $\mathcal{C}$. For a morphism $\mu$ of $\mathcal{C}$, we write $s(\mu), t(\mu)\in \mathbb{N}$ the integers (source and target) such that $\mu: s(\mu)\to t(\mu)$. The set $G$ is said to \textit{generate} $\mathcal{C}$ if for every $a,b\in \mathbb{N}$, every morphism in $\mathcal{C}(a,b)$ can be expressed as a composition of elements of the form $\mathds{1}_x \otimes g \otimes \mathds{1}_y$ with $g\in G$. Let $\mathcal{F}_G$ be the PROP freely generated by $G$. By definition, the morphisms in $\mathcal{F}_G(a,b)$ are formal expressions of the form $\mu_1\ldots \mu_n$ where $\mu_i= \mathds{1}_{x_i} \otimes g_i \otimes \mathds{1}_{y_i}$ with $g_i \in G$,  such that $s(\mu_i)=t(\mu_{i+1})$, $s(\mu_1)=a$, $t(\mu_n)=b$. Composition is the concatenation of words. Equivalently, we can represent the morphisms in $\mathcal{F}_G(a,b)$ by graphs with $a$ leaves on the top and $b$ leaves on the bottom and possibly some $(s(g), t(g))$ coupons for each $g\in G$. For instance, letting $G$ be the set of morphisms of $\BT$ of Figure \ref{fig_BTHopfAlg}, the morphism $\mu (\mathds{1}\otimes S) \Delta \in \mathcal{F}_G(1,1)$ is represented by the graph $\adjustbox{valign=c}{\includegraphics[width=0.5cm]{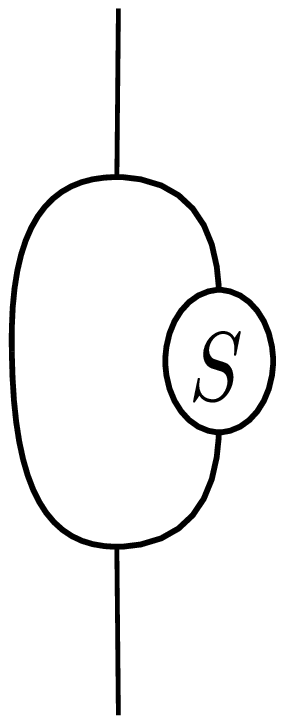}}$. There is a unique monoidal functor $F: \mathcal{F}_G \to \mathcal{C}$ sending $g\in \mathcal{F}_G(s(g), t(g))$ to $g\in \mathcal{C}(s(g),t(g))$ and $G$ is a generating set if and only if $F$ is essentially surjective.

 An \textit{equivalence relation} on a PROP $\mathcal{F}$ is the data for each $a,b\in \mathbb{N}$ of an equivalence relation $\sim$ on the set $\mathcal{F}(a,b)$ in such a way that $\mu\sim \nu$ implies that $\eta\otimes \mu \otimes \kappa \sim \eta\otimes \nu \otimes \kappa$ and that $\mu\sim \nu$ implies that $\mu' \circ \mu \circ \mu'' \sim \mu' \circ \nu \circ \mu''$ (i.e. $\sim$ is preserved by composition and tensor product). In this case we can define a new PROP $\quotient{\mathcal{F}}{\sim}$ by $\quotient{\mathcal{F}}{\sim}(a,b):= \quotient{\mathcal{F}(a,b)}{\sim}$. For instance, if $G$ is a set of morphisms in $\mathcal{C}$ and $F: \mathcal{F}_G\to \mathcal{C}$ the associated functor, we define an equivalence $\sim_{\mathcal{C}}$ in $\mathcal{F}_G$ by setting $\mu \sim_{\mathcal{C}} \nu$ if $F(\mu)=F(\nu)$. When $G$ generates $\mathcal{C}$, then $F: \quotient{\mathcal{F}_G}{\sim_{\mathcal{C}}} \to \mathcal{C}$ is an equivalence of categories. A \textit{relation} in $\mathcal{C}$ is a pair $(\mu, \nu)$ of morphisms in $\mathcal{F}_G$ with the same target and source and such that $F(\mu)=F(\nu)$. For instance $\left( \mu(\mu\otimes \mathds{1}) ,  \mu(\mathds{1}\otimes \mu) \right)$ is the relation in $\BT$ corresponding to the fact that $\mu$ is associative. Define a poset structure $\leq$ on the set $ER$ of equivalence relations on $\mathcal{F}_G$ by setting $\sim_1 \leq \sim_2$ if $\mu \sim_1 \nu$ implies $\mu \sim_2\nu$. The poset $(ER, \leq)$ is clearly filtrant: given $\sim_1$ and $\sim_2$ the relation $\sim$ defined by $\mu \sim \nu$ if we have both $\mu \sim_1 \nu$ and $\mu\sim_2\nu$ clearly satisfies $\sim \leq \sim_1$ and $\sim\leq \sim_2$. Therefore, by Zorn lemma, given a set $Rel$ of relations, we can speak of the smallest equivalence relation $\sim_{Rel}$ containing $Rel$; we say that $Rel$ is a \textit{complete set of relations} if $\sim_{Rel}=\sim_{\mathcal{C}}$.

\begin{definition} For $\mathcal{C}$ a PROP, $G$ a set of morphisms and $Rel$ a set of relations, we say that $(G,Rel)$ is a \textit{presentation} of $\mathcal{C}$ if $G$ generates $\mathcal{C}$ and if $Rel$ is a complete set of relations.
\end{definition}

\begin{theorem}(Habiro \cite[Theorem $5.16$]{Habiro_BottomTangles})\label{theorem_Habiro_generators} The set $G:=\{\mu, \eta, \Delta, \epsilon, S^{\pm 1}, \theta\}$  of morphisms of Figure \ref{fig_BTHopfAlg} generates $\BT$.\end{theorem}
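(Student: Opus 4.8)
The statement is Habiro's Theorem~5.16, so the proof I would give is the one of \cite[\S5]{Habiro_BottomTangles}, whose architecture I sketch. I work throughout in the PROP $\mathfrak{bt}$, identified with $\BT$ via Lemma~\ref{lemma_BT}, and I would \emph{not} invoke Theorem~\ref{theorem_functorBT} (which depends on the present statement). The starting point is the classical fact that the monoidal category $\mathcal{T}$ of framed oriented tangles is generated, as a monoidal category, by the elementary tangles --- the positive and negative crossings $\crosspos,\crossneg$ and the cup/cap morphisms $\CAP$ together with their reversed-orientation variants --- this being the Freyd--Yetter--Turaev--Shum presentation of $\mathcal{T}$ as the free ribbon category on a single object. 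Given $T\in\mathfrak{bt}(n,m)$, choosing a generic height function on a diagram of $T$ and putting it in Morse position expresses $T$, in $\mathcal{T}$, as a composite of such elementary pieces tensored with identities.

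The heart of the argument is the passage from $\mathcal{T}$ back into $\mathfrak{bt}$. The elementary pieces above are not themselves morphisms of $\mathfrak{bt}$, but the defining constraint $T\eta_n\sim_h\eta_m$ --- an isotopy relation that permits arbitrary crossing changes $\crosspos\leftrightarrow\crossneg$ --- allows one to reorganize the diagram so that, scanning from the bottom boundary upward, each Morse event contributes exactly one of the following \emph{elementary bottom tangles} of $\mathfrak{bt}$: a merge of two adjacent components ($\mu$), a doubling/clasp of one component ($\Delta$), insertion of a trivial unknotted unlinked $0$-framed component ($\eta$), deletion of such a component ($\epsilon$), the re-routing automorphism of a component and its inverse ($S^{\pm1}$), insertion of a $\pm1$ self-framing kink ($\theta^{\pm1}$), and the exchange of two adjacent components, i.e.\ the braiding $c_{\mathbf{H}_1,\mathbf{H}_1}^{\pm1}$. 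Concretely: a local minimum opens a bridge, a local maximum either caps two strands of one component (which is absorbed) or closes a bridge between two components (a $\mu$ or $\Delta$ up to re-indexing), a crossing of two distinct components is a braiding, and a self-crossing of one component is traded, using $\sim_h$ together with a Reidemeister-I move, for a kink and a re-routing; the bookkeeping forced by the height orders produces precisely this list. This reduces the theorem to checking that those seven elementary bottom tangles lie in the sub-PROP generated by $G$.

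For $\mu,\eta,\Delta,\epsilon,S^{\pm1}$ and $\theta^{\pm1}$ there is nothing to do: each is literally the corresponding element of $G$, read off from Figure~\ref{fig_BTHopfAlg}. The one step carrying genuine content is the elimination of the braiding: one must exhibit the $2$-component bottom tangle in $\mathbf{H}_2$ representing $c_{\mathbf{H}_1,\mathbf{H}_1}$ as a composite of $\mu,\Delta,S,\theta$. I would prove this by an explicit local manipulation of that tangle --- trading the crossing of the two components for a clasp plus compensating kinks --- which is exactly the point where the particular Crane--Habiro--Kerler--Yetter Hopf-algebra structure of $\mathbf{H}_1$, rather than merely the axioms of a braided Hopf algebra, is used; at the algebraic level this is the identity expressing $\Psi$ through $\mu,\Delta,S,\theta$ that follows from the braided (co)commutativity of $\mathcal{H}$ together with the BP relations of Figure~\ref{fig_BPHopf}. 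I expect this braiding-elimination to be the main obstacle: the Morse/height bookkeeping of the previous paragraph, while technical, is routine, and the remaining identifications are immediate, so all the real work is concentrated in showing the crossing is redundant.
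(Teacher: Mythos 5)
The paper does not actually prove this statement: it is quoted verbatim from Habiro, so your sketch has to be measured against Habiro's argument and against the way the appendix uses the result, and on both counts two steps fail. First, Lemma~\ref{lemma_BT} does not identify $\BT$ with $\mathfrak{bt}$: it identifies $\BT(n,m)$ with $P_n(\mathbf{H}_m)$, i.e.\ with bottom tangles in handlebodies, whereas $\mathfrak{bt}(n,m)$ consists of tangles in a cube subject to the membership condition $T\eta_n\sim_h\eta_m$; these are genuinely different categories (for instance $\BT(0,1)=P_0(\mathbf{H}_1)$ is a single point, while $\mathfrak{bt}(0,1)$ is infinite). Relatedly, you use $\sim_h$ as if crossing changes $\crosspos\leftrightarrow\crossneg$ were admissible moves with which to ``reorganize the diagram''; they are not. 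Morphisms of $\BT$ (and of $\mathfrak{bt}$) are honest isotopy classes, and a crossing change changes the morphism --- $\sim_h$ only enters as a condition selecting which tangles belong to $\mathfrak{bt}$, never as a relation imposed on morphisms. So the step in which every Morse event is forced into your list of elementary pieces is unjustified as written; Habiro's proof works directly with a bottom tangle in a handlebody (equivalently with the boundary cobordism) and decomposes it into elementary pieces there.

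Second, the step you yourself identify as the heart --- eliminating the braiding by an identity expressing $c_{\mathbf{H}_1,\mathbf{H}_1}$ through $\mu,\Delta,S,\theta$ --- is asserted, not proved, and it is not supplied by the sources you appeal to. Habiro's Theorem~5.16 is a statement about generation \emph{as a braided subcategory}: the braidings $c^{\pm1}$ are available for free alongside $\mu,\eta,\Delta,\epsilon,S^{\pm1},\theta^{\pm1}$, and this is also how the present statement must be read for the appendix to be coherent, since the relations of Figure~\ref{fig_BPHopf} are themselves written using the braiding (in $B^-$ and in the bialgebra axiom) and Bobtcheva--Piergallini's Theorem~5.5.4, invoked in Theorem~\ref{theorem_presBT}, presents $\mathcal{C}^{CYK}$ as a braided monoidal category. ``Braided commutativity of $\mathcal{H}$ together with the BP relations'' cannot yield a formula for the braiding: braided commutativity is a property of $\ad$ and $\mu$, and the BP relations presuppose the braiding rather than determine it. If the braiding really were a composite of morphisms $\mathds{1}^{\otimes a}\otimes g\otimes\mathds{1}^{\otimes b}$ with $g\in G$, that would be a new and nontrivial claim requiring an explicit tangle construction that you do not give, and which Habiro, Kerler and Bobtcheva--Piergallini all avoid precisely by formulating generation and presentation in the braided sense. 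The correct repair is to prove (or cite) braided generation, which removes the need for this step entirely, and to carry out the remaining decomposition bookkeeping for bottom tangles in handlebodies as in Habiro's Section~5.
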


Recall from Remark \ref{remark_CYK} the faithful braided functor 
$$\partial : \BT \to \mathcal{C}^{CYK}$$
 sending $\mathbf{H}_g$ to $\Sigma_{g,1}$ and sending an embedding $\mu : \mathbf{H}_a \to \mathbf{H}_b$ to the cobordism $\mathbf{H}_b \setminus \mu(\mathbf{H}_a)$. Not every cobordism in $\mathcal{C}^{CYK}(a,b)$ is homeomorphic to a cobordism of the form $\mathbf{H}_b \setminus \mu(\mathbf{H}_a)$ (so $\partial$ is not full), however every element of $\mathcal{C}^{CYK}(a,b)$ can be obtained from a cobordism $\mathbf{H}_b \setminus \mu(\mathbf{H}_a)$ by performing some surgery  along a closed framed link $L\subset \mathbf{H}_b \setminus \mu(\mathbf{H}_a)$. We can thus represent pictorially a morphism in $\mathcal{C}^{CYK}(a,b)$ by a $a$-bottom tangle $T$ in $\mathbf{H}_b$ entangled with a closed framed link $L \subset H_b \setminus T$ and think of $T\cup L$ as the cobordism obtained from $\mathbf{H}_b \setminus \mu_T(\mathbf{H}_a)$ by performing a surgery along $L$. The pairs $(T,L)$ are thus only considered up to some Kirby moves (see \cite{Kerler_PresTanglesCat, Kerler_AlgCobordisms, BobtchevaPiergallini} for a precise definition of the Kirby moves involved). For instance, one can consider the two morphisms: 
 $$ \lambda := \adjustbox{valign=c}{\includegraphics[width=1cm]{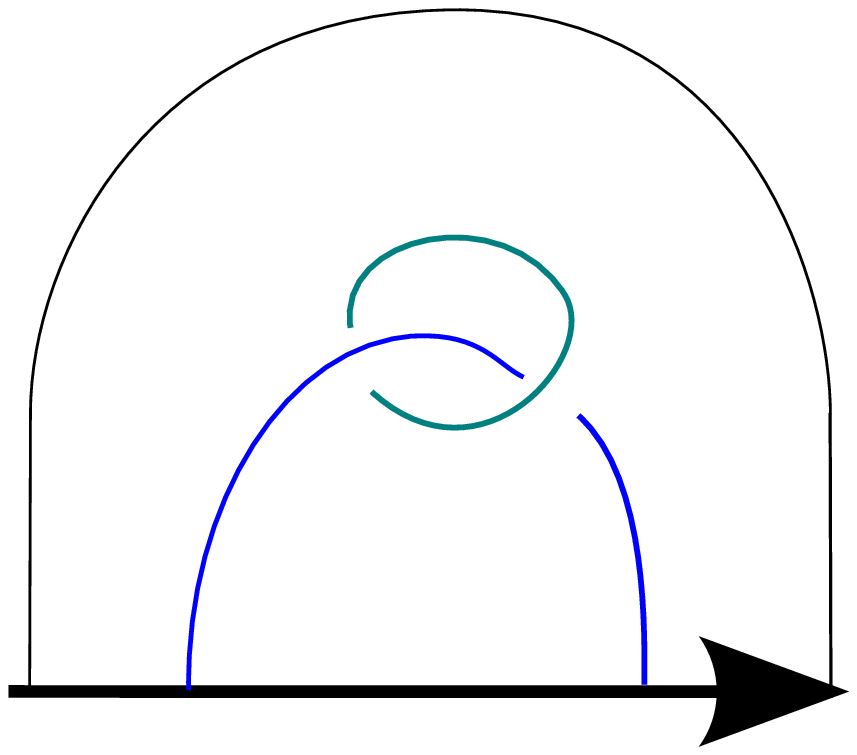}} \in \mathcal{C}^{CYK}(1,0) \quad \mbox{ and }\quad \Lambda := \adjustbox{valign=c}{\includegraphics[width=1cm]{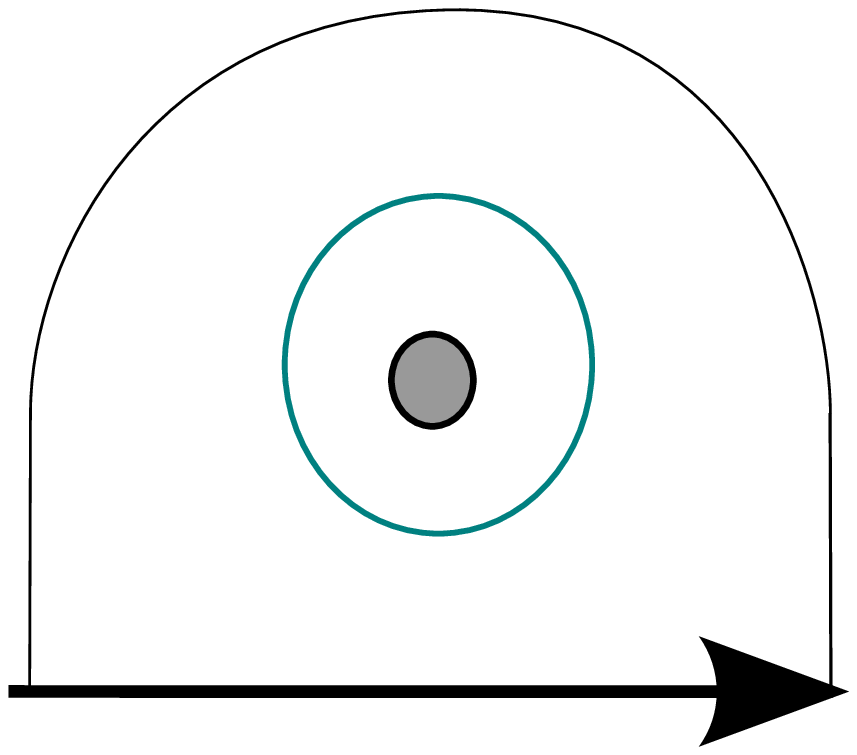}} \in \mathcal{C}^{CYK}(0,1), $$
 which do not belong to the image of $\partial$ (we perform the surgery along the green curve here). They satisfy some relations drawn in Figure \ref{fig_RelationsIntegral} which justify the terminology \textit{integral} and \textit{cointegral} for $\lambda$ and $\Lambda$ respectively.

\begin{figure}[!h] 
\centerline{\includegraphics[width=12cm]{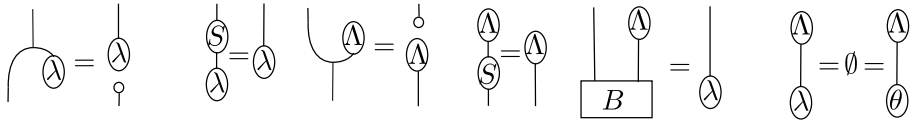} }
\caption{The relations satisfied by the integral and cointegral in $\mathcal{C}^{CYK}$.} 
\label{fig_RelationsIntegral} 
\end{figure}

 By abuse of notations, we identify $\BT$ with its image by $\partial$ inside $\mathcal{C}^{CYK}$ so we consider the set $G$ of morphisms of Figure \ref{fig_BTHopfAlg} both as a set of morphisms in $\BT$ and $\mathcal{C}^{CYK}$. Similarly,  the set $Rel$ of relations defined by Figure \ref{fig_BPHopf} is seen as a set of relations for $\mathcal{C}^{CYK}$ as well.
 
 \begin{theorem}(Bobtcheva-Piergallini \cite[Theorem $5.5.4$]{BobtchevaPiergallini})\label{theorem_BP} The category $\mathcal{C}^{CYK}$ is presented by the set of generators $G \bigsqcup \{ \lambda, \Lambda\}$ and the set of relations which is the union of $Rel$ with the relations of Figure \ref{fig_RelationsIntegral}.
 \end{theorem}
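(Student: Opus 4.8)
The final statement as worded is Theorem \ref{theorem_BP}, i.e. the Bobtcheva--Piergallini presentation of $\mathcal{C}^{CYK}$ by the generators $G\bigsqcup\{\lambda,\Lambda\}$ and the relations $Rel$ together with the relations of Figure \ref{fig_RelationsIntegral}. This is quoted in the appendix as an external input, so strictly speaking it is not something the present paper proves from scratch; the reasonable task here is to explain how one would \emph{reconstruct} its proof from the cited works of Kerler and Bobtcheva--Piergallini. The plan is to assemble the presentation in two stages: first pass from a Kirby-calculus description of $3$-manifold cobordisms with corners to an algebraic presentation in terms of a braided Hopf algebra with integral and cointegral, following Kerler \cite{Kerler_PresTanglesCat, Kerler_AlgCobordisms}; then verify completeness of the relation set by a normal-form argument in the free PROP on $G\bigsqcup\{\lambda,\Lambda\}$.

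\textbf{Step 1: Generation.} First I would establish that $G\bigsqcup\{\lambda,\Lambda\}$ generates $\mathcal{C}^{CYK}$ as a PROP. By the discussion preceding the statement, every morphism in $\mathcal{C}^{CYK}(a,b)$ is represented by a pair $(T,L)$, where $T$ is an $a$-bottom tangle in $\mathbf{H}_b$ and $L\subset H_b\setminus T$ a closed framed link encoding a surgery. Habiro's Theorem \ref{theorem_Habiro_generators} already decomposes the bottom-tangle part $T$ as a composition and tensor product of the morphisms in $G$ (after the identification of $\BT$ with its image under $\partial$). For the surgery link $L$, each component is an embedded framed circle, and any such circle in $H_b$ can be pushed, up to isotopy, into the region of a single handle or into a "clasp" configuration between two handles; decomposing it into Habiro's elementary tangle generators and then capping off the two endpoints with the integral $\lambda$ (to form a circle carrying a surgery) and using $\Lambda$ where a $0$-framed unknot must be created, one writes $L$ too in terms of $G\bigsqcup\{\lambda,\Lambda\}$. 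The precise bookkeeping is exactly Kerler's reduction of tangle-surgery presentations; I would cite it and sketch the induction on the number of components and crossings of $L$.

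\textbf{Step 2: The relations hold.} Next I would check that $Rel$ (Figure \ref{fig_BPHopf}) and the integral/cointegral relations (Figure \ref{fig_RelationsIntegral}) are all valid in $\mathcal{C}^{CYK}$. The Hopf-algebra axioms, the cotwist compatibility, the compatibility of the pairing $B$ with the product, and the two BP relations $(r8),(r9)$ are each verified by an explicit isotopy-plus-Kirby-move computation on the one-holed surface $\Sigma_{1,1}$, exactly as in Kerler's and Bobtcheva--Piergallini's papers; the relations of Figure \ref{fig_RelationsIntegral} are precisely the surgery relations identifying $\lambda$ as an integral and $\Lambda$ as a cointegral (sliding a component over a surgered unknot, cancelling a $\pm 1$-framed unknot, etc.). I would present one or two of the less obvious ones (the BP relations, and the $S(\lambda)=\lambda$-type relation) in pictures and state that the rest are routine.

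\textbf{Step 3: Completeness.} The heart of the proof is to show $\sim_{Rel}\,=\,\sim_{\mathcal{C}^{CYK}}$. Let $\mathcal{F}$ be the free PROP on $G\bigsqcup\{\lambda,\Lambda\}$ and $\overline{\mathcal{F}}:=\mathcal{F}/\!\sim_{Rel}$. By Steps 1--2 there is a surjective PROP functor $\overline{F}:\overline{\mathcal{F}}\to\mathcal{C}^{CYK}$, and we must show it is faithful. Here I would follow the Bobtcheva--Piergallini strategy: using the braided-Hopf-algebra-with-integral relations, reduce every morphism of $\overline{\mathcal{F}}(a,b)$ to a normal form --- intuitively, all surgery circles (built from $\lambda$) pushed to the bottom and absorbed, all antipodes and twists moved to standard positions, the bottom-tangle part put into Habiro's normal form --- so that $\overline{\mathcal{F}}(a,b)$ is exhibited as a quotient of an explicit module, and then match this with Kerler's algebraic model $\Cob_{123}$ (or the equivalent description via the Lyubashenko coend / the $\mathcal{O}_qG$-transmutation functor of Theorem \ref{theorem_functorBT}) which is known to be isomorphic to $\mathcal{C}^{CYK}$. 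Concretely, one produces from the cited Kirby-calculus theorems (Kerler's presentation of $3$-cobordisms, refined by Bobtcheva--Piergallini's $4$-dimensional handle-calculus argument removing the last ambiguity) a set of moves that suffices to connect any two $(T,L)$-diagrams representing the same cobordism, and checks that every such move is a consequence of $\sim_{Rel}$ and the Figure \ref{fig_RelationsIntegral} relations. \textbf{The main obstacle} is exactly this last verification: the completeness of the move set is the deep content of \cite{BobtchevaPiergallini}, resting on a nontrivial $4$-manifold trisection/handle-slide analysis, and faithfully reproducing it (rather than citing it) would require importing that machinery wholesale. In practice I would therefore treat Theorem \ref{theorem_BP} as the cited black box it is in the appendix, give the generation argument (Step 1) and the relation-checking (Step 2) in detail, and for Step 3 reduce to the statement of \cite[Theorem $5.5.4$]{BobtchevaPiergallini} after explaining how the present generators and relations translate into theirs.
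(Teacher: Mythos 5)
The paper offers no proof of this statement at all: Theorem \ref{theorem_BP} is imported verbatim from \cite[Theorem 5.5.4]{BobtchevaPiergallini}, the surrounding text only recording that Kerler \cite{Kerler_PresTanglesCat} proved generation and all the relations except the two BP ones and conjectured the presentation, completeness being precisely the content of Bobtcheva--Piergallini's work. Your treatment --- sketching generation and the verification of the relations, and explicitly deferring the completeness step to the cited handle-calculus argument rather than reproducing it --- is exactly the stance the paper itself takes, so it is consistent with the paper's approach.
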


That the set $G \bigsqcup \{ \lambda, \Lambda\}$ generates $\mathcal{C}^{CYK}$ and that they satisfy the above relations (with the exception of the two bottom "BP" relations in Figure \ref{fig_BPHopf}) was proved by Kerler in \cite{Kerler_PresTanglesCat} who conjectured that they form a presentation of $\mathcal{C}^{CYK}$. The two additional BP relations were found by Bobtcheva-Piergallini in \cite{BobtchevaPiergallini}. 

\begin{proof}[Proof of Theorem \ref{theorem_presBT}]
Let $\mathcal{D}$ be the category presented by $(G,Rel)$ and $f: \mathcal{D}\to \mathcal{C}^{CYK}$ be the inclusion functor. It suffices to prove that $f$ is faithful; indeed, since $\partial: \BT \to \mathcal{C}^{CYK}$ is faithful and since $\BT$ is generated by $G$ by Theorem \ref{theorem_Habiro_generators}, this will prove that $\BT$ is equivalent to $\mathcal{D}$ and conclude the proof. Write $G':= G\bigsqcup \{\lambda, \Lambda\}$, let $Rel''$ be the set of relations (in $\mathcal{F}_{G'}$) of Figure \ref{fig_RelationsIntegral} and $Rel':= Rel \bigsqcup Rel''$ so that $\mathcal{C}^{CYK}$ is presented by $(G', Rel')$ (in virtue of Theorem \ref{theorem_BP}).
\par The faithfulness of $f$ follows from the following remark: for each relation in $Rel''$ of Figure \ref{fig_RelationsIntegral}, if we replace $\lambda$ by the counit $\epsilon$ and $\Lambda$ by the unit $\eta$, then get a relation in $Rel$. Therefore the quotient category $\quotient{ \mathcal{C}^{CYK}}{\left( \adjustbox{valign=c}{\includegraphics[width=1.5cm]{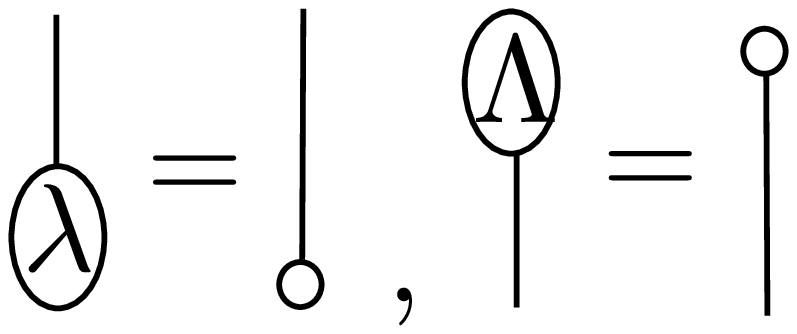}} \ \right)}$ is equivalent to $\mathcal{D}$ and the quotient functor
$$g : \mathcal{C}^{CYK} \to  \quotient{ \mathcal{C}^{CYK}}{\left( \adjustbox{valign=c}{\includegraphics[width=1.5cm]{Integral_quotient.eps}} \right)} \cong \mathcal{D}$$ 
satisfies $g\circ f = \id_{\mathcal{D}}$. This proves that $f$ is faithful and concludes the proof.

\end{proof}

\bibliographystyle{amsalpha}
\bibliography{biblio}

\def\cprime{$'$}
\providecommand{\bysame}{\leavevmode\hbox to3em{\hrulefill}\thinspace}
\providecommand{\MR}{\relax\ifhmode\unskip\space\fi MR }
\providecommand{\MRhref}[2]{%
  \href{http://www.ams.org/mathscinet-getitem?mr=#1}{#2}
}
\providecommand{\href}[2]{#2}
\begin{thebibliography}{{Kor}22}

\bibitem[AGS95]{AlekseevGrosseSchomerus_LatticeCS1}
A.Y. Alekseev, H.~Grosse, and V.~Schomerus, \emph{Combinatorial quantization of
  the {H}amiltonian {C}hern-{S}imons theory {I}}, Communications in
  Mathematical Physics \textbf{172} (1995), no.~2, 317--358.

\bibitem[AGS96]{AlekseevGrosseSchomerus_LatticeCS2}
\bysame, \emph{Combinatorial quantization of the {H}amiltonian {C}hern-{S}imons
  theory {II}}, Communications in Mathematical Physics \textbf{174} (1996),
  no.~3, 561--604.

\bibitem[AM94]{AlekseevMalkin_PoissonLie}
A.Y. Alekseev and A.Z. Malkin, \emph{Symplectic structures associated to
  {L}ie-{P}oisson groups}, Communications in Mathematical Physics \textbf{162}
  (1994), no.~1, 147--173.

\bibitem[Bar99]{Barett}
J.W. Barett, \emph{Skein spaces and spin structures}, Math. Proc. Cambridge
  \textbf{126} (1999), no.~2, 267--275.

\bibitem[BDCV]{BDSV_ETQFT2}
B.~Bartlett, C.L. Douglas, Schommer-Pries C.J., and J.~Vicary, \emph{Extended
  3-dimensional bordism as the theory of modular objects}, arXiv:1411.0945.

\bibitem[BDR22]{BeliakovaDeRenzi_KLFunctor}
A.~Beliakova and M.~De~Renzi, \emph{Kerler–{L}yubashenko functors on
  4-dimensional 2-handlebodies}, Int. Math. Res. Not. \textbf{39} (2022), 55.

\bibitem[BL]{BloomquistLe}
W.~Bloomquist and T.T.Q. {Le}, \emph{{The {C}hebyshev-{F}robenius homomorphism
  for stated skein modules of $3$-manifolds}}, arXiv:2011.02130[math.GT].

\bibitem[BP12]{BobtchevaPiergallini}
I.~Bobtcheva and R.~Piergallini, \emph{On 4-dimensional 2-handlebodies and
  3-manifolds}, J. Knot Theory Ramifications \textbf{21} (2012), no.~12, 230.

\bibitem[BR95]{BuffenoirRoche}
E.~Buffenoir and Ph. Roche, \emph{Two-dimensional lattice gauge theory based on
  a quantum group}, Comm. Math. Phys. \textbf{170} (1995), no.~3, 669--698.

\bibitem[BR96]{BuffenoirRoche2}
\bysame, \emph{Link invariants and combinatorial quantization of {H}amiltonian
  {C}hern-{S}imons theory}, Comm. Math. Phys. \textbf{181} (1996), no.~2,
  331--365.

\bibitem[Bul97]{Bullock}
D.~Bullock, \emph{Rings of $sl_2(\mathbb{C})$-characters and the {K}auffman
  bracket skein module}, Comentarii Math. Helv. \textbf{72} (1997), no.~4,
  521--542.

\bibitem[BW11]{BonahonWongqTrace}
F.~Bonahon and H.~Wong, \emph{Quantum traces for representations of surface
  groups in $\mathrm{SL}_2(\mathbb{C})$}, Geom. Topol. \textbf{15} (2011),
  1569--1615.

\bibitem[BZBJ18]{BenzviBrochierJordan_FactAlg1}
D.~Ben-Zvi, A.~Brochier, and D.~Jordan, \emph{Integrating quantum groups over
  surfaces}, J.Topology \textbf{11} (2018), no.~4, 873--916.

\bibitem[CKL]{KojuTannakianSSkein_ToAppear}
F.~Costantino, J.~{Korinman}, and T.Q.T. Le, \emph{{Stated skein algebras for
  {T}annakian ribbon categories}}, In preparation.

\bibitem[CL19]{CostantinoLe19}
F.~Costantino and T.T.Q. L\^e, \emph{{Stated skein algebras of surfaces}},
  ArXiv:1907.11400, 2019.

\bibitem[CL22]{CostantinoLe_SSkeinTQFT}
\bysame, \emph{{Stated skein modules of $3$ manifolds and {T}{Q}{F}{T}}},
  arXiv:2206.10906, 2022.

\bibitem[CM09]{ChaMa}
L.~Charles and J.~March\'e, \emph{Multicurves and regular functions on the
  representation variety of a surface in {SU}(2)}, Comentarii Math. Helv.
  \textbf{87} (2009), 409--431.

\bibitem[Coo19]{Cooke_FactorisationHomSkein}
J.~Cooke, \emph{Excision of skein categories and factorisation homology},
  arXiv:1910.02630, 2019.

\bibitem[CY99]{CraneYetter_Categorification}
L.~Crane and D.~Yetter, \emph{On algebraic structures implicit in topological
  quantum field theories}, J. Knot Theory and Ramifications \textbf{8} (1999),
  125--163.

\bibitem[Day70]{DayThesis}
B.~Day, \emph{Construction of biclosed categories}, Ph.D. thesis, University of
  New South Wales, 2070, url: http://web.science.mq.edu.au/~street/DayPhD.pdf.

\bibitem[Fai20]{Faitg_LGFT_SSkein}
M.~Faitg, \emph{Holonomy and (stated) skein algebras in combinatorial
  quantization}, arXiv:2003.08992, 2020.

\bibitem[FGL02]{FGL_SkeinApolynomial}
C.~Frohman, R.~Gelca, and W.~Lofaro, \emph{The {A}-polynomial from the
  noncommutative viewpoint}, Trans. Amer. Math. Soc. \textbf{354} (2002),
  no.~2, 735--747.

\bibitem[Fie96]{Fiedorowicz}
Z.~Fiedorowicz, \emph{The symmetric bar construction}, url:
  https://people.math.osu.edu/fiedorowicz.1/symbar.ps.gz., 1996.

\bibitem[Fre17]{Fresse_Book}
B.~Fresse, \emph{Homotopy of operads \& {G}rothendieck-{T}eichm\"uller groups},
  Mathematical Surveys and Monographs, vol. 217, AMS, 2017.

\bibitem[Gar04]{Garoufalidis_AJ}
S.~Garoufalidis, \emph{On the characteristic and deformation varieties of a
  knot, {P}roceedings of the {C}asson {F}est}, Geom. Topol. Monogr. \textbf{7}
  (2004), 291--309.

\bibitem[GJS]{GunninghamJordanSafranov_FinitenessConjecture}
S.~Gunningham, D.~Jordan, and P.~Safronov, \emph{The finiteness conjecture for
  skein modules}, arXiv:1908.05233[math.QA].

\bibitem[GL05]{GaroufalidisLe_JonesqHolonomic}
S.~Garoufalidis and T.T.Q. L\^e, \emph{The colored {J}ones function is
  q-holonomic}, Geom. Topol. \textbf{9} (2005), 1253--1293.

\bibitem[Hab06]{Habiro_BottomTangles}
K.~Habiro, \emph{Bottom tangles and universal invariants}, , Algebr. Geom.
  Topol. \textbf{6} (2006), 1113--1214.

\bibitem[Hab12]{Habiro_QCharVar}
\bysame, \emph{A note on quantum fundamental groups and quantum representation
  varieties for $3$-manifolds}, RIMS Kokyuroku \textbf{1777} (2012), 21--30,
  www.kurims.kyoto-u.ac.jp/kyodo/kokyuroku/contents/pdf/1777-02.pdf.

\bibitem[Hai22]{Haioun_Sskein_FactAlg}
B.~Haioun, \emph{Relating stated skein algebras and internal skein algebras},
  SIGMA \textbf{18} (2022), no.~042, 39.

\bibitem[Hat00]{Hatcher_Book}
A.~Hatcher, \emph{{Algebraic topology}}, Cambridge Univ. Press, Cambridge,
  2000.

\bibitem[Hem04]{Hempel_3mfd}
J.~Hempel, \emph{$3$-manifolds}, AMS Chelsea Pub, Providence, U.S., 2004.

\bibitem[Hig]{Higgins_SSkeinSL3}
V.~Higgins, \emph{Triangular decomposition of $sl_3$ skein algebras},
  arXiv:2008.09419.

\bibitem[HP92]{HP92}
J.~Hoste and J.~Przytycki, \emph{A survey of skein modules of {$3$}-manifolds},
  Knots 90 ({O}saka, 1990), de Gruyter, Berlin, 1992, pp.~363--379. \MR{1177433
  (93m:57018)}

\bibitem[Kas95]{Kassel}
C.~Kassel, \emph{Quantum groups}, Springer-Verlag, New York, 1995, Graduate
  Texts in Mathematics, No. 155.

\bibitem[Ker97]{Kerler_QInv}
T.~Kerler, \emph{Genealogy of non-perturbative quantum invariants of
  $3$-manifolds: the surgical family}, Lecture Notes in Pure and Applied Math.
  \textbf{184} (1997), 503--547.

\bibitem[Ker99]{Kerler_PresTanglesCat}
\bysame, \emph{Bridged links and tangle presentations of cobordism categories},
  Adv. Math. \textbf{141} (1999), 207--281.

\bibitem[Ker03]{Kerler_AlgCobordisms}
\bysame, \emph{Towards an algebraic characterization of 3-dimensional
  cobordisms, diagrammatic morphisms and applications}, Contemp. Math.
  \textbf{318} (2003), 141--173.

\bibitem[KM17]{KapovitchMillson}
M.~Kapovitch and J.J. Millson, \emph{On representation varieties of
  $3$-manifolds groups}, Geometry and Topology \textbf{21} (2017), no.~4,
  1931–--1968.

\bibitem[{Kor}20]{KojuPresentationSSkein}
J.~{Korinman}, \emph{{Finite presentations for stated skein algebras and
  lattice gauge field theory}}, Accepted in Algebraic \& Geometric Topology.
  arXiv:2012.03237, 2020.

\bibitem[{Kor}21]{KojuSurvey}
\bysame, \emph{{Stated skein algebras and their representations}},
  arXiv:2105.09563, 2021.

\bibitem[{Kor}22]{KojuMCGRepQT}
\bysame, \emph{{Mapping class group representations derived from stated skein
  algebras}}, SIGMA \textbf{18} (2022), no.~64, 35, arXiv:2201.07649.

\bibitem[KQ19]{KojuQuesneyClassicalShadows}
J.~{Korinman} and A.~Quesney, \emph{{Classical shadows of stated skein
  representations at odd roots of unity}}, arXiv:1905.03441, 2019.

\bibitem[KQ22]{KojuQuesneyQNonAb}
\bysame, \emph{{The quantum trace map as a quantum non-abelianization}}, J.
  Knot theory \& its ramifications \textbf{31} (2022), no.~06, 2250032.

\bibitem[KR90]{KirillovReshetikhin}
A.N. Kirillov and N.Y. Reshetikhin, \emph{q-{W}eyl group and a multiplicative
  formula for universal {R}-matrices}, Comm. Math. Phys. \textbf{134} (1990),
  no.~2, 421--431.

\bibitem[KS06]{KashiwaraSchapira_Book}
M.~Kashiwara and P.~Schapira, \emph{Categories and sheaves}, Springer-Verlag,
  2006, Grundlehren der Math. Wiss. 332.

\bibitem[KT09]{KamnitzerTingley}
J.~Kamnitzer and P.~Tingley, \emph{The crystal commutor and {D}rinfelds
  unitarized {R}-matrix}, Journal of Algebraic Combinatorics \textbf{29}
  (2009), no.~3, 315.

\bibitem[{Le}06]{LeAJ}
T.T.Q. {Le}, \emph{{The colored Jones polynomial and the {A}-polynomial of
  knots}}, Adv. Math. \textbf{207} (2006), no.~2, 782--804.

\bibitem[{Le}18]{LeStatedSkein}
\bysame, \emph{{Triangular decomposition of skein algebras}}, Quantum Topology
  \textbf{9} (2018), 591--632.

\bibitem[LS]{LeSikora_SSkein_SLN}
T.T.Q. {Le} and A.~{Sikora}, \emph{Stated $\mathrm{SL}(n)$ stated skein modules
  and algebras}, arXiv:2201.00045.

\bibitem[LS91]{LevandorskiiSoibelman}
S.Z. Levandorskii and Y.S. Soibelman, \emph{The quantum {W}eyl group and a
  multiplicative formula for the {R}-matrix of a simple {L}ie algebra}, Funct.
  Annal. Appl. \textbf{25} (1991), no.~2, 143--145.

\bibitem[Lus93]{Lusztig_Book}
G.~Lusztig, \emph{Introduction to quantum groups}, Progress in Mathematics,
  vol. 110, Birkh\"auser Boston, 1993.

\bibitem[Lyu95]{Lyubashenko_ModularTransoTensorCat}
V.~Lyubashenko, \emph{Modular transformations for tensor categories}, Journal
  of Pure and Applied Algebra \textbf{98} (1995), 279--327.

\bibitem[Maj90]{Majid_BQG_Rank}
S.~Majid, \emph{Rank of quantum groups and braided groups in dual form}, Proc.
  of the {E}uler {I}nstitute, St. Petersberg \textbf{1570} (1990), 79--89.

\bibitem[Maj94]{Majid_BraidedHopfAlg}
\bysame, \emph{Algebras and {H}opf algebras in braided categories}, Advances in
  Hopf Algebras, Lec. Notes Pure and Applied Maths \textbf{158} (1994),
  55--105.

\bibitem[Maj95]{Majid_QGroups}
\bysame, \emph{Foundations of quantum group theory}, Cambridge University
  Press, 1995.

\bibitem[Mil63]{MilnorSpinStructure}
J.~Milnor, \emph{{Spin structure on manifolds}}, Enseign. Math. \textbf{9}
  (1963), 198--203.

\bibitem[Mur21]{Murakami_RIMS}
J.~Murakami, \emph{On quantum character varieties of knots}, RIMS Kokyuroku
  \textbf{2191} (2021).

\bibitem[MVdV]{MurakamiVdV_QRepSpaces}
J.~Murakami and R.~Van~der Veen, \emph{Quantized {S}{L}(2) representations of
  knot groups}, ArXiv:1812.09539[math.GT].

\bibitem[Neg18]{Negron_Frobenius}
C.~Negron, \emph{Log-modular quantum groups at even roots of unity and the
  quantum {F}robenius i}, arXiv:1812.02277[math.QA], 2018.

\bibitem[PS00]{PS00}
J.H Przytycki and S.~Sikora, \emph{On skein algebras and
  $\mathrm{SL}_2(\mathbb{C})$-character varieties}, Topology \textbf{39}
  (2000), no.~1, 115--148.

\bibitem[Saw06]{Sawin}
S.F. Sawin, \emph{Quantum groups at roots of unity and modularity}, J. of Knot
  Theory and Ramifications \textbf{15} (2006), no.~10, 1245--1277.

\bibitem[ST09]{SnyderTingley_HalfTwist}
N.~Snyder and P.~Tingley, \emph{The half-twist for $u_q (g)$ representations},
  Algebra and number theory \textbf{3} (2009), no.~7, 809--834.

\bibitem[SW03]{SalvatoreWahl_FD2}
P.~Salvatore and N.~Wahl, \emph{Framed discs operads and {B}atalin-{V}ilkovisky
  algebras}, Q. J. Math. \textbf{54} (2003), no.~2, 213--231.

\bibitem[Swe69]{Sweedler_HopfAlgebras}
M.E. Sweedler, \emph{Hopf algebras}, Mathematics Lecture Note Series, W. A.
  Benjamin, Inc., 1969.

\bibitem[Tin]{Tingley_MinusSign}
P.~Tingley, \emph{A minus sign that used to annoy me but now {I} know why it is
  there}, arXiv:1002.0555.

\bibitem[Tur88]{Tu88}
V.~G. Turaev, \emph{The {C}onway and {K}auffman modules of a solid torus}, Zap.
  Nauchn. Sem. Leningrad. Otdel. Mat. Inst. Steklov. (LOMI) \textbf{167}
  (1988), no.~Issled. Topol. 6, 79--89, 190. \MR{964255 (90f:57012)}

\bibitem[Tur10]{Tu}
V.G. Turaev, \emph{Quantum invariants of knots and 3-manifolds}, revised ed.,
  de Gruyter Studies in Mathematics, vol.~18, Walter de Gruyter \& Co., Berlin,
  2010. \MR{2654259 (2011f:57023)}

\end{thebibliography}

\end{document}